	\newlength\mylen
	\newlist{mycases}{enumerate}{1}
	\setlist[mycases,1]{label=\textbf{Case~\arabic*.}, 
		labelwidth=\dimexpr-\mylen-\labelsep\relax,leftmargin=0pt,align=right}
\begin{document}
\pagestyle{plain}
\fancyhead{}

		\bibliographystyle{alpha}
		\newtheorem{theorem}{Theorem}[section]
		\newtheorem{lemma}{Lemma}[section]
		\newtheorem{definition}{Definition}[section]
		\newtheorem{proposition}{Proposition}[section]
		\newtheorem{remark}{Remark}[section]
		\newtheorem{property}{Property}[section]
		\newtheorem{corollary}{Corollary}[section]
		\newtheorem{convention}{Convention}[section]
		\newcounter{casenum}
		\newenvironment{caseof}{\setcounter{casenum}{1}}{\vskip.5\baselineskip}
		\newcommand{\case}[2]{\vskip.5\baselineskip\par\noindent {\bfseries Case \arabic{casenum}:} #1\\#2\addtocounter{casenum}{1}}
		\newcommand{\bigO}{\ensuremath{\mathop{}\mathopen{}\mathcal{O}\mathopen{}}}
		
		\newcommand\smallO[1]{
			\mathchoice
			{
				\scriptstyle\mathcal{O}
			}
			{
				\scriptstyle\mathcal{O}
			}
			{
				\scriptscriptstyle\mathcal{O}
			}
			{
				\scalebox{0.8}{$\scriptscriptstyle\mathcal{O}$}
			}
		}
		
		\newlength{\dhatheight}
      \newcommand{\doublehat}[1]{
    \settoheight{\dhatheight}{\ensuremath{\hat{#1}}}%
    \addtolength{\dhatheight}{-0.35ex}%
    \hat{\vphantom{\rule{1pt}{\dhatheight}}%
    \smash{\hat{#1}}}}

		\title{The Word and Conjugacy Problems in Lacunary Hyperbolic Groups}
		\author{Arman Darbinyan}
		
		\maketitle

	\begin{abstract}
	
We study the word and conjugacy problems in  lacunary hyperbolic groups (briefly, LHG). In particular, we  describe a necessary and sufficient condition for decidability of the word problem in LHG. Then, based on the graded small-cancellation theory of Olshanskii, we develop a  general framework which allows us to construct lacunary hyperbolic groups with word and conjugacy problems highly controllable and flexible both in terms of computability and computational complexity.

As an application, we show that for any recursively enumerable subset $\mathcal{L} \subseteq \mathcal{A}^*$, where $\mathcal{A}^*$ is the set of words over arbitrarily chosen non-empty finite alphabet $\mathcal{A}$, there exists a lacunary hyperbolic group $G_{\mathcal{L}}$ such that the membership problem for $ \mathcal{L}$ is `almost' linear time equivalent to the conjugacy problem in $G_{\mathcal{L}}$. Moreover, for the mentioned group the word and individual conjugacy problems are decidable in `almost' linear time. 

Another application is the construction of a lacunary hyperbolic group with `almost' linear time word problem and with all the individual conjugacy problems being undecidable except the word problem.

As yet another  application of the developed framework, we construct infinite verbally complete groups and torsion free Tarski monsters, i.e. infinite torsion-free groups all of whose proper subgroups are cyclic, with `almost' linear time word and polynomial time conjugacy problems.  
 These groups are constructed as quotients of arbitrarily given non-elementary torsion-free hyperbolic groups and are lacunary hyperbolic.

Finally, as a consequence of the main results, we answer a few open questions. 
 

\end{abstract}

\tableofcontents  

	\section{Introduction}
	Traditionally, computability questions of \textit{word and conjugacy problems} in groups, along with the groups isomorphism problem, are considered as some of the most important properties and questions in combinatorial and geometric group theory. For a given finitely generated group $G = \langle X \rangle$, $|X|<\infty$, the word problem is an algorithmic problem of deciding whether any arbitrarily given word $W\in X^*$ represents  the trivial element in $G$ or not. Here and later, whenever a set, say $X$, is a set of group generators, by $X^*$ we denote the set of all words in the alphabet $X \cup X^{-1}$.\index{word problem} \index{conjugacy problem} Otherwise, if $X$ is merely a (finite) set, then $X^*$ means the set of all finite words composed by letters from $X$. 
	
	The conjugacy problem considers on input an arbitrary pair $(U, V)\in X^* \times X^*$ and decides whether $U$ is conjugate to $V$ in $G$ or not. If for the word problem in $G$ such a decision algorithm exists, then it is said that the word problem (briefly, WP) is decidable in $G$. Analogously, if there is an decision algorithm for the conjugacy problem (briefly, CP) in $G$, then it is said that the conjugacy problem is decidable in $G$. 
	
	Observe that, since the triviality of an element of $G$ is equivalent to the fact that it is conjugate to the trivial element of $G$,  decidability of the conjugacy problem in $G$ implies   decidability of the word problem. Another obvious observation is that the decidability of WP and CP do not depend on the choice of the finite generating set. 
	
	Word and conjugacy problems in groups first were introduced by Max Dehn in 1911. A bit later, in 1912, Max Dehn described algorithms for word and conjugacy problems for surface groups (i.e. fundamental groups of two dimensional manifolds) for surfaces of genus $g \geq 2$. The algorithm described by him for the word problem is one of the most important word problem solving algorithms. It is one of the most important word problem solving algorithms not only because of its simplicity and good time complexity behavior or, say, because of its historical importance, but also because, based on generalizations of underlying properties of surface groups, this algorithm was generalized to a much broader class of groups, called hyperbolic groups (or, word hyperbolic groups). The notion of hyperbolic groups was first introduced by Gromov in his seminal paper \cite{gromov-hyperbolic}. In fact,  it is well-known that hyperbolic groups are essentially  the finitely presented groups on which one can extend  Dehn's original algorithm for the word problem in surface groups. See, for example, \cite{gromov-hyperbolic, lysenok}. 
	
	To describe Dehn's algorithm\index{Dehn's algorithm}, let us consider any finitely presented group $G$ with its finite presentation
	     \begin{equation}
	     \label{dehn's presentation}
	     G = \langle X \mid \mathcal{R} \rangle.	
	     \end{equation}
	     Then the presentation \eqref{dehn's presentation} is said to be \textit{Dehn's presentation}\index{Dehn's presentation of groups} if the following property holds: $\mathcal{R}$ is a finite symmetric set of words (i.e. it is closed under operations of taking cyclic shifts and inverses of words); for any freely cyclically reduced word $W \in X^*$, if $W =_G 1$ (i.e. $W$ represents the trivial element in $G$), then there exists a word $R=R_1R_2 \in \mathcal{R}$ such that $\|R_1\|>\|R_2\|$, and a cyclic shift $W'$ of $W$ such that $W'= W_1R_1W_2$. (Throughout this text, by the symbol $\|\cdot \|$ we denote lengths of words in a given alphabet. Another notation which we use in this work extensively is the following: For $G=\langle X \rangle$ suppose $U, V \in X^*$, then $U=_GV$ means that the words $U$ and $V$ represent the same element from $G$.)
	     
	     Note that if \eqref{dehn's presentation} is a Dehn's presentation, then to check whether or not a cyclically reduced word $W \in X^*$ is trivial in $G$, one can simply consider all cyclic shifts of $W$ and all relator words from $\mathcal{R}$ in order to find the above mentioned cyclic shift $W'$ and relator word $R=R_1R_2$. Then the key observation is that $W=_G 1$ if and only if $W_1R_2^{-1} W_2=_G 1$. But $\|W_1R_2^{-1} W_2\| < \|W\|$. Thus the word problem for $W$ is reduced to the word problem for a strictly shorter word $W_1R_2^{-1} W_2$. Next, in order to check whether or not $W_1R_2^{-1} W_2 =_G 1$, in a similar way as for $W$, we can try to reduce this question to the word problem for a shorter word. If at some point this shortening procedure cannot be applied anymore, then it means that either we obtained an empty word, hence we conclude $W=_G 1$ or, otherwise, we conclude $W \neq_G 1$. Also it is clear that this procedure of shortening can be applied only finitely many times (bounded from above by $\|W\|$), hence the process will eventually halt, giving us the wanted answer about triviality of $W$ in $G$. Since this procedure is based on the original algorithm of Dehn, following the established tradition, we call it \textit{Dehn's algorithm.} \index{Dehn's algorithm} 
	     
	   Note that there exist finitely presented groups with undecidable word problem. In fact, the first examples of finitely presented groups with algorithmically undecidable word problem were given by Novikov in 1955, see \cite{novikov} and independently by Boone in 1958, see \cite{boone}. These results of Novikov and Boone are considered as one of the most important and classical results in the algorithmic theory of groups. Another famous example is a construction by Kharlampovich (see \cite{kharlampovich}), where the first example of finitely presented solvable group with undecidable word problem was constructed, answering a long standing open problem by Adian. 
	   
	   Speaking about  word and conjugacy problems in finitely generated groups, there are several key aspects one might consider. Below we mention some of them.
	   	   \begin{enumerate}
	   	\item[(a).] Whether or not the WP (resp. CP) is decidable?
	   	\item[(b).] If it is undecidable, what is the Turing degree of undecidability of the WP (resp. CP)?
	   	\item[(c).] If it is decidable, what computational complexity classes does it belong to?
	   \end{enumerate}
	   	   
	   Note that for a given group the answer to (a) reveals not only computational properties of the group, but also its algebraic properties. This follows, for example, from a classical theorem of Boone and Higman, \cite{boone-higman, lyndon schupp}, which says that a finitely generated group $G$ has decidable word problem if and only
if $G$ can be embedded in a simple subgroup of a finitely presented group. Moreover, after the works of Gromov \cite{gromov-asymptotic}, Sapir, Birget, Rips \cite{sapir-birget-rips},  Birget, Olshanskii,  Rips, Sapir \cite{birget-olshanskii-rips-sapir}, Olshanskii \cite{olshanskii_space_functions}, Grigorchuk, Ivanov \cite{grigorchuk-ivanov}, Bridson \cite{bridson-geometry_of_word_problem} and others, it becomes apparent that the answer to the question 3 may reveal information not only about the computational properties of the group, but also about its topological and geometric properties. Therefore, in the light of  modern developments in the theory of groups, investigation of these questions is important from the perspective of computational, algebraic, topological and geometric points of view. Note that since for any two finite sets of generators $X$ and $Y$ of a given group, the words in $X^*$ can be in linear time translated into corresponding words in $Y^*$, the answer to the above formulated questions (a), (b) and (c) is independent of finite sets of group generators.
	   
	   We would like to mention that even the question of existence of a lacunary hyperbolic group with decidable word problem and undecidable conjugacy problem was still open. This question was asked by Olshanskii, Osin and Sapir as Problem 7.5 in \cite{olshanskii-osin-sapir}. A positive answer to this question follows from Theorems \ref{theorem_about_connecton_of_word_and_conjugacy_problems} and \ref{theorem-schupp-miasnikov-question} of the current paper.\\

	  In this paper we systematically study all the above mentioned aspects of word and conjugacy problems in the class of so called lacunary hyperbolic groups, with a special emphasize on the ones obtained via small cancellation techniques.
	   
	   The formal definition of the class of lacunary hyperbolic groups (more briefly, LHG) was first introduced by Olshanskii, Osin and Sapir in \cite{olshanskii-osin-sapir}. Intuitively, lacunary hyperbolic groups can be thought of as the finitely generated but not necessarily finitely presented versions of word hyperbolic groups. In the next sections we will recall the mathematically rigorous definitions of both hyperbolic and lacunary hyperbolic groups. But for this introductory part let us just add to the already mentioned that all lacunary hyperbolic groups are inductive limits of hyperbolic groups as it is established in \cite{olshanskii-osin-sapir} and recalled in Lemma \ref{lem lacunary hyp gps} of the current work.
	   
	   Speaking about inductive limits of hyperbolic groups, here we would like to mention that many such groups were constructed by using various generalized small cancellation techniques and many of them possess various exotic group theoretical properties. For example, this way Olshanskii constructed Burnside  groups of large exponents \cite{ol'shanskii - burnside problem} and \cite{ivanov_burnside} and Tarski Monsters \cite{ol'shanskii - torsion-free Tarski monsters, ol'shanskii - Tarski monsters}. 
	   For a more complete exposition of these constructions see also \cite{olshanskii_the_book}.
	   
	   Following an already established tradition, we call the groups which possess  exotic properties and are obtained as inductive limits of hyperbolic groups via small cancellation techniques, \textit{monster groups}. \index{monster groups}
	   
	   For the monster groups appearing, for example, in \cite{olshanskii_the_book}, in the currently existing literature there are no known time complexity effective algorithms for the basic decision problems such as the word and conjugacy problems. The methods developed in this work help us to construct monster groups with effective word and conjugacy problems, see Theorems \ref{theorem_verbally_complete} and \ref{theorem_tarskii_monsters}.
	    
~\\
\textbf{Acknowledgements.} I am grateful to my advisor Alexander Olshanskii for his encouragement to work on this subject and for his comments and suggestions which were invaluable. Also I would like to thank Goulnara Arzhantseva for pointing out several misprints and inaccuracies.
	\section{Main results}
	
	The main objective of this paper is twofold.
	
 First, based on the small cancellation theory of Olshanskii (see \cite{Olsh G-groups}), we describe general constructions of lacunary hyperbolic groups under which the word and conjugacy problems can be effectively reduced to much simpler problems. 

	Even more, we develop a general framework in sections \ref{section Small cancellation conditions}--\ref{section-about-general-scheme} which provides  with necessary tools to understand the rich nature of word and conjugacy problems in the class of LHG. In fact, this framework will allow us to shed light on the rich nature of word and conjugacy problems in LHG from several perspectives. More specifically:  
	\begin{enumerate}
		\item From the perspective of computability, e.g. in Theorem \ref{theorem iff condition for wp in lac hyp gp 1} we formulate an "if and only if" condition for decidability of WP. Also we develop necessary tools to construct lacunary hyperbolic groups with decidable word problem and undecidable conjugacy problem; 
		\item From the perspective of computational complexity theory; and
		\item From the perspective of interconnection of WP and CP in the class of LHG, both in terms of computability and  computational complexity.

	\end{enumerate}
	
	 
	 Second, we use the developed framework to formulate the main theorems of this paper, that is Theorem \ref{theorem_verbally_complete}, Theorem \ref{theorem_tarskii_monsters}, Theorem \ref{theorem_about_connecton_of_word_and_conjugacy_problems} and Theorem \ref{theorem-schupp-miasnikov-question}. The first two theorems, in particular, show that versions of some of the most prominent groups of the class of LHG can be constructed in such a way that they will have fast WP and CP. The third theorem shows in particular that WP and CP are `almost' completely independent one of another in the class of lacunary hyperbolic groups, not only from the perspective of computability, but also from the perspective of computational complexity. \\
	 
	 
	 Below we describe the content of the paper in more details.\\
	~\\
	
	Even though the original definition of lacunary hyperbolic groups involves the concept of asymptotic cones, there exist equivalent and more algebraic definitions. In this work we employ the following definition (see Lemma \ref{lem lacunary hyp gps} and Remark \ref{remark 000}):  finitely presented group  $\bar{G}=\langle X \rangle$ is lacunary hyperbolic if and only if $\bar{G}$ is the inductive limit of a chain of epimorphisms
	\begin{align}
	\label{seq 000}
		G_1 \stackrel{\alpha_1}\twoheadrightarrow G_2 \stackrel{\alpha_2}\twoheadrightarrow \ldots,
	\end{align}
	where $\alpha_i: G_i \twoheadrightarrow G_{i+1}$ is the induced epimorphism from the identity map $id: X \rightarrow X$  for $i\in \mathbb{N}$, $G_i=\langle X \mid \bar{\mathcal{R}}_i \rangle$ is finitely presented hyperbolic group and   the hyperbolicity constant of $G_i$ (relative to $X$) is “little o” of
		the radius of $\alpha_i$. \textit{Radius} is defined as follows: For $G=\langle X \rangle$ and $\alpha: G \rightarrow G'$, radius  of $\alpha$ is the maximal radius of a ball in the Cayley graph $\Gamma(G, X)$ centered at $1_G$ such that all elements from that ball map to non-trivial elements in $G'$ except for $1_G$.
		
		We say that $\bar{G}$ has a \textit{graded recursive presentation} by hyperbolic groups with respect to \eqref{seq 000} if the map $i \mapsto \long\bar{\mathcal{R}}_i$ is computable.

		For $\Upsilon: \mathbb{N} \rightarrow \mathbb{N}$, we call $\Upsilon$ a \textit{supradius} \index{supradius} for \eqref{seq 000}, if for all $n \in \mathbb{N}$ and $i \in \mathbb{N}$ such that $i \geq \Upsilon(n)$, the radius of $\alpha_i: G_i \twoheadrightarrow G_{i+1}$ is greater than $n$.
We say that $\Upsilon: \mathbb{N} \rightarrow \mathbb{N}$ is a \textit{computable} supradius if $\Upsilon$ as a function is computable, i.e. the set $\{(i, \Upsilon(i)) \mid i \in \mathbb{N} \}$ is recursive.

	In Section \ref{section-lacunary-hyperbolic} we prove the following  theorem.
	\begin{theorem}[Theorem \ref{theorem iff condition for wp in lac hyp gp 1}]
		Let $\bar{G}$ be an inductive limit of hyperbolic groups connected by epimorphisms. Then $\bar{G}$ has decidable word problem if and only if it has a graded recursive presentation by hyperbolic groups and a recursively computable supradius function over that presentation.	
	\end{theorem}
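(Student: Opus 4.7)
The plan is to prove the two directions separately. For $(\Leftarrow)$, given a computable graded hyperbolic presentation $(\bar{\mathcal{R}}_i)_{i\geq 1}$ and a computable supradius $\Upsilon$, I would decide WP on input $W$ as follows: set $n=\|W\|$, compute $i=\Upsilon(n)$, produce $\bar{\mathcal{R}}_i$ by the recursion, and decide whether $W=_{G_i}1$ using the fact that a finitely presented group promised to be hyperbolic has uniformly decidable WP (one semi-algorithmically certifies a hyperbolicity constant via Papasoglu's detection algorithm and thereby extracts a Dehn presentation). Correctness is immediate from the supradius property: for every $j\geq\Upsilon(n)$ the map $\alpha_j$ is injective on the ball of radius $n$ around $1$, so $W=_{G_i}1 \Leftrightarrow W=_{G_j}1$ for all $j\geq i$, and consequently $W=_{G_i}1 \Leftrightarrow W=_{\bar{G}}1$.

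For $(\Rightarrow)$, assume WP in $\bar{G}$ is decidable; I propose to construct $\bar{\mathcal{R}}_i$ inductively in such a way that $\Upsilon(n)=n$ automatically becomes a computable supradius. Set $M_i=\max(i,\max\{\|r\|\colon r\in\bar{\mathcal{R}}_{i-1}\})$ and at stage $i$ search by dovetailed enumeration for a finite set $R$ of words such that (a) each $r\in R$ satisfies $r=_{\bar{G}}1$, (b) $\langle X\mid R\rangle$ is hyperbolic, and (c) every word $W$ with $\|W\|\leq M_i$ is trivial in $\langle X\mid R\rangle$ iff it is trivial in $\bar{G}$; then define $\bar{\mathcal{R}}_i:=R$. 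Property (a) is directly checkable by the WP decider for $\bar{G}$; property (b) is semi-decidable, since the set of finite hyperbolic presentations is recursively enumerable via a Papasoglu/Cannon-type local thin-triangles criterion; property (c), once hyperbolicity is certified, reduces to finitely many comparisons between two decidable word problems. Existence of such $R$ at every stage is guaranteed by the hypothesis that $\bar{G}$ is an inductive limit of hyperbolic groups: take any sufficiently late hyperbolic group in the (a priori non-computable) defining chain whose quotient onto $\bar{G}$ is injective on the $M_i$-ball, and use its finite presentation as $R$.

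It remains to verify that this yields a valid chain. The bound $M_i\geq\max\{\|r\|\colon r\in\bar{\mathcal{R}}_{i-1}\}$ forces every old relator to be trivial in $G_i$, so the identity map on $X$ induces the required epimorphism $\alpha_{i-1}\colon G_{i-1}\twoheadrightarrow G_i$. The bound $M_i\geq i$ combined with the analogous condition at stage $i+1$ forces $G_i$ and $G_{i+1}$ to agree on the $i$-ball around $1$, giving $\operatorname{radius}(\alpha_i)>i$ and so validating $\Upsilon(n)=n$ as a computable supradius. Convergence of $(G_i)$ to $\bar{G}$ is automatic, since any word of length $\leq i$ trivial in $\bar{G}$ is already trivial in $G_i$. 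The main technical hurdle is step (b): hyperbolicity is not decidable from a finite presentation, only semi-decidable, so one must rely crucially on the structural hypothesis on $\bar{G}$ to ensure that the dovetailed search at each stage halts on a suitable candidate.
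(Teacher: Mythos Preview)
Your proposal is correct, and the $(\Leftarrow)$ direction matches the paper's argument essentially verbatim. For $(\Rightarrow)$ you take a genuinely different route.

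The paper, instead of relying on Papasoglu's \emph{semi}-decidability of hyperbolicity, invokes Arzhantseva's theorem that the property of being an $\alpha$-Dehn presentation for some $3/4\le\alpha<1$ is fully \emph{decidable}. Concretely, the paper computes $\mathcal{S}_n=\{W\in X^*: W=_{\bar G}1,\ \|W\|\le n\}$ (finite, via the WP oracle) and then tests the finitely many subsets $\mathcal{S}'\subseteq\mathcal{S}_n$ for the $\alpha$-Dehn property; existence of a suitable $\mathcal{S}'$ is again guaranteed by the inductive-limit hypothesis, since every hyperbolic group in the defining chain admits such a presentation and its relators eventually lie in some $\mathcal{S}_n$. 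A strictly increasing chain of such $\alpha$-Dehn presentations is then extracted algorithmically, and the supradius is computed directly by locating, for each $r$, the first stage at which all relators of $\bar G$ of length $\le r$ already hold.

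What each approach buys: your dovetail argument is more elementary in that it only needs the widely-known Papasoglu detection result and avoids the less standard Arzhantseva criterion; the price is that termination of the search is not automatic and must be grounded in the structural hypothesis, which you do correctly. The paper's approach confines each stage's search to a \emph{finite} set with a \emph{decidable} test, so termination is immediate, at the cost of importing the stronger detection theorem. One cosmetic point: with your setup you obtain $\mathrm{radius}(\alpha_i)\ge i$ rather than strictly $>i$, so $\Upsilon(n)=n+1$ (or taking $M_i\ge i+1$) is what literally validates the supradius inequality; this is an off-by-one adjustment, not a gap.
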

	~\\
		
	The main object of investigation in this paper are the following type of chains of hyperbolic groups satisfying some special conditions.
	\begin{align}
	    \label{main seq of gps}
		G_0 \stackrel{\beta_0}\hookrightarrow H_1 \stackrel{\gamma_1}\twoheadrightarrow G_1 \stackrel{\beta_1}\hookrightarrow H_2 \stackrel{\gamma_2}\twoheadrightarrow \ldots.
	\end{align}
	If we denote $\alpha_i= \gamma_{i+1}\circ \beta_i$, then we always assume that $\alpha_i$ is surjective for $i=1,2, \ldots$. All the groups in this chain are assumed to be hyperbolic. Let $G_0 = \langle X \mid \mathcal{R}_0 \rangle$ be given with its initial finite presentation and let for all $i \in \mathbb{N}$,
	\begin{align}
				H_i =G_{i-1}*F(Y_i)/ \ll \mathcal{S}_i\gg, 
	\end{align}
	where $|Y_i|<\infty$, $Y_i \cap \beta_{i-1}(G_{i-1})=\emptyset$, $\mathcal{S}_i$ is a finite (symmetric) set of words from $(X \cup Y_i)^*$ and $F(Y_i)$ is the free group with basis $Y_i$. Also
	\begin{align}
		G_i &=H_i / \ll \mathcal{R}_i \gg,
	\end{align}
	where  $\mathcal{R}_i$ is a finite symmetric set of words from $(X \cup Y_i)^*$ satisfying certain small cancellation conditions.
	
	The main group of our interest is the group $\bar{G}=\langle X \rangle$, $|X|< \infty$, defined as the inductive limit
	\begin{align*}
		\bar{G} =  \lim_i (G_i, \alpha_i).
	\end{align*}
		
	In Section \ref{section_about_special_class_of_LHG}	we introduce the concepts of $G$- and $H$-conjugacies for the group $\bar{G}$ defined as follows: For $u, v \in X^*$ we say that $u$ is $H$-conjugate to $v$ if there exists $i\in \mathbb{N}$ such that $u$ is conjugate to $v$ in $H_i$ but nevertheless $u$ is not conjugate to $v$ in $G_{i-1}$. $G$-conjugacy is defined analogously, namely, $u$ is $G$-conjugate to $v$ in $\bar{G}$ if either $u$ is conjugate to $v$ in $G_0$ or there exists $i\in \mathbb{N}$ such that $u$ is conjugate to $v$ in $G_i$ but $u$ is not conjugate to $v$ in $H_i$. Clearly, $u$ is conjugate to $v$ in $\bar{G}$ if and only if either $u$ is $H$-conjugate to $v$ or $G$-conjugate to $v$ in $\bar{G}$.
	
	 In the same section we introduce a special small cancellation condition\\ 
	$C'\big(\mathcal{TM}, (g_i)_{i=1}^{\infty},  ({\rho}_i)_{i=1}^{\infty} \big)$ which assures that word problem and $G$-conjugacy problem for $\bar{G}$ can be solved in polynomial time granted that the words $\mathcal{R}_i$, $i=1,2, \ldots$, are effectively computable. Note that this condition does not tell us about effectiveness of the $H$-conjugacy problem. In fact, as the proof of Theorem \ref{theorem_about_connecton_of_word_and_conjugacy_problems} reveals, $H$-conjugacy problem in general can have an arbitrary behavior not depending on the behavior of, say, $G$-conjugacy problem.
	
	In Subsections \ref{words with small cancellation-1} and \ref{subsection-a-class-of-small-cancellation-words}, we describe constructions of words which can be easily constructed, have appropriate small-cancellation properties, and they will serve in Sections \ref{section-verbally_comlete_groups}, \ref{section-Tarskii_monsters} and  \ref{section_problem_7-5} as the main ingredient for describing the specifications of the words $\mathcal{R}_i$, $i=1,2, \ldots$ for appropriate constructions. It is worthwhile  to mention here that Sections \ref{section-verbally_comlete_groups}, \ref{section-Tarskii_monsters} and  \ref{section_problem_7-5} provide the proofs of the main applications of the general framework, that is the proofs of Theorems \ref{theorem_verbally_complete}, \ref{theorem_tarskii_monsters} and \ref{theorem_about_connecton_of_word_and_conjugacy_problems}, and all the proofs are constructive and based on a general scheme described in Section \ref{section-about-general-scheme}. On its own turn, the general scheme from 	Section \ref{section-about-general-scheme} is based on the already mentioned general framework developed mostly in Sections \ref{section Small cancellation conditions}, \ref{section-slender conjugacy dyagrams}, \ref{section-algorithms} and \ref{section_about_special_class_of_LHG}.

	
		Concerning the groups $H_i$, $i=1,2, \ldots$, in the main applications in Sections \ref{section-verbally_comlete_groups}, \ref{section-Tarskii_monsters}, \ref{section_problem_7-5}, we consider two main situations: First, when $H_i = G_{i-1}$ and $\beta_{i-1}=id$ and second, when $H_i$-s are obtained as HNN-extensions of $G_{i-1}$. 
		~\\
		\begin{definition}
		Let $f: \mathbb{N} \rightarrow \mathbb{N}$	be a positive integer valued function, and let $\mathcal{D}$ be any decision problem. Then we say that $\mathcal{D}$ can be solved in almost $f(n)$ time, if for any $\varepsilon>0$  the problem $\mathcal{D}$ belongs to $DTime\big(n^{\varepsilon}f(n)\big)$, or in other words, it belongs to $\bigcap_{k=1}^{\infty}DTime\big(n^{1/k}f(n)\big)$. If $f(n)=n$, $n\in \mathbb{N}$, then we say that $\mathcal{D}$ is decidable in almost linear time\index{almost linear time} (similarly we define almost quadratic time, etc).
		\end{definition}

		\subsection{Main theorems}
		The following theorem 
		concerns verbally complete infinite quotients of arbitrary torsion-free non-elementary hyperbolic groups with nice algorithmic properties.
		
		Recall that the group $G' = \langle X \rangle$ is {\textit{verbally complete}\index{verbally complete groups} if for any element $g\in G'$ and for any non-trivial element $w$ from a countably generated free group $F=F(y_1, y_2, \ldots)$, the equation $w=g$ has a solution in $G'$. In other words, there exists a homomorphism $h: F \rightarrow G'$ such that $h: w \mapsto g$.
		\begin{theorem}
		\label{theorem_verbally_complete}
		Let $G$ be an arbitrary torsion-free, non-elementary hyperbolic group. Then there exists a lacunary hyperbolic infinite torsion-free quotient $\check{G}$	of $G$ such that the following is true about $\check{G}$.
		\begin{enumerate}
			\item [(i).] $\check{G}$ is a verbally complete group,\\
			\item [(ii).] The word problem in $\check{G}$ is decidable in almost quadratic time and the conjugacy problem in $\check{G}$ is decidable in polynomial time.
		        
		\end{enumerate}
		\end{theorem}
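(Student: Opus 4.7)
The plan is to realize $\check{G}$ as the inductive limit of a chain of the form (\ref{main seq of gps}) produced by the general scheme of Section \ref{section-about-general-scheme}, starting from $G_0=G$ with its given finite presentation $\langle X\mid \mathcal{R}_0\rangle$. First I would fix a recursive enumeration of all pairs $(w_i,u_i)_{i\ge 1}$, where $w_i$ ranges over the non-trivial elements of $F=F(y_1,y_2,\ldots)$ and $u_i$ ranges over $X^{*}$, arranged so that every such pair appears infinitely often. At stage $i$, set $H_i$ to be an HNN-extension of $G_{i-1}$ in which the new generators $Y_i=\{y_1^{(i)},\ldots,y_{k_i}^{(i)}\}$ correspond to the variables occurring in $w_i$, with stable-letter conjugation rules chosen so that $H_i$ remains a non-elementary torsion-free hyperbolic group (Olshanskii's Bass--Serre-type arguments apply because $G_{i-1}$ is non-elementary torsion-free hyperbolic). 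Then define $G_i=H_i/\ll\mathcal{R}_i\gg$, where $\mathcal{R}_i$ is a finite symmetric set of words from $(X\cup Y_i)^{*}$ obtained via the explicit constructions of Subsections \ref{words with small cancellation-1} and \ref{subsection-a-class-of-small-cancellation-words}, containing in particular one relator of the shape $w_i(y_1^{(i)},\ldots,y_{k_i}^{(i)})\,A_i\,u_i^{-1}$, where $A_i$ is a padding word engineered so that the full $\mathcal{R}_i$ satisfies the small-cancellation condition $C'\bigl(\mathcal{TM},(g_i)_{i=1}^{\infty},(\rho_i)_{i=1}^{\infty}\bigr)$ of Section \ref{section_about_special_class_of_LHG}, with parameters $\rho_i$ growing fast enough that the hyperbolicity constant of $G_i$ is ``little o'' of the radius of $\alpha_i$; by Lemma \ref{lem lacunary hyp gps} the limit $\check{G}=\lim_i(G_i,\alpha_i)$ is then lacunary hyperbolic.

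Verbal completeness follows by construction: for each pair $(w,u)$ and each stage $i$ at which $(w_i,u_i)=(w,u)$, the assignment $y_j\mapsto y_j^{(i)}$ for $j\le k_i$ (and arbitrary values on the remaining $y_j$) extends to a homomorphism $F\to \check{G}$ sending $w\mapsto u$; since every element of $\check{G}$ is represented by some $u\in X^{*}$, every non-trivial $w\in F$ admits a solution of $w=g$ for every $g\in\check{G}$. Torsion-freeness is preserved because HNN-extensions over torsion-free hyperbolic groups with hyperbolic associated subgroups are torsion-free, and the words in $\mathcal{R}_i$ are designed in Subsection \ref{subsection-a-class-of-small-cancellation-words} to avoid proper powers so that the torsion-freeness clause of Olshanskii's graded small-cancellation theory applies at each quotient. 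Infiniteness follows because the radius of each $\alpha_i$ is positive and $\check{G}$ surjects onto each $G_i$, all of which are non-elementary.

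For the algorithmic content I would invoke the main theorems of Sections \ref{section-algorithms} and \ref{section_about_special_class_of_LHG}. Since the map $i\mapsto\mathcal{R}_i$ is explicit and computable, the presentation (\ref{seq 000}) obtained from (\ref{main seq of gps}) via $\alpha_i=\gamma_{i+1}\circ\beta_i$ is graded recursive and admits a recursively computable supradius; combined with the framework, the condition $C'\bigl(\mathcal{TM},(g_i),(\rho_i)\bigr)$ yields word problem decidable in almost quadratic time (the quadratic, rather than almost linear, bound arising from the additional Britton-reduction overhead introduced by the HNN-layers $H_i$) and $G$-conjugacy decidable in polynomial time. The $H$-conjugacy part, which concerns conjugations realized inside the HNN-extensions $H_i$, reduces by a controlled Britton normal-form argument to finitely many conjugacy instances in $G_{i-1}$, which by induction and the polynomial-time conjugacy algorithms in torsion-free hyperbolic groups is also polynomial; combining these settles part (ii).

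The main obstacle is the simultaneous enforcement, at each stage $i$, of the four competing requirements: keeping $H_i$ non-elementary torsion-free hyperbolic despite the imposed substitution $w_i(Y_i)=u_i$; respecting $C'\bigl(\mathcal{TM},(g_i),(\rho_i)\bigr)$ which forces the padding $A_i$ and the other words of $\mathcal{R}_i$ to be long and generic relative to everything that appeared at earlier stages; preserving torsion-freeness through both the HNN-step and the small-cancellation quotient; and keeping the radius of $\alpha_i$ large enough that the lacunary hyperbolic condition and the computable supradius bound are compatible. Balancing these reduces to a careful parameter choice in the word-construction subsections, and it is this bookkeeping --- rather than any conceptually new ingredient --- that constitutes the technical heart of the proof.
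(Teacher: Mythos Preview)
Your overall plan—realize $\check{G}$ as a limit via the scheme of Section~\ref{section-about-general-scheme} and read off the algorithmic bounds from the $C'(\mathcal{TM},(g_i),(\rho_i))$ framework—is right, but the step that is supposed to produce verbal completeness does not work. A relator of the shape $w_i(y_1^{(i)},\ldots,y_{k_i}^{(i)})\,A_i\,u_i^{-1}$ imposes $w_i(Y_i)=u_iA_i^{-1}$ in $G_i$, not $w_i(Y_i)=u_i$; there is no mechanism making the padding $A_i$ trivial in $\check{G}$, and if it were, the relator could not be $(\lambda_i,c_i)$-quasi-geodesic of length $\ge\rho_i$ in $H_i$. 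The words of Subsections~\ref{words with small cancellation-1}--\ref{subsection-a-class-of-small-cancellation-words} have the rigid form $zU^{m_1}VU^{m_2}\cdots$ and are designed to express the auxiliary letters $z\in Z_i$ as fixed words in $X$, not to encode an arbitrary equation in their boundary. The paper solves the equation \emph{before} the small-cancellation quotient, already inside $H_{i+1}$: it adjoins an auxiliary infinite cyclic $\langle z_{i+1}\rangle$, takes roots $u'_{i+1},v'_{i+1}$ with $u_{i+1}=(u'_{i+1})^k$, $v_{i+1}=(v'_{i+1})^l$, and forms two HNN-extensions $t_{i+1}^{-1}u'_{i+1}t_{i+1}=z_{i+1}^{l}$ and $s_{i+1}^{-1}v'_{i+1}s_{i+1}=z_{i+1}^{k}$; then $v_{i+1}(y_1^{s_{i+1}t_{i+1}^{-1}},\ldots)=u_{i+1}$ holds identically in $H_{i+1}$, and $\mathcal{R}_{i+1}=\mathcal{R}(Y_{i+1},x_1,x_2,\ldots)$ is used only to push the extra generators $Y_{i+1}$ back into $X$, exactly matching the template of Section~\ref{section-about-general-scheme}. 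This Mikhajlovskii--Ol'shanskii trick is the missing idea.

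Your treatment of the conjugacy problem is also incomplete. Reducing $H$-conjugacy ``by Britton normal form to finitely many instances in $G_{i-1}$'' needs a polynomial bound on the index $i$ at which an $H$-conjugation first appears and on the depth of the Britton reduction; neither is supplied, and in a generic HNN tower neither holds. The paper's specific HNN (with $z_{i+1}$ in a free factor) is chosen precisely so that one can prove that for $U,V\in X^{*}$, conjugacy in $H_{i+1}$ already implies conjugacy in $G_i$; hence there are \emph{no} $H$-conjugate pairs in $\check{G}$ at all, the conjugacy problem coincides with the $G$-conjugacy problem, and Theorem~\ref{theorem-effective-G-conjugacy-problem} gives polynomial time directly.
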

         Note that part $(i)$ of Theorem \ref{theorem_verbally_complete} appears in the work of Mikhajlovskii and Olshanskii, \cite{mikhajlovskii-ol'shanskii}. Since verbally complete groups are divisible groups,  Mikhajlovskii and Olshanskii's work can be regarded as a generalization of a result of Guba from 1987, which is about the existence of finitely generated non-trivial divisible groups. Note that the question of   existence of such groups was regarded as a long standing open problem prior its solution. To achieve the result of Theorem \ref{theorem_verbally_complete}, we elaborate the original construction of Mikhajlovskii and Olshanskii and combine it with the machinery developed in this paper.
         
         Let us  mention that for the group $\check{G}$ from Theorem \ref{theorem_verbally_complete}, there exists an algorithm such that for all inputs $w \in F(y_1, y_2, \ldots) \setminus \{1\}$ and $\check{g} \in \check{G}$, the algorithm finds a solution for the equation $w=\check{g}$ in $\check{G}$.  Indeed, to solve the equation  $w=\check{g}$ in $\check{G}$, one can just check for all possible values of variables $y_1, y_2, \ldots$, whether $w=\check{g}$ in $\check{G}$ or not. Since the word problem in  $\check{G}$ is decidable and $\check{G}$ is verbally complete, this procedure will eventually halt.\\		
         

         ~\\
         ~\\
         \begin{theorem}
         \label{theorem_tarskii_monsters}
         	Let $G$ be an arbitrary torsion-free, non-elementary hyperbolic group. Then there exists a non-cyclic torsion-free lacunary hyperbolic quotient $\hat{G}$	of $G$ such that the following is true about $\hat{G}$.
         	\begin{enumerate}
         		\item[(i).] Every proper subgroup of $\hat{G}$ is an infinite cyclic group,\\
         		\item[(ii).] The word problem in $\hat{G}$  is decidable in almost quadratic time and the conjugacy problem in $\hat{G}$  is decidable in polynomial time.
         	\end{enumerate}
         \end{theorem}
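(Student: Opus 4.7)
The plan is to apply the general scheme of Section \ref{section-about-general-scheme} in the situation where $H_i = G_{i-1}$, $\beta_{i-1} = \mathrm{id}$, $Y_i = \emptyset$, and $\mathcal{S}_i = \emptyset$, starting from $G_0 := G$; the chain \eqref{main seq of gps} then reduces to a sequence of quotients of $G$ in its fixed generating set $X$, so that $\hat{G} := \lim_i (G_i, \alpha_i)$ is automatically a quotient of $G$. First, I would fix a computable enumeration $\{(u_j, v_j)\}_{j=1}^{\infty}$ of all pairs of words in $X^*$ in which each pair appears infinitely often, and at stage $i$ concentrate on the pair $(u_i, v_i)$.

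If the images of $u_i$ and $v_i$ already lie in a common cyclic subgroup of $G_{i-1}$ (a property decidable in the hyperbolic group $G_{i-1}$), set $\mathcal{R}_i := \emptyset$. Otherwise, using the small-cancellation words of Subsections \ref{words with small cancellation-1} and \ref{subsection-a-class-of-small-cancellation-words}, I would construct a finite symmetric $\mathcal{R}_i \subset X^*$ whose normal closure in $G_{i-1}$ forces $\langle u_i, v_i \rangle$ to embed in a single cyclic subgroup of $G_i = G_{i-1}/\ll \mathcal{R}_i\gg$, modeled on Olshanskii's torsion-free Tarski monster construction from \cite{ol'shanskii - torsion-free Tarski monsters, olshanskii_the_book}. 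The words $\mathcal{R}_i$ must be selected so that $\bigcup_i \mathcal{R}_i$ satisfies the small-cancellation condition $C'\bigl(\mathcal{TM}, (g_i)_{i=1}^{\infty}, (\rho_i)_{i=1}^{\infty}\bigr)$ of Section \ref{section_about_special_class_of_LHG}, and padded in length so that the radius of $\alpha_i$ grows at least like a prescribed recursive function.

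Given this data, lacunary hyperbolicity of $\hat{G}$ is exactly Lemma \ref{lem lacunary hyp gps}; torsion-freeness is preserved at each inductive step by the Olshanskii graded small-cancellation theory of \cite{Olsh G-groups} applied to torsion-free groups, and therefore passes to the inductive limit. Non-cyclicity of $\hat{G}$ is arranged by keeping the first relator set $\mathcal{R}_1$ well inside a proper subgroup of the non-elementary hyperbolic group $G$. Property $(i)$ then holds because any finitely generated proper subgroup of $\hat{G}$ is in fact $2$-generated, hence represented by some pair of words in $X^*$, and the dovetailed enumeration guarantees that this pair is collapsed to a common cyclic subgroup at a later stage of the construction. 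Property $(ii)$ is the direct output of the complexity part of the framework: the $C'(\mathcal{TM}, \ldots)$-condition yields almost quadratic time WP and polynomial time $G$-conjugacy, and since $H_i = G_{i-1}$ there is no nontrivial $H$-conjugacy to handle, so the full CP in $\hat{G}$ is polynomial.

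The hard part is the explicit construction of the relators $\mathcal{R}_i$. They must simultaneously (a) collapse $\langle u_i, v_i\rangle$ to a cyclic subgroup without accidentally killing additional elements, so that non-cyclicity of $\hat{G}$ is preserved through the limit, (b) respect the quantitative small-cancellation condition $C'\bigl(\mathcal{TM}, (g_i), (\rho_i)\bigr)$ used by the complexity framework, and (c) be computable from $i$ with length bounds compatible with the required recursive supradius. The most delicate point is to keep the collapsing relators genuinely forcing cyclicity rather than triviality while paying the small-cancellation price; this is exactly where the special family of small-cancellation words from Subsection \ref{subsection-a-class-of-small-cancellation-words} is designed to intervene, and where this construction must depart from the earlier, non-effective variants in \cite{ol'shanskii - torsion-free Tarski monsters, olshanskii_the_book}.
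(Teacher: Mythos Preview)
Your proposal inverts the key step. You say that when $u_i, v_i$ do not already lie in a common cyclic subgroup of $G_{i-1}$, the relators $\mathcal{R}_i$ should force $\langle u_i, v_i\rangle$ into a single cyclic subgroup of $G_i$. But the enumeration runs over \emph{all} pairs of words, so in particular it will reach a pair consisting of two generators of $G$; collapsing those to a common cyclic subgroup, and iterating over the remaining generators, makes the limit $\hat G$ finitely generated and locally cyclic, hence cyclic. Your safeguard (``keeping $\mathcal{R}_1$ well inside a proper subgroup'') does nothing to prevent later stages from doing this, and your justification of~(i) (``any finitely generated proper subgroup of $\hat G$ is in fact $2$-generated'') is circular: that is a \emph{consequence} of the Tarski-monster property, not an input to its proof.

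The paper (and Olshanskii's original argument) does the opposite. At stage $i$ one selects the first pair $(u_{j_i},v_{j_i})$ with $v_{j_i}\notin E(u_{j_i})$ in $G_{i-1}$, takes $Z_i=\{x\in X: x\notin E(u_{j_i})\}$, and sets $\mathcal{R}_i=\mathcal{R}\bigl(Z_i,u_{j_i},v_{j_i},\delta'_i,\lambda_i,c_i,\epsilon_i,\mu_i,\rho_i\bigr)$ as in Subsection~\ref{subsection-a-class-of-small-cancellation-words}. Each relator has the shape $z\cdot W(u_{j_i},v_{j_i})$ with $z\in Z_i$, so every $x\in X$ becomes a word in $u_{j_i},v_{j_i}$ and hence $G_i=\langle u_{j_i},v_{j_i}\rangle$. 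The resulting dichotomy---for every pair, either the two elements lie in a common elementary subgroup or they generate all of $\hat G$---is what yields~(i): any proper $K\le\hat G$ is then abelian, and since centralizers in each torsion-free hyperbolic $G_i$ coincide with (cyclic) maximal elementary subgroups, $K$ is cyclic. Your treatment of~(ii) is correct: with $H_i=G_{i-1}$ there is no $H$-conjugacy, so Theorems~\ref{theorem_about_WP_in_barG_2} and~\ref{theorem-effective-G-conjugacy-problem} give the stated complexities directly.
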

         Note that the first example of an infinite non-cyclic group with the property of part $(i)$ appears in \cite{ol'shanskii - torsion-free Tarski monsters} and the exact statement of Theorem \ref{theorem_tarskii_monsters} but only with part $(i)$ appears in \cite{Olsh G-groups}. Construction of $\hat{G}$ can be regarded as a more elaborated version of the corresponding result from \cite{Olsh G-groups} combined with the machinery developed in this paper.\\
         
         Let us also mention that from the method by which the groups $\check{G}$ and $\hat{G}$ are constructed it follows that for every torsion-free, non-elementary hyperbolic $G$, there are continuum many pairwise non-isomorphic quotients of $G$ satisfying the statements (i) of Theorem \ref{theorem_verbally_complete} and Theorem \ref{theorem_tarskii_monsters}, respectively. However, the cardinality of  groups satisfying all the conditions of Theorem \ref{theorem_verbally_complete} and Theorem \ref{theorem_tarskii_monsters}, respectively, is $\aleph_0$. (In fact,  the cardinality of finitely generated groups with decidable word problem is  $\aleph_0$.)\\
         ~\\

\begin{definition}[Strong (many-one) reduction]
\index{strong reduction}
Let $\mathcal{L}_1 \subseteq \mathcal{A}_1^*$ and $\mathcal{L}_2\subseteq \mathcal{A}_2^*$, where $\mathcal{A}_1$ and $\mathcal{A}_2$ are finite alphabets. Then $\mathcal{L}_1$ is strongly (many-one) reducible to $\mathcal{L}_2$ if there exists a computable function $\phi: \mathcal{A}^*_1 \rightarrow \mathcal{A}^*_2$ and a constant $C>0$ such that for all $x \in \mathcal{A}_1^*$ we have $\|\phi(x)\|_{\mathcal{A}_2} \leq C\|x\|_{\mathcal{A}_1}$ and $\phi(\mathcal{L}_1) = \mathcal{L}_2$, $\phi( \mathcal{A}_1^* \setminus \mathcal{L}_1) \subseteq  \mathcal{A}_2^* \setminus \mathcal{L}_1$. Also if for some $g: \mathbb{N} \rightarrow \mathbb{N}$ and for all $ x\in \mathcal{L}_1 $ the value of $\phi(x)$ can be computed in time $\bigO(g(\|x\|_{\mathcal{A}_1}))$, then we say that $\mathcal{L}_1$ is strongly reducible to $\mathcal{L}_2$ in time $g(n)$.
\end{definition}

         \begin{theorem}
         \label{theorem_about_connecton_of_word_and_conjugacy_problems}
         Let $\mathcal{A}$ be any finite alphabet, and let $\mathcal{L} \subseteq \mathcal{A}^*$ be any recursively enumerable subset (i.e., r.e. language) of $\mathcal{A}^*$. Then there exists a lacunary hyperbolic group $G_{\mathcal{L}}$ 
         such that  the following is true about $G_{\mathcal{L} }$.
         \begin{enumerate}
         	\item[(I).] The word problem in $G_{\mathcal{L} }$ is decidable in almost linear time.\\

         \item[(II.i).] 
         The conjugacy problem in 	$G_{\mathcal{L} }$ can be strongly reduced to the decidability problem in $\mathcal{L}$ in almost linear time;\\
         \item[(II.ii).] 
         The decidability problem in $\mathcal{L}$ can be strongly reduced to the conjugacy problem in $G_{\mathcal{L} }$  in  linear time;\\
         
         In particular, if the membership problem for $\mathcal{L}$ belongs to $DTime(f(n))$, then the conjugacy problem in $G_{\mathcal{L}}$ is decidable in time almost $f(n)$, and if the conjugacy problem in $G_{\mathcal{L}}$ belongs to  $DTime(g(n))$, then the membership problem in $\mathcal{L}$  also belongs to $DTime(g(n))$.\\
         \item[(II.iii).] 
         For every fixed $g_0 \in G_{\mathcal{L}}$, the problem of deciding if an arbitrary $g \in G_{\mathcal{L}}$ is conjugate to $g_0$ is decidable in almost linear time.\\


                 
       
         \end{enumerate}

         \end{theorem}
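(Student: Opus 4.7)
My plan is to realize $G_{\mathcal{L}}$ via the general scheme of Section \ref{section-about-general-scheme}, driving the construction with a Turing machine $M_{\mathcal{L}}$ that enumerates $\mathcal{L}$. I would choose the base hyperbolic group $G_0$ to contain a free subgroup whose basis includes a copy of $\mathcal{A}$, so that every $w \in \mathcal{A}^*$ has a canonical encoding as an element $a_w \in G_0$. At each stage $i$ at which $M_{\mathcal{L}}$ outputs a new word $w$, I set $H_i$ to be the HNN-extension of $G_{i-1}$ obtained by adjoining a stable letter $t_w \in Y_i$ with $\mathcal{S}_i$ enforcing $t_w^{-1} a_w t_w = b$ for a fixed element $b \in G_0$ independent of $w$; at other stages I take $H_i = G_{i-1}$ with $\beta_{i-1}=\mathrm{id}$. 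Finally, I would pick the symmetrized relator sets $\mathcal{R}_i$ from the word families constructed in Subsections \ref{words with small cancellation-1} and \ref{subsection-a-class-of-small-cancellation-words}, with parameters tuned so that the resulting chain satisfies $C'(\mathcal{TM}, (g_i)_{i=1}^{\infty}, (\rho_i)_{i=1}^{\infty})$ and the associated supradius grows fast enough for almost-linear-time bookkeeping.

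Part (I) then follows from the framework: the map $i \mapsto \mathcal{R}_i$ is computable from $M_{\mathcal{L}}$ and the supradius is computable, so combining Theorem \ref{theorem iff condition for wp in lac hyp gp 1} with the WP algorithm of Section \ref{section-algorithms} under $C'(\mathcal{TM}, (g_i), (\rho_i))$ places WP in $\bigcap_{\varepsilon>0} DTime(n^{1+\varepsilon})$. For (II.iii), a fixed $g_0$ has bounded length, so only finitely many HNN-steps produce stable letters that can appear in a short $g_0$-conjugator; by the slender-diagram analysis of Sections \ref{section-slender conjugacy dyagrams} and \ref{section_about_special_class_of_LHG}, the finite collection of $H$-conjugates of $g_0$ is effectively computable once $g_0$ is fixed, reducing the individual CP to an almost-linear $G$-conjugacy query against a precomputed finite list.

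For (II.ii), the map $w \mapsto a_w$ is length-linear, and by construction $a_w$ is conjugate to $b$ in $G_{\mathcal{L}}$ if and only if $w$ is ever enumerated into $\mathcal{L}$; this yields the required linear-time strong reduction. For (II.i), given $(u,v) \in X^* \times X^*$ I would separate CP into its $G$- and $H$-conjugacy components: the $G$-conjugacy part is delivered in polynomial time by $C'(\mathcal{TM}, (g_i), (\rho_i))$ without any oracle calls, and the slender conjugacy diagrams of Section \ref{section-slender conjugacy dyagrams} force any $H_i$-witness to involve a single stable letter $t_w$ whose index $w$ is readable from $(u,v)$ in almost linear time; the reduction then outputs a bounded list of candidate words $w_1, \ldots, w_k$ and asks whether any of them lies in $\mathcal{L}$, giving the desired strong almost-linear-time reduction to membership in $\mathcal{L}$.

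The main obstacle I anticipate is the \emph{tightness} of the encoding. On one side, the graded quotients must introduce no accidental conjugacies between distinct encodings, so the lengths of words in $\mathcal{R}_i$ must dominate the hyperbolicity constants of $H_i$ at every stage while $\mathcal{R}_i$ remains uniformly computable from $M_{\mathcal{L}}$; on the other side, for (II.i) the $H$-conjugator must be canonically recoverable from $(u,v)$, which forces the slender-diagram machinery to give a uniform bound on the complexity of $H_i$-witnesses independent of the stage $i$. Balancing these two requirements against the growth conditions imposed by an almost-linear supradius is the technical core of the argument, and it is precisely for this balancing that the framework of Sections \ref{section Small cancellation conditions}--\ref{section_about_special_class_of_LHG} is developed; I would invoke its results step by step at each stage of the chain.
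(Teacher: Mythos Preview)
Your overall architecture is right—drive an HNN/quotient chain by an enumerator for $\mathcal{L}$ and invoke the $C'(\mathcal{TM},(g_i),(\rho_i))$ framework—but the specific encoding you chose has a genuine gap that breaks part (II.iii).

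You conjugate every encoded word $a_w$ (for $w$ enumerated into $\mathcal{L}$) to a \emph{single fixed} element $b$. Then $ICP(b)$ asks, for arbitrary input $g$, whether $g\sim_{conj} b$; taking $g=a_w$ this is exactly the question ``$w\in\mathcal{L}$?''. So if $\mathcal{L}$ is r.e.\ but not recursive, $ICP(b)$ is undecidable, contradicting (II.iii). Your justification that ``only finitely many HNN-steps produce stable letters that can appear in a short $g_0$-conjugator'' is false for $g_0=b$: every stable letter $t_w$ is a conjugator from $a_w$ to $b$, and there are infinitely many of them; the length of $g_0$ does not bound the number of relevant HNN-stages, it only bounds the length of $g_0$ itself. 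The ``finite collection of $H$-conjugates of $g_0$'' is simply not finite here.

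The paper avoids this by never funneling conjugacies through a common target. It takes $G_0=F(x_1,x_2,x_3)*F(y_1,y_2,y_3)*F(z_1,z_2)$ and encodes each $\omega\in\mathcal{L}$ as a \emph{pair} $(u_\omega,v_\omega)=(\Lambda_0(\omega)x_3,\ \varsigma(\Lambda_0(\omega))y_3)$ living in the two disjoint free factors $F_1,F_2$; the $i$-th HNN-extension conjugates $u_i$ to $v_i$, and the relator $\mathcal{R}_i$ is a single word in $z_1,z_2$ (the third factor) killing $t_i$. Because each $u_i$ and $v_i$ participate in at most one HNN-relation, a fixed $g_0$ has only finitely many $\Lambda$-pair partners, and those are computable from $g_0$ alone—this is what makes (II.iii) go through. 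The separation into three free factors is also what drives the inductive properties (a$_i$)--(f$_i$) ensuring that the quotients introduce no accidental conjugacies among $X_0^*\cup Y_0^*$ words; your sketch does not provide an analogue of this separation, so even (II.i) would need more care. The fix is to replace your single target $b$ by the paper's paired encoding (or something equivalent where no element is hit by infinitely many HNN-relations).
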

         The individual conjugacy problem with regard to a fixed $g_0\in G$, shortly $ICP(g_0)$, asks if for any input element $g\in G$, $g$ is conjugate to $g_0$ in $G$. Note that $ICP(1)$ coincides with the word problem in $G$. The statement (II.iii) of Theorem \ref{theorem_about_connecton_of_word_and_conjugacy_problems} says that for every $g_0 \in G$, $ICP(g_0)$ belongs to  $\bigcap_{k=1}^{\infty}DTime\big(n^{1+\frac{1}{k}}\big)$.
                 
         Note that Theorem \ref{theorem_about_connecton_of_word_and_conjugacy_problems}  immediately implies that for every time complexity class $\mathcal{C}$, which is not contained in $\bigcap_{k=1}^{\infty}DTime\big(n^{1+\frac{1}{k}}\big)$, the question whether there exists a group $\tilde{G}$ with almost linear time word problem (even more generally, with almost linear time individual conjugacy problem) and with conjugacy problem belonging to $\mathcal{C}$ is equivalent to the question whether  $\mathcal{C}$ is empty or not. In case $\mathcal{C} \neq \emptyset$, one just can take $\mathcal{L} \in \mathcal{C}$ and consider the group $\tilde{G}=G_{\mathcal{L}}$.
         
         For example, the last observation implies that there exist finitely generated groups with almost linear time individual conjugacy problems and (uniform) conjugacy problem which belongs to one of the following time complexity classes:
         \begin{itemize}
         \item	NP-complete, co-NP-complete, PP-complete, PSpace-complete, etc; or
         \item belongs to $DTime(f(n)) \setminus DTime(g(n))$ where the time constructible functions $f$ and $g$ are such that $DTime(f(n)) \setminus DTime(g(n)) \neq \emptyset$ and $f(n)>n^{1+\varepsilon}$ for some $\varepsilon>0$; or
         \item the conjugacy problem is undecidable and has any given recursively enumerable Turing degree of undecidability.
         \end{itemize}
   In particular, parts (I), (II)  of Theorem \ref{theorem_about_connecton_of_word_and_conjugacy_problems} extend similar results of Miasnikov and Schupp from \cite{miasnikov-schupp}.
                      ~\\
                      
                      In \cite{cannonito}, Cannonito classified finitely generated groups with decidable word problem based on complexity of the word problem. As a measure of complexity the author considered \textit{Grzegorczyk hierarchy}. For the details of the results and definition of Grzegorczyk hierarchy and its  link to word problem, we refer to \cite{grzegorczyk} and \cite{cannonito}.
                      
                      In the same paper \cite{cannonito}, the author mentions a question posed by Boone (see page 391, \cite{cannonito}) which was formulated as follows:
                      \begin{quote}
                      	\textit{A very interesting problem suggested by W. W. Boone is the following: Do there exist any f.g. groups with conjugacy problem $\mathcal{E}_*^{\alpha}$-decidable, and word problem $\mathcal{E}^{\beta}$-decidable such that $\beta < \alpha$?}
                      \end{quote}
                      This question was also touched in \cite{lipton-logspace}.

         
         
         Parts (I) and (II) of Theorem \ref{theorem_about_connecton_of_word_and_conjugacy_problems} imply the following stronger statement.
         \begin{corollary}
         \label{corollary-22**}
          For every $\alpha \geq 3$, there exists a finitely generated (lacunary hyperbolic) group $\tilde{G}$ with   $\mathcal{E}^3 $-decidable  word problem and $\mathcal{E}_*^{\alpha}$-decidable conjugacy problem.          
          \end{corollary}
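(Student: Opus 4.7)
The plan is to deduce the corollary as a direct application of parts (I) and (II.i) of Theorem~\ref{theorem_about_connecton_of_word_and_conjugacy_problems} to a suitably chosen recursive language, and then to translate the resulting time bounds into the language of the Grzegorczyk hierarchy.

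First, I would fix $\alpha \geq 3$ and, appealing to the classical theory of the Grzegorczyk hierarchy, pick a recursive language $\mathcal{L} \subseteq \mathcal{A}^*$ over some finite alphabet $\mathcal{A}$ whose membership problem is $\mathcal{E}_*^{\alpha}$-decidable. Any such $\mathcal{L}$ is recursively enumerable, so it is an admissible input to Theorem~\ref{theorem_about_connecton_of_word_and_conjugacy_problems}, and I would set $\tilde{G} := G_{\mathcal{L}}$. By part (I) of that theorem, the word problem in $\tilde{G}$ is decidable in almost linear time, i.e.\ in $\bigcap_{k \geq 1} DTime(n^{1+1/k})$. Since any almost-linear-time predicate is in particular Kalm\'ar elementary, it lies in $\mathcal{E}^3$, which gives the first half of the statement.

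For the conjugacy problem, part (II.i) supplies a strong many-one reduction $\phi$ from the conjugacy problem of $\tilde{G}$ to the membership problem of $\mathcal{L}$, with $\phi$ computable in almost linear time and $\|\phi(x)\|_{\mathcal{A}} \leq C\|x\|$ for a fixed constant $C$. To decide conjugacy on an input $x$, one would first compute $\phi(x)$ (in almost linear, hence elementary, time) and then run the $\mathcal{E}_*^{\alpha}$-decision procedure for $\mathcal{L}$ on $\phi(x)$, whose length is $O(\|x\|)$. Because $\mathcal{E}_*^{\alpha}$ for $\alpha \geq 3$ is closed under composition with functions computable in lower levels of the hierarchy whose outputs are linearly bounded in the input length, the composed procedure still lies in $\mathcal{E}_*^{\alpha}$, yielding the second half.

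The only delicate point I foresee is the complexity bookkeeping at the end: one must check that prepending the almost-linear-time reduction $\phi$ to the $\mathcal{E}_*^{\alpha}$-decision procedure for $\mathcal{L}$ does not push the overall algorithm out of $\mathcal{E}_*^{\alpha}$, and, symmetrically, that almost linear time is itself inside $\mathcal{E}^3$. Both are standard facts about the Grzegorczyk hierarchy at levels $\geq 3$ (in particular, the linear bound $\|\phi(x)\| \leq C\|x\|$ built into the definition of strong reduction is exactly what makes the composition argument go through cleanly). Once this is verified, the corollary is essentially a translation of Theorem~\ref{theorem_about_connecton_of_word_and_conjugacy_problems} into the language of the hierarchy; if one additionally starts with an $\mathcal{L}$ that is not $\mathcal{E}_*^{\beta}$-decidable for any $\beta<\alpha$, then part (II.ii) of Theorem~\ref{theorem_about_connecton_of_word_and_conjugacy_problems} transfers this lower bound to the conjugacy problem of $\tilde{G}$, which is what gives the corollary its bite relative to Boone's question.
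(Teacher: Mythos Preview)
Your proposal is correct and follows exactly the route the paper indicates: the paper gives no explicit proof, merely stating that parts (I) and (II) of Theorem~\ref{theorem_about_connecton_of_word_and_conjugacy_problems} imply the corollary, and your argument is precisely the natural unpacking of that claim. One small remark: since you already chose $\mathcal{L}$ to be $\mathcal{E}_*^{\alpha}$-decidable (i.e.\ in $\mathcal{E}^{\alpha}$ but not in any lower level), the lower bound on the conjugacy problem via (II.ii) is not an optional add-on but an integral part of establishing $\mathcal{E}_*^{\alpha}$-decidability, so you should fold that final paragraph into the main argument rather than phrasing it conditionally.
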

          \begin{remark}
          	We would like to note that Corollary \ref{corollary-22**} follows also from the main results of \cite{miasnikov-schupp}.

          \end{remark}
         


                  In \cite{olshanskii-osin-sapir}, the authors, Olshanskii, Osin and Sapir, asked about the existence of a lacunary hyperbolic group with decidable word problem but undecidable conjugacy problem. See Problem 7.5 in \cite{olshanskii-osin-sapir}.       
         Another immediate corollary from Theorem \ref{theorem_about_connecton_of_word_and_conjugacy_problems}, parts $(I)$, $(II.i)$ and $(II.ii)$, answers this question in positive.          
         
         \begin{corollary}
           There exists a lacunary hyperbolic group with decidable word problem but undecidable conjugacy problem.	
         \end{corollary}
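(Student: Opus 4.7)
The plan is to deduce this corollary as a direct application of Theorem \ref{theorem_about_connecton_of_word_and_conjugacy_problems}, with a carefully chosen input language. I would fix any finite alphabet $\mathcal{A}$ (for instance $\mathcal{A}=\{a,b\}$) and pick $\mathcal{L}\subseteq \mathcal{A}^*$ to be a recursively enumerable but non-recursive subset; the existence of such an $\mathcal{L}$ is classical and may be witnessed, e.g., by a standard encoding of the halting set. Since $\mathcal{L}$ is r.e., the hypothesis of Theorem \ref{theorem_about_connecton_of_word_and_conjugacy_problems} is satisfied, and the theorem produces a lacunary hyperbolic group $G_{\mathcal{L}}$ to which I then appeal for both conclusions.

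For the word problem, part (I) of Theorem \ref{theorem_about_connecton_of_word_and_conjugacy_problems} yields immediately that the word problem in $G_{\mathcal{L}}$ is decidable — in fact in almost linear time, which is far more than is needed. For the failure of decidability of the conjugacy problem, I would invoke part (II.ii): the membership problem for $\mathcal{L}$ strongly (many-one) reduces, in linear time, to the conjugacy problem in $G_{\mathcal{L}}$ via some total computable function $\phi$. By the definition of strong reduction, $\phi$ sends $\mathcal{L}$ into the set of conjugate pairs and $\mathcal{A}^*\setminus \mathcal{L}$ into the set of non-conjugate pairs, so a decision algorithm for the conjugacy problem in $G_{\mathcal{L}}$ composed with $\phi$ would decide $\mathcal{L}$, contradicting our choice of $\mathcal{L}$ as non-recursive. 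Hence the conjugacy problem in $G_{\mathcal{L}}$ is undecidable.

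There is essentially no obstacle here: the corollary is a formal consequence of parts (I) and (II.ii) of Theorem \ref{theorem_about_connecton_of_word_and_conjugacy_problems}, once one observes the standard fact that any total computable many-one reduction preserves decidability. All of the genuine mathematical work — building the group $G_{\mathcal{L}}$ and establishing the linear-time reduction between $\mathcal{L}$ and its conjugacy problem — is carried out in the proof of Theorem \ref{theorem_about_connecton_of_word_and_conjugacy_problems}, so the proof of the corollary itself should be only a few lines long.
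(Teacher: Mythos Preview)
Your proposal is correct and matches the paper's own proof essentially line for line: the paper also simply chooses any recursively enumerable but non-recursive $\mathcal{L}$ and reads off the conclusion from Theorem~\ref{theorem_about_connecton_of_word_and_conjugacy_problems}. Your slightly more explicit unpacking of why part~(II.ii) forces undecidability of the conjugacy problem is fine, but adds nothing beyond what the paper leaves implicit.
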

         \begin{proof}
         	Indeed, take any recursively enumerable but not recursive set $\mathcal{L}$. Then, according to Theorem \ref{theorem_about_connecton_of_word_and_conjugacy_problems}, the group $G_{\mathcal{L}}$ has decidable word problem but undecidable conjugacy problem.
         \end{proof}
      ~\\
      ~\\
      Note that Theorem \ref{theorem_about_connecton_of_word_and_conjugacy_problems} provides a reasonably complete classification of conjugacy problem in finitely generated groups in terms of time computational complexity for groups with decidable word problem and in terms of recursively enumerable Turing degrees for recursively presented groups with undecidable conjugacy problem. It is worth mentioning that the analogous classifications were obtained for word problems, for example, by the following authors: by Cannonito \cite{cannonito} in terms of Grzegorczyk hierarchy; by Valiev and Trakhtenbrot \cite{valiev, trakhtenbrot} in terms of space  complexity, by Stillwell \cite{stillwell} in terms of time complexity. However, in spirit, probably the closest result to parts $(II.i)$ and $(II.ii)$ of Theorem \ref{theorem_about_connecton_of_word_and_conjugacy_problems} is the following result of Birget-Olshanskii-Rips-Sapir  from \cite{birget-olshanskii-rips-sapir} stated for the word problem in finitely presented groups and mentioned as ``an important corollary'' (Corollary 1.1).
      \begin{quote}
      	There exists a finitely presented group with NP-complete
 word problem. Moreover, for every language $L \subseteq \mathcal{A}^*$ from some finite alphabet $\mathcal{A}$,
 there exists a finitely presented group $G$ such that the nondeterministic time
 complexity of $G$ is polynomially equivalent to the nondeterministic time complexity of $L$.
      \end{quote}
      The first examples of groups with decidable word problem and undecidable conjugacy problem of arbitrary r.e. Turing degree for finitely generated groups  were constructed by Miller \cite{miller III}, and for finitely presented groups by Collins \cite{collins}. It was shown in \cite{borovik-miasnikov} that in Miller's group from \cite{miller III} even though the conjugacy problem is undecidable, the individual conjugacy problems $ICP(g)$ are solvable in polynomial time for all $g$ from an exponentially generic subset of $G$.  This and other observations led Miasnikov and Schupp to formulate the following question in \cite{miasnikov-schupp}.
      \begin{quote}      
\textit{Question. Are there recursively presented groups $G$ with solvable word problem such that if the individual conjugacy problems are decidable on a computably enumerable subset $Y \subseteq G$ then $Y$ is negligible, or indeed exponentially negligible?	}
      \end{quote} 
      We answer this question in positive by showing the following stronger result.
      \begin{theorem}
      \label{theorem-schupp-miasnikov-question}	
      There exist lacunary hyperbolic groups $\tilde{G}=\langle X \rangle$  with word problem decidable in almost linear time and such that for $g\in \tilde{G}$ the individual conjugacy problem $ICP(g)$ is decidable if and only if $g= 1$.
      \end{theorem}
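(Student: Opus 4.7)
The plan is to invoke the general scheme of Section~\ref{section-about-general-scheme} with a new choice of the HNN-extensions $H_i$ designed so that the conjugacy class of \emph{every} nontrivial element of the limit encodes a fixed non-recursive r.e. language. Fix once and for all a non-recursive r.e. set $\mathcal{L}\subseteq\mathbb{N}$ with an enumeration procedure, and a computable enumeration $(u_k,v_k,n_k)_{k=1}^{\infty}$ of all triples in $X^*\times X^*\times\mathbb{N}$ in which every triple appears infinitely often. Starting from a suitable non-elementary torsion-free hyperbolic group $G_0=\langle X\mid\mathcal{R}_0\rangle$, I build the chain \eqref{main seq of gps} stage by stage: at stage $i$, if by time $i$ the procedure has enumerated $n_i$ into $\mathcal{L}$, let $H_i$ be the HNN-extension of $G_{i-1}$ with stable letter $y_i$ and associated relation $y_i^{-1}u_iy_i=v_i$; otherwise let $H_i=G_{i-1}*\langle y_i\rangle$ (a ``dummy'' free product extension). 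In either case take $G_i=H_i/\langle\!\langle\mathcal{R}_i\rangle\!\rangle$, with $\mathcal{R}_i$ drawn from the word families of Subsections~\ref{words with small cancellation-1} and \ref{subsection-a-class-of-small-cancellation-words} to satisfy the graded small-cancellation condition $C'(\mathcal{TM},(g_i)_{i=1}^\infty,(\rho_i)_{i=1}^\infty)$ of Section~\ref{section_about_special_class_of_LHG}.

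I would then verify that $\tilde G=\lim_i(G_i,\alpha_i)$ is lacunary hyperbolic and has word problem in almost linear time. Lacunary hyperbolicity and the existence of a computable supradius are forced by the choice of $\mathcal{R}_i$ in the general scheme; the presentation is recursive because deciding ``$n_i$ has been seen in $\mathcal{L}$ by stage $i$'' is a bounded computation in the fixed enumeration of $\mathcal L$. Hence Theorem~\ref{theorem iff condition for wp in lac hyp gp 1} and the quantitative WP analysis used for part~(I) of Theorem~\ref{theorem_about_connecton_of_word_and_conjugacy_problems} transfer verbatim and give the almost linear time bound.

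For the main assertion, fix any nontrivial $g\in\tilde G$ represented by a word $w\in X^*$. I would argue that $\mathcal{L}$ is strongly (linear-time) reducible to $ICP(g)$. Given an input $n\in\mathbb{N}$, scan the enumeration $(u_k,v_k,n_k)_k$ until the first index $k=k(n)$ is found with $(u_k,v_k,n_k)=(w,t_n,n)$, where $t_n\in X^*$ is a canonical fresh ``test word'' produced by the construction of Subsection~\ref{subsection-a-class-of-small-cancellation-words}. By construction, at stage $k(n)$ the HNN-conjugator $y_{k(n)}^{-1}wy_{k(n)}=t_n$ is added to $H_{k(n)}$ \emph{iff} $n\in\mathcal L$. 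The separation between $G$-conjugacy and $H$-conjugacy provided by the condition $C'(\mathcal{TM},(g_i),(\rho_i))$ and the faithfulness analysis of Section~\ref{section_about_special_class_of_LHG} then force $t_n\sim_{\tilde G}g$ iff $n\in\mathcal{L}$, yielding a computable map $n\mapsto t_n$ reducing $\mathcal{L}$ to $ICP(g)$. Since $\mathcal{L}$ is not recursive, neither is $ICP(g)$. For $g=1$, $ICP(1)$ is the word problem, which is already almost linear.

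The main technical obstacle is the ``only if'' direction of the encoding: when $n\notin\mathcal{L}$, one must rule out that some later stage $k'>k(n)$ inadvertently makes $t_n$ conjugate to $w$ in $\tilde G$ via the compounded effect of dummy letters, other HNN-conjugators for unrelated triples, and the small-cancellation relators. This requires that the test words $t_n$ be chosen from the same canonical family as the $\mathcal R_i$ so that they remain $H$- and $G$-independent of $w$ through every intermediate stage unless explicitly identified; the needed ``no accidental conjugation'' property is precisely the content of the faithfulness lemmata underlying Theorem~\ref{theorem_about_connecton_of_word_and_conjugacy_problems}, applied uniformly in $g$ rather than for a single target. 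Once this is in place, the encoding is faithful simultaneously for every nontrivial $g\in\tilde G$, and the theorem follows.
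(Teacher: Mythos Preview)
Your high-level idea—encoding a fixed non-recursive r.e.\ set into the conjugacy class of every nontrivial element via HNN-extensions—is in the right spirit, but the implementation has a genuine gap that the paper's construction specifically works around.

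The core problem is your enumeration of \emph{all} triples $(u_k,v_k,n_k)$ and the resulting uncontrolled web of HNN-conjugations. Concretely: fix $g$ with word $w$ and suppose $n\notin\mathcal L$. For your reduction you need $t_n\not\sim_{\tilde G} w$. But your construction will also, at other stages, process triples like $(t_n,w',m)$ and $(w',w,m')$ for arbitrary $w'$ and $m,m'\in\mathcal L$, producing a chain $t_n\sim w'\sim w$ that has nothing to do with $n$. The ``faithfulness lemmata'' from Section~\ref{section_problem_7-5} do not cover this: there, the HNN-extensions conjugate only \emph{fixed canonical pairs} $(u_i,v_i)=(\Lambda_0(\omega)x_3,\varsigma(\Lambda_0(\omega))y_3)$ living in disjoint free factors, and the entire analysis of $\Lambda$-pairs (Lemmas~\ref{lemma-*}--\ref{Lemmmmma}) rests on that rigidity. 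Once you allow arbitrary $u_k,v_k\in X^*$, that structure evaporates. A secondary but related issue: for $H_i$ to be hyperbolic via Theorem~\ref{th hnn extension} you need $u_i,v_i$ to generate maximal elementary, non-conjugate infinite cyclic subgroups in $G_{i-1}$, which an arbitrary enumerated pair need not do.

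The paper's construction solves this differently. Rather than encoding $\mathcal L$ separately for each $g$, it first forces \emph{every} nontrivial element to be conjugate to a power of one of a controlled list of canonical root elements $\tilde v_{j_i}$ (this is Lemma~\ref{lemma-MS-*}, achieved by the careful inductive definition of $\tilde v_{j_{i+1}}$ at each stage). Then it only ever HNN-conjugates each $\tilde v_{j_i}$ to test words of the single special shape $x_1^{q_i^{\tilde n}}x_2$, with distinct primes $q_i$ for distinct canonical roots. The restricted form of the test words makes the ``no accidental conjugation'' statement provable: Lemmas~\ref{lemma-commensurability-x1x2}--\ref{lem-central} show these words are never $G$-conjugate to each other or to the $\tilde v_{j_i}$ unless the HNN relation explicitly made them so, and Lemma~\ref{lem-*!} then pins down exactly which $n$ yield conjugacy. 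Your proposal lacks both ingredients—the reduction of every element to a canonical family, and the rigid shape of the test words—and without them the ``only if'' direction cannot be salvaged by appeal to the existing lemmas.
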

      \begin{remark}
      	In fact, it is possible to show that the group $\tilde{G}$, which is constructed in Section \ref{section-MS} and satisfies Theorem \ref{theorem-schupp-miasnikov-question}, in addition, has exponential growth. This would imply that the set $\{ w\in X^* \mid  w=_{\tilde{G}} 1 \}$ is exponentially negligible.
      \end{remark}
~\\
\section{Preliminaries}
	\label{section-preliminaries}
	Let $(\mathcal{X}, d)$ be a geodesic metric space.
	Given a geodesic triangle $ABC$ in $\mathcal{X}$ with vertices $A$, $B$ and $C$, for any $\delta>0$, $ABC$ is called \textit{$\delta$-slim} \index{$\delta$-slim triangle}\index{slimness of a triangle}if each side of the triangle $ABC$ is contained in the $\delta$-neighborhood of the union of  other two sides of $ABC$.
	
	 For a given constant $\delta>0$, $\mathcal{X}$ is called \textit{$\delta$-hyperbolic} space, if all the geodesic triangles in $\mathcal{X}$ are $\delta$-slim. Throughout this text, when we consider a geodesic triangle with vertices $A$, $B$ and $C$, by $AB$, $BC$, $CA$ we denote the sides of the triangle joining the corresponding vertices. The same convention we use also for other polygons.
	
	There are other equivalent definitions of hyperbolic spaces characterizing hyperbolicity in therms of insize or thinness, which we briefly mention below. However, in this work we will exclusively stick to the definition through the notion of slimness (this is also called the Rips' definition). \\
	Easy to see that for any geodesic triangle $ABC$ in $\mathcal{X}$ there is a unique triple of points $(O_A, O_B, O_C)$ on the sides of $ABC$, such that $O_A \in BC$, $O_B \in AC$, $O_C \in AB$ and $d(A, O_B)=d(A, O_C)$,  $d(B, O_A)=d(A, O_C)$ ,  $d(C, O_A)=d(C, O_B)$. If for some $\delta'>0$, $d(O_A, O_B), d(O_B, O_C), d(O_C, O_A) \leq \delta'$, then, following a common terminology, we say that the \textit{insize} \index{insize} of the triangle $ABC$, defined as $insize(ABC)=\max\{ d(O_A, O_B), d(O_B, O_C), d(O_C, O_A)\}$, is bounded by $\delta'$.
	 
	If there exists $\delta''>0$, such that for any points $O_1 \in AO_B$ or $O_2 \in AO_C$; $O_1 \in BO_A$, $O_2 \in BO_C$ or $O_1 \in CO_A$, $O_2 \in CO_B$, the corresponding equation $d(A, O_1)=d(A, O_2)$ or $d(B, O_1)=d(B, O_2)$ or, respectively, $d(C, O_1)=d(C, O_2)$ implies that $d(O_1, O_2)\leq \delta''$, then the triangle $ABC$ is called \textit{$\delta''$-thin}. \index{thinness of a triangle}
	\begin{property}
	\label{property 1111}
	It follows from \cite{alonso brady} (see Proposition 2.1 in \cite{alonso brady}) that if the insize of $ABC$ is bounded from above by $\delta'>0$, then it is $3\delta'$-slim and $18\delta'$-thin. Meanwhile, if $ABC$ is $\delta$-slim, then it is $6\delta$-thin.	
	\end{property}

	
	Let $G = \langle X \rangle$ be a finitely generated group with a finite generating set $X$. Note that the Cayley graph $\Gamma(G, X)$ possesses a natural geodesic metric, $d_G$, called word metric. That is for any $g, h \in G$, $d_G(g, h)$ is the length of a smallest word from $X^*$ representing the word $g^{-1}h \in G$. By $|g|_G$ (or just by $|g|_X$ or $|g|$, depending on the context and convenience) we denote the distance $d_G(1, g)$. In the current work, whenever it does not lead to ambiguities, instead of using the notation $d_G$ we will simply write $d$. Depending on the
	 convenience  derived from the context, we will use sometimes instead of $d_G$, $d_X$ or simply $d$, if it does not lead to ambiguities.
	 \index{$d_G, d_X, d$}
	
	Note that, at the first glance, it would be more appropriate to use notations $d_X$ and $| \cdot |_X$ instead of $d_G$ and $|\cdot |_G$. However, this notation we use by purpose, because in many applications in this paper, we interchangeably consider metrics on different Cayley graphs of groups with presentations $\langle X \mid \mathcal{R}_1 \rangle$ and  $\langle X \mid \mathcal{R}_2 \rangle$, where $\mathcal{R}_1 \neq \mathcal{R}_1$.

	 The group  $G = \langle X \rangle$ is called $\delta$-hyperbolic, if its Cayley graph $\Gamma(G, X)$ is $\delta$-hyperbolic. In general, we say $G$ is hyperbolic if the Cayley graph $\Gamma(G,X)$ is $\delta$-hyperbolic for some $\delta\geq 0$. It is a well-know fact that the property of hyperbolicity does not depend on the choice of finite generating sets (see \cite{gromov-hyperbolic}). However, the hyperbolicity constant $\delta$ may depend on the choice of the generating set. In this text, whenever we say that some group or space is $\delta$-hyperbolic, by default we assume that $\delta$ is a positive integer.\\
	 ~\\
	 The following are well-known algorithmic properties of hyperbolic groups.
	 \begin{enumerate}
	    \item The calss of hyperbolic groups is exactly the class of finitely presented groups with Dehn presentation. See \cite{gromov-hyperbolic} and also \cite{lysenok}.\\
	    \item  It was established by Epstein and Holt in \cite{epstein holt} that given a hyperbolic group $G$ with finite Dehn presentation, there exists an algorithm solving the conjugacy problem in $G$ in linear time.\\
	   
	 	\item It was established by Papasoglu in \cite{papasoglu-detecting-hyperbolicity} (see also \cite{papasoglu-00} for background) that there exists a  partial algorithm which detects hyperbolicity of finitely presented hyperbolic groups. In other words, the set of finite presentations of hyperbolic groups is recursively enumerable. See also \cite{holt-detecting-hyperbolicity}. \\
	 	\item There exists an algorithm which computes a slimness constant $\delta$ for any finite presentation of a hyperbolic group. See, for example, \cite{holt-detecting-hyperbolicity}. \\
	 	\item There exists an algorithm which for any input of finite presentation of a hyperbolic group computes its Dehn presentation.
	 	 
	 \end{enumerate}
	~\\

	 \index{quasi-geodesic path}
	Now consider a path $p$ in  $(\mathcal{X}, d)$ with a natural parametrization by length. The path $p$ is called \textit{$(\lambda, c)$-quasi-geodesic} for some $\lambda \geq 1$ and $c\geq 0$, if for any points $p(s)$ and $p(t)$ on $p$, we have
	\begin{align*}
	|s-t| \leq \lambda d(p(s), p(t)) +c.
	\end{align*}
	Hereafter, whenever it is not stated otherwise, we assume that the quasi-geodesity constants $\lambda$ and $c$ are integers. We denote the origin of $p$ with respect to this parametrization (i.e. the point $p(0)$) by $p_-$ and the terminal point by $p_+$.\\

	We say that a word $W \in X^*$ is a \textit{geodesic word} \index{geodesic word} (in $\Gamma(G, X)$), if the paths in $\Gamma(G, X)$ with label $W$ are geodesics, and
	we say that a word $W \in X^*$ is \textit{cyclically geodesic} \index{cyclically geodesic word} if any cyclic shift of $W$ is a geodesic word in $\Gamma(G, X)$. Analogously, for $\lambda\geq 1, c\geq 0$, we say $W$ is \textit{$(\lambda, c)$-quasi-geodesic} \index{quasi-geodesic word} (in $\Gamma(G, X)$) if the corresponding paths in  $\Gamma(G, X)$ are $(\lambda, c)$-quasi-geodesic. 
	The length of  the  word $W$ we denote by $\|W\|$ and by $|W|$ we denote the length of the shortest word representing the same element as $W$ in $G$. Clearly, $W$ is a geodesic word if and only if $\|W\|=|W|$.
	
	For any $W' \in X^*$ the notation \textit{$W' \sim_{conj}W$ in $G$} means that $W'$ represents an element in $G$ conjugate to the element represented by $W$ in $G$. \index{$\sim_{conj}$}
	
	We say that $V \in X^*$ is a \textit{cyclically minimal} \index{cyclically minimal representative} representative of $W$ if $V \sim_{conj} W$ in $G$ and $V$ has the smallest length among all such words. For $V$ satisfying this assumption, we also define $|W|_c = \|V\|$. If $\|W\|=|W|_c$, then we say that $W$ is \textit{cyclically minimal}. Clearly, if $W$ is cyclically minimal,  then it is cyclically geodesic. \index{cyclically minimal word}
	
	Now suppose that $p$ is a path in $\Gamma(G, X)$. Then, as we said, we will denote its initial and terminal points by $p_-$ and $p_+$, respectively. If $A, B$ are some points on $p$, then by $[A, B]$ \index{$[A, B]$} we denote the subpath $q$ of $p$ between $A$ and $B$ such that $q_-=A$ and $q_+=B$. Also we denote the length of $p$ by $\|p\|$ and, context based, we denote the length of $q$ by $\|q\|$ or by $\big\|[A, B]\big\|$. Since all the edges in Cayley graphs are labeled by the letters of $X \cup X^{-1}$, any path $p$ in $\Gamma(G, X)$  in fact is a labeled path. We denote the label of $p$ by $lab(p)$.

		\begin{lemma}[Theorem III.1.7, \cite{bridson}]
		\label{lem hausdorff distance between quasi-geodesics}
		 Let $p$ be a $(\lambda, c)$-quasi-geodesic path in the Cayley graph $\Gamma(G,X)$, where $\lambda \geq 1, c\geq 0$ and $G = \langle X \rangle$ is a hyperbolic group. Then there exists an effectively calculable constant $R_{\lambda, c} \in \mathbb{N}$ depending on $\lambda, ~c$ and $G$, such that the Hausdorff distance between $p$ and any geodesic path joining $p_-$ to $p_+$ is bounded by $R_{\lambda, c}$. \index{$R_{\lambda, c}$}
	\end{lemma}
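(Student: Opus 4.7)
The plan is to reproduce the standard proof of the Morse (stability of quasi-geodesics) lemma in a $\delta$-hyperbolic space, keeping track of the constants so that effectivity is transparent. First I would reduce to a continuous tame quasi-geodesic: since $p$ is a concatenation of unit edges in $\Gamma(G,X)$, it is already continuous, and the $(\lambda,c)$-quasi-geodesic condition on integer parameters propagates (with slightly worse constants $\lambda',c'$ depending effectively on $\lambda,c$) to every pair of points on $p$. Fix a geodesic $\gamma$ joining $p_-$ to $p_+$ and set $D=\sup_{y\in\gamma}d(y,p)$. The first, and main, goal is to bound $D$ in terms of $\delta,\lambda,c$ only.

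The key step is the classical bounded-divergence argument. Pick $y_0\in\gamma$ (almost) realizing $D$ and choose $a,b\in\gamma$ on opposite sides of $y_0$ with $d(a,y_0),d(b,y_0)$ of order $D$; then choose $a',b'\in p$ with $d(a,a'),d(b,b')\leq D$. Form the closed path consisting of $[a,y_0]\cup[y_0,b]\subseteq\gamma$, the two short geodesic segments $[a,a']$ and $[b,b']$, and the subarc of $p$ between $a'$ and $b'$. Decomposing this into a constant number of geodesic triangles and quadrilaterals and applying $\delta$-slimness repeatedly, $y_0$ lies within a $\kappa(\delta)$-neighborhood of either the subarc of $p$ or of one of the short connectors $[a,a'],[b,b']$. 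In the first case this contradicts the definition of $D$ if $D$ is too large; in the second case the quasi-geodesic inequality on $p$ between $a'$ and $b'$ forces $\|[a',b']\|_p\leq \lambda(2D+2\delta)+c$, while the same inequality forces this arc to be long (exponentially in $D/\delta$ by the standard divergence estimate in hyperbolic spaces). Comparing these two bounds yields an explicit upper bound $D\leq D_0(\delta,\lambda,c)$ of the form $O(\delta\log(\lambda+1)+\lambda+c)$.

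Once $D$ is bounded, the reverse inclusion is routine. Given $x\in p$, let $x_-,x_+\in p$ be the two closest points (on either side of $x$) that lie in the $D_0$-neighborhood of $\gamma$; such points exist because $p_-,p_+\in\gamma$. Using the quasi-geodesic inequality between $x_-$ and $x_+$ and the triangle inequality via their nearest points on $\gamma$, one obtains $d(x,\gamma)\leq\lambda(2D_0)+c+D_0$. Setting $R_{\lambda,c}$ to be the maximum of this bound and $D_0$ gives the required Hausdorff-distance estimate.

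The main obstacle is not the existence of the bound but the explicit tracking of constants needed for effectivity. This requires that $\delta$ itself be effectively computable from a finite presentation of $G$, which is exactly item~(4) in the list of algorithmic properties of hyperbolic groups recalled in the preliminaries; once $\delta$ is known and $\lambda,c$ are given, every estimate above is a closed-form expression in $\delta,\lambda,c$, so $R_{\lambda,c}$ is effectively calculable as claimed.
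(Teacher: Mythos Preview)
The paper does not give its own proof of this lemma: it is stated as a direct citation of Theorem~III.H.1.7 in Bridson--Haefliger and used as a black box, so there is nothing in the paper to compare your argument against. Your sketch is essentially the standard Morse-lemma proof that Bridson--Haefliger give for that theorem (tame the quasi-geodesic, use the logarithmic divergence estimate to bound the distance from the geodesic to the quasi-geodesic, then deduce the reverse bound from the quasi-geodesic inequality), and it is correct in outline; the explicit constants you wrote are slightly off in places but this does not affect the effectivity claim, which is the only part the paper actually uses.
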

	
	In this text, whenever we write the notation $R_{\lambda, c}$, we refer to the constant from Lemma \ref{lem hausdorff distance between quasi-geodesics}. 
	
		\begin{corollary}
		\label{corollary on hausdorff distance between quasi-geodesics}
		Let $p$ and $q$ be $(\lambda_1, c_1)$- and $(\lambda_2, c_2)$-quasi-geodesic paths in $\Gamma(G,X)$ respectively. Also let $d(p_-, q_-) \leq L$,  $d(p_+, q_+) \leq L$ for some constant $L$, then the Hausdorff distance between $p$ and $q$ is bounded from above by $L+R_{\lambda_1, c_1}+R_{\lambda_2, c_2}+2\delta$, where $\delta$ is a hyperbolicity constant of $\Gamma(G, X)$. Moreover, if we join $p_-$ to $q_-$ and $p_+$ to $q_+$ by some geodesics, then we get a quadrangle such that the distance from any point on $p$ (or $q$) to the union of the other three sides is bounded from above by  $R_{\lambda_1, c_1}+R_{\lambda_2, c_2}+2\delta$. In case $p$ and $q$ are geodesics, this distance is bounded from above by $2\delta$.
	\end{corollary}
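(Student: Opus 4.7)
The plan is to reduce everything to the geodesic case via Lemma~\ref{lem hausdorff distance between quasi-geodesics}, and then to use the standard trick of decomposing a geodesic quadrangle into two triangles and applying $\delta$-slimness twice. First, I would handle the special case where $p$ and $q$ are geodesics. Fix geodesic segments $r_-$ joining $p_-$ to $q_-$ and $r_+$ joining $p_+$ to $q_+$, each of length at most $L$. This gives a geodesic quadrangle with sides $p$, $r_+$, $q$ (reversed) and $r_-$. Choose a diagonal, say the geodesic from $p_-$ to $q_+$, which splits the quadrangle into two geodesic triangles. Given any point $A \in p$, applying $\delta$-slimness to the triangle with side $p$ shows that $A$ lies within $\delta$ of the union of the diagonal and $r_+$; if $A$ is close to the diagonal, then applying $\delta$-slimness to the second triangle shows it lies within a further $\delta$ of the union of $q$ and $r_-$. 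Thus any point of $p$ is within $2\delta$ of the union of the remaining three sides, which establishes the last assertion of the corollary.

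For the general quasi-geodesic case, I would apply Lemma~\ref{lem hausdorff distance between quasi-geodesics} to replace $p$ and $q$ by geodesics $p^\ast$ and $q^\ast$ with the same endpoints, noting that the Hausdorff distance from $p$ to $p^\ast$ is at most $R_{\lambda_1,c_1}$ and similarly $R_{\lambda_2,c_2}$ for $q$ and $q^\ast$. The paths $p^\ast$, $r_+$, $q^\ast$, $r_-$ now form a geodesic quadrangle, and by the previous paragraph every point of $p^\ast$ lies within $2\delta$ of the union of the other three sides. Composing with the Hausdorff approximations then gives, for every point $A \in p$: either $A$ lies within $R_{\lambda_1,c_1} + 2\delta + R_{\lambda_2,c_2}$ of $q$ (when the nearest side is $q^\ast$), or within $R_{\lambda_1,c_1} + 2\delta + L$ of an endpoint of $q$ (when the nearest side is one of $r_\pm$), which in either case is bounded by $L + R_{\lambda_1,c_1} + R_{\lambda_2,c_2} + 2\delta$. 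The symmetric argument applied to points of $q$ gives the claimed Hausdorff bound, and the intermediate bound $R_{\lambda_1,c_1} + R_{\lambda_2,c_2} + 2\delta$ for the distance to the three remaining sides of the quadrangle $p,r_+,q,r_-$ also drops out.

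The argument is essentially bookkeeping, so the only real step that needs care is the \emph{choice of diagonal} for the quadrangle and making sure the constants add correctly: one must resist the temptation to apply slimness directly to a ``quadrangle'' (which would give $2\delta$ too much) and instead decompose into triangles before invoking Lemma~\ref{lem hausdorff distance between quasi-geodesics}. I do not anticipate any genuine obstacle; the corollary is a routine packaging of the standard stability of quasi-geodesics in hyperbolic spaces.
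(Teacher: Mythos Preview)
Your proposal is correct and follows essentially the same route as the paper: reduce to the geodesic case via Lemma~\ref{lem hausdorff distance between quasi-geodesics}, then split the resulting geodesic quadrangle into two triangles by a diagonal and apply $\delta$-slimness twice. The only cosmetic difference is that the paper picks the diagonal $q_-p_+$ and argues starting from a point on $q$, whereas you pick $p_-q_+$ and start from $p$; the bookkeeping is otherwise identical.
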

	\begin{proof}
	It follows from Lemma \ref{lem hausdorff distance between quasi-geodesics} that it would be enough to prove the statement for the case when $p$ and $q$ are geodesic paths and correspondingly $R_{\lambda_1, c_1}=R_{\lambda_2, c_2}=0$.
 
		Now assume that $p$ and $q$ are geodesics. Let $p_-$, $q_-$ and $p_+$, $q_+$ be joined by some geodesics $f_1$ and $f_2$, respectively. Also let $e$ be a geodesic path joining $q_-$ to $p_+$. 
			
		By the definition of hyperbolicity constant, for any point $o_1\in q$, there exists $o_2 \in e \cup f_2$ such that $d(o_1, o_2)\leq \delta$. Now, if $o_2 \in f_2$, then since $\|f_2\|\leq L$, the statement of the corollary follows for $o_1$ immediately. Otherwise, if $o_2 \in e$, the statement follows for $o_1$ immediately from the observation that $dist(o_2, f_1 \cup p)\leq \delta$ and $\|f_1\|\leq L$. If $o_1$ belongs to one of the other three sides, then we can deal with that case analogously.
	\end{proof}

	\begin{corollary}
		\label{another corollary about hausdorff distance}
		Let $p$ and $q$ be $(\lambda_1, c_1)$- and $(\lambda_2, c_2)$-quasi-geodesic paths in $\Gamma(G,X)$ respectively, and let $d(p_-, q_-) \leq L$,  $d(p_+, q_+) \leq L$ for some constants $\lambda_1 \geq 1, c_1 \geq 0$, $\lambda_2 \geq 1, c_2\geq 0$, $L \geq 0$. Then for any point $o\in p$ such that $d(o, p_-), d(o, p_+) \geq L+R_{\lambda_1, c_1}+2\delta$, we have
		$dist(o, q)\leq R_{\lambda_1, c_1} + R_{\lambda_2, c_2} + 2\delta$, where $\delta$ is the hyperbolicity constant of $\Gamma(G, X)$.
	\end{corollary}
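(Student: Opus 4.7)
The plan is to reduce the quasi-geodesic quadrilateral to a geodesic one via Lemma \ref{lem hausdorff distance between quasi-geodesics} and then exploit the $2\delta$-slimness of geodesic quadrilaterals, using the hypothesis on $d(o,p_{\pm})$ to exclude the possibility that the nearby point lies on one of the bridging geodesics.

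First, I would choose a geodesic $p'$ from $p_-$ to $p_+$ at Hausdorff distance at most $R_{\lambda_1,c_1}$ from $p$, and a geodesic $q'$ from $q_-$ to $q_+$ at Hausdorff distance at most $R_{\lambda_2,c_2}$ from $q$, both guaranteed by Lemma \ref{lem hausdorff distance between quasi-geodesics}. Let $f_1$ be a geodesic from $p_-$ to $q_-$ and $f_2$ a geodesic from $p_+$ to $q_+$, each of length at most $L$. These four geodesics form a geodesic quadrilateral; splitting along a diagonal into two $\delta$-slim triangles shows that every side is contained in the $2\delta$-neighborhood of the union of the other three. Picking $o'\in p'$ with $d(o,o')\le R_{\lambda_1,c_1}$, one obtains some $o''\in f_1\cup q'\cup f_2$ with $d(o',o'')\le 2\delta$. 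The remaining task is to force $o''\in q'$.

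For this I argue by contradiction. If $o''\in f_1$, then
\[
d(o,p_-)\le d(o,o')+d(o',o'')+d(o'',p_-)\le R_{\lambda_1,c_1}+2\delta+\|f_1\|\le R_{\lambda_1,c_1}+2\delta+L,
\]
which together with the hypothesis $d(o,p_-)\ge L+R_{\lambda_1,c_1}+2\delta$ forces equality throughout the chain. In particular $d(o'',p_-)=\|f_1\|=L$, so $o''$ must be the endpoint $q_-$ of $f_1$, and this point lies on $q'$ as well. The case $o''\in f_2$ is symmetric and forces $o''=q_+\in q'$. With $o''\in q'$ secured, the Hausdorff bound for $q$ and $q'$ yields $o'''\in q$ with $d(o'',o''')\le R_{\lambda_2,c_2}$, and the triangle inequality gives
\[
dist(o,q)\le d(o,o')+d(o',o'')+d(o'',o''')\le R_{\lambda_1,c_1}+R_{\lambda_2,c_2}+2\delta,
\]
the desired bound.

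The main anticipated obstacle is the equality (boundary) case in the previous paragraph: the inequality $d(o,p_-)\le R_{\lambda_1,c_1}+2\delta+L$ does not strictly contradict the hypothesis, so $o''\in f_1$ cannot be ruled out outright. The resolution is the observation used above, namely that in the boundary situation the realizer collapses to the shared vertex $q_-$ (or $q_+$) of $f_1$ (respectively $f_2$) and $q'$, so the argument still proceeds through $q'$ with no extra estimate.
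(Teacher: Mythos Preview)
Your proof is correct and follows essentially the same route as the paper's own argument: pass to geodesics $p',q'$ via Lemma~\ref{lem hausdorff distance between quasi-geodesics}, use $2\delta$-slimness of the geodesic quadrilateral $p'\cup f_1\cup q'\cup f_2$ to find a point within $2\delta$ of $o'$ on one of the other three sides, and rule out $f_1,f_2$ using the distance hypothesis on $o$. Your handling of the boundary case is in fact more careful than the paper's: the paper simply asserts that $d(o,p_-)\le L+R_{\lambda_1,c_1}+2\delta$ is ``a contradiction'' to the hypothesis $d(o,p_-)\ge L+R_{\lambda_1,c_1}+2\delta$, whereas you correctly observe that only equality is forced and then show that in the equality case $o''$ must coincide with the shared vertex $q_-$ (or $q_+$), which already lies on $q'$.
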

	\begin{proof}
	 Let $p_-$, $q_-$ and $p_+$, $q_+$ be joined by some geodesics $f_1$ and $f_2$, respectively. Also let $p', q'$ be geodesic paths joining $p_-$ to $p_+$ and $q_-$ to $q_+$, respectively.
	 
	  By Lemma \ref{lem hausdorff distance between quasi-geodesics} there exists $o_1 \in p'$ such that $d(o, o_1) \leq R_{\lambda_1, c_1}$. Now, by Corollary \ref{corollary on hausdorff distance between quasi-geodesics}, $dist(o_1, f_1\cup f_2 \cup q') \leq 2\delta$. 
	  
	  On the other hand, if $dist(o_1, f_1)\leq 2\delta$, then $dist(o, f_1) \leq d(o, o_1)+dist(o_1, f_1) \leq R_{\lambda_1, c_1}+2\delta$. Hence, by the triangle inequality, this would imply $d(o, p_-) \leq L+R_{\lambda, c}+2\delta$, which is a contradiction. This contradiction implies that $dist(o_1, f_1)>2\delta$. Similarly, we get that $dist(o_1, f_2)>2\delta$. Therefore, $dist(o_1, q')\leq 2\delta$, and hence $dist(o_1, f_1\cup f_2 \cup q') \leq 2\delta$ implies that $dist( o_1, q')\leq 2\delta$.

	  Therefore, since  $d(o, o_1) \leq R_{\lambda_1, c_1}$ and the Hausdorff distance between $q'$ and $q$ is bounded from above by $R_{\lambda_2, c_2}$, we get that  $dist(o, q)\leq R_{\lambda_1, c_1} + R_{\lambda_2, c_2} + 2\delta$.
	\end{proof}
	~\\
	Given a path $p$ and $k \geq 0$, $\lambda \geq 1, c\geq 0$, we say that $p$ is\textit{ $k$-local $(\lambda, c)$-quasi-geodesic}, if each subpath of $p$, of length at most $k$, is $(\lambda, c)$-quasi-geodesic. In case $\lambda=1$, $c=0$, we say that $p$ is \textit{$k$-local geodesic.} \index{$k$-local (quasi-)geodesicity}\\
	
	\begin{lemma}[Theorem III.H.1.13, \cite{bridson}]
		\label{lemma_about_local_geodesicness}	
		Let $\mathcal{X}$ be a $\delta$-hyperbolic geodesic space and $p$ be a $k$-local geodesic, where $k> 8\delta$. Then for every geodesic segment $q$ joining $p_{-}$ to $p_{+}$ we have:\\
		(1) $p$ is contained in the $2\delta$-neighborhood of $q$;\\
		(2) $q$ is contained in the $3\delta$-neighborhood of $p$;\\
		(3) $p$ is a $(\lambda, c)$-quasi-geodesic, where $\lambda=(k+4\delta)/(k-4\delta) $ and $c = 2\delta $.
	\end{lemma}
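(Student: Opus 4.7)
I would prove the three claims sequentially, with (1) being the central geometric content and (2), (3) following by routine arguments. Throughout, write $q$ for a geodesic segment joining $p_-$ to $p_+$.

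For (1), I argue by contradiction. Suppose some point $x = p(t_0)$ satisfies $d(x, q) > 2\delta$, with $D := \sup_t d(p(t), q)$ attained (approximately) at $x$. Since $p_\pm \in q$, the cases $t_0 < 4\delta$ and $L - t_0 < 4\delta$ can be dispatched by the triangle inequality directly; otherwise set $y = p(t_0 - 4\delta)$ and $z = p(t_0 + 4\delta)$. Because the length of $p|_{[t_0 - 4\delta,\, t_0 + 4\delta]}$ is $8\delta < k$, the $k$-local geodesicity upgrades it to a genuine geodesic in $\mathcal{X}$, so $d(y, z) = 8\delta$ and $x$ is the midpoint of $[y, z]$. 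Choosing nearest-point projections $y^\ast, z^\ast \in q$ and forming the geodesic quadrilateral $y\,y^\ast\,z^\ast\,z$, the $2\delta$-slimness of quadrilaterals in $\delta$-hyperbolic spaces places $x$ within $2\delta$ of one of the other three sides. The crux is to exclude the short sides $[y, y^\ast]$ and $[z, z^\ast]$ by exploiting the maximality of $D$ combined with the fact that $x$ is the midpoint of the $8\delta$-geodesic $[y, z]$; this forces $x$ to be $2\delta$-close to the subarc $[y^\ast, z^\ast] \subset q$, contradicting $d(x, q) > 2\delta$.

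For (3), fix $0 \leq s < t \leq L$; if $t - s \leq k$ then $p|_{[s,t]}$ is a genuine geodesic and the quasi-geodesic inequality holds trivially. Otherwise, let $q_{st}$ be a geodesic from $p(s)$ to $p(t)$; applying (1) to the $k$-local geodesic $p|_{[s,t]}$ yields $p|_{[s,t]} \subset \mathcal{N}_{2\delta}(q_{st})$. I would partition $[s, t]$ into consecutive subintervals of length $k - 4\delta$ (plus a remainder), and for each partition point choose a $2\delta$-close witness on $q_{st}$. Each subinterval contributes a piece of $p$ of length $k - 4\delta$ that is genuinely geodesic (its length is less than $k$), while spanning a stretch of $q_{st}$ of length at least $(k - 4\delta) - 4\delta$ after subtracting the $2\delta$ detours at both endpoints. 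Summing and rearranging gives $t - s \leq \tfrac{k + 4\delta}{k - 4\delta}\, d(p(s), p(t)) + 2\delta$, as required. For (2), I would then combine (1) with a near-continuity argument for the nearest-point projection $\pi\colon p \to q$: by (1), $d(p(t), \pi(p(t))) \leq 2\delta$ for all $t$, and $\pi$ fixes $p_\pm$. Although $\pi$ need not be continuous, $\delta$-hyperbolicity controls the jumps of $\pi \circ p$ along $q$ by at most $\delta$, so for any $r \in q$ there exists $t$ with $d(\pi(p(t)), r) \leq \delta$, whence $d(r, p) \leq \delta + 2\delta = 3\delta$.

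The main obstacle is the case analysis in (1): a naive application of $2\delta$-slimness of the quadrilateral together with the bounds $d(y, y^\ast), d(z, z^\ast) \leq D$ produces only the inequality $d(x, q) \leq D$, which is consistent with the assumption rather than contradicting it. The key extra input is that $x$ is the midpoint of a \emph{genuine} geodesic of length $8\delta$, so if $x$ were $2\delta$-close to a short side, the witness $w$ would satisfy $d(y, w) \geq 2\delta$, and a careful re-examination of distances would produce a point of $p$ strictly farther than $D$ from $q$, yielding the contradiction. The hypothesis $k > 8\delta$ is used precisely to allow a $4\delta$-geodesic excursion on either side of $x$.
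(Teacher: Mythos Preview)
The paper does not supply a proof of this lemma; it is quoted from Bridson--Haefliger (Theorem III.H.1.13) and used as a black box throughout. Your outline is essentially the argument given in that reference, so there is nothing in the paper itself to compare against.

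One correction to your handling of the ``main obstacle'' in part (1): your proposed resolution (``a careful re-examination of distances would produce a point of $p$ strictly farther than $D$ from $q$'') does not work as stated, since $x$ already realises the supremum $D$ and the short-side estimate you wrote down only yields $d(x,q)\le D$, which is consistent rather than contradictory. The standard fix exploits the \emph{strict} inequality $k>8\delta$: place $y,z$ at arc-distance $4\delta+\varepsilon$ from $x$ for some small $\varepsilon>0$, so that the arc from $y$ to $z$ still has length $8\delta+2\varepsilon<k$ and is a genuine geodesic with $d(x,y)=4\delta+\varepsilon$. Then if $x$ is $2\delta$-close to $w\in[y,y^\ast]$ one gets $d(y,w)\ge (4\delta+\varepsilon)-2\delta=2\delta+\varepsilon$, hence $d(w,y^\ast)\le D-2\delta-\varepsilon$ and $d(x,q)\le d(x,y^\ast)\le 2\delta+(D-2\delta-\varepsilon)=D-\varepsilon<D$, the genuine contradiction you were after. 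Your final sentence does invoke $k>8\delta$, but only as permitting a $4\delta$-excursion on either side; the strictness is precisely what supplies the missing $\varepsilon$.
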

	The next lemma is a generalization of the previous one. It can be found in \cite{ghys}.
	
	\begin{lemma}[See Theorem 25 in  \cite{ghys}]
		\label{lem 1}
		\index{$\mathcal{K}(\delta, \lambda, c)$}
		Let $\mathcal{X}$ be a $\delta$-hyperbolic  space. Then there exists an effectively computable constant $\mathcal{K}=\mathcal{K}(\delta, \lambda, c)\in \mathbb{N}$ such that for any $k\geq \mathcal{K}$, if $p$ is a $k$-local $(\lambda, c)$-quasi-geodesic path in $\mathcal{X}$, then $p$ is $(\mathcal{K}, \mathcal{K})$-quasi-geodesic.
	\end{lemma}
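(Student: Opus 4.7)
The plan is to mimic the proof of Lemma \ref{lemma_about_local_geodesicness}, using Lemma \ref{lem hausdorff distance between quasi-geodesics} as the input in place of the "geodesic stays within $2\delta$ of its shortcut" fact, so as to upgrade \emph{local} quasi-geodesicity to \emph{global} quasi-geodesicity. Let $p$ be a $k$-local $(\lambda,c)$-quasi-geodesic in the $\delta$-hyperbolic space $\mathcal{X}$, and let $q$ be a geodesic joining $p_-$ to $p_+$. The first goal is to produce a constant $D=D(\delta,\lambda,c)$, independent of $p$, such that $p$ lies in the $D$-neighborhood of $q$, provided $k\geq \mathcal{K}_0(\delta,\lambda,c)$.

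I would establish this by a maximality argument. Pick $A\in p$ (approximately) realizing $\max_{x\in p}\mathrm{dist}(x,q)$ and set $D:=\mathrm{dist}(A,q)$. Choose $B,C\in p$ at arclength $k/3$ on either side of $A$, or use the endpoint $p_\pm$ instead if $A$ is closer than $k/3$ to it. The subpath $p_{BC}$ has length at most $k$, hence is $(\lambda,c)$-quasi-geodesic, so Lemma \ref{lem hausdorff distance between quasi-geodesics} places $A$ within $R_{\lambda,c}$ of the geodesic $[B,C]$. Let $B',C'\in q$ be closest points to $B,C$; by maximality $d(B,B'),d(C,C')\leq D$, so Corollary \ref{corollary on hausdorff distance between quasi-geodesics} applied to the quadrilateral $BB'C'C$ places $[B,C]$ within $2\delta$ of $[B,B']\cup q_{B'C'}\cup [C',C]$. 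Combining, $A$ lies within $R_{\lambda,c}+2\delta$ of this broken line. If the closest point lies on $q_{B'C'}$ we get $D\leq R_{\lambda,c}+2\delta$ outright; if it lies on a lateral side $[B,B']$ (say), then since $A$ is at arclength roughly $k/3$ from $B$ on $p$ and the lateral side has length $\leq D$, requiring $k/(3\lambda)-c>2D+2R_{\lambda,c}+2\delta$ contradicts $\mathrm{dist}(A,[B,B'])\leq R_{\lambda,c}+2\delta$. Either way $D$ is bounded by an explicit expression in $\delta,\lambda,c$. The boundary cases, where $B$ or $C$ equals an endpoint $p_\pm$, only simplify the diagram.

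With $p\subseteq\mathcal{N}_D(q)$, global quasi-geodesicity follows by a standard projection argument: for arclength parameters $s<t$, project $p(s)$ and $p(t)$ to points $q(s'),q(t')$ of $q$ within $D$ of them, and bound $|s-t|$ by subdividing the subpath of $p$ between them into blocks of length $\leq k$ and applying the local $(\lambda,c)$-bound on each block, summing. The resulting inequality reads $|s-t|\leq \mathcal{K}\,d(p(s),p(t))+\mathcal{K}$ for an explicit $\mathcal{K}=\mathcal{K}(\delta,\lambda,c)$ built from $D$, $R_{\lambda,c}$, $\lambda$, and $c$, which also dominates the threshold $\mathcal{K}_0$ from the previous step. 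Effectivity of $\mathcal{K}$ is inherited from the effectivity of $R_{\lambda,c}$ in Lemma \ref{lem hausdorff distance between quasi-geodesics}. The main obstacle is calibrating the lower bound on $k$ in the maximality step so that the "lateral excursion" alternative closes without circularity: one needs $k$ to exceed a linear function of $\lambda(D+R_{\lambda,c}+\delta+c)$, and verifying that this system of inequalities is consistent (i.e., can be solved for $D$ first and then $k$) is the delicate bookkeeping that the proof must carry out.
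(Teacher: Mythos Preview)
The paper does not prove this lemma; it is quoted from Ghys--de~la~Harpe without argument. Your outline (a maximality argument to bound the Hausdorff distance from $p$ to the geodesic $q$, followed by a projection/subdivision step for the global quasi-geodesic inequality) is exactly the standard route and is correct.

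The circularity you flag in the lateral case dissolves with one more application of maximality, so no ``delicate bookkeeping'' remains. In your notation, suppose $d(A,P)\le R_{\lambda,c}+2\delta$ for some $P\in[B,B']$. Since $B'\in q$ we have $D=d(A,q)\le d(A,P)+d(P,B')$, hence $d(P,B')\ge D-(R_{\lambda,c}+2\delta)$; combined with $d(B,B')\le D$ (maximality at $B$) and $d(P,B)+d(P,B')=d(B,B')$ this gives $d(P,B)\le R_{\lambda,c}+2\delta$, and therefore
\[
d(A,B)\le d(A,P)+d(P,B)\le 2(R_{\lambda,c}+2\delta),
\]
a bound with no $D$ in it. On the other hand the arc of $p$ from $B$ to $A$ has length $k/3\le k$ and is thus $(\lambda,c)$-quasi-geodesic, so $d(A,B)\ge (k/3-c)/\lambda$; choosing $\mathcal{K}>6\lambda(R_{\lambda,c}+2\delta)+3c$ therefore rules out the lateral alternative outright and forces $D\le R_{\lambda,c}+2\delta$. (When $B=p_-$ the lateral $[B,B']$ degenerates and only the $q$-alternative survives.) For the second step it is cleanest to observe that every sub-arc of $p$ is again $k$-local $(\lambda,c)$-quasi-geodesic, so it suffices to prove the inequality for the full path; with $D$ now fixed independently of $p$, the block-projection argument goes through, and effectivity of $\mathcal{K}$ is inherited from that of $R_{\lambda,c}$.
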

	~\\
	
	 For any metric space $(\mathcal{X}, d)$ and for any $x, y, z \in \mathcal{X}$ the \textit{Gromov product} \index{Gromov product} of $y$ and $z$ at $x$, denoted $(y \cdot z)_x$, is defined by
\begin{align*}
	(y \cdot z)_x = \frac{1}{2} \big( d(x, y) + d(x, z) - d(y, z) \big).
\end{align*}
	\begin{lemma}[see Lemma 5, \cite{ivanov ol'shanskii}]
		\label{lem Lemma 5}
		 Let $G = \langle X \rangle$ be a $\delta$-hyperbolic group. Let $\alpha\geq 14\delta$, $\alpha_1 \geq 12(\alpha + \delta)$, and a geodesic $n$-gon $A_1A_2\ldots A_n$ with
		$n\geq 3$ satisfies the following conditions: $d(A_{i-1}, A_i) > \alpha_1$ for $i = 2,...,n$ and
		$(A_{i-2} \cdot A_i)_{A_{i-1}} \leq \alpha$ for $i = 3,...,n$. Then the polygonal line $p = A_1 A_2 \cup \ldots \cup A_{n-1}A_n$ is
		contained in the closed $2\alpha$-neighborhood of the side $A_nA_1$ and the side $A_nA_1$ is
		contained in the closed $14\delta$-neighborhood of $p$. In addition, $d(A_1, A_n) > 6(n-1)(\alpha+\delta)$.
	\end{lemma}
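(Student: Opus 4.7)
The plan is to proceed by induction on $n$, letting $q$ denote a geodesic from $A_n$ to $A_1$ (the ``closing'' side of the polygon). The strategy is to reduce an $n$-gon to an $(n-1)$-gon by absorbing two consecutive sides into one, verify that the hypotheses are inherited under this reduction, and then patch the resulting triangle back in using $\delta$-slimness.

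For the base case $n=3$, the statement is essentially unwinding the Gromov product hypothesis. From $(A_1 \cdot A_3)_{A_2} \leq \alpha$ one reads off
\begin{equation*}
d(A_1, A_3) \;\geq\; d(A_1, A_2) + d(A_2, A_3) - 2\alpha \;>\; 2\alpha_1 - 2\alpha \;\geq\; 12(\alpha+\delta) \;=\; 6(n-1)(\alpha+\delta),
\end{equation*}
which yields the length bound. Now apply $\delta$-slimness to the triangle $A_1A_2A_3$: any point on $A_1A_2$ is within $\delta$ of $A_2A_3 \cup A_1A_3$. The Gromov product bound implies that only the initial segment of length $\alpha+\delta$ of $A_1A_2$ (measured from $A_2$) can lie $\delta$-close to $A_2A_3$; the remainder, hence all but a $(2\alpha)$-portion, must be $\delta$-close to $A_1A_3$. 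The same argument applied to $A_2A_3$ gives $p \subseteq \mathcal{N}_{2\alpha}(q)$, and a reverse slimness argument (using that the full triangle is $6\delta$-thin by Property \ref{property 1111}) gives $q \subseteq \mathcal{N}_{14\delta}(p)$.

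For the inductive step, pick a geodesic $r$ from $A_{n-2}$ to $A_n$ and consider the $(n-1)$-gon $A_1A_2\ldots A_{n-2}A_n$ with this new side $r$. I need to verify that all hypotheses still hold. First, applying the base-case argument to the triangle $A_{n-2}A_{n-1}A_n$ (whose Gromov product hypothesis at $A_{n-1}$ is assumed) gives
\begin{equation*}
d(A_{n-2}, A_n) \;\geq\; d(A_{n-2}, A_{n-1}) + d(A_{n-1}, A_n) - 2\alpha \;>\; 2\alpha_1 - 2\alpha \;>\; \alpha_1,
\end{equation*}
so the new side is long enough. The delicate point — and the main obstacle — is verifying the Gromov product condition $(A_{n-3} \cdot A_n)_{A_{n-2}} \leq \alpha$ at the vertex $A_{n-2}$. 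This follows because, by the $n=3$ analysis of the triangle $A_{n-2}A_{n-1}A_n$, the geodesic $r$ fellow-travels with $[A_{n-2}, A_{n-1}]$ for a length comparable to $\alpha$ near $A_{n-2}$ (and then diverges into $[A_{n-1}, A_n]$). Combined with the original hypothesis $(A_{n-3} \cdot A_{n-1})_{A_{n-2}} \leq \alpha$, a careful accounting (using that $\alpha_1$ is much larger than $\alpha+\delta$) bounds the Gromov product $(A_{n-3} \cdot A_n)_{A_{n-2}}$ by $\alpha$, up to constants that are absorbed by the choice $\alpha \geq 14\delta$, $\alpha_1 \geq 12(\alpha+\delta)$.

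With the hypotheses verified for the reduced polygon, the inductive hypothesis gives that the broken path $p' = A_1A_2 \cup \ldots \cup A_{n-3}A_{n-2} \cup r$ lies in $\mathcal{N}_{2\alpha}(q)$ and $q \subseteq \mathcal{N}_{14\delta}(p')$, together with $d(A_1, A_n) > 6(n-2)(\alpha+\delta)$. The triangle analysis of $A_{n-2}A_{n-1}A_n$ adds the two segments $A_{n-2}A_{n-1}$ and $A_{n-1}A_n$ into the $2\alpha$-neighborhood of $r$, hence into the $2\alpha$-neighborhood of $q$, establishing $p \subseteq \mathcal{N}_{2\alpha}(q)$. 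The coverage $q \subseteq \mathcal{N}_{14\delta}(p)$ is preserved since $r \subseteq \mathcal{N}_{14\delta}(A_{n-2}A_{n-1} \cup A_{n-1}A_n) \subseteq \mathcal{N}_{14\delta}(p)$ by the base case. Finally, the length bound upgrades by an additive $6(\alpha+\delta)$ coming from the extra triangle:
\begin{equation*}
d(A_1, A_n) \;>\; 6(n-2)(\alpha+\delta) + 6(\alpha+\delta) \;=\; 6(n-1)(\alpha+\delta),
\end{equation*}
completing the induction.
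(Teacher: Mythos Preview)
The paper does not give its own proof of this lemma; it simply cites Ivanov--Ol'shanskii. So there is no ``paper's proof'' to compare against, and your argument has to stand on its own. Unfortunately, as written it has two genuine gaps in the inductive step.

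\textbf{Gap 1: the Gromov product does not stay bounded by $\alpha$.} You assert that ``careful accounting'' gives $(A_{n-3}\cdot A_n)_{A_{n-2}} \le \alpha$, but if you actually carry this out via the four-point inequality you get only $(A_{n-3}\cdot A_n)_{A_{n-2}} \le (A_{n-3}\cdot A_{n-1})_{A_{n-2}} + O(\delta) \le \alpha + O(\delta)$. The additive $O(\delta)$ loss is unavoidable in this kind of estimate, and it means the reduced $(n-1)$-gon does \emph{not} satisfy the hypotheses of the lemma with the same constant $\alpha$. Your inductive hypothesis therefore does not apply. The phrase ``absorbed by the choice $\alpha \ge 14\delta$'' is exactly the point that needs a real argument: one standard fix is to prove a version of the statement with a floating parameter and show that the neighborhood and length conclusions are stable under replacing $\alpha$ by $\alpha + c\delta$, but this has to be set up carefully from the start.

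\textbf{Gap 2: the length bound does not upgrade.} Your $(n-1)$-gon has closing side $[A_n,A_1]$, so the inductive hypothesis gives $d(A_1,A_n) > 6(n-2)(\alpha+\delta)$ and nothing more. There is no mechanism by which ``the extra triangle'' adds $6(\alpha+\delta)$ to this quantity: the triangle $A_{n-2}A_{n-1}A_n$ tells you about $d(A_{n-2},A_n)$, not about $d(A_1,A_n)$. A workable route is instead to induct on the polygon $A_1\cdots A_{n-1}$ (closing side $[A_{n-1},A_1]$), obtain $d(A_1,A_{n-1}) > 6(n-2)(\alpha+\delta)$, bound $(A_1\cdot A_n)_{A_{n-1}}$ using the fellow-traveling of $[A_1,A_{n-1}]$ with $[A_{n-2},A_{n-1}]$ near $A_{n-1}$, and then compute $d(A_1,A_n) \ge d(A_1,A_{n-1}) + d(A_{n-1},A_n) - 2(A_1\cdot A_n)_{A_{n-1}}$; the slack in $\alpha_1 \ge 12(\alpha+\delta)$ then does the work. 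But this, too, requires controlling a Gromov product that is not among the original hypotheses, so you are back to the issue in Gap~1.
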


	\begin{lemma}[see Lemma 1.17, \cite{Olsh G-groups}, also Lemma 8, \cite{ivanov ol'shanskii}]
		\label{lem Lemma 8}
		 Let $g$ be an element of infinite order in a hyperbolic group $G$ and an
		equality $x g^k x^{-1} = g^l$ holds in $G$, where $x \in G$, $l \neq 0$. Then $k = \pm l$.
	\end{lemma}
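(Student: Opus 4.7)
The plan is to iterate the relation $xg^kx^{-1}=g^l$ and compare the resulting growth rates in the word metric, exploiting the fact that an infinite-order element of a hyperbolic group generates an undistorted cyclic subgroup.

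First, I dispose of the degenerate case: if $k=0$, the hypothesis reduces to $g^l=1$, forcing $l=0$ and contradicting $l\neq 0$, so $k\neq 0$. Raising the hypothesis to the $m$-th power gives $xg^{km}x^{-1}=g^{lm}$ for every $m\in\mathbb{Z}$, and a straightforward induction on $n\geq 1$ (combine the identity for $n$ with the case $m=k^{n}$ to obtain the case $n+1$) yields
\[
x^n g^{k^n}x^{-n} \;=\; g^{l^n}\quad\text{in }G.
\]

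Next, since $g$ has infinite order in the hyperbolic group $G$, the cyclic subgroup $\langle g\rangle$ is quasi-geodesically embedded in $\Gamma(G,X)$. Indeed, a cyclically minimal representative of a suitable positive power of $g$ yields a bi-infinite labeled path whose sufficiently long subpaths are local geodesics, so Lemma \ref{lem 1} produces constants $\lambda\geq 1$ and $c\geq 0$ such that $|g^m|\geq |m|/\lambda - c$ for all $m\in\mathbb{Z}$. Combining this lower bound with the triangle inequality applied to the iterated identity gives
\[
\frac{|l|^n}{\lambda}-c \;\leq\; |g^{l^n}| \;\leq\; 2n|x|+|g^{k^n}| \;\leq\; 2n|x|+|k|^n|g|.
\]
If $|l|>|k|$, dividing by $|l|^n$ and letting $n\to\infty$ produces an immediate contradiction, since the left-hand side grows like $(|l|/|k|)^n$ relative to the right-hand side. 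The case $|k|>|l|$ is symmetric: rewrite the hypothesis as $x^{-1}g^l x=g^k$ and run the same argument with the roles of $k$ and $l$ interchanged. Hence $|k|=|l|$, i.e.\ $k=\pm l$.

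The main obstacle is establishing the quasi-geodesic lower bound $|g^m|\geq |m|/\lambda - c$; this is the only genuinely geometric input, while the remainder of the argument is elementary algebra and the triangle inequality. Once this bound is in place, the iteration argument simply compares base-$|l|$ and base-$|k|$ exponential growth in $n$, and forces $|l|=|k|$.
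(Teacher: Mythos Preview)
Your proof is correct. Note that the paper does not actually supply its own proof of this lemma: it is stated with citations to Lemma~1.17 of \cite{Olsh G-groups} and Lemma~8 of \cite{ivanov ol'shanskii} and is used as a black box throughout. Your argument is the standard one found in those sources: iterate the conjugacy relation to obtain $x^n g^{k^n} x^{-n} = g^{l^n}$, then exploit the undistortedness of $\langle g\rangle$ in a hyperbolic group to compare exponential growth rates and force $|k|=|l|$.

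One minor remark on internal references: you invoke Lemma~\ref{lem 1} (local quasi-geodesics are global) to justify the lower bound $|g^m|\geq |m|/\lambda - c$, but the paper's Lemma~\ref{lem 1.11} already establishes directly that $W^k$ is $(\lambda_W, c_W)$-quasi-geodesic for any geodesic word $W$ representing an infinite-order element, so that would be the cleaner internal citation. Your sketch of the undistortedness (via a cyclically minimal representative and local geodesics) is essentially how Lemma~\ref{lem 1.11} is proved anyway.
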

	
	\begin{lemma}
	\label{lemma_aux_q-g}
	Let $G = \langle X \rangle$ be a $\delta$-hyperbolic group, and let $W, V, T \in X^*$ be such that $V$ is freely cyclically reduced non-empty word and	
	\begin{align*}
		W=_G T^{-1} V T.
	\end{align*}
	Suppose that for some $k\in \mathbb{N}$ and $\lambda\geq 1$, $c\geq 0$, $V^k$ is a $(\lambda, c)$-quasi-geodesic word. Then $W^k$ is a $\big(\lambda \|W\|, (2\lambda\|T\|+ c +2)\|W\| \big)$-quasi-geodesic word.
	\end{lemma}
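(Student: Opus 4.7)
The plan is to transfer the quasi-geodesic estimate from the path labeled $V^k$ to the path labeled $W^k$ in $\Gamma(G, X)$, using that $W^i =_G T^{-1} V^i T$ for every $i \geq 0$, which follows from $W =_G T^{-1} V T$.

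First, I would fix arbitrary points $P(s), P(t)$ on the path $P$ based at $1_G$ and labeled $W^k$, with $0 \leq s \leq t \leq k\|W\|$, and snap each to the nearest preceding ``$W$-boundary'': write $s = a\|W\| + r_1$ and $t = b\|W\| + r_2$ with $0 \leq r_1, r_2 < \|W\|$, so that $P(a\|W\|) = W^a$ and $P(b\|W\|) = W^b$ as elements of $G$. The triangle inequality then yields $|W^{b-a}| = d(W^a, W^b) \leq d(P(s), P(t)) + r_1 + r_2$.

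Next, the conjugation identity $W^{b-a} =_G T^{-1} V^{b-a} T$ together with the triangle inequality yields $|V^{b-a}| \leq |W^{b-a}| + 2\|T\|$. I would apply the $(\lambda, c)$-quasi-geodesicity of $V^k$ to the subpath of length $(b-a)\|V\|$ joining $V^a$ to $V^b$ in order to deduce $(b-a)\|V\| \leq \lambda |V^{b-a}| + c$. Since $V$ is non-empty and freely reduced, $\|V\| \geq 1$, so $b-a$ is bounded by the same right-hand side. Finally, $t - s = (b-a)\|W\| + (r_2 - r_1) \leq (b-a+1)\|W\|$, and substituting the previous bounds produces the claimed quasi-geodesic inequality for $W^k$ after combining constants.

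The main (mild) obstacle is the boundary bookkeeping: since $s$ and $t$ need not lie at $W$-boundaries, one must carefully track how the remainders $r_1, r_2 \in [0, \|W\|)$ propagate into the additive constant $(2\lambda\|T\| + c + 2)\|W\|$. The multiplicative factor $\|W\|$ in the conclusion reflects that each unit increment of $b-a$ contributes $\|W\|$ to the path length $t-s$ of $W^k$ but only $\|V\|$ to the corresponding path length along $V^k$, so the hypothesis $\|V\| \geq 1$ is exactly what is needed to convert the quasi-geodesic bound on $(b-a)\|V\|$ into a bound on $b-a$ itself.
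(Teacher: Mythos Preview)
Your approach is correct and essentially identical to the paper's: both decompose an arbitrary subword of $W^k$ as (suffix of $W$)$\cdot W^l\cdot$(prefix of $W$), invoke the conjugation $W^l =_G T^{-1}V^lT$ together with the $(\lambda,c)$-quasi-geodesicity of $V^k$ and the bound $\|V\|\geq 1$ to control $l$, and then absorb the two boundary pieces of length $<\|W\|$ into the additive constant. The only cosmetic difference is that the paper bounds the subword length directly in terms of $|W^l|$, whereas you first pass from $|W^{b-a}|$ to $d(P(s),P(t))$ via the triangle inequality before substituting.
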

	\begin{proof}
	First of all, note that for all $l \geq 0$, we have
	\begin{align*}
		l\|V\|=\|V^{l}\|\leq \lambda |V^{l}|+c,
	\end{align*}
	hence
	\begin{align}
	\label{eq-++++}
		l\leq \frac{\lambda |V^{l}|+c}{\|V\|} \leq \lambda |V^{l}|+c.
	\end{align}
	
	Now note that every subword of $W^k$ is of the form $W_1 W^l W_2$, where $l\geq 0$ and $W_1$, $W_2$ are (possibly empty) suffix and prefix of $W$ respectively.
	
	Now for $W_1 W^l W_2$ we have
	\begin{align*}
		\|W_1 W^l W_2\| &\leq \|W_1\|+\|W^l\|+\|W_2\| = \|W_1\|+\|W_2\|+l\|W\|\\
		\text{by \eqref{eq-++++},~}&\leq \|W_1\|+\|W_2\|+(\lambda |V^{l}|+c)\|W\|\\
		&=\|W_1\|+\|W_2\|+(\lambda |TW^lT^{-1}|+c)\|W\|\\
		&\leq  2\|W\|+(\lambda |W^l|+ 2\lambda\|T\|+c)\|W\|\\
		& = \lambda \|W\||W^l|+2\|W\|+2\lambda\|T\|\|W\|+ c \|W\| \\
		&= \lambda \|W\||W^l|+(2\lambda\|T\|+ c +2)\|W\|.
	\end{align*}
	Now, since $W_1 W^l W_2$ was chosen to be an arbitrary subword of $W^k$, we conclude that $W^k$ is a $\big(\lambda \|W\|, (2\lambda\|T\|+ c +2)\|W\| \big)$-quasi-geodesic word.
	\end{proof}

\begin{lemma}
\label{lemma_aux_q-g_cyclically_minimal}
Let $G = \langle X \rangle$ be a $\delta$-hyperbolic group, and let $V \in X^*$ be a cyclically minimal word such that $\|V\|\geq \alpha$, where $\alpha=12\cdot 15 \delta=180\delta$. Then for each $k \in \mathbb{Z}$, $V^k$ is a $(4, 2520\delta)$-quasi-geodesic word.
\end{lemma}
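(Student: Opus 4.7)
The plan is to show that the path in $\Gamma(G,X)$ labeled by $V^k$ is an $\|V\|$-local geodesic, and then invoke Lemma \ref{lemma_about_local_geodesicness}. First I reduce to the case $k \geq 1$: the case $k=0$ yields the empty word, which is trivially quasi-geodesic; and for $k < 0$, since cyclic shifts of $V^{-1}$ are precisely the inverses of cyclic shifts of $V$ and inversion preserves word length, $V^{-1}$ is also cyclically minimal of length $\|V^{-1}\| = \|V\| \geq \alpha$, so it suffices to handle $V^{|k|}$ with $V$ replaced by $V^{-1}$.

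The key claim is that every subword $W$ of $V^k$ with $\|W\| \leq \|V\|$ is a geodesic word. If $W$ lies inside a single copy of $V$, then $W$ is a subword of $V$; since cyclic minimality implies geodesicity of $V$, and any subword of a geodesic word is geodesic, we are done. Otherwise $W$ straddles two consecutive copies of $V$, so $W = V_{[a..\|V\|]}\,V_{[1..b]}$ for some $1 \leq a \leq \|V\|$ and $0 \leq b$ with $(\|V\|-a+1)+b \leq \|V\|$, i.e.\ $b \leq a-1$. Then $W$ is a prefix of the cyclic shift $V' := V_{[a..\|V\|]}\,V_{[1..a-1]}$ of $V$. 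Cyclic minimality of $V$ forces $V'$ to be geodesic, and so its prefix $W$ is geodesic as well. Thus every subpath of the path labeled $V^k$ of length at most $\|V\|$ is a geodesic, i.e.\ $V^k$ is an $\|V\|$-local geodesic.

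Since $\|V\| \geq 180\delta > 8\delta$, Lemma \ref{lemma_about_local_geodesicness} applies with the local-geodesic parameter equal to $\|V\|$, yielding that $V^k$ is a $(\lambda, c)$-quasi-geodesic with
\[
\lambda = \frac{\|V\|+4\delta}{\|V\|-4\delta}, \qquad c = 2\delta.
\]
The function $x \mapsto (x+4\delta)/(x-4\delta)$ is decreasing on $(4\delta, \infty)$, so $\lambda \leq (180\delta+4\delta)/(180\delta-4\delta) = 184/176 < 2 \leq 4$, and of course $c = 2\delta \leq 2520\delta$. Since $(\lambda, c)$-quasi-geodesicity with smaller constants implies $(4, 2520\delta)$-quasi-geodesicity, the conclusion follows. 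The only nontrivial point is the cyclic-shift identification in the straddling case of the subword argument; everything else is a direct application of the already-cited lemmas.
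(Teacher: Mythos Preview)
Your proof is correct and takes a genuinely different, more elementary route than the paper. The paper decomposes $V$ into $s=\lfloor \|V\|/\alpha\rfloor$ pieces $V_1,\dots,V_s$ each of length in $[\alpha,2\alpha)$, observes that cyclic minimality forces the Gromov products at the junction vertices to vanish, and then applies Lemma~\ref{lem Lemma 5} (the Ivanov--Olshanskii polygon lemma) to obtain a lower bound $|V'|>90\delta t-450\delta$ for a subword passing through $t$ pieces; combining this with $\|V'\|\leq (t+2)\cdot 2\alpha$ yields the $(4,2520\delta)$ estimate. Your argument bypasses all of this: you simply note that any length-$\leq\|V\|$ subword of $V^k$ is a subword of a cyclic shift of $V$, hence geodesic by cyclic minimality, so $V^k$ is $\|V\|$-local geodesic and Lemma~\ref{lemma_about_local_geodesicness} applies directly. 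This is shorter, avoids the polygon lemma entirely, and in fact proves the sharper statement that $V^k$ is $\bigl(\tfrac{184}{176},2\delta\bigr)$-quasi-geodesic. The paper's approach does not appear to buy anything extra here; your argument is strictly preferable for this lemma.
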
	

\begin{proof}
	Without loss of generality let us assume that $k\in \mathbb{N}$. We want to show that  $V^k$ is $(2, 1260\delta)$-quasi-geodesic. 
	
	For that reason, let us decompose $V$ as 
		\begin{equation*}
			V=V_1V_2\ldots V_s,
		\end{equation*}
		where $s = \Big\lfloor \frac{\|V\|}{\alpha} \Big\rfloor$ and $\alpha \leq \|V_i \| < 2\alpha$ for $i=1, \ldots, s$. Then, since $V$ is cyclically minimal and the word $V_sV_1$ along with the words $V_1V_2$, \ldots, $V_{s-1}V_s$  are subwords of (a cyclic shift of) $V$, we get
		\begin{equation*}
			|V_1|+|V_s|-|V_sV_1| =\|V_1\|+\|V_s\|-\|V_sV_1\| =0		\end{equation*}
					and 
		\begin{equation*}
			|V_i|+|V_{i+1}|-|V_iV_{i+1}| =\|V_i\|+\|V_{i+1}\|-\|V_iV_{i+1}\|=0, \text{~for $i=1, \ldots, s-1$}.
		\end{equation*}
		The last equations suggest that we can apply Lemma \ref{lem Lemma 5} on subwords of $V^k$ to conclude that for any subword $V'$ of $V^k$, which is indeed of the form
		\begin{equation*}
			V'=U_1 V_{i_1}\ldots V_{i_t} U_2,
		\end{equation*}
		where $U_1$ and $U_2$ are suffix and prefix of words from $\{ V_1, \ldots, V_s\}$, we have
		\begin{equation}
		\label{agagag}
			|V'|\geq |V_{i_1}\ldots V_{i_t}|-\|U_1\|-\|U_2\|> 6(t-1)15\delta-2\alpha=90\delta t - 450\delta.
		\end{equation}
		(Lemma \ref{lem Lemma 5} was used to obtain $|V_{i_1}\ldots V_{i_t}| >6(t-1)15\delta$).\\
		On the other hand
		\begin{equation*}
		\begin{aligned}
			\|V'\|\leq (t+2)2\alpha = (t+2)360\delta &=(360\delta t- 1800\delta)+1800\delta+720\delta\\
			\text{by (\ref{agagag}),~}&\leq 4|V'|+2520\delta.
		\end{aligned}		
		\end{equation*}
		Therefore, since $V'$ is an arbitrary subword of $V^k$, we conclude that $V^k$ is a 
		\begin{equation}
		\label{Vk_is_q-g}
			(4, 2520\delta)\text{-quasi-geodesic word}.
		\end{equation}
	
	\end{proof}

	\begin{lemma}
			\label{lem 1.11}
		Let $G = \langle X \rangle$ be a $\delta$-hyperbolic group, where $X$ is symmetric (i.e. $X=X^{-1}$), and let $W \in X^*$ be a geodesic word representing an element of $ G$ of infinite order. Then for every $k \in \mathbb{Z}$, the word $W^k$ is $(\lambda_W, c_W)$-quasi-geodesic in the Cayley graph $\Gamma(G, X)$, where $\lambda_W$ and $c_W$ are given by the formulas
		\begin{align}
		\label{eq def of lambda-g}
		\lambda_W= 	 4|X|^{\alpha}\|W\|,
		\end{align}
		and
		\begin{align}
		\label{eq def of c-g}
		c_W= 5|X|^{2\alpha}\|W\|^2
		\end{align}
		where $\alpha=180\delta$. Moreover, if $W$ is cyclically minimal, then
		$W^k$ is $(4\alpha|X|^{\alpha}, 5\alpha^2|X|^{2\alpha})$-quasi-geodesic.
	\end{lemma}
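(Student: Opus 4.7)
My plan is to reduce the statement, via Lemma \ref{lemma_aux_q-g}, to the setting of Lemma \ref{lemma_aux_q-g_cyclically_minimal}, which already handles cyclically minimal words of length at least $\alpha$. I would first prove the ``moreover'' (cyclically minimal) assertion and then derive the general statement by passing to a cyclically minimal conjugate.

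Assume first that $W$ is cyclically minimal. If $\|W\|\geq \alpha$, then Lemma \ref{lemma_aux_q-g_cyclically_minimal} gives directly that $W^k$ is $(4, 2520\delta)$-quasi-geodesic, and since $2520\delta = 14\alpha$ this sits inside the asserted bound $(4\alpha|X|^\alpha, 5\alpha^2|X|^{2\alpha})$. The nontrivial subcase is $\|W\|<\alpha$, for which I would use a pigeonhole argument. By Lemma \ref{lem Lemma 8}, the $G$-conjugacy class of $W^j$ equals $\{W^j, W^{-j}\}$ for each $j\geq 1$, and hence the classes $\{W^j, W^{-j}\}$ are pairwise distinct. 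Since there are at most $|X|^\alpha$ words in $X^*$ of length less than $\alpha$, at most $|X|^\alpha$ of these conjugacy classes admit a cyclically minimal representative of length less than $\alpha$; consequently some $m$ with $1\leq m\leq |X|^\alpha+1$ satisfies $|W^m|_c\geq \alpha$. Choose such an $m$ minimally and let $\tilde V$ be a cyclically minimal word with $W^m =_G T^{-1}\tilde V T$ for some $T\in X^*$; then $\|\tilde V\|\geq \alpha$, and the standard short-conjugator estimate in $\delta$-hyperbolic groups (e.g.\ Bridson, Lemma 2.9) yields $\|T\|=O(\|W^m\|+\|\tilde V\|)=O(m\|W\|)$. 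Lemma \ref{lemma_aux_q-g_cyclically_minimal} applied to $\tilde V$ then gives that $\tilde V^k$ is $(4,2520\delta)$-quasi-geodesic, and Lemma \ref{lemma_aux_q-g} applied to $W^m =_G T^{-1}\tilde V T$ transfers this to quasi-geodesicity of $(W^m)^k$ with constants linear in $m\|W\|$. Any $W^j$ is a prefix of $(W^m)^{\lceil j/m\rceil}$, hence inherits the same constants as a subpath; inserting the bounds $m\leq |X|^\alpha+1$ and $\|W\|<\alpha$ produces the claimed $(4\alpha|X|^\alpha, 5\alpha^2|X|^{2\alpha})$-quasi-geodesicity, after absorbing small universal constants into the exponential factors.

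For the general statement, let $V$ be a cyclically minimal conjugate of $W$, so $\|V\|=|W|_c\leq \|W\|$ and $W=_G T^{-1}VT$ for some $T$ with $\|T\|=O(\|W\|)$ by the short-conjugator estimate. The moreover part gives that $V^k$ is $(4\alpha|X|^\alpha, 5\alpha^2|X|^{2\alpha})$-quasi-geodesic, and a single further application of Lemma \ref{lemma_aux_q-g} to $W=_G T^{-1}VT$ transfers this bound to $W^k$, yielding $(\lambda_W, c_W)$ of the stated form $(4|X|^\alpha\|W\|, 5|X|^{2\alpha}\|W\|^2)$, again modulo absorption of universal constants into the exponential factors.

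I expect the main obstacle to be purely computational: squeezing out the precise stated constants $4$ and $5$, rather than some larger universal constant, requires careful choice of the conjugator $T$ at each step and careful tracking of constants through the two successive invocations of Lemma \ref{lemma_aux_q-g} (one inside the pigeonhole case of the moreover part, and one when transferring from $V$ to $W$ in the general case). Qualitatively, however, the polynomial dependence on $\|W\|$ and the exponential dependence on $\delta$ through $\alpha=180\delta$ are exactly what the pigeonhole count of short cyclically minimal words and the short-conjugator estimate produce.
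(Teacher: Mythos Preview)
Your outline uses the same three ingredients as the paper (the pigeonhole argument via Lemma~\ref{lem Lemma 8}, Lemma~\ref{lemma_aux_q-g_cyclically_minimal}, and the transfer Lemma~\ref{lemma_aux_q-g}), but in the reverse order, and this reversal is what costs you the constants. The paper runs the pigeonhole argument \emph{directly on the given geodesic $W$}: it finds $1\le m\le |X|^{\alpha}$ with $|W^m|_c>\alpha$, writes $W^m=_G T^{-1}VT$ with $V$ cyclically minimal of length $>\alpha$, applies Lemma~\ref{lemma_aux_q-g_cyclically_minimal} to $V$, and then a \emph{single} application of Lemma~\ref{lemma_aux_q-g} gives the bound $(4|X|^{\alpha}\|W\|,\,5|X|^{2\alpha}\|W\|^{2})$ for $W^{km}$, hence for $W^k$. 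The ``moreover'' clause is then a specialization: for cyclically minimal $W$ with $\|W\|\le\alpha$ one simply caps $\|W\|$ by $\alpha$ in the general formula, while for $\|W\|>\alpha$ Lemma~\ref{lemma_aux_q-g_cyclically_minimal} already gives the better bound $(4,2520\delta)$. Your route instead proves the moreover clause first and then transfers once more to the general $W$; but Lemma~\ref{lemma_aux_q-g} multiplies the $\lambda$-parameter by $\|W\|$, so starting from $\lambda=4\alpha|X|^{\alpha}$ you arrive at $\lambda_W=4\alpha|X|^{\alpha}\|W\|$, not $4|X|^{\alpha}\|W\|$. The extra factor of $\alpha$ is structural to the double transfer, not a bookkeeping slip, so the obstacle you flag is not ``purely computational.''

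A second point: the short-conjugator lemma you cite (Bridson, Lemma~2.9) requires both words to be \emph{cyclically} geodesic, which $W^m$ (and $W$ itself, in your general step) need not be. The paper replaces this citation with a direct two-line argument: choose $T$ of minimal length with $W^m=_G T^{-1}VT$, so that in the conjugacy quadrangle $ABCD$ one has $\|T\|=\operatorname{dist}(B,AD)$; then any point on the $V$-side $AD$ at distance $>4\delta$ from its endpoints is $2\delta$-close to the geodesic side labelled by a geodesic for $W^m$, giving $\|T\|\le |W^m|+2\delta\le |X|^{\alpha}\|W\|+2\delta$. This self-contained estimate is exactly what is needed and feeds straight into Lemma~\ref{lemma_aux_q-g}.
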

	\begin{proof}
		First, let us show that there exists an integer $ 1\leq m \leq |X|^{\alpha}$ such that $|W^m|_c >\alpha$ (recall that we assume $X=X^{-1}$). Indeed, assume that there is no such $m$. Then, by the pigeonhole principle, there exist $1 \leq m_1 < m_2 \leq |X|^{\alpha}$ and $V\in X^*$, $T_1, T_2 \in X^*$, such that $\|V\| \leq \alpha$, $\|V\|=|W^m|_c$ and
		\begin{align*}
		W^{m_1}=_G T^{-1}_1 V T_1, ~ W^{m_2}=_G  T^{-1}_2 V T_2.
		\end{align*}
		 But this means that $W^{m_1}$ and $W^{m_2}$ are conjugate in $G$, which on its own turn, by Lemma \ref{lem Lemma 8}, implies that $m_1 = \pm m_2$. A contradiction.
		
		Therefore, there exists $ 1 \leq m \leq |X|^{\alpha}$ such that 
		\begin{align}
		\label{equation*-*}
		W^m =_{G} T^{-1}VT,
		\end{align}
		where $T, V \in X^*$, $\|V\|=|W^m|_c$   and 
		\begin{align}
		\label{eq auxiliary 2}
		\|V\|>\alpha.
		\end{align}	
		
		Note that the equation $\|V\|=|W^m|_c$ implies that $V$ is cyclically geodesic.
		
		Without loss of generality assume that $T$ has the smallest length among all the words $T$ satisfying the equation (\ref{equation*-*}) for some $V$ with $\|V\|=|W^m|_c$.

		Let us assume that $W^m=_G U$ for some geodesic word $U \in X^*$. Let us consider a geodesic quadrangle $ABCD$ in $\Gamma(G,X) $ such that $lab(AB)=lab(DC)= T$, $lab(AD)=V$ and $lab(BC)= W^m$, i.e. the boundary of $ABCD$ corresponds to the equation $	W^m =_G T^{-1}VT$.
		
		 The first observation is that $\|T^{-1}\|=\|T\| = dist(B, AD) ~(=dist(C, AD))$. Indeed, if there exists a point $O\in AD$ such that $d(B, O) < \|T\|$, then there exists a path joining $B$ to $O$, whose label is a word $Q$ such that $\|Q\|<\|T\|$. Now, if we denote $lab(AO)=V_1$, $lab(O, D)=V_2$, we get $W^m=_G U =_G Q (V_2V_1) Q^{-1}$. But because of the minimality assumption on $\|T\|$, the inequality $\|Q\|<\|T\|$ leads to a contradiction. Thus the first observation is proved. See Figure \ref{fig:  for lemma 6}.
		\begin{figure}[H]
			\centering
			\includegraphics[clip, trim=2cm 14.3cm 1cm 4.8cm, width=.65\textwidth]{{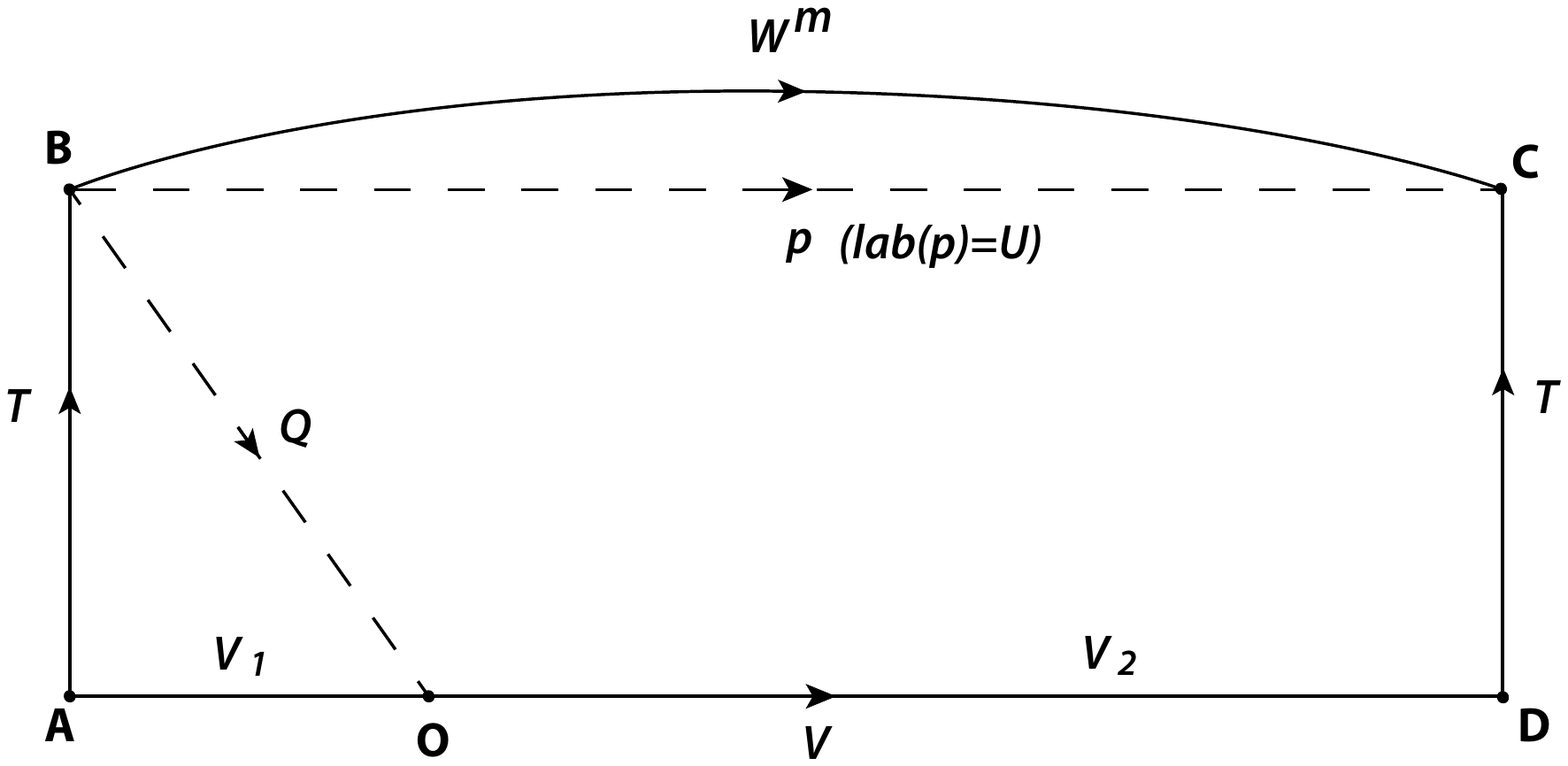}} 
			\caption{} 
			\label{fig:  for lemma 6}
		\end{figure}	
		
		The second observation is that for any point $O_1 \in AD$ such that $d(A,O_1), d(O_1, D) > 4\delta$ (note that such a point exists, because $\|V\|>\alpha$), we have $dist(O_1, p)\leq 2\delta$, where $p$ is the path joining $B$ to $C$ with the label $U$. To show this, first notice that $dist(O_1, AB\cup p \cup CD) \leq 2\delta$ (see Corollary \ref{corollary on hausdorff distance between quasi-geodesics}). Also, because of the minimality assumption on $\|T\|$, we get $d(B, O_1) \geq d(B, A)$. Now suppose that there is a point $O_2 \in AB$ such that $d(O_1, O_2) \leq 2\delta$. Then, since  $d(B, O_1) \geq d(B, A)$, we get 
		\begin{equation*}
		   d(B, A)=d(B, O_2)+d(O_2, A) \leq d(B, O_1) \leq d(B, O_2)+d(O_2, O_1).
		\end{equation*}
		 Therefore, $d(O_2, A) \leq d(O_2, O_1) \leq 2\delta$ and as a consequence,  by the triangle inequality, we get $d(A, O_1) \leq d(A, O_2)+d(O_2, O_1) \leq 4\delta$. But since $d(A, O_1)>4\delta$, we obtain a contradiction. 
		 
		 The last contradiction implies that $dist (O_1, AB)> 2\delta$. The same way we get $dist(O_1, CD) >2\delta$. Therefore, the inequality $dist (O_1, AB\cup p \cup CD) \leq 2\delta$ implies that $dist(O_1, p)\leq 2\delta$, and consequently, since the length of $p$ is bounded from above by $\|W^m\|$, we get that $d(O_1, B) \leq \|W^m\|+2\delta$. Therefore, from the minimality assumption on $\|T\|$, we get
		 \begin{equation}
			\label{estimation_of_T}
			 \|T\| \leq |X|^{\alpha}\|W\|+2\delta.
		\end{equation}
		 
		

Now, since $W^m = T^{-1}VT$ and $|V|>\alpha$, it follows immediately from Lemma \ref{lemma_aux_q-g}, Lemma \ref{lemma_aux_q-g_cyclically_minimal} and \eqref{estimation_of_T} that for all $k \in \mathbb{Z}$, $W^{km}$ is a
$\big( 4 \|W^m\|, (2|X|^{\alpha} \|W\|+8\delta+2520\delta+2)\|W^m\| \big)$-quasi-geodesic word. Also, taken into the account the fact that $W^k$ is a subword of $W^{km}$ and the inequalities $m\leq |X|^{\alpha}$ and $2520\delta+2 \leq |X|^{\alpha} \|W\|$, we conclude that $W^{k}$ is a
\begin{align}
		\label{equ}
		(4|X|^{\alpha}\|W\|, 5|X|^{2\alpha}\|W\|^2)\text{-quasi-geodesic}.	
		\end{align}

		 Finally, since for cyclically minimal words $V$ satisfying $\|V\|> \alpha$, we showed that $V^k$ is $(4, 2520\delta)$-quasi-geodesic, by taking in (\ref{equ}) $\|W\|=\alpha$, we get that for every cyclically minimal $V \in X^*$, regardless their lengths, $V^k$ is $(4\alpha|X|^{\alpha}, 5\alpha^2|X|^{2\alpha})$-quasi-geodesic.
	\end{proof}
	\subsection{Isoperimetric functions of hyperbolic groups}
	\label{subsec 3.1}
	Let $G$ be a group with finite presentation $G= \langle X \mid r_1, \ldots, r_k \rangle$. A function $f: \mathbb{N} \rightarrow \mathbb{N}$ is called an \textit{isoperimetric function} for $G$ (w.r.t. the given presentation), if for every reduced word $W \in X^*$ such that $W =_G 1$, $W$ can be presented as
	\begin{align*}
	W = \prod_{i=1}^{n} u_i r^{\pm 1}_{j_i} u_i^{-1}
	\end{align*}
	where $ n \leq f(\|W\|)$. The minimal among isoperimetric functions is sometimes called\textit{ Dehn function}. If $n$ is the minimal number for which such a decomposition exists, then $n$ is called \textit{the area} of $W$ and denoted $n = Area(W)$. Another, equivalent definition of the isoperimetric function is the following: let $p$ be a closed path in $\Gamma(G, X)$, then $p$ can be tessellated by at most $f(\|p\|)$ labeled discs whose labels belong to $\{r_1^{\pm 1}, \ldots, r_k^{\pm 1} \}$.
	
	\index{isoperimetric function} \index{Dehn's function}

	It is a well known fact that a group is hyperbolic if and only if it has a finite presentation with linear (equivalently, subquadratic) isoperimetric function. See for example \cite{bridson, Olsh isoperimetric}. Moreover, if $G = \langle X \rangle$ is $\delta$-hyperbolic and $\mathcal{F} = \{ U \in X^* \mid  \|U \| \leq 16 \delta+1, ~U=_G 1 \}$, then $G$ can be given by the following presentation
	\begin{align}
	\label{full-presentation}
	G = \langle X \mid \mathcal{F} \rangle
	\end{align}
	and for this presentation, for all reduced words $W \in \ll \mathcal{F}\gg$, we have $Area(W) \leq n$. Let us call this presentation the $(X, \delta)$-\textit{full presentation} of $G$ with respect to $X$ and $\delta$. If from the context it is clear what are $X$ and $\delta$, then we will just call it  \textit{the full presentation}. \index{$(X, \delta)$-full presentation} \index{full presentation for groups}
	
	An important observatoin about full-presentations follows from Lemma  \ref{lemma_about_local_geodesicness}. More precisely,  the full presentations \eqref{full-presentation} is in fact Dehn presentations. It follows from Lemma \ref{lemma_about_local_geodesicness} and from the observation that in the Cayley graph $\Gamma(G, X)$ the only $(8\delta+1)$-local geodesic loop is the loop with length $0$, i.e. a point. For more details see \cite{bridson} or Proposition \ref{proposition 1}.\\
	
	For a given presentation  $G= \langle X \mid \mathcal{R} \rangle$ of a hyperbolic group, let $f(n)\leq An$ for some constant $A>0$. Then we call $A$ an \textit{isoperimetry coefficient} (w.r.t. $G= \langle X \mid \mathcal{R} \rangle$). \index{isoperimetry coefficient}
	\begin{proposition}
		\label{proposition 1}
		(1). For any Dehn presentation $G=\langle X \mid \mathcal{R}\rangle$ the isoperimetry coefficient is equal to $1$.
		
		(2). If $G$ is $\delta$-hyperbolic, then the full presentation $G = \langle X \mid \mathcal{F} \rangle$ is a Dehn presentation.
	\end{proposition}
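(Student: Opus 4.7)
For (1), the plan is to induct on $\|W\|$ for freely reduced $W$ with $W =_G 1$. The base $\|W\|=0$ is trivial. For the inductive step I would first peel off any outer cancellation pair: if $W = x W'' x^{-1}$ then $W'' =_G 1$ has $\|W''\| = \|W\| - 2$, and any van Kampen decomposition of $W''$ can be conjugated by $x$ to one of $W$, giving $\mathrm{Area}(W) \leq \mathrm{Area}(W'')$. Once $W$ is cyclically reduced, I apply the Dehn condition to produce $R = R_1 R_2 \in \mathcal{R}$ with $\|R_1\| > \|R_2\|$ and a cyclic shift $W'$ of $W$ of the form $W_1 R_1 W_2$; cyclic shifts are free conjugates, so $\mathrm{Area}(W) = \mathrm{Area}(W')$. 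I then rewrite
\[
W' \;=\; (W_1 R W_1^{-1}) \cdot (W_1 R_2^{-1} W_2),
\]
which gives $\mathrm{Area}(W') \leq 1 + \mathrm{Area}(W_1 R_2^{-1} W_2)$. Since $\|R_2\| < \|R_1\|$, the free reduction $\tilde W$ of $W_1 R_2^{-1} W_2$ satisfies $\|\tilde W\| \leq \|W\| - 1$ and has the same area, so the inductive hypothesis yields $\mathrm{Area}(W) \leq 1 + (\|W\| - 1) = \|W\|$.

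For (2), the first step is purely formal: $\mathcal{F}$ is finite (only finitely many words of length $\leq 16\delta+1$) and is closed under cyclic shifts and inverses, since both operations preserve length and the property of representing $1_G$. It remains to verify the Dehn reduction property. Given a freely cyclically reduced nonempty $W$ with $W=_G 1$, I split on $\|W\|$. When $\|W\| \leq 8\delta+1$, I simply take $R_1 = W$, $R_2$ the empty word, and $W_1 = W_2$ empty; then $R_1 R_2 = W \in \mathcal{F}$ and $\|R_1\| > 0 = \|R_2\|$, trivially giving the required decomposition.

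The substantive case is $\|W\| > 8\delta+1$, and I expect this to be the only real step. View $W$ as a closed loop $p$ in $\Gamma(G,X)$. My claim is that $p$ cannot be $(8\delta+1)$-local geodesic: indeed, Lemma \ref{lemma_about_local_geodesicness}(3) with $k = 8\delta+1$ would make $p$ a $(\lambda, 2\delta)$-quasi-geodesic, and because $p_- = p_+$ this forces $\|p\| \leq 2\delta$, contradicting $\|p\| > 8\delta+1$. Hence some subpath $q$ of $p$ with $\|q\| \leq 8\delta+1$ is not geodesic. Setting $R_1 = \mathrm{lab}(q)$ and letting $R_2^{-1}$ label a geodesic from $q_-$ to $q_+$, I get $\|R_2\| < \|R_1\|$ and $R_1 R_2 =_G 1$ with $\|R_1 R_2\| \leq 16\delta+1$, so $R_1 R_2 \in \mathcal{F}$. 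Finally, since $q$ is a subpath of $p$ in its natural parametrization, it cuts $W$ as $W = W_1 R_1 W_2$, completing the Dehn decomposition with trivial cyclic shift $W' = W$.

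The only genuinely geometric ingredient is this collapse of $(8\delta+1)$-local geodesic loops to a point via Lemma \ref{lemma_about_local_geodesicness}(3); everything else is bookkeeping in the free group.
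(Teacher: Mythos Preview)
Your proof is correct and follows essentially the same approach as the paper. Both arguments hinge on the same geometric fact from Lemma~\ref{lemma_about_local_geodesicness}(3): an $(8\delta+1)$-local geodesic loop is $(\lambda,2\delta)$-quasi-geodesic and hence must have length $\leq 2\delta$, forcing it to be trivial. Your presentation of (2) is slightly more direct---you verify the Dehn reduction property explicitly by locating a non-geodesic subword---whereas the paper phrases the same content as ``the Dehn algorithm reduces any loop to a trivial local geodesic''; but these are two wordings of the same argument, and your handling of (1) is just a more careful unpacking of the paper's one-sentence sketch.
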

	\begin{proof}
	(1). Let $G=\langle X \mid \mathcal{R}\rangle$ be a Dehn presentation and let $p$ be a loop in $\Gamma(G, X)$. Then, since $G=\langle X \mid \mathcal{R}\rangle$  is a Dehn presentation, $p$ contains a subpath $q$ such that for another path $q'$ we have $\|q\|>\|q'\|$ and $lab(q^{-1}q')\in \mathcal{R}$. Then $q^{-1}q'$ can be filled with one cell from $\mathcal{R}$. Based on this observation, it is clear that there is a van Kampen diagram over $G=\langle X \mid \mathcal{R}\rangle$  with boundary $p$ and number of cells not exceeding $p$. Hence the first part of the proposition is proved.
	
	(2). 	Indeed, let $p$ be a closed path in $\Gamma(G, X)$ with its ends on $1$. Then, by Lemma \ref{lemma_about_local_geodesicness}, there exists a closed $8\delta+1$-local geodesic path $q$ with its ends on $1$ such that $pq$ can be tesselated by at most $\|p\|$ cells with labels from $\mathcal{F}$.
		
		On the other hand, again by Lemma \ref{lemma_about_local_geodesicness}, $q$ is $(3, 2\delta)$-quasi-geodesic. Now, since $q$ is a closed $8\delta$-local geodesic, we get that either $q$ has $0$ length, or $\|q\|\geq 8\delta$. But since $q$ is $(3, 2\delta)$-quasi-geodesic, the last inequality cannot happen. Hence $q$ has length $0$. This means that the loop $p$ can be tessellated by at most $\|p\|$ cells with labels from $\mathcal{F}$. Thus the proposition is proved.
	\end{proof}
	It is well-known that a finitely presentable group is hyperbolic if and only if with respect to any finite presentation the Dehn function of the group is linear. See, for example, \cite{gromov-hyperbolic, Olsh isoperimetric, alonso brady}. The next lemma tells that if with respect to some finite presentation $\langle X \mid r_1, r_2, \ldots, r_l \rangle$ of a hyperbolic group $G$, an isoperimetric coefficient $A$ is given, then one can effectively find $\delta>0$ such that $G$ will be $\delta$-hyperbolic with respect to the generating set $X$.
	\begin{lemma}[See \cite{lysenok}, \cite{alonso brady}]
		\label{lem about delta and isoperimetric index}
		Suppose $G$ is a hyperbolic group given with a finite presentation $G = \langle X \mid r_1, r_2, \ldots, r_l \rangle$. Also suppose that $f: \mathbb{N} \rightarrow \mathbb{N}$ is an isoperimetric function with respect to this presentation such that $f(n) \leq An$ for some positive integer $A$. Then  $G$ is $f_{\delta}(A, M)$-hyperbolic with respect to the generating set $X$, where $M=\max\{\|r_1\|, \ldots, \|r_l \| \}$ and $f_{\delta}: \mathbb{N}^2\rightarrow \mathbb{N}$ is a computable function independent  of $G$.
	\end{lemma}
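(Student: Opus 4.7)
The plan is to follow the classical proof that a linear isoperimetric inequality implies hyperbolicity (as in \cite{alonso brady, lysenok, Olsh isoperimetric}) and to track the dependence of the resulting hyperbolicity constant on $A$ and $M$. Since the existence of \emph{some} $\delta$ is standard, the content of the lemma is just that every constant which appears in that existence proof can, in fact, be written as an explicit computable expression in $A$ and $M$ alone.

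First I would fix an arbitrary geodesic triangle $\Delta=ABC$ in $\Gamma(G,X)$ with perimeter $P=d(A,B)+d(B,C)+d(C,A)$. The closed path $\partial \Delta$ represents the trivial element, so by hypothesis it bounds a van Kampen diagram $D$ over $\langle X\mid r_1,\ldots,r_l\rangle$ with at most $AP$ faces, each of boundary length at most $M$. To bound the slimness of $\Delta$, I would estimate, for an arbitrary point $O$ on side $AB$, the quantity $\mathrm{dist}(O,BC\cup CA)$. This is done by a combinatorial argument inside $D$: take the shortest dual-path in $D$ from $O$ to the union of the other two sides, and consider the maximal ``lune'' in $D$ bounded on one side by a sub-arc of $\partial\Delta$ and on the other side by an interior arc of length close to $\mathrm{dist}(O,BC\cup CA)$. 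If this lune were too wide, the two bounding arcs would themselves be close to geodesics, yet the region they enclose would contain a number of faces growing faster than linearly in its perimeter, contradicting the linear isoperimetric bound with coefficient $A$ and face size $M$.

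The output of this argument is an explicit upper bound $\delta\le f_\delta(A,M)$, where $f_\delta$ is a fixed (in fact polynomial) function of $A$ and $M$ alone, extracted directly from the inequalities that appear in the combinatorial estimate above. Because no parameter specific to the particular group $G$ enters the argument once $A$ and $M$ are fixed, $f_\delta$ is independent of $G$, and it is visibly primitive recursive (hence computable) because every step of the extraction is an elementary arithmetic manipulation.

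The only real obstacle is bookkeeping: the standard references phrase their conclusions as pure existence statements, so some care is required to confirm that every auxiliary constant introduced along the way — the width of lunes, the girth of small loops in $D$, the threshold above which corridors can be shortened, and so on — is itself an elementary function of $A$ and $M$. This verification is essentially a re-reading of the proof in \cite{alonso brady} (or the linear-area analysis in \cite{Olsh isoperimetric}), and the resulting $f_\delta$ can be taken to be any specific polynomial in $A$ and $M$ that majorises all the intermediate quantities appearing in that re-reading.
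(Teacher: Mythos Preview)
The paper does not actually prove this lemma; it is stated with the citation ``See \cite{lysenok}, \cite{alonso brady}'' and treated as a known result from the literature, so there is no paper proof to compare against in the strict sense. Your proposal is a reasonable outline of the classical argument from those references: fill a geodesic triangle by a van Kampen diagram with at most $AP$ cells of boundary length $\le M$, and argue that a point on one side far from the other two sides forces a fat subregion whose area would exceed the linear bound.

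That said, your sketch is rather loose at the key step. The phrases ``shortest dual-path'' and ``maximal lune'' are suggestive but do not pin down the quantitative mechanism. The cleanest way to make this precise (and the approach implicit in the references, especially Olshanskii's version) is via a \emph{thickness} estimate: one shows that a minimal diagram whose boundary is a geodesic polygon of thickness $t$ must contain at least on the order of $t^2/M^3$ cells, and then compares this quadratic lower bound with the linear upper bound $A\cdot(\text{perimeter})$ on a carefully chosen sub-polygon around the insize point of the triangle. This yields an explicit polynomial bound of the form $\delta \le C\,AM^3$ for an absolute constant $C$. Your proposal would benefit from naming this area-versus-thickness lemma explicitly rather than alluding to ``corridors'' and ``girth of small loops,'' which are not the tools actually used here. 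Once that lemma is invoked, the bookkeeping you mention is genuinely elementary and the computability of $f_\delta$ is immediate.
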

	\subsection{Elementary subgroups of hyperbolic groups}
	\label{section elementary subgroups}
	A group is called \textit{elementary } \index{elementary groups} if it has a cyclic subgroup of finite index. It is a well know fact that in a hyperbolic group each element $g$ of infinite order is contained in a unique maximal elementary subgroup \index{$E(g)$, maximal elementary subgroup}, denoted by $E(g)$, see for example \cite{Olsh G-groups}.
	
	By the lemmas 1.16 and 1.17 of \cite{Olsh G-groups}, for a hyperbolic group $G$ and for $g\in G$ of infinite order, the following holds: 
	\begin{align*}
	E(g)=\{x \in G \mid xg^n x^{-1}= g ^{\pm n} \text{~for some~} n \in \mathbb{N} \}
	\end{align*}
	and
	\begin{align*}
	E(g)=\{x \in G \mid xg^k x^{-1}= g ^{l} \text{~for some~} k,l \in \mathbb{Z}\setminus \{0\}~ \}.
	\end{align*}
	Also we need the following definitions,
	\begin{align*}
	E&^-(g)=\{x \in G \mid xg^n x^{-1}= g ^{- n} \text{~for some~} n \in \mathbb{N} \},\\
	~\\
	E&^+(g)=\{x \in G \mid xg^n x^{-1}= g ^{n} \text{~for some~} n \in \mathbb{N} \}.
	\end{align*}
	Note that the equivalence of the two descriptions of $E(g)$ given above, follows from Lemma \ref{lem Lemma 8}.\\

	Since, as it is well-known, in every torsion-free hyperbolic group $G$ each elementary subgroup is cyclic, it follows that for all $g\in G\setminus \{1\}$ , the subgroup $E(g)$ is of the form $\langle g_0 \rangle$, where $g$ is a power of $g_0$ and $E(g_0)=\langle g_0 \rangle$.
	
	
	For any $U\in X^*$, we denote by $E(U)$ the group $E(g)$, where $g\in G$ and $U=_G g$. Similarly, we define $E^{\pm}(U)$. For $V\in X^*$, we say that $V \in E(U)$, if for some $h\in G$, $V=_G h$ and $h \in E(U)$.
	\begin{definition}
	\index{root element}
	If $G = \langle X \rangle$	is a torsion-free hyperbolic group, then for a word $U \in X^*$ we say that $U$ represents a root element in $G$, if $U=_G g_0$ and $E(g_0)=\langle g_0 \rangle$. Correspondingly, if $E(g_0)=\langle g_0 \rangle$, then $g_0$ is called root element.
	
	If for some $g \in G$, $E(g)=\langle g_0 \rangle$, then $g_0$ is called a root of $g_0$. (Note that each element $g \in G\setminus \{1\}$ has two different roots, $g_0$ and $g_0^{-1}$.)
	\end{definition}

	\begin{lemma}[see Lemma 2.1 in \cite{Olsh G-groups}]
		
		\label{lem 2.1}
		 Let $G=\langle X \rangle$ be a $\delta$-hyperbolic group, $X$ be symmetric, and let $U, V \in X^*$ be geodesic words with respect to $\Gamma(G, X)$. Let	
		 $\lambda\geq 1$ and $c \geq 0$ be constants such that $U^k$ and $V^k$ are $(\lambda, c)$-quasi-geodesic words w.r.t. $\Gamma(G, X)$ for all $k\in \mathbb{Z}$. (According to Lemma \ref{lem 1.11}, such $(\lambda, c)$  always exist.) Let $T_1, T_2 \in X^*$ be arbitrary elements in $G$. Denote $L=\max\{\|T_1\|, \|T_2\|\}$. Then, there exists a computable function $f: \mathbb{N}^5 \rightarrow \mathbb{N}$ independent of $G$ such that for any 
		integer $m$ satisfying the inequality
		\begin{align*}
		m \geq f(|X|, \delta, \lambda, c, \|V\|), 
		\end{align*}
		either
		\begin{align*}
		L > 	\frac{\|U\|}{12\lambda}m,
		\end{align*}
		or the equation 
		\begin{align*}
		T_1U^mT_2 =_G V^n
		\end{align*}
		implies that 
		$T_1UT_1^{-1}, T_2^{-1}UT_2 \in E(V)$. Moreover, if $U=_G V$, then $T_1, T_2 \in E(U) (=E(V))$. More precisely, $T_1,T_2 \in E^+(U)$ for $n>0$ and $T_1, T_2 \in E^-(U)$ for $n\leq 0$.
	\end{lemma}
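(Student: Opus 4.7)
The plan is to convert the equation $T_1U^mT_2 =_G V^n$ into a geodesic quadrangle in $\Gamma(G,X)$, combine fellow-traveling of the two quasi-geodesic sides with a pigeonhole argument to extract a short conjugation relation of the form $U^{i_2-i_1} =_G W\, V^s\, W^{-1}$ with $W$ of bounded length, and then invoke Lemma \ref{lem Lemma 8} to conclude.

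First, I would set up the quadrangle $ABCD$ with $A = 1$, $B = T_1$, $C = T_1U^m$, $D = V^n$: the sides $AB$ and $CD$ are geodesic of length at most $L$, while $BC$ and $AD$ (labeled $U^m$ and $V^n$) are $(\lambda,c)$-quasi-geodesic by hypothesis. Assuming $L \le \tfrac{\|U\|}{12\lambda}m$ (otherwise the first alternative of the lemma is immediate), Corollary \ref{another corollary about hausdorff distance} applied to this quadrangle shows that every vertex $B_i := T_1U^i$ of $BC$ with $\min(i, m-i)\|U\| \ge L + R_{\lambda,c} + 2\delta$ lies within distance $\kappa := 2R_{\lambda,c} + 2\delta + 1$ of some vertex $A_{p_i}$ of $AD$, where $p_i$ is the distance from $A$ to $A_{p_i}$ along $AD$. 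Because $U$ is geodesic, the ``good'' indices form an integer interval of cardinality at least $m\bigl(1 - \tfrac{1}{6\lambda}\bigr) - O_{\delta,\lambda,c}(1)$.

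Next, for each good $i$ I record the pair $(W_i, r_i)$, where $W_i$ is a geodesic word of length at most $\kappa$ joining $B_i$ to $A_{p_i}$, and $r_i := p_i \bmod \|V\|$. The number of such pairs is bounded by $N := |X|^{\kappa}\cdot \|V\|$, and $\kappa$ is effectively computable from $|X|, \delta, \lambda, c$, so I can define $f(|X|, \delta, \lambda, c, \|V\|)$ so that $m \ge f$ forces the count of good indices to exceed $N$. The pigeonhole principle then produces good indices $i_1 < i_2$ with $W_{i_1} = W_{i_2}$ and $p_{i_2} - p_{i_1} = s\|V\|$ for some nonzero integer $s$ (nonzero because $U$ has infinite order). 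Absorbing the common residue $r_{i_1}$ (a bounded prefix of $V$) into $W := W_{i_1}$ and reading the label around the loop $B_{i_1} \to B_{i_2} \to A_{p_{i_2}} \to A_{p_{i_1}} \to B_{i_1}$ yields
\[
U^{i_2 - i_1} =_G W \cdot V^s \cdot W^{-1}.
\]

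Finally, this relation shows that $W^{-1}UW$ commutes with $V^s$, so $W^{-1}UW \in E(V)$. From $T_1 = V^{p_{i_1}/\|V\|}\,W^{-1}\,U^{-i_1}$ (obtained from the equation at $i=i_1$), one computes $T_1UT_1^{-1} = V^{p_{i_1}/\|V\|}(W^{-1}UW)V^{-p_{i_1}/\|V\|} \in E(V)$, and combining the analogous equation at $i = i_2$ with $T_1U^mT_2 = V^n$ gives $T_2^{-1}UT_2 \in E(V)$ by a parallel manipulation. For the moreover clause, when $U =_G V$ the boxed relation reads $WU^sW^{-1} = U^{i_2-i_1}$, so Lemma \ref{lem Lemma 8} forces $s = \pm(i_2-i_1)$, whence $W \in E(U)$; since $T_1$ and $T_2$ are products of $W^{\pm 1}$ and powers of $U$, they lie in $E(U)$ as well. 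The sign refinement follows from orientation: for $n > 0$ the positions $p_i$ increase with $i$ (fellow-travelers preserve orientation), forcing $s > 0$ and hence $s = i_2 - i_1$, which places $W, T_1, T_2$ in $E^+(U)$; the case $n < 0$ is symmetric and yields $E^-(U)$. The main obstacle is the bookkeeping: pinning down $f$ so that good indices outnumber $N$, correctly absorbing the residue when $p_{i_1} \not\equiv 0 \pmod{\|V\|}$, and tracking orientation to recover the sign.
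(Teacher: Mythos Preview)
Your proposal is correct and follows essentially the same route as the paper's proof: set up the quasi-geodesic quadrangle, use Corollary~\ref{another corollary about hausdorff distance} to get fellow-traveling of the $U^m$ and $V^n$ sides away from the ends, pigeonhole on (short connector word, residue of position on $AD$ modulo $\|V\|$) to extract a relation $Q^{-1}U^{m_0}Q =_G V^{n_0}$, and then read off $T_1UT_1^{-1},\,T_2^{-1}UT_2 \in E(V)$ from the expression of $T_1$ as a product of powers of $U,V$ and $Q^{\pm1}$. The only cosmetic difference is that the paper absorbs the residue up front by snapping each $U$-phase vertex to the nearest $V$-phase vertex on $AD$ (so the connector has length at most $2R_{\lambda,c}+2\delta+\|V\|$ and one pigeonholes on that single word), whereas you record the connector and the residue separately and then absorb; the two are equivalent.
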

	For the purpose of completeness we present a proof of  Lemma \ref{lem 2.1}  in Appendix.\\
	Also, for the reason of convenience, for the constants mentioned in Lemma \ref{lem 2.1} we introduce the following notations
	\begin{align}
	\label{def of theta}
	\index{$\upsilon(U)$}
	\upsilon = \upsilon(U)=	\frac{\|U\|}{12\lambda}.
	\end{align}
	and, assuming that the values of $|X|, \delta, \lambda, c, \|V\|$ are already known, we denote
	\begin{align}
	\label{def of M}
	\index{$\mathcal{M}(U, V)$}
	\mathcal{M}=	\mathcal{M}(U, V)=  f(|X|, \delta, \lambda, c, \|V\|). 
	\end{align}

	\begin{lemma}[See Theorem 2 and Theorem 3 in \cite{lysenok}]
		\label{lemma_about_finding_root_of_a_group_element}
		Let $G=\langle X \rangle$ be a torsion-free $\delta$-hyperbolic group given with its $(X, \delta)$-full-presentation. Then there exists an algorithm such that for any input $U \in X^*$ it finds a word $V\in X^*$ such that $E(U)=\langle V \rangle$, i.e. there exists an algorithm computing roots of the elements of $G$.
		\end{lemma}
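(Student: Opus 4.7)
The plan is to produce a generator of $E(U)$ via a three-stage algorithm: a triviality test, a cyclic minimization step, and a bounded search.

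First, since the $(X,\delta)$-full presentation of $G$ is a Dehn presentation by Proposition~\ref{proposition 1}, Dehn's algorithm decides whether $U=_G 1$; if so, I output the empty word, since then $E(U)=\{1\}$. Otherwise $U$ has infinite order ($G$ being torsion-free), and the discussion opening Section~\ref{section elementary subgroups} gives $E(U)=\langle V_{\max}\rangle$ with $V_{\max}$ a root unique up to inverse, so it suffices to output any word representing $V_{\max}$.

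Second, I would compute a cyclically minimal word $\tilde U\in X^*$ together with a conjugator $T\in X^*$ such that $T^{-1}UT=_G\tilde U$, via the standard iterated short-conjugator reduction in hyperbolic groups: at each step one searches for a conjugator of length at most $2\delta+1$ that strictly shortens the current representative. Each step is effective because the word problem is decidable via Dehn's algorithm, and termination in at most $\|U\|$ rounds produces the pair $(\tilde U,T)$.

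Third, I would enumerate candidates. Let $\tilde V$ be a cyclically minimal conjugate of $V_{\max}$; then $\tilde V^{n_0}\sim_{conj}\tilde U$ for some $n_0\geq 1$, whence $|\tilde V^{n_0}|_c=\|\tilde U\|$. By Lemma~\ref{lem 1.11} applied in the cyclically minimal regime, $\tilde V^{n_0}$ is a $(4\alpha|X|^{\alpha},5\alpha^2|X|^{2\alpha})$-quasi-geodesic word with $\alpha=180\delta$. Combining this quasi-geodesicity with Lemma~\ref{lem 2.1} to control the conjugator between $\tilde V^{n_0}$ and $\tilde U$ yields an effective bound $B=B(\|\tilde U\|,\delta,|X|)$ on both $\|\tilde V\|$ and $n_0$. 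The algorithm then enumerates pairs $(W,n)$ with $\|W\|\leq B$ and $1\leq n\leq B$, in increasing order of $\|W\|$, and for each tests whether $W^n\sim_{conj}\tilde U$ in $G$ using the Epstein--Holt linear-time conjugacy algorithm for hyperbolic groups. The first successful pair has $W$ a root: otherwise $W=W_0^k$ in $G$ with $k\geq 2$ and $W_0$ a shorter cyclically minimal root candidate, and $W_0^{kn}\sim_{conj}\tilde U$ would have been detected at a strictly earlier stage. Recovering a word $S\in X^*$ with $W^n=_G S^{-1}\tilde U S$ from the conjugacy algorithm, I output $V=TSWS^{-1}T^{-1}$; then $V^{n}=_G U$, so $E(U)=E(V)=\langle V\rangle$, as required.

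The main obstacle will be making the bound $B$ fully effective: the quasi-geodesic inequality $n_0\|\tilde V\|\leq\lambda_0|\tilde V^{n_0}|+c_0$ involves the geodesic length of $\tilde V^{n_0}$ in $G$, which a priori can exceed $|\tilde V^{n_0}|_c=\|\tilde U\|$ by the length of the conjugator realizing $\tilde V^{n_0}\sim_{conj}\tilde U$; bounding that excess purely in terms of $\|\tilde U\|$, $\delta$, and $|X|$ is exactly what Lemma~\ref{lem 2.1} is designed for. Once $B$ is in place, the remainder reduces to a finite loop over the already-solved conjugacy problem in hyperbolic groups.
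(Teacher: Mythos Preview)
The paper does not prove this lemma itself; it simply cites Theorems~2 and~3 of Lysenok. Your proposal instead assembles a self-contained argument from the paper's own toolkit, and the three-stage strategy---triviality test, cyclic minimization, bounded search over root candidates---is the natural one and essentially correct.

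Two repairs are needed. First, Lemma~\ref{lem 2.1} is the wrong citation for bounding the conjugator: that lemma \emph{presupposes} short $T_1,T_2$ and draws structural conclusions, whereas you need to \emph{produce} a short conjugator. Use Lemma~\ref{lemma-side length of a slender diagram} instead: $\tilde V^{n_0}$ is cyclically $(\lambda_0,c_0)$-quasi-geodesic by the last clause of Lemma~\ref{lem 1.11} and $\tilde U$ is cyclically geodesic, so some cyclic shifts satisfy $W_1=_G T^{-1}W_2T$ with $\|T\|\le\tau(|X|,\delta,\lambda_0,c_0)$; then $n_0\|\tilde V\|=\|W_1\|\le\lambda_0(2\tau+\|\tilde U\|)+c_0$, which bounds both $n_0$ and $\|\tilde V\|$ effectively. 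Second, stopping at the first successful $(W,n)$ in increasing $\|W\|$ does not force $W$ to be a root: that would require $|\tilde V^{\,l}|_c\ge\|\tilde V\|$ for all $|l|\ge2$, which is true via stable translation length but not immediate from your sketch. A cleaner fix is to run the whole finite search and output a successful pair with \emph{maximal} $n$; since any successful $W$ satisfies $W\sim_{conj}\tilde V^{\pm l}$ with $ln=n_0$ by Lemma~\ref{lem Lemma 8}, the maximum $n=n_0$ is attained exactly when $|l|=1$, i.e.\ when $W$ is a root.
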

		\begin{corollary}
		\label{corollary_about_finding_root_of_a_group_element} 
		There exists a (partial) algorithm	which for any input hyperbolic group $G=\langle X \mid \mathcal{R} \rangle$ given by a finite presentation and for any input word $U \in X^*$ finds $V \in X^*$ such that $V$ represents a root element of $U$ in $G$.
		\end{corollary}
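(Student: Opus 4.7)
The plan is to reduce the statement to Lemma \ref{lemma_about_finding_root_of_a_group_element} by algorithmically converting an arbitrary finite presentation of a (torsion-free, as implicitly required by that lemma) hyperbolic group into the $(X,\delta)$-full presentation with a known slimness constant $\delta$. The five algorithmic properties of hyperbolic groups listed just after the preliminaries header in Section \ref{section-preliminaries} will be composed in sequence.

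On input $(\langle X \mid \mathcal{R}\rangle, U)$, I would first run Papasoglu's partial algorithm (property (3) of that list); this halts and certifies hyperbolicity if and only if $G$ is hyperbolic, so it is the single source of partiality in the entire procedure. Next, applying property (4) to the certified presentation yields an explicit integer slimness constant $\delta$ for $\Gamma(G, X)$. Using property (5), compute a Dehn presentation of $G$; Dehn's algorithm on it decides the word problem. Using this word-problem oracle, enumerate and test all words over $X$ of length at most $16\delta + 1$ to produce the finite set
\[
\mathcal{F} = \{W \in X^* \mid \|W\| \leq 16\delta+1,\ W =_G 1\},
\]
which by definition gives the $(X, \delta)$-full presentation $G = \langle X \mid \mathcal{F}\rangle$. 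Finally, feed $\langle X \mid \mathcal{F}\rangle$ together with $U$ into the algorithm of Lemma \ref{lemma_about_finding_root_of_a_group_element} and return the word $V$ it produces; by construction $V$ satisfies $E(U) = \langle V\rangle$, so it represents a root element of $U$ in $G$, as required.

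The main and in fact only genuine obstacle is the first step: as recorded in property (3), the set of finite presentations of hyperbolic groups is merely recursively enumerable, not recursive, so a total algorithm is impossible and the partiality built into the statement of the corollary is necessary. Every subsequent step is uniformly effective once hyperbolicity (and then $\delta$) has been certified, so whenever the input group is indeed hyperbolic the composite procedure terminates and outputs a valid root.
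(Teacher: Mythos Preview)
Your proposal is correct and follows essentially the same route as the paper: detect hyperbolicity via Papasoglu, compute a slimness constant $\delta$, build the $(X,\delta)$-full presentation, and then invoke Lemma~\ref{lemma_about_finding_root_of_a_group_element}. You spell out one step the paper leaves implicit---namely, using a Dehn presentation to solve the word problem and thereby enumerate $\mathcal{F}$---but this is just added detail, not a different strategy.
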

\begin{proof}
	The set of finite group presentations for hyperbolic groups is recursively enumerable (Papasoglu \cite{papasoglu-detecting-hyperbolicity}) and
	there is an algorithm which finds a thinness constant $\delta$ for any input finitely presented hyperbolic group $G=\langle X \mid \mathcal{R} \rangle$ (see, for example, \cite{holt-detecting-hyperbolicity}) and moreover, with respect to this constant one can find the $(X, \delta)$-full-presentation of $G$. Combination of these observations with Lemma \ref{lemma_about_finding_root_of_a_group_element} implies Corollary \ref{corollary_about_finding_root_of_a_group_element}.	
		\end{proof}
		
\subsection{HNN-extensions of (hyperbolic) groups}
	\label{section-HNN}
	Let $G=\langle X \mid \mathcal{R} \rangle$ is a finitely generated group and $A, B \leq G$ are some isomorphic subgroups of $G$, and $\phi: A \rightarrow B$ is a group isomorphism between $A$ and $B$. Then the \textit{HNN-extension} \index{HNN-extension of a group} of $G$ with respect to $\phi: A \rightarrow B$ is defined as $H^G_{\phi}= \langle X \cup \{t \}\mid \mathcal{R}, t^{-1} a t = \phi(a) ~\forall a \in A \rangle$. Note that in this text, since mostly from the context it is clear what is $\phi$ , for the HNN-extension $H^G_{\phi}$ we will use the notation $H^G_{\phi}= H=\langle G, t  \mid t^{-1}At=B \rangle$.
	
	 We are mostly interested in the case when $A=\langle a \rangle$, $B=\langle b \rangle$ are infinite cyclic groups. For this case by the notation 
	$H=\langle G, t  \mid t^{-1}at=b \rangle$, we mean the HNN-extension $H^G_{\phi}$, where $\phi: A \rightarrow B$ is induced by the map $\phi: a \mapsto b$.
	
	
	Let us consider the product
	\begin{equation}
	\label{eq-t-form}
		u=g_0 t^{\epsilon_1} g_1 t^{\epsilon_2} \ldots t^{\epsilon_n} g_n,
	\end{equation}
	where for $0\leq i \leq n$, $g_i \in G$ and for $1\leq j \leq n$, $\epsilon_j \in \{\pm 1\}$. We say that this decomposition corresponds to the sequence $(g_0, t^{\epsilon_1}, g_1, t^{\epsilon_2}, \ldots, t^{\epsilon_n}, g_n)$ and we say that a decomposition is a cyclic shift of \eqref{eq-t-form} if is corresponds to a cyclic shift of the sequence $(g_0, t^{\epsilon_1}, g_1, t^{\epsilon_2}, \ldots, t^{\epsilon_n}, g_n)$.
	
Also the decomposition from \eqref{eq-t-form} is said to be \textit{$t$-reduced} if for $0\leq i \leq n$, $g_i \in G$ and no subproduct of the form $t^{-1}a t$, $a \in A$ or of the form  $t^{}b t^{-1}$, $b \in B$, appears in \eqref{eq-t-form}. And it is said to be \textit{cyclically $t$-reduced} if all cyclic shifts of the product \eqref{eq-t-form} are \textit{$t$-reduced}. 

The word 
\begin{align*}
	w=u_0 t^{\epsilon_1} u_1 t^{\epsilon_2} \ldots t^{\epsilon_n} g_n \in (X \cup \{t\})^*
\end{align*}
is called \textit{reduced word } \index{reduced word} with respect to the HNN-extension $H$, if $u_i \in X^*$ for $0 \leq i \leq n$ and the corresponding sequence $(u_0, t^{\epsilon_1}, \ldots, t^{\epsilon_n}, u_n)$ is $t$-reduced. Analogously, $w$ is said to be \textit{cyclically reduced} with respect to the HNN-extension $H$ if all cyclic shifts of $w$ are reduced. Also we define $\theta$ as
\begin{align*}
	\theta(w)=n
\end{align*}
and for $h\in H$, define
\begin{align*}
	\theta(h)= \min \{ \theta(w) \mid w \in (X \cup \{t\})^*, w=_H h \}.
\end{align*}

	
	An element $u \in H^G_{\phi}$ is said to be  \textit{cyclically $t$-reduced} if its $t$-reduced decomposition is in fact cyclically $t$-reduced}. Again, this is a well-defined definition. Also every element  $ u\in  H^G_{\phi}$ is conjugate to a cyclically $t$-reduced element $u'$ which we call \textit{$t$-cyclic-reduction} of $u$. See \cite{lyndon schupp}. \\
	

The next lemma is a very well-known and sometimes is  called Britton's Lemma in the literature.
\begin{lemma}[Britton's Lemma]
	\label{lem-britton}
	Let $w \in (X\cup \{t\})^*$ is a reduced word with respect to the HNN-extension $H=\langle G, t  \mid t^{-1}At=B \rangle$ and $\theta(w)>0$, then $w\neq_{H} 1$.
\end{lemma}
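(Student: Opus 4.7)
The plan is to prove Britton's Lemma via the standard normal form argument for HNN-extensions, which extracts from every element of $H$ a well-defined $t$-length, and then to conclude that $\theta(w)>0$ on a reduced $w$ prevents $w$ from representing the identity.

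First I would fix a set $T_A$ of right coset representatives of $A$ in $G$ with $1 \in T_A$, and similarly a set $T_B$ for $B$ in $G$. Call a sequence $(g_0,\,t^{\epsilon_1},\,g_1,\ldots,\,t^{\epsilon_n},\,g_n)$ a \emph{normal form} if $g_0 \in G$ is arbitrary and for each $i\geq 1$ one has $g_i \in T_A$ when $\epsilon_i=-1$ and $g_i \in T_B$ when $\epsilon_i=+1$, with the additional constraint that no subsequence $(t^{-1},\,1,\,t)$ or $(t,\,1,\,t^{-1})$ appears (this is exactly the $t$-reducedness condition, specialized so that the coset representative is forced to be nontrivial whenever consecutive exponents are opposite). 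The aim is to show that each element of $H$ corresponds to exactly one normal form, and that the length $n$ of this normal form equals $\theta(h)$.

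Existence of a normal form is straightforward: starting from any product $g_0 t^{\epsilon_1}g_1 \cdots t^{\epsilon_n} g_n$, one can repeatedly (i) replace each $g_i$ for $i\geq 1$ by its representative in $T_A$ or $T_B$, shifting the discrepancy across the adjacent $t^{\pm 1}$ via the relation $t^{-1} a t = \phi(a)$ (or its inverse) and absorbing it into $g_{i-1}$, and (ii) remove pinches whenever $g_i = 1$ sits between cancelling $t^{\pm 1}$'s, which reduces $n$. The less obvious direction is uniqueness. I would prove uniqueness by the classical \emph{action on normal forms} method: let $\Omega$ be the set of formal normal forms, and define left-multiplication operators $L_g$ for $g\in G$ and $L_t, L_{t^{-1}}$ on $\Omega$ by explicit case analysis (multiply $g$ into the leftmost slot; for $L_t$ strip off any leading $A$-part, rewrite via $\phi$, and prepend the appropriate $t$ together with the coset representative). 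Then verify that each defining relation of $H$ is respected by these operators, so the map $g \mapsto L_g$ factors through $H$ and yields a well-defined action $H \curvearrowright \Omega$. Since the orbit of the empty normal form under an element is the element's normal form itself, uniqueness follows.

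Once uniqueness is established, the lemma is immediate: given a reduced word $w$ with $\theta(w) = n > 0$, running the above rewriting procedure on it never decreases the $t$-count (rewrites of type (i) preserve $n$, and rewrites of type (ii) are never triggered because $w$ is already $t$-reduced), so the resulting normal form has positive $t$-length and therefore cannot represent the identity of $H$, which has normal form of length $0$. The main obstacle is purely bookkeeping in the uniqueness step: verifying that the operators $L_{t}, L_{t^{-1}}, L_g$ satisfy the relations $t^{-1}at = \phi(a)$ for all $a \in A$, which requires a careful but routine case split based on the leading block of the normal form being acted on.
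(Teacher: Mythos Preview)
Your argument is correct and is exactly the classical normal-form proof of Britton's Lemma (the van der Waerden trick of defining an action of $H$ on the set of normal forms to establish uniqueness). The paper does not supply its own proof of this lemma: it simply states it as ``very well-known'' and implicitly refers the reader to standard sources such as Lyndon--Schupp \cite{lyndon schupp}, where precisely this argument appears. So there is no alternative approach in the paper to compare against; your write-up is a faithful version of the textbook proof.
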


Then next lemma is a well-known fact as well and in literature is usually called Collins' Lemma. See, for example, \cite{miller-schupp, geometry-of-wp}
\begin{lemma}[Collins' Lemma]
\label{lemma-Collins}
\label{lemma_about_conjugacy_in_HNN_extensions}
	 Let $$u=u_0t^{\alpha_0} u_1t^{\alpha_1} \ldots u_{n}t^{\alpha_n} $$ and $$v=v_0t^{\beta_0} v_1 t^{\beta_1} \ldots v_{m}t^{\beta_m}$$
	  be cyclically reduced words with respect to the HNN-extension $H$ such that $u_i, v_j \in  X^*$ and $\alpha_i, \beta_j \in \mathbb{Z}$. 
If $u$ and $v$ are conjugate in $H =  \langle G, t \mid t^{-1}A t = B; \phi \rangle$, then one of the following holds:
\begin{itemize}
	\item $u$, $v$ are words in $X^*$ which are conjugate in $X^*$;
	\item There is a finite chain of words in $G$, 
	
	$$u = w_0, w'_1,w_1,w'_2,w_2,...,w'_k, w_k, w'_{k+1},$$ such that $w_i =\phi^{\pm 1}(w_i')$, as group elements, $w_i', w_i$ represent elements from $A \cup B$, and for each $i = 0,\ldots, k$, $w_i$ is conjugate to $w'_{i+1}$ in $G$;
	\item $\theta(u), \theta(v) >0$ and $p=q$, and $u$ is conjugate in $H$ to some cyclic shift of $v$ by an element from $A \cup B$.
\end{itemize}

\end{lemma}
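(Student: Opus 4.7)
The plan is to prove Collins' Lemma by carefully applying Britton's Lemma (Lemma \ref{lem-britton}) to the equation $x u x^{-1} =_H v$, where $x$ is a conjugator chosen to have the minimum possible $t$-length $\theta(x)$ among all elements conjugating $u$ to $v$. Write $x$ in reduced form $x = x_0 t^{\gamma_1} x_1 \cdots t^{\gamma_k} x_k$ with $x_i \in X^*$. The general strategy is to separate into cases according to $\theta(u)$ and $\theta(v)$, then show inductively that the $t$-syllables of $x$ can be eliminated one at a time, with each elimination either producing a link in the chain described in the second bullet or effecting a cyclic shift of $v$ together with a conjugation by an element of $A \cup B$.

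First I would reduce to the case $\theta(u) = \theta(v)$. Indeed, expanding $xux^{-1}$ using the reduced form of $x$ and then fully reducing via Britton's Lemma can only decrease the number of $t$-letters coming from $u$ by even numbers (each pinch removes two $t$-letters), so the parity is preserved, and moreover the result must equal $v$, which forces $\theta(u) \equiv \theta(v) \pmod 2$. A more careful accounting, using that $u$ and $v$ are cyclically reduced, shows equality $\theta(u) = \theta(v)$ (any imbalance would force a pinch inside a cyclic permutation of $u$, contradicting cyclic reducedness).

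Next, consider the case $\theta(u) = \theta(v) = 0$, so $u, v \in X^*$. If $\theta(x) = 0$ as well, then $u$ and $v$ are conjugate already in $G$, giving the first conclusion. Otherwise, write $x = x_0 t^{\gamma_1} x_1 \cdots t^{\gamma_k} x_k$ with $k \geq 1$. Then the product $x_k u x_k^{-1}$ is the element sandwiched innermost in $xux^{-1}$ between $t^{\gamma_k}$ and $t^{-\gamma_k}$. Since the entire expression equals $v \in G$, Britton's Lemma forces $x_k u x_k^{-1}$ to lie in $A$ (if $\gamma_k = -1$) or $B$ (if $\gamma_k = +1$), so that the pinch can be performed and the $\phi^{\pm 1}$-image is produced. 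Setting $w_0 = u$, $w_1' = x_k u x_k^{-1} \in A \cup B$, and $w_1 = \phi^{\pm 1}(w_1')$, we reduce to a conjugation of $w_1$ to $v$ by the shorter element $x_0 t^{\gamma_1} x_1 \cdots t^{\gamma_{k-1}} x_{k-1}$. Iterating produces precisely the finite chain $u = w_0, w_1', w_1, \ldots, w_k', w_k, w_{k+1}'$ with the required properties, giving the second conclusion.

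Finally, consider the case $\theta(u) = \theta(v) = n > 0$. Here I would induct on $\theta(x)$. If $\theta(x) = 0$, then $x \in G$ and $xux^{-1}$ has the form $xu_0 t^{\alpha_0} u_1 \cdots u_n t^{\alpha_n} x^{-1}$, which is already $t$-reduced; for this to equal the reduced word $v$, we need $\alpha_0 = \beta_0$ and a pinch to occur, which only happens if $x u_0 \in A$ or $B$, giving a cyclic shift and conjugation by an element of $A \cup B$. If $\theta(x) \geq 1$, decompose $x = y t^{\gamma} g$ with $g \in X^*$ and look at the innermost product $t^{\gamma} g u g^{-1} t^{-\gamma}$. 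Writing $gug^{-1}$ out explicitly, its outermost $t$-letters are $t^{\alpha_0}$ and $t^{\alpha_n}$; because $u$ is cyclically reduced, these match cyclically but do not create pinches within $u$ itself. However, the outermost $t^{\gamma}$ and $t^{-\gamma}$ of the surrounding conjugation may create a pinch on either side. By Britton's Lemma together with $t$-reducedness of $x$, at least one such pinch must form (otherwise the conjugator $y$ is shorter than $x$, contradicting minimality after cyclic permutation). That pinch forces $g u_0 \in A \cup B$ (up to sign of $\gamma$), which replaces $u$ by its cyclic shift conjugated by an element of $A \cup B$. Iterating this, and using the minimality of $\theta(x)$ to ensure we eventually reach $\theta(x) = 0$ after finitely many such cyclic-permutation-with-$A\cup B$ conjugations, yields the third conclusion.

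The main obstacle is the careful bookkeeping in the third case: one must verify that each Britton reduction really does correspond to cyclically permuting $v$ and conjugating by an element of $A \cup B$, rather than somehow creating new $t$-letters, and that the minimality of $\theta(x)$ is preserved so the induction terminates. The standard way to handle this cleanly is to work with the van Kampen-like picture of the corresponding equation $xux^{-1}v^{-1} =_H 1$, where pinches correspond to bigon-shaped subdiagrams and minimality of $x$ rules out configurations that would shorten the conjugator.
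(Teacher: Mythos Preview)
The paper does not prove this lemma at all: it is stated as a well-known fact and the reader is referred to \cite{miller-schupp, geometry-of-wp}. So there is no ``paper's own proof'' to compare against.

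Your outline is the standard route via Britton's Lemma and is essentially the argument one finds in the cited sources. A couple of points deserve tightening. First, your reduction to $\theta(u)=\theta(v)$ via a parity count is not quite the right mechanism: pinches in $xux^{-1}$ can involve one $t$-letter from $x$ and one from $u$, so the parity of the contribution from $u$ alone is not obviously preserved. The clean argument is rather that once $\theta(u),\theta(v)>0$, the analysis in your third case (cyclic shift plus conjugation by an element of $A\cup B$) already forces equality of $t$-lengths, since cyclic shifts preserve $\theta$. Second, in the base case $\theta(x)=0$ of your induction for $\theta(u)=\theta(v)>0$, the sentence ``for this to equal the reduced word $v$, we need $\alpha_0=\beta_0$ and a pinch to occur'' is muddled: with $x\in G$ the word $xux^{-1}$ is already reduced (since $u$ is cyclically reduced), so no pinch occurs and uniqueness of reduced forms gives directly that $v$ is a cyclic permutation of $u$ conjugated by $x u_0 \cdots$ lying in $A\cup B$ only after one further step. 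These are bookkeeping issues rather than genuine gaps; the skeleton of your argument is correct.
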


\begin{corollary}
\label{corollary_about_proper_powers_in_hnn}
Let $H=\langle X \cup \{t \}\mid \mathcal{R}, t^{-1} a t = \phi(a) ~\forall a \in A \rangle$ be an HNN-extension of $G=\langle X \mid \mathcal{R} \rangle$, and suppose $g_1\in G$ is not a proper power of any element in $G$. Then  the image of $g_1$ in $H$ (which we again denote by $g_1$) is a proper power in $H$ if and only if there exists $k\geq 2$ and $g_2 \in G$ such that $g_1 \sim_{conj} g_2^k$ in $H$. 
\end{corollary}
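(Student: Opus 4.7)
The plan is to prove the two directions separately. The backward implication is essentially immediate, and for the forward direction I would use the standard HNN-extension machinery: $t$-cyclically reduce a $k$-th root of $g_1$ in $H$, and then combine Britton's and Collins' Lemmas to force this reduced root to lie entirely in $G$.

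The backward direction is a one-line argument: if $g_1 = z g_2^k z^{-1}$ in $H$, then $g_1 = (z g_2 z^{-1})^k$, so $g_1$ is a proper $k$-th power in $H$.

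For the forward direction, assume $g_1 = h^k$ in $H$ with $k \geq 2$ and $h \in H$. First I would replace $h$ by a cyclically $t$-reduced conjugate $h' = z^{-1} h z$, which exists by the standard $t$-cyclic-reduction procedure recalled just before Lemma \ref{lem-britton}. This gives $z^{-1} g_1 z = h'^k$, so $g_1 \sim_{conj} h'^k$ in $H$; hence it suffices to show $h' \in G$, as we can then set $g_2 = h'$. The key observation is that $h'^k$, viewed as the $k$-fold product of $h'$ with itself, is still cyclically $t$-reduced: any candidate $t$-reduction at a seam between two consecutive copies of $h'$ is precisely a seam reduction of the cyclic shift of $h'$, and is thus ruled out by the cyclic $t$-reducedness of $h'$. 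By Britton's Lemma (Lemma \ref{lem-britton}) this yields $\theta(h'^k) = k\,\theta(h')$. Now I would apply Collins' Lemma (Lemma \ref{lemma-Collins}) to the two cyclically reduced conjugate elements $g_1$ and $h'^k$. Since $g_1 \in G$ gives $\theta(g_1) = 0$, the third case of Collins' Lemma, which requires $\theta(u),\theta(v)>0$, cannot occur; hence one of the first two cases applies, and both force $h'^k \in X^*$. This gives $\theta(h'^k) = 0$, and since $k \geq 2$ we conclude $\theta(h') = 0$, i.e.\ $h' \in G$, as required.

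The only mildly technical point is the verification that $h'^k$ remains cyclically $t$-reduced when $h'$ is, which amounts to a routine seam check; I do not foresee a genuine obstacle. Finally, I would note that the hypothesis that $g_1$ is not already a proper power inside $G$ does not enter the argument at all — it appears to be included merely to exclude the trivial case in which $g_1 = g_0^m$ already holds in $G$ and the conclusion is immediate with $g_2 = g_0$.
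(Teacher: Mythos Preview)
Your proof is correct and follows essentially the same route as the paper: take a $t$-cyclically reduced conjugate $h'$ of the root, observe that $(h')^k$ remains cyclically $t$-reduced, and apply Collins' Lemma (Lemma~\ref{lemma_about_conjugacy_in_HNN_extensions}) to the conjugate pair $g_1$ and $(h')^k$ to force $h'\in G$. Your additional remark that the hypothesis ``$g_1$ is not a proper power in $G$'' plays no role in the argument is also accurate; the paper's proof likewise makes no use of it.
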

\begin{proof}
	Let for some $u\in X^*$, $u=_G g_1$, and $w\in \big(X\cup \{t\}\big)^*$ such that $u=_H w^k$ for some $k\geq 2$. Now let $w'\in \big(X\cup \{t\}\big)^*$ be a $t$-cyclic reduction of $w$. Then for some $T \in \big(X\cup \{t\}\big)^*$, we have $w=Tw'T^{-1}$. Also note that for any $k\geq 2$, $(w')^k$ is also $t$-cyclically reduced. Therefore, by Lemma \ref{lemma_about_conjugacy_in_HNN_extensions}, it must be that $w' \in X^*$, namely $w'$ represents an element in $H$ which is an image of an element from $G$.
	
	The inverse statement of the corollary is obvious.
\end{proof}

\begin{lemma}
\label{lemma_about_proper_powers_in_hnn}
	Let $H=\langle G \cup \{t \}\mid   t^{-1}at = b \rangle$ be an HNN-extension of $G=\langle X\rangle$ where $a, b \in G$ are elements of infinite order which are not proper powers. Then, for any $g_0 \in G$, if $g_0$ is not a proper power in $G$, then its image in $H$ is also not a proper power.
\end{lemma}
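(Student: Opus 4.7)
The plan is to suppose for contradiction that the image of $g_0$ in $H$ is a proper power, and then to derive a contradiction by chaining Corollary~\ref{corollary_about_proper_powers_in_hnn} with Collins' Lemma (Lemma~\ref{lemma-Collins}) and an inductive argument along the resulting conjugacy chain. Under the contrary assumption, Corollary~\ref{corollary_about_proper_powers_in_hnn} yields some $g_2 \in G$ and $k \geq 2$ with $g_0 \sim_{conj} g_2^k$ in $H$. I would then apply Collins' Lemma to $u = g_0$ and $v = g_2^k$: both lie in $X^*$, so each has $\theta = 0$, which rules out the third case of the lemma (that case requires positive $\theta$). Only the first two cases remain.

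The first case --- in which $g_0$ and $g_2^k$ are already conjugate in $G$ --- is immediate: it would express $g_0 = (s^{-1} g_2 s)^k$ as a proper power in $G$, contradicting the hypothesis on $g_0$.

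The substantive work is in the chain case, where one obtains a chain $g_0 = w_0, w_1', w_1, \ldots, w_k', w_k, w_{k+1}' = g_2^k$ in which each intermediate $w_i', w_i$ lies in $A \cup B = \langle a \rangle \cup \langle b \rangle$, with $w_i = \phi^{\pm 1}(w_i')$ and $w_i \sim_{conj} w_{i+1}'$ in $G$. The central step I would carry out is to prove by induction along the chain that every intermediate $w_i$ and $w_i'$ in fact belongs to $\{a, a^{-1}, b, b^{-1}\}$. For the base case, $w_1'$ is a conjugate in $G$ of the non-proper-power $g_0$ and hence is itself not a proper power in $G$; since $a$ and $b$ have infinite order and are not proper powers, every power $a^m$ or $b^m$ with $|m| \geq 2$ is a proper power in $G$, forcing $w_1' \in \{a^{\pm 1}, b^{\pm 1}\}$. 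The inductive step uses that $\phi$ bijects $\{a^{\pm 1}\}$ with $\{b^{\pm 1}\}$, and reapplies the same non-proper-power reasoning after each conjugation $w_i \sim_{conj} w_{i+1}'$ in $G$.

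With the induction in hand, $w_k$ is some $\eta \in \{a^{\pm 1}, b^{\pm 1}\}$ and $\eta = s g_2^k s^{-1} = (sg_2s^{-1})^k$ in $G$ for an appropriate $s \in G$. The hypothesis that $g_0$ is not a proper power in $G$ forces $g_0 \neq 1$, whence $g_2 \neq 1$ as well, so $(sg_2s^{-1})^k$ with $k \geq 2$ is a bona fide proper-power decomposition of $\eta$ in $G$, contradicting the hypothesis that $a$ and $b$ are not proper powers. The most delicate point in the plan is ensuring that the induction along the chain genuinely closes without any additional hypotheses on $G$; this works because both conjugation in $G$ and the partial isomorphism $\phi^{\pm 1}$ preserve the non-proper-power property, and because among the nontrivial integer powers of the roots $a$ and $b$, the failure of being a proper power is equivalent to the exponent having absolute value~$1$.
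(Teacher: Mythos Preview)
Your proof is correct and follows essentially the same idea as the paper's: both start from Corollary~\ref{corollary_about_proper_powers_in_hnn} to obtain $g_0 \sim_{conj} g_2^k$ in $H$ with $g_2 \in G$, and then unwind the HNN conjugacy, using at each step that conjugation in $G$ preserves the non-proper-power property and hence forces the intermediate elements in $\langle a\rangle \cup \langle b\rangle$ to be $a^{\pm 1}$ or $b^{\pm 1}$. The only difference is packaging: the paper works directly with Britton's Lemma applied to $Tu^{-1}T^{-1}(w')^k =_H 1$ and a minimality argument on the conjugator $T$, whereas you invoke Collins' Lemma (Lemma~\ref{lemma-Collins}) to obtain the conjugacy chain explicitly and then induct along it. These are interchangeable ways of reading off the same HNN structure, and your version is arguably cleaner (the paper's final line ``this contradicts the minimality assumption of $T$'' hides the same inductive descent you spell out).
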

\begin{proof}
	 Assume that for some $u \in X^*$, $u=_G g_0$ and also assume that there exists a word $w\in \big(X\cup \{t\}\big)^*$ such that $u=_H w^k$ for some $k\geq 2$. Then, by Corollary \ref{corollary_about_proper_powers_in_hnn}, there exists a word $T \in \big(X\cup \{t\}\big)^*$  and a word $w'\in X^*$ such that $u=_H T^{-1} (w')^k T$. If $T$ does not contain $t^{\pm 1}$, then clearly we get a contradiction to the fact that $u$ is not a proper power in $G$. Therefore, it must be that $\theta(T)\geq 1$, i.e. its $t$-reduced decompositions contains $t^{\pm 1}$. Assume that $w'$ and $T$ are chosen such that $\|T\|$ is minimal for all possible such triples $(u, w', T)$.
	
	Since $Tu^{-1}T^{-1}(w')^k=_H 1$, the word $Tu^{-1}T^{-1}(w')^k$ must contain a subword of the form $t^{\epsilon} v t^{-\epsilon}$, where for some $l \in \mathbb{Z}$, $v=_G a^l$ if $\epsilon=-1$ or $v=_G b^l$ if $\epsilon=1$. Moreover, $v$ is of the form $v_1 u^{-1} v_1^{-1}$, where $v_1 \in X^*$ is a suffix of $T$. But this contradicts the minimality assumption of $T$.


\end{proof}

\begin{lemma}
\label{lemma_about_conjugacy-commensurability_hnn-1}
	Let $H=\langle G \cup \{t \}\mid   t^{-1}u_0t = v_0 \rangle$ be an HNN-extension of $G=\langle X\rangle$, where $u_0, v_0 \in X^*$.  Suppose that $u, v \in X^*$ such that $u \sim_{conj} v$ in $H$. Then, either $u \sim_{conj} v$ in $G$ or $u$ and $v$ commensurate with at least one of $u_0$ and $v_0$ in $G$.
		\end{lemma}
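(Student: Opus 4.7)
The plan is to apply Collins' Lemma (Lemma \ref{lemma-Collins}) directly to $u$ and $v$. Since $u, v \in X^*$ already lie in the base group $G$, both words are automatically cyclically $t$-reduced with $\theta(u) = \theta(v) = 0$. Consequently the third alternative of Collins' Lemma (which requires $\theta(u), \theta(v) > 0$) cannot apply, so only the first two alternatives of that lemma are relevant.

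If the first alternative holds, then $u$ and $v$ are conjugate inside $G$, which is exactly the first conclusion of the desired statement. Otherwise the second alternative applies, giving a finite chain
\begin{equation*}
u = w_0,\ w_1',\ w_1,\ w_2',\ w_2,\ \ldots,\ w_k',\ w_k,\ w_{k+1}' = v
\end{equation*}
of elements of $G$ such that each $w_i$ and $w_i'$ (with $1 \leq i \leq k$) lies in $A \cup B = \langle u_0 \rangle \cup \langle v_0 \rangle$, $w_i = \phi^{\pm 1}(w_i')$, and $w_i \sim_{conj} w_{i+1}'$ in $G$ for each $i = 0, \ldots, k$. From the first link of the chain I would read off that $u = w_0$ is conjugate in $G$ to $w_1'$, and $w_1'$ is a power of either $u_0$ or $v_0$; this is precisely the statement that $u$ commensurates in $G$ with one of $u_0, v_0$. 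Symmetrically, from the last link $w_k \sim_{conj} w_{k+1}' = v$ in $G$ with $w_k \in A \cup B$, one obtains that $v$ commensurates in $G$ with one of $u_0, v_0$.

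The only degenerate subcase to dispose of is when some $w_i'$ in the chain is trivial; then the chain forces $u = 1$ or $v = 1$, and together with the hypothesis $u \sim_{conj} v$ in $H$ (and the fact that $G$ embeds in $H$) this gives $u = v = 1$ in $G$, returning us to the first alternative. I do not anticipate any substantial obstacle: the proof is essentially an unwrapping of Collins' Lemma, where the main identification to make is that ``being conjugate in $G$ into $A \cup B$'' is, given $A = \langle u_0 \rangle$ and $B = \langle v_0 \rangle$, precisely the condition of commensurating in $G$ with $u_0$ or with $v_0$.
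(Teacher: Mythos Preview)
Your proposal is correct and is essentially the same approach as the paper's: the paper simply writes ``It follows immediately from Theorem~2 in \cite{miller-schupp},'' and what you have done is unwrap that citation via Collins' Lemma (which the paper also attributes to \cite{miller-schupp}). Your explicit handling of the chain in the second alternative, including the degenerate case $w_1'=1$, makes the argument self-contained, but there is no genuine difference in strategy.
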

\begin{proof}
	It follows immediately from Theorem 2 in \cite{miller-schupp}. 
\end{proof}

	The following theorem can be found in \cite{mikhajlovskii-ol'shanskii} (it can be also regarded as a corollary from the combination theorem of Bestvina and Feighn, \cite{bestvina feighn}).
	\begin{theorem}
		\label{th hnn extension}
		Let $G$ be a hyperbolic group with isomorphic infinite elementary subgroups $A$ and $B$, and let $\phi$ be an isomorphism from $A$ to $B$. Then the HNN-extension $H =
		\langle G, t \mid t^{-1}at = \phi(a), a \in A \rangle$ of $G$ with associated subgroups $A$ and $B$ is hyperbolic if and only if the following two conditions hold:
		\begin{enumerate}
			\item either $A$ or $B$ is a maximal elementary subgroup of $G$;
			\item for all $g \in G$ the subgroup $gAg^{-1} \cap B$ is finite.
		\end{enumerate}
	\end{theorem}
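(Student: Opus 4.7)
The plan is to prove the ``only if'' and ``if'' directions separately, and for necessity to proceed by contrapositive on each of the two conditions.

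For necessity, suppose first that condition 2 fails: some $g \in G$ has $gAg^{-1} \cap B$ infinite. Since $A$ and $B$ are virtually cyclic, the intersection contains an element of infinite order, so one extracts $a \in A$ and $b \in B$ of infinite order together with nonzero integers $m, n$ satisfying $ga^m g^{-1} = b^n$ in $G$. Combining this with the HNN relation $t^{-1} a t = \phi(a)$ produces inside $H$ either a Baumslag--Solitar subgroup $BS(m,n)$ with $|m| \neq |n|$, forbidden by Lemma \ref{lem Lemma 8} applied inside the (hypothetically hyperbolic) group $H$, or a rank-two free abelian subgroup, which cannot embed into a hyperbolic group. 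If condition 1 fails, pick maximal elementary subgroups $\tilde{A} \supsetneq A$ and $\tilde{B} \supsetneq B$ of $G$, and choose an infinite-order $a \in A$ together with $c \in \tilde{A} \setminus A$ commuting with a power of $a$. Using $t^{-1} a t = \phi(a)$, the fact that $[c, a] = 1$, and Britton's Lemma \ref{lem-britton} to certify non-triviality of the constructed elements, one assembles a rank-two abelian subgroup in $H$, again contradicting hyperbolicity. Hence both conditions are necessary.

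For sufficiency, I view $H$ as the fundamental group of the graph of groups with a single vertex (group $G$) and a single loop edge with edge group $A \cong_\phi B$, and invoke the Bestvina--Feighn combination theorem. The vertex group is hyperbolic by hypothesis and the edge groups are virtually cyclic, hence hyperbolic; the remaining requirement is the annuli-flare (hallways-flare) condition on the $H$-action on its Bass--Serre tree $T$. I reduce annuli-flare to acylindricity of this action, namely a uniform $N$ such that the pointwise stabilizer of any segment of $T$ of length $\geq N$ is finite. Condition 2 guarantees that the intersection of an edge-stabilizer with a non-adjacent conjugate edge-stabilizer is finite, preventing alternating $A$/$B$-conjugations from building infinite stabilizers along long segments. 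Condition 1 rules out a maximal elementary ``envelope'' around $A$ (or $B$) that would otherwise supply extra stabilizing elements extending the fixed segment beyond a bounded length. A short segment-tracking argument then bounds $N$, acylindricity follows, and Bestvina--Feighn delivers hyperbolicity of $H$.

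The main obstacle is the quantitative verification of acylindricity from conditions 1 and 2: one must carefully track how an element stabilizing a long edge-path in $T$ forces an iterated chain of conjugacy constraints that, under the combination of both conditions, admits only finitely many solutions. All other ingredients, notably the hyperbolicity of vertex and edge groups and the invocation of Bestvina--Feighn, are routine. An alternative, more direct route would be a van Kampen diagram argument in the spirit of \cite{Olsh isoperimetric}: decompose any reduced diagram over the HNN presentation into maximal $t$-bands, use Britton's Lemma \ref{lem-britton} and Collins' Lemma \ref{lemma-Collins} to control where bands begin and end, and use hyperbolicity of $G$ together with conditions 1 and 2 to bound each band's length linearly in the perimeter, yielding a linear isoperimetric inequality for $H$.
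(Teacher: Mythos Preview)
The paper does not give its own proof of this result; it is quoted from \cite{mikhajlovskii-ol'shanskii}, with the remark that it can also be derived from the Bestvina--Feighn combination theorem \cite{bestvina feighn}. Your sufficiency argument follows exactly the second of these two routes, and the acylindricity claim is correct: if, say, $A$ is maximal, then at any vertex of the Bass--Serre tree the only turn whose two incident edge-stabilisers can intersect in an infinite group is the $(t,g,t^{-1})$ type with $g\notin B$, and two such turns cannot be consecutive, so every segment of length $\geq 3$ has finite pointwise stabiliser. You should, however, add a line justifying why acylindricity gives the annuli-flare hypothesis (a non-flaring family of annuli would produce an infinite-order element fixing arbitrarily long segments).

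There is a genuine gap in your necessity argument for condition~(1). Choosing only $c\in\tilde A\setminus A$ with $[c,a^k]=1$ does \emph{not} assemble a rank-two abelian subgroup: the element $t^{-1}ct$ does commute with $b^k=\phi(a)^k$, but since $[\tilde A:A]<\infty$ some power $c^n$ already lies in $A$, whence $(t^{-1}ct)^n=\phi(c^n)\in B$, and $\langle t^{-1}ct,\,b^k\rangle$ is only virtually cyclic. You must use $\tilde B\supsetneq B$ as well. In the torsion-free cyclic case, writing $a=\alpha^p$ with $\alpha\in\tilde A\setminus A$ and $b=\phi(a)=\beta^q$ with $\beta\in\tilde B\setminus B$, set $s=t^{-1}\alpha t$; then $s^p=b=\beta^q$, so $s\beta^{-1}$ commutes with $b$, and Britton's Lemma---using \emph{both} $\alpha\notin A$ and $\beta\notin B$ to prevent any reduction---gives $(s\beta^{-1})^n\notin G$ for $n\neq 0$, hence $\langle s\beta^{-1},\,b\rangle\cong\mathbb{Z}^2$. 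The diagrammatic alternative you mention at the end is closer in spirit to the original argument in \cite{mikhajlovskii-ol'shanskii}.
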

	\begin{remark}
		In this work we need Theorem \ref{th hnn extension} in case when $G$ is a torsion-free hyperbolic group. Note that in case $G$ is a torsion free hyperbolic group, the subgroups $A$ and $B$, being maximal elementary subgroups, are cyclic. Therefore, in this case, the second condition in the statement of Theorem \ref{th hnn extension} can be replaced with this: for all $g\in G$, the subgroup $gAg^{-1} \cap B$ is trivial.
	\end{remark}
	~\\
	
	 \section{Lacunary hyperbolic groups}
	 \label{section-lacunary-hyperbolic}
  Let $G=\langle X \rangle$, $|X|<\infty$. Let $\bar{d}=(d_i)_{i=1}^{\infty}$ be an unbounded sequence of positive constants, called scaling constants, and let $\bar{x}=(x_i)_{i=1}^{\infty}$ be any fixed sequence of points from $\Gamma(G, X)$, called observation points. Then the ultralimit of the sequence of spaces with basepoints $(\Gamma(G, X), d/d_i, x_i)$ with respect to some non-principal ultrafilter $\omega$ over $\mathbb{N}$ is called the asymptotic cone of $G=\langle X \rangle$ with respect to $\bar{d}$ and $\omega$, where $d$ is the word metric over $\Gamma(G, X)$. It is denoted by $Con^{\omega}(G, \bar{d})$. The term \textit{asymptotic cone} was first introduced by Gromov in \cite{gromov-cones}. Since, in this paper, we do not need much details about asymptotic cones, for more detailed definitions we refer to \cite{gromov-cones, olshanskii-osin-sapir}.
  
    As it was discovered by Gromov (see, for example, \cite{gromov-asymptotic, gromov-hyperbolic}) many basic algebraic properties of
  groups can be translated into geometric or topological ones via studying asymptotic cones of Cayley graphs of the groups. For example, hyperbolicity of a group is equivalent to the fact that all the asymptotic cones of the group are $\mathbb{R}$-trees. As it is shown by Kapovich and Kleiner (see \cite{olshanskii-osin-sapir}), if for a finitely presented group at least one of the asymptotic cones is an $\mathbb{R}$-tree, then the group is hyperbolic. However, if the group is not finitely presentable, then this statement is not true anymore. In fact, \textit{lacunary hyperbolic groups} \index{lacunary hyperbolic groups} are defined to be the groups which have at least one asymptotic cone that is an $\mathbb{R}$-tree, see \cite{olshanskii-osin-sapir}. 
 \begin{definition}[Lacunary hyperbolic groups]
 	A finitely presented group $G$ is lacunary hyperbolic if for some unbounded sequence $\bar{d}=(d_i)_{i=1}^{\infty}$ of scaling constants, $Con^{\omega}(G, \bar{d})$ is an $\mathcal{R}$-tree.
 \end{definition}

  Let $\alpha: G \rightarrow G'$ be a homomorphism, $G=\langle X \rangle$. The \textit{radius} \index{radius of homomorphism} of $\alpha$ is the maximal radius of a ball in the Cayley graph $\Gamma(G, X)$ centered at $1_G$ such that all elements from that ball map to non-trivial elements in $G'$ except for $1_G$.
  
  The next lemma is essentially Theorem 1.1 from \cite{olshanskii-osin-sapir}.
\index{lacunary hyperbolic groups}
	\begin{lemma}[Theorem 1.1, \cite{olshanskii-osin-sapir}]
	\label{lem lacunary hyp gps}
		A finitely generated group $G$ is lacunary hyperbolic if and only if $G$ is the direct limit of a sequence of hyperbolic groups $G_i=\langle X_i \rangle$ ($X_i$
		is finite) and epimorphisms
		\begin{align}
		\label{lacunary seq}
		G_1 \stackrel{\alpha_1}\twoheadrightarrow G_2 \stackrel{\alpha_2}\twoheadrightarrow \ldots,
		\end{align}
		where $\alpha_i(X_i)=X_{i+1}$, and the hyperbolicity constant of $G_i$ (relative to $X_i$) is “little o” of
		the radius of $\alpha_i$.
	\end{lemma}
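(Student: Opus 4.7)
The plan is to prove the two directions of this equivalence via the asymptotic-cone argument of Olshanskii, Osin and Sapir.

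For the sufficient direction, I would choose scaling constants $d_i$ satisfying $\delta_i \ll d_i \ll r_i$---for instance $d_i = \lceil \sqrt{\delta_i r_i}\rceil$---where $\delta_i$ denotes the hyperbolicity constant of $G_i$ and $r_i$ the radius of $\alpha_i$; this is possible precisely because $\delta_i = o(r_i)$. The key observation is that since all compositions $\alpha_j \circ \cdots \circ \alpha_i$ are injective on the ball of radius $r_i$ around $1_{G_i}$, the ball of radius $r_i/2$ around the identity in $\Gamma(G, X)$ is isometric to the corresponding ball in $\Gamma(G_i, X_i)$. Hence every geodesic triangle in $\Gamma(G, X)$ of perimeter $O(d_i)$ is $\delta_i$-slim, which rescaled by $d_i$ gives $(\delta_i/d_i)$-slimness. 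Passing to the ultralimit, all triangles in $Con^{\omega}(G, \bar{d})$ are $0$-slim, so the cone is an $\mathbb{R}$-tree and $G$ is lacunary hyperbolic.

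For the necessary direction, assume $Con^{\omega}(G, \bar{d})$ is an $\mathbb{R}$-tree for some scaling sequence $\bar{d}=(d_i)$. Let $\mathcal{R}$ be the full set of relations of $G$ over $X$ and define $G_i = \langle X \mid \mathcal{R}_i\rangle$ with $\mathcal{R}_i = \{r \in \mathcal{R} : \|r\| \leq d_i\}$, which is finite since $X$ and $d_i$ are. The Cayley graphs of $G$ and $G_i$ agree on the ball of radius $d_i/2$ about the identity, so the $\mathbb{R}$-tree property---which $\omega$-a.e.\ asserts that triangles in $\Gamma(G, X)$ of perimeter $\Theta(d_i)$ are $o(d_i)$-slim---transfers to triangles in $\Gamma(G_i, X)$ at the same scale. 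A local-to-global hyperbolicity criterion then upgrades this to global $\delta_i$-hyperbolicity of $G_i$ with $\delta_i = o(d_i)$. Since the radius of $\alpha_i\colon G_i \twoheadrightarrow G_{i+1}$ is at least $d_i/2$ by construction (words of length $\leq d_i/2$ trivial in $G_{i+1}$ are trivial in $G$, hence in $G_i$), we obtain $\delta_i = o(r_i)$; restricting to the $\omega$-generic sub-sequence yields the desired chain, whose direct limit is $G$ because $\bigcup_i \mathcal{R}_i = \mathcal{R}$.

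The hard part is the local-to-global hyperbolicity criterion used in the necessary direction: one must invoke a \emph{quantitative} version of the statement that sub-linear slimness of triangles at one sufficiently large scale implies global hyperbolicity with a comparable constant. Such results, in the spirit of Papasoglu's strings-of-disks theorem or Gromov's sub-quadratic isoperimetric characterization, require care in matching bounds so that the global $\delta_i$ remains $o(d_i)$ after the upgrade from local to global slimness. Once this is handled, the remaining ingredients---finite presentability of $G_i$, convergence of the direct limit to $G$, and the chain of epimorphisms $\alpha_i$ induced by $\mathrm{id}\colon X \to X$---are routine.
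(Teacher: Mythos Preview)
The paper does not prove this lemma: it is stated with the attribution ``Theorem 1.1, \cite{olshanskii-osin-sapir}'' and used as a black box, with no argument supplied. So there is nothing in the paper to compare your attempt against.

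That said, your sketch is a reasonable outline of the original Olshanskii--Osin--Sapir argument. The sufficient direction is essentially correct as written: the choice $d_i \sim \sqrt{\delta_i r_i}$ (or any sequence with $\delta_i \ll d_i \ll r_i$) together with the ball-isometry observation does force all finite-radius triangles in the cone to be $0$-slim. For the necessary direction you have correctly located the genuine difficulty. One point to sharpen: knowing that triangles of perimeter $\Theta(d_i)$ in $\Gamma(G_i,X)$ are $o(d_i)$-slim is information at a \emph{single} scale, whereas the standard local-to-global principles (Gromov's Cartan--Hadamard type theorem for hyperbolic spaces, or the bigon/loop criteria in the spirit of Papasoglu and Bowditch) require control on loops or triangles at \emph{all} scales below some threshold. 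The way Olshanskii--Osin--Sapir handle this is to show that if the cone is an $\mathbb{R}$-tree then for $\omega$-almost every $i$ \emph{every} geodesic triangle in $\Gamma(G,X)$ of perimeter at most a fixed multiple of $d_i$ is $o(d_i)$-slim (not just those of perimeter exactly $\Theta(d_i)$), and this range-of-scales control is what feeds into the local-to-global machinery. Your write-up elides this distinction; once it is made explicit, the argument goes through along the lines you indicate.
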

\begin{remark}
\label{remark 000}
Note that in part (3) of Lemma \ref{lem lacunary hyp gps}, for almost all indices $i$, $|X_i|=|X_{i+1}|$, therefore we can identify $X_i$ with $X_{i+1}$ by $x=\alpha_i(x)$ for $x \in X_i$ and regard $\alpha_i$ as the identity map from $X_{i}$ to $X_{i+1}$.
\end{remark}		
\subsection{Word problem in lacunary hyperbolic groups.}
		Let $\bar{G}=\langle X \rangle$ be a finitely presented group given as an inductive limit of the chain of epimorphims
	\begin{align}
	\label{eq !}
		G_1 \stackrel{\alpha_1}\twoheadrightarrow G_2 \stackrel{\alpha_2}\twoheadrightarrow \ldots,
	\end{align}
	where $\alpha_i: G_i \twoheadrightarrow G_{i+1}$ is the induced epimorphism from the identity map $id: X \rightarrow X$, and for $i\in \mathbb{N}$, $G_i=\langle X \mid \mathcal{R}_i \rangle$ is finitely presented.
		
	We say that $\bar{G}$ has a \textit{graded recursive presentation} \index{graded recursive presentation} with respect to \eqref{eq !}  if the function $i\mapsto \mathcal{R}_i$ is computable. In general, if $\bar{G}$ has a graded recursive presentation with respect to some sequence of type \eqref{eq !} then we say that $\bar{G}$ has a graded recursive presentation. If, in addition, all the groups $G_i$, $i\in \mathbb{N}$, are hyperbolic, then we say that the presentation is a \textit{graded recursive presentation by hyperbolic groups}. \index{graded recursive presentation! by hyperbolic groups}

For $\Upsilon: \mathbb{N} \rightarrow \mathbb{N}$, we call $\Upsilon$ a \textit{supradius} \index{supradius} for \eqref{eq !}, if for all $n \in \mathbb{N}$ and $i \in \mathbb{N}$ such that $i \geq \Upsilon(n)$, the radius for $\alpha_i: G_i \twoheadrightarrow G_{i+1}$ is greater than $n$.
We say that $\Upsilon: \mathbb{N} \rightarrow \mathbb{N}$ is a \textit{computable} supradius if $\Upsilon$ as a function is computable, i.e. the set $\{(i, \Upsilon(i)) \mid i \in \mathbb{N} \}$ is recursively enumerable.
\begin{proposition}
\label{lemma-radius-delta}
	If the limit group $\bar{G}$ is lacunary hyperbolic and $G_i$ is hyperbolic for all $i\in \mathbb{N}$, then either $\bar{G}$  is finitely presented, hence hyperbolic, or 
	\begin{align*}
	\limsup_{i \rightarrow \infty} r_i = \infty \text{~and~}\limsup_{i \rightarrow \infty} \delta_i = \infty,
	\end{align*}
	where $r_i$ is the radius of $\alpha:G_i \twoheadrightarrow G_{i+1}$ and $\delta_i$ is a hyperbolicity constant for $G_i$.
\end{proposition}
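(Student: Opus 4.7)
The plan is to proceed by contrapositive: assume $\bar G$ is not finitely presented, and deduce that both $\limsup_{i\to\infty} r_i = \infty$ and $\limsup_{i\to\infty} \delta_i = \infty$. The two statements will be handled independently, and both rest on the basic principle that the natural maps $\phi_i: G_i \twoheadrightarrow \bar G$ are $1$-Lipschitz on Cayley graphs with respect to the common generating set $X$ (after identifying generators as in Remark \ref{remark 000}), together with the fact that any relation of $\bar G$ eventually becomes a relation in some $G_N$ by the definition of the inductive limit.

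For the unboundedness of $\delta_i$ I would assume a uniform bound $\delta_i \leq D$ and prove that $\bar G$ itself is $D$-hyperbolic; since every hyperbolic group is finitely presented, this will contradict the standing hypothesis. Given a geodesic triangle $\Delta$ in $\Gamma(\bar G, X)$ with side labels $w_{AB}, w_{BC}, w_{CA}$, I would choose $N$ large enough that (i) the closing word $w_{AB} w_{BC} w_{CA}$ already vanishes in $G_N$, so the same labels trace a closed triangle $\Delta'$ in $\Gamma(G_N, X)$, and (ii) each of the three lifted side-lengths realizes the corresponding distance in $G_N$ — possible because $|g|_{G_N}$ is nonincreasing in $N$ with limit $|g|_{\bar G}$ (a shortest word for $g$ in $\bar G$ represents the lift of $g$ in $G_N$ for $N$ large), and only finitely many vertices are involved. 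Then $\Delta'$ is a geodesic triangle in $\Gamma(G_N, X)$, hence $D$-slim, and projecting slimness forward through $\phi_N$, which can only shrink distances, shows that $\Delta$ is $D$-slim in $\Gamma(\bar G, X)$.

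For the unboundedness of $r_i$ I would first note that since $\bar G$ is not finitely presented the chain cannot stabilize, so infinitely many $\alpha_i$ fail to be isomorphisms and in particular satisfy $r_i < \infty$. Suppose for contradiction that $r_i \leq R$ for infinitely many $i$; by the definition of radius, for each such $i$ there is a word $w_i \in X^*$ of length at most $R+1$ with $w_i \neq_{G_i} 1$ but $w_i =_{G_{i+1}} 1$. Because only finitely many words have length at most $R+1$, a single word $w$ arises as $w_{i_1}$ and $w_{i_2}$ for two indices $i_1 < i_2$; but $w =_{G_{i_1 + 1}} 1$ forces $w =_{G_{i_2}} 1$ through the chain of epimorphisms, contradicting $w \neq_{G_{i_2}} 1$.

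The main obstacle is the justification in the first step that for $N$ sufficiently large the lifted sides of $\Delta$ are simultaneously geodesic in $G_N$ and that the triangle actually closes in $G_N$; this amounts to the elementary but slightly fiddly fact that $|g|_{G_N} \to |g|_{\bar G}$ pointwise, applied uniformly across the finite set of elements appearing in $\Delta$. Once this setup is secured, the slimness transfer through $\phi_N$ is automatic, and the radius step reduces to a pure pigeonhole count, so I do not expect further difficulty. The lacunarity hypothesis on $\bar G$ itself plays no role in the argument beyond the standing assumption that each $G_i$ is hyperbolic.
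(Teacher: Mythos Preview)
Your proof is correct and more self-contained than the paper's. For $\limsup r_i = \infty$, the paper argues loosely that non-finite-presentability forces relators of arbitrarily large $G_{n-1}$-length to appear at some stage $n$ and then asserts the conclusion; your pigeonhole argument is tighter and in fact yields the stronger $\liminf r_i = \infty$ (your preliminary remark that the chain cannot stabilize is not even needed, since the hypothesis $r_i \leq R$ already forces $\alpha_i$ to be a non-isomorphism). For $\limsup \delta_i = \infty$, the paper simply invokes the characterization of lacunary hyperbolicity in Lemma~\ref{lem lacunary hyp gps}, whereas you show directly that a uniform bound $\delta_i \leq D$ makes $\bar G$ itself $D$-hyperbolic by lifting geodesic triangles to some $G_N$. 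Note that your step (ii) is in fact automatic for \emph{every} $N$, since $|w|_{\bar G} \leq |w|_{G_N} \leq \|w\|$ forces any $\bar G$-geodesic word to be $G_N$-geodesic; only the closing relation (i) requires $N$ large. As you rightly observe, your route never uses the lacunary hyperbolicity of $\bar G$, so it actually establishes the proposition for arbitrary direct limits of hyperbolic groups, while the paper's argument leans on lacunarity for the $\delta_i$ half.
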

\begin{proof}
	Indeed, if $\bar{G}$ is finitely presented, then starting from some $i\in \mathbb{N}$, for all $j> i$, the normal closure of $\mathcal{R}_{j+1}$ in $G_{i}$ coincides with the normal closure of $\mathcal{R}_{i+1}$ in $G_i$. Therefore, $\bar{G}$ coincides with $G_{i+1}$, hence is hyperbolic.
	
	Now let us assume that $\bar{G}$ is lacunary hyperbolic, but it is not hyperbolic. Then, since $\bar{G}$ is not finitely presented for each $N>0$ there is $n\in \mathbb{N}$ and $R \in \mathcal{R}_n$ such that there is no $U \in X^*$ such that $\|U\|<N$ and $R=_{G_{n-1}} U$. Therefore, $\limsup_{i \rightarrow \infty} r_i = \infty$ and by the statement $(3)$ of Lemma \ref{lem lacunary hyp gps}, also $\limsup_{i \rightarrow \infty} \delta_i = \infty$.
\end{proof}
In particular, from Lemma \ref{lemma-radius-delta} it follows that for infinitely presented lacunary hyperbolic groups all supradius functions are unbounded.\\

For the proof of the next theorem we need the follows definition from \cite{arzhantseva-dehn}, which is a slight generalization of the standard notion of Dehn's presentation.
\index{$\alpha$-Dehn presentation}\index{Dehn's presentation of groups! $\alpha$-Dehn presentation}
\begin{definition}[See Definition 1 in \cite{arzhantseva-dehn}]
	For $\frac{1}{2} \leq \alpha < 1$, the group $G=\langle X \mid \mathcal{R} \rangle$ given with a finite presentation, where $\mathcal{R}$ is symmetric, is said to be $\alpha$-Dehn presented, if for any freely cyclically reduced word $W \in X^*$ representing the trivial element of $G$, for some cyclic shift $W'$ of $W$, $W'$ contains a subword $u$, such that $u$ is a prefix of some word $R \in \mathcal{R}$ and $\|u\| > \alpha \|R\|$.
\end{definition} 
It is a well-known fact that hyperbolic groups admit $\alpha$-Dehn presentations for all $\frac{1}{2} \leq \alpha < 1$. See, for example, \cite{gromov-hyperbolic, alonso brady, arzhantseva-dehn}.

It was shown by Arzhantseva  in \cite{arzhantseva-dehn}  that the property that a finite presentation of a group is an $\alpha$-Dehn presentation for some   $\frac{3}{4} \leq \alpha <1$ can be detected algorithmically as it is stated below. 
\begin{theorem}[See \cite{arzhantseva-dehn}]
\label{theorem-dehn-presentation-detection}
	There exists an algorithm determining whether or not a finite presentation of a group is an $\alpha$-Dehn presentation for some $\frac{3}{4} \leq \alpha <1$.
\end{theorem}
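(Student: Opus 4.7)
The plan is to reduce the detection problem to a bounded combinatorial check that can be carried out in finite time, by first converting the existential quantifier on $\alpha$ into a single test and then bounding the possible counterexamples.

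\textbf{Step 1 (integrality trick).} I would first show that a finite presentation is $\alpha$-Dehn for \emph{some} $\alpha \in [3/4, 1)$ if and only if it is $(3/4)$-Dehn, exploiting integrality of word lengths. One direction is immediate. For the converse, set $M := \max_{R \in \mathcal{R}} \|R\|$; any witness $u$ for the $(3/4)$-Dehn condition has integer length, hence $\|u\|/\|R\| \geq \alpha^* := \min_{R \in \mathcal{R}} (\lfloor 3\|R\|/4 \rfloor + 1)/\|R\| > 3/4$, so the presentation is $\alpha$-Dehn for every $\alpha \in [3/4, \alpha^*)$. Thus the task is reduced to algorithmically deciding the $(3/4)$-Dehn property.

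\textbf{Step 2 (bounding minimal counterexamples).} The crux is to exhibit computable functions $N(\mathcal{R})$ and $A(\mathcal{R})$ such that the presentation fails to be $(3/4)$-Dehn if and only if there exists a freely cyclically reduced trivial word $W$ with $\|W\| \leq N(\mathcal{R})$, not admitting a Dehn reduction, and bounded by some van Kampen diagram of area at most $A(\mathcal{R})$. To obtain these bounds, I would consider a minimal-area diagram $\Delta$ over $\mathcal{R}$ with boundary $\partial\Delta = W$ violating the Dehn condition. The violation forces $\|\partial\pi \cap \partial\Delta\| \leq (3/4)\|\partial\pi\|$ for every boundary cell $\pi$ of $\Delta$; combined with minimality of $\Delta$ (no reducible pair of adjacent cells) and of $\|W\|$, a Greendlinger-style analysis of how boundary cells must meet interior cells yields the desired explicit bounds in terms of $M$ and $|\mathcal{R}|$.

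\textbf{Step 3 (the algorithm).} With $N(\mathcal{R})$ and $A(\mathcal{R})$ in hand, enumerate every freely cyclically reduced word $W$ of length at most $N(\mathcal{R})$. For each such $W$, perform two finite checks: (i) scan every cyclic shift of $W$ for a subword of length $>(3/4)\|R\|$ that is a prefix of some $R \in \mathcal{R}$, and (ii) enumerate all van Kampen diagrams over $\mathcal{R}$ with boundary labeled by $W$ and at most $A(\mathcal{R})$ cells. If check (i) fails and check (ii) succeeds, output \textsc{No}. If no $W$ triggers this, output \textsc{Yes}. Both checks are manifestly computable; correctness follows from Step 2.

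\textbf{Main obstacle.} The principal difficulty is Step 2: one must rule out arbitrarily long or arbitrarily high-area minimal counterexamples. The threshold $\alpha = 3/4$ is sharp enough to drive a Greendlinger-type induction on area: peeling off a boundary cell from $\Delta$ must either reveal a new Dehn reduction or produce a smaller diagram whose boundary is still a counterexample, contradicting minimality. Making this peeling argument quantitative, and tracking the constants through the induction, is where the real work lies and where the hypothesis $\alpha \geq 3/4$ is used in an essential way.
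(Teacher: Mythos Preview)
The paper does not prove this statement; it is quoted from Arzhantseva's preprint and used as a black box. So there is nothing in the paper to compare against, and I can only assess your sketch on its own terms.

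Step~1 is correct, and Step~3 is routine once Step~2 is available. The gap is Step~2, and it is structural rather than a matter of missing constants. Your peeling argument runs: take a counterexample $W$ of minimal area, peel a boundary cell, obtain $W'$ of strictly smaller area; by minimality $W'$ must admit a $3/4$-reduction. But this is exactly what minimality of $W$ asserts --- the word one step below $W$ is good --- and it carries no information whatsoever about how large $W$ itself may be. To derive a bound $A(\mathcal{R})$ you would need the opposite implication, that a large-area counterexample manufactures a smaller-area counterexample, so that iteration drives the area down to something fixed; your dichotomy points the wrong way for that, and no amount of bookkeeping on constants repairs it. The phrase ``Greendlinger-style analysis'' is also doing too much work: Greendlinger's lemma requires a small-cancellation hypothesis as \emph{input}, which is precisely what you do not have a priori.

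The substance of Arzhantseva's argument is a localisation of a different kind: rather than peeling a global minimal counterexample, one shows directly that failure of the $3/4$-Dehn condition is already witnessed by the boundary of a reduced diagram built from an a priori bounded number of cells, governed by overlaps between pairs of relators. That turns the problem into a finite combinatorial check on the relator set itself, and it is where the threshold $3/4$ is genuinely used. Supplying that localisation is the whole content of the theorem; the peeling framework you propose does not lead there.
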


Note that if $G=\langle X \mid \mathcal{R} \rangle = \langle X \mid \mathcal{R}' \rangle$ and $\mathcal{R} \subseteq \mathcal{R}'$, then the presentation $G=\langle X \mid \mathcal{R} \rangle$ is a $\alpha$-Dehn presentation implies that the presentation $\langle X \mid \mathcal{R}' \rangle$ is a $\alpha$-Dehn presentation too. Also, as we already mentioned in preliminaries, if $G=\langle X \mid \mathcal{R} \rangle$ is a finite presentation for a hyperbolic group, then there is an algorithm which constructs a Dehn presentation for $G$. 

\begin{theorem}
	\label{theorem iff condition for wp in lac hyp gp 1}
	Let $\bar{G}$ be an inductive limit of hyperbolic groups connected by epimorphisms. Then $\bar{G}$ has decidable word problem if and only if it has a graded recursive presentation by hyperbolic groups and a recursively computable supradius function over that presentation.	
	\end{theorem}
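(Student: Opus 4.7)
The plan is to prove both implications separately. Sufficiency is an algorithmic reduction to the word problem in a single hyperbolic group; necessity requires a recursive construction that exploits the recursive enumerability of hyperbolic presentations (Papasoglu's theorem) together with the decidability of WP in $\bar{G}$.

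For sufficiency, assume $\bar{G}$ is given by a graded recursive presentation $\{\mathcal{R}_i\}_{i\in\mathbb{N}}$ and a recursively computable supradius $\Upsilon$. Given $W\in X^*$ with $n:=\|W\|$, the algorithm computes $i_0:=\Upsilon(n)$ and $\mathcal{R}_{i_0}$, effectively obtains a hyperbolicity constant for $G_{i_0}=\langle X\mid\mathcal{R}_{i_0}\rangle$ by combining Papasoglu's partial detector with the algorithm mentioned in item~(4) of Section~\ref{section-preliminaries}, and decides $W=_{G_{i_0}} 1$ by Dehn's algorithm. Correctness rests on the equivalence $W=_{G_{i_0}} 1 \Leftrightarrow W=_{\bar{G}} 1$. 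The forward direction is immediate since $\bar{G}$ is a quotient of $G_{i_0}$. For the converse, if $W\neq_{G_{i_0}} 1$ then $W$ represents a nontrivial element of word-length at most $n$ in $G_{i_0}$, and since for every $j\geq i_0 = \Upsilon(n)$ the radius of $\alpha_j$ exceeds $n$, an induction on $j$ shows that the image of $W$ stays nontrivial in every $G_j$, hence in $\bar{G}$.

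For necessity, assume WP in $\bar{G}$ is decidable, so $\mathcal{W}:=\{W\in X^*:W=_{\bar{G}} 1\}$ is recursive. Enumerate $\mathcal{W}$ as $W_1,W_2,\ldots$ and build $\mathcal{R}_n$ inductively. At stage $n$, dovetail a search for a finite symmetric $\mathcal{R}_n\subseteq\mathcal{W}$ with $\mathcal{R}_n\supseteq\mathcal{R}_{n-1}\cup\{W_n^{\pm 1}\}$ satisfying (a) Papasoglu's partial detector certifies $G_n:=\langle X\mid\mathcal{R}_n\rangle$ to be hyperbolic, and (b) for every $U\in X^*$ with $\|U\|\leq n+1$ and $U=_{\bar{G}} 1$ one has $U=_{G_n} 1$. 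Condition (b) is a finite check using the decidable WPs of $G_n$ and $\bar{G}$; combined with $\mathcal{R}_n\subseteq\mathcal{W}$ (which yields the reverse implication for free), it guarantees that the canonical epimorphism $G_n\twoheadrightarrow\bar{G}$ induced by $\mathrm{id}_X$ has radius exceeding $n$, whence a fortiori the radius of $\alpha_n:G_n\twoheadrightarrow G_{n+1}$ exceeds $n$. Such an $\mathcal{R}_n$ exists by hypothesis: fix any (not a priori effective) inductive-limit presentation $\bar{G}=\lim_j\langle X\mid\mathcal{S}_j\rangle$ with each $\langle X\mid\mathcal{S}_j\rangle$ hyperbolic, pick $j$ so that the radius to $\bar{G}$ exceeds $n$, and take $\mathcal{R}_n:=\mathcal{R}_{n-1}\cup\{W_n^{\pm 1}\}\cup\mathcal{S}_j$; adding $\bar{G}$-trivial relators to $\mathcal{S}_j$ leaves the group unchanged, hence hyperbolic, and condition (b) is inherited from the radius of $\langle X\mid\mathcal{S}_j\rangle\twoheadrightarrow\bar{G}$.

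The resulting sequence $(\mathcal{R}_n)$ is recursive, $\bigcup_n\mathcal{R}_n=\mathcal{W}$ since $W_n\in\mathcal{R}_n$ for every $n$ (so the inductive limit is indeed $\bar{G}$), and $\Upsilon(n):=n+1$ is a recursively computable supradius. The main subtlety is the simultaneous control of hyperbolicity and radius growth at each stage: Papasoglu's algorithm only semi-decides hyperbolicity, so one needs to know a priori that some valid $\mathcal{R}_n$ exists in order to conclude that the dovetailed search terminates; the existential witness is supplied precisely by the assumed (non-effective) representation of $\bar{G}$ as an inductive limit of hyperbolic groups, while effectiveness of the verification comes from combining Papasoglu's detector with the finite WP-based radius check made available by the decidability of WP in $\bar{G}$.
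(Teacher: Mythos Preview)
Your proof is correct and reaches the same conclusion, but via a genuinely different route for the necessity direction. The paper invokes Arzhantseva's theorem that $\alpha$-Dehn presentations (for $\tfrac{3}{4}\le\alpha<1$) are \emph{decidably} recognizable: at each stage it computes the finite set $\mathcal{S}_n=\{W:\|W\|\le n,\ W=_{\bar G}1\}$, deterministically extracts a maximal $\alpha$-Dehn subset, and only afterward constructs the supradius by a separate search over the resulting chain. You instead rely only on Papasoglu's \emph{semi}-decision procedure for hyperbolicity and compensate by dovetailing over candidate relator sets, with the twist of baking the radius condition (b) directly into the search so that $\Upsilon(n)=n$ drops out for free. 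Your approach is more elementary in that it avoids Arzhantseva's result, at the cost of a less canonical construction and the need for the non-constructive existence witness. The sufficiency direction is essentially identical in both arguments.

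One minor imprecision in your existence argument: you write ``adding $\bar G$-trivial relators to $\mathcal{S}_j$ leaves the group unchanged,'' but this holds only once those finitely many relators (the elements of $\mathcal{R}_{n-1}\cup\{W_n^{\pm1}\}$) are already trivial in $\langle X\mid\mathcal{S}_j\rangle$. You must therefore choose $j$ large enough both for this and for the radius condition, which is possible since the set is finite and $\bar G$ is the inductive limit of the $\langle X\mid\mathcal{S}_j\rangle$.
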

\begin{proof}
	First, let us show that if $\bar{G}$ has a decidable word problem, then $\bar{G}$ possesses the mentioned properties.
	
	Indeed, let $\bar{G}=\langle X \rangle$, $|X|<\infty$. For $n\in \mathbb{N}$, let us define  $\mathcal{S}_n=\{ W \in X^* \mid W=_{\bar{G}} 1, \|W\|\leq n \}$. Since the word problem in $\bar{G}$ is decidable, we get that the sets $\mathcal{S}_1, \mathcal{S}_2, \ldots $ are algorithmically constructible - one just needs to check for each $W\in X^*$, $\|W\|\leq n$, if $W=_{\bar{G}} 1$ or not, in order to construct $\mathcal{S}_n$. 
	
	Now, since $\bar{G}$ is an inductive limit of hyperbolic groups connected by epimorphisms and since all hyperbolic groups admit $\alpha$-Dehn presentations for some  $\alpha$ such that $\frac{3}{4} \leq \alpha <1$, we get that for some $n$, there exists $\mathcal{S}_n'\subseteq \mathcal{S}_n$  such that the group presentation $H_n=\langle X \mid \mathcal{S}'_n \rangle$ is an $\alpha$-Dehn presentations for some $\alpha$, $\frac{3}{4} \leq \alpha <1$. By Theorem \ref{theorem-dehn-presentation-detection}, for a fixed $\mathcal{S} \subseteq \mathcal{S}_n$ it can be algorithmically verified if $H=\langle X \mid \mathcal{S} \rangle$  is such a presentation or not. Therefore, one can algorithmically find a maximum subset $\mathcal{S}' \subseteq \mathcal{S}_n$ with respect to $\subseteq$ such that 	$\langle X \mid \mathcal{S}' \rangle$ is an $\alpha$-Dehn presentations for some $\alpha$, $\frac{3}{4} \leq \alpha <1$. Define $\mathcal{S}'_n=\mathcal{S}'$.
		
	Since $\bar{G}$ is an inductive limit of hyperbolic groups, there exists a strictly increasing sequence of natural numbers $(n_i)_{i=1}^{\infty}$ and a sequence of sets $(\mathcal{S}_{n_i}')_{i=1}^{\infty}$ such that $\mathcal{S}_{n_1}' \subseteq \mathcal{S}_{n_1} \subsetneq \mathcal{S}_{n_2}' \subseteq \mathcal{S}_{n_2} \ldots$ and for any $i\geq 1$ the presentation $\langle X \mid \mathcal{S}'_{n_i} \rangle$ is an 
	$\alpha$-Dehn presentation for some $\alpha$, $\frac{3}{4} \leq \alpha <1$. Moreover, such sequences $(n_i)_{i=1}^{\infty}$ and $(\mathcal{S}_{n_i}')_{i=1}^{\infty}$  can be algorithmically constructed. Indeed, the partially ordered set $(2^{X^*}, \subseteq)$ is recursively enumerable, hence, assuming that $\mathcal{S}_{n_i}'$ is already constructed, based on the algorithm from Theorem \ref{theorem-dehn-presentation-detection}, for each $\mathcal{S}$ and $\mathcal{S}_{n_i}$ such that $\mathcal{S}_{n_i} \subsetneq \mathcal{S}$, we can check if $\langle X \mid \mathcal{S} \rangle$ is an $\alpha$-Dehn presentations for some $\alpha$, $\frac{3}{4} \leq \alpha <1$. Then $\mathcal{S}'_{n_{i+1}}$ can be chosen to be the smallest such $\mathcal{S}$ and $n_{i+1}$ as the smallest index such that $\mathcal{S} \subseteq \mathcal{S}_{n_{i+1}}$.

	Now, for any such algorithmically constructible sequence $(\mathcal{S}_{n_i}')_{i=1}^{\infty}$ and for all $i \geq q$, let us define $\mathcal{R}_i=\mathcal{S}'_{n_i}$.  Also for all $m\in \mathbb{N}$, define $G_m = \langle X \mid \mathcal{R}_m \rangle$. Then clearly $G_m$ is hyperbolic and $\bar{G}$ is the inductive limit of
	\begin{align}
	\label{eq !!!}
		G_1 \stackrel{\alpha_1}\twoheadrightarrow G_2 \stackrel{\alpha_2}\twoheadrightarrow \ldots,
	\end{align}
	where $\alpha_i: G_i \twoheadrightarrow G_{i+1}$ is the induced epimorphism from the identity map $id: X \rightarrow X$. Also the presentation $\bar{G}=\langle X \mid \cup_{i=1}^{\infty} \mathcal{R}_i \rangle$ is a graded recursive presentation.
	
	The final observation for proving the first part of the theorem is the following: for any $r \in \mathbb{N}$, the smallest $n$ such that radius of $\alpha_n$ is larger than $r$ can be found algorithmically. Indeed, in order to find $n$ we can first   algorithmically construct the set $\mathcal{S}_r=\{W \in X^* \mid \|W\|\leq r, W=_{\bar{G}} 1 \}$, then for each $i=1,2, \ldots$, we can iteratively check whether for each $W \in \mathcal{S}_r$, $W=_{G_i} 1$. The smallest $n$ such that for all $W \in \mathcal{S}_r$, $W=_{G_n} 1$, will be the desired index. This means that there exists a computable supradius function for \eqref{eq !!!}.
	
	Thus the first part of the theorem is proved.\\
	
	Now assume that $\bar{G}$ is the inductive limit of 
	\begin{align}
	\label{eq !!!!}
		G_1 \stackrel{\alpha_1}\twoheadrightarrow G_2 \stackrel{\alpha_2}\twoheadrightarrow \ldots,
	\end{align}
	where for $i \in \mathbb{N}$, the groups $G_i = \langle X \mid \mathcal{R}_i \rangle$ are hyperbolic groups. Let $\bar{G}=\langle X \mid \cup_{i=1}^{\infty} \mathcal{R}_i \rangle$ be a graded recursive presentation and let $\Upsilon: \mathbb{N} \rightarrow \mathbb{N}$ be a computable supradius function for \eqref{eq !!!!}. Then for each $W \in X^*$ to check whether $W=_{\bar{G}}$ or not, it is enough to check if $W=_{G_{\Upsilon(n)}} 1$ or not, where $n=\|W\|$. Since the presentation $G_{\Upsilon(n)} = \langle X \mid \mathcal{R}_{\Upsilon(n)} \rangle$ can be recursively constructed and since $G_{\Upsilon(n)}$ is hyperbolic, we get that the word problem in $G_{\Upsilon(n)}$ is decidable. Therefore, indeed, the equation $W=_{G_{\Upsilon(n)}} 1$ can be algorithmically checked. This means that the word problem in $\bar{G}$ is decidable.
	
	Thus the theorem is proved.	
	\end{proof}

\begin{remark}
	Note that the proof of Theorem \ref{theorem iff condition for wp in lac hyp gp 1} does not give any idea about complexity of word problem in lacunary hyperbolic groups. Hence we need to obtain more detailed structure of presentations of classes of lacunary hyperbolic groups in order to describe efficient word problem solving algorithms on them. Description of subclasses of LHG with effective word (and conjugacy) problems is one of the primary goals in the next sections.
\end{remark}
\begin{corollary}
	A lacunary hyperbolic group has a solvable word problem if and only if it has a graded recursive presentation by hyperbolic groups and a computable supradius function over that presentation.
\end{corollary}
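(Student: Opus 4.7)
The plan is to deduce this corollary as an immediate specialization of Theorem \ref{theorem iff condition for wp in lac hyp gp 1}. The only gap to bridge is observing that the hypothesis of that theorem --- being an inductive limit of hyperbolic groups connected by epimorphisms --- is automatically satisfied by any lacunary hyperbolic group. This is exactly the content of Lemma \ref{lem lacunary hyp gps} (together with Remark \ref{remark 000}, which lets us identify the generating sets so that the connecting maps are induced by $\mathrm{id}: X \to X$).

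For the forward implication, I would start by assuming $\bar{G}$ is lacunary hyperbolic with decidable word problem. Invoke Lemma \ref{lem lacunary hyp gps} to write $\bar{G}$ as the direct limit of a chain $G_1 \stackrel{\alpha_1}\twoheadrightarrow G_2 \stackrel{\alpha_2}\twoheadrightarrow \ldots$ of hyperbolic groups with $\alpha_i$ induced by $\mathrm{id}: X \to X$. Now apply the ``only if'' direction of Theorem \ref{theorem iff condition for wp in lac hyp gp 1} to obtain a graded recursive presentation by hyperbolic groups together with a computable supradius function.

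For the converse, assume $\bar{G}$ is lacunary hyperbolic and already equipped with such a graded recursive presentation by hyperbolic groups and a computable supradius. This is precisely the hypothesis of the ``if'' direction of Theorem \ref{theorem iff condition for wp in lac hyp gp 1}, so applying it directly yields decidability of the word problem.

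There is no real obstacle here: both directions are one-line applications of Theorem \ref{theorem iff condition for wp in lac hyp gp 1} once we note that a lacunary hyperbolic group trivially falls within the class of groups to which that theorem applies. The corollary essentially just repackages the theorem in the special case where the inductive limit is known \emph{a priori} to be lacunary hyperbolic (as opposed to merely being presented as an inductive limit of hyperbolic groups).
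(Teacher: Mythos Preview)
Your proposal is correct and matches the paper's approach: the corollary is stated immediately after Theorem \ref{theorem iff condition for wp in lac hyp gp 1} without a separate proof, as it follows at once from that theorem together with Lemma \ref{lem lacunary hyp gps}. Your write-up makes explicit precisely the one observation needed (that lacunary hyperbolic groups are inductive limits of hyperbolic groups via epimorphisms), which the paper leaves implicit.
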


	\section{Small cancellation conditions} 
	\label{section Small cancellation conditions}
	In the first subsection we are going to recall small cancellation concepts for hyperbolic groups introduced in \cite{Olsh G-groups} and in the second section we extend this concept to chains of hyperbolic groups connected by epimorphisms.\\
	\subsection{Small cancellation in hyperbolic groups}(See \cite{Olsh G-groups}.)
	\label{subsection-small-cancellation}
	Let $G = \langle X \rangle$ be a finitely generated group, and let $\mathcal{R}$ be a symmetric set of words from $X^*$. A subword $U$ of a word $R \in \mathcal{R}$ is called an \textit{$\epsilon$-piece} \index{$\epsilon$-piece}  for $\epsilon \geq 0$ if there exists a word $R' \in \mathcal{R}$ such that
	\begin{enumerate}
		\item $R\equiv UV$, $R' \equiv U'V'$ for some $V, U', V' \in X^*$;
		\item $U'=_G YUZ$ for some $Y, Z \in X^*$ where $\| Y \|, \|Z \| \leq \epsilon$;
		\item $YRY^{-1} \neq_G R'$.
	\end{enumerate}
	
	It is said that the system $\mathcal{R}$ satisfies the $C(\lambda, c, \epsilon, \mu,  \rho)$-\textit{condition} \index{$C(\epsilon, \mu, \lambda, c, \rho)$-condition} for some $\lambda\geq 1$, $c\geq 0$,$\epsilon \geq 0$, $\mu >0$,  $\rho >0$, if 
	\begin{enumerate}
		\item[(1.1)] $\|R\| \geq \rho$ for any $R \in \mathcal{R}$;
		\item[(1.2)]  any word $R \in \mathcal{R}$ is $(\lambda, c)$-quasi-geodesic;
		\item[(1.3)]  for any $\epsilon$-piece of any word $R \in \mathcal{R}$, the inequalities $\| U \|, \|U'\| < \mu \|R\|$ hold.
	\end{enumerate}
	
	Now suppose that for a word $R \in \mathcal{R}$ we have
	\begin{enumerate}
		\item[(2.1)]  $R = UVU'V'$ for some $U,V, U', V' \in X^*$;
		\item[(2.2)]  $U' = Y U^{\pm 1} Z$ in the group $G$ for some words $Y,Z \in X^*$ where $\|Y\|$, $\|Z\|\leq \epsilon$;
	\end{enumerate}
	then the word $U$ is called an \textit{$\epsilon'$-piece} \index{$\epsilon'$-piece} of the word $R$. If $\mathcal{R}$ satisfies the $C(\lambda, c, \epsilon, \mu,  \rho)$-condition and, in addition, for all $R\in \mathcal{R}$, the above described decomposition of $R$ implies $\|U\|, \|U'\| < \mu \|R\|$ then, like in \cite{Olsh G-groups}, we say that $\mathcal{R}$ satisfies the \textit{$C'(\lambda, c, \epsilon, \mu,  \rho)$-condition} \index{$C'(\lambda, c, \epsilon, \mu,  \rho)$-condition}. 
	
	\subsection{Auxiliary parameters, lowest parameter principle (LPP) and the main conventions}
	\index{lowest parameter principle (LPP)}
	In Subsection \ref{subsection-small-cancellation}, in the context of the definition of the small cancellation condition $C(\lambda, c,  \epsilon, \mu,  \rho)$ the parameters 	$\delta, \lambda, c, \epsilon, \mu, \rho$ were introduced.
	In this paper, whenever we mention the small-cancellation condition $C(\lambda, c, \epsilon, \mu, \rho)$, we assume that the parameters $\delta,\lambda, c, \epsilon, \mu, \rho$ satisfy some relations. More specifically, $\epsilon$ depends on $\lambda$ and $c$; $\mu$ depends on $\lambda, c$ and $\epsilon$; and $\rho$ depends on $\lambda, c, \epsilon$ and $\mu$ (see, for example, Lemma \ref{lem 6.6} for an example where the condition $C(\lambda, c, \epsilon, \mu, \rho)$ is involved).
	
	Based on a similar concept introduced in \cite{olshanskii_the_book} (see \S 15 in \cite{olshanskii_the_book}), we introduce the notation $\succ$ between parameters defined as follows: if $ \alpha_1, \alpha_2, \ldots$ are some parameters, then $\alpha_1 \succ \alpha_2 \succ \ldots$ means that the value of  $\alpha_i$ is being chosen after the parameters $\alpha_1,  \ldots \alpha_{i-1}$ were chosen. In other words, the parameters  $\alpha_1,  \ldots \alpha_{i-1}$ are independent of $\alpha_i$, but $\alpha_i$ depends on the values of  $\alpha_1, \ldots, \alpha_{i-1}$. If $\alpha$ and $\beta$ are some parameters such that $\alpha \succ \beta$ then we say that $\alpha$ is a \textit{higher} parameter (correspondingly, $\beta$ is a \textit{lower} parameter), alternatively, we say that $\alpha$ has higher priority with respect to $\beta$ and $\beta$ has lower priority with respect to $\alpha$.\\ \index{parameters} \index{parameters! with lower(higher) priority}
	\begin{convention}
	\label{convention-lpp}
	Throughout this text we will deal with statements involving parameters $\lambda, c, \epsilon, \mu, \rho$ and their indexed versions  $\lambda_i, c_i, \epsilon_i, \mu_i, \rho_i$ for $i\in \mathbb{N}$. For all these parameters we assume that  $\delta \succ \lambda \succ c \succ \epsilon \succ \mu \succ \rho$. Analogously,  $\lambda_i \succ c_i \succ \epsilon_i \succ \mu_i \succ \rho_i$. 
	Also we assume that parameters with lower indexes are higher with respect to $\succ$.
	
	We also will deal with parameters $\delta_i$, $\delta_i'$. For them we assume $\rho_i \succ \delta_i $ and $\lambda_i \succ \delta_i' \succ \rho_{i-1}$ for $i=1,2, \ldots$.
  
	\end{convention}
	\begin{convention}
	\label{convention-large_enough}
		Throughout this text, for  parameters $\delta, \lambda, c, \epsilon, \mu^{-1}, \rho$ and their indexed versions  $\lambda_i, c_i, \epsilon_i, \mu_i, \rho_i$ when we say that some parameter, say $\alpha$, is \textbf{large enough} 
		then we mean that there is a finite number of parameters of higher priority, say $\beta_1, \ldots, \beta_k$, and a computable function $f_{\alpha, \beta_1, \ldots, \beta_k} : \mathbb{N}^k \rightarrow \mathbb{N}$ such that $\alpha$ can be chosen to have any value greater than $f(\beta_1, \ldots, \beta_k)$. 
		For example, if for $\rho_i$ "large enough" means $\rho_i> \lambda_i\mu_i$, then we think of $i$ to be an arbitrary index from $\mathbb{N}$. 
		
			\end{convention}
	\begin{definition}[The standard parameters]
	\index{standard parameters}
The parameters $\delta,  \lambda, c, \epsilon, \mu, \rho$ and the indexed parameters  $\delta_i, \delta_i', \lambda_i, c_i, \epsilon_i, \mu_i, \rho_i$, which are intensively used in this paper, we call the standard parameters. 
		\end{definition}
		\begin{definition}[Sparse enough standard parameters]
		\label{sparse parameters}
		\index{standard parameters! sparse enough}
		We will say that a sequence of parameters is \textit{sparse enough} if for each parameter $\alpha_{i_0}$, where $i_0$ is the index of the parameter, we assume that 	
		\begin{align}
		\label{relations}	
		\alpha_{i_0} > f_{i_0, i_1, \ldots, i_k}(\alpha_{i_1}, \ldots, \alpha_{i_k}),
		\end{align}	
		  where $\alpha_{i_1}, \ldots, \alpha_{i_k}$ are parameters with smaller indices $i_1, \ldots, i_k$ (hence, of higher priority) and $f_{i_0, i_1, \ldots, i_k}$ is a computable function such that   $f_{i_0, i_1, \ldots, i_k}=f_{i_0+t, i_1+t, \ldots, i_k+t}$ for all $t\geq 0$, and the map $ i_0 \mapsto f_{i_0, i_1, \ldots, i_k}$ is computable as well. 
	\end{definition}
	
	\begin{convention}[Lowest parameter principle (LPP)]
	\index{lowest parameter principle (LPP)}
		In order many results of the current paper to hold (for example, theorem \ref{theorem-effective-G-conjugacy-problem}, \ref{theorem_about_connecton_of_word_and_conjugacy_problems}, etc.), we require from the standard parameters to be sparse enough. Therefore, whenever we mention some relation of the form \eqref{relations} involving the standard parameters, for example, $\epsilon> \lambda\delta+c$ or $\epsilon_i> \mu_i \rho_i+c$ (the last one is equivalent to $\mu_i^{-1}>(\epsilon_i-c)^{-1}$), then we say that this relation holds by lower parameter principle -- simply, by LPP.
	\end{convention}

	\subsection{Words with small cancellation conditions}
	\label{words with small cancellation-1}
	Hereafter, if it is not stated otherwise, we assume that $G=\langle X \rangle$ is a non-trivial, non-elementary, torsion free $\delta$-hyperbolic group for some $\delta>0$.
	
	Let us consider a set $\mathcal{R}$ consisting of words of the form
	\begin{align}
	\label{eq words with cancellation condition}
	R_i=z_i U^{m_{i, 1}} V U^{m_{i, 2}} V  U^{m_{i, 3}}\ldots V U^{m_{i, j_i}}, ~~i=1,2, \ldots, k
	\end{align}
	and their cyclic shifts, where $k \in \mathbb{N}$, $U, V,z_1, \ldots, z_k \in X^*$ are geodesic words, $U, V \neq_G 1$, and $m_{i, t} \in \mathbb{N}$ for $1\leq i \leq k$, $1 \leq t \leq j_i$.
	Denote $Z=\{z_1, \ldots, z_k\}$, $L = \max\{\|U\|, \|V\|, \|z_1\|,\ldots, \|z_k\|\}$.
	
	Let $\tilde{\lambda}, \tilde{c} \in \mathbb{N}$ be such that $U^n$ is $(\tilde{\lambda}, \tilde{c})$-quasi-geodesic in $\Gamma(G,X)$ for all $n\in \mathbb{Z}$. Note that the existence of $\tilde{\lambda}$ and $\tilde{c}$ follows from \eqref{lem 1.11}. Moreover, given the $\delta$-hyperbolic group $G=\langle X \rangle$ and the word $U$, one can find such a pair $(\tilde{\lambda}, \tilde{c})$ algorithmically.
	
	
	Now let $\underbar{m}=\min\{m_{i, t} \mid 1\leq i \leq k ~\text{and}~ 1 \leq t \leq j_i\}$, $\overline{m}_i=\max\{m_{i, t} \mid  1 \leq t \leq j_i\}$ for $1\leq i \leq k$. Then the following holds.
	\begin{lemma}
		\label{lem 1111 on cancellation words}
		For the set of words $\mathcal{R}$ suppose that $V \notin E(U)$, $z_i \notin E(U)$ for $1\leq i \leq k$. Then there exist constants $\lambda=c=\tilde{K} \in \mathbb{N}$, computably depending on $G$, $U$, $V$ and $Z$, such that the words of the system \eqref{eq words with cancellation condition} are $(\lambda, c)$-quasi-geodesic in $\Gamma(G,X)$, provided that $\underbar{m} \geq \tilde{K}$. 	\end{lemma}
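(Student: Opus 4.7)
The strategy is to apply Lemma \ref{lem Lemma 5} to a geodesic polygon in $\Gamma(G, X)$ built from the factorization
\[
R_i \;=\; (z_i U^{m_{i,1}})(V U^{m_{i,2}})(V U^{m_{i,3}})\cdots(V U^{m_{i,j_i}}),
\]
using the hypotheses $V \notin E(U)$ and $z_i \notin E(U)$ to bound the Gromov products at the interior vertices via Lemma \ref{lem 2.1}. Concretely, let $p$ be the path in $\Gamma(G, X)$ from $1_G$ with label $R_i$, and let $A_1, A_2, \ldots, A_{j_i+1}$ be the endpoints of the consecutive parenthesized blocks (so $A_s A_{s+1}$ carries the label of the $s$-th block). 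Replace each $A_s A_{s+1}$ by a geodesic between its endpoints and close up with a geodesic $\gamma$ from $A_{j_i+1}$ to $A_1$; this yields a geodesic $(j_i+1)$-gon $\Pi$. By Lemma \ref{lem 1.11}, $U^k$ is $(\tilde\lambda, \tilde c)$-quasi-geodesic for all $k$, hence
\[
d(A_s, A_{s+1}) \;\geq\; \frac{m_{i,s}\|U\| - \tilde c}{\tilde\lambda} - L,
\]
so for $\underline m$ exceeding a computable threshold, each such distance surpasses $\alpha_1 = 12(\alpha+\delta)$, where $\alpha$ is fixed below.

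The core of the argument is the estimate $(A_{s-1} \cdot A_{s+1})_{A_s} \leq \alpha$ at each interior vertex $A_s$. Suppose not, and pick $B_\pm$ on the geodesics $[A_s, A_{s\pm 1}]$ with $d(A_s, B_\pm) = \alpha$ and $d(B_-, B_+) \leq 2\delta$. By Lemma \ref{lem hausdorff distance between quasi-geodesics}, each $B_\pm$ lies within $R_{\tilde\lambda, \tilde c}$ of the labeled quasi-geodesic segment emanating from $A_s$. Choosing $\alpha > L + R_{\tilde\lambda, \tilde c}$, both $B_\pm$ land past the short $V$ or $z_i$ segment incident to $A_s$, so there exist integers $k_1, k_2 \geq 1$ (each of order $\alpha/\|U\|$) and a short word $W \in \{V, z_i\}$ for which
\[
\bigl|\, U^{k_1}\, W\, U^{k_2} \,\bigr|_G \;\leq\; 2\delta + 2R_{\tilde\lambda, \tilde c}.
\]
Rewriting this as $T_1 U^{k_2} T_2 =_G U^n$ with $T_1 = W$, $\|T_2\| \leq 2\delta + 2R_{\tilde\lambda, \tilde c}$, and $|n| = k_1$, one invokes Lemma \ref{lem 2.1} with both of its words $U$ and $V$ equal to our $U$: once $k_2 \geq \mathcal{M}(U, U)$ and $\alpha$ is large enough that $\max\{\|T_1\|, \|T_2\|\} \leq \|U\| k_2 / (12\tilde\lambda)$, the ``moreover'' clause of that lemma forces $T_1 = W \in E(U)$, contradicting either $V \notin E(U)$ or $z_i \notin E(U)$. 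The vertex of a cyclic shift of $R_i$ at which $z_i$ occupies the position of $W$ (rather than the generic $V$) is handled identically, and this is where $z_i \notin E(U)$ enters.

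With the Gromov product bounds in place, Lemma \ref{lem Lemma 5} gives $d(A_1, A_{j_i+1}) > 6 j_i (\alpha+\delta)$, which controls $j_i$ linearly by $|R_i|$, and places the polygonal line in the $2\alpha$-neighborhood of $\gamma$. Projecting each $A_s$ to a nearest point $\pi_s$ of $\gamma$ (the $\pi_s$ being approximately monotonic along $\gamma$ by standard consequences of Lemma \ref{lem Lemma 5}), one gets $\sum_s d(A_s, A_{s+1}) \leq |R_i| + 4\alpha j_i \leq C_1 |R_i|$ for a computable $C_1$. Combining with the per-side estimate $\|\mathrm{label}(A_s A_{s+1})\| \leq \tilde\lambda \, d(A_s, A_{s+1}) + C'$, which follows from the $(\tilde\lambda, \tilde c)$-quasi-geodesicity of the $U^m$-blocks together with the bounded contributions of $V$ and $z_i$, one obtains $\|R_i\| \leq \tilde K |R_i| + \tilde K$ for some $\tilde K$ computable from $G$, $U$, $V$ and $Z$. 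Running the same argument on arbitrary subpaths of $R_i$ and of its cyclic shifts extends this to every subword, establishing that each word in the system \eqref{eq words with cancellation condition} is $(\tilde K, \tilde K)$-quasi-geodesic. The main obstacle lies in the second paragraph: translating the purely geometric overlap at a large-Gromov-product vertex into a clean algebraic equation $T_1 U^m T_2 =_G U^n$ to which Lemma \ref{lem 2.1} can be applied forces the choice of $\alpha$, and the threshold $\tilde K$ is then pinned down in terms of $\mathcal{M}(U, U)$, $R_{\tilde\lambda, \tilde c}$, $L$, $\delta$ and $\tilde\lambda$.
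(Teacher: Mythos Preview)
Your approach via Lemma~\ref{lem Lemma 5} is a legitimate alternative, but it differs from the paper's route, which uses the local-to-global principle (Lemma~\ref{lem 1}) instead. The paper observes that once $\underline{m}\geq\tilde K$, every subpath of length at most $\tilde K$ has label of the simple form $W_1U^{a_1}V_1^{b}U^{a_2}W_2$ with $V_1\in\{V^{\pm1},z_1^{\pm1},\dots,z_k^{\pm1}\}$ and $b\in\{0,\pm1\}$; a short case analysis (the interesting case invoking Lemma~\ref{lem 2.1} in essentially the same way you do) shows these are uniformly $(24\tilde\lambda,(2\bar{\mathcal M}+2)L)$-quasi-geodesic, and Lemma~\ref{lem 1} then promotes this to global $(\tilde K,\tilde K)$-quasi-geodesicity in one stroke. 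This buys the paper two things you have to work for separately: it handles all subwords simultaneously without rerunning the argument, and it bypasses any projection bookkeeping.

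On that last point, there is a soft spot in your argument. The inequality $\sum_s d(A_s,A_{s+1})\leq |R_i|+4\alpha j_i$ requires the nearest-point projections $\pi_s$ to be monotone along $\gamma$, and this is not part of the statement of Lemma~\ref{lem Lemma 5}; that lemma yields only Hausdorff closeness and the lower bound on $d(A_1,A_{j_i+1})$. Monotonicity \emph{is} true here---combining $d(\pi_s,\pi_{s+1})\geq d(A_s,A_{s+1})-4\alpha>\alpha_1-4\alpha$ with the Gromov-product bound $(A_{s-1}\cdot A_{s+1})_{A_s}\leq\alpha$, a short triangle-inequality computation rules out $\pi_{s+1}$ lying on the $\pi_{s-1}$-side of $\pi_s$---but you should supply that computation rather than label it a standard consequence. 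With that filled in, your proof goes through; as written, the passage from ``polygonal line close to $\gamma$'' to a genuine length estimate is the one step that is asserted rather than proved.
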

	\begin{proof}
	We will show that $\lambda,c$ and $\tilde{K}$ can be effectively computed by the following formulas
		\begin{equation}
		\begin{aligned}
		\label{eq finding m_0 to make a system of words quasigeodesic}
		\lambda=c=\tilde{K}=\mathcal{K}\big(24\tilde{\lambda}, (2\bar{\mathcal{M}}+2)L\big)\\
		\end{aligned}
		\end{equation}	
		where $\mathcal{K}( ~)$ is defined as in Lemma \ref{lem 1}, $L = \max\{\|U\|, \|V\|, \|z_1\|,\ldots, \|z_k\|\}$ and 
		\begin{align*}
			\bar{\mathcal{M}}=\max\big\{24\tilde{\lambda}+\tilde{c} , ~\mathcal{M}(U, V_1)\mid V_1\in \big\{V^{\pm 1}, z_1^{\pm 1}, \ldots, z_k^{\pm 1} \big\}\big\}
		\end{align*}
		where $\mathcal{M}()$ is given  by the formula \eqref{def of M}.

	First, we will show that all the paths  in $\Gamma(G, X)$ with labels of the form
	\begin{equation}
	\label{auxiliary_form_1}
		W_1 U^{a_1} V_1^b U^{a_2}W_2,
	\end{equation}
	where $W_1$ and $W_2$ are subwords of some words from $ \big\{U^{\pm 1}, V^{\pm 1}, z_1^{\pm 1}, \ldots, z_k^{\pm 1} \big\}$, $V_1\in \big\{V^{\pm 1}, z_1^{\pm 1}, \ldots, z_k^{\pm 1} \big\}$ and $b \in \{0, \pm 1\}$, are 
	$\Big(24  \tilde{\lambda}, (2\bar{\mathcal{M}}+2)L \Big)$-quasi-geodesic. 
	
	For that let us fix an arbitrary such path $q$, with $lab(q)=W_1 U^{a_1} V_1^b U^{a_2}W_2$.
	Note that since all the subwords of $lab(q)$ are also of the form (\ref{auxiliary_form_1}), to show that $q$ is $\Big(24\tilde{\lambda}, (2\bar{\mathcal{M}}+2)L \Big)$-quasi-geodesic, it is enough to show that 
	\begin{equation*}
		\|W_1 U^{a_1} V_1^b U^{a_2}W_2\| \leq 24\lambda|W_1 U^{a_1} V_1^b U^{a_2}W_2| + (2\mathcal{M}+2)L.
	\end{equation*}
	
	To this end we will separately consider three cases: 
	\begin{enumerate}
		\item[1.] when $b\neq 0$ and $\max\{a_1, a_2\}<\bar{\mathcal{M}}$;
		\item[2.] when $b \neq 0$ and $\max\{a_1, a_2\}\geq \bar{\mathcal{M}}$; and
		\item[3.] when $b=0$. 
	\end{enumerate}	 
	\textit{Case 1.} If $b\neq 0$ and $\max\{a_1, a_2\}< \bar{\mathcal{M}}$, then 
	\begin{align*}
		\|W_1 U^{a_1} V_1^b U^{a_2}W_2\| \leq &\|W_1\|+a_1\|U\|+\|V_1\|+a_2\|U\|+\|W_2\|\\
		\leq &\|W_1\|+\|V_1\|+\|W_2\|+(2\bar{\mathcal{M}}-1)\|U\| \leq (2\bar{\mathcal{M}}+2)L.
	\end{align*}
	\textit{Case 2.} If $b \neq 0$ and $\max\{a_1, a_2\}\geq \bar{\mathcal{M}}$, then, by Lemma \ref{lem 2.1}, either $|U^{a_1} V_1^b U^{a_2}| \geq \upsilon \max\{a_1, a_2\}$  or $V_1 \in E(U)$, where $\upsilon=\|U\|/12\tilde{\lambda}$.
	
	 Since, by our assumptions, $V_1\notin E(U)$, we get that $|U^{a_1} V_1^b U^{a_2}| \geq \upsilon \max\{a_1, a_2\}$. Therefore,
	\begin{equation}
		\label{auxiliary_2}
		|W_1 U^{a_1} V_1^b U^{a_2}W_2| \geq |U^{a_1} V_1^b U^{a_2}|-|W_1|-|W_2|\\
		\geq \upsilon \max\{a_1, a_2\} -2L.
	\end{equation}
	
	   On the other hand,
	   \begin{align*}
	   	\|W_1 U^{a_1} V_1^b U^{a_2}W_2\| &\leq \|W_1\|+a_1\|U\|+\|V_1\|+a_2\|U\|+\|W_2\| \\
	   	&\leq 2\max\{a_1, a_2\} \|U\|+3L \\
	   	\text{~ by (\ref{auxiliary_2}),~} &\leq 2 \bigg(\frac{|W_1 U^{a_1} V_1^b U^{a_2}W_2|+2L}{\upsilon}\bigg)\|U\|+3L\\
	   	&\leq 24\tilde{\lambda}|W_1 U^{a_1} V_1^b U^{a_2}W_2| + (48\tilde{\lambda}+1)L\\
	   	 	&\leq 24\tilde{\lambda}|W_1 U^{a_1} V_1^b U^{a_2}W_2| + (2\bar{\mathcal{M}}+2)L.
	   \end{align*}
	\textit{Case 3.} If $b=0$, then since $U^{a_1+a_2}$ is a $(\tilde{\lambda}, \tilde{c})$-quasi-geodesic word,
	   we get
	   \begin{align*}
	   	\|W_1 U^{a_1} V_1^b &U^{a_2}W_2\|=\|W_1 U^{a_1+a_2}W_2\| \leq \|U^{a_1+a_2}\|+\|W_1\|+\|W_2\|\\
	   	 &\leq \lambda  |U^{a_1+a_2}|+c+2L < \lambda(|W_1 U^{a_1+a_2}W_2|+2L)+c+2L \\	 
	   	&< 24\lambda|W_1  U^{a_1+a_2}W_2| + (2\bar{\mathcal{M}}+2)L. 	   \end{align*}
	   	Formula (\ref{def of M}) implies that  $2(\tilde{\lambda}+1)L+\tilde{c} < (2\bar{\mathcal{M}}+2)L$, hence the last inequality is true.\\

	Now, let $p$ be a path in $\Gamma(G,X)$ whose label corresponds to a word from $\mathcal{R}$. Since $\tilde{K} \leq m$, all the subpaths of $p$ of the lengths bounded from above by $\tilde{K}$ are of the form (\ref{auxiliary_form_1}). Therefore, $p$ is $\tilde{K}$-local $\Big(24\tilde{\lambda}, (2\bar{\mathcal{M}}+2)L \Big)$-quasi-geodesic. Therefore, taken into account the formula for $\tilde{K}$ from (\ref{eq finding m_0 to make a system of words quasigeodesic}) and the inequality $m \geq \tilde{K}$, by Lemma \ref{lem 1}, $p$ is $(\tilde{K}, \tilde{K})$-quasi-geodesic.
	\end{proof}
	
	Assume that in the system (\ref{eq words with cancellation condition}), for all $1\leq i, i' \leq k$ and $1 \leq t \leq j_{i}$, $1 \leq t' \leq j_{i'}$, $m_{i,t} \neq m_{i', t'}$ if $(i, t) \neq (i', t')$. 
	
	Recall that in Lemma \ref{lem 1111 on cancellation words} we required
	\begin{align}
	\label{a condition 1}
	V \notin E(U) \text{~and $z_i \notin E(U)$ for $1\leq i \leq k$}.	
	\end{align}

		Let us introduce the following notations: 
		 For a given $\epsilon>0$, $\epsilon_0 = \epsilon+2L$, $\epsilon_i = \epsilon_0+i\big(2R_{\lambda, c}+182\delta+\frac{L}{2} \big)$ for $1\leq i \leq 5 $, where $R_{\lambda, c}$ is defined as in Lemma \ref{lem hausdorff distance between quasi-geodesics} and, as before, $L = \max\{\|U\|, \|V\|, \|z_1\|,\ldots, \|z_k\|\}$. Let $\tilde{K}$ be defined by the formula (\ref{eq finding m_0 to make a system of words quasigeodesic}) and $\lambda=c=\tilde{K}$. Now, with respect to the given constants $\epsilon \geq 0$, $\mu >0$, $\rho >0$ assume that
		\begin{equation}
			\label{inequality -1}
			\|R\|\geq \rho, \text{~for all~} R \in \mathcal{R},
		\end{equation}
		
		\begin{align}
		\label{mistery}
		\underbar{m} \geq \tilde{K},
		\end{align}
		hence, by Lemma \ref{lem 1111 on cancellation words}, the words from $\mathcal{R}$ are $(\lambda, c)$-quasi-geodesics in $\Gamma(G, X)$.
		Next, we require the following.
		\begin{align}
		\label{inequal 1}
		\mu \|R_i\| \geq 6L(\overline{m}_i+1)
		\end{align}		
		and
		\begin{align}
		\label{inequality mmmm}
		 \underbar{m} \geq \frac{2\epsilon_5}{\upsilon}
		\end{align}	
		where $\upsilon=\upsilon(U)$ is defined by formula (\ref{def of theta}).	
	\begin{lemma}
		\label{lem on words with cancellation condition}
		Using the setting of the previous lemma and assuming that the above described conditions take place, let us consider the system of words $\mathcal{R}$ given by \eqref{eq words with cancellation condition}. Let $\lambda, c$ be defined by the formulas \eqref{eq finding m_0 to make a system of words quasigeodesic}.
		Then, if for the given constants $\epsilon \geq 0$, $\mu >0$, $\rho >0$, the conditions \eqref{inequality -1}, \eqref{mistery}, \eqref{inequal 1} and  \eqref{inequality mmmm} are satisfied, then the system $\mathcal{R}$ satisfies the $C'(\lambda, c, \epsilon, \mu, \rho)$-condition.
		
		Moreover, if two words $R_1, R_2 \in \mathcal{R}$ are not equal up to cyclic shifts, then there are no subwords $U_1$ and $U_2$ of $R_1$ and $R_2$, respectively, such that $\|U_1\| \geq \rho \|R_1\|$ and for some $T_1, T_2 \in X^*$, $\|T_1\|, \|T_2\| \leq \epsilon$ and 
		\[
		T_1^{-1} U_1 T_2 =_G U_2.
		\] 
		
	\end{lemma}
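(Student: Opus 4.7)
Conditions (1.1) and (1.2) of the $C'$-condition are immediate: (1.1) is the hypothesis \eqref{inequality -1}, and (1.2) is the conclusion of Lemma \ref{lem 1111 on cancellation words}, applicable thanks to \eqref{mistery}. The substantive content is condition (1.3) together with the \emph{moreover} clause, and the plan is to handle them with a single geometric rigidity argument run by contradiction.

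Suppose subwords $U_1 \subseteq R_1$ and $U_2 \subseteq R_2$ with $R_1, R_2 \in \mathcal{R}$ satisfy $T_1^{-1} U_1^{\pm 1} T_2 =_G U_2$ for some $T_1, T_2 \in X^*$ with $\|T_1\|, \|T_2\| \leq \epsilon$, and assume $\|U_1\|$ is large in the appropriate sense (at least $\mu\|R_1\|$ for the $C'$-condition, and as in the hypothesis of the moreover clause). Realize $U_1$ and $U_2$ as labels of paths $p_1, p_2$ in $\Gamma(G, X)$. Since $p_1, p_2$ are subpaths of $(\lambda, c)$-quasi-geodesic paths by Lemma \ref{lem 1111 on cancellation words}, Corollary \ref{corollary on hausdorff distance between quasi-geodesics} places $p_1$ within Hausdorff distance $\epsilon + 2R_{\lambda, c} + 2\delta \leq \epsilon_1$ of $p_2$; the chain of constants $\epsilon_0 < \cdots < \epsilon_5$ is designed to absorb further errors of this kind when one switches between subpaths and their geodesic shadows.

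The heart of the argument is to locate a maximal $U$-block $U^{m_{i,t}}$ lying in the interior of $U_1$ --- such a block exists because by \eqref{inequal 1} the largeness assumption forces $U_1$ to span strictly more than one full block $U^{m_{i,t}}V$ --- and to apply Lemma \ref{lem 2.1} to its shadow in $p_2$. The bound \eqref{inequality mmmm} makes $U^{m_{i,t}}$ long enough that Lemma \ref{lem 2.1} triggers with the inflated conjugator length $\epsilon_5$; the conclusion is that this shadow is itself of the form $W \cdot U^{m'} \cdot W'$ with $W, W'$ of bounded length, because the alternative would force $V \in E(U)$ or some $z_j \in E(U)$, contradicting \eqref{a condition 1}. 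Since all exponents $m_{i,t}$ appearing across $\mathcal{R}$ are pairwise distinct, the identity $m' = \pm m_{i,t}$ pins down the matching block in $R_2$ uniquely. The rigidity then propagates: adjacent separators ($V$ or $z_j$) and neighboring $U$-blocks must align, and the entire structural decomposition of $U_1$ matches that of $U_2$. For the moreover clause this forces $R_1$ and $R_2$ to coincide up to a cyclic shift, contradicting the hypothesis; for (1.3) it shows that $R_2 =_G Y R_1^{\pm 1} Y^{-1}$ for a short word $Y$ extracted from $T_1, T_2$, which is exactly the case excluded by clause (3) of the $\epsilon$-piece definition, again a contradiction.

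The principal technical obstacle will be the boundary bookkeeping where $U_1$ may begin or end in the middle of a $U^{m_{i,t}}$-block, combined with the careful calibration of constants so that Lemma \ref{lem 2.1} applies to the inflated conjugators of order $\epsilon_5$ rather than the original $\epsilon$; this is precisely what the chain $\epsilon_0 < \cdots < \epsilon_5$, the inequality \eqref{inequality mmmm}, and the LPP calibration of parameters are designed to handle uniformly.
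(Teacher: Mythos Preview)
Your proposal is correct and follows essentially the same strategy as the paper: set up a thin quasi-geodesic rectangle, use Lemma~\ref{lem 2.1} repeatedly (with the chain $\epsilon_0 < \cdots < \epsilon_5$ absorbing the successive Hausdorff errors) together with $V, z_j \notin E(U)$ to force the block structures of $R_1$ and $R_2$ to align, and then invoke the pairwise distinctness of the exponents $m_{i,t}$ to pin down the cyclic shift and reach the contradiction with clause~(3) of the $\epsilon$-piece definition.

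The one organizational difference worth noting is that the paper runs the alignment argument through the \emph{separators} (the ``special segments'' labeled $V$ or $z_j$) rather than through the $U$-blocks. Concretely, it proves two claims: (i) for any inner special segment on one side and a nearby phase vertex on the other, the connecting word lies in $E(U)$; (ii) given three consecutive inner special segments on one side, the middle one lies on both sides exactly. Six consecutive special segments (guaranteed by \eqref{inequal 1}) then force two adjacent separators to coincide on both sides, whence the $U$-power between them is literally the same path, giving equal exponents directly (no $\pm$ ambiguity to resolve). This separator-centric bookkeeping is exactly what cleanly handles the boundary issue you flag as the main technical obstacle; your $U$-block-centric phrasing is the dual picture and would work, but the write-up is tidier the paper's way.
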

	\begin{proof}
		First of all, let us assume that the constants $\epsilon \geq 0$, $\mu >0$, $\rho >0$ are already given.

		Assume by contradiction that there exist two different words from $\mathcal{R}$, $W_1$ and $W_2$, which have common $\epsilon$-pieces. Suppose that $W_1$ and $W_2$ are cyclic shifts of the words 
		\begin{align*}
		z_i U^{m_{i, 1}} V U^{m_{i, 2}} V  U^{m_{i, 3}}\ldots V U^{m_{i, j_i}}		\end{align*}
		and
		\begin{align*}
		z_{i'} U^{m_{i', 1}} V U^{m_{i', 2}} V  U^{m_{i', 3}}\ldots V U^{m_{i', j_{i'}}} 
		\end{align*}
		or of their inverses, for some $1\leq i, i' \leq k$.
		
		Existence of a common $\epsilon$-piece for the words $W_1$ and $W_2$ implies that there is a rectangle $ABCD$ in $\Gamma(G, X)$ such that the labels of $AD$ and $BC$ are prefixes of $W_1$ and $W_2$ with length at least $\mu\|W_1\|$ and $\mu \|W_2 \|$, respectively, and $AB$, $CD$ are geodesics with length at most $\epsilon$. 
		
		Let us call vertices on $AD$ and $BC$ \textit{phase vertices} if they are either origin or endpoint of a subpath with label $U^{\pm 1}$, $V^{\pm 1}$, $z_i^{\pm 1}$ or $z_{i'}^{\pm 1}$. 
		
		Note that after making $AB$ and $CD$ longer by at most $2y$, we can ensure that $A$, $B$, $C$ and $D$ are phase vertices. Hereafter, let us assume that the length of $AB$ and $CD$ are bounded by $\epsilon+2L= \epsilon_0$ and the vertices $A, B, C$ and $ D$ are phase vertices.
		
		 We will call a subpath of $AD$ or $BC$ \textit{special} if it is labeled by $V$, $z_i$ or $z_{i'}$. If a special segment on $AD$ or $BC$ is between other special segments then we call this special segment \textit{inner}, otherwise, we call it \textit{boundary special segment}. Note that for any point $O \in AD$ (or $O \in BC$), there is a phase vertex $O' \in AD$ (or, correspondingly,  $O' \in BC$), such that $\big\|[O, O']\big\| \leq L/2$.
		
		Before proceeding further, let us state and prove the following auxiliary claims.\\
		~\\
		\textit{Claim 1.}
		For the rectangle $ABCD$ let us consider any inner special segment $P_1P_2$ on one of the sides $AD$ or $BC$. For concreteness let us assume that $P_1P_2$ belongs to $AD$. Then  for any phase vertex $Q_1 \in BC$, if $d(P_1, Q_1) \leq \epsilon_3$, then either $lab(P_1Q_1) \in E(g)$ in $G$ or $lab(P_2Q_1) \in E(g)$ in $G$, where by $lab(P_1Q_1)$ and $lab(P_2Q_1)$ we mean the labels of any paths joining $P_1$ to $Q_1$ and $P_2$ to $Q_1$, respectively.
		\begin{proof}
			Let $P_3P_4$ and $P_5P_6$ be the closest to $P_1P_2$ special segments on $AD$ such that $P_1P_2$ is between $P_3P_4$ and $P_5P_6$ (their existence follows from the assumption that $P_1P_2$ is an inner special segment). Let $Q_1 \in BC$ be a fixed phase vertex such that $d(P_1, Q_1)\leq \epsilon_3$, and let $Q_4$ be the closest to $P_4$ phase vertex on $BQ_1$. See Figure \ref{fig: small cancellation words 1}.
			\begin{figure}[H]
				\centering
				\includegraphics[clip, trim=1cm 19cm 1cm 4.9cm, width=1.\textwidth]{{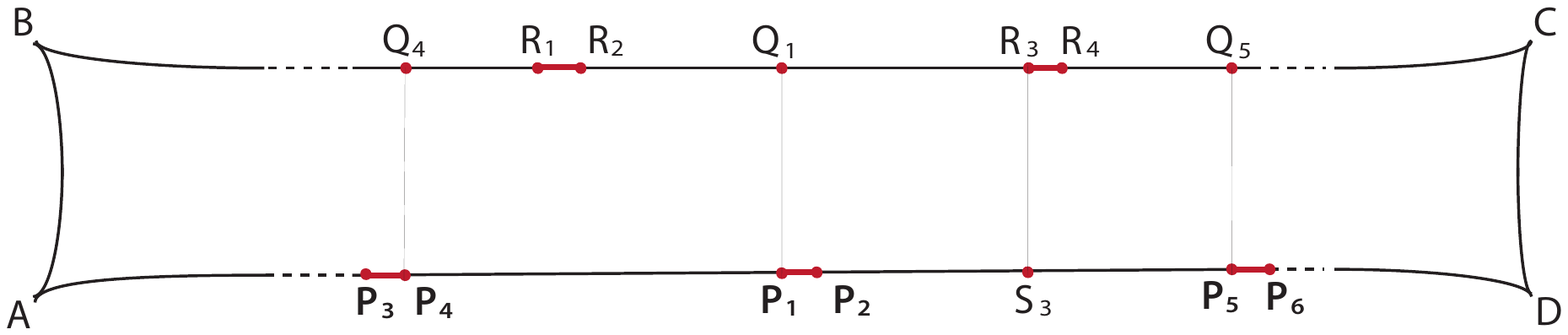}} 
				\caption{} 
				\label{fig:  small cancellation words 1}
			\end{figure}
			Since $d(P_1, Q_1), d(A, B) \leq \epsilon_3$, by Corollary \ref{corollary on hausdorff distance between quasi-geodesics}, we get $dist(P_4, BQ_1)\leq \epsilon_3+2R_{\lambda, c}+2\delta$. Therefore, $d(P_4, Q_4) \leq \epsilon_3+2R_{\lambda, c}+2\delta+\frac{L}{2} = \epsilon_4$.
			
			There are two possibilities which we are going to discuss separately: either $lab(Q_4Q_1)$ is a power of $U$ or $Q_4Q_1$ contains a special segment.
			
			 In case $lab(Q_4Q_1)$ is a power of $U$, since $lab(P_4P_1)=U^{\xi_1}$ for $\xi_1\geq \underbar{m}\geq \mathcal{M}$ and $d(P_4, Q_4), d(P_1, Q_1)\leq \epsilon_4 \leq \upsilon\underbar{m}$ (the last inequality follows from \eqref{inequality mmmm}), by Lemma \ref{lem 2.1}, the equality 
			 \begin{align*}
			 	lab(P_4Q_4)lab(Q_4Q_1)lab(Q_1P_1)lab(P_1P_4)=_G 1
			 \end{align*}
			  implies that $lab(P_1Q_1)\in E(U)$.
			  Thus we are done with this case.
			
			Now let us consider the case when $Q_4Q_1$ contains a special segment. Let $Q_5$ be a phase vertex on $Q_1C$ closest to $P_5$. By Corollary \ref{corollary on hausdorff distance between quasi-geodesics}, we again get $d(P_5, Q_5)\leq \epsilon_4$. Again, if $lab(Q_1Q_5)$ is a power of $U$, then, similarly to the previous case, by Lemma \ref{lem 2.1}, $lab(P_2Q_1)\in E(g)$ in $G$. Thus we are  left only with the case when both $Q_4Q_1$ and $Q_1Q_5$ contain special segments. Let us consider this case in more details.
			
			 Let $R_1R_2$ and $R_3R_4$ be the closest to $Q_1$ special segments on $Q_4Q_1$ and $Q_1Q_5$, respectively. See Figure \ref{fig:  small cancellation words 1}. Since $lab(R_2R_3)$ has a form $U^{\xi_2}$, where $|\xi_2| \geq \underbar{m}$, at least one of $lab(R_2Q_1)$ and $lab(Q_1R_3)$ is of the form $U^{\xi_3}$, where $|\xi_3| \geq \underbar{m}/2 \geq \mathcal{M}$. Without loss of generality, assume that $lab(Q_1R_3)=U^{\xi_3}$ for $|\xi_3| \geq \underbar{m}/2$. Then, let $S_3$ be a phase vertex on $P_1P_5$ closest to $R_3$. Then, by Corollary \ref{corollary on hausdorff distance between quasi-geodesics}, $d(R_3, Q_3)\leq \epsilon_5$. Therefore, since by \eqref{inequality mmmm}, $ \upsilon\underbar{m} \geq 2\epsilon_5$, by Lemma \ref{lem 2.1}, the equality
			 \begin{align*}
			 	lab(P_2Q_1)lab(Q_1R_3)lab(R_3S_3)lab(S_3P_2)=_G 1
			 \end{align*}
			  implies that $lab(Q_1P_2) \in E(U)$.\\ 
		\end{proof}
		~\\
		\textit{Claim 2.} If $A_1A_2$, $A_3 A_4$ and $A_5A_6$ are three consecutive inner special segments belonging either to $AD$ or to $BC$, then $A_3A_4$ is a special segment on $AD \cap BC$.
		
		\begin{proof}
			Firstly, without loss of generality let us assume that $A_1A_2$, $A_3 A_4$ and $A_5A_6$  belong to $AD$. Let $B_3$ be the closest to $A_3$ phase vertex on $BC$, $B_2$ be the closest to $A_2$ phase vertex on $BB_3$ and $B_5$ be the closest to $A_5$ phase vertex on $B_3C$. See Figure \ref{fig:  small cancellation words 2}.\\
			\begin{figure}[H]
				\centering
				\includegraphics[clip, trim=1cm 19cm 1cm 4.9cm, width=1.\textwidth]{{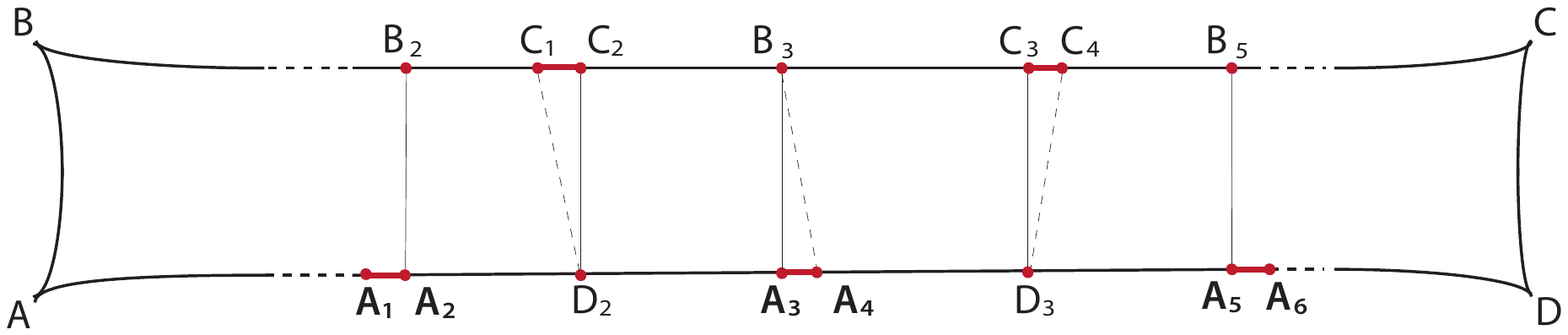}} 
				\caption{} 
				\label{fig:  small cancellation words 2}
			\end{figure}	
			We will consider the case when $B_3B_5$ contains special segment(s) and the case when it does not contain any special segment separetely.
			
			First let us consider the case when $B_3B_5$ contains special segment(s). Let $C_1C_2$ be the closest to $B_3$ special segment on $B_2B_3$ and $C_3C_4$ be the closest to $B_3$ special segment on $B_3B_5$. In case $B_2B_3$ does not contain any special segments, we take $C_2=B_2$.
						
			Correspondingly, let $D_2$ be the closest to $C_2$ phase vertex on $A_2A_3$ and $D_3$ be the closest to $C_3$ phase vertex on $A_4A_5$. Then, by Corollary \ref{corollary on hausdorff distance between quasi-geodesics}, $d(A_3, B_3) \leq \epsilon_1$, $d(A_2, B_2), d(A_5, B_5)\leq \epsilon_2$, and hence $d(C_2, D_2), d(C_3, D_3) \leq \epsilon_3$. Therefore, by Claim 1, one word from each pair $(lab(C_1D_2)$, $lab(C_2D_2))$; $(lab(A_3B_3)$, $lab(A_4, B_3))$ and $(lab(D_3, C_3)$, $lab(D_3, C_4))$ belongs to $E(U)$.
			
			Note that if $lab(D_3C_4) \in E(U)$, then it cannot be so that $lab(A_4B_3) \in E(U)$, because otherwise it would imply that $lab(C_3C_4)\in E(U)$ as well, which is not true by our assumptions (see the condition (\ref{a condition 1})). Therefore, in case $lab(D_3C_4) \in E(U)$, it must be that $lab(A_3B_3) \in E(U)$. But, since $lab(A_3C_3), lab(A_4 C_4) \in E(U)$ in that case, by condition (\ref{a condition 1}), it would mean that $d(A_3, C_3) = d(A_4, C_4) = 0$ or, in other words, $A_3A_4$ coincides with $C_3C_4$.
			
			Now, if $lab(D_3C_3) \in E(U)$, then $lab(A_4B_3) \in E(U)$. Therefore, because of the condition (\ref{a condition 1}), $lab(C_2D_2)$ cannot belong to $E(U)$. Finally, in case $C_2=B_2$, by Claim 1, this would mean that $lab(A_1B_2) \in E(U)$, which is impossible because of the condition (\ref{a condition 1}). Otherwise, again by Claim 1, $lab(C_1D_2) \in E(g)$ in $G$. Therefore, by the condition (\ref{a condition 1}), since in this case $lab(A_3C_1) \in E(U)$ and $lab(A_4C_4)\in E(U)$, we would get $A_3=C_1$ and $A_4=C_2$.
			
			Now let us turn to the case when $B_3B_5$ does not contain a special segment. In this case, by applying Lemma \ref{lem 2.1} to the boundary label of the rectangle $A_4B_3B_5A_5$ we get that $lab(A_4B_3)$ and $lab(A_5B_5)$ belong to $E(U)$. Then, by repeating previous arguments, we obtain that $lab(D_2C_1) \in E(U)$ and consequently $A_3=C_1$ and $A_4=C_2$. Thus Claim 2 is proved.
		\end{proof}
		Inequality \eqref{inequal 1} assures us that on $AD$ one can find six consecutive special segments $A_1A_2$, $A_3A_4$,  $A_5A_6$,  $A_7A_8$,  $A_9A_{10}$ and  $A_{11}A_{12}$. By Claim 2, $A_5A_6$,  $A_7A_8$ belong to $AD \cap BC$. See Figure \ref{fig:  small cancellation words 3}.
		\begin{figure}[H]
			\centering
			\includegraphics[clip, trim=1cm 19cm 1.2cm 4.5cm, width=1.\textwidth]{{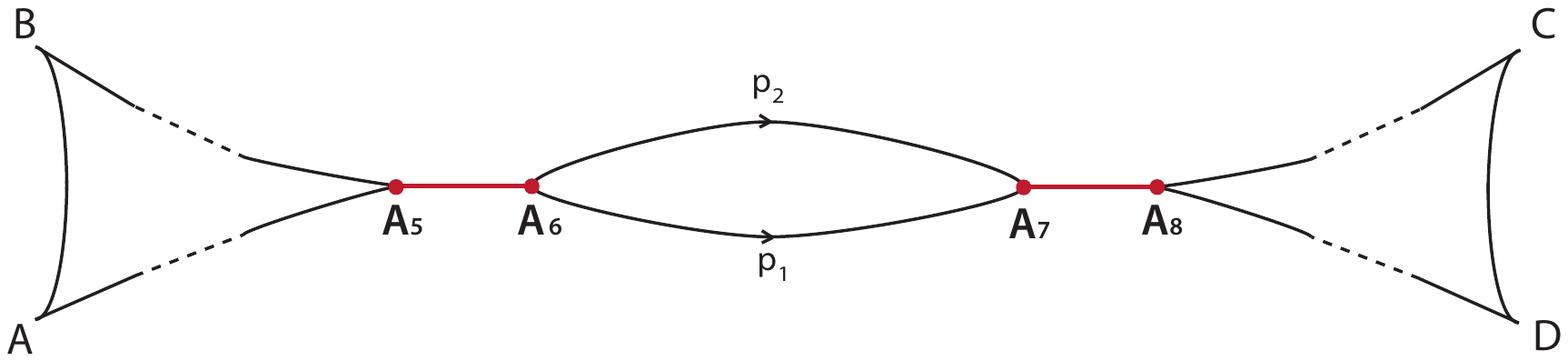}} 

 			\caption{} 
			\label{fig:  small cancellation words 3}
		\end{figure}	
		As it is shown in Figure  \ref{fig:  small cancellation words 3}, let us denote the subpaths of $AD$ and $BC$ restricted between $A_6$ and $A_7$ by $p_1$ and $p_2$, respectively. Since $A_5A_6$ and $A_7A_8$ are consecutive special segments, the label of $p_1$ is a power of $U$. Now, assuming that $p_2$ contains a special segment, just like it was done in the proof of Claim 2, we can show that that special segment must also belong to $p_1$, which is impossible since $p_1$ does not contain any special segments. Therefore, it must be that $p_2$ also does not contain any special segments. In other words, the label of $p_2$ is a power of $U$ as well. This means that the label of the closed path $p_2p_1^{-1}$ is also a power of $U$. But since $U$ represents an element $g\in G$ of infinite order, this can happen if and only if the label of $p_2p_1^{-1}$ is the empty word, i.e., when $p_2$ coincides with $p_1$. 
		
		Now, since for all  $1 \leq t \leq j_{i}$, $1 \leq t' \leq j_{i'}$, $m_{i,t} \neq m_{i', t'}$ if $(i, t) \neq (i', t')$, the last observation implies that, in fact, $i=i'$ and $W_1$ is a cyclic shift of $W_2$. Moreover, we get that either $lab(BA)$ is a suffix of $lab(AA_5)$ or $lab(AA_5)$ is a suffix of $lab(BA)$. This means that either $lab(AB)$ is equal to a prefix of $W_1$ in $G$ or $lab(BA)$ is equal to a prefix of $W_2$ in $G$; but this is impossible, because it
		contradicts condition (3) in the definition of $\epsilon$-pieces.
		~\\
		
		Now it follows from \eqref{inequality -1} and Lemma \ref{lem 1111 on cancellation words} that $\mathcal{R}$ satisfies the small cancellation condition $C'(\lambda, c, \epsilon, \mu, \rho)$.
		
	\end{proof}
	
	\subsection{A special subclass of small cancellation words}
	\label{subsection-a-class-of-small-cancellation-words}
	
	Using the already established setting of Subsection \ref{words with small cancellation-1}, let us define the positive integer $m_{1, 1}$ as the smallest integer satisfying all the constraints put on it in Subsection \ref{words with small cancellation-1}. 

	Now let us assume that in the set of words $\mathcal{R}$ we have that $m_{1,1}$ is defined as above and for all $1 \leq i \leq k$,  $m_{i, 1}=2^{i-1}m_{1, 1}$, $j_i=m_{i, 1}-1$ and for all $1 \leq t \leq j_{i}$, $m_{i, t}=m_{i,1}+(t-1)$. 

	If all these equations are satisfied, then we denote the system of words $\mathcal{R}$ by
	\begin{align}
	\index{$\mathcal{R} \big((z_j)_{j=1}^k, U, V,  \delta, \tilde{\lambda}, \tilde{c}, \epsilon,  \mu, \rho \big)$, a set of words}
	\label{definition_of_special_words}
	\mathcal{R}=\mathcal{R} \big(\boldsymbol{Z}, U, V,  \delta, \lambda, c, \epsilon,  \mu, \rho \big).
	\end{align}
	where $\boldsymbol{Z}$ is the ordered set $\{z_1<z_2<\ldots<z_k \}$.
	
	Note that the set of words $\mathcal{R}$ defined this way satisfies all the conditions prescribed for Lemmas \ref{lem 1111 on cancellation words} and  \ref{lem on words with cancellation condition}. A little bit less obvious among this conditions seems to be condition \eqref{inequal 1}. Let us show that condition \eqref{inequal 1} holds as well.
		
	Indeed, the length of each word $R_i$ from $\mathcal{R} \big(\boldsymbol{Z}, U, V,  \delta, \lambda, c, \epsilon,  \mu, \rho \big)$ is not smaller than $m_{i,1}+(m_{i,1}+1)+\ldots+ (2m_{i,1}-1) > m_{i,1}^2$ and $\overline{m}_i = 2m_{i,1}-1$. Therefore, for each $1\leq i \leq k$, $\mu \|R_i\|\geq \mu m_{i,1}^2$. 
	Now we have $\mu \|R_i\| \geq \mu m_{i,1}^2 \geq 12Lm_{i,1}=6L(\overline{m}_i+1)$. Note that the last inequality follows from the condition A2.
	
	Thus, by Lemma \ref{lem 1111 on cancellation words} and Lemma \ref{lem on words with cancellation condition}, the set of words $\mathcal{R} \big(\boldsymbol{Z}, U, V,  \delta, \lambda, c, \epsilon,  \mu, \rho \big)$ satisfies the small cancellation condition $C'(\lambda, c, \epsilon, \mu, \rho)$.\\

Let $f: \mathbb{N} \rightarrow \mathbb{N}$ be a linear time computable function. Then for all $n \in \mathbb{N}$, define
   \begin{equation*}
   	_n\mathcal{R} = \big\{ R \in \mathcal{R} \mid \|R\| \leq f(n) \big\}.
   \end{equation*}
   Assuming that $\boldsymbol{Z}, U, V,  \delta, \lambda, c, \epsilon,  \mu, \rho$ are already computed, from the structure of \eqref{definition_of_special_words}, it is not hard to see that the set $_n\mathcal{R}$ can be computed in time bounded from above by $An$, where $A>0$ does not depend on the parameters of $_n\mathcal{R}$. Thus we get the following property.
  \begin{property}
  \label{property-time-for-special-words}
  $_n\mathcal{R}$ can be computed in time bounded from above by $An$, where $A$ is a positive constant not depending on $\boldsymbol{Z}, U, V,  \delta, \lambda, c, \epsilon,  \mu, \rho$.
  \end{property}
For the applications, let us introduce the following convention: 
\begin{align}
\label{definition_of_special_words-2}
	\mathcal{R} \big( \emptyset, U, V,  \delta, \lambda, c, \epsilon,  \mu, \rho \big) = \emptyset.
\end{align}
	
	
	\subsection{Planar diagrams over hyperbolic groups and van Kampen's lemma}
	Let $H=\langle X \mid \mathcal{O}\rangle$, where $|X|<\infty$.
	
	A \textit{map} is a finite, planar connected $2$-complex. A \textit{diagram} $\Delta$ over $X$ is a map whose edges $e$ are labeled by letters $lab(e) \in X^{\pm 1}$ such that $lab(e)^{-1}= lab(e^{-1})$. 
	 The label of a path $p=e_1 \ldots e_n$ in $\Delta$ is, by definition, the word $lab(e_1) \ldots lab(e_n)$. A diagram over $X$ is called a \textit{ diagram over the group} $H=\langle X \mid \mathcal{O} \rangle$ if the label of the boundary path of every cell of $\Delta$ is a cyclic shift of some relator from $R$.  \index{diagram}
	
	A van Kampen lemma \index{van Kampen's lemma} states that a word $W \in X^*$ represents the identity of the group $H$ if and only if there is a simply connected diagram $\Delta$ over $H=\langle X \mid \mathcal{O} \rangle$  such that the boundary label of $\Delta$ is $W$. Hence, for a given $W\in X^*$ we call such a $\Delta$\textit{ van Kampen's diagram} \index{van Kampen's diagram} with label $W$ over $H=\langle X \mid \mathcal{O} \rangle$. In this paper we only use simply connected diagrams. Therefore, hereafter by diagrams we will mean simply connected diagrams.
	
	Note that for any diagram $\Delta$ over $H=\langle X \mid \bigO \rangle$, $\Delta$ can be naturally projected into the Cayley graph $\Gamma(H, X)$ such that all the labels are preserved. Moreover, if we fix arbitrary vertex $o_1$ of $\Delta$ and arbitrary vertex $o_2$ of $\Gamma(H, X)$, then the projection which maps $o_1$ to $o_2$ is defined uniquely. We denote this projection by $Proj_{o_1}^{o_2}(\Delta)$. Since in the applications of the current paper, we do not need to specify $o_1$ and $o_2$, we will simply use the notation $Proj(\Delta)$. This projection allows us to consider word metric on $\Delta$, by simply considering the word metric on the projection of $\Delta$. \index{$Proj(\Delta)$}

	Let $G$ be a quotient of $H$. When considering group $G$ we will partition the defining relators into two sets. The first set $\mathcal{O}$ will consist of all relators (not only defining) of $H$ with a fixed generating set $X$. The second set, $\mathcal{R}$, will be some symmetrized set of additional relators. We shall write
	
	\begin{align}
	\label{presentation of G1}
	G = \langle X \mid \mathcal{O} \cup \mathcal{R} \rangle =\langle H \mid \mathcal{R} \rangle.
	\end{align}
	
	\index{$\mathcal{R}$-cell} \index{$0$-cell}
	Using the terminology of \cite{Olsh G-groups} we call the cells of a diagram with boundary labels from $\mathcal{O}$ (from $\mathcal{R}$) $0$-cells ($\mathcal{R}$-cells). 
	Diagram is called \textit{reduced} if it contains minimal number of $\mathcal{R}$-cells among all diagrams with the same boundary label.
	
	
	\index{$\epsilon$-contiguity subdiagram} \index{contiguity subdiagram}
	\index{contiguity arc}	Now consider  a simple closed path $w = p_1q_1p_2 q_2$ in a diagram $\Delta$ over $G$, such that $q_1$ and $q_2$ are subpaths of boundary cycles of $\mathcal{R}$-cells $\Pi_1$ and $\Pi_2$, and $\|p_1\|$, $\|p_2\| \leq \epsilon$ for a fixed constant $\epsilon$. Assuming that the subdiagram $\Gamma$ of $\Delta$ bounded by $w$ has no hole and no $\mathcal{R}$-cell and $\Pi_1 \neq \Pi_2$, following Olshanskii, we call $\Gamma$ a \textit{$\epsilon$-contiguity}  (or simply, \textit{contiguity}) subdiagram of $\Pi_1$ and $\Pi_2$. The same term will be used if $\Pi_1 = \Pi_2$ and $\Gamma$ contains no holes. 
	In case $q_2$ instead of being a subpath of $\Pi_2$ is a subpath of of a connected path $q$ on $\partial\Delta$, $\Gamma$ is called \textit{outer $\epsilon$-contiguity subdiagram} (from $\Pi_1$ to $\partial\Delta$ or to $q$). 
	The notation $\partial (\Pi_1, \Gamma, \Pi_2)$ (or $\partial(\Pi_1, \Gamma, q)$) $=p_1q_1p_2q_2$ will define the partition of the contour $w$ of $\Gamma$. The above subpaths $q_1$ and $q_2$ are called the contiguity arcs while $p_1$ and $p_2$ are called the side arcs of the contiguity subdiagram $\Gamma$.
	
	Hereafter we will denote by $\partial\Pi$ the loop in $\Gamma(G, X)$ with the label equal to the label of $\Pi$. By $\| \Pi \|$ we denote the length of the boundary label of a cell $\Pi$. The ratio $\|q_1\|/\| \Pi_1\|$ for a contiguity subdiagram of a cell $\Pi_1$ to a cell $\Pi_2$ (or to a section $q$), is called the contiguity degree of $\Pi_1$ to $\Pi_2$ via $\Gamma$ (or of $\Pi_1$ to $q$). It is denoted $(\Pi_1, \Gamma,\Pi_2)$ (or $(\Pi_1, \Gamma, q)$). For a matter of convenience, instead of the notation $(\Pi_1, \Gamma, q)$ we will simply use the notation $\partial\Gamma$  if it does not lead to ambiguities.
	\index{$(\Pi_1, \Gamma,\Pi_2)$} \index{$(\Pi_1, \Gamma, q)$}
	
	
	\index{contiguity arc!inner}\index{contiguity arc!outer}

	If for a contiguity subdiagram $\Gamma \in \mathcal{M}$, $p_1^{-1}q_1p_2q_2^{-1} = \partial\Gamma$, $q_2$ belongs to $\partial\Delta$, then $q_2$ is called \textit{outer} contiguity arc, and correspondingly $q_1$ is called \textit{inner} contiguity arc. 
	Whenever it is not mentioned otherwise, hereafter we will denote the outer arc of $\Gamma$ by $\hat{q}_{\Gamma}$ and the inner arc by $\check{q}_{\Gamma}$. Also let us denote $p_1 = p_{\Gamma}$ and $p_2=p'_{\Gamma}$.\\
	\index{$\check{q}_{\Gamma}$, $\hat{q}_{\Gamma}$, $p_{\Gamma}$, $p'_{\Gamma}$}
	
			\subsection{Quotients of hyperbolic groups by normal closures of words with small cancellation conditions}
	\label{subsection about quotients of hyp gps}
Hereafter, if $\Delta$ is a diagram over the quotient $G=H/\ll \mathcal{R} \gg$, then by saying that the boundary $\partial\Delta$ of  $\Delta$ is a $(\lambda, c)$-quasi-geodesic $t$-gon, we mean that $\partial\Delta$ is partitioned into $t$ connected pieces such that they are $(\lambda, c)$-quasi-geodesic in $\Gamma(H, X)$.

	\begin{lemma}[see Lemma 4.6 in \cite{olshanskii-osin-sapir} and  Lemma 6.6 in \cite{Olsh G-groups}]
		\label{lem 6.6}
		
		For appropriately chosen parameters based on the lowest parameter principle with respect to the order $\lambda \succ c \succ \epsilon \succ \mu \succ \rho$, if the presentation (\ref{presentation of G1}) satisfies the $C( \lambda, c, \epsilon, \mu, \rho)$-condition,  then for any reduced disk diagram $\Delta$ over the presentation (\ref{presentation of G1})  whose boundary is a $(\lambda, c)$-quasi-geodesic $t$-gon for $1\leq t \leq 12$ and which contains an $\mathcal{R}$-cell, there exists an $\mathcal{R}$-cell $\Pi$ in $\Delta$ and disjoint outer $\epsilon$-contiguity subdiagrams $\Gamma_{1}, \ldots, \Gamma_{t}$ of $\Pi$ to different sides of the $(\lambda, c)$-q.g. $t$-gon $\partial\Delta$, such that
		\begin{align}
		\label{greendlinger inequality}
		\sum_{i=1}^t(\Pi, \Gamma_{i}, \hat{q}_{\Gamma_i}) > 1- 23 \mu.
		\end{align}
		Moreover, the quotient $G=H/\ll \mathcal{R} \gg$ is $4L$-hyperbolic, where $L=\max\{ \|R\| \mid R \in \mathcal{R} \}$.
	\end{lemma}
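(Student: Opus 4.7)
The plan is to follow Olshanskii's graded small-cancellation approach, essentially reproducing the argument of Lemma 6.6 in \cite{Olsh G-groups} and Lemma 4.6 in \cite{olshanskii-osin-sapir}. First, I would fix the parameters according to the LPP with $\lambda \succ c \succ \epsilon \succ \mu \succ \rho$: $\epsilon$ will be chosen to dominate the Hausdorff-distance constants governing $(\lambda,c)$-quasi-geodesics in $\Gamma(H,X)$ (so that Corollaries \ref{corollary on hausdorff distance between quasi-geodesics} and \ref{another corollary about hausdorff distance} may be applied), $\mu$ sufficiently small that $1 - 23\mu$ is positive and well away from zero, and $\rho$ to dominate both $\lambda L$ and $\mu^{-1}L$, where $L = \max\{\|R\| : R \in \mathcal{R}\}$.

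The key local reduction is that in a reduced diagram, two distinct $\mathcal{R}$-cells $\Pi_1, \Pi_2$ cannot share any contiguity subdiagram $\Gamma$ with $(\Pi_1, \Gamma, \Pi_2) \geq \mu$: such a $\Gamma$ would exhibit $\check{q}_{\Gamma}$ as an $\epsilon$-piece of $\partial\Pi_1$, violating condition $(1.3)$ of $C(\lambda,c,\epsilon,\mu,\rho)$, unless $\Pi_1$ and $\Pi_2$ are opposite (i.e., cancellable), which contradicts reducedness. The same argument eliminates self-contiguities of large degree. Consequently, for any $\mathcal{R}$-cell $\Pi$ in $\Delta$, the portion of $\partial\Pi$ covered by inner contiguities to other $\mathcal{R}$-cells is small relative to $\|\partial\Pi\|$, and the uncovered portion splits into $O(t)$ short arcs each of length at most $\epsilon$.

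The central step is to locate a single $\Pi$ whose outer contiguities to $\partial\Delta$ hit all $t$ sides with combined degree exceeding $1 - 23\mu$. My plan is to induct on the number of $\mathcal{R}$-cells in $\Delta$, passing to a peripheral $\mathcal{R}$-cell and exploiting planarity together with the $(\lambda,c)$-quasi-geodesicity of each side and condition $(1.1)$ ($\|R\| \geq \rho$) to spread the outer contiguities across all $t$ sides; the numerical factor $23\mu$ absorbs the inner contiguities and the $O(t)$ short side-arcs of length at most $\epsilon$, which by LPP are negligible compared to $\rho$. The main obstacle is ruling out degenerate configurations in which $\Pi$ touches fewer than $t$ sides: this requires a careful geometric analysis using the fact that a side of $\partial\Delta$ cannot be ``short-circuited'' by a single contiguity in view of $\rho \gg \epsilon$, so that the outer arcs of a peripheral $\Pi$ genuinely distribute among all $t$ boundary arcs.

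For the final hyperbolicity assertion, iterating \eqref{greendlinger inequality} yields a linear isoperimetric inequality for the presentation \eqref{presentation of G1}: each application removes one $\mathcal{R}$-cell while shortening $\partial\Delta$ by at least $(1-23\mu)\rho - 2\epsilon > 0$ (by LPP), so the total number of $\mathcal{R}$-cells in any reduced diagram is linear in $\|\partial\Delta\|$. Combining this with the linear Dehn function of the full presentation of $H$ gives a linear Dehn function for $G$; a careful optimization of the constants, in which the factor $(1-23\mu)$ and the inequality $\epsilon \ll \rho$ combine favorably, then delivers the explicit slimness bound $4L$ as stated.
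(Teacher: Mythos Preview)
The paper does not give a proof of this lemma: it is imported wholesale from the cited references (Lemma~6.6 of \cite{Olsh G-groups} and Lemma~4.6 of \cite{olshanskii-osin-sapir}), and the only commentary is the Remark immediately following it. Your outline does track the architecture of Olshanskii's original argument---the key local reduction (no two $\mathcal{R}$-cells in a reduced diagram share a contiguity of degree $\geq \mu$, else one gets an $\epsilon$-piece contradicting $C(\lambda,c,\epsilon,\mu,\rho)$) and the passage to a linear isoperimetric inequality for the hyperbolicity claim are both correct in spirit.

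There is, however, a misreading of the statement that would send your induction down a wrong path. You write that the essential cell $\Pi$ must have outer contiguities that ``hit all $t$ sides'', and identify as the main obstacle ``ruling out degenerate configurations in which $\Pi$ touches fewer than $t$ sides''. The lemma does not assert this, and it is false in general: as the paper's Remark explicitly says, some of the $\Gamma_i$ are allowed to be empty (with the convention $(\Pi,\Gamma_i,\hat q_{\Gamma_i})=0$), and the claim is only that the non-empty ones together have total degree exceeding $1-23\mu$. So the ``degenerate configurations'' you propose to exclude are not excluded at all, and your plan to force contact with every side via the inequality $\rho \gg \epsilon$ would not succeed. The actual induction in \cite{Olsh G-groups} locates a peripheral cell and bounds the total inner contiguity degree; it never attempts to distribute the outer arcs across all sides.
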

	~\\
	\textbf{Remark.}
	Note that, in fact, in Lemma \ref{lem 6.6} some of the subdiagrams $\Gamma_{1}, \ldots, \Gamma_{t}$, say $\Gamma_1$,  may not exist, in which case we would call $\Gamma_1$ \textit{empty} \index{empty contiguity subdiagram} \index{contiguity subdiagram!empty} contiguity subdiagram and take $(\Pi, \Gamma_{1}, \hat{q}_{\Gamma_1}) =0$. The important thing is that, according to Lemma \ref{lem 6.6}, some of $\Gamma_{1}, \ldots, \Gamma_{t}$  are not empty, so that the inequality \eqref{greendlinger inequality} holds.
	
	\begin{lemma}[Lemma 7.2, \cite{Olsh G-groups}]
		\label{lem 7.2}
		 Let $H=\langle X \rangle$ by a non-elementary hyperbolic group. Let $G$ be a group with a presentation (\ref{presentation of G1}) such that $\mathcal{R}$ satisfies the $C'(\lambda, c, \epsilon, \mu,  \rho)$-condition for appropriately chosen parameters $\lambda \succ c \succ \epsilon \succ \mu \succ \rho$. Then $G$ is non-cyclic, each $R\in \mathcal{R}$ represents an element of infinite order in $G$, and a word $W\in X^*$ has a finite order in $G$ if and only if $W$ is trivial in $G$ or is conjugate in $G$ to an element having finite order in $H$.
	\end{lemma}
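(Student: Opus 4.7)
The strategy is to apply Lemma~\ref{lem 6.6} to carefully chosen reduced van Kampen diagrams over the presentation~\eqref{presentation of G1}, playing the universal lower bound $\|\partial\Pi\|\geq\rho$ enjoyed by every $\mathcal R$-cell against the upper bounds on $\epsilon$- and $\epsilon'$-pieces supplied by the $C'$-condition. Throughout, $\mu$ is chosen small enough (by LPP) that $1-23\mu$ dominates the constants introduced by the diagram surgery, and then $\rho$ is chosen large enough that any reduced diagram whose boundary length is bounded by a constant independent of $\rho$ must be free of $\mathcal R$-cells.

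For non-cyclicity, fix elements $a,b\in H$ with $[a,b]\neq_H 1$, which exist because $H$ is non-elementary, and represent them by geodesic words in $X^*$ of length independent of $\rho$. If $[a,b]=_G 1$ then there is a reduced diagram $\Delta$ over~\eqref{presentation of G1} whose boundary is a geodesic quadrilateral of perimeter $2\|a\|+2\|b\|$; Lemma~\ref{lem 6.6} with $t=4$ yields an $\mathcal R$-cell $\Pi$ whose outer contiguity subdiagrams sum to degree exceeding $1-23\mu$, which gives
\[
(1-23\mu)\|\partial\Pi\|<2\|a\|+2\|b\|,
\]
and hence $\rho<3(\|a\|+\|b\|)$, contradicting LPP. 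Thus $[a,b]\neq_G 1$ and $G$ is non-cyclic.

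For the assertion that each $R\in\mathcal R$ represents an element of infinite order, it is enough to show that $R$ already has infinite order in $H$. If $R^m=_H 1$ for some $m\geq 2$, the closed loop labeled $R^m$ in $\Gamma(H,X)$ is a concatenation of $m$ copies of the $(\lambda,c)$-quasi-geodesic word $R$; by $\delta$-hyperbolicity and Corollary~\ref{corollary on hausdorff distance between quasi-geodesics}, two of these copies must $\epsilon$-fellow-travel along a common subarc whose length is a definite fraction of $\|R\|$ (pigeonhole on the $m$ cyclic shifts, each of which starts at a point within bounded distance of the origin). This yields a self-$\epsilon'$-piece of $R$ exceeding $\mu\|R\|$, contradicting the $C'$-condition.

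For the torsion characterization the ``if'' direction is immediate since the quotient $H\twoheadrightarrow G$ preserves finite order and conjugation in $G$ preserves order. For the converse, suppose $W\neq_G 1$ and $W^n=_G 1$, and replace $W$ by a representative of minimal length in its $G$-conjugacy class, taken geodesic in $X^*$. Consider a reduced diagram $\Delta$ for $W^n$. If $\Delta$ has no $\mathcal R$-cells then $W^n=_H 1$, so $W$ itself has finite order in $H$ and we are done. Otherwise Lemma~\ref{lem 6.6}, applied to $\partial\Delta$ viewed as a $(\lambda,c)$-quasi-geodesic polygon with $\min(n,12)$ sides each labeled by $W$, yields an $\mathcal R$-cell $\Pi$ and an outer $\epsilon$-contiguity subdiagram $\Gamma$ of contiguity degree exceeding $1-23\mu$. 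The main obstacle is then to convert the replacement of $\hat q_\Gamma$ by the short detour $p_\Gamma\,\check q_\Gamma\,(p'_\Gamma)^{-1}$ into a genuine shortening of $W$: when $\hat q_\Gamma$ fits inside a single $W$-block the shortening of the boundary loop descends to a strictly shorter conjugate of $W$, contradicting minimality; when $\hat q_\Gamma$ straddles several $W$-blocks, a $W$-periodic subword of length substantially greater than $\|W\|$ lies $\epsilon$-close to a subarc of $\partial\Pi$, so the $C'$-bound on $\epsilon'$-pieces of the cell label forces a separate contradiction that rules the straddling case out. Either way the presumed $\mathcal R$-cell in $\Delta$ is impossible, forcing $W^n=_H 1$ and completing the argument.
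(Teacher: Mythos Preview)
The paper does not prove this lemma; it is quoted verbatim from \cite{Olsh G-groups} (Lemma~7.2 there) and used as a black box. So there is no ``paper's own proof'' to compare against, and your sketch should be judged on its own.

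Your overall plan---use Lemma~\ref{lem 6.6} to locate an essential $\mathcal R$-cell and play its size $\|\Pi\|\geq\rho$ off against upper bounds coming from the $C'$-condition---is the right one, and your non-cyclicity paragraph is essentially fine. But two points need repair. First, the clause ``each $R\in\mathcal R$ represents an element of infinite order in $G$'' cannot be literally correct, since $R=_G 1$ by definition of $G$; the intended claim (and what Ol'shanski\u{\i} proves) is infinite order in $H$. Your sentence ``it is enough to show that $R$ already has infinite order in $H$'' reads as though infinite order in $H$ implies infinite order in the quotient $G$, which is false; you should simply prove the $H$-statement directly, and your pigeonhole idea for that needs to be made precise (as written it is not clear why two copies of $R$ must $\epsilon$-fellow-travel on a segment longer than $\mu\|R\|$).

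Second, in the torsion argument you invoke Lemma~\ref{lem 6.6} for the $n$-gon $W^n$ with ``$\min(n,12)$ sides'', but that lemma requires each side to be $(\lambda,c)$-quasi-geodesic \emph{in $H$}, and grouping several copies of $W$ into a single side does not obviously preserve this (cyclic minimality of $W$ is in $G$, not $H$). The straddling case is also too sketchy: you want to say that $W$-periodicity of $\hat q_\Gamma$ forces two shifted subarcs of $\check q_\Gamma\subset\partial\Pi$ to be $\epsilon$-close, yielding an $\epsilon'$-piece of length exceeding $\mu\|\Pi\|$; that is the correct mechanism, but it has to be written out (cf.\ the proof of Lemma~\ref{lemma_about_proper_powers_in_quotients} in this paper, which carries out exactly this kind of case split).
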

	\begin{remark}
	\label{remarm 7.2}
		Note that if $H$ is a non-elementary torsion-free hyperbolic group, then Lemma \ref{lem 7.2} implies that $G$ is also a non-elementary torsion-free hyperbolic group. 
	\end{remark}

	\begin{definition}[\textit{Essential} cells and contiguity subdiagrams]
		Let $\Pi$ be an $\mathcal{R}$-cell in a reduced van Kampen diagram $\Delta$ with $(\lambda, c)$-quasi-geodesic $t$-gon boundary for $1\leq t \leq 12$, and let $\Pi$ be connected to the sides of the $t$-gon $\partial\Delta$  by disjoint outer $\epsilon$-contiguity subdiagrams $\Gamma_1$, \ldots, $\Gamma_t$  such that
		\begin{align}
		\label{greendlinger inequality}
		\sum_{i=1}^t(\Pi, \Gamma_{i}, \hat{q}_{\Gamma_i}) > 1- 23 \mu.
		\end{align}
		Then we call $\Pi$ \textit{an essential} cell,\index{essential cell} and the contiguity subdiagrams $\Gamma_{1}, \ldots, \Gamma_{t}$ -- \textit{essential} contiguity subdiagrams. \index{essential contiguity subdiagrams}		
	\end{definition}

\subsection{Auxiliary definitions and lemmas}	
In this subsection we discuss some auxiliary lemmas and definitions for  $G=H/\ll \mathcal{R} \gg$, where $H=\langle X \rangle$ is hyperbolic and $ \mathcal{R}$ satisfies the $C(\lambda, c, \epsilon, \mu,  \rho)$-condition. Also, $0\leq \epsilon_0 \leq \eta \leq 1$ are some constants.

\begin{definition}[$(\epsilon_0, \eta)$-arcs and $(\epsilon_0, \eta)$-words]
	\label{()-arcs}
	\index{$(\epsilon_0, \eta)$-arc, $(\epsilon_0, \eta)$-word }
		$W_0\in X^*$ is an $(\epsilon_0, \eta)$-word (associated with a word $R\in \mathcal{R}$) with respect to the quotient $G=H/\ll \mathcal{R} \gg$, if there exist words $T_1, T_2 \in X^*$, $\|T_1\|, \|T_2\| \leq \epsilon_0$ and a word $R \in \mathcal{R}$ such that $R=UV$, $\|U\| \geq \eta\|R\|$ and
	\begin{align*}
		 W_0 =_H T_1^{-1} U T_2.
	\end{align*}
	
	A subpath $p'$ of a path $p$ from $\Gamma(G, X)$ is called $(\epsilon_0, \eta)$-arc (or $(\epsilon_0, \eta)$-subpath) if its label is a $(\epsilon_0, \eta)$-word.
	

\end{definition}

\begin{lemma}
\label{lemma-arcs-associated-cells-are-short}
Suppose that $W\in X^*$ contains  a $(\epsilon_0, \eta)$-subword  associated with some word $R \in \mathcal{R}$. Then 
$$\|R\| \leq \frac{\lambda(\|W\|+2\epsilon_0)+c)}{\eta}.$$
\end{lemma}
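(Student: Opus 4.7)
The plan is to unwind the definition of an $(\epsilon_0, \eta)$-word, use the quasi-geodesicity of relators guaranteed by the $C(\lambda, c, \epsilon, \mu, \rho)$-condition, and then apply a straightforward triangle-inequality estimate in $H$.

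More concretely, let $W_0$ be an $(\epsilon_0, \eta)$-subword of $W$ associated with $R \in \mathcal{R}$. By Definition \ref{()-arcs}, there are $T_1, T_2 \in X^*$ with $\|T_1\|, \|T_2\| \leq \epsilon_0$ and a decomposition $R \equiv UV$ with $\|U\| \geq \eta \|R\|$ such that
\[
W_0 =_H T_1^{-1} U T_2, \qquad \text{equivalently} \qquad U =_H T_1 W_0 T_2^{-1}.
\]
From the second equality I read off the upper bound
\[
|U|_H \;\leq\; \|T_1\| + \|W_0\| + \|T_2\| \;\leq\; \|W\| + 2\epsilon_0,
\]
using only that $W_0$ is a subword of $W$, hence $\|W_0\| \leq \|W\|$.

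Next, condition (1.2) of the $C(\lambda, c, \epsilon, \mu, \rho)$-condition says that every $R \in \mathcal{R}$ is $(\lambda, c)$-quasi-geodesic in $\Gamma(H,X)$. Since subpaths of $(\lambda,c)$-quasi-geodesic paths are themselves $(\lambda,c)$-quasi-geodesic, $U$ satisfies $\|U\| \leq \lambda |U|_H + c$. Combining this with the previous bound gives
\[
\|U\| \;\leq\; \lambda\bigl(\|W\| + 2\epsilon_0\bigr) + c.
\]
Finally, the inequality $\|U\| \geq \eta \|R\|$ yields
\[
\|R\| \;\leq\; \frac{\|U\|}{\eta} \;\leq\; \frac{\lambda(\|W\| + 2\epsilon_0) + c}{\eta},
\]
which is the claimed estimate. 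There is no real obstacle here: the only ingredient beyond the definition is the quasi-geodesicity of relators, which is built into the $C(\lambda, c, \epsilon, \mu, \rho)$-condition.
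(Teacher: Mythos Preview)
Your proof is correct and follows exactly the approach sketched in the paper: unwind the definition of an $(\epsilon_0,\eta)$-subword, use the triangle inequality to bound $|U|_H$, and then invoke the $(\lambda,c)$-quasi-geodesicity of relators to translate this into a bound on $\|U\| \geq \eta\|R\|$.
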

\begin{proof}
	Follows from the definition of the $(\epsilon_0, \eta)$-subwords and the fact that the word from $\mathcal{R}$ are $(\lambda, c)$-quasi-geodesics in $\Gamma(H, X)$. We just need to apply the triangle inequality.
\end{proof}

\begin{lemma}
		\label{lemma on Greendlinger's cell}
		Let us consider the quotient $G=H/\ll \mathcal{R} \gg$, where $H=\langle X \rangle$ and $ \mathcal{R}$ satisfies the $C(\lambda, c, \epsilon, \mu,  \rho)$-condition. Then for any constants $\epsilon_0\geq 0$ and $K>0$, if $\rho$ is small enough and $\rho$ is large enough, then there is no $(\lambda, c)$-quasi-geodesic path in $\Gamma(G, X)$ containing an $(\epsilon_0, 1-K\lambda \mu)$-arc.
\end{lemma}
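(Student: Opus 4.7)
The plan is to argue by contradiction: assume some $(\lambda,c)$-quasi-geodesic path $p$ in $\Gamma(G,X)$ contains an $(\epsilon_0, 1-K\lambda\mu)$-arc $p'$. Unpacking Definition~\ref{()-arcs}, the label $W_0 = \mathrm{lab}(p')$ satisfies $W_0 =_H T_1^{-1}UT_2$ for some $T_1,T_2 \in X^*$ with $\|T_1\|,\|T_2\| \leq \epsilon_0$ and some $R = UV \in \mathcal{R}$ with $\|U\| \geq (1-K\lambda\mu)\|R\|$, so that $\|V\| \leq K\lambda\mu\|R\|$.

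The core of the argument is a two-step quasi-geodesic sandwich. First, since $UV =_G 1$, we have $W_0 =_G T_1^{-1}V^{-1}T_2$, whence $d_G(p'_-,p'_+) \leq 2\epsilon_0 + \|V\|$. The $(\lambda,c)$-quasi-geodesicity of $p'$ in $\Gamma(G,X)$ then yields
\begin{equation*}
\|W_0\| \leq \lambda(2\epsilon_0 + \|V\|) + c.
\end{equation*}
Second, working now in $\Gamma(H,X)$, the same equation rearranges as $U =_H T_1 W_0 T_2^{-1}$, so $|U|_H \leq 2\epsilon_0 + \|W_0\|$. Since $U$ is a subword of $R$, which is $(\lambda,c)$-quasi-geodesic in $\Gamma(H,X)$ by the $C(\lambda,c,\epsilon,\mu,\rho)$-condition, I would conclude (this is essentially Lemma~\ref{lemma-arcs-associated-cells-are-short})
\begin{equation*}
\|U\| \leq \lambda|U|_H + c \leq \lambda(2\epsilon_0 + \|W_0\|) + c.
\end{equation*}

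Substituting the previous bound for $\|W_0\|$ and then using $\|U\| \geq (1-K\lambda\mu)\|R\|$ together with $\|V\| \leq K\lambda\mu\|R\|$, all occurrences of $\|R\|$ collect on one side to give
\begin{equation*}
\bigl(1 - K\lambda\mu - K\lambda^3\mu\bigr)\|R\| \leq 2\lambda(1+\lambda)\epsilon_0 + (1+\lambda)c.
\end{equation*}
By LPP (with $\mu \succ \rho$, and $\epsilon_0, K$ treated as higher-priority external parameters), I first choose $\mu$ small enough that $1 - K\lambda\mu - K\lambda^3\mu \geq 1/2$, and then choose $\rho$ large enough that $\rho > 4\lambda(1+\lambda)\epsilon_0 + 2(1+\lambda)c$. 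The resulting bound $\|R\| < \rho$ contradicts $\|R\| \geq \rho$ from clause (1.1) of the $C(\lambda,c,\epsilon,\mu,\rho)$-condition. The only mild obstacle is the bookkeeping required to keep the two quasi-geodesic estimates consistent across the two different Cayley graphs $\Gamma(G,X)$ and $\Gamma(H,X)$; once the sandwich is set up, no diagram surgery or Greendlinger-type cell (Lemma~\ref{lem 6.6}) is needed.
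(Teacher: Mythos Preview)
Your proof is correct and follows essentially the same length-sandwich argument as the paper: bound $d_G(p'_-,p'_+)$ from above via $V$, bound $\|U\|$ from below via $(1-K\lambda\mu)\|R\|$, and use the two quasi-geodesic conditions to force $\|R\|$ below a constant independent of $\rho$. Your version is in fact a bit more careful than the paper's, which writes both bounds as inequalities on a single quantity ``$|p'|$'' without explicitly threading through $\|W_0\|$ to bridge the $\Gamma(G,X)$ and $\Gamma(H,X)$ estimates; your intermediate step $\|W_0\|\le\lambda(2\epsilon_0+\|V\|)+c$ makes that passage transparent.
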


\begin{proof}
	All the metric notations   which we use in this proof are in Cayley graph $\Gamma(G, X)$.
	
	First of all, assume that $0< \mu < \frac{1}{K\lambda}$ so that we have $0< 1-K\lambda \mu <1$.
	
	Now assume that there exists a $(\lambda, c)$-quasi-geodesic path $p$ in $\Gamma(H, X)$ which contains an $(\epsilon_0, 1-K\lambda \mu)$-arc $p'$. 
	Then, by definition, there exist words $T_1, T_2 \in X^*$, $\|T_1\|, \|T_2\| \leq \epsilon_0$ and a word $R \in \mathcal{R}$, such that $R=UV$, $\|U\| \geq (1-K\lambda \mu)\|R\|$ and
	\begin{align*}
		lab(p') =_H T_1^{-1} U T_2.
	\end{align*}
	Then, combining the last equation with triangle inequality and with the inequality $\|U\| \geq (1-K\lambda \mu)\|R\|$, we get 
	\begin{align}
	\label{aux-maux-1}
		|p'| \geq \frac{(1-K\lambda \mu)\|R\|-c}{\lambda}-2\epsilon_0.
	\end{align}
	On the other hand, by triangle inequality, we have
	\begin{align}
	\label{aux-maux-2}
	|p'|\leq 2\epsilon_0 + \|V\| \leq 2\epsilon_0 +(1-(1-K\lambda \mu)) \|R\| = 2\epsilon_0+K\lambda \mu \|R\|.	
	\end{align}
	Finally, note that, since $\|R\|\geq \rho$, if $\rho$ is large enough, then the system of inequalities \eqref{aux-maux-1} and \eqref{aux-maux-2}  is not consistent, which contradicts the existence of $p'$
	\end{proof}

	\begin{definition}[Truncated diagrams]
	\label{def-truncated diagrams}
	\index{truncated diagrams}
	If a van Kampen diagram $\Delta$ over $G=H/\ll \mathcal{R} \gg$ has a rectangular boundary $\partial\Delta=ABCD$ such that the following conditions hold.
	\begin{enumerate}
		\item $[A, D]$ and $[B, C]$ are $(\lambda, c)$-quasi-geodesics in $\Gamma(G, X)$,
			\item $[A, B]$ and $[D, C]$ are geodesic,
			\item $d_G(A, B)=dist_G(A, [B, C])$, $d_G(D, C)=dist_G(D, [B, C])$.

	\end{enumerate}
	Then $\Delta$ is called truncated diagram.
	\end{definition}

	\begin{lemma}
	\label{lemma-truncated-diagrams}
			Suppose that $\Delta$ is a reduced diagram over $G=H/\ll \mathcal{R} \gg$ such that $\partial\Delta = ABCD$, $\Delta$ is truncated and the following holds:
			\begin{align}
			\label{cond-(4)}
				 d_G(A, D) \geq \lambda\big( L+\big\|[A, B]\big\|+\big\|[D, C]\big\|+2\epsilon \big)+c, 
			\end{align}
			\text{~where~} $L=\max\{\|R\| \mid R \in \mathcal{R} \}$.
		
		Suppose that $\Delta$ contains an essential $\mathcal{R}$-cell $\Pi$ connected to $[A, B]$, $[B, C]$, $[C, D]$ and $[D, A]$ by essential $\epsilon$-contiguity subdiagrams $\Gamma_1$, $\Gamma_2$, $\Gamma_3$ and $\Gamma_4$, respectively. Then, if the standard parameters are large enough, we have
		\begin{enumerate}
			\item[(i)] either $\Gamma_1$ or $\Gamma_3$ is empty;
			\item[(ii)]  $(\Pi, \Gamma_1, [A, B])+(\Pi, \Gamma_2, [B,C]) + (\Pi, \Gamma_3, [B,C]) \leq 1-26\mu$; and
			\item[(iii)] $(\Pi, \Gamma_4, [A, D])>\mu$.
		\end{enumerate}
	\end{lemma}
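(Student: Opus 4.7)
The plan is to prove (i) by a direct triangle inequality argument, then (ii) by applying Lemma~\ref{lemma on Greendlinger's cell} to a composite quasi-geodesic path in $\partial\Delta$, and finally to deduce (iii) from (ii) together with the Greendlinger-type inequality of Lemma~\ref{lem 6.6}.

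For (i), I would argue by contradiction, supposing both $\Gamma_1$ and $\Gamma_3$ are non-empty. Each would then provide a vertex on $\partial\Pi$ at $G$-distance at most $\epsilon$ from the respective short side; since $\|\partial\Pi\|\le L$, the two vertices would lie within $G$-distance $L/2$ of each other on $\partial\Pi$. Chaining through the two side arcs yields points $X\in[A,B]$ and $Y\in[C,D]$ with $d_G(X,Y)\le L/2+2\epsilon$, whence $d_G(A,D)\le \|[A,B]\|+\|[C,D]\|+L/2+2\epsilon$; since $\lambda\ge 1$ and $L/2<L$, this contradicts \eqref{cond-(4)}.

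For (ii), by (i) I may assume without loss of generality that $\Gamma_3=\emptyset$; writing $\sigma_i=(\Pi,\Gamma_i,\cdot)$, the claim reduces to $\sigma_1+\sigma_2\le 1-26\mu$. I consider the subpath $q$ of $\partial\Delta$ obtained by concatenating $\hat{q}_1$, the subpath $W_B$ of $\partial\Delta$ from the end of $\hat{q}_1$ through the corner $B$ to the start of $\hat{q}_2$, and $\hat{q}_2$. Because $B$ realises $\mathrm{dist}_G(A,[B,C])$ by the truncated-diagram condition, a standard hyperbolic-geometry argument shows $[A,B]\cup[B,C]$ is $(\lambda,c')$-quasi-geodesic in $\Gamma(G,X)$ for some $c'$ controlled by $\delta,\lambda,c$, and so is $q$. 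Meanwhile, the subarc $\check{q}_1\cdot V\cdot\check{q}_2$ of $\partial\Pi$, where $V$ is the gap between $\check{q}_1$ and $\check{q}_2$ (of length $\le 23\mu\|\Pi\|$ by Lemma~\ref{lem 6.6}), carries a subword of a cyclic shift of the relator $R$ associated to $\Pi$ of length at least $(\sigma_1+\sigma_2)\|R\|$. Combining the $H$-trivial boundary of $\Gamma_1\cup\Gamma_2$ with the $G$-trivial boundary of the ``bridge'' region of $\Delta$ enclosed by $V$, the side arcs of $\Gamma_1,\Gamma_2$, and $W_B$, one verifies that $q$ contains an $(\epsilon',\sigma_1+\sigma_2)$-arc associated to $R$ in the sense of Definition~\ref{()-arcs}, with $\epsilon'$ controlled by the standard parameters. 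Then Lemma~\ref{lemma on Greendlinger's cell}, applied to $q$ with $K$ chosen by LPP so that $1-K\lambda\mu\le 1-26\mu$, forces $\sigma_1+\sigma_2\le 1-26\mu$. Part (iii) is then immediate: combining (ii), $\sigma_3=0$, and the Greendlinger inequality $\sigma_1+\sigma_2+\sigma_3+\sigma_4>1-23\mu$ of Lemma~\ref{lem 6.6} yields $\sigma_4>3\mu>\mu$.

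The main obstacle will be justifying the $(\epsilon',\sigma_1+\sigma_2)$-arc property on $q$ when the bridge region may contain its own $\mathcal{R}$-cells, so that its boundary label is only known to be trivial in $G$. I would overcome this by combining the $H$-equalities $\operatorname{lab}(\hat{q}_i)=_H T_i\,U_i\,T_i'$ (coming from the $\mathcal{R}$-cell-free contiguity subdiagrams $\Gamma_i$, $i=1,2$) with the $G$-equality $V\cdot p_1'^{-1}\cdot W_B\cdot p_2=_G 1$ (coming from the bridge region) to recover, modulo short modifier words of length bounded by $\epsilon'$, an $H$-equality of the required form expressing $\operatorname{lab}(q)$ as $T\,U\,T''$ with $U$ a subword of $R$ of length $\ge(\sigma_1+\sigma_2)\|R\|$.
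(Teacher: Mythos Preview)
Your arguments for (i) and for deducing (iii) from (ii) are fine and essentially match the paper.

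For (ii) there is a genuine gap. An $(\epsilon_0,\eta)$-arc (Definition~\ref{()-arcs}) requires an equality $W_0=_H T_1^{-1}UT_2$ \emph{in $H$}, not merely in $G$. Your composite path $q=\hat q_1\cdot W_B\cdot \hat q_2$ around the corner $B$ only satisfies the analogous relation in $G$: the bridge region bounded by the gap $V\subset\partial\Pi$, the side arcs $p_{\Gamma_1}',p_{\Gamma_2}$, and $W_B$ is a subdiagram of $\Delta$ that may well contain $\mathcal R$-cells, so its boundary label is trivial in $G$ but not in $H$. Concretely, from $\Gamma_1,\Gamma_2$ you get $lab(\hat q_i)=_H T_i\,lab(\check q_i)\,T_i'$, hence
\[
lab(q)=_H T_1\,lab(\check q_1)\cdot\bigl(T_1'\,lab(W_B)\,T_2\bigr)\cdot lab(\check q_2)\,T_2',
\]
but you only know $T_1'\,lab(W_B)\,T_2=_G lab(V)$, not $=_H$. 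So you cannot exhibit $lab(\check q_1)\cdot lab(V)\cdot lab(\check q_2)$ (the long subword of $R$) inside $lab(q)$ up to short conjugators in $H$, and Lemma~\ref{lemma on Greendlinger's cell} does not apply to $q$ in the way you want.

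The paper sidesteps this by never combining $\Gamma_1$ and $\Gamma_2$. Instead it uses the truncated condition $d_G(A,B)=\mathrm{dist}_G(A,[B,C])$ directly: since $(\hat q_{\Gamma_2})_+\in[B,C]$ one has $d_G(A,(\hat q_{\Gamma_2})_+)\ge d_G(A,B)$, whence (as $[A,B]$ is geodesic) $d_G((\hat q_{\Gamma_1})_-,B)\le d_G((\hat q_{\Gamma_1})_-,(\hat q_{\Gamma_2})_+)\le 2\epsilon+(1-\kappa_1-\kappa_2)\|\Pi\|$, the last bound coming from going around $\partial\Pi$ avoiding $\check q_1\cup\check q_2$. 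Combined with $\|\hat q_{\Gamma_1}\|\ge(\kappa_1\|\Pi\|-c)/\lambda-2\epsilon$ and the contradiction hypothesis $\kappa_1+\kappa_2\ge 1-26\mu$, this forces $\kappa_1<27\lambda\mu$, hence $\kappa_2>1-53\lambda\mu$. But then $\hat q_{\Gamma_2}$ alone, lying on the genuinely $(\lambda,c)$-quasi-geodesic side $[B,C]$ and coming from the $\mathcal R$-cell-free $\Gamma_2$, is an honest $(\epsilon,1-53\lambda\mu)$-arc in $H$, contradicting Lemma~\ref{lemma on Greendlinger's cell}. The point is that the truncated condition kills $\kappa_1$ outright, so one never needs to splice across the corner.
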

	\begin{proof}
		First of all, if both $\Gamma_1$ and $\Gamma_3$ are not empty, then the distance between $[A, B]$ and $[D, C]$ is bounded by $2\epsilon+\|\Pi\| \leq 2\epsilon+L$. Therefore, since $[A, D]$ is $(\lambda, c)$-quasi-geodesic in $\Gamma(G, X)$, by the triangle inequality, we have $d_G(A, D)\leq \lambda\big(\big\|[A, B]\big\|+\big\|[D, C]\big\|+2\epsilon+L \big)+c$, which contradicts the condition \eqref{cond-(4)} in the statement of the lemma.
		Therefore, without loss of generality we can  assume that $\Gamma_3$ is empty.
		
		
		Now let us prove that $$(\Pi, \Gamma_1, [A, B])+(\Pi, \Gamma_2, [B, C])<1-26\mu.$$ 
		For that let us denote $\kappa_1=(\Pi, \Gamma_1, [A, B])$ and $\kappa_2=(\Pi, \Gamma_2, [B, C])$, and by contradiction, assume that $\kappa_1+\kappa_2 \geq 1-26\mu$. Then, since $d_G(A, B)=dist_G(A, [B, C])$, we get $d_G(A, B)\leq d_G(A, (\hat{q}_{\Gamma_2})_+)$, and consequently, 
		\begin{align}
		\label{Equation111}
			d_G\big((\hat{q}_{\Gamma_1})_-, B \big)\leq d_G\big((\hat{q}_{\Gamma_1})_-, (\hat{q}_{\Gamma_2})_+ \big) \leq 2\epsilon+(1-\kappa_1 - \kappa_2)\|\Pi\| < 2\epsilon + 26\mu\|\Pi\|.
		\end{align}
		 See Figure \ref{fig:  for lemma 18}.
		 Since $\partial\Pi$ is $(\lambda, c)$-quasi-geodesic, we also have
		 \begin{align}
		 \label{Equation222}
		 \frac{\kappa_1 \|\Pi\|-c}{\lambda}-2\epsilon \leq \| \hat{q}_{\Gamma_1} \| \leq d_G((\hat{q}_{\Gamma_1})_-, B).	
		 \end{align}
		 
		 Combining \eqref{Equation111} and \eqref{Equation222}, we get $\kappa_1 \|\Pi\| \leq \lambda(4\epsilon + 26 \mu\|\Pi\|) + c <^{\text{by LPP}}  27 \lambda \mu \|\Pi \|$, and consequently, we get $\kappa_1 < 27 \lambda \mu$. Therefore, $\kappa_2 > 1-26\mu -27 \lambda\mu > 1-53 \lambda \mu$, or in other words, $\hat{q}_{\Gamma_2}$ is a $(\epsilon, 1-53\lambda\mu)$-arc, by Lemma \ref{lemma on Greendlinger's cell}, but for large enough $\rho$ this  is impossible, because $[B, C]$ is $(\lambda, c)$-quasi-geodesic. 

		\begin{figure}[H]
				\centering
				\includegraphics[clip, trim=1.9cm 19cm 1cm 0cm, width=.8\textwidth]{{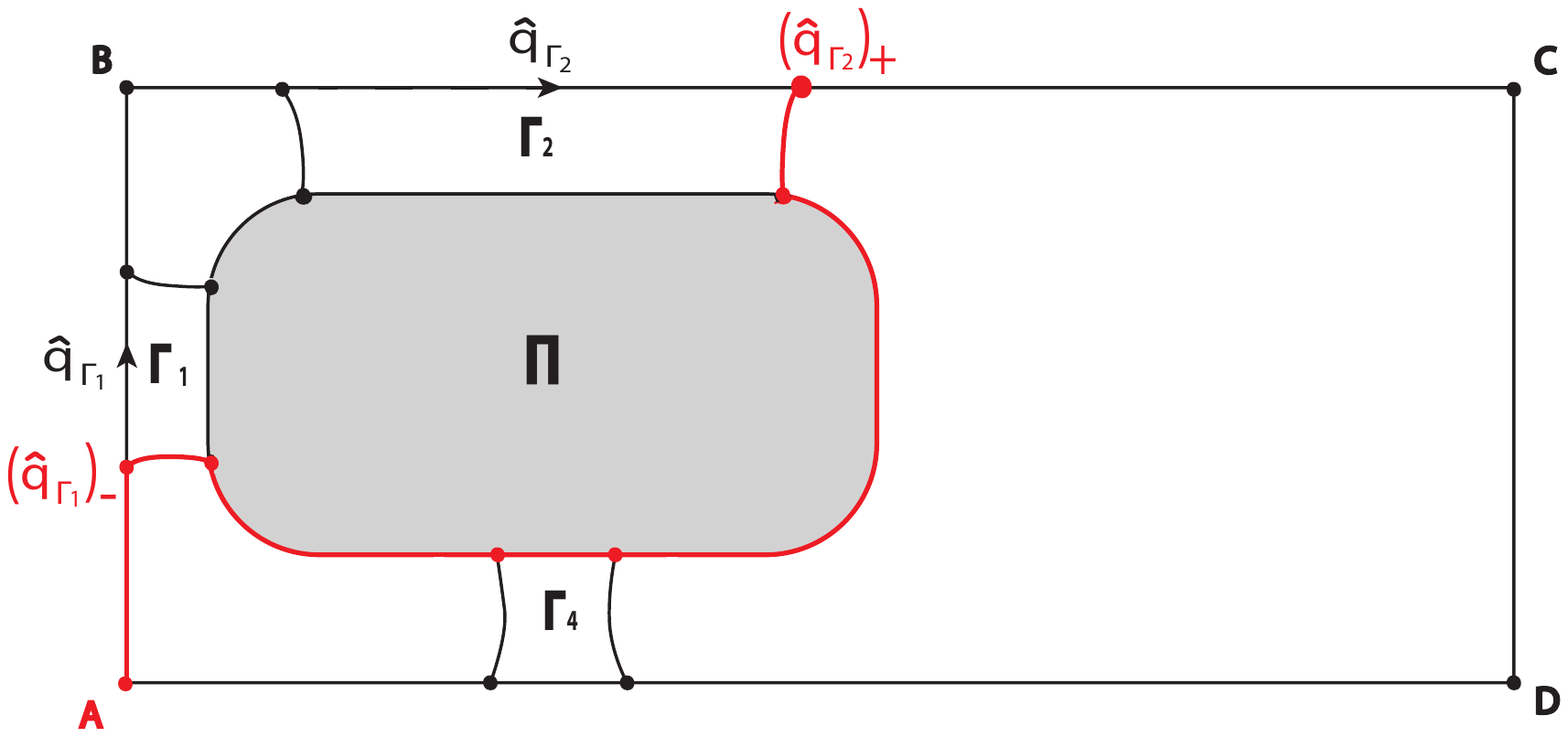}} 
				\caption{} 
				\label{fig:  for lemma 18}
			\end{figure}

	\end{proof}
	
	Finally, since the system of contiguity diagrams $\Gamma_i$, $i=1,2,3,4$, is essential and $\Gamma_3$ is empty, we get that $(\Pi, \Gamma_4, [A, D]) \geq 1-23\mu - (\kappa_1+\kappa_2) > 3\mu >\mu$.

	\begin{lemma}
		\label{lemma_about_contiguity_arcs_with_the_same_label}
		
		Suppose that $\mathcal{R}$ satisfies $C'(\lambda, c, \epsilon, \mu,  \rho)$-condition and $\epsilon$, $\rho$ are large enough.
		Suppose that $R \in \mathcal{R}$ and $U, V$ are disjoint subwords from $R$ such that for some words $T_1, T_2\in X^*$, $\|T_1\|,  \|T_2\| \leq 2\epsilon $ and $T_1^{-1} U T_2 =_H V^{\pm 1}$. 
		
		Then $\|U\|, \|V\| \leq 2\mu \|R\|$.
		
	\end{lemma}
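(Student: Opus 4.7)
The plan is to derive the conclusion from the $C'(\lambda, c, \epsilon, \mu, \rho)$-condition by a bisection argument that pays a multiplicative factor of two. First I would make the standard symmetric reductions: since $\mathcal{R}$ is closed under inverses and cyclic shifts, I may replace $R$ by a cyclic shift of $R$ or $R^{-1}$ so that the hypothesis reads $T_1^{-1}UT_2 =_H V$ (eliminating the $\pm 1$) and $R = U \cdot P \cdot V \cdot Q$ for some $P, Q \in X^*$, so $U$ and $V$ appear as non-overlapping subwords of (a cyclic shift of) $R$. The equation rewrites as $T_1 V T_2^{-1} =_H U$ with conjugators of the same length bounds, so by swapping $U \leftrightarrow V$ it suffices to prove $\|U\| \leq 2\mu\|R\|$.

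Next I would set up the quadrangle. Assume for contradiction $\|U\| > 2\mu\|R\|$. Form a quadrangle $ABCD$ in $\Gamma(H, X)$ with $A\to B$ along $p_U$ labeled $U$, $B\to C$ a geodesic labeled $T_2$, $C\to D$ along $p_V^{-1}$ labeled $V^{-1}$, and $D\to A$ a geodesic labeled $T_1^{-1}$; the short sides $[A,D]$ and $[B,C]$ have length $\leq 2\epsilon$, while $p_U$ and $p_V$ are $(\lambda,c)$-quasi-geodesic as subwords of $R$. Choose $N \in p_U$ so that $U = U_1 U_2$ with $\|U_1\|, \|U_2\| > \mu\|R\|$. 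Since $\rho$ is large enough by LPP, quasi-geodesity ensures $d_H(A, N), d_H(N, B) > 2\epsilon + R_{\lambda, c} + 2\delta$. Corollary~\ref{another corollary about hausdorff distance} then produces $N' \in p_V$ with $d_H(N, N') \leq 2R_{\lambda,c} + 2\delta \leq \epsilon$ (again by LPP). Writing $\sigma$ for the short geodesic $N \to N'$, the vertex $N'$ splits $V = V_1 V_2$.

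Going around the two sub-quadrangles $ANN'D$ and $NBCN'$ gives
\[
U_1 =_H T_1 V_1 \sigma^{-1}, \qquad U_2 =_H \sigma V_2 T_2^{-1},
\]
so each pair $(U_i, V_i)$ is related by one conjugator of length $\leq \epsilon$ (namely $\sigma^{\pm 1}$) and one of length $\leq 2\epsilon$ (namely $T_i^{\pm 1}$). Now I bisect the remaining long-short sides: write $T_1 = T_1^{(a)} T_1^{(b)}$ and $T_2 = T_2^{(a)} T_2^{(b)}$ with each piece of length $\leq \epsilon$, and let $M_1, M_2$ be the corresponding midpoints of $[A,D]$ and $[B,C]$. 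A thinness argument on the sub-quadrangles (using Property~\ref{property 1111}) places $M_1$ within distance $\leq 18\delta \leq \epsilon$ of either $p_U$ near $A$, the diagonal $\sigma$, or $p_V$ near $D$; in each case one produces a path of length $\leq \epsilon$ realising the identification of $U_1$ (as a subword of $R$ in its original cyclic shift) with $V_1$ (as a subword of $R$ in the cyclic shift starting with $V$), i.e. a genuine $\epsilon'$-piece configuration for $R$. The symmetric analysis at $M_2$ does the same for $U_2$ and $V_2$.

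Applying the $C'(\lambda, c, \epsilon, \mu, \rho)$-condition to each piece yields $\|U_1\|, \|U_2\| < \mu\|R\|$, contradicting $\|U_1\|, \|U_2\| > \mu\|R\|$ from the bisection. Hence $\|U\| \leq 2\mu\|R\|$, and the bound on $\|V\|$ follows by the symmetric argument. The main obstacle is the case analysis of Step 4, verifying that after bisecting the $2\epsilon$-side the resulting sub-configuration is indeed an $\epsilon'$-piece of $R$ (rather than merely an approximate one); this requires careful use of $18\delta$-thinness together with the fact that $\epsilon \succ \delta$ by LPP, so that all auxiliary error terms can be absorbed into $\epsilon$.
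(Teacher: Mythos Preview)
Your bisection approach has a genuine gap at the step you yourself flag as ``the main obstacle.'' After splitting $U=U_1U_2$ and finding $\sigma$ with $\|\sigma\|\leq\epsilon$, the relation $U_1 =_H T_1 V_1 \sigma^{-1}$ still carries one conjugator $T_1$ of length $\leq 2\epsilon$. Bisecting $T_1$ at $M_1$ and invoking thinness places $M_1$ within $O(\delta)$ of one of the other three sides, but this does \emph{not} manufacture conjugators of length $\leq\epsilon$: for instance, if $M_1$ is $18\delta$-close to $M_1^V\in p_V$, then the conjugator from $A$ (the start of $U_1$) to $M_1^V$ has length $\leq \epsilon+18\delta$, which is strictly larger than $\epsilon$. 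The $C'(\lambda,c,\epsilon,\mu,\rho)$-condition bounds $\epsilon'$-pieces for this \emph{fixed} $\epsilon$; you cannot ``absorb $18\delta$ into $\epsilon$'' because $\epsilon+18\delta>\epsilon$. So the $C'$-condition does not apply to $(U_1,V_1)$ as stated, and the claimed contradiction $\|U_1\|<\mu\|R\|$ does not follow. Any fix requires trimming $U_1$ (or $V_1$) near the $T_1$-end, after which $C'$ only gives $\|U_1\| < \mu\|R\| + (\text{trim})$ --- not a contradiction with $\|U_1\|>\mu\|R\|$, since you have already spent the factor-of-two slack in the initial bisection of $U$.

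The paper's route is the direct one and avoids the bisection entirely. Apply Corollary~\ref{another corollary about hausdorff distance} once to the whole quadrangle: for points $O_1,O_2$ on the $U$-side at metric distance $\geq 2\epsilon+R_{\lambda,c}+2\delta$ from $A,B$, one finds $O_1',O_2'$ on the $V$-side with $d(O_i,O_i')\leq 2R_{\lambda,c}+2\delta<\epsilon$ (by LPP). The trimmed subwords $lab([O_1,O_2])\subset U$ and $lab([O_1',O_2'])\subset V$ now form a genuine $\epsilon'$-piece of $R$, so $C'$ gives $\|[O_1,O_2]\|<\mu\|R\|$. The two trimmed tails together have length at most $2\big(\lambda(2\epsilon+R_{\lambda,c}+2\delta)+c\big)$, which is $<\mu\rho\leq\mu\|R\|$ for $\rho$ large enough, whence $\|U\|<2\mu\|R\|$. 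This is a single trimming step with one application of $C'$; your two-level bisection is both unnecessary and, as written, does not close.
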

		 \begin{proof}
		   The statement follows from the definition of the small cancellation condition $C'(\lambda, c, \epsilon, \mu,  \rho)$ (see properties (2.1) and (2.2) in the definition of $C'(\lambda, c, \epsilon, \mu,  \rho)$) and Corollary \ref{another corollary about hausdorff distance}.
		 \end{proof}

	\section{van Kampen diagrams over HNN-extensions}
	Let $ G=\langle X \mid \mathcal{R} \rangle$  be a group presentation.
Let $S$ be a subset of $X$. Then an \textit{$S$-band} $B$ is a sequence of cells $\Pi_1$,...,$\Pi_n$ in a van Kampen diagram such that
	
	\begin{itemize}
	\item Each two consecutive cells in this sequence have a common edge labeled by a letter from $S$;
	\item Each cell $\Pi_i$, $i = 1, ..., n$, has exactly two $S$-edges (i.e. edges labeled by a letter from $S$) having opposite orientations.
	\end{itemize}
	
	We call the sides of an $S$-band with labels from $S$ \textit{ends} of the band, and the sides of the $S$-band which are separated by the ends  \textit{sides} of the $S$-band. \index{bands} \index{bands! ends of bands} \index{bands! sides of bands}\\
	
	The concept of bands naturally occurs when one considers HNN-extensions of groups as follows.
	
	Let $H=\langle X, t \mid t^{-1} A t=B \rangle$, where $A, B \leq G=\langle X \mid R \rangle$ are isomorphic subgroups by some isomorphism $\phi: A \rightarrow B$. Then, from van Kampen's lemma it follows that for each $W \in (X \cup \{t\})^*$ such that $W=_H 1$, there exists a disc diagram (=van Kampen diagram) $\Delta$ over the presentation 
	\begin{align}
	\label{HNN-pres}
	H=\big\langle X \cup \{t\} \mid \mathcal{R} \cup \{t^{-1} a t \phi(a)^{-1} \mid a \in A \} \big\rangle	
	\end{align}
such that $lab(\partial \Delta)= W$ and $\Delta$ is reduced, in the sense that it contains minimal number of $t$-bands among all van Kampen diagrams with boundary label $W$. A well-known fact is that either $\Delta$ does not contain $t$-bands  (which implies that $W \in X^*$ and $W=_G1$) or all the $t$-bands of $\Delta$ have their edges with label $t^{\pm 1}$ on the boundary $\partial\Delta$ of $\Delta$. For more details see, for example, \cite{miller-schupp, sapir-book}.
	
	Analogously, if $H$ is obtained from $G$ by multiple HNN-extensions with respect to isomorphic subgroups $\phi_1: A_1 \rightarrow B_1$, \ldots, $\phi_n: A_n \rightarrow B_n$, namely,
	$$H=\big\langle X \cup \{t_1, \ldots, t_n\} \mid \mathcal{R} \cup \{t_i^{-1} a_i t_i \phi_i(a_i)^{-1} \mid 1\leq i \leq n, a_i \in A_i \} \big\rangle, $$
	Then for each $1\leq i \leq n$, either $W$ does not contain letters from $\{t_i^{\pm 1} \}$ or in $\Delta$ all $t_i$-bands have their ends on $\partial \Delta$ and moreover, every edge of $\partial \Delta$ with a label from $\{t_i^{\pm 1}\}$ is connected with  a $t_i$-band to another edge on $\partial \Delta$ with the same label.


	\section{Slender conjugacy diagrams and their geometry}
\label{section-slender conjugacy dyagrams}
\subsection{Slender conjugacy diagrams over hyperbolic groups}
	Let $H = \langle X \mid \mathcal{O} \rangle$, $|\mathcal{O}|<\infty$, $|X|<\infty$, be a non-cyclic $\delta$-hyperbolic group with respect to $\Gamma(G, X)$ for some $\delta \in \mathbb{N}$.
	
	We call a disk diagram $\Delta$ with quadrilateral boundary $ABCD$, a d$(U,V)$-conjugacy diagram \index{$(U,V)$-conjugacy diagram} over  $\langle X \mid \mathcal{O} \rangle$ if $lab(AB)=lab(DC)$ and $lab(BC)=U$, $lab(AD)=V$.
	
	We say that $\Delta$ is a \index{slender conjugacy diagram} \textit{slender} $(U, V)$-conjugacy diagram over  $\langle X \mid \mathcal{O}\rangle$, if $AB$ has minimal length among all $(U, V)$-conjugacy diagrams over  $\langle X \mid \mathcal{O} \rangle$.
	Also we say $\Delta$ is a \index{slender conjugacy diagram! cyclically slender conjugacy diagram}  $\textit{cyclically slender}$ $(U,V)$-conjugacy diagram over  $\langle X \mid \mathcal{O} \rangle$ if it is a $(U',V')$-conjugacy diagram  for some cyclic shifts $U'$ and $V'$ of $U$ and $V$, respectively, and in addition,	\begin{align*}
	\|lab(AB)\|=\\
	\min\{&\|lab(A'B')\| \mid ~\forall (U',V')\text{-conj. diagram~} 
	\Delta' \text{~with~} \partial\Delta'=A'B'C'D',\\
	& \text{where $U'$ and $V'$ are, respectively, cyclic shifts of $U$ and $V$}\}.
	\end{align*}
	
	For arbitrary points $O \in AB$ and $O' \in DC$,  let us call them \textit{mirroring} points if $lab(AO)= lab(DO')$. \index{mirroring points}
	\begin{lemma}
	\label{lemma_about_opposite_points_in_slender_conjugacy_diagrams}
	  If $(U, V)$-conjugacy diagram $\Delta$ has two different pairs	 of mirroring points $(O_1, O'_1)$ and $(O_2, O'_2)$ such that in $Proj(\Delta)$,  $O_1$ is joined to $O'_1$ by a path $p_1$ and $O_2$ is joined to $O'_2$ by a path $p_2$ such that $lab(p_1) \equiv lab(p_2)$, then $\Delta$ is not slender.
	\end{lemma}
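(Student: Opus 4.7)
The plan is to derive a contradiction by exhibiting a $(U,V)$-conjugacy diagram whose $AB$-side is strictly shorter than $lab(AB)$. Let $T := lab(AB) = lab(DC)$ and set $T_i := lab(AO_i)$ for $i=1,2$; the mirroring hypothesis then gives $lab(DO_i') \equiv T_i$ as well. Since the two mirroring pairs are distinct, $T_1 \not\equiv T_2$ as words, so without loss of generality $\|T_1\| < \|T_2\|$. Both $T_1$ and $T_2$ are prefixes of $T$, hence we may factor $T \equiv T_1 S T''$ with $T_2 \equiv T_1 S$ and $T''$ the suffix of $T$ beyond $O_2$.

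I would next translate the hypothesis into group identities in $H$. The boundary of $\Delta$ reads $T U T^{-1} =_H V$. Fixing any projection of $\Delta$ into $\Gamma(H,X)$ that sends $A$ to a group element $g_A$, one then has $D \mapsto g_A V$, $O_i \mapsto g_A T_i$, and $O_i' \mapsto g_A V T_i$. Consequently, a path $p_i$ from $O_i$ to $O_i'$ in $Proj(\Delta)$ with label $W$ amounts to the identity $T_i^{-1} V T_i =_H W$. Equating the $i=1$ and $i=2$ cases and substituting $T_2 \equiv T_1 S$ gives $S^{-1} W S =_H W$, i.e., $S$ commutes with $W$ in $H$.

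Now I construct the shorter conjugator. Set $T' := T_1 T''$; its length satisfies $\|T'\| \leq \|T_1\| + \|T''\| = \|T\| - (\|T_2\| - \|T_1\|) < \|T\|$. From $T U T^{-1} =_H V$ together with $T \equiv T_1 S T''$ one gets
\[
T'' U (T'')^{-1} =_H S^{-1} T_1^{-1} V T_1 S =_H S^{-1} W S =_H W,
\]
where the middle equality uses $T_1^{-1} V T_1 =_H W$ and the last uses the commutation established in the previous paragraph. Therefore
\[
T' U (T')^{-1} = T_1 \bigl(T'' U (T'')^{-1}\bigr) T_1^{-1} =_H T_1 W T_1^{-1} =_H V.
\]
Hence $T' U (T')^{-1} V^{-1}$ is trivial in $H$, and van Kampen's lemma yields a $(U,V)$-conjugacy diagram over $\langle X \mid \mathcal{O} \rangle$ whose $AB$-side is a freely reduced form of $T'$, of length at most $\|T'\| < \|T\|$. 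This contradicts the slenderness of $\Delta$.

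The argument is purely algebraic once the mirroring and path hypotheses are translated into group identities, so there is no real obstacle. The only subtleties are to keep track of orientation conventions when reading labels along $AB$ versus $DC$, and to note that a path in $Proj(\Delta)$ between two vertices automatically encodes a word identity in $H$ between the corresponding group elements.
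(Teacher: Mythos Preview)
Your proof is correct and is essentially the algebraic translation of the paper's geometric surgery argument: the paper removes the strip bounded by $[O_1,O_2]$, $p_2$, $[O_2',O_1']$, $p_1^{-1}$ and glues along the identically labeled $p_1$, $p_2$ to obtain a new conjugacy diagram with side label $T_1T'' = T'$, then invokes van Kampen's lemma exactly as you do. Your explicit commutation identity $S^{-1}WS =_H W$ is precisely what makes that gluing legitimate, so the two arguments are the same up to language.
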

	\begin{proof}
	Indeed, if the statement of Lemma \ref{lemma_about_opposite_points_in_slender_conjugacy_diagrams} holds, then we can remove the subdiagram in $\Delta$ bounded between $O_1, O_2, O_2'$ and $O_1'$ and obtain a new  diagram $\Delta'$ with $\partial \Delta'=A'B'C'D'$, where $A'B'$ is shorter than $AB$. This procedure is depicted in Figure \ref{fig:  surgery on conjugacy diagrams}. Since the boundary label of the newly obtained diagram $\Delta'$ represents the trivial element of $G$, by van Kampen's lemma, there exists a disk diagram over $\langle X \mid \mathcal{O} \rangle$ with boundary of $\Delta'$. Since $lab(A'B')=lab(D'C')$, then, in fact, the new disk diagram is a $(U, V)$-conjugacy diagram over $\langle X \mid \mathcal{O} \rangle$ as well. Finally, since the length of $A'B'$ is strictly shorter than the length of $AB$, by definition, $\Delta$ is not a slender diagram.
	\begin{figure}[H]
				\centering
				\includegraphics[clip, trim=-0cm 15.9cm 0cm 7.1cm, width=1.13\textwidth]{{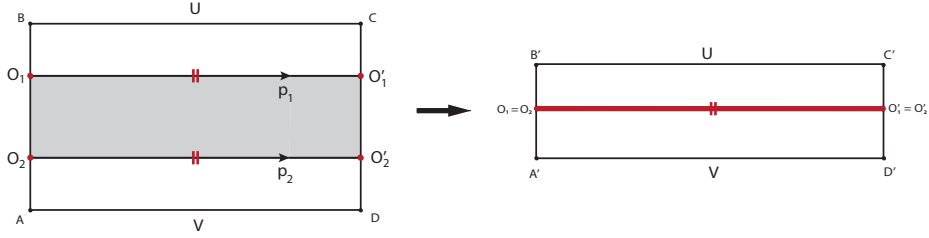}} 
				\caption{The left diagram is $Proj(\Delta)$, $lab(p_1)=lab(p_2)$. The right diagram is $\Delta'$, which is obtained after making a surgery on $\Delta$ to remove the colored subdiagram.} 
				\label{fig:  surgery on conjugacy diagrams}
			\end{figure}
	\end{proof}
	
   Based on Lemma \ref{lemma_about_opposite_points_in_slender_conjugacy_diagrams} and Corollary \ref{another corollary about hausdorff distance}, it is not hard to see that the following is true.  
   \begin{lemma}
   \label{lemma-side length of a slender diagram}
   	Let $U, V \in X^*$ be cyclically $(\lambda, c)$-geodesic words such that $U \sim_{conj} V$ in $H$. Suppose that $\Delta$ is a slender $(U, V)$-conjugcy diagram with the standard boundary $ABCD$. Then $\|AB\|=\|DC\| \leq \tau(|X|, \delta, \lambda, c)$, where $\tau: \mathbb{N}^4 \rightarrow \mathbb{N}$ is a computable function independent of $H$. In other words, there exist cyclic shifts $U', V' \in X^*$ of $U$ and $V$, respectively, and a word $T \in X^*$ such that $\|T\|\leq \tau(|X|, \delta, \lambda, c)$ and $U'=_H T^{-1}V'T$.
   \end{lemma}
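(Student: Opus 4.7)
I would argue by contradiction. Suppose $L := \|AB\|$ exceeds a threshold $\tau = \tau(|X|,\delta,\lambda,c)$ to be determined. Since $\Delta$ is slender, the word $W := lab(AB) = lab(DC)$ must be geodesic in $\Gamma(H,X)$: otherwise, any shorter word representing the same element of $H$ would, via van Kampen's lemma, yield a strictly shorter $(U,V)$-conjugacy diagram. Projecting $\Delta$ into the Cayley graph, we obtain a quadrilateral $abcd$ whose ``short'' sides $[a,b]$ and $[d,c]$ are geodesics of length $L$ both labeled by $W$, while the ``long'' sides $[a,d]$ and $[b,c]$ are $(\lambda,c)$-quasi-geodesics labeled by $V$ and $U$ respectively. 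For each $0 \leq k \leq L$, let $O_k \in [a,b]$ and $O_k' \in [d,c]$ be the mirror points at distance $k$ from $a$ and $d$ respectively.

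The geometric heart of the argument is to show that for $k$ lying in a long middle subinterval of $\{0,1,\ldots,L\}$, the mirror points $O_k$ and $O_k'$ are joined in $\Gamma(H,X)$ by a path of length bounded by a constant $D = D(\delta,\lambda,c)$ (after, if necessary, a uniformly bounded cyclic reindexing of the mirror correspondence, as permitted by the ``in other words'' reformulation of the lemma). The existence of a nearest-point projection $\phi(O_k) \in [d,c]$ with $d(O_k,\phi(O_k)) \leq 2\delta$ for $k$ outside a small buffer follows from Corollary \ref{another corollary about hausdorff distance} applied to the two parallel geodesics $[a,b]$ and $[d,c]$. That $\phi(O_k)$ differs from $O_k'$ by a uniformly bounded offset along $[d,c]$ -- the essential point for keeping $\tau$ independent of $\|U\|$ and $\|V\|$ -- follows from the coarse-Lipschitz behavior of nearest-point projections between parallel geodesics of equal length in $\delta$-hyperbolic spaces, using also the Hausdorff bound $R_{\lambda,c}$ from Lemma \ref{lem hausdorff distance between quasi-geodesics}.

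Once such uniformly short connecting words exist on a long middle range, pigeonhole concludes the proof. Each connecting word has length at most $D$ over the alphabet $X \cup X^{-1}$, so it takes at most $(2|X|)^{D+1}$ values. Choosing $\tau$ to exceed this quantity plus the middle-range buffer, there must exist $k_1 < k_2$ whose connecting geodesics carry the same word label. Lifting each of these short Cayley-graph geodesics into a mild enlargement of $\Delta$ -- obtained by attaching a van Kampen filling to the loop formed by the Cayley geodesic together with an existing path in $\Delta$ joining the same endpoints -- produces two distinct mirror pairs inside the enlarged diagram that are joined in its projection by paths with identical labels. Lemma \ref{lemma_about_opposite_points_in_slender_conjugacy_diagrams} then contradicts slenderness (the enlarged diagram has the same boundary as $\Delta$), yielding the desired bound $L \leq \tau(|X|,\delta,\lambda,c)$.

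The principal obstacle is the bounded-offset claim for the nearest-point projection uniformly in $\|U\|$ and $\|V\|$; this is a routine but delicate $\delta$-hyperbolic geometry computation in which the quasi-geodesic-to-geodesic control of Lemma \ref{lem hausdorff distance between quasi-geodesics} and the thin-quadrilateral estimate of Corollary \ref{another corollary about hausdorff distance} must be combined with the cyclic-shift flexibility built into the statement. Once it is in place, the computable function $\tau$ is read off from $R_{\lambda,c}$, $\delta$, and $|X|$ through the pigeonhole count.
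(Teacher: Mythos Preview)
Your overall strategy—pigeonhole on short words joining mirror pairs, then invoke Lemma~\ref{lemma_about_opposite_points_in_slender_conjugacy_diagrams}—follows the paper's hint, but the step you flag as the ``principal obstacle'' is a genuine gap, and the resolution you sketch does not go through. You assert that $\phi(O_k)$ differs from the mirror point $O_k'$ by a uniformly bounded offset along $[d,c]$. A direct triangle-inequality computation using $d(a,d)=|V|$ and $d(O_k,\phi(O_k))\le 2\delta$ gives only $|j_k-k|\le \min(|U|,|V|)+O(\delta)$, hence $d(O_k,O_k')\le \min(|U|,|V|)+C$. The pigeonhole threshold becomes $(2|X|)^{\min(|U|,|V|)+C}$, and no bound on $L$ independent of $U,V$ follows. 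Your appeal to ``cyclic-shift flexibility'' does not repair this: cyclically shifting $U$ or $V$ changes the conjugator $W$ and hence the entire quadrilateral, so there is no bounded re-indexing of the mirror correspondence on a \emph{fixed} diagram. Relatedly, the ``small buffer'' in your application of Corollary~\ref{another corollary about hausdorff distance} to $[a,b],[d,c]$ has size $\max(|U|,|V|)+2\delta$, not a uniform constant. (In fact the literal first sentence of the lemma is too strong; only the ``in other words'' conclusion about cyclic shifts is what the cited references prove and what the paper actually uses.)

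The argument in Lysenok and Antol\'in--Sale applies Corollary~\ref{another corollary about hausdorff distance} to the \emph{other} pair of sides. First observe that any $o$ on the $V$-side and $o'$ on the $U$-side satisfy $d(o,o')\ge L$ by cyclic slenderness, since the element from $o$ to $o'$ conjugates a cyclic shift of $V$ to one of $U$. Now if $\|U\|,\|V\|\ge\mathcal K(\delta,\lambda,c)$, the cyclic quasi-geodesicity hypothesis together with Lemma~\ref{lem 1} makes $U^N,V^N$ globally $(\mathcal K,\mathcal K)$-quasi-geodesic for every $N$; in the quadrilateral for $V^N=WU^NW^{-1}$ (same $W$, same $L$), Corollary~\ref{another corollary about hausdorff distance} applied to the $V^N$- and $U^N$-sides produces, once $N$ is large, points $o,o'$ with $d(o,o')\le 2R_{\mathcal K,\mathcal K}+2\delta$. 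Combining with the lower bound forces $L\le 2R_{\mathcal K,\mathcal K}+2\delta$. The residual case $\min(\|U\|,\|V\|)<\mathcal K$ is handled by finiteness (together with Lemma~\ref{lem 1.11} when only one of the two words is short). Note that Lemma~\ref{lemma_about_opposite_points_in_slender_conjugacy_diagrams} is not actually needed on this route.
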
   
   In fact, Lemma \ref{lemma-side length of a slender diagram} is a slight variation of Lemma 10  in \cite{lysenok} and Proposition 3 in \cite{antolin sale}.
   
   \begin{lemma}[About conjugacy diagrams over HNN-extensions]
	Let $\Delta$ be a reduced conjugacy diagram over the presentation \eqref{HNN-pres} such that $\partial\Delta =ABCD$, $lab(AB)=lab(DC)$. Then either $\Delta$ does not contain $t$-bands joining $AB$ to $DC$ or if $B$ is such a band with its ends $e_1 \in AB$ and $e_2 \in DC$, then $(e_1)_+$ and $(e_2)_+$ are mirroring pair of points.
		\end{lemma}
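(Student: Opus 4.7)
The plan is to exploit the non-crossing property of $t$-bands in reduced van Kampen diagrams together with the word-level identity $lab(AB)=lab(DC)$.

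First I would set up the local picture. Let $\mathcal{B}$ be a $t$-band from $e_1\in AB$ to $e_2\in DC$, with its two sides $p_1,p_2$ labelled by words $w_1,w_2\in X^*$ satisfying $w_1\in A$ and $w_2=\phi(w_1)\in B$ (or the symmetric pair). Cutting $\Delta$ along $\mathcal{B}$ separates it into two subdiagrams, $\Delta_1$ containing the side $AD$ and $\Delta_2$ containing the side $BC$. Since $p_1,p_2$ contain no $t$-letter, every other $t$-band of $\Delta$ lies entirely in one of $\Delta_1,\Delta_2$: the $t$-bands behave like non-crossing chords of the disk $\Delta$.

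Next, let $k,k'$ denote the positions of $e_1,e_2$ measured as the lengths of the prefixes $u_1=lab(A(e_1)_-)$ of $lab(AB)$ and $v_1=lab(D(e_2)_-)$ of $lab(DC)$. Since $lab(AB)\equiv lab(DC)$ as words, the mirror of $(e_1)_+$ on $DC$ is the point whose prefix along $DC$ has the same length $k$, so the conclusion of the lemma is equivalent to the word-level equality $k=k'$. Reading the boundary of $\Delta_1$ starting from $A$ gives the label $u_1 w_1 v_1^{-1} V^{-1}$, and since $\Delta_1$ is a disk diagram over $H$ this yields $u_1 w_1 = V v_1$ in $H$; the analogous identity $u_2 U = w_2 v_2$ is read off from $\Delta_2$. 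Combining these with the conjugacy relation $V T = T U$ (where $T=lab(AB)=lab(DC)$) that is provided by the whole diagram produces a relation between two prefixes of the same word $T$.

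Assume for contradiction that $k\ne k'$; without loss of generality $k<k'$. Then, as words in $(X\cup\{t\})^*$, we have $v_1\equiv u_1 t^{\epsilon} s'$, where $t^{\epsilon}$ is the letter of $T$ at position $k$ and $s'$ is the sub-word from positions $k+1$ to $k'-1$. Substituting into $u_1 w_1 = V v_1$ yields an equation in $H$ whose left side is $t$-free while the right side carries a prescribed $t^{\epsilon}$. Applying Britton's Lemma (Lemma \ref{lem-britton}) to the word $w_1^{-1} u_1^{-1} V v_1$ extracts a forbidden pinch $t^{-1} a t$ or $t b t^{-1}$ with $a\in A$ or $b\in B$. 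Translating this algebraic cancellation back into $\Delta_1$ locates a pair of cancellable $\mathcal{R}$-cells, whose removal produces a van Kampen diagram with strictly fewer cells and the same boundary, contradicting the assumption that $\Delta$ is reduced.

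The main obstacle is the bookkeeping needed to convert Britton's cancellation into an actual reduction of $\Delta$. For this reason, I would likely proceed by induction on the number $n$ of $t$-bands joining $AB$ to $DC$. The base case $n=1$ admits a direct argument using only the non-crossing property of bands together with the identity $lab(AB)=lab(DC)$, since the single band leaves all remaining $t$-edges of $AB$ and $DC$ to be paired via bands lying entirely inside the two subdiagrams, forcing the positions to match. For the inductive step, I would peel off the band closest to $AD$, obtaining a smaller reduced conjugacy subdiagram whose two new vertical sides still carry equal labels, and apply the induction hypothesis.
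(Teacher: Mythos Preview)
The paper disposes of this in one line: ``It follows immediately from Collins' Lemma.'' What is really being invoked is the standard $t$-band analysis that \emph{proves} Collins' Lemma (as in Lyndon--Schupp or Miller--Schupp), where the mirroring of band-ends is exactly what is established; the statement of Collins' Lemma alone, as quoted in the paper, does not literally mention diagrams or mirroring points. So your instinct to give a direct band argument is reasonable---you are essentially reproving the relevant part of Collins' Lemma---but the execution has a real gap.

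The inductive step is circular. When you ``peel off the band closest to $AD$'' and look at the remaining subdiagram, its top and bottom sides are the suffixes $lab[(e_1)_+,B]$ and $lab[(e_2)_+,C]$ of the common word $lab(AB)=lab(DC)$. These two suffixes coincide as words \emph{if and only if} $(e_1)_+$ and $(e_2)_+$ are already mirroring---which is precisely the conclusion you need for that first band. So the ``smaller reduced conjugacy subdiagram whose two new vertical sides still carry equal labels'' does not exist until you have done the work. Your base case is likewise not justified: the lemma does not assume $U,V\in X^*$, so $AD$ and $BC$ may carry $t$-edges, and the non-crossing/parity observation you sketch does not force $k=k'$ even when $n=1$. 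The Britton route has the gap you yourself flag (turning an algebraic pinch into a geometric reduction of the given reduced $\Delta$), and you never return to close it. A self-contained proof genuinely requires the innermost-band surgery underlying Collins' Lemma; short of redoing that, the paper's citation is the honest argument.
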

	\begin{proof}
		It follows immediately from Collins' Lemma (see Lemma \ref{lemma-Collins}).
		\end{proof}
~\\
	\subsection{Cyclically slender conjugacy diagrams over quotient groups with small cancellation conditions}
	~\\	
\begin{definition}[(cyclically) $(\lambda, c, \epsilon,  \eta)$-reduced words]
\index{ $(\lambda, c, \epsilon,  \eta)$-reduced word}
\index{ $(\lambda, c, \epsilon,  \eta)$-reduced word! cyclically -}
 For $\epsilon>0$, $0< \eta \leq 1$, a cyclically reduced word $W \in X^*$ is called $(\lambda, c, \epsilon, \eta)$-reduced over the quotient $G= H / \ll \mathcal{R} \gg$ if $W$ is $(\lambda, c)$-quasi-geodesic in $\Gamma(H, X)$ and moreover, $W$ does not contain a  $(\epsilon, \eta)$-subword.	And it is called cyclically $(\lambda, c, \epsilon,  \eta)$-reduced, if all cyclic shifts of $W$ are $(\lambda, c, \epsilon,  \eta)$-reduced.
\end{definition}
For the next lemma, let $H = \langle X \rangle$ be a $\delta$-hyperbolic group with respect to the generating set $X$, and let $G= H / \ll \mathcal{R} \gg=\langle H \mid \mathcal{R} \rangle$, where $\mathcal{R}$ is a finite symmetric set of words satisfying the small cancellation condition $C'(\lambda, c, \epsilon, \mu, \rho)$ for appropriately chosen parameters  $\lambda \succ c \succ \epsilon \succ \mu \succ \rho$.
	\begin{lemma}
		\label{lemma_about_slender_conjugacy_diagrams}
 Let $U, V \in X^*$ be cyclically $(\lambda, c, \epsilon, 1-121\lambda\mu)$-reduced words. Then for any  reduced  cyclically slender $(U,V)$-conjugacy diagram $\Delta$ with $\partial\Delta=ABCD$, assuming that $\Delta$  contains an $\mathcal{R}$-cell, we get that $\Delta$ contains an essential $\mathcal{R}$-cell $\Pi$ which is connected to $AB$, $BC$, $CD$ and $DA$ by contiguity subdiagrams $\Gamma_1$, $\Gamma_2$, $\Gamma_3$ and $\Gamma_4$, respectively, and the following hold		
		\begin{enumerate}
			\item $\Gamma_2$ and $\Gamma_4$ are non-empty;
			\item $(\Pi, \Gamma_2, BC)+(\Pi, \Gamma_4, DA) \geq 1-121 \lambda \mu$; and
			\item $(\Pi, \Gamma_1, AB)$ and $ (\Pi, \Gamma_3, CD)$ are either empty or smaller than  $49 \lambda \mu $.
		\end{enumerate}
	\end{lemma}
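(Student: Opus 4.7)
The plan is to apply Lemma~\ref{lem 6.6} to the quadrangle $\partial\Delta=ABCD$ and then to extract the desired estimates on the individual contiguity degrees from two independent sources: the $(\lambda,c,\epsilon,1-121\lambda\mu)$-reducedness of $U,V$ (which controls the sides $BC$ and $DA$), and the cyclic slenderness of $\Delta$ (which controls the sides $AB$ and $CD$).

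First I would verify that $\partial\Delta$ is a $(\lambda,c)$-quasi-geodesic $4$-gon, so that Lemma~\ref{lem 6.6} applies. The sides $BC$ and $DA$ are $(\lambda,c)$-quasi-geodesic by the reducedness hypothesis, and the sides $AB$, $CD$ may be taken geodesic in $\Gamma(G,X)$ thanks to Lemma~\ref{lemma-side length of a slender diagram} and cyclic slenderness; in particular they are $(\lambda,c)$-quasi-geodesic. Lemma~\ref{lem 6.6} then produces an essential $\mathcal{R}$-cell $\Pi$ with disjoint outer $\epsilon$-contiguity subdiagrams $\Gamma_1,\Gamma_2,\Gamma_3,\Gamma_4$ to $AB,BC,CD,DA$ respectively, satisfying $\sum_{i=1}^{4}(\Pi,\Gamma_i,\hat q_{\Gamma_i})>1-23\mu$.

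Next, the reducedness bounds are immediate. If $\Gamma_2$ is nonempty with $(\Pi,\Gamma_2,BC)\geq 1-121\lambda\mu$, then by Definition~\ref{()-arcs} the outer arc $\hat q_{\Gamma_2}$ is an $(\epsilon,1-121\lambda\mu)$-arc, so its label, which is a subword of a cyclic shift of $U$, is an $(\epsilon,1-121\lambda\mu)$-word; this contradicts $U$ being cyclically $(\lambda,c,\epsilon,1-121\lambda\mu)$-reduced. Symmetrically $(\Pi,\Gamma_4,DA)<1-121\lambda\mu$.

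The heart of the proof is the slenderness bound: if $\Gamma_1$ is nonempty, then $(\Pi,\Gamma_1,AB)<49\lambda\mu$ (and analogously for $\Gamma_3$). Suppose for contradiction that $(\Pi,\Gamma_1,AB)\geq 49\lambda\mu$. Let $P_1,P_2$ be the endpoints of $\hat q_{\Gamma_1}$ on $AB$ (ordered from $A$), and let $P_1',P_2'$ be their mirroring points on $DC$, so that $lab(P_1P_2)\equiv lab(P_1'P_2')$ by $lab(AB)\equiv lab(DC)$. Going around $\Pi$ via $p_{\Gamma_1}$, the complement of $\check q_{\Gamma_1}$ in $\partial\Pi$, and $(p'_{\Gamma_1})^{-1}$ gives a path $\pi$ in $Proj(\Delta)$ from $P_1$ to $P_2$ whose length is at most $2\epsilon+(1-49\lambda\mu)\|\Pi\|$, whereas $\|\hat q_{\Gamma_1}\|\geq(49\lambda\mu\|\Pi\|-c)/\lambda-2\epsilon$. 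By LPP, $\|\pi\|<\|\hat q_{\Gamma_1}\|$ is impossible for the direct replacement to shorten $AB$; however the correct surgery is to cut $\Delta$ along the loop $P_1\to\pi\to P_2\to P_2'\to P_1'\to P_1$, where the arc $P_2\to P_2'$ and $P_1'\to P_1$ are mirror-identification paths through $BC$ and $DA$, respectively. Using $lab(P_1P_2)\equiv lab(P_1'P_2')$ to paste, and using that $\Pi$ can be erased from the removed piece, the result is a $(U',V')$-conjugacy diagram for cyclic shifts $U',V'$ of $U,V$ whose conjugator side is strictly shorter than $\|AB\|$, contradicting cyclic slenderness.

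The three bounds combine to give the lemma: adding them, $(\Pi,\Gamma_2,BC)+(\Pi,\Gamma_4,DA)>1-23\mu-2\cdot 49\lambda\mu\geq 1-121\lambda\mu$ by LPP, which is (ii); non-emptiness of $\Gamma_2$ and $\Gamma_4$ follows from (ii) together with the reducedness bounds (if $\Gamma_2$ were empty, then $(\Pi,\Gamma_4,DA)\geq 1-121\lambda\mu$, contradicting Step~2), giving (i); and (iii) is precisely the slenderness bound. The main obstacle is the surgery in Step~3: one must set up the mirror-identification cut so that the resulting planar complex is genuinely a valid reduced conjugacy diagram (in particular simply connected, with matching boundary labels on the two new ``conjugator sides'') and so that the new side is strictly shorter than the original $AB$; the constant $49\lambda\mu$ is calibrated exactly so that the length saving from removing $\hat q_{\Gamma_1}$ exceeds the length of the replacement path $\pi$ together with the $2\epsilon$ of side-arc overhead.
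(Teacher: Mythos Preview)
Your Steps~1 and~2 are fine and match the paper. The gap is in Step~3: the surgery you describe does not produce a valid conjugacy diagram. The points $P_2\in AB$ and $P_2'\in DC$ lie on opposite sides of the quadrangle; there is no ``mirror-identification path through $BC$'' joining them inside $\Delta$, so the loop $P_1\to\pi\to P_2\to P_2'\to P_1'\to P_1$ is not a path in $Proj(\Delta)$ along which one can cut. You already computed that replacing $\hat q_{\Gamma_1}$ by the complementary arc $\pi$ of $\partial\Pi$ lengthens rather than shortens, and nothing in your description repairs this. In particular, an argument that looks only at $\Gamma_1$ cannot work: since $AB$ is geodesic in $\Gamma(G,X)$, any detour that stays on one side of $\Delta$ is at least as long as $\hat q_{\Gamma_1}$.

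What the paper actually uses is the interaction between $\Gamma_1$ and $\Gamma_3$ together with the $C'$-condition, and the logical order is reversed from yours: one proves (1) first, then (3). For (1), and for the subcase of (3) where both $\Gamma_1,\Gamma_3$ are nonempty, slenderness (via inequalities like $d(A,B)\le d(A,C_1)$) pins down the positions of $\hat q_{\Gamma_1}\subset AB$ and $\hat q_{\Gamma_3}\subset DC$ so closely that, because $lab(AB)\equiv lab(DC)$, they share a long common subword. Transporting through the $\epsilon$-side arcs, the corresponding inner arcs $\check q_{\Gamma_1},\check q_{\Gamma_3}$ are then two disjoint subwords of the relator $R=lab(\partial\Pi)$ related in $H$ by words of length $\le 2\epsilon$; Lemma~\ref{lemma_about_contiguity_arcs_with_the_same_label} (the $\epsilon'$-piece bound of the $C'$-condition) forces both to have length $\le 2\mu\|R\|$, contradicting $\max\{(\Pi,\Gamma_1,AB),(\Pi,\Gamma_3,CD)\}\ge 49\lambda\mu$. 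For the remaining subcase of (3) where, say, $\Gamma_3$ is empty, one needs (1) already in hand: with $\Gamma_2,\Gamma_4$ nonempty there is a path of length $\le 2\epsilon+23\mu\|\Pi\|$ from a point of $DA$ to a point of $BC$ through the side arcs of $\Gamma_4,\Gamma_2$ and the short uncovered portion of $\partial\Pi$, and cyclic slenderness then gives $\|AB\|\le 2\epsilon+23\mu\|\Pi\|$, which bounds $(\Pi,\Gamma_1,AB)$. The missing ingredient in your plan is precisely Lemma~\ref{lemma_about_contiguity_arcs_with_the_same_label}; no single-cell surgery on $AB$ alone can replace it.
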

	\begin{proof}
		Proof of Lemma \ref{lemma_about_slender_conjugacy_diagrams} is given in Appendix (see Subsection \ref{subsection-proof-of-lemma-slender-c-d}).
	\end{proof}
	
	\begin{definition}
	\label{definition-types-of-slender-diagrams}
	\index{slender conjugacy diagram! $H$-diagram}
	\index{slender conjugacy diagram! $G$-diagram}
	If $\Delta$ is a cyclically slender $(U, V)$-conjugacy diagram over the quotient $G=\langle H \mid \mathcal{R} \rangle$, then we say that $\Delta$ is a $(U, V)$-conjugacy $H$-diagram if $G$ does not contain an $\mathcal{R}$-cell, otherwise, we say that $\Delta$ is a $(U, V)$-conjugacy $G$-diagram.	
	\end{definition}

\begin{convention}
\label{convention-8-1}
	In the rest of the text for the quotient $G=H/\ll \mathcal{R} \gg$ we assume that the parameters $\lambda, c, \epsilon, \mu, \rho$ are chosen so that $1>1-122\lambda\mu>0$ and no $(\lambda, c)$-quasi-geodesic path in $\Gamma(G, X)$ contains an $(\epsilon, 1-122\lambda\mu)$-arc (with respect to $G=H/\ll \mathcal{R} \gg$).
	
	Note that we can make this assumptions without loss of generality because of Lemma \ref{lemma on Greendlinger's cell}.
\end{convention}

		  
	\subsection{An application of Lemma \ref{lemma_about_slender_conjugacy_diagrams}}
	 Lemma \ref{lemma_about_slender_conjugacy_diagrams} together with Lemma \ref{lemma_about_contiguity_arcs_with_the_same_label} implies the following.
	\begin{lemma}
	\label{lemma_about_proper_powers_in_quotients}
		Let $H=\langle X \rangle$ be a torsion-free non-elementary hyperbolic group and $G= H / \ll \mathcal{R} \gg$ satisfies the $C'(\lambda, c, \epsilon, \mu, \rho)$-condition for sparse enough parameters  $\lambda \succ c \succ \epsilon \succ \mu \succ \rho$. 
\begin{enumerate}[label=(\roman*)]
\item For any $U \in X^*$ and $k\in \mathbb{N}$ such that $U$ is a cyclically minimal word in $\Gamma(G, X)$, $U^k$ does not contain a $(\epsilon, 1-122\lambda\mu)$-subword with respect to the quotient $G= H / \ll \mathcal{R} \gg$.\\

	\item Suppose that $U, W \in X^*$  are such that $U=_G W^k$  for some $k\geq 2$ and 
		 		\begin{align}
		\label{inequality_aaaaa}
			\|U\|< \frac{\mu\rho-c}{\lambda}-2\epsilon.
		\end{align}		
		Then $U=_H W^k$. In particular, if $U \notin E(W)$ in $G$, then $U \notin E(W)$ in $H$.
		\end{enumerate}	
		\end{lemma}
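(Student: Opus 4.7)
By Remark~\ref{remarm 7.2} the quotient $G$ is itself a non-elementary torsion-free hyperbolic group. If $U=_G 1$ then cyclic minimality forces $\|U\|=0$ and the claim is vacuous, so we may assume $U$ has infinite order in $G$. The ``moreover'' clause of Lemma~\ref{lem 1.11} applied inside $\Gamma(G,X)$ produces constants $\lambda_0,c_0$ depending only on $|X|$ and a hyperbolicity constant of $G$ (itself bounded by $4\max_{R\in\mathcal{R}}\|R\|$ via Lemma~\ref{lem 6.6}) for which $U^k$ is $(\lambda_0,c_0)$-quasi-geodesic in $\Gamma(G,X)$. Sparsity of the standard parameters (LPP) grants $\lambda\geq\lambda_0$ and $c\geq c_0$, so $U^k$ is $(\lambda,c)$-quasi-geodesic in $\Gamma(G,X)$, and Convention~\ref{convention-8-1} forbids any $(\epsilon,1-122\lambda\mu)$-subword in $U^k$.

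\textbf{Part (ii).} Assume for contradiction $U\neq_H W^k$, and fix a reduced van Kampen diagram $\Delta$ over $\langle X\mid\mathcal{O}\cup\mathcal{R}\rangle$ with boundary label $UW^{-k}$ having a minimum number of $\mathcal{R}$-cells; by assumption this number is positive. Decompose $\partial\Delta=q\cup p$ with $lab(q)=U$ and $lab(p)=W^{-k}$. Conjugating $W$ to its cyclically minimal representative in $G$ (and transferring the conjugation to $U$, the short conjugator bound of Lemma~\ref{lemma-side length of a slender diagram} being absorbed into the $\|U\|$ hypothesis by LPP), we reduce to the case that $W$ itself is cyclically minimal in $G$. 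Then $p=W^{-k}$ contains no $(\epsilon,1-122\lambda\mu)$-subword by part~(i), while Lemma~\ref{lem 1.11} makes $p$ a $(\lambda,c)$-quasi-geodesic in $\Gamma(G,X)$; the short side $q$ is $(\lambda,c)$-quasi-geodesic trivially by LPP.

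Applying Lemma~\ref{lem 6.6} with $t=2$ to the $(\lambda,c)$-quasi-geodesic bigon $\partial\Delta$ yields an essential $\mathcal{R}$-cell $\Pi$ with disjoint outer contiguity subdiagrams $\Gamma_q,\Gamma_p$ to $q,p$ whose degrees sum to more than $1-23\mu$. If $(\Pi,\Gamma_q,q)\geq\mu$, then $U$ contains an $(\epsilon,\mu)$-subword associated with $\partial\Pi$, and Lemma~\ref{lemma-arcs-associated-cells-are-short} gives
\[
\|\partial\Pi\|\;\leq\;\frac{\lambda(\|U\|+2\epsilon)+c}{\mu}\;<\;\rho,
\]
where the strict inequality is just the rearrangement of the hypothesis $\|U\|<(\mu\rho-c)/\lambda-2\epsilon$; this contradicts $\|\partial\Pi\|\geq\rho$. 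Hence $(\Pi,\Gamma_q,q)<\mu$, so $(\Pi,\Gamma_p,p)>1-24\mu\geq 1-122\lambda\mu$ (using $\lambda\geq 1$), producing a forbidden subword in $p=W^{-k}$. Therefore $\Delta$ contains no $\mathcal{R}$-cell and $U=_H W^k$. The hardest step is the reduction to cyclically minimal $W$: without it the required $(\lambda,c)$-quasi-geodicity of $p$ in $\Gamma(G,X)$ is not guaranteed, and the conclusion of part (i) cannot be invoked on the long side of the bigon. The ``in particular'' clause is immediate: if $U\in E(W)$ in $H$ then some relation $UW^nU^{-1}=_H W^{\pm n}$ witnessing this descends to the quotient $G$ and places $U\in E(W)$ in $G$, whose contrapositive is precisely the stated claim.
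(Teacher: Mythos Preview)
Your argument for part~(i) has a genuine gap in the LPP step. The constants $\lambda_0,c_0$ produced by Lemma~\ref{lem 1.11} applied inside $\Gamma(G,X)$ depend on the hyperbolicity constant $\delta_G$ of $G$, and by Lemma~\ref{lem 6.6} this is $4\max_{R\in\mathcal{R}}\|R\|\geq 4\rho$. In the priority order of Convention~\ref{convention-lpp} we have $\lambda\succ c\succ\cdots\succ\rho\succ\delta_G$, so $\delta_G$ is a \emph{lower}-priority parameter than $\lambda,c$; LPP therefore does not allow you to assume $\lambda\geq\lambda_0$, $c\geq c_0$. What \emph{is} available via LPP (since $U$ cyclically minimal in $G$ implies cyclically minimal in $H$, and $\delta=\delta_H\succ\lambda$) is that $U^k$ is $(\lambda,c)$-quasi-geodesic in $\Gamma(H,X)$, but Convention~\ref{convention-8-1} concerns quasi-geodesics in $\Gamma(G,X)$, and quasi-geodesicity does not pass from $H$ to its quotient. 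This is precisely why the paper carries out the case analysis on $n$ (the number of full periods of $U$ that the putative arc spans): when the arc lies inside a single cyclic shift of $U$ one uses that this shift is geodesic in $G$; when it spans several periods one exploits the periodicity together with Lemma~\ref{lemma_about_contiguity_arcs_with_the_same_label} (the $C'$-condition) to force a contradiction, working with the Hausdorff estimate in $\Gamma(H,X)$ where $U^k$ \emph{is} $(\lambda,c)$-quasi-geodesic.

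Your reduction in part~(ii) is also problematic. If you conjugate $W$ to its cyclically minimal representative $W''$ by some $T$ with $W=_G TW''T^{-1}$, then showing $T^{-1}UT=_H (W'')^k$ only yields $U=_H T(W'')^kT^{-1}$, and there is no reason for the latter to equal $W^k$ in $H$ (the conjugation was in $G$, not $H$). Moreover the short-conjugator bound you invoke from Lemma~\ref{lemma-side length of a slender diagram} would have to be applied in $G$, again bringing in $\delta_G$. The paper sidesteps both issues by passing to a cyclically slender $(U',(W'')^k)$-\emph{conjugacy} diagram and invoking Lemma~\ref{lemma_about_slender_conjugacy_diagrams}: the conjugator is absorbed into the short sides of the quadrilateral, and the length bound on $U$ (which persists for the reduction $U'$ since $\|U'\|\leq\|U\|$) forces the contiguity degree to the $U'$-side below $\mu$, pushing the degree on the $(W'')^k$-side above $1-122\lambda\mu$ and contradicting part~(i).
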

	\begin{proof}
		
		(i). Let $U$ be as in the statement of the lemma. By contradiction, assume that $U$ contains a $(\epsilon, 1-122\lambda \mu)$-subword $V$.
		
		For sparse enough standard parameters $\lambda, c, \mu, \rho$, by Lemma  \ref{lem 1.11}, $U^k$ is a $(\lambda, c)$-quasi-geodesic word in $\Gamma(G, X)$.
	Then, since by the assumption $U$ is cyclically minimal in $\Gamma \big(G, X \big)$, by Convention \ref{convention-8-1}, we get that $V$ is not a subword of a cyclic shift of $U$. 
This means that $V$ is of the form
		\begin{align*}
			lab(\hat{q}_{\Gamma_2}) = (U')^n Q,
		\end{align*}
		where $U'$ is a cyclic shift of $U$, $n \geq 1$, and $Q$ is a prefix of $U'$.
		
		By the definition of $(\epsilon, 1-122\lambda \mu)$-subwords, in the Cayley graph $\Gamma(G, X)$, there exist paths $p$ and $q$ such that $lab(p)$ is a subword of a word $R$ from $\mathcal{R}$, $lab(q)=U^k$ and $d(p_-, q_-), d(p_+, q_+) \leq \epsilon$. 
		
		Note that, by Corollary \ref{corollary on hausdorff distance between quasi-geodesics}, the Hausdorff distance between $p$ and $q$ is bounded from above by $\epsilon+2R_{\lambda, c}+2\delta <^{\text{by LPP}} 2\epsilon$.\\ 

		Now let us separately consider the cases when $n=1$ and when $n>1$.\\
		~\\
		\textit{Case 1:}($n=1$). In this case, let us partition $q =  q_{1} q_{2} q_{3}$, where $lab(q_{1})=lab(q_{3}) = Q$.  Let us also partition $p=  p_{1} p_{2} p_{3}$ such that $(p_{1})_+$ and $(p_{2})_+$ are the closest points on $p_{\Gamma_2}$ correspondingly to $(q_{1})_+$ and to $(q_{2})_+$. Since $lab(q_{1})=lab(q_{3})$ and $d_{Haus}(p, q)<2\epsilon$, from  Lemma \ref{lemma_about_contiguity_arcs_with_the_same_label} it follows that $\|p_{1}\|, \|p_{ 3}\| \leq 2\mu\|R\| < \mu \lambda \|R\|$. Then, combining this with (\ref{aineq ****}), we get that $\|p_{1} p_{2}\| > 1-23\lambda\mu$. But, since $\|U'\|= \big\|p_{1} p_{2} \big\|$ and $U'$ is a geodesic word in $\Gamma \big(G, X \big)$, by Convention \ref{convention-8-1}, we get a contradiction. 
		  Thus we are done with the case $n=1$.\\
		~\\
		\textit{Case 2:}($n>1$). In this case, again we partition $q$ into three parts $q =  q_{1}q_{2}q_{3}$ such that  $lab(q_{1})=lab(q_{3})$ and  $lab(q_{2})$ is a suffix of $U'$. Then, since $lab(q) = (U')^n Q $ and $n \geq 2$, we get that $\| q_{1}\|=\|q_{3}\| > \frac{1}{3} \|q_{2}\|$, hence $\| q\| < 3 \| q_{1}\|$. Also just like we showed in case $n=1$, by Lemma \ref{lemma_about_contiguity_arcs_with_the_same_label}, in this case $\| q_{1}\|, \| q_{3}\| \leq 2\mu \|R\|$ as well. Therefore, $\|q\|< 6\mu \|R\|$. But if $\rho$ and $\mu$ are chosen sparse enough, then the last inequality, combined with $\|R\| \geq \rho$, contradicts the assumption that $lab(q)$ contains a $(\epsilon, 1-122\lambda \mu)$-subword associated with $R$.\\
~\\	
(ii).	 Suppose that $U$ and $W$ are as in the statement of the lemma and we have $U=_G W^k$ for some $k\geq 2$. Also, by contradiction, assume that $U\neq_H W^k$.
	
	Let $U'\in X^*$ be a cyclically $(\lambda, c, \epsilon, 1-121\lambda\mu)$-reduced word such that  $U' \sim_{conj} U$ in $G$ (clearly such a word exists). Then there exists a word $W' \in X^*$ such that $U' =_G (W')^k$.
	
		Now, let $W'' \in X^*$ be a cyclically minimal representative of $W'$ with respect to $G$. This means that there exists $T\in X^*$ such that $W'=_{G} T W'' T^{-1}$ and $W''$ has minimal length among all such words. In particular, this means that $U'=_{G}  T(W'')^k T^{-1}$ and $W''$ is cyclically geodesic in $\Gamma \big(G, X\big)$. Note that, since $G$ is a quotient of $H$, we get that $W''$ is also cyclically geodesic in $\Gamma \big(H, X\big)$. Therefore, if $\lambda$ and $c$ are large enough, then by Lemma \ref{lem 1.11},  $(W'')^k$ is cyclically $(\lambda, c)$-quasi-geodesic in $\Gamma\big(H, X\big)$.
		
	Since $W''$ is conjugate to $W'$ in $G$ and $U'=_{G} (W')^k$, there exists a $(U', (W'')^k)$-conjugacy diagram over $G$. Hence there exists a cyclically slender $(U', (W'')^k)$-conjugacy diagram over $G$. Let $\Delta$ be such a diagram. As before, let us denote $\partial \Delta = ABCD$, where $lab(BC), lab(AD)$ are cyclic shifts of $(W'')^k$ and $U'$, respectively, and $lab(AB)=lab(DC)$ are geodesic words in $\Gamma\big(G, X\big)$. 
	
	Since $U'$ is cyclically $(\lambda, c)$-quasi-geodesic in $\Gamma(H, X)$, by Lemma \ref{lem 6.6}, $\Delta$ contains an essential $\mathcal{R}$-cell, $\Pi$. Let  $\Gamma_1$, $\Gamma_2$, $\Gamma_3$ and $\Gamma_4$ be essential $\epsilon$-contiguity subdiagrams connecting $\Pi$ to $AB$, $BC$, $CD$ and $DA$, respectively. Since we chose $\Delta$ to be cyclically slender, by Lemma \ref{lemma_about_slender_conjugacy_diagrams}, $\Gamma_2$ and $\Gamma_4$ are non-empty and 
	\begin{equation}
	\label{aiiiii}
		(\Pi, \Gamma_2, BC)+ (\Pi, \Gamma_4, DA) \geq 1-121 \lambda \mu.
	\end{equation}
		Also, by (\ref{inequality_aaaaa}), using triangle inequalities and the fact that $\partial\Pi$ is $(\lambda, c)$-quasi-geodesic, we get 
		\begin{equation*}
			\|\check{q}_{\Gamma_4}\| \leq \big(\|U\|+2\epsilon \big)\lambda+c \leq^{\text{by (\ref{inequality_aaaaa})}}  \mu \rho.
			\end{equation*}
		Therefore, 
		\begin{equation*}
			(\Pi, \Gamma_4, DA) < \frac{\mu\rho}{\rho} =\mu.
		\end{equation*}				
		Combining this with (\ref{aiiiii}), we get
		\begin{equation}
		\label{aineq ****}
			(\Pi, \Gamma_2, BC)  > (1-121 \lambda \mu) - \mu > 1-122 \lambda \mu.
		\end{equation}
		
Since $lab(BC)= (W'')^k$ and $W''$ is cyclically minimal in $\Gamma(G, X)$,  by Part (i) of the current lemma, we get a contradiction.

	\end{proof}
~\\

\section{Algorithms}
\label{section-algorithms}
This section is dedicated to the description of the core algorithms needed for the further exposition.

\textbf{ Convention:} In this section, for our purposes, it will be convenient to consider any word $W \in X^*$ as  a labeled circle $\sigma$ such that its label is equal to $W$ when we read it in the clockwise direction starting from some point on it. We denote the length of $\sigma$ by $\|\sigma\|$.
 
 Throughout this subsection we are interested  in words up to their cyclic shifts. Taken this into account, for a \textit{labeled circle} \index{labeled circle}  $\sigma$, we will say that $lab(\sigma)=W$ if its label is equal to $W$ if we read it in the clockwise direction starting from some point on it. For the circle $\sigma$ we introduce the following quasi-metrics, $\overrightarrow{d}$ and $\overleftarrow{d}$: \index{$\overrightarrow{d}$, $\overleftarrow{d}$}for any points $A, B \in \sigma$, $\overrightarrow{d}(A, B)$ is the length of the arc connecting $A$ to $B$ in the clockwise direction and $\overleftarrow{d}(A, B)=\|\sigma\|-\overrightarrow{d}(A, B)$. Also, for $\varepsilon>0$, we say that $B$ is in $\varepsilon$-neighborhood of $A$ (denoted $B \in \mathcal{N}_{\varepsilon}(A)$) \index{$\mathcal{N}_{\varepsilon}( ~)$} if either $\overrightarrow{d}(A, B)\leq \epsilon$ or  $\overleftarrow{d}(A, B)\leq \epsilon$.
 
 Oriented arc on $\sigma$ which, in a clockwise direction, starts at $A$ and ends at $B$ we denote by $[A, B]$. The length of the label of $[A, B]$ we denote by $\big\|[A, B] \big\|$.\\
 
 In the further exposition, we will regards $\mathcal{R}$-cells, defined in Section \ref{section Small cancellation conditions}, as labeled circles. Therefore, all the notations on circles are applicable for $\mathcal{R}$-cells.
 
 Also, throughout this section $H = \langle X \mid \mathcal{F} \rangle$, $|X|<\infty$, is a $\delta$-hyperbolic group with respect to the generating set $X$ given with its $(X, \delta)$-full-presentation.
 ~\\

\subsection{A few auxiliary algorithms}




Below we are going to describe a few auxiliary algorithms which will be used to construct effective algorithms for word and conjugacy problems in the limit groups over chains of type (\ref{main seq of gps}). 

~\\
~\\
\underline{Algorithm \texttt{$(\lambda, c)$-smoothing}.} Let us assume that $\sigma$ is a labeled circle with a label from $X^*$, and let $A_1, \ldots, A_K$ be points on $\sigma$ such that the labels of the arcs $[A_1, A_2]$, $[A_2, A_{3}]$, \ldots $[A_K, A_1]$ are  $(8\delta+1)$-local geodesics in $\Gamma(H, X)$. Then we call the points \textit{$A_1, \ldots, A_K$  $(\lambda, c)$-break-points} of $\sigma$. \index{$(\lambda, c)$-break-points}

Below we describe an algorithm which on input receives $\sigma$ along with the break points $A_1, \ldots, A_K$ and outputs another labeled circle $\sigma'$ such that $lab(\sigma')$ is $(8\delta+1)$-local geodesic  in $\Gamma(H, X)$ and $lab(\sigma') \sim_{conj} lab(\sigma)$ in $H$.

 First, suppose that $lab(\sigma)=W_0=_H h_0$, and suppose that the break points $A_1, \ldots, A_K$ are recorded in a list which may change after each step of the following described procedure.\\
~\\
\underline{Step 1.} Chose a break point $A$ on the circle $\sigma$ and search for  a pair of points $B, B' \in \mathcal{N}_{8\delta+1}$ such that $\overrightarrow{d}(B, B')=8\delta+1$ but $[B, B']$ is not a geodesic arc (i.e. $lab([B, B'])$ is not a geodesic word). If such a pair is found, then go to Step 1.1, otherwise, go to Step 1.2.\\
~\\
\underline{Step 1.1.} If $[B, B']$ is not a geodesic arc, then replace the arc $[B, B']$ of $\sigma$ with a shorter arc whose label represents the same element of $H$. As a result, we obtain a new labeled circle whose label represents an element of $H$ conjugate to $h_0$. Also, add the points $B, B'$ to the list of the break points and remove the break point which are not on the newly obtained circle from the current list of break points.\\
~\\
\underline{Step 1.2.} If such points $B, B'$ are not found, then remove $A$ from the list of break points.\\
~\\
\underline{Step 2.} Repeat the procedure of Step 1. for the next break point until there is no break point left in the list.\\
~\\
\underline{Step 3.} If there is no break point left, then return the current circle.\\
~\\

Clearly the procedure described in Step 1 and Step 2 will eventually halt, since after each call of Step 1 either the newly obtained circle gets shorter (Step 1.1.) or the number of break point in the list decreases.
Also, it is clear that the newly obtained labeled circle $\sigma'$ is such that $lab(\sigma')$ is $8\delta+1$-local geodesic and $lab(\sigma) \sim_{conj} lab(\sigma')$ in $H$.

The following observation will be used for the main algorithm.\\
\textit{Observation 8.1}. Suppose that the $A_1, \ldots, A_K\in \sigma$ are enumerated in the clockwise direction and $[A_1, A_K]$ is marked with some number, say, with $0$. Suppose also that after replacing an arc of the current circle with a new arc on Step 1.1, we mark the edges of the new arc with the label $0$. If $\sigma'$ is the returned labeled circle of\texttt{$(\lambda, c)$-smoothing} algorithm, then, clearly, the edges of $\sigma'$ which are marked with $0$ compose a connected arc.\\

Now, suppose that $\|\sigma\|-\|\sigma'\|=d$, $d\geq 0$. Then, note that Step 1 was called during this procedure not more than $d$ times, and since after Step 1.1 the number of break points in the list increases at most by 1, while after Step 1.2 it decreases by 1, we get that Step 1.2. was called during this procedure at most $K+d$ times. Therefore, Step 1 was called during this procedure at most $d+(K+d)=K+2d$ times. Consequently, the total time required for this procedure is bounded from above by
\begin{align}
\label{complexity of smoothing}
	f_{S}(\delta, |X|) (K+d),
\end{align}
where $f_{S}: \mathbb{N} \rightarrow \mathbb{N}$ is a computable function depending only on $\delta$ and $|X|$.
After summarizing, we get to the following lemma.
\begin{lemma}
	\label{lemma-smoothing-complexity}
	For input $\sigma$, $A_1, \ldots, A_K \in \sigma$ if the output of the algorithm\\ \texttt{$(\lambda, c)$-smoothing} is shorter than $\sigma$ by $d$, then the time which \texttt{$(\lambda, c)$-smoothing} spent before halting is bounded from above by $f_{S}(\delta, |X|) (K+d)$, where $f_{S}: \mathbb{N} \rightarrow \mathbb{N}$ is a  computable function depending only on $\delta$ and $|X|$.
\end{lemma}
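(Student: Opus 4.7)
The plan is to separately bound (i) the total number of times the main loop (Step~1) is invoked and (ii) the cost of each single invocation, the latter being a quantity that depends only on $\delta$ and $|X|$. The total running time is then the product of these two bounds.

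For (i), I would introduce a potential function, namely the current size of the break-point list, and count the two types of loop iterations separately. Let $N_{1.1}$ be the number of times Step~1.1 is executed and $N_{1.2}$ the number of times Step~1.2 is executed. Each execution of Step~1.1 replaces an arc $[B,B']$ of length exactly $8\delta+1$ that fails to be geodesic by a shorter arc representing the same element of $H$, so the circle shortens by at least one edge; hence $N_{1.1}\leq d$. On the other hand, the break-point list starts at size $K$, grows by at most $1$ per execution of Step~1.1 (this is the bookkeeping claim stated with the algorithm: we add the two new points $B,B'$ but, of old break points that no longer lie on the new circle, one at least is removed so that the net increase is at most $1$), shrinks by exactly $1$ per execution of Step~1.2, and must reach $0$ at halting. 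Therefore $N_{1.2}\leq K+N_{1.1}\leq K+d$, and the total number of iterations is at most $N_{1.1}+N_{1.2}\leq K+2d$.

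For (ii), I would note that a single iteration examines only the $(8\delta+1)$-neighborhood $\mathcal{N}_{8\delta+1}(A)$ of the chosen break point, which contains at most $2(8\delta+1)+1$ edges, so at most a constant (depending only on $\delta$) many pairs $(B,B')$ with $\overrightarrow{d}(B,B')=8\delta+1$ to test. For each such pair the test of whether $[B,B']$ is geodesic, and the subsequent search for a shorter replacement, is carried out against the full presentation $\mathcal{F}=\{U\in X^*\mid \|U\|\leq 16\delta+1,\ U=_H 1\}$, whose cardinality is bounded by $|X|^{16\delta+1}$ and whose words have length at most $16\delta+1$; thus this step takes time bounded by some computable $g(\delta,|X|)$. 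Updating the list of break points (inserting $B,B'$, deleting stale entries, choosing the next $A$) is linear in $|\mathcal{N}_{8\delta+1}(A)|$, hence also bounded by a function of $\delta$ and $|X|$.

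Combining (i) and (ii), the total running time is at most $g(\delta,|X|)\cdot(K+2d)\leq 2g(\delta,|X|)\cdot(K+d)$, which gives the required bound with $f_S(\delta,|X|)\mathrel{:=}2g(\delta,|X|)$. The only mildly delicate point is the accounting in step (i): one must justify carefully that a single invocation of Step~1.1 does not increase the list by more than $1$, which relies on the fact that at least one old break point falling inside the replaced arc $[B,B']$ is removed; this is the part of the proof that I would write out most carefully, while the rest is essentially mechanical.
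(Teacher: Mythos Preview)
Your proposal is correct and follows essentially the same approach as the paper: bound the number of Step~1.1 calls by $d$ (each shortens the circle), use the list-size potential to bound Step~1.2 calls by $K+d$, and observe that each iteration costs a computable function of $\delta$ and $|X|$. You actually supply more detail than the paper on why a single iteration has cost bounded in terms of $\delta$ and $|X|$ (via $|\mathcal{F}|\le |X|^{16\delta+1}$ and the bounded neighborhood), and you correctly flag the one point that deserves care---that Step~1.1 increases the list by at most one---which the paper simply asserts.
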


 As we already mentioned the procedure of Step 1 and Step 2 ends up with a cyclically $8\delta+1$-local geodesic word. However, as it is apparent from Lemma \ref{lemma_about_local_geodesicness}, for large enough constants $\lambda, c$, $8\delta+1$-local geodesiceness implies cyclically $(\lambda, c)$-quasi-geodesicness. Therefore, since our primary interest in this procedure is about obtaining cyclically $(\lambda, c)$-quasi-geodesic word conjugate to $W_0$ in $H$, we name  this algorithm \texttt{$(\lambda, c)$-smoothing} \index{algorithm!\texttt{$(\lambda, c)$-smoothing}}(with respect to the input $\sigma$ and $A_1, \ldots, A_K \in \sigma$). \\
 
 Also, note that if we consider all the points on $\sigma$ as break points, then the $(\lambda, c)$-\texttt{smoothing} algorithm becomes the well-known algorithm for finding $(8\delta+1)$-local geodesic word conjugate to the given word (see, for example, \cite{bridson}). In case all the points on $\sigma$ are regarded as break points then we call this algorithm  $(\lambda_i, c_i)$-\texttt{cyclic-reduction}. \index{$(\lambda_i, c_i)$-\texttt{cyclic-reduction}}\\

 \underline{Algorithm \texttt{ShortLex}.} (Shapiro's Algorithm on ShortLex normal forms). 
 \index{\texttt{ShortLex}}
 As in \cite{epstein holt}, for a given element $g \in H$, we define the \textit{$ShortLex_H$ normal form} \index{ShortLex normal form}  of $g$ with respect to $X$ as lexicographically the least word $W' \in X^*$ such that $W'=_H g$. Analogously, for $W \in X^*$, we denote $W'=ShortLex_H(W)$ \index{$ShortLex_H(W)$} if $W'$ is lexicographically the least word  in $X^*$ such that $W=_H W'$. 
 
 According to Shapiro's theorem described in \cite{epstein holt}, there is a linear-time algorithm which for any input $W\in X^*$ finds $W'=ShortLex_H(W)$. Moreover, as it follows from the proof of the theorem in \cite{epstein holt}, the time complexity of this procedure is bounded from above by
 \begin{align}
 \label{complexity_of_the_shortlex_form_algorithm}
 	f_{\texttt{SL}}(|X|, \delta)\|W\| ,
 \end{align}
 where $f_{\texttt{SL}}$ is a computable function independent of $H$ and $W$. We name this algorithm simply \texttt{ShortLex}.
 ~\\

\subsection{The main algorithm}
\label{subsection-the main algorithm}
 (\texttt{$(\lambda, c, \epsilon,  \eta)$-cyclic-reduction}.) \index{algorithm!\texttt{$(\lambda, c, \epsilon,  \eta)$-cyclic-reduction}}
  
  As above, let $H=\langle X \rangle$ be a $\delta$-hyperbolic group with $(X, \delta)$-full presentation $H = \langle X \mid \mathcal{F} \rangle$. Let $G= H / \ll \mathcal{R} \gg$, where $\mathcal{R}$ is a finite set of words satisfying the small cancellation condition $C'(\lambda, c, \epsilon, \mu, \rho)$ for large enough standard parameters  $\lambda \succ c \succ \epsilon \succ \mu \succ \rho$.
  Note that, as it follows from \ref{lem 6.6}, the group $G$ is a non-elementary hyperbolic group.


  Let $0<\eta<1$ be a fixed rational constant such that 
  	\begin{align}
  	\label{condition aa}
  	2\eta-3/2 > 3\lambda(1-\eta).
  	\end{align}
~\\

  
  In this subsection, our goal is to describe an algorithm (see \texttt{$(\lambda, c, \epsilon, \eta)$-cyclic-reduction}  algorithm below)  which  for an input word $W \in X^*$ (inputed as a labeled circle), outputs a word $W' \in X^*$ such that $W \sim_{conj} W'$ in $G$ and $W'$ is  cyclically $(\lambda, c, \epsilon,  \eta)$-reduced in $\Gamma(G, X)$. 
   Moreover, if $W$ and all its cyclic shifts do not contain $(\epsilon, \eta)$-subwords, then $W \sim_{conj} W'$ in $H$.
  The algorithm will be universal in the sense that it does not depend  on the choice of $H$ and $G$.
 We also would like to note that the main technical difficulties for this algorithm are connected with making it as fast as possible. 
 
  \begin{lemma}
  	\label{lemma-a1}
  	Suppose that $W \in X^*$ is $(\lambda, c)$-quasi-geodesic word in $H$ and $W'$ is a geodesic word in $H$ such that $W=_H W'$. If $W$ contains an $(\epsilon, \eta)$-subword, then $W'$ contains a $(2\epsilon, \eta)$-subword.
  \end{lemma}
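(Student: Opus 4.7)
\smallskip

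The plan is direct and uses Lemma \ref{lem hausdorff distance between quasi-geodesics} to transfer the $(\epsilon, \eta)$-subword from the quasi-geodesic path to the geodesic path, at the cost of at most doubling the $\epsilon$.

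First I would set up the geometry: let $p$ and $q$ be the paths in $\Gamma(H, X)$ starting at a common vertex with $lab(p) = W$ and $lab(q) = W'$. Since $W =_H W'$, we have $p_+ = q_+$ (and $p_- = q_-$). By hypothesis there is a subpath $p'$ of $p$ whose label is $lab(p') = T_1^{-1} U T_2$, where $R \equiv UV \in \mathcal{R}$, $\|U\| \geq \eta \|R\|$, and $\|T_1\|, \|T_2\| \leq \epsilon$. Write $A = p'_-$ and $B = p'_+$.

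Next, since $p$ is $(\lambda, c)$-quasi-geodesic and $q$ is geodesic, Lemma \ref{lem hausdorff distance between quasi-geodesics} gives points $A', B' \in q$ with $d(A, A') \leq R_{\lambda, c}$ and $d(B, B') \leq R_{\lambda, c}$. Let $q'$ be the subpath $[A', B']$ of $q$, and let $S_1, S_2 \in X^*$ be any geodesic words labeling geodesic paths from $A'$ to $A$ and from $B$ to $B'$ respectively, so that $\|S_1\|, \|S_2\| \leq R_{\lambda, c}$. Reading the boundary of the quadrangle with sides $S_1$, $lab(p')$, $S_2$, $lab(q')^{-1}$ gives in $H$:
\begin{equation*}
lab(q') \;=_H\; S_1 \cdot T_1^{-1} U T_2 \cdot S_2 \;=\; (T_1 S_1^{-1})^{-1} \, U \, (T_2 S_2).
\end{equation*}

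Setting $T_1' = T_1 S_1^{-1}$ and $T_2' = T_2 S_2$, the triangle inequality yields $\|T_i'\| \leq \|T_i\| + R_{\lambda, c} \leq \epsilon + R_{\lambda, c}$ for $i = 1, 2$. By LPP (recall $\lambda \succ c \succ \epsilon$, and $R_{\lambda, c}$ depends only on $\lambda, c, \delta$), we may assume $\epsilon \geq R_{\lambda, c}$, so $\|T_i'\| \leq 2\epsilon$. Thus $lab(q')$ is a $(2\epsilon, \eta)$-word associated with the same relator $R \in \mathcal{R}$, and consequently $W'$ contains a $(2\epsilon, \eta)$-subword. The only subtlety is confirming the LPP bookkeeping; no further obstacles arise.
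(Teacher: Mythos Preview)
Your argument is correct and is exactly the standard one; the paper in fact states Lemma~\ref{lemma-a1} without proof, and the same bigon-plus-Hausdorff-distance idea appears verbatim in the proof of Proposition~\ref{proposition - aux - 4}. One notational slip: by Definition~\ref{()-arcs} the $(\epsilon,\eta)$-subword $W_0$ only satisfies $W_0 =_H T_1^{-1}UT_2$, not $lab(p') \equiv T_1^{-1}UT_2$ as a word, but your chain of equalities already uses $=_H$ so the argument is unaffected.
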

  
  \begin{lemma}
  	\label{lemma-a2}
  	Let $\sigma$ be a labeled circle which contains an $(\epsilon, \eta)$-arc. Then there exist points $A, B \in \sigma$ such that $[A, B]$ contains an $(\epsilon, \eta)$-arc and $\|[A, B]\| \leq \tilde{N}$.
  \end{lemma}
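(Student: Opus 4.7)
The plan is as follows. Let $p_0 \subseteq \sigma$ be the given $(\epsilon, \eta)$-arc, with label $W_0 = lab(p_0)$ satisfying $W_0 =_H T_1^{-1} U T_2$ for some relator $R = UV \in \mathcal{R}$ with $\|U\| \geq \eta \|R\|$ and $\|T_1\|, \|T_2\| \leq \epsilon$. I view $p_0$ as a path in the Cayley graph $\Gamma(H, X)$ with endpoints $v_-$ and $v_+$, and set $u_- = v_- \cdot T_1^{-1}$ and $u_+ = u_- \cdot U = v_+ \cdot T_2^{-1}$, so that $d_H(v_-, u_-) \leq \epsilon$, $d_H(v_+, u_+) \leq \epsilon$, and the $U$-labeled path from $u_-$ to $u_+$ is $(\lambda, c)$-quasi-geodesic in $\Gamma(H, X)$.

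First I would dispose of the easy case: if $\|p_0\|$ is already bounded by a constant $\tilde{N}$ depending on $\|R\|$ and the standard parameters, take $A = (p_0)_-$ and $B = (p_0)_+$. The length $\|R\|$ is itself controlled by Lemma~\ref{lemma-arcs-associated-cells-are-short}, which yields $\|R\| \leq (\lambda(\|W_0\| + 2\epsilon) + c)/\eta$. After a preliminary call to the \texttt{$(\lambda, c)$-smoothing} algorithm, one can assume $p_0$ is already a $(\lambda, c)$-quasi-geodesic word in $\Gamma(H, X)$, so that $\|p_0\| \leq \lambda(2\epsilon + \|R\|) + c$, and both estimates together give an explicit bound of the desired form for $\tilde{N}$.

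The non-trivial case is when $p_0$ has not been pre-smoothed and may wander arbitrarily. To reduce to the easy case, I would locate a shorter sub-arc of $p_0$ that is itself an $(\epsilon, \eta)$-arc. Concretely, I look for vertices $a, b$ lying on $p_0$ (with $a$ preceding $b$) such that $d_H(a, u_-) \leq \epsilon$ and $d_H(b, u_+) \leq \epsilon$; the sub-arc from $a$ to $b$ then automatically has label of the form $S_1^{-1} U S_2$ in $H$ with $\|S_1\|, \|S_2\| \leq \epsilon$ (choose $S_1, S_2$ as geodesic representatives of $a^{-1} u_-$ and $u_+^{-1} b$), hence remains an $(\epsilon, \eta)$-arc with the same associated relator $R$. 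Among all such pairs $(a, b)$ I pick one minimising the arc-length $\overrightarrow{d}_{p_0}(a, b)$; the existence of such a pair is guaranteed by taking $a = (p_0)_-$ and $b = (p_0)_+$.

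The main obstacle is bounding this minimal arc-length by a uniform $\tilde{N}$. The plan is to exploit the hyperbolicity of $H$: if the minimal sub-arc were long, then along it one would encounter two vertices $a', b'$ with $d_H(a', b')$ small (by a pigeonhole in the $\delta$-hyperbolic ball of radius roughly $\|R\| + \epsilon$ containing the projection of $p_0$), and one of the short-cuts $a, a', b', b$ or $a, b', b$ would yield a strictly shorter sub-arc still satisfying the endpoint conditions, contradicting minimality. Lemma~\ref{lem hausdorff distance between quasi-geodesics} and Corollary~\ref{another corollary about hausdorff distance} would be used to control how closely the chosen sub-arc tracks the quasi-geodesic $U$-path from $u_-$ to $u_+$, and an elementary counting argument in the ball of radius $O(\|R\|)$ in $\Gamma(H, X)$ then produces the uniform constant $\tilde{N}$ depending only on $|X|$, $\delta$, $\lambda$, $c$, $\epsilon$, $\eta$, and $\|R\|$ (and, via Lemma~\ref{lemma-arcs-associated-cells-are-short}, ultimately on the input size).
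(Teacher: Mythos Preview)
The paper gives no proof of this lemma, and in fact the constant $\tilde N$ is never defined anywhere in the text; the lemma appears to be a vestigial preliminary form of Observation~8.2, which is what the algorithm actually uses (with $\tilde N$ essentially $2\tilde L_n$). In the only context where the statement is needed, the circle $\sigma$ has already been passed through Step~0 of the algorithm, so its label is cyclically $(8\delta+1)$-local geodesic and hence $(\lambda,c)$-quasi-geodesic in $\Gamma(H,X)$. Under that assumption your ``easy case'' is the entire argument: the $(\epsilon,\eta)$-arc $p_0$ itself satisfies $\|p_0\|\le \lambda(|U|+2\epsilon)+c\le \lambda(\|R\|+2\epsilon)+c$, and Lemma~\ref{lemma-arcs-associated-cells-are-short} bounds $\|R\|$ in terms of $n=\|\sigma\|$, giving exactly the bound $\tilde L_n$ of \eqref{definition of L and l}. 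So take $A=(p_0)_-$, $B=(p_0)_+$.

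Your ``non-trivial case'' is both unnecessary and flawed. If $p_0$ is not assumed quasi-geodesic, it need not stay inside any ball of radius $O(\|R\|)$ around $u_-$, so your pigeonhole in such a ball does not apply. More seriously, even if you locate vertices $a',b'$ on $p_0$ with $d_H(a',b')$ small, this gives you no information about $d_H(a',u_-)$ or $d_H(b',u_+)$; hence the sub-arc between them (or any of your suggested variants) has no reason to satisfy the endpoint conditions $d_H(\cdot,u_\pm)\le\epsilon$, and you cannot contradict the minimality of your chosen pair $(a,b)$. Indeed, without the quasi-geodesic hypothesis the statement is essentially vacuous: any arc on $\sigma$ has length at most $\|\sigma\|$, but no bound independent of $\|\sigma\|$ and depending only on the standard parameters is available.
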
	
  ~\\
  \textbf{Observation 8.1.} As it follows from Lemma \ref{lemma-arcs-associated-cells-are-short}, if a cyclic shift of a word $W \in X^*$ contains an $(\epsilon, \eta)$-subword, then this subword must be associated with a word from $\mathcal{R}$ whose lengths is bounded from above by $\frac{\lambda(\|W\|+2\epsilon)+c }{\eta}$. This boservation leads us to the following definition.
  
  	   Define 
   \begin{equation*}
   	_{n}\mathcal{R} = \bigg\{ R \in \mathcal{R} \mid \|R\| \leq \frac{\lambda(\|W\|+2\epsilon)+c }{\eta} \bigg\}
   \end{equation*}
   where $n=\|W\|$, and let $\mathcal{C}(_n\mathcal{R})$ be an upper bound of time required for constructing a set of  representatives of $_n\mathcal{R}$ up to cyclic shift. 
    
   Let us denote 
   \begin{align}
   \label{definition of L and l}
   \tilde{L}_n = \lceil \lambda(\eta L_n+ 2\epsilon)+c \rceil \text{~and~} l_n= \min\{\|R\| \mid R \in _n\mathcal{R} \}.	
   \end{align}
 The following simple key observation is the main motivation for considering $\tilde{L}_n$.\\
  \textbf{Observation 8.2.} Suppose that $A_1, \ldots, A_s \sigma$ are such that $\overrightarrow{d}(A_i, A_{i+1}) \leq \tilde{L}_n$ for $1\leq i <s$ and $\overrightarrow{d}(A_s, A_{1})\leq \tilde{L}_n$. Then, if $\sigma$ contains an $(\epsilon, \eta)$-arc, there exists a point $A_i \in \{A_1, \ldots, A_s \}$ such that the $\tilde{L}_n$-neighborhood of $A_i$ contains an $(\epsilon, \eta)$-arc.
   
   
   
   ~\\
   Let $k_n=\#\big\{~_n\mathcal{R}/ \sim_{{cyclic~shifts}} \big\}$, where $\sim_{{cyclic~shifts}}$ is the equivalence relation for cyclic shifts. Now let $ ~_n\tilde{\mathcal{R}} \stackrel{def}=\{ R_1, \ldots, R_{k_n}\} \subset ~_n\mathcal{R}$ be a set of representatives of the set $~_n\mathcal{R}/ \sim_{{cyclic~shifts}}$. 
   
   Next, let us partition the elements from $ ~_n\tilde{\mathcal{R}}$ in the following way: partition the words $R_i \in _n\tilde{\mathcal{R}}$, $i=1, \ldots, k_n$, in the form
				
				\begin{equation*}
					R_i= U_i^1U_i^2\ldots U_i^{s_i},
				\end{equation*}
				where $\big\lfloor \frac{1}{1-\eta} \big\rfloor -1 < s_i \leq \big\lceil \frac{1}{1-\eta} \big\rceil$, $\|U_i^j\| = \big\lfloor ({1-\eta})\|R_i\| \big\rfloor$, for $j=1, \ldots, s_i-1$, and $\big\lfloor ({1-\eta})\|R_i\| \big\rfloor \leq \|U_i^{s_i}\| < 2\big\lfloor ({1-\eta})\|R_i\| \big\rfloor $.
				
				Now let us define
				\begin{align*}
				~_n\mathcal{R}'=\big\{\hat{U}_i^j \stackrel{\text{def}}= U_i^1\ldots U_i^{j-2} U_i^{j+1}U_i^{j+2} \ldots U_i^{s_i} \mid  (i, j), 1\leq i \leq k_n, ~&1< j < s_i \big\}\\
				 \bigcup \{ \hat{U}_i^1 \stackrel{\text{def}}= U_i^{2}U_i^{3} \ldots U_i^{s_i-1} \} \bigcup \{ \hat{U}_i^{s_i}\stackrel{\text{def}}=U_i^{1}U_i^{2} \ldots U_i^{s_i-2} \}
				\end{align*}
				and also, by using the convention $U_i^0 \stackrel{\text{def}}= U_i^{s_i}$, define
				\begin{align}
				\label{auxiliary-set-of-words}
				~_n\mathcal{R}''=\{{U}_i^{j-1}{U}_i^j \mid (i, j), 1 \leq i \leq k_n, ~&1\leq j \leq s_i \}.
				\end{align}
				
				Then we have
				\begin{equation}
				\label{inequality-8.1}
				\begin{aligned}
					(2\eta-1)\|R_i\| \leq \hat{U}_i^j \leq (3\eta-1)\|R_i\|,\\
					2(1-\eta) \|R_i\| \leq  \|{U}_i^{j-1}{U}_i^j\|\leq 3(1-\eta)\|R_i\|.
				\end{aligned}
				\end{equation}
	~\\			
		One of the motivations for considering the sets $~_n\mathcal{R}'$ and $~_n\mathcal{R}''$ is revealed in the following proposition.
		\begin{proposition}
		\label{proposition - aux 1}
			Let $W\in X^*$ be a word containing a $(\epsilon, \eta)$-subword $V$. Suppose $n=\|W\|$. Then $V$ contains a subword $V'$ of the form 
		\begin{align*}
			V'=_H E_1^{-1} U'E_2,
		\end{align*}
		where $U' \in~_n\mathcal{R}'$ and $E_1, E_2 \in X^*$, $\|E_1\|, \|E_2\| \leq 2\epsilon$. Hence $V'$ is a $(2\epsilon, 3\eta-2)$-subword of $W$.
		\end{proposition}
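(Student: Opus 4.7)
The plan is to unpack $V$ being an $(\epsilon, \eta)$-subword into a geometric quadrangle in $\Gamma(H, X)$, locate a word $U' \in {}_n\mathcal{R}'$ inside the relator data of $V$, and transfer it back to a literal subword $V'$ of $V$ by hyperbolic quadrangle thinness, with all the short bridging pieces absorbed into the two error words $E_1, E_2$ of length at most $2\epsilon$.

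First, I would fix the defining data: $T_1, T_2 \in X^*$ with $\|T_1\|, \|T_2\| \leq \epsilon$, and $R \in \mathcal{R}$ decomposing as $R = \tilde U \tilde V$ with $\|\tilde U\| \geq \eta \|R\|$ and $V =_H T_1^{-1} \tilde U T_2$. Lemma \ref{lemma-arcs-associated-cells-are-short} applied to $V$ forces $R \in {}_n\mathcal{R}$, so $R$ is a cyclic shift $R_{i,a}$ of the unique representative $R_i \in {}_n\tilde{\mathcal{R}}$, and the block partition $R_i = U_i^1 \cdots U_i^{s_i}$ transfers to $R_{i,a} = U_i^{a+1}\cdots U_i^{s_i}U_i^{1}\cdots U_i^{a}$. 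Since $\|\tilde U\| \geq \eta \|R_i\|$, the missing suffix of $R_{i,a}$ has length at most $(1-\eta)\|R_i\|$, which is strictly less than the combined length of two consecutive blocks; hence $\tilde U$ contains the prefix of $R_{i,a}$ consisting of its first $s_i - 2$ blocks as a subword. Together with \eqref{inequality-8.1}, this length lies between $(2\eta-1)\|R_i\|$ and $(3\eta-1)\|R_i\|$.

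Second, I would identify this $(s_i - 2)$-block prefix with a specific element $U' \in {}_n\mathcal{R}'$, giving a factorisation $\tilde U = U' \cdot Q$ with $\|Q\| \leq (1-\eta)\|R_i\|$. Depending on $a$, the prefix literally coincides as a word with one of the three ``contiguous'' members $\hat U_i^1, \hat U_i^2, \hat U_i^{s_i}$ of ${}_n\mathcal{R}'$, corresponding to the three positions of the missing pair of blocks at a cyclic boundary. For the remaining offsets the prefix is a cyclic rearrangement of an inner $\hat U_i^j$, which I would handle by using closure of $\mathcal R$ under cyclic shifts to replace $R$ by a better-aligned shifted relator, with the corresponding changes in $T_1, T_2$ kept under $\epsilon$ by LPP. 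The equality $V =_H T_1^{-1} U' Q T_2$ then realises a quadrangle in $\Gamma(H, X)$; marking the endpoint of the $U'$-portion on the relator side and applying Corollary \ref{corollary on hausdorff distance between quasi-geodesics} yields a nearby vertex on the $V$-side within $2\epsilon$ (absorbing $\epsilon + \|Q\| + 2R_{\lambda,c} + 2\delta$ into $2\epsilon$ by LPP). The subword $V'$ of $V$ ending at that vertex then satisfies $V' =_H E_1^{-1} U' E_2$ with $E_1 = T_1$ of length $\leq \epsilon$ and $\|E_2\| \leq 2\epsilon$. The ``Hence $V'$ is a $(2\epsilon, 3\eta-2)$-subword'' follows from $\|U'\| \geq (2\eta-1)\|R_i\| \geq (3\eta-2)\|R_i\|$.

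The main obstacle I expect is the second step, specifically handling cyclic offsets $a$ for which the $(s_i-2)$-block window of $R_{i,a}$ is not literally equal as a word to any member of ${}_n\mathcal{R}'$ but only to a cyclic rearrangement of one. Bridging this mismatch requires exploiting the cyclic-shift symmetry of $\mathcal R$ to replace $R$ by a more convenient representative and carefully tracking the effect on $T_1, T_2$, ultimately relying on LPP to keep all adjustments inside the $2\epsilon$ budget for $E_1, E_2$; the small-cancellation setup $C'(\lambda, c, \epsilon, \mu, \rho)$ is exactly what makes this absorption work.
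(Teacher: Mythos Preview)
Your strategy---locate a member of ${}_n\mathcal R'$ inside the relator piece $\tilde U$, then project its endpoints to the $V$-side via Corollary~\ref{corollary on hausdorff distance between quasi-geodesics}---is exactly the paper's. But the paper's first step is much simpler than your cyclic-offset case analysis: write $R = U\bar U$ with $\|\bar U\| \leq (1-\eta)\|R\|$; since each block has length $\geq \lfloor(1-\eta)\|R\|\rfloor$, the short complement $\bar U$ sits inside some consecutive pair $U_i^{j-1}U_i^j$ (cyclic convention), and hence $\hat U_i^j$ is already a literal subword of $U$. No alignment cases, no relator replacement.

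Two steps in your plan do not work as written. First, you try to absorb $\epsilon + \|Q\| + 2R_{\lambda,c} + 2\delta$ into $2\epsilon$ by LPP, but $\|Q\| \leq (1-\eta)\|R_i\|$ is of order $\|R\|$ and hence unbounded in the standard parameters; LPP cannot touch it. In fact $\|Q\|$ should not appear at all: the Hausdorff bound between the $V$-side and the $\tilde U$-side of the quadrangle is $\epsilon + 2R_{\lambda,c} + 2\delta$ uniformly for every point on $\tilde U$, and this is $<2\epsilon$ by LPP. Second, your fallback of ``replacing $R$ by a better-aligned shifted relator, keeping the changes in $T_1, T_2$ under $\epsilon$ by LPP'' fails for the same reason: a generic cyclic shift moves the endpoints of $\tilde U$ by amounts comparable to $\|R\|$, far beyond any fixed $\epsilon$. (Note also that your $R_{i,a}$ are block-aligned shifts, whereas $R$ need not be block-aligned; working with the complement $\bar U$ directly, as the paper does, sidesteps this as well.)
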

		\begin{proof}
			By definition and Observation 8.1, there exist $T_1, T_2 \in X^*$, $\|T_1\|, \|T_2\| \leq \epsilon$ and $R  \in ~_n\mathcal{R}$ such that for some subword $U$ of $R$, $\|U\|\geq \eta \|R\|$ and
			\begin{align*}
				V=_H T_1^{-1} UT_2.
			\end{align*}
			Let $R = U \bar{U}$. Then, since $\|\bar {U} \| \leq (1-\eta)\|R\|$, there exists a word $U_i^{j-1}U_i^j$ such that $\bar{U}$ is a subword of $U_i^{j-1}U_i^j$. But this means that $\hat{U}_i^j$ is a subword of $U$. Therefore, by the inequality $\epsilon >^{\text{by LPP}} 2R_{\lambda, c}+2\delta$ and by Corollary \ref{corollary on hausdorff distance between quasi-geodesics}, we get that there exists a subword $V'$ of $V$ and $E_1, E_2 \in X^*$, $\|E_1\|, \|E_2\| \leq 2\epsilon$, such that $V'=_H E_1^{-1} U'E_2$. Finally, since we have $\| \hat{U}_i^j \| >\|R\|-3\lfloor (1-\eta) \|R\| \rfloor \geq (3\eta -2)\|R\|$, we get that $V'$ is a  $(2\epsilon, 3\eta-2)$-subword of $W$.
			
		\end{proof}

		Now let us define
		\begin{align*}
		\mathcal{E}_0(~_n\mathcal{R}')=\{ShortLex (T_1^{-1} \hat{U}_i^j T_2) \mid (i, j), 1\leq i \leq k_n, ~&1\leq j \leq s_i,\\
		& T_1, T_2 \in X^*,  \|T_1\|, \|T_2\| \leq 3\epsilon \}.
		\end{align*}

	\begin{lemma}
		\label{lemma-epsilon_0}
		The sum of the lengths of the words from $\mathcal{E}_0(~_n\mathcal{R}')$ is  bounded from above by $f_{\mathcal{E}_0}(|X|, \epsilon, \eta) L_nk_n$, where $f_{\mathcal{E}_0}: \mathbb{N} \rightarrow \mathbb{N}$ is a computable function depending only on $|X|, \epsilon$ and $ \eta$.
	\end{lemma}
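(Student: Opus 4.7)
The plan is to control the total length by bounding separately the cardinality of $\mathcal{E}_0(~_n\mathcal{R}')$ and the maximum length of its elements, and then multiplying the two bounds. Since the statement asks for a function depending only on $|X|, \epsilon, \eta$ multiplied by $L_n k_n$, the counting needs to be arranged so that the ``size'' factor $L_n k_n$ appears exactly once, which suggests bounding cardinality by a constant (in $|X|, \epsilon, \eta$) times $k_n$ and individual length by a constant (in $|X|, \epsilon, \eta$) times $L_n$.

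First I would count how many triples $(i, j, (T_1, T_2))$ parametrize $\mathcal{E}_0(~_n\mathcal{R}')$. The number of pairs $(i, j)$ indexing the words $\hat{U}_i^j$ is $\sum_{i=1}^{k_n} s_i$, and from the construction of the partition of each $R_i$ we have $s_i \leq \lceil 1/(1-\eta) \rceil$, so that $|~_n\mathcal{R}'| \leq \lceil 1/(1-\eta) \rceil \cdot k_n$. For each such $\hat{U}_i^j$, the admissible pairs $(T_1, T_2) \in X^* \times X^*$ with $\|T_1\|, \|T_2\| \leq 3\epsilon$ form a set of size at most $N(|X|,\epsilon)^2$, where $N(|X|,\epsilon)$ is the number of words over $X \cup X^{-1}$ of length $\leq 3\epsilon$, a quantity manifestly computable from $|X|$ and $\epsilon$ alone. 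Hence $|\mathcal{E}_0(~_n\mathcal{R}')| \leq \lceil 1/(1-\eta) \rceil \cdot N(|X|,\epsilon)^2 \cdot k_n$.

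Next I would bound the length of each word in $\mathcal{E}_0(~_n\mathcal{R}')$. The operation $ShortLex$ returns the lexicographically least, hence a shortest, representative in $H$, so in particular $\|ShortLex(T_1^{-1}\hat{U}_i^j T_2)\| \leq \|T_1\| + \|\hat{U}_i^j\| + \|T_2\| \leq 6\epsilon + (3\eta-1) L_n$, where the bound on $\|\hat{U}_i^j\|$ is the upper inequality from \eqref{inequality-8.1}. Since $L_n \geq l_n \geq \rho \geq 1$ by the small-cancellation hypothesis, the additive $6\epsilon$ can be absorbed, yielding an upper bound of the form $c(\epsilon, \eta) L_n$ per element.

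Multiplying the two bounds gives a total sum of lengths of the form $f_{\mathcal{E}_0}(|X|, \epsilon, \eta) L_n k_n$, as required, with $f_{\mathcal{E}_0}$ an explicit computable function. There is no genuine obstacle: the argument is a direct enumeration coupled with a triangle inequality and the non-expansiveness of $ShortLex$. The only point requiring care is making sure that the dependence on the higher-priority parameters $\lambda, c$ does not sneak back in; this is automatic because $\lambda, c$ enter the construction of $\mathcal{E}_0$ only through the definition of $L_n$ itself (via the bound on $\|\hat{U}_i^j\|$ through $\|R_i\|$), so they are already absorbed into the factor $L_n$ and do not appear in the coefficient.
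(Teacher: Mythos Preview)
Your proposal is correct and follows essentially the same approach as the paper: bound the cardinality of $\mathcal{E}_0(~_n\mathcal{R}')$ by a constant (in $|X|,\epsilon,\eta$) times $k_n$, bound each individual word length by a constant (in $\epsilon,\eta$) times $L_n$ using the non-expansiveness of $ShortLex$ and the triangle inequality, and multiply. The paper carries out exactly this two-step estimate, writing the bounds explicitly as $\#\mathcal{E}_0(~_n\mathcal{R}') \leq k_n \lceil 1/(2(1-\eta)) \rceil |X|^{6\epsilon}$ and $\|W\| \leq \eta' L_n + 6\epsilon$; your constants differ slightly but the argument is the same.
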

	\begin{proof}
	Indeed, first of all note that
	
		\begin{align*}
		\label{estimate for Epsilon_0}
			\#\mathcal{E}_0(~_n\mathcal{R}')  \leq k_n \bigg\lceil \frac{1}{2(1-\eta)} \bigg\rceil|X|^{6\epsilon} .
		\end{align*}
		Therefore,
		\begin{equation*}
\label{eq bb}
\begin{aligned}
 \sum_{W \in \mathcal{E}_0(~_n\mathcal{R}')}\|W\|
&\leq (\eta'L_n+6\epsilon) \#\mathcal{E}_0(~_n\mathcal{R}')
\leq   (\eta'L_n+6\epsilon)k_n \bigg\lceil \frac{1}{2(1-\eta)} \bigg\rceil|X|^{6\epsilon}\\
&< (\eta'+6\epsilon) \bigg\lceil \frac{1}{2(1-\eta)} \bigg\rceil|X|^{6\epsilon} L_nk_n.
\end{aligned}
\end{equation*}

Now define $f_{\mathcal{E}_0}(|X|, \epsilon, \eta)=(\eta'+6\epsilon) \bigg\lceil \frac{1}{2(1-\eta)} \bigg\rceil|X|^{6\epsilon}$.
	\end{proof}
~\\
The main motivation for considering the set $\mathcal{E}_0(~_n\mathcal{R}')$ is observed in the following proposition.
\begin{proposition}
\label{proposition - aux - 4}
	Suppose that $W \in X^*$ is a $(\lambda, c)$-quasi-geodesic word in $\Gamma(H, X)$ containing an $(\epsilon, \eta)$-subword and $\|W\|\leq n$. Then the word $W'=ShortLex_H(W)$ contains a subword from  $\mathcal{E}_0(~_n\mathcal{R}')$ which is also a $(3\epsilon, \eta')$-subword.
\end{proposition}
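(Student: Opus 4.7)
My approach is to chain together three observations: transfer the $(\epsilon,\eta)$-subword from $W$ across to $W'$ using a Hausdorff-distance bound, trade length inside the $R$-piece for an element of $~_n\mathcal{R}'$ via the decomposition \eqref{inequality-8.1}, and then invoke closure of ShortLex normal forms under taking subwords to land in $\mathcal{E}_0(~_n\mathcal{R}')$.

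First I would unpack the hypothesis using Observation 8.1 to fix some $R \in ~_n\mathcal{R}$, words $T_1, T_2 \in X^*$ with $\|T_1\|, \|T_2\| \leq \epsilon$, and a splitting $R = U\bar{U}$ with $\|U\| \geq \eta\|R\|$ so that the given $(\epsilon,\eta)$-subword $V$ of $W$ satisfies $V =_H T_1^{-1} U T_2$. Applying Proposition \ref{proposition - aux 1} directly to $W$ and $V$ produces a subword $V^* \subseteq V \subseteq W$ with $V^* =_H E_1^{-1} \hat{U}_i^j E_2$, where $\hat{U}_i^j \in ~_n\mathcal{R}'$ (the indices $i, j$ being selected so that the removed two-block $U_i^{j-1}U_i^j$ covers $\bar{U}$ after, if necessary, cyclically rotating $R$ to its representative in $~_n\tilde{\mathcal{R}}$) and $\|E_1\|, \|E_2\| \leq 2\epsilon$. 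By the inequality $\|\hat{U}_i^j\| \geq (3\eta-2)\|R\|$ from the proof of Proposition \ref{proposition - aux 1} (and compatible with \eqref{inequality-8.1}), $V^*$ is a $(2\epsilon, 3\eta-2)$-subword of $W$.

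Next I would transfer $V^*$ across to $W' = ShortLex_H(W)$: since $W$ and $W'$ are $(\lambda,c)$-quasi-geodesic and geodesic respectively with the same endpoints, Corollary \ref{corollary on hausdorff distance between quasi-geodesics} places the two endpoints of the subpath labeled $V^*$ within $R_{\lambda,c} + 2\delta$ of the path labeled $W'$, yielding a subword $V' \subseteq W'$ with $V' =_H E_1'^{-1} V^* E_2'$ and $\|E_1'\|, \|E_2'\| \leq R_{\lambda,c} + 2\delta$. Composing the two identities gives $V' =_H (E_1' E_1)^{-1} \hat{U}_i^j (E_2 E_2')$ with each side word of length at most $2\epsilon + R_{\lambda,c} + 2\delta < 3\epsilon$ by LPP; combined with the lower bound $\|\hat{U}_i^j\| \geq (3\eta-2)\|R\|$, this makes $V'$ a $(3\epsilon, 3\eta-2)$-subword of $W'$.

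To conclude, I would apply the standard fact that every subword of a ShortLex normal form is itself in ShortLex normal form: otherwise substituting a strictly shorter, or equal-length lex-earlier, representative of $V'$ inside $W' = A V' B$ would contradict the minimality of $W'$. Hence $V' \equiv ShortLex_H\bigl((E_1'E_1)^{-1}\hat{U}_i^j(E_2 E_2')\bigr)$, which is by definition an element of $\mathcal{E}_0(~_n\mathcal{R}')$. I expect the main (and only real) obstacle to be the additive bookkeeping of side-word lengths through the two Hausdorff transfers; the point is that the LPP ordering $\epsilon \succ \mu \succ \rho$, via the absorption $R_{\lambda,c}+2\delta < \epsilon$ already exploited in Proposition \ref{proposition - aux 1}, is designed precisely so that the accumulated error fits inside the $3\epsilon$ budget dictated by the definition of $\mathcal{E}_0(~_n\mathcal{R}')$.
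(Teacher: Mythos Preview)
Your proof is correct and follows essentially the same approach as the paper: apply Proposition~\ref{proposition - aux 1} to obtain a $(2\epsilon,\eta')$-subword of $W$, transfer it to $W'$ via the Hausdorff bound from Corollary~\ref{corollary on hausdorff distance between quasi-geodesics} (using $R_{\lambda,c}+2\delta<\epsilon$ by LPP), and conclude via closure of ShortLex forms under subwords. Your initial unpacking of the $(\epsilon,\eta)$-hypothesis is redundant given that Proposition~\ref{proposition - aux 1} already handles it, but this does no harm.
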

\begin{proof}
First of all, note that, by Proposition \ref{proposition - aux 1}, $W$ contains a $(2\epsilon, \eta')$-subword, say, $V$.

	Let us consider a bigon in $\Gamma(H, X)$ with boundary $pq^{-1}$ such that $lab(p)=W$ and $lab(q)=W'$. Also, let $p_1$ be a subpath on $p$ such that $lab(p_1)=V$. By Corollary \ref{corollary on hausdorff distance between quasi-geodesics}, the Hausdorff distance between $p$ and $q$ is bounded from above by $2\delta+R_{\lambda, c}$. Therefore, since $\epsilon>2\delta+R_{\lambda, c}$, we get that there is a subpath $q_1$ on $q$ such that $d((p_1)_-, (q_1)_-), d((p_1)_+, (q_1)_+) < \epsilon$, which implies that $lab(q_1)$ is a $(3\epsilon, \eta')$-subword of $W'$.
	
	Now, since all subword of a word in ShortLex form are also in ShortLex form, combining with Observation 8.1, we get that $lab(q_1) \in \mathcal{E}_0(~_n\mathcal{R}')$.
\end{proof}
~\\
~\\
Now let us describe the \texttt{$(\lambda, c, \epsilon, \eta)$-cyclic-reduction}  algorithm.\\
\subsubsection{Description of \texttt{$(\lambda, c, \epsilon, \eta)$-cyclic-reduction}}
\label{subsubsection-main-algorithm}
~\\
\underline{Input/Output.} As an input the algorithm receives a labeled circle $\sigma$ with $lab(W)\in X^*$ and outputs a word $W'$ such that $W' \sim_{conj} W$ in $G$ and $W'$ is cyclically $(\lambda, c, \epsilon, \eta)$-reduced. Let $\|W\|=n$.\\
~\\
\underline{Step 0.} Compute $W_0$ such that $W_0$ is cyclically $8\delta+1$-local geodesic (hence, $W_0$ is $(\lambda, c)$-quasi-geodesic in $\Gamma(H, X)$) and $W_0 \sim_{conj} W$ in $H$.

Let $\sigma_0$ be a labeled circle such that $lab(\sigma_0)=W_0$.\\
~\\
\underline{Step 1.}
		If $\|\sigma_0\| \geq 2\tilde{L}_n$, then  partition $\sigma_0$ by points $A_1, A_2, \ldots, A_s \in \sigma_0$ such that $\overrightarrow{d}(A_i, A_{i+1})=\tilde{L}_n$ for $1\leq i < s$ and $\overrightarrow{d}(A_s, A_1) \leq \tilde{L}_n$. Then $s=\lfloor \frac{\|\sigma_0\|}{\tilde{L}_m} \rfloor +1$.\\
		Otherwise, if  $\|\sigma_0\| \leq 2\tilde{L}_n$ take $A_1 \in \sigma_0$ arbitrarily and define $A_2\in \sigma_0$ as the opposite to $A_1$ point on $\sigma_0$ in the sense that $\overrightarrow{d}(A_1, A)=\overrightarrow{d}(A, A_1)\pm 1$.
		~\\
		Include the elements $A_1, A_2, \ldots, A_s$ in a list of special points which we simply call \texttt{List}.\\
~\\
Let us save the value of $\sigma_0$ in a special variable $\sigma'$ which is by itself a labeled circle.\\

Now for all elements $A \in \texttt{List}$ do the procedure of Step 2 as follows.\\
~\\
\underline{Step 2.}  If $\|\sigma'\| < 2\tilde{L}_n$, then consider the points $B_1, B_2 \in \sigma'$ such that $B_1=B_2$ and $\overrightarrow{d}(B_1, A)=\overrightarrow{d}(A, B_1)\pm 1$ (thus $B_1$ is the opposite vertex point of $A$ on $\sigma'$). Otherwise, if $\|\sigma'\| \geq 2\tilde{L}_n$ choose $B_1, B_2 \in \sigma'$ such that $\overrightarrow{d}(B_1, A) =\overrightarrow{d}(A, B_2)=\tilde{L}_n$. Then go to Step 2.1 as follows.\\
~\\
\underline{Step 2.1.}  
Compute $W_A \stackrel{\text{def}} = ShortLex (lab[B_1, B_2])$ and go to Step 2.2.\\
~\\
\underline{Step 2.2.}  
Search for a subword from $\mathcal{E}_0(~_n\mathcal{R}')$ in $W_A$ using Aho-Corasick's string search algorithm. (A formal description of Step 2.2 via pseudo-code is given in Algorithm \ref{algorithm_Aho_Corasick}).\\ 

 If such a subword is not found, then conclude that  $[B_1, B_2]$ does not contain a $(\epsilon, \eta)$-subword and go to Step 2.2.1 as follows, otherwise go to Step 2.2.2.\\
~\\
\underline{Step 2.2.1.} Remove $A$ from \texttt{List}. Then, if \texttt{List} is not empty, choose another point from \texttt{List} and return to Step 2 with the chosen point as the input. Otherwise, return $lab(\sigma')$ and halt.\\
~\\
\underline{Step 2.2.2.} Suppose that $W_A$ contains a subword from $\mathcal{E}_0(~_n\mathcal{R}')$ of the form\\ $ShortLex (T_1^{-1} \hat{U}_i^j T_2) $. Then, 
\begin{enumerate}
	\item In $W_A$ replace the subword $ShortLex (T_1^{-1} \hat{U}_i^j T_2) $ with the word $T_1^{-1} U_i^{j-1}U_i^j T_2$. Denote the new word by $W'_A$,\\
	\item Compute $W''_A \stackrel{\text{def}}= ShortLex(W_A')$ and replace the arc $[B_1, B_2]$ of $\sigma'$ with a new arc $[B_1', B_2']$ such that $lab([B_1', B_2'])=W''_A$,\\
	\item Change the value of $\sigma'$ by prescribing to it the newly obtained labeled circle,\\
	\item Add the points $B_1'$ and $B_2'$ to \texttt{List},\\
	\item If the point $A$ was the $i$-th point which was checked in Step 2, then mark the arc $[B_1', B_2']$ of $\sigma'$ with  $i$,\\
	\item Go to Step 2.2.3 as follows.
\end{enumerate}
~\\
\underline{Step 2.2.3.} Apply the \texttt{$(\lambda, c)$-Smoothing} algorithm with inputs $lab(\sigma'), B_1', B_2'$ and then mark all the newly obtained edges during the process of running\\ \texttt{$(\lambda, c)$-Smoothing}$(lab(\sigma'), B_1', B_2')$  with $i$. Save the newly obtained labeled circle again in the variable $\sigma'$.\\
~\\
\underline{Step 2.2.4.} Suppose that the new labeled circle $\sigma'$, obtained after Steps 2.2.2 and  2.2.3, has an arc marked with $i$ which is bounded between some points $O, O'\in \sigma'$ (the fact that the edges marked with $i$ form an arc follows from Observation 8.1). Then partition the arc $[O, O']$ with the points $O_1, \ldots, O_t \in [O, O']$ such that 
$O_1=O$, $O_2=O'$ and for $1\leq i <t$, $\overrightarrow{d}(O_i, O_{i+1}) = \tilde{L}_n$ and $\overrightarrow{d}(O_{t-1}, O_{t}) \leq \tilde{L}_n$. 

Add the points $O_1, \ldots, O_t$ to \texttt{List} and then choose another point from \texttt{List} and go to Step 2 with the chosen point as the input.\\
~\\
\begin{figure}[H]
					\centering
					\includegraphics[clip, trim=0.2cm 9cm .73cm 11.5cm, width=1\textwidth]{{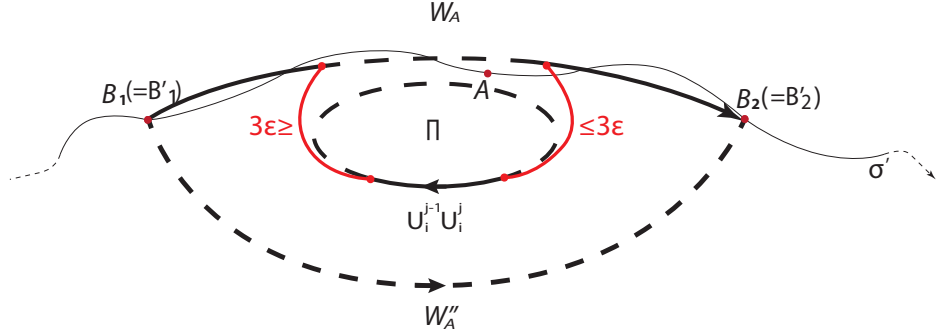}} 
					\caption{Graphical explanation of Steps 2-2.2.4 of \texttt{$(\lambda, c, \epsilon, \eta)$-cyclic-reduction} algorithm in case when the result of search in Step 2.2 is positive. In the figure $W_A$ is the ShortLex for of the label of the arc $[B_1, B_2]$ which gets replaced with a shorter arc labeled with label $W_A''$ as in Step 2.2.2.}

					\label{fig:  for break points}
				\end{figure}
Below we give a more formal description of Step 2.2 based on Aho-Corasick's famous  algorithm (see \cite{aho corasick} for the description of the algorithm) for multiple string search.

\begin{algorithm}
\caption{Searching for $(3\epsilon, \eta')$-arcs}
\label{algorithm_Aho_Corasick}
\begin{algorithmic}[1]
\State $\textbf{Input:} ~W\in X^*, ~\mathcal{E}_0(~_n\mathcal{R}')$. \Comment{\textit{$W$ is given in its $ShortLex_H$ form.}}
\State $\textbf{Output:}\text{~An $(3\epsilon, \eta')$-subword of $W$ if such a subword exists and $0$ otherwise} .$
\State Apply Aho-Corasick's string searching algorithm to find all subwords of $W$ from $~\mathcal{E}_0(~_n\mathcal{R}')$ \Comment{\textit{For the description of Aho-Corasick's algorithmic see} \cite{aho corasick}.}
\If {at least one such subword is found} 
\State \textbf{return } one of the found subwords \Comment{\textit{By  definition, this subword will be a $(3\epsilon, \eta')$-subword.} }
\Else
\State \textbf{return } $0$
\EndIf
\end{algorithmic}
\end{algorithm}				
~\\
\subsubsection{Time complexity of  \texttt{$(\lambda, c, \epsilon, \eta)$-cyclic-reduction}}

First, we will estimate the time that \texttt{$(\lambda, c, \epsilon, \eta)$-cyclic-reduction} algorithm spends on Step 2.
To this end suppose $\sigma_1, \sigma_2, \ldots, \sigma_s$ are the circles which Step 2 outputs in cases when it changes the input circle (i.e. when on Step 2.2 algorithm finds a subword from $\mathcal{E}_0(~_n\mathcal{R}')$). According to Step 2.2.2 (5), this means that for each $1\leq i \leq s$, $\sigma_i$ contains an arc whose edges are marked by $i$ and $\sigma_i$ does not contain edges marked with $i+1$. Let us denote this arc by $q_i$ and suppose that $q_i$ replaced an arc $p_i$ of $\sigma_{i-1}$.


Note that on Step 2.2.2 the algorithm replaces an arc $[B_1, B_2]$ with an arc $[B_1', B_2']$. For sparse enough standard parameters $\lambda, c, \epsilon, \mu, \rho$ we have $\|[B_1', B_2']\| < \|[B_1, B_2]\|$. Therefore, after each run of Steps 2-2.2.2 either the number of elements in \texttt{List} is decreasing or the newly obtained circle is shorter and hence the algorithm eventually halts. On the other hand, it follows from Observation 8.2 and Proposition \ref{proposition - aux - 4} that the output circle of \texttt{$(\lambda, c, \epsilon, \eta)$-cyclic-reduction} does not contain an $(\epsilon, \eta)$-arc.
\begin{lemma}
\label{shortening-coefficient}
	$\frac{\|[B'_1, B'_2]\|}{\|[B_1, B_2]\|} \leq \Lambda_0<1$, where $\Lambda_0=\Lambda_0(\lambda, c, \epsilon, \eta, \frac{L_n}{l_n})$ is a constant depending only on $\lambda, c, \epsilon, \eta$ and $ \frac{L_n}{l_n}$.
\end{lemma}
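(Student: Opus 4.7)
The plan is to analyze a single execution of Step 2.2.2 and show that the substitution strictly shortens the arc by a multiplicative factor bounded away from 1. Setting $L_{\mathrm{old}} = \|ShortLex(T_1^{-1}\hat{U}_i^j T_2)\|$ and $L_{\mathrm{new}} = \|T_1^{-1} U_i^{j-1} U_i^j T_2\|$, the algorithm replaces the first word by the second inside $W_A = ShortLex(lab([B_1,B_2]))$ and then reduces again by ShortLex. Thus
\[
\|[B'_1, B'_2]\| = \|W''_A\| \leq \|W_A\| - (L_{\mathrm{old}} - L_{\mathrm{new}}) \leq \|[B_1,B_2]\| - (L_{\mathrm{old}} - L_{\mathrm{new}}),
\]
so everything reduces to a sufficiently strong lower bound on $L_{\mathrm{old}} - L_{\mathrm{new}}$ in terms of $\|R_i\|$, together with the trivial upper bound $\|[B_1,B_2]\| \leq 2\tilde{L}_n$ coming from Step 2.

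Upper-bounding $L_{\mathrm{new}}$ is straightforward: by the triangle inequality and the piece-length estimate in \eqref{inequality-8.1}, $L_{\mathrm{new}} \leq 6\epsilon + \|U_i^{j-1} U_i^j\| \leq 6\epsilon + 3(1-\eta)\|R_i\|$. The main work will be the matching lower bound on $L_{\mathrm{old}} = |T_1^{-1}\hat{U}_i^j T_2|_H \geq |\hat{U}_i^j|_H - 6\epsilon$. Here I will decompose $R_i = AMB$ with $A = U_i^1 \cdots U_i^{j-2}$, $M = U_i^{j-1} U_i^j$, $B = U_i^{j+1} \cdots U_i^{s_i}$, so that $\hat{U}_i^j$ is the word $AB$. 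The key observation, using right-translation invariance of $d_H$, is that $d_H(AMB, AB) = d_H(MB \cdot B^{-1},\, B \cdot B^{-1}) \cdot$ -- actually $d_H(AMB, AB) = |M|_H \leq \|M\|$. Combining this with the fact that $R_i$ is $(\lambda,c)$-quasi-geodesic in $H$ (which gives $|R_i|_H \geq (\|R_i\|-c)/\lambda$) yields via one triangle inequality
\[
|\hat{U}_i^j|_H \;\geq\; |R_i|_H - |M|_H \;\geq\; \tfrac{\|R_i\|-c}{\lambda} - 3(1-\eta)\|R_i\|.
\]

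Subtracting, $L_{\mathrm{old}} - L_{\mathrm{new}} \geq \|R_i\|\bigl(\tfrac{1}{\lambda} - 6(1-\eta)\bigr) - \tfrac{c}{\lambda} - 12\epsilon$. The condition \eqref{condition aa}, rearranged as $3(1-\eta) < (2\eta - 3/2)/\lambda$, forces $K := \tfrac{1}{\lambda} - 6(1-\eta) > 0$ and depends only on $\lambda$ and $\eta$. By LPP $\rho$ is large enough that $\|R_i\| \geq \rho$ absorbs the additive $c/\lambda + 12\epsilon$ terms, leaving $L_{\mathrm{old}} - L_{\mathrm{new}} \geq \tfrac{K}{2}\|R_i\| \geq \tfrac{K}{2}l_n$. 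Inserting this and the bound $\|[B_1, B_2]\| \leq 2\tilde{L}_n \leq C\lambda\eta L_n$ (with $C$ absolute) produces
\[
\frac{\|[B'_1, B'_2]\|}{\|[B_1, B_2]\|} \;\leq\; 1 - \frac{K\, l_n/2}{C\,\lambda\eta\,L_n} \;=\; 1 - \frac{K}{2C\lambda\eta \cdot (L_n/l_n)},
\]
which is the desired $\Lambda_0 = \Lambda_0(\lambda, c, \epsilon, \eta, L_n/l_n) < 1$.

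I expect the main obstacle to be the lower bound on $|\hat{U}_i^j|_H$, because $\hat{U}_i^j$ is the concatenation of a prefix and a suffix of $R_i$ with a middle chunk excised, so it is not itself a subword of a quasi-geodesic and has no obvious direct quasi-geodesic estimate. The right-translation identity $d_H(AMB, AB) = |M|_H$ is precisely what converts the quasi-geodesicity of the whole relator $R_i$ into a usable estimate on the truncated piece $AB$, and the interplay of this estimate with condition \eqref{condition aa} is what ultimately produces a shortening constant.
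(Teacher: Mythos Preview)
Your overall strategy matches the paper's: bound $\|[B_1',B_2']\| \le \|[B_1,B_2]\| - (L_{\mathrm{old}} - L_{\mathrm{new}})$, estimate $L_{\mathrm{new}} \le 6\epsilon + 3(1-\eta)\|R_i\|$ from \eqref{inequality-8.1}, and then divide by $\|[B_1,B_2]\| = 2\tilde L_n$. The gap is exactly where you predicted it would be---the lower bound on $|\hat U_i^j|_H$---but your fix is wrong. The word metric $d_H$ on the Cayley graph is \emph{left}-invariant only; right translation is not an isometry. With $d_H(g,h)=|g^{-1}h|_H$ one gets
\[
d_H(AMB,\,AB)=|(AMB)^{-1}AB|_H=|B^{-1}M^{-1}B|_H,
\]
the length of a conjugate of $M$, which in a hyperbolic group can be arbitrarily large compared to $|M|_H$ (think of $b^{-n}ab^{n}$ in a free group). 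So the claimed identity $d_H(AMB,AB)=|M|_H$ is false, and the inequality $|\hat U_i^j|_H \ge |R_i|_H - \|M\|$ does not follow.

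The repair is simpler than the detour you attempted. The set $\mathcal R$ is symmetric, so every cyclic shift of $R_i$ lies in $\mathcal R$ and is therefore $(\lambda,c)$-quasi-geodesic by condition (1.2). The word $\hat U_i^j$ is precisely the subword of the cyclic shift of $R_i$ obtained by deleting the block $U_i^{j-1}U_i^j$ (this is how $\hat U_i^j$ is actually used---see the proof of Proposition~\ref{proposition - aux 1}, where it is asserted to be a subword of $U$, itself a subword of some $R\in{}_n\mathcal R$). Hence $\hat U_i^j$ is itself a subword of a $(\lambda,c)$-quasi-geodesic, and directly
\[
|\hat U_i^j|_H \;\ge\; \frac{\|\hat U_i^j\|-c}{\lambda}\;\ge\;\frac{(2\eta-1)\|R_i\|-c}{\lambda},
\]
using \eqref{inequality-8.1}. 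Plugging this into your framework and invoking condition \eqref{condition aa} (which gives exactly $\frac{2\eta-1}{\lambda} - 3(1-\eta) > \frac{1}{2\lambda}$ after absorbing the $12\epsilon + c/\lambda$ via LPP) completes the proof, and this is what the paper does.
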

\begin{proof}
Indeed, direct computations show
\begin{align*}
	\|[B'_1, B'_2]\| &\leq \|[B_1, B_2]\| - |T_1^{-1} \hat{U}_i^j T_2| +\|T_1^{-1} {U}_i^{j-1}{U}_i^j T_2\|\\
	&\leq^{\text{we use \eqref{inequality-8.1}}} 2\tilde{L}_n - \frac{(2\eta-1)\|R_i\|}{\lambda}+12\epsilon+3(1-\eta)\|R_i\|\\
	&\leq^{\text{by LPP}} 2\tilde{L}_n-\frac{(2\eta-3/2+3\lambda(1-\eta))\|R_i\|}{\lambda} <^{\text{by \eqref{condition aa}}}2\tilde{L}_n.
\end{align*}
Therefore, we get that $\frac{\|[B'_1, B'_2]\|}{\|[B_1, B_2]\|}$ is of the forms described in the statement of the lemma.
\end{proof}
\begin{lemma}
\label{shortening-coefficient-2}
	For each $1\leq i \leq s$, $\frac{\|q_i\|}{\|p_i\|} \leq \Lambda<1$, where $\Lambda=\Lambda(\lambda, c, \epsilon, \eta, \frac{L_n}{l_n})$ is a constant depending only on $\lambda, c, \epsilon, \eta$ and $ \frac{L_n}{l_n}$.
\end{lemma}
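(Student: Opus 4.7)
\textit{Proof proposal.} The plan is to write $\|q_i\|$ and $\|p_i\|$ as corrections to the bounds in Lemma \ref{shortening-coefficient}, and then to show that the correction produced by the $(\lambda, c)$-smoothing of Step 2.2.3 is uniformly bounded. Let $S_3$ denote the total length of the edges newly created by the smoothing; they are precisely the edges added to the marked region in that step, hence sit inside $q_i$. Since smoothing only shortens the circle, $\|q_i\| \leq \|[B_1', B_2']\| + S_3$ and $\|p_i\| \geq \|[B_1, B_2]\|$. Combined with Lemma \ref{shortening-coefficient} this gives
\[
\frac{\|q_i\|}{\|p_i\|} \;\leq\; \Lambda_0 + \frac{S_3}{\|[B_1, B_2]\|}.
\]
The proof then reduces to two substeps: first, bound $S_3$ by a constant $C_S = C_S(\delta, |X|)$; second, show that $C_S/\|[B_1, B_2]\| < (1-\Lambda_0)/2$, so that $\Lambda := (1+\Lambda_0)/2 < 1$ meets the required dependence.

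For the first substep, I would verify by induction on $i$ that $\sigma_{i-1}$ is cyclically $(8\delta+1)$-local geodesic: Step 0 supplies the base case, and the inductive step uses that $[B_1', B_2']$ is in $ShortLex_H$ form (hence geodesic) together with the output guarantee of $(\lambda, c)$-smoothing at its break points. Any non-geodesic $(8\delta+1)$-subarc of $\sigma'$ at the start of Step 2.2.3 must therefore meet a junction $B_1'$ or $B_2'$, so the total geodesic deficit of $\sigma'$ is concentrated in the $(8\delta+1)$-neighborhoods of these two points and is bounded by a function of $\delta$ alone. Because each smoothing reduction shortens the circle by at least one edge and spawns new break points within distance $8\delta+1$ of previous ones, the bookkeeping argument underlying Lemma \ref{lemma-smoothing-complexity} bounds the number of reductions, and hence $S_3$, by some $C_S = C_S(\delta, |X|)$.

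For the second substep, split into two cases. In the short case $\|\sigma'\| < 2\tilde L_n$, we have $[B_1, B_2] = \sigma_{i-1}$ with no neighboring region to absorb, and the bound $\|q_i\|/\|p_i\| \leq \Lambda_0$ follows directly. In the long case $\|\sigma'\| \geq 2\tilde L_n$, we have $\|[B_1, B_2]\| = 2\tilde L_n$ of order $\lambda L_n$, while the computation in the proof of Lemma \ref{shortening-coefficient} furnishes $1 - \Lambda_0 \geq \Omega(\|R_i\|/\|[B_1, B_2]\|) \geq \Omega(l_n/(\lambda L_n))$. Both $S_3/\|[B_1, B_2]\|$ and $1 - \Lambda_0$ scale with $1/L_n$, so their ratio is essentially $C_S/l_n$, which is at most $1/2$ by LPP since $l_n \geq \rho$ and $\rho$ can be chosen large relative to $\lambda C_S$.

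The main obstacle is the bookkeeping in the first substep: rigorously tracking how the set of break points evolves under iterated smoothing and verifying that the modified region cannot propagate into $\sigma_{i-1} \setminus [B_1, B_2]$ by more than a constant depending only on $\delta$ and $|X|$. This rests on the inductively maintained $(8\delta+1)$-local geodesicity of $\sigma_{i-1}$, and some care is required to confirm that this property is in fact preserved by every iteration of \texttt{$(\lambda, c, \epsilon, \eta)$-cyclic-reduction}—in particular that smoothing with the two break points $B_1', B_2'$ outputs a circle which is locally geodesic not only near these break points but, thanks to the induction hypothesis, everywhere.
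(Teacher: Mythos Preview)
Your approach differs substantially from the paper's and contains a real gap. The paper's argument is essentially a one-liner: both the replacement $[B_1,B_2]\to[B_1',B_2']$ (ratio $\le\Lambda_0$ by Lemma~\ref{shortening-coefficient}) and each individual smoothing step (ratio $\le 8\delta/(8\delta+1)$, since an arc of length $8\delta+1$ is replaced by one of length $\le 8\delta$) are local shortenings with ratio at most $\Lambda:=\max\{\Lambda_0,\,8\delta/(8\delta+1)\}$, and this bound is inherited by the composite $p_i\to q_i$. A short induction on the smoothing steps, tracking marked versus unmarked length, makes this rigorous; the paper writes $\min$ where $\max$ is clearly intended.

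Your route instead tries to bound $S_3$ absolutely by a constant $C_S(\delta,|X|)$. The appeal to Lemma~\ref{lemma-smoothing-complexity} does not deliver this: that lemma bounds the work by $f_S(\delta,|X|)(K+d)$, where $d$ is the total length lost to smoothing, and $d$ is \emph{not} bounded by a function of $\delta$ alone. Concretely, the junction at $B_1'=B_1$ glues a quasi-geodesic arc (the part of $\sigma_{i-1}$ outside $[B_1,B_2]$) to the geodesic arc $[B_1',B_2']$, and nothing prevents their Gromov product at $B_1$ from being large---there is no relation forced between the first letters of $W_A''=ShortLex_H(W_A')$ and the letters of $\sigma_{i-1}$ just before $B_1$. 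In that case smoothing eats arbitrarily far into both sides, making $d$ and the number of reductions large. Your ``geodesic deficit concentrated in the $(8\delta+1)$-neighborhoods'' heuristic does not rule this out: the initial non-geodesic spots are indeed localized, but each reduction can spawn new ones, and the cascade can be long. One could try to argue that even when $d$ is large the \emph{surviving} smoothing edges $S_3$ stay bounded (since later reductions consume earlier new arcs), but you do not carry this out, and it is precisely the bookkeeping you yourself flag as the main obstacle. The paper sidesteps the entire issue by never separating the smoothing contribution from the main replacement: treating every atomic replacement uniformly gives the ratio bound directly, with no need to control how far smoothing propagates.
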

\begin{proof}
	$q_i$ is obtained from $p_i$ after replacing arcs of $p_i$ after applying Step 2.2.2. (5) and replacing arcs of  lengths $8\delta+1$ with shorter arcs after applying Step 2.2.3.
	Therefore, taken into account Lemma \ref{shortening-coefficient}, $\Lambda$ can be taken as $\Lambda=\min\{\Lambda_0, 8\delta/(8\delta+1) \}$, where $\Lambda_0$ is defined as in Lemma \ref{shortening-coefficient}.
\end{proof}
\begin{corollary}
\label{a main corollary}
	$\sum_{i=1}^s \|q_i\| \leq \frac{\Lambda}{1-\Lambda}n$.
\end{corollary}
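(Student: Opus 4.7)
The plan is to use a charging/amortization argument on the successful iterations of Step 2.2. By Lemma \ref{shortening-coefficient-2}, whenever Step 2.2.2 modifies the circle, the replacement arc $q_i$ satisfies $\|q_i\| \leq \Lambda \|p_i\|$, where $p_i$ is the arc of $\sigma_{i-1}$ that it replaces. So $\sum_i \|q_i\| \leq \Lambda \sum_i \|p_i\|$, and the task reduces to bounding $\sum_i \|p_i\|$.

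The key idea is to classify the edges of each $p_i$ by their origin. Every edge of $p_i$ is either an edge that was already present in $\sigma_0$ (an \emph{original} edge), or an edge that was created by some earlier successful iteration $q_j$ with $j < i$ (a \emph{generation-$j$} edge). Writing $x_{i,0}$ for the total length of original edges in $p_i$ and $x_{i,j}$ for the total length of generation-$j$ edges in $p_i$, we have
\[
\|p_i\| = x_{i,0} + \sum_{j < i} x_{i,j}.
\]

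The crucial observation is that Step 2.2.2 physically deletes the arc $p_i$ from the current circle; hence every edge contained in $p_i$ is gone from $\sigma_i$ and cannot occur inside any later $p_k$. Consequently, each original edge contributes to at most one $x_{i,0}$ and each edge of $q_j$ contributes to at most one $x_{i,j}$ with $i > j$. This yields the two basic inequalities
\[
\sum_{i=1}^{s} x_{i,0} \leq \|\sigma_0\| \leq n, \qquad \sum_{i > j} x_{i,j} \leq \|q_j\|, \quad j = 1, \ldots, s.
\]
Setting $S = \sum_{i=1}^{s}\|q_i\|$ and summing,
\[
S \leq \Lambda \sum_{i=1}^{s} \|p_i\| = \Lambda\!\left( \sum_{i=1}^{s} x_{i,0} + \sum_{j=1}^{s} \sum_{i > j} x_{i,j} \right) \leq \Lambda(n + S),
\]
which rearranges to the desired bound $S \leq \frac{\Lambda}{1 - \Lambda} n$.

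The main obstacle I expect is a careful bookkeeping argument ensuring that the ``at most once'' charging is actually valid — in particular, that the edges produced by Step 2.2.3 (the \texttt{$(\lambda, c)$-smoothing} substep) are correctly counted as generation-$i$ edges, and that the partitioning of $[O, O']$ into new break points in Step 2.2.4 does not cause an edge of $q_j$ to be consumed twice. These facts follow from the marking scheme in Step 2.2.2 (5) together with Observation~8.1 (the edges of a fixed mark form a connected arc), but they need to be checked explicitly to make the above charging rigorous.
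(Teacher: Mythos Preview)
Your proof is correct and is essentially the same argument as the paper's: both rely on Lemma~\ref{shortening-coefficient-2} and the observation that each edge is consumed by at most one replacement, leading to the same geometric-series bound. The paper expresses this as $\sum_i \|q_i\| \leq \Lambda n + \Lambda^2 n + \cdots = \frac{\Lambda}{1-\Lambda}n$, while you unwind the same idea into the self-referential inequality $S \leq \Lambda(n+S)$; your version is in fact more carefully justified, and the bookkeeping concerns you flag are handled exactly as you anticipate by the marking scheme and Observation~8.1.
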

\begin{proof}
	Indeed, since by Lemma \ref{shortening-coefficient-2}, $q_i$, $1\leq i \leq s$ are obtained by replacing an arc $p_i$ of $\sigma_{i-1}$ of lengths at least $\|q_i\|/\Lambda$, we get that 
	\begin{align*}
		\sum_{i=1}^s \|q_i\|\leq \Lambda n+ \Lambda^2 n + \ldots =  \frac{\Lambda}{1-\Lambda}n.
	\end{align*}
\end{proof}
\begin{lemma}
\label{a main lemma}	
	During the run of \texttt{$(\lambda, c, \epsilon, \eta)$-cyclic-reduction} algorithm, the total number of points added to \texttt{List} (See Steps 1, 2.2.2 (5) and 2.2.4) is bounded from above by
	\begin{align*}
		\frac{2}{(1-\Lambda)\tilde{L}_n}n.
	\end{align*}
	Hence Step 2 of \texttt{$(\lambda, c, \epsilon, \eta)$-cyclic-reduction} algorithm is being called not more than $\frac{2}{(1-\Lambda)\tilde{L}_n}n$ times for input of lengths $n$.
\end{lemma}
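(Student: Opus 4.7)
The plan is to count the contributions to \texttt{List} from the three sources (Step~1, Step~2.2.2\,(5), and Step~2.2.4) separately and then to combine them using the geometric decay already captured by Lemma~\ref{shortening-coefficient-2} and Corollary~\ref{a main corollary}. Let $s$ denote the number of \emph{productive} invocations of Step~2, i.e.\ those exiting through Step~2.2.2 rather than Step~2.2.1.

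First, Step~1 creates at most $\lfloor n/\tilde{L}_n\rfloor+1$ initial break points. Second, each productive call adds the two points $B_1',B_2'$ in Step~2.2.2\,(5) together with the Step~2.2.4 partition of the marked arc $q_i$ at spacing $\tilde{L}_n$, contributing at most $\lceil\|q_i\|/\tilde{L}_n\rceil+1$ additional points. Consequently the total number of points ever inserted in \texttt{List} is bounded by
\begin{equation*}
\Bigl\lfloor\tfrac{n}{\tilde{L}_n}\Bigr\rfloor+1+\sum_{i=1}^{s}\Bigl(\tfrac{\|q_i\|}{\tilde{L}_n}+3\Bigr).
\end{equation*}
Corollary~\ref{a main corollary} immediately gives $\sum_i\|q_i\|/\tilde{L}_n\le\frac{\Lambda n}{(1-\Lambda)\tilde{L}_n}$, and combined with the initial-point count this produces the leading term $\frac{n}{(1-\Lambda)\tilde{L}_n}$.

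What remains is to absorb the remaining $O(s)$ term into this leading order. Here I would use Lemma~\ref{shortening-coefficient-2}: each productive step shortens the current circle by $\|p_i\|-\|q_i\|\ge(1-\Lambda)\|p_i\|$, and, because the length-$2\tilde{L}_n$ arc $[B_1,B_2]$ processed in Step~2 is contained in $p_i$, one has $\|p_i\|\ge \min(2\tilde{L}_n,\|\sigma'\|)$. Since the total shortening across all productive steps satisfies $\sum_i(\|p_i\|-\|q_i\|)\le\|\sigma_0\|\le n$, we obtain $\sum_i\|p_i\|\le n/(1-\Lambda)$ and hence $s\le n/(2(1-\Lambda)\tilde{L}_n)$ apart from an $O(1)$ correction absorbing the at-most-one iteration processed on a circle of length below $2\tilde{L}_n$. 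Plugging this bound back in and collecting the two $\frac{n}{(1-\Lambda)\tilde{L}_n}$-order contributions yields the claimed estimate $\frac{2n}{(1-\Lambda)\tilde{L}_n}$. The ``Hence'' statement then follows because each execution of Step~2 consumes exactly one element of \texttt{List}, so the number of executions of Step~2 is no larger than the number of elements ever added.

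The main obstacle I anticipate is the precise book-keeping at the boundary of the marked arcs and the accounting of the short-circle tail: the endpoints $B_1',B_2'$ produced in Step~2.2.2\,(5) can coincide with partition endpoints $O_1,O_t$ created in Step~2.2.4, and the smoothing in Step~2.2.3 may enlarge the marked region $q_i$ slightly beyond $[B_1',B_2']$; both effects influence the precise constant. Making these overlaps and boundary cases explicit is what turns the naive accounting (which would give a constant such as $3$ or $5$) into the stated constant $2$.
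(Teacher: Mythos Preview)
Your approach is correct and is a careful unpacking of the paper's one-sentence proof, which merely cites Corollary~\ref{a main corollary} together with the description of the algorithm. Your explicit bound on the number $s$ of productive steps via Lemma~\ref{shortening-coefficient-2}, and your honest accounting of the endpoint contributions, go beyond what the paper writes out; the paper is not careful about the exact constant $2$, and since only the order $n/\bigl((1-\Lambda)\tilde L_n\bigr)$ is used in the downstream corollaries, the discrepancy you flag between $2$ and a naive $5/2$ is not an obstacle to any of the applications.
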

\begin{proof}
It follows directly from the description of \texttt{$(\lambda, c, \epsilon, \eta)$-cyclic-reduction} algorithm and Corollary \ref{a main corollary}.
\end{proof}

\begin{corollary}
\label{cor-1}
	On Step 2.1 in summary ---- spends time bounded from above by $f_1(|X|, \lambda, c, \epsilon, \mu,  \eta, k)n^{1+\upsilon} $, where $f_1: \mathbb{N}^7 \rightarrow \mathbb{N}$ is a  computable function depending only on $\delta$ and $|X|$,  $k$ is the number of elements in $\mathcal{R}$ up to cyclic shifts,  and $\upsilon=0$ if $k=1$, otherwise $\upsilon=1$.
\end{corollary}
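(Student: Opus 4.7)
The plan is to estimate the total cost of Step 2.1 by multiplying (a) the cost of a single invocation with (b) the total number of invocations during the run of \texttt{$(\lambda,c,\epsilon,\eta)$-cyclic-reduction}.

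First I would observe that each call to Step 2.1 applies \texttt{ShortLex} to a word of length at most $2\tilde L_n$ (by the way the points $B_1,B_2$ are chosen on the current circle), so by the linear-time bound \eqref{complexity_of_the_shortlex_form_algorithm} for Shapiro's algorithm each invocation costs at most $2\,f_{\texttt{SL}}(|X|,\delta)\,\tilde L_n$. Next, Lemma \ref{a main lemma} bounds the total number of invocations of Step 2 by $\frac{2n}{(1-\Lambda)\tilde L_n}$, where $\Lambda=\Lambda(\lambda,c,\epsilon,\eta,L_n/l_n)$ is the contraction constant supplied by Lemma \ref{shortening-coefficient-2}. Multiplying the two estimates, the $\tilde L_n$ factors cancel and the total cost of Step 2.1 is at most
\[
\frac{4\,f_{\texttt{SL}}(|X|,\delta)}{1-\Lambda}\cdot n.
\]
Hence it suffices to estimate $1/(1-\Lambda)$ in the two cases.

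In the case $k=1$ the set $\mathcal R$ consists of cyclic shifts of a single word, so $L_n=l_n$ whenever ${}_n\mathcal R\neq\emptyset$. Inspecting the proof of Lemma \ref{shortening-coefficient} (the key point being that the length decrease in one replacement is at least a constant times the length $\|R_i\|\ge l_n$ of the cancelling relator, while the arc being replaced has length $\asymp\tilde L_n\asymp L_n$) gives $\Lambda_0\le 1-\Theta(l_n/L_n)$; with $L_n=l_n$ this forces $1-\Lambda$ to be bounded below by a positive constant depending only on $\lambda,c,\epsilon,\mu,\eta$ and $\delta$. The total cost is therefore $O(n)=O(n^{1+0})$, which is the asserted bound.

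In the case $k\ge 2$ I would use only the universal estimates $l_n\ge\rho$ (guaranteed by the $C'(\lambda,c,\epsilon,\mu,\rho)$-condition on $\mathcal R$) and $L_n\le (\lambda(n+2\epsilon)+c)/\eta$ (from the definition of ${}_n\mathcal R$ together with Observation 8.1), which yield $L_n/l_n=O(n)$ with constant depending on $\lambda,c,\epsilon,\eta,\rho$. The same inspection of Lemma \ref{shortening-coefficient} then gives $1/(1-\Lambda)=O(L_n/l_n)=O(n)$, so the total cost becomes $O(n^2)=O(n^{1+1})$, matching the statement. The main obstacle in this argument is isolating the explicit linear dependence of $1-\Lambda_0$ on $l_n/L_n$ from the proof of Lemma \ref{shortening-coefficient}; once that is done, the dichotomy $k=1$ versus $k>1$ and the resulting $n^{1+\upsilon}$ bound follow immediately from the definitions of $L_n$ and $l_n$ and the chosen standard parameters.
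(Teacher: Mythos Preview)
Your proposal is correct and follows essentially the same approach as the paper: multiply the per-call cost of \texttt{ShortLex} on a word of length at most $2\tilde L_n$ (from \eqref{complexity_of_the_shortlex_form_algorithm}) by the bound on the number of invocations of Step~2 given in Lemma~\ref{a main lemma}, so that the $\tilde L_n$ factors cancel and only $1/(1-\Lambda)$ remains to be controlled. The paper's proof is a one-line citation of exactly these two ingredients; your write-up additionally spells out the $k=1$ versus $k>1$ dichotomy by extracting from the proof of Lemma~\ref{shortening-coefficient} that $1-\Lambda_0$ is bounded below by a constant times $l_n/L_n$, which is the content the paper leaves implicit in its dependence $\Lambda=\Lambda(\lambda,c,\epsilon,\eta,L_n/l_n)$.
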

\begin{proof}
	It follows directly from Lemma \ref{a main lemma} and from the time complexity properties of the \texttt{ShortLex} algorithm. See \eqref{complexity_of_the_shortlex_form_algorithm}.

\end{proof}

\begin{corollary}
\label{cor-2}
	On Step 2.2  in summary the algorithm spends time bounded from above by $f_2(|X|, \lambda, c, \epsilon, \mu,  \eta, k)n^{1+\upsilon} $, where $f_2: \mathbb{N}^7 \rightarrow \mathbb{N}$ is a  computable function depending only on $\delta$ and $|X|$,  $k$ is the number of elements in $\mathcal{R}$ up to cyclic shifts,  and $\upsilon=0$ if $k=1$, otherwise $\upsilon=1$.
\end{corollary}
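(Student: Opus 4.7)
The plan is to decompose the total time spent on Step 2.2 into two pieces: (i) the one-time cost of building the Aho--Corasick automaton for the pattern set $\mathcal{E}_0(~_n\mathcal{R}')$, and (ii) the cumulative cost of running that automaton on the successive texts $W_A$ handed to Step 2.2. The analysis will then parallel the proof of Corollary \ref{cor-1}, but with the extra preprocessing contribution.

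First I would observe that the pattern set $\mathcal{E}_0(~_n\mathcal{R}')$ depends only on the initial input length $n$, not on which invocation of Step 2.2 we are processing. Thus the Aho--Corasick automaton (keyword trie together with failure links and the output function) can be constructed once, at the very start of \texttt{$(\lambda, c, \epsilon, \eta)$-cyclic-reduction}, in time linear in the total pattern length. By Lemma \ref{lemma-epsilon_0}, this total length is at most $f_{\mathcal{E}_0}(|X|, \epsilon, \eta)\, L_n k_n$, so the preprocessing cost is of order $L_n k_n$ up to a factor depending only on $|X|, \epsilon, \eta$.

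Second, I would bound the per-invocation scan cost. At each call of Step 2.2, the text $W_A$ has length at most $2\tilde{L}_n$, and we only need to report the first matching subword (Algorithm \ref{algorithm_Aho_Corasick} branches immediately on a hit). Hence each Aho--Corasick scan terminates in time $O(\tilde{L}_n)$, with a constant depending on $|X|, \epsilon, \eta$. By Lemma \ref{a main lemma}, Step 2.2 is invoked at most $\frac{2n}{(1-\Lambda)\tilde{L}_n}$ times, so the cumulative scan cost across all invocations is $O(n)$, the implied constant depending only on $\lambda, c, \epsilon, \mu, \eta$.

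Combining the two contributions yields a total bound of order $L_n k_n + n$. If $k = 1$, then $k_n \leq 1$ for every $n$ and $L_n$ is ultimately the (fixed) length of the unique cyclic representative in $\mathcal{R}$, so $L_n k_n = O(1)$ and the total time is $O(n)$, matching $\upsilon = 0$. If $k > 1$, then in the worst case $L_n = O(n)$ and $k_n = O(n)$, so $L_n k_n = O(n^2)$, giving $\upsilon = 1$. The main obstacle I foresee is justifying that the automaton genuinely needs to be built only once—even though $\sigma'$ (and hence the inputs $W_A$) evolves throughout the algorithm—and this hinges precisely on the fact that $\mathcal{E}_0(~_n\mathcal{R}')$ is a function of $n$ alone; once this observation is in place, the bound is a routine synthesis of the standard Aho--Corasick complexity analysis with the call-count estimate of Lemma \ref{a main lemma} and the total-pattern-length estimate of Lemma \ref{lemma-epsilon_0}.
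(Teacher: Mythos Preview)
Your proposal is correct and follows essentially the same route as the paper, which simply cites Lemma~\ref{a main lemma} together with the standard Aho--Corasick complexity bounds; you have just unpacked that citation into the natural preprocessing-plus-scanning decomposition. One minor remark: your estimate $k_n = O(n)$ in the $k>1$ case is needlessly loose, since $k_n \le k$ always (so the preprocessing is in fact $O(kn)$, already within the stated bound once $k$ is absorbed into $f_2$), but this does not affect the validity of the argument.
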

\begin{proof}
	Indeed, it follows from Lemma \ref{a main lemma} and from the time complexity properties of Aho-Corasick's string search algorithm. See \cite{aho corasick}.
\end{proof}
\begin{corollary}
\label{cor-3}
	On Steps 2.2.1-2.2.4  in summary the algorithm spends time bounded from above by $f_3(|X|, \lambda, c, \epsilon, \mu,  \eta, k)n^{1+\upsilon} $, where $f_3: \mathbb{N}^7 \rightarrow \mathbb{N}$ is a  computable function depending only on $\delta$ and $|X|$,  $k$ is the number of elements in $\mathcal{R}$ up to cyclic shifts,  and $\upsilon=0$ if $k=1$, otherwise $\upsilon=1$.

\end{corollary}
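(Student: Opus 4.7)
The plan is to account for all four substeps separately across the lifetime of a single run, then combine. By Lemma~\ref{a main lemma}, the body of Step~2 is executed at most $s_*:=\frac{2}{(1-\Lambda)\tilde{L}_n}n$ times, so any per-call constant-time bookkeeping in Steps~2.2.1--2.2.4 (removing $A$ from \texttt{List}, adding $B_1',B_2'$ to \texttt{List}, marking an arc, choosing the next point, etc.) contributes at most $O(s_*)=O(n/\tilde{L}_n)$ to the total.

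Next I would estimate Step~2.2.2. The input $W_A$ to the substitution has length $O(\tilde{L}_n)$, and since $\|T_1^{-1}U_i^{j-1}U_i^jT_2\|\leq 6\epsilon+3(1-\eta)L_n=O(\tilde{L}_n)$ by \eqref{inequality-8.1}, the word $W_A'$ produced by the replacement still has length $O(\tilde{L}_n)$. Hence computing $W_A''=ShortLex(W_A')$ takes $O(\tilde{L}_n)$ time by Shapiro's bound \eqref{complexity_of_the_shortlex_form_algorithm}, and updating $\sigma'$ by replacing $[B_1,B_2]$ with $[B_1',B_2']$ is also $O(\tilde{L}_n)$. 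Summed over the at most $s_*$ invocations this is $O(s_*\tilde{L}_n)=O(n)$.

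The crux---and in my view the most delicate step---is the accounting in Step~2.2.3. Each invocation of \texttt{$(\lambda,c)$-Smoothing} is called with exactly the two break points $B_1',B_2'$, so Lemma~\ref{lemma-smoothing-complexity} bounds the cost of the $i$-th call by $f_S(\delta,|X|)(2+d_i)$, where $d_i$ is the amount by which $\sigma'$ shrinks during that call. The key observation is that the circle's total decrease over the entire run is at most $n$ (its initial length), so $\sum_i d_i\leq n$; combined with $s_*=O(n/\tilde{L}_n)$ this yields $f_S(\delta,|X|)(2s_*+n)=O(n)$ total work for smoothing.

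Finally, for Step~2.2.4, the marked arc partitioned has length $\|q_i\|$ (including edges marked during smoothing), so its partitioning contributes $O(\|q_i\|/\tilde{L}_n)$ new break points and uses $O(\|q_i\|)$ time. By Corollary~\ref{a main corollary}, $\sum_i\|q_i\|\leq \frac{\Lambda}{1-\Lambda}n$, so the total time spent in Step~2.2.4 is $O(n)$. Adding the four contributions and absorbing the dependencies on $|X|,\lambda,c,\epsilon,\mu,\eta,k$ into a single computable function $f_3$ gives an $O(f_3\cdot n)$ bound, which matches the claim with $\upsilon=0$; the looser $\upsilon=1$ factor in the statement accommodates the case $k>1$, where auxiliary quantities such as $L_n$, $k_n$, $\tilde{L}_n$ may grow with $n$ and so the constants must be amortized into an extra linear factor.
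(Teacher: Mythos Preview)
Your argument is correct and follows the same route as the paper, only with considerably more detail: the paper's proof is a one-liner asserting that Step~2.2.3 is the bottleneck and that Lemma~\ref{lemma-smoothing-complexity} handles it, whereas you explicitly account for all four substeps, invoking Lemma~\ref{a main lemma} to bound the number of iterations and Corollary~\ref{a main corollary} to control the total length of marked arcs in Step~2.2.4. Your treatment of the $\upsilon$ dichotomy at the end is a bit informal, but since the paper itself does not elaborate on this point either (and the statement already absorbs the dependence into $f_3$), this is not a defect.
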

\begin{proof}
	Indeed,  in terms of time complexity, the hardest part among the Steps 2.2.1-2.2.4 is  Step 2.2.3, and taken this into account, the claim of the corollary follows immediately from Lemma \ref{lemma-smoothing-complexity}.   
\end{proof}
~\\
\begin{lemma}
\label{lemma_about_complexity_of_arc_reduction}
	Suppose that the above define set  $~_n\mathcal{R}'$  is already computed. Then there exists a computable function $\Psi: \mathbb{Q}^7 \rightarrow \mathbb{N}$ such that if the constants $\lambda, c, \epsilon, \mu, \rho, \eta$ are fixed and large enough, then for any word $W \in X^*$, $\|W\|=n$, a cyclic $(\lambda, c, \epsilon, \eta)$-reduction of $W$ can be computed in time bounded from above by
	\begin{equation}
	\label{eq 00}
		 \Psi(|X|, \lambda, c, \epsilon, \mu,  \eta, k) n^{1+\upsilon}	
	\end{equation}	
where  $k$ is the number of elements in $\mathcal{R}$ up to cyclic shifts,  and $\upsilon=0$ if $k=1$, otherwise $\upsilon=1$.
\end{lemma}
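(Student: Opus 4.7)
The plan is to assemble the bound from the pieces already prepared in Corollaries \ref{cor-1}, \ref{cor-2}, and \ref{cor-3}, after accounting for the preprocessing steps and verifying correctness of the output.

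First I would handle correctness. The output $W'=lab(\sigma')$ is conjugate to $W$ in $G$ because every modification of $\sigma'$ occurs either in Step 0 (a standard cyclic reduction over $H$, hence conjugation in $H$), in Step 2.2.2 (replacing $T_1^{-1} \hat{U}_i^j T_2$ by $T_1^{-1} U_i^{j-1}U_i^j T_2$, which equals it modulo $\ll \mathcal{R}\gg$ since $\hat{U}_i^j \cdot (U_i^{j-1}U_i^j)^{-1}$ is a cyclic shift of an element of $_n\mathcal{R}$), or in the \texttt{ShortLex} / \texttt{$(\lambda,c)$-smoothing} subroutines (conjugation in $H$). Termination and absence of $(\epsilon,\eta)$-arcs in $W'$ follow from the combination of Observation 8.2 and Proposition \ref{proposition - aux - 4}: if $\sigma'$ still contained an $(\epsilon,\eta)$-arc, then after the next Step 2 call the \texttt{ShortLex} form of the neighborhood $[B_1,B_2]$ of some $A\in \texttt{List}$ would contain a subword from $\mathcal{E}_0(~_n\mathcal{R}')$, so Step 2.2.1 could not be reached for every remaining $A$. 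This takes care of the qualitative part.

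Next I would bound the preprocessing. Step 0 consists of a single call to the $(\lambda,c)$-\texttt{cyclic-reduction} routine; by Lemma \ref{lemma-smoothing-complexity} (applied with all vertices declared break points) this costs $f_S(\delta,|X|)\cdot n$. Step 1 merely marks $O(n/\tilde L_n)$ points, hence is linear. The construction of $_n\mathcal{R}'$ and $\mathcal{E}_0(~_n\mathcal{R}')$ depends on the hypothesis of the lemma that $_n\mathcal{R}'$ is already computed; the preprocessing of Aho--Corasick trie for $\mathcal{E}_0(~_n\mathcal{R}')$ takes time proportional to the total length of the dictionary, which by Lemma \ref{lemma-epsilon_0} is bounded by $f_{\mathcal{E}_0}(|X|,\epsilon,\eta)\,L_n k_n \leq F(|X|,\lambda,c,\epsilon,\eta,k)\,n^{\upsilon}$ (using $L_n = O(n)$ and $k_n\leq k$). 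This is absorbed into the final constant $\Psi$.

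The core of the proof is then to add up the three corollaries:
\[
\text{time for Steps 2.1, 2.2, and 2.2.1--2.2.4} \;\leq\; \bigl(f_1+f_2+f_3\bigr)(|X|,\lambda,c,\epsilon,\mu,\eta,k)\, n^{1+\upsilon}.
\]
Adding the linear terms from Step 0, Step 1, and the Aho--Corasick preprocessing (all dominated by $n^{1+\upsilon}$ for $n$ large) yields a bound of the required shape, so one may simply define
\[
\Psi(|X|,\lambda,c,\epsilon,\mu,\eta,k) \;:=\; f_S(\delta,|X|) + F(|X|,\lambda,c,\epsilon,\eta,k) + (f_1+f_2+f_3)(|X|,\lambda,c,\epsilon,\mu,\eta,k) + 1,
\]
where $\delta$ itself is a computable function of $(\lambda,c,\mu,\epsilon,\rho)$ by Lemma \ref{lem 6.6} (since $G$ is $4L$-hyperbolic and $L$ is controlled by our parameters). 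No step is genuinely difficult given the preceding material; the only thing to be careful about is that the case split $k=1$ vs $k>1$ appears in each corollary with the same $\upsilon$, so that the sum remains of the advertised form $\Psi(\cdots)\,n^{1+\upsilon}$, and that the dependence on $\rho$ in the corollaries can be absorbed into the dependence on $\mu$ via the LPP relations. This is the only mildly delicate bookkeeping.
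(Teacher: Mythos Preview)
Your proposal is correct and follows essentially the same route as the paper: the paper's proof is a one-liner, ``it directly follows from Corollaries \ref{cor-1}, \ref{cor-2} and \ref{cor-3},'' and your argument is exactly this summation, fleshed out with the correctness check and the preprocessing bounds that the paper leaves implicit in the algorithm description preceding the lemma.

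One small correction: your remark that ``$\delta$ itself is a computable function of $(\lambda,c,\mu,\epsilon,\rho)$ by Lemma \ref{lem 6.6}'' is not right. The $\delta$ appearing in $f_S(\delta,|X|)$ and $f_{\texttt{SL}}(|X|,\delta)$ is the hyperbolicity constant of $H$, which is given as part of the input (the $(X,\delta)$-full presentation), not derived from the small-cancellation parameters; Lemma \ref{lem 6.6} concerns the hyperbolicity of the quotient $G$, which is not what the subroutines use. In the paper's setup $\delta$ is simply known and precedes all of $\lambda,c,\epsilon,\mu,\rho$ in the priority order, so it should be treated as an implicit parameter of $\Psi$ (the paper is admittedly loose about this in the statement). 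This does not affect the validity of your argument.
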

\begin{proof}
	Indeed, it directly follows from Corollaries \ref{cor-1}, \ref{cor-2} and \ref{cor-3}.
\end{proof}
~\\
~\\	
\section{A subclass of lacunary hyperbolic groups with effectively decidable word problem and $G$-conjugacy problem.}
\label{section_about_special_class_of_LHG}
In this section we describe a subclass of lacunary hyperbolic groups for which there is a fast algorithm solving the word problem. As it will be shown in corresponding sections, the groups which are described in theorems \ref{theorem_verbally_complete}, \ref{theorem_tarskii_monsters} and \ref{theorem_about_connecton_of_word_and_conjugacy_problems} are constructed so that they belong to that subclass.

\subsection{Small cancellation conditions in chains of hyperbolic groups}
	\label{subsection about small cancellation in chains}
	Let us consider the chain of hyperbolic groups (\ref{main seq of gps}), that is
	\begin{align}
	    \label{main seq of gps-2*}
		G_0 \stackrel{\beta_0}\hookrightarrow H_1 \stackrel{\gamma_1}\twoheadrightarrow G_1 \stackrel{\beta_1}\hookrightarrow H_2 \stackrel{\gamma_2}\twoheadrightarrow \ldots,
	\end{align}
	where $\alpha_i= \gamma_{i+1}\circ \beta_i$ is surjective for $i=1,2, \ldots$. All the groups in this chain are hyperbolic.
	
	 Suppose that for all $i  \geq 0$, $G_i$ is $\delta_i$ hyperbolic and for all $j \geq 1$, $H_i$ is $\delta'_j$ hyperbolic, where $\delta_i, \delta_j' \in \mathbb{N}$. 
	 Also suppose $G_0 = \langle X \mid \mathcal{R}_0 \rangle$ is given with its initial symmetric finite presentation and for all $i \in \mathbb{N}$
	\begin{align}
				H_i =G_{i-1}*F(Y_i)/ \ll \mathcal{S}_i\gg,
	 	\end{align}
	where $|Y_i|<\infty$, $Y_i \cap \beta_{i-1}(G_{i-1})=\emptyset$ and $\mathcal{S}_i$ is a finite symmetric set of words from $(X \cup Y_i)^*$, and 
	\begin{align}
		G_i &=H_i / \ll \mathcal{R}_i \gg,
	\end{align}
	where  $\mathcal{R}_i$ is a finite symmetric set of words from $(X \cup Y_i)^*$ as well.\\
	~\\
	~\\
	Let us denote the sequences $(\lambda_i)_{i=1}^{\infty}, (c_i)_{i=1}^{\infty}, (\epsilon_i)_{i=1}^{\infty}$, $(\mu_i)_{i=1}^{\infty}$, $(\rho_i)_{i=1}^{\infty}$ by
	$\boldsymbol{\lambda}$, $\boldsymbol{ c}$, $\boldsymbol{\epsilon}$, $\boldsymbol{ \mu}$, $\boldsymbol{\rho}$, respectively.
	~\\
	~\\
	\begin{definition}[Small cancellation conditions  $C'\big( \boldsymbol{\lambda, c, \epsilon, \mu, \rho} \big)$ and $C''\big(\boldsymbol{\lambda, c, \epsilon, \mu, \rho}\big)$\big ]
	
	\label{definition_of_the_sequential_small_cancellation_condition_C'}
		We say that the chain (\ref{main seq of gps-2*}) satisfies (alternatively, based on the context, we may say $\bar{G}=\lim_{i}(G_i, \alpha_i)$ satisfies) 
		 the
		 \index{$C'\big(\boldsymbol{\lambda, c, \epsilon, \mu, \rho}\big)$}
		\index{$C''\big(\boldsymbol{\lambda, c, \epsilon, \mu, \rho}\big)$}
		
	\begin{align*}
		C'\big(\boldsymbol{\lambda, c, \epsilon, \mu, \rho}\big)\text{-condition}
	\end{align*}
		 of small cancellation, if
		 \begin{enumerate}
		  \item[(a)] The set $\mathcal{R}_i$ satisfies the $C'(\lambda_i, c_i, \epsilon_i, \mu_i,  \rho_i)$-condition with respect to $H_i=\langle X \bigcup \cup_{j=1}^i Y_j \rangle$; \\
		  	 
		  \item[(b)] The following sequences are computable:
		  \begin{equation}
\begin{aligned}
\label{computable sequences}
(Y_i)_{i=1}^{\infty}, (\mathcal{R}_i)_{i=0}^{\infty}, (\mathcal{S}_i)_{i=1}^{\infty}& \text{~and~}\\
 (\delta_i)_{i=0}^{\infty}, (\delta'_i)_{i=1}^{\infty}, (\lambda_i)_{i=1}^{\infty}, (c_i)_{i=1}^{\infty},& (\epsilon_i)_{i=1}^{\infty}, (\mu_i)_{i=1}^{\infty}, (\rho_i)_{i=1}^{\infty}.
\end{aligned}
\end{equation}
That is there exists an algorithm which on input $i\geq1$ returns\\ $(Y_i, \mathcal{R}_i,  \mathcal{S}_i, \delta_i, \delta'_i,\lambda_i, c_i, \epsilon_i, \mu_i, \rho_i)$;\\
		   \end{enumerate}
		   If, in addition, the following condition is satisfied, then in the notations we replace $C'(\boldsymbol{\lambda, c, \epsilon, \mu, \rho})$ with $C''(\boldsymbol{\lambda, c, \epsilon, \mu, \rho})$.
		   \begin{enumerate}
		    \item[(d)]
		    For each pair $(i, j)$, $0<i<j$, and each $R_i\in \mathcal{R}_i$, $R_j \in \mathcal{R}_j$, there are no subwords $U_i$ and $U_j$ of $R_i$ and $R_j$, respectively, such that $\|U_i\| \geq \mu_i \|R_i\|$ and there exist $v_1, v_2 \in (X\cup Y_i)^*$, $\|v_1\|, \|v_2\|\leq \epsilon_i$, such that $v_1U_iv_2U_j=_{H_{i} }1$.
		    \end{enumerate}
\end{definition}

\begin{definition}[Rank of contiguity subdiagrams]
\index{rank of contiguity subdiagrams}
We say that a diagram over \eqref{main seq of gps-2*} is a $\epsilon_i$-contiguity subdiagram of rank $i$ if the diagram can be regarded as a $\epsilon_i$-contiguity subdiagram over the quotient $G_i=H_i /\ll \mathcal{R}_i \gg$.
	
\end{definition}
~\\

\subsection{An auxiliary theorem}
Now let  $G=\langle H \mid \mathcal{R} \rangle$ be fixed and suppose $\mathcal{R}$ satisfies the $C'(\lambda, c, \epsilon, \mu, \rho)$ small cancellation condition. Let $\eta=1-23\mu$. Then the following theorem holds.

\begin{theorem}
\label{theorem-main-theorem-about-wp-in-quotients}
Using the above described setting, suppose that $\lambda \succ c \succ \epsilon \succ \mu \succ \rho$ are large enough. Then
there exists a computable function $\Psi: \mathbb{Q}^6 \rightarrow \mathbb{Q}$ such that for any given $W \in X^*$, the checking $W=_{G} 1$ can be done in time bounded from above by 

	\begin{equation}
	\label{equation*******}	
		a \Psi(|X|, \lambda, c, \epsilon, \mu,  k) n^{1+\upsilon}	
\end{equation}	
where $n=\|W\|$, $k$ is the number of elements  in $\mathcal{R}$ up to cyclic shifts, and $\varepsilon=0$ when $k=1$ and $\varepsilon=1$ when $k>1$.
\end{theorem}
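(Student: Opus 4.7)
\medskip

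\noindent\textbf{Proof plan.} The plan is to reduce the word problem in $G$ to the word problem in the hyperbolic group $H$ after first passing through the cyclic-reduction machinery of Subsection \ref{subsection-the main algorithm}. First I would verify that with $\eta:=1-23\mu$ the hypothesis $2\eta-3/2>3\lambda(1-\eta)$ used in Subsection \ref{subsection-the main algorithm} (see \eqref{condition aa}) is satisfied: this amounts to $\mu<\tfrac{1}{2(69\lambda+46)}$, which holds by LPP once the standard parameters are sparse enough. Having this, I would apply \texttt{$(\lambda,c,\epsilon,\eta)$-cyclic-reduction} to the input word $W$ and obtain a word $W'\in X^*$ which is cyclically $(\lambda,c,\epsilon,\eta)$-reduced and satisfies $W'\sim_{conj}W$ in $G$. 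By Lemma \ref{lemma_about_complexity_of_arc_reduction} this step costs at most $a\,\Psi(|X|,\lambda,c,\epsilon,\mu,\eta,k)\,n^{1+\upsilon}$, which is of the form \eqref{equation*******} (note $\eta$ is determined by $\mu$, so can be absorbed into the computable function $\Psi$).

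\medskip

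\noindent The core of the argument is the claim that $W=_G1$ if and only if $W'=_H1$. Since $W'\sim_{conj}W$ in $G$, the forward direction $W=_G1\Rightarrow W'=_G1$ is immediate, and conversely $W'=_H1$ implies $W'=_G1$ hence $W=_G 1$. So it suffices to show: if $W'=_G1$, then $W'=_H1$. Suppose for contradiction that $W'=_G1$ but $W'\neq_H1$. Then there is a reduced van Kampen diagram $\Delta$ over $G=\langle H\mid\mathcal{R}\rangle$ with boundary labeled by $W'$ and containing at least one $\mathcal{R}$-cell. View $\partial\Delta$ as a (degenerate) $t$-gon with $t=1$; since $W'$ is cyclically $(\lambda,c)$-quasi-geodesic in $\Gamma(H,X)$, Lemma \ref{lem 6.6} applies and yields an $\mathcal{R}$-cell $\Pi$ together with a non-empty outer $\epsilon$-contiguity subdiagram $\Gamma$ with $(\Pi,\Gamma,\partial\Delta)>1-23\mu=\eta$. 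Writing $\partial\Gamma=p_1\check q_\Gamma p_2\hat q_\Gamma$, the label of $\check q_\Gamma$ is a subword $U$ of some $R\in\mathcal{R}$ with $\|U\|\geq\eta\|R\|$ (after a cyclic shift of $R$, which is allowed since $\mathcal{R}$ is symmetric and closed under cyclic shifts), and $\|p_1\|,\|p_2\|\leq\epsilon$, so the label of $\hat q_\Gamma$ is by definition an $(\epsilon,\eta)$-subword of a cyclic shift of $W'$. This contradicts cyclic $(\lambda,c,\epsilon,\eta)$-reducedness of $W'$, proving the claim.

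\medskip

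\noindent Once the claim is established, it remains to test $W'=_H1$. The length $\|W'\|$ is at most $n$ (it decreased under every arc-replacement in the algorithm), and since $H$ is a fixed $\delta$-hyperbolic group, its word problem is solvable in time linear in the input length by Dehn's algorithm using the $(X,\delta)$-full presentation (or by \cite{epstein holt}). The constant of linearity depends only on $|X|$ and $\delta$, both of which are encoded in the parameter $\Psi$. Combining the cost of \texttt{$(\lambda,c,\epsilon,\eta)$-cyclic-reduction} with the linear-time test in $H$ yields a total running time of the form \eqref{equation*******}, completing the proof.

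\medskip

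\noindent The main obstacle is the correctness of the reduction step: one needs to know that after \texttt{$(\lambda,c,\epsilon,\eta)$-cyclic-reduction} the absence of $(\epsilon,\eta)$-subwords exactly matches the Greendlinger-type conclusion in Lemma \ref{lem 6.6}, which is why the choice $\eta=1-23\mu$ is essential and why the small-cancellation parameters must be chosen sparse enough in the sense of LPP (so that both the cyclic-reduction algorithm and Lemma \ref{lem 6.6} are simultaneously applicable). Everything else is bookkeeping of constants.
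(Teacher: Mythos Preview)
Your proposal is correct and follows essentially the same approach as the paper: apply \texttt{$(\lambda,c,\epsilon,\eta)$-cyclic-reduction} with $\eta=1-23\mu$, then use Lemma~\ref{lem 6.6} to conclude that the output $W'$ satisfies $W'=_G1$ iff $W'=_H1$, and appeal to Lemma~\ref{lemma_about_complexity_of_arc_reduction} for the complexity bound. The only (harmless) difference is that your final step of running Dehn's algorithm on $W'$ is redundant: the cyclic-reduction algorithm already outputs a cyclically $(8\delta+1)$-local geodesic word (via Step~0 and the smoothing in Step~2.2.3), so $W'=_H1$ holds iff $W'$ is the empty word, and the paper simply checks emptiness.
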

\begin{proof}
First of all, notice that if $\rho$ are large enough then the restrictions put on $\eta$ and $\eta'$ in the beginning of Subsection \ref{subsection-the main algorithm} are satisfied if $\eta$ is defined as $\eta=1-23\mu$.
 
    For the given word $W \in X^*$, let $\sigma$ be a labeled circle such that $lab(\sigma)=W$. Let $\sigma'$ be the output of the $(\lambda, c, \epsilon, \eta)$-cyclic-reduction algorithm (i.e. Algorithm \ref{algorithm-2}), and let $lab(\sigma')=W'$ for $W' \in X^*$.
    We claim that $W'$ is empty if and only if $W=_{G} 1$.
    
    First of all, since $W'\sim_{conj} W$ in $G$, the emptiness of $W'$  would imply that $W=_{G} 1$. Now let us prove the opposite.
    Suppose that $W'$ is not empty. Then $W'\neq_{H} 1$, because $W'$ is $8\delta+1$-local geodesic word in $\Gamma(H, X)$ and the only 
    $8\delta+1$-geodesic word in $\Gamma(H, X)$ which represents the trivial element of $H$ is the empty word.
    
    On the other hand, since $W'$ is a $(\lambda, c)$-quasi-geodesic word in $\Gamma(H, X)$, by Lemma \ref{lem 6.6}, the equation $W'=_{G} 1$ would imply that $W'$ contains a $(\epsilon, 1-23\mu)$-subword. But since $W'$ is an output of the $(\lambda, c, \epsilon, \eta)$-cyclic-reduction algorithm, this cannot happen. A contradiction. Therefore, it must be that $W'$ is empty if and only if $W=_{G} 1$.
    
    Now the complexity formula in the statement of the lemma directly follows from Lemma \ref{lemma_about_complexity_of_arc_reduction}.
\end{proof}

\begin{remark}
	\label{remark-about-wp-in-quotients}
In the settings of the current subsection, for any word $W \in X^*$, $W=_{G} 1$ if and only if  the 	$(\lambda, c, \epsilon, \eta)$-cyclic-reduction algorithm (i.e. Algorithm \ref{algorithm-2}) returns the empty word for input $W$, as it was shown in the proof of Theorem \ref{theorem-main-theorem-about-wp-in-quotients}.
\end{remark}

\subsection{The definition of the subclass}
\label{subsection_the-definition-of-the-subclass}

Let us return to the chain of hyperbolic groups given by \eqref{main seq of gps} in the introduction.  That is the chain of hyperbolic group homomorphisms:
	\begin{align}
	    \label{main seq of gps***}
		G_0 \stackrel{\beta_0}\hookrightarrow H_1 \stackrel{\gamma_1}\twoheadrightarrow G_1 \stackrel{\beta_1}\hookrightarrow H_2 \stackrel{\gamma_2}\twoheadrightarrow \ldots,
	\end{align}
	where $\alpha_i= \gamma_{i+1}\circ \beta_i$ is surjective for $i=1,2, \ldots$.  
	Recall that in Subsection \ref{subsection about small cancellation in chains} we described additional settings for this chain. Namely,
	for all integers $i$, $i  \geq 0$, $G_i$ is $\delta_i$ hyperbolic and for all $j \geq 1$, $H_i$ is $\delta'_j$ hyperbolic with respect to the generating set $X\cup\bar{Y}_i$, where $ \bar{Y}_i= \cup_{j=1}^i Y_j$ and $\delta_i, \delta_j' \in \mathbb{N}$. 
	 Also we suppose $G_0 = \langle X \mid \mathcal{R}_0 \rangle$ is given with its initial symmetric finite presentation and for all $i \in \mathbb{N}$,
	\begin{align}
				H_i =G_{i-1}*F(Y_i)/ \ll \mathcal{S}_i\gg,	
	\end{align}
	where $|Y_i|<\infty$, $Y_i \cap \beta_{i-1}(G_{i-1})=\emptyset$ and $\mathcal{S}_i$ is a finite symmetric set of words from $(X \cup Y_i)^*$, and 
	\begin{align}
		G_i &=H_i / \ll \mathcal{R}_i \gg,
	\end{align}
	where  $\mathcal{R}_i$ is a finite symmetric set of words from $(X \cup Y_i)^*$. Then
	\begin{align*}
		\bar{G}\stackrel{\text{def}}= \lim_i (G_i, \alpha_i).
	\end{align*}
In addition, we suppose that the fixed sequences   $(\lambda_i)_{i=1}^{\infty}$, $(c_i)_{i=1}^{\infty}$, $(\epsilon_i)_{i=1}^{\infty}$, $(1/\mu_i)_{i=1}^{\infty}$, $(\rho_i)_{i=1}^{\infty}$ of positive integers are such that the chain \eqref{main seq of gps***} satisfies the small cancellation condition $C'\big(\boldsymbol{\lambda, c, \epsilon, \mu, \rho}\big)$. Even more, hereafter we  will assume that for all $i\geq 1$,  the 5-tuple $(\lambda_i, c_i, \epsilon_i, 1-121\lambda_i\mu_i)$ satisfies the $SP$-relation.

Also suppose that the sequences 
\begin{equation}
\begin{aligned}
\label{computable sequences}
(Y_i)_{i=1}^{\infty}, (\mathcal{R}_i)_{i=0}^{\infty}, (\mathcal{S}_i)_{i=1}^{\infty}& \text{~and~}\\
 (\delta_i)_{i=0}^{\infty}, (\delta'_i)_{i=1}^{\infty}, (\lambda_i)_{i=1}^{\infty}, (c_i)_{i=1}^{\infty},& (\epsilon_i)_{i=1}^{\infty}, (\mu_i)_{i=1}^{\infty}, (\rho_i)_{i=1}^{\infty} 
\end{aligned}
\end{equation}
are computable sequences of integers, i.e. there exists an algorithm which on input $i\geq1$ returns $(Y_i, \mathcal{R}_i,  \mathcal{S}_i, \delta_i, \delta'_i,\lambda_i, c_i, \epsilon_i, \mu_i, \rho_i)$.\\
	~\\
	
	

Let us assume that $\Psi: \mathbb{Q}^7 \rightarrow \mathbb{Q}$ is a fixed computable function satisfying the conditions defined in Theorem \ref{theorem-main-theorem-about-wp-in-quotients} (see expression \eqref{equation*******}). Let us denote $\Psi_i=\Psi(|X \cup \bar{Y}_i|, \lambda_i, c_i, \epsilon_i, \mu_i,  k_i)$, where $k_i$ is the cardinality of $\mathcal{R}_i$ up to cyclic shifts of its elements.\\

\begin{definition}[$SP$-relation]

\label{definition-coefficients-relation}
 \index{$SP$-relation}
		Let $H=\langle X \rangle$, $|X|<\infty$, be a $\delta$-hyperbolic group, where $\delta$ is a given positive integer.  We say that the 5-tuple of positive number 
		$(\lambda, c, \epsilon, \mu, \rho)$ 
		satisfies the standard parameters relation, or briefly $SP$-relation with respect to $(H, \delta)$ if the following holds.
		\begin{enumerate}
			\item $\lambda, c, \epsilon, \mu, \rho$ with respect to $(H, \delta)$ satisfy all the restrictions and relations put on $\lambda, c, \epsilon, \mu, \rho$ for all the lemmas and theorems above (see Section \ref{section Small cancellation conditions}),
			\item  $\lambda, c, \epsilon, 1/\mu, \rho \in \mathbb{N}$, and
			\item if we define $\eta = 1-121\lambda\mu$ and $\eta'=3\eta-2$, then  $\eta$ and $\eta'$ satisfy all the restrictions put on $\eta$ and $\eta'$ in the beginning of Subsection \ref{subsection-the main algorithm}. 
		\end{enumerate}
	\end{definition}

Let $g_1, g_2, \ldots : \mathbb{R}_+ \rightarrow \mathbb{R}_+$ be a sequence of  functions such that for all $i\in \mathbb{N}$, $g_i^{-1}$ is computable and
\begin{align*}
	\bigO(g_i) \subseteq \bigO(g_j) \text{~whenever~} i>j. 
\end{align*}

\index{$\bar{\rho}_i$}
For all $i\in \mathbb{N}$, let $f_{\bar{\rho}}:\mathbb{N} \rightarrow \mathbb{N}$ be a fixed computable function such that for $\bar{\rho}_i =f_{\bar{\rho}}(i)$ 
the 5-tuple $( \lambda_i, c_i, \epsilon_i, \mu_i, \bar{\rho}_i)$ satisfies the SP-relation, 
and in addition, 
\begin{align}
\label{additional condition on bar-rho}
	\bar{\rho}_i \geq \frac{\lambda_i(g_i^{-1}(\Psi_i)+2\epsilon_i)+c_i}{1-23\mu_i}. 
\end{align}
Clearly, since $i \mapsto \frac{\lambda_i(g_i^{-1}(\Psi_i)+2\epsilon_i)+c_i}{1-23\mu_i}$ is computable, such functions $f_{\bar{\rho}}$ do exist.

Hereafter, by lowest parameter principle, we will always assume that $\rho_i \geq \bar{\rho}_i$ for all $i\in \mathbb{N}$.
~\\

Let us define \index{$\xi$, $\bar{\xi}$} $\xi, \bar{\xi}:\mathbb{N} \rightarrow \mathbb{N}$ as follows
\begin{align*}
	\bar{\xi}(i)=\frac{(1-23\mu_{i}) \bar{\rho}_{i} - c_{i} }{\lambda_{i}} - 2\epsilon_{i} \text{~and~}
	\xi(i)=\frac{(1-23\mu_{i}) \rho_{i} - c_{i} }{\lambda_{i}} - 2\epsilon_{i}.
\end{align*}
Note that, in this notations, \eqref{additional condition on bar-rho} immediately implies 
\begin{align}
\label{inequality-an-additional-inequality-001}
	g_i\big(\bar{\xi}(i) \big) \geq \Psi_i \text{~for ~} i=1,2, \ldots.
\end{align}
\begin{lemma}
\label{property 0}
	Let $W \in X^*$ and $W=_{\bar{G}} 1$, but $W\neq_{G_0} 1$. Suppose $i\geq 1$ is such that $W=_{G_i} 1$, but $W \neq_{G_{i-1}} 1$ (i.e. $i$ is the minimum index such that $W=_{G_i} 1$). Then
	\begin{align*}
	\|W\| > \xi(i).
	\end{align*}
\end{lemma}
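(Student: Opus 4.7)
The plan is to reduce the statement to a straightforward application of Lemma \ref{lem 6.6} at rank $i$, after passing to a geodesic representative of $W$ in $H_i$.

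First I would set up the reduction. Since $i$ is minimal with $W=_{G_i}1$, we have $W\ne_{G_{i-1}}1$, and because $\beta_{i-1}: G_{i-1}\hookrightarrow H_i$ is an embedding, this forces $W\ne_{H_i}1$. Now I would choose a word $W_1\in(X\cup\bar{Y}_i)^*$ with $W_1=_{H_i}W$ whose length equals $|W|_{H_i}$; in particular $W_1$ is a geodesic word in $\Gamma(H_i,X\cup\bar{Y}_i)$, hence automatically $(\lambda_i,c_i)$-quasi-geodesic, and $\|W_1\|\le\|W\|$. Also $W_1=_{G_i}1$ and $W_1\ne_{H_i}1$.

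Next I would take a reduced van Kampen diagram $\Delta$ over the presentation $G_i=\langle H_i\mid\mathcal{R}_i\rangle$ with boundary label $W_1$. Because $W_1\ne_{H_i}1$, $\Delta$ must contain at least one $\mathcal{R}_i$-cell. Viewing $\partial\Delta$ as a $(\lambda_i,c_i)$-quasi-geodesic $1$-gon and applying Lemma \ref{lem 6.6}, I obtain an $\mathcal{R}_i$-cell $\Pi$ together with an outer $\epsilon_i$-contiguity subdiagram $\Gamma$ of $\Pi$ to $\partial\Delta$ satisfying
\[
(\Pi,\Gamma,\hat{q}_{\Gamma})>1-23\mu_i.
\]
Writing $\partial\Gamma=p_{\Gamma}\check{q}_{\Gamma}p'_{\Gamma}\hat{q}_{\Gamma}$ with $\|p_{\Gamma}\|,\|p'_{\Gamma}\|\le\epsilon_i$, this yields
\[
\|\check{q}_{\Gamma}\|>(1-23\mu_i)\|\Pi\|\ge(1-23\mu_i)\rho_i.
\]

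Finally I would extract the length bound on $W$. Since $\check{q}_{\Gamma}$ is a subpath of $\partial\Pi$ and $\partial\Pi$ is $(\lambda_i,c_i)$-quasi-geodesic by property (1.2) of the $C'(\lambda_i,c_i,\epsilon_i,\mu_i,\rho_i)$-condition, applying the quasi-geodesicity inequality together with the triangle inequality across the contiguity rectangle gives
\[
\|\check{q}_{\Gamma}\|\le\lambda_i\bigl(\|\hat{q}_{\Gamma}\|+\|p_{\Gamma}\|+\|p'_{\Gamma}\|\bigr)+c_i\le\lambda_i\bigl(\|W_1\|+2\epsilon_i\bigr)+c_i.
\]
Combining with the previous bound and solving for $\|W_1\|$ yields
\[
\|W\|\ge\|W_1\|>\frac{(1-23\mu_i)\rho_i-c_i}{\lambda_i}-2\epsilon_i=\xi(i).
\]
There is no genuine obstacle; the only point that needs care is justifying that $W\ne_{H_i}1$ (using that $\beta_{i-1}$ is an embedding, as built into the chain \eqref{main seq of gps***}) and that the monogon case $t=1$ of Lemma \ref{lem 6.6} applies to a geodesic boundary, but both are immediate from the setup.
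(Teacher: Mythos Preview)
Your proof is correct and follows essentially the same route as the paper's: both pass to a short $(\lambda_i,c_i)$-quasi-geodesic representative in $H_i$ (you use a geodesic word equal to $W$, the paper uses the $(\lambda_i,c_i)$-cyclic-reduction, which is merely conjugate to $W$ but still no longer), observe via the embedding $\beta_{i-1}$ that it is nontrivial in $H_i$, apply Lemma~\ref{lem 6.6} to the resulting $1$-gon to extract an $\mathcal{R}_i$-cell with contiguity degree $>1-23\mu_i$, and then run the same triangle-inequality computation with the quasi-geodesicity of $\partial\Pi$ to bound $\|W\|$ below by $\xi(i)$.
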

\begin{proof}
	First, note that, since the map $\beta_i: G_{i-1} \rightarrow H_i$ is an embedding, the relation $W \neq_{G_{i-1}} 1$ implies that $W \neq_{H_{i}} 1$.
	
	 Now let $W' \in (X\cup Y_i)^*$ be the $(\lambda_i, c_i)$-cyclic-reduction of $W$ over $\Gamma(H_i, X \cup Y_i)$. Then, since $W \sim_{conj} W'$ in $H_i$ and $W \neq_{H_{i}} 1$, we get that $W' \neq_{H_{i}} 1$. Therefore, if $\Delta$ is a reduced disk diagram over $G_i$ with the boundary label $W'$ then, by Lemma \ref{lem 6.6}, $\Delta$ contains an  $\mathcal{R}_i$-cell $\Pi$ connected to $\partial\Delta$ by a $\epsilon_i$-contiguity subdiagram $\Gamma$ such that $(\Pi, \Gamma, \partial\Delta)> 1-23\mu_i> \eta_i$. Therefore, by the triangle inequality, we have
	 \begin{align*}
	 	\|W\|\geq \|W'\| \geq \|\hat{q}_{\Gamma}\| \geq \frac{\|\check{q}_{\Gamma}\|-c_i}{\lambda_i}-2\epsilon_i> \frac{\eta_i\|\Pi\|-c_i}{\lambda_i}-2\epsilon_i \geq \frac{\eta_i\rho_i-c_i}{\lambda_i}-2\epsilon_i=\xi(i).
	 \end{align*}
\end{proof}

\begin{remark}
	\label{remark-on-lemma-property 0}
	Note that, since $\xi(i)\geq \bar{\xi}(i)$, in the setting of Lemma \ref{property 0}, Lemma \ref{property 0} implies $\|W\| > \bar{\xi}(i)$.
\end{remark}
~\\

\begin{definition}[$i$-th level data] \index{$i$-th level data} \index{level data}
\label{definition-i-th-level-data}
 For any fixed $i\in \mathbb{N}$, the below described list of data we call the $i$-th level data for the chain \eqref{main seq of gps***}.
\begin{enumerate}
\item The slimness constant $\delta'_i \in \mathbb{N}$ of $\Gamma(H_i, X \cup \bar{Y}_i)$, where recall that $ \bar{Y}_i= \cup_{j=1}^i Y_j$; 
\item The $(X\cup \bar{Y}_i, \delta_i')$-full-presentation $H_i=\langle X \cup \bar{Y}_i \mid \mathcal{F}_i \rangle$ of $H_i$;

\item The constants $\delta_{i-1}$, $\delta'_i$, $\lambda_i$, $c_i$, $\epsilon_i$, $\mu_i$, $\bar{\rho}_i$;
\item $\bar{\xi}(i)$.
\end{enumerate}
\end{definition}

Note that, since the sequences \eqref{computable sequences} are computable by our assumption, there exists an algorithm which computes the $i$-the level data, i.e. there exists a (deterministic) Turing machine $\mathcal{TM}$ which, for the input $i\in \mathbb{N}$, outputs the $i$-th level data. Indeed, the computability of the data from parts (1), (2) and (4) of Definition \ref{definition-i-th-level-data} is straightforward. 

Let $\mathcal{TM}$ be a fixed deterministic Turing machine which,  for all inputs $i\geq 1$, computes the $i$-th level data for the presentation \eqref{main seq of gps***} of $\bar{G}$.
 Suppose that $\phi: \mathbb{N} \rightarrow \mathbb{N}$  is a recursive function, such that for $i\in \mathbb{N}$, $\phi(i)$ is the number of steps $\mathcal{TM}$ makes after input $i$ before it halts. Let $\Phi: \mathbb{N} \rightarrow \mathbb{N}$ be a function defined as $\Phi(i)= \sum_{j=1}^i \phi(i)$, for $i\in \mathbb{N}$.
\index{$\mathcal{I}:\mathbb{N} \rightarrow \mathbb{N}$}

Let $\mathcal{I}: \mathbb{N}\rightarrow \mathbb{N}$ be the integer valued function such  that $$\Phi(\mathcal{I}(n))\leq n < \Phi(\mathcal{I}(n)+1).$$ In other words, if we run $\mathcal{TM}$ consecutively for inputs $i=1, 2, \ldots$, then after the $n$-th step $\mathcal{I}(n)$-th level data will be computed, but $\mathcal{I}(n)+1$-th level data will not.

~\\

Now for $i, n \in \mathbb{N}$, in analogy with the set \eqref{auxiliary-set-of-words}, let us define  
\begin{align*}
	_n\mathcal{R}_i= \bigg\{ R  \mid R \in \mathcal{R}_i, \|R\| \leq \frac{\lambda_i( n+2\epsilon_i) + c_i}{1-23\mu_i} \bigg\}.
\end{align*}
The motivation behind the definition of $_n\mathcal{R}_i$ is that if a word $W \in X^*$, $\|W\|=n$, is not trivial in $H_i$ but trivial in $G_i$, then $W$ must be trivial also in the group $H_i/\ll_n\mathcal{R}_i \gg$. This follows from Lemma \ref{lem 6.6}.

Since, by our assumptions, the sequences \eqref{computable sequences} are computable, the sets $_n\mathcal{R}_i$ are computable too, i.e. there exists an algorithm which for input $(n, i)$ returns $_n\mathcal{R}_i$.  Let  $\mathcal{C}: \mathbb{N} \rightarrow \mathbb{N}$ be a (time-constructible) function such that, for some fixed Turing machine computing the words $_n\mathcal{R}_i$, $\mathcal{C}(n, i)$ is the time the machine spends after input $(n, i)$ before it halts. Define \index{$\mathcal{C}_{_n\mathcal{R}_i}$} 
\begin{align*}
\mathcal{C}_{_n\mathcal{R}_i}=\max\{\mathcal{C}(n, j) \mid 1\leq j \leq i \}.	
\end{align*}

The main theorem of this section is the following.
\begin{theorem}
\label{theorem_about_WP_in_barG_2}
If the standard parameters are sparse enough,  the word problem in $\bar{G}$ can be solved  in time   
  	\begin{equation*}
	    	  \bigO \big( \mathcal{C}_{_n\mathcal{R}_{\mathcal{I}(n)}} + g_k(n) n^{1+\upsilon} \big),
	    \end{equation*}
where $k \in \mathbb{N}$ is any positive integer, $n$ is the length of the input word from $X^*$  and $\upsilon=0$ if for all but finitely many $i\geq 1$, $\mathcal{R}_i$ contains one word up to cyclic shifts, otherwise,  $\upsilon=1$. 
\end{theorem}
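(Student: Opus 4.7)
The plan is to produce an explicit recognizer for the word problem in $\bar G$ and then read off its complexity. Given an input $W\in X^*$ with $\|W\|=n$, the algorithm has three phases. First, simulate $\mathcal{TM}$ for exactly $n$ steps; by the definition of $\mathcal{I}$ this outputs the $i$-th level data of Definition \ref{definition-i-th-level-data} for every $i\leq\mathcal{I}(n)$, using time $\bigO(n)$. Second, from the level data assemble $_n\mathcal{R}_j$ for $j=1,\dots,\mathcal{I}(n)$; by the definition of $\mathcal{C}_{_n\mathcal{R}_{\mathcal{I}(n)}}$ this costs $\bigO(\mathcal{C}_{_n\mathcal{R}_{\mathcal{I}(n)}})$. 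Third, form the finite presentation
\[
\widetilde G_{\mathcal{I}(n)}\;=\;\bigl\langle X\cup\bar Y_{\mathcal{I}(n)}\,\big|\,\mathcal{F}_{\mathcal{I}(n)}\cup\, _n\mathcal{R}_{\mathcal{I}(n)}\bigr\rangle,
\]
view it as the quotient of the $\delta'_{\mathcal{I}(n)}$-hyperbolic group $H_{\mathcal{I}(n)}=\langle X\cup\bar Y_{\mathcal{I}(n)}\mid\mathcal{F}_{\mathcal{I}(n)}\rangle$ by the finite symmetric set $_n\mathcal{R}_{\mathcal{I}(n)}$, and feed $W$ to the $(\lambda_{\mathcal{I}(n)}, c_{\mathcal{I}(n)}, \epsilon_{\mathcal{I}(n)}, 1-23\mu_{\mathcal{I}(n)})$-cyclic-reduction algorithm of Subsection \ref{subsubsection-main-algorithm}. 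Output ``$W=_{\bar G}1$'' iff the reduction returns the empty word. Because $_n\mathcal{R}_{\mathcal{I}(n)}\subseteq\mathcal{R}_{\mathcal{I}(n)}$ inherits the $C'(\lambda_{\mathcal{I}(n)}, c_{\mathcal{I}(n)}, \epsilon_{\mathcal{I}(n)}, \mu_{\mathcal{I}(n)}, \rho_{\mathcal{I}(n)})$-condition, Theorem \ref{theorem-main-theorem-about-wp-in-quotients} together with Remark \ref{remark-about-wp-in-quotients} bounds the third phase by $\bigO(\Psi_{\mathcal{I}(n)}\cdot n^{1+\upsilon})$.

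Correctness proceeds through Lemma \ref{property 0}. If the reduction reaches the empty word, then $W=_{\widetilde G_{\mathcal{I}(n)}}1$, hence $W=_{G_{\mathcal{I}(n)}}1$ and $W=_{\bar G}1$. Conversely, assume $W=_{\bar G}1$; ignoring the trivial subcase $W=_{G_0}1$, let $i^*$ be the least index with $W=_{G_{i^*}}1$. Lemma \ref{property 0} gives $n>\xi(i^*)\geq\bar\xi(i^*)$. I exploit the sparseness clause to enlarge $f_{\bar\rho}$ so that $\bar\xi(i)\geq\Phi(i)$ for every $i$, while still respecting the SP-relation and \eqref{additional condition on bar-rho}; this is admissible under LPP because $\Phi$ is a fixed recursive function depending only on $\mathcal{TM}$. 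With this choice $\Phi(i^*)\leq\bar\xi(i^*)<n$, so $i^*\leq\mathcal{I}(n)$ by the very definition of $\mathcal{I}$, and therefore $W=_{G_{\mathcal{I}(n)}}1$. Iteratively applying Lemma \ref{lem 6.6} to any reduced van Kampen diagram for $W$ over $H_{\mathcal{I}(n)}/\ll\mathcal{R}_{\mathcal{I}(n)}\gg$, every essential cell encountered has boundary length at most $\tfrac{\lambda_{\mathcal{I}(n)}(n+2\epsilon_{\mathcal{I}(n)})+c_{\mathcal{I}(n)}}{1-23\mu_{\mathcal{I}(n)}}$ and thus lies in $_n\mathcal{R}_{\mathcal{I}(n)}$; since the cyclic-reduction algorithm only shortens the current word, this bound persists throughout its execution, so $W=_{\widetilde G_{\mathcal{I}(n)}}1$ and the reduction indeed terminates at the empty word.

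To match the complexity stated in the theorem, fix $k\in\mathbb{N}$. For $n$ large enough that $\mathcal{I}(n)\geq k$ the hypothesis $\bigO(g_{\mathcal{I}(n)})\subseteq\bigO(g_k)$ furnishes a constant $C_k$ with $g_{\mathcal{I}(n)}(x)\leq C_k g_k(x)$ for every $x$. By also arranging (under the same sparseness freedom) that $\bar\xi(i)\leq C'\Phi(i)$ for a single absolute constant $C'$, I deduce from \eqref{inequality-an-additional-inequality-001} that
\[
\Psi_{\mathcal{I}(n)}\;\leq\;g_{\mathcal{I}(n)}\bigl(\bar\xi(\mathcal{I}(n))\bigr)\;\leq\;C_k\,g_k\bigl(C'\Phi(\mathcal{I}(n))\bigr)\;\leq\;C_k\,g_k(C'n)\;=\;\bigO(g_k(n)).
\]
Summing the three phases yields $\bigO(\mathcal{C}_{_n\mathcal{R}_{\mathcal{I}(n)}}+g_k(n)\,n^{1+\upsilon})$, with $\upsilon=0$ when $_n\mathcal{R}_{\mathcal{I}(n)}$ has a single cyclic-shift class for all sufficiently large $n$ (equivalent to the stated cofinitary condition on the $\mathcal{R}_i$) and $\upsilon=1$ otherwise.

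The principal technical point is the simultaneous two-sided calibration of $\bar\xi$: one needs $\bar\xi(i)\gtrsim\Phi(i)$ so that the first-trivialization level $i^*$ of any $W$ of length $n$ falls within the reach $\mathcal{I}(n)$ of the $n$-step preprocessing (correctness), and simultaneously $\bar\xi(i)\lesssim\Phi(i)$ so that $g_k$ can absorb $\bar\xi(\mathcal{I}(n))$ into $g_k(n)$ (complexity), all while remaining compatible with the SP-relation and the LPP-imposed constraints of Sections \ref{section Small cancellation conditions}--\ref{section-algorithms}. The sparseness hypothesis on the standard parameters is exactly what supplies room to make this joint choice; once it is in place, the rest of the proof is a routine synthesis of Lemma \ref{property 0}, Lemma \ref{lem 6.6}, Theorem \ref{theorem-main-theorem-about-wp-in-quotients}, and Remark \ref{remark-about-wp-in-quotients}.
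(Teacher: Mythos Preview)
Your overall architecture matches the paper's: simulate $\mathcal{TM}$ for $n$ steps, assemble the relevant relators, then invoke the cyclic-reduction machinery of Theorem \ref{theorem-main-theorem-about-wp-in-quotients} and Remark \ref{remark-about-wp-in-quotients}. But you have omitted a step that the paper treats as essential, and the omission creates a genuine circularity.

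The paper does \emph{not} run the reduction at level $\mathcal{I}(n)$. After computing $i_0=\mathcal{I}(n)$ it performs an additional search (its step S-2): find the largest $i_1\in[1,i_0]$ with $\bar\xi(i_1)\le n$, and run the reduction at level $i_1$. This choice guarantees $\bar\xi(i_1)\le n$ \emph{by construction}, so \eqref{inequality-an-additional-inequality-001} gives $\Psi_{i_1}\le g_{i_1}(\bar\xi(i_1))\le g_{i_1}(n)$ with no further calibration. For correctness the paper uses $\xi$ (built from $\rho_{j_0}$, which is \emph{not} part of the level data) rather than $\bar\xi$: since $\rho_{j_0}$ may be enlarged after $\mathcal{TM}$ is fixed, the LPP inequality $\xi(j_0)\ge\Phi(j_0)$ involves no feedback loop, and then $\bar\xi(j_0)\le\xi(j_0)<n$ together with $j_0\le i_0$ forces $j_0\le i_1$.

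Your proof tries to bypass the $i_1$-search by forcing the two-sided estimate $\Phi(i)\le\bar\xi(i)\le C'\Phi(i)$ through the freedom in $f_{\bar\rho}$. But $\bar\rho_i$ is item (3) of Definition \ref{definition-i-th-level-data}, hence part of the output of $\mathcal{TM}$, hence $\Phi$ itself depends on $f_{\bar\rho}$. Your sentence ``$\Phi$ is a fixed recursive function depending only on $\mathcal{TM}$'' therefore begs the question: modifying $f_{\bar\rho}$ modifies $\mathcal{TM}$, which modifies $\Phi$, which moves the very target you are calibrating $\bar\xi$ against. In particular there is no reason the upper bound $\bar\xi(i)\le C'\Phi(i)$ can be arranged simultaneously with the SP-relation and \eqref{additional condition on bar-rho}; the ``sparse enough'' hypothesis only licenses making parameters \emph{larger}, never bounding them from above. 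Restore the $i_1$-step (and use $\xi$ rather than $\bar\xi$ for the correctness inequality $\xi(i^\ast)\ge\Phi(i^\ast)$), and your argument becomes the paper's.
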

\begin{proof}

For a given word $W \in X^*$, first of all, without loss of generality assume that $\|W\| \geq \xi(1)$ and $W\neq_{G_0} 1$. Now, to check whether $W=_{\bar{G}} 1$ or not, we can apply the following procedure:
\begin{enumerate}
	\item[S-1.] Run the Turing machine $\mathcal{TM}$ consecutively for inputs $k=1,2,\ldots$ and stop after exactly $\|W\|$ steps. Suppose that, as a result,  the $i_0$-th level data  is constructed, but the $(i_0+1)$-st level data is not constructed, i.e. $i_0 = \mathcal{I}(\|W\|)$;
	
	\item[S-2.] Find the maximum integer index $i_1$ from the interval $[1, i_0]$ such that $\bar{\xi}(i_1) \leq \|W\|$;
	\item[S-3.] Construct the set $_n\mathcal{R}_{i_1}$, where $n=\|W\|$;
	\item[S-4.] Run the \texttt{$(\lambda_{i_1}, c_{i_1}, \epsilon_{i_1}, 1-121\lambda_{i_1}\mu_{i_1})$-cyclic-reduction} algorithm with input circle $\sigma$ such that $lab(\sigma)=W$. Note that, in order to run this algorithm, we need the $i_1$-th level data and the set $_n\mathcal{R}_{i_1}$.
\end{enumerate}
Note that if $W=_{\bar{G}} 1$, then there is minimum $j_0\geq 1$ (recall that we assumed $W\neq_{G_0} 1$) such that $W=_{G_{j_0}} 1$. 



~\\
\textit{Claim.} $j_0 \leq i_0$.
\begin{proof}[Proof of the claim]
First of all, we have $\Phi(i_0)\leq \|W\| < \Phi(i_0+1)$. 
We have
\begin{align*}
	\xi({j_0})\stackrel{\text{by def}}{=} \frac{(1-23\mu_{j_0})\rho_{j_0}-2\lambda_{j_0}\epsilon_{j_0} - c_{j_0}}{\lambda_{j_0}} \geq^{by LPP} \Phi({j_0}).
\end{align*}
Therefore, since by Lemma \ref{property 0} we have $\|W\|>\xi({j_0})$, we get $\|W\|>\Phi({j_0})$. On the other hand, since $\|W\| < \Phi(i_0+1)$ and $\Phi$ is an increasing function, from the last inequality we get $i_0+1>j_0$. Therefore, $i_0\geq j_0$. The claim is proved.
\end{proof}
Since, by the above claim, $j_0<i_0$, and by Remark \ref{remark-on-lemma-property 0}, $\bar{\eta}(j_0) < \|W\|$, in view of the way $i_1$ was defined, we get that $j_0 \leq i_1$. Therefore, the equality  $W=_{G_{j_0}} 1$ implies  $W=_{G_{i_1}} 1$. Thus $W=_{\bar{G}} 1$ if and only if $W=_{G_{i_1}} 1$, hence
on step S-4 the $(\lambda_{i_1}, c_{i_1}, \epsilon_{i_1}, \eta_{i_1})$-cyclic-reduction algorithm returns empty word for input $W$ (see Remark \ref{remark-about-wp-in-quotients}).

Now we are in a position to show that the time complexity estimations in the statement of the theorem, in fact, are true. For that reason, first, notice that on steps S-1 and S-3 the procedure spends $\|W\|+\mathcal{C}_{\mathcal{R}^n_m} =\bigO \big( \mathcal{C}_{\mathcal{R}^n_m} + n^{1+\upsilon}g_{j_0}(n) \big)$ time. Next, since $i_0< \|W\|$ and since before the step S-2 the $i$-th level data already was constructed for $i= 1, 2, \ldots, i_0$, we get that on  step S-2 the procedure spends $\bigO(\|W\|)$ time. Finally, on step S-4, by Theorem \ref{theorem-main-theorem-about-wp-in-quotients}, the procedure spends 
$a \Psi\big(|X\cup Y_{i_1}|, \lambda_{i_1}, c_{i_1}, \epsilon_{i_1}, \mu_{i_1},  k_{i_1} \big) n^{1+\upsilon} = a \Psi_{i_1} n^{1+\upsilon}$ time, where $a$ is a constant not depending on $\bar{G}$ and $k_{i_1}$ is the number of elements in $\mathcal{R}_{i_1}$ up to cyclic shifts. Since $\|W\| > \bar{\xi}(i_1)$ and $g_{i_1}$ is increasing, by \eqref{inequality-an-additional-inequality-001}   we get $a \Psi_{i_1} n^{1+\upsilon} =\bigO \big(g_{i_1}(n) n^{1+\upsilon} \big)$. Thus we confirmed the estimations in the statement of the lemma.

\end{proof}
\begin{remark}
	Note that in Theorem \ref{theorem_about_WP_in_barG_2}, we did not put any restrictions on the relators $\mathcal{S}_i$, $i=1,2,\ldots$,  other then that they are recursively enumerable and make the groups $H_i=G_{i-1}*F(Y_i)/ \ll \mathcal{S}_i\gg$ hyperbolic. However, when instead of the word problem we consider conjugacy problem in $\bar{G}$, an analogue to the statement of Theorem \ref{theorem_about_WP_in_barG_2} no longer holds unless the sets $\mathcal{S}_i$  possess additional properties. In fact, the group $G_{\mathcal{L}}$, constructed in the proof of Theorem \ref{theorem_about_connecton_of_word_and_conjugacy_problems}, has the structural properties of the group $\bar{G}$ from Theorem \ref{theorem_about_WP_in_barG_2} but, nevertheless, the conjugacy problem is undecidable in it whenever the underlying set $\mathcal{L}$ is not recursive.
	
	However, if we restrict ourselves from the conjugacy problem to the so called $G$-conjugacy problem, then the analogue of Theorem \ref{theorem_about_WP_in_barG_2} holds as it is shown in Theorem \ref{theorem-effective-G-conjugacy-problem}.
\end{remark}
\begin{remark}
\label{an-important-remark}
Note that in the proof of Theorem \ref{theorem_about_WP_in_barG_2} we, in particular, showed that the construction of the $\mathcal{I}(n)$-th level data and the implementation of the $(\lambda_{i_1}, c_{i_1}, \epsilon_{i_1}, \eta_{i_1})$-\texttt{cyclic-reduction} algorithm, whenever $_n\mathcal{R}_{i_1}$ is not empty, can be done in time $\bigO \big( \mathcal{C}_{_n\mathcal{R}_{\mathcal{I}(n)}} + g_k(n) n^{1+\upsilon} \big)$.
\end{remark}


\begin{theorem}
\label{theorem_about_lacunary_hyperbolicity_of_barG}
If the standard parameters are sparse enough, then the group $\bar{G}$ from Theorem \ref{theorem_about_WP_in_barG_2} is lacunary hyperbolic.
\end{theorem}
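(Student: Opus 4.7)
The plan is to verify the criterion of Lemma~\ref{lem lacunary hyp gps} (as reformulated in Remark~\ref{remark 000}), namely to exhibit $\bar{G}$ as the inductive limit of a chain of hyperbolic groups connected by epimorphisms induced by $\mathrm{id}: X \to X$, with the hyperbolicity constant of the $i$-th group being ``little $o$'' of the radius of the $i$-th epimorphism. The chain
\[
G_0 \stackrel{\alpha_0}{\twoheadrightarrow} G_1 \stackrel{\alpha_1}{\twoheadrightarrow} G_2 \twoheadrightarrow \cdots
\]
with $\alpha_i = \gamma_{i+1} \circ \beta_i$ already has the required structural features: each $G_i$ is hyperbolic by hypothesis, each $\alpha_i$ is an epimorphism induced by the identity on $X$, and $\bar{G} = \lim_i(G_i, \alpha_i)$. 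Writing $\tilde{\delta}_i$ for the hyperbolicity constant of $G_i$ with respect to $X$, all that remains is to verify $\tilde{\delta}_i / \mathrm{radius}(\alpha_i) \to 0$.

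First I would lower-bound $\mathrm{radius}(\alpha_i)$ via Lemma~\ref{property 0}: any word $W \in X^*$ satisfying $W \neq_{G_i} 1$ and $W =_{G_{i+1}} 1$ cannot be trivial already in $G_0$ and is first killed exactly at level $i+1$, so the lemma yields $\|W\| > \xi(i+1)$. Consequently
\[
\mathrm{radius}(\alpha_i) \;\geq\; \xi(i+1) \;=\; \frac{(1-23\mu_{i+1})\rho_{i+1} - c_{i+1}}{\lambda_{i+1}} - 2\epsilon_{i+1},
\]
which by the LPP convention dominates any prescribed computable function of the level-${\leq i}$ parameters once $\rho_{i+1}$ is taken sparse enough.

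Next I would observe that $\tilde{\delta}_i$ is a finite positive integer determined by the presentation of $G_i$, hence by the level-${\leq i}$ data in \eqref{computable sequences}. By item~(4) of the preliminaries on algorithmic properties of hyperbolic groups---which supplies an algorithm computing a slimness constant for any finite presentation of a hyperbolic group---$\tilde{\delta}_i$ is bounded above by a computable function $\Delta(i)$ of this data. Taking the sparseness of the standard parameters so as to enforce $\rho_{i+1}$ large enough that $\xi(i+1) \geq i \cdot \Delta(i)$ (permissible by LPP) yields $\tilde{\delta}_i / \mathrm{radius}(\alpha_i) \leq 1/i \to 0$, completing the verification.

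The main technical point to monitor is that $\tilde{\delta}_i$, the hyperbolicity constant of $G_i$ relative to the original alphabet $X$ (rather than the larger bookkeeping alphabet $X \cup \bar{Y}_i$ used throughout Section~\ref{section Small cancellation conditions}), is finite at all and is indeed controllable by the level-${\leq i}$ data. This is implicit in the setup of Subsection~\ref{subsection_the-definition-of-the-subclass}: the assumption $\bar{G} = \langle X \rangle$ forces each $y \in Y_j$ to become expressible over $X$ at some finite level, and in the applications of Sections~\ref{section-verbally_comlete_groups}, \ref{section-Tarskii_monsters}, \ref{section_problem_7-5} this expressibility is built into the relators $\mathcal{R}_i$, so that each $G_i = \langle X \rangle$ with a computable finite presentation from which $\tilde{\delta}_i \leq \Delta(i)$ can be extracted.
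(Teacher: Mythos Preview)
Your proposal is correct and follows essentially the same approach as the paper: both use Lemma~\ref{property 0} to bound the radius below by $\xi(i+1)$, then invoke LPP to force $\xi(i+1)$ to dominate a quantity of the form (hyperbolicity constant of $G_i$) times (something tending to infinity), and conclude via Lemma~\ref{lem lacunary hyp gps}. The paper takes $\delta_i\Phi(i+1)$ as that quantity (so $\delta_i/r_i < 1/\Phi(i+1)$), whereas you take $i\cdot\Delta(i)$; both choices are perfectly adequate applications of the sparseness convention. Your final paragraph on the generating-set issue is a worthwhile clarification that the paper leaves implicit.
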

\begin{proof}
First of all, the group $\bar{G}$ is an inductive limit of groups $G_i$, $i\in \mathbb{N}$, all of which are hyperbolic. More precisely, $\bar{G}$ is the inductive limit of the following sequence
\begin{align*}
	G_0  \stackrel{\alpha_0}\twoheadrightarrow G_1  \stackrel{\alpha_1}\twoheadrightarrow \ldots.
\end{align*}

   An immediate corollary of Lemma \ref{property 0} is that, for all $i \in \mathbb{N}$, the radius of $\alpha_{i}: G_{i}\rightarrow G_{i+1}$, which we denote by $r_{i}$, satisfies the following inequality
      \begin{align*}
   \xi(i+1)\stackrel{\text{by def}}{=}\frac{(1-23\mu_{i+1})\rho_{i+1} -c_{i+1}}{\lambda_{i+1}} - 2\epsilon_{i+1}<r_i.
   \end{align*}
   Combining the last inequality with the inequality $\xi(i+1) \leq \delta_i \Phi(i+1)$, we get $\delta_i \Phi(i+1) < r_i$. Therefore,
   \begin{equation*}
   	\lim_{i\rightarrow \infty} \frac{\delta_i}{r_i} \leq \lim_{i\rightarrow \infty} \frac{\delta_i}{\delta_i \Phi(i+1)}=\lim_{i\rightarrow \infty} \frac{1}{ \Phi(i+1)}=0,
   \end{equation*}

   hence, by Lemma \ref{lem lacunary hyp gps}, this means that $\bar{G}$ is lacunary hyperbolic.
\end{proof}
~\\
~\\




\subsection{$G$- and $H$- conjugacy problems in $\bar{G}$. Effectiveness of the $G$-conjugacy problem in $\bar{G}$}

The main goal of this subsection is to define the $G$-conjugacy problem for sequences of type \eqref{main seq of gps***} (see Definition \ref{definition- G- and H- conjugacy problems}) and then show that the $G$-conjugacy problem is effectively solvable when  the sequences $(\lambda_i, c_i, \epsilon_i, \mu_i, \rho_i)_{i=1}^{\infty}$ are sparse enough (see Theorem \ref{theorem-effective-G-conjugacy-problem}).\\

Let $\bar{G}$ be the group defined in Subsection \ref{subsection_the-definition-of-the-subclass} which also carries all the properties described there. 
\begin{definition} [$G$- and $H$-conjugates]
\index{$G$- and $H$-conjugates}
	\label{definition- G- and H- conjugacy}
	Let $U, V \in X^*$. Then we say that $U$ is $G$-conjugate to $V$ in $\bar{G}$ if either $U \sim_{conj} V$ in $G_0$ or there exists $i \in \mathbb{N}$ such that $U \sim_{conj} V$ in $G_i$ but $ U \not\sim_{conj} V$ in $H_i$.
	
	Analogously, if there exists $i \in \mathbb{N}$ such that $U \sim_{conj} V$ in $H_i$, but $U \not\sim_{conj} V$ in $G_{i-1}$, then we say that $U$ is $H$-conjugate to $V$ in $\bar{G}$.
\end{definition}

\begin{definition}[$G$- and $H$-conjugacy problems]
	\index{$G$- and $H$-conjugacy problems} \index{conjugacy problem! $H$-conjugacy problem} \index{conjugacy problem! $G$-conjugacy problem}
	\label{definition- G- and H- conjugacy problems} 
	For the presentation \eqref{main seq of gps***} of $\bar{G}$ the $G$-conjugacy problem  asks whether there is an algorithm which for any pair of  input words $U, V \in X^*$, decides whether $U$ is $G$-conjugate to $V$ in $\bar{G}$ or not. $H$-conjugacy problem is defined analogously.
\end{definition}
~\\

Let us define $\zeta: \mathbb{N} \rightarrow \mathbb{N}$ as
\begin{align*}
	\zeta(i)=\frac{(1-121\lambda_i \mu_i )\rho_i - 2c_i }{\lambda_i}  - 4\epsilon_i.
\end{align*}

\begin{lemma}
\label{lemma-about-structure-of-CP-in-barG}
 Suppose that the standard parameters are sparse enough, and  $U, V \in X^*$ are such that $U$ is $G$-conjugate to $V$ in $\bar{G}$. Then there exists $i\in \mathbb{N}$ such that $\zeta(i) \leq \|U\|+\|V\|$, $i \leq \mathcal{I} (\|U\|+\|V\| )$ and $U \sim_{conj} V$ in $G_i$, but $U \not\sim_{conj} V$ in $H_{i}$.
\end{lemma}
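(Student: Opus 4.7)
The plan is to take $i$ to be the smallest positive integer for which $U\sim_{conj} V$ in $G_i$ while $U\not\sim_{conj} V$ in $H_i$, and to derive the length bound from a slender conjugacy diagram at level~$i$. First I would replace $U$ and $V$ by cyclic conjugates $U_H,V_H$ which are cyclically $(\lambda_i,c_i)$-quasi-geodesic in $\Gamma(H_i,X\cup\bar Y_i)$, obtained from $U,V$ by conjugation performed \emph{inside} $H_i$ (so that no $\mathcal{R}_i$-relator is ever used). Crucially this preserves both the $G_i$-conjugacy of the pair and its $H_i$-non-conjugacy, while shortening the words: $\|U_H\|\le\|U\|$, $\|V_H\|\le\|V\|$.

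Next I would fix a reduced cyclically slender $(U_H,V_H)$-conjugacy diagram $\Delta$ over $G_i$, with boundary $ABCD$ in which $lab(BC)$ and $lab(DA)$ are cyclic shifts of $U_H$ and $V_H$. If $\Delta$ contained no $\mathcal{R}_i$-cell it would be a conjugacy diagram over $H_i$, contradicting $U_H\not\sim_{conj} V_H$ in $H_i$, so $\Delta$ must contain at least one $\mathcal{R}_i$-cell. Applying Lemma~\ref{lem 6.6} to $\Delta$, viewed as a $4$-gon whose sides are $(\lambda_i,c_i)$-quasi-geodesic in $\Gamma(H_i,X\cup\bar Y_i)$, produces an essential $\mathcal{R}_i$-cell $\Pi$ and outer $\epsilon_i$-contiguity subdiagrams $\Gamma_1,\Gamma_2,\Gamma_3,\Gamma_4$ to $AB,BC,CD,DA$ with total contiguity degree exceeding $1-23\mu_i$. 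Slenderness of $\Delta$ bounds $\|AB\|=\|CD\|$ by a constant depending only on the slimness data of $G_i$ (an analogue of Lemma~\ref{lemma-side length of a slender diagram}); combined with $\|\Pi\|\ge\rho_i$ and the sparsity of the standard parameters, this forces $(\Pi,\Gamma_1,AB),(\Pi,\Gamma_3,CD)<\mu_i$, and hence
\[
(\Pi,\Gamma_2,BC)+(\Pi,\Gamma_4,DA)>1-25\mu_i\ge 1-121\lambda_i\mu_i.
\]
Passing from the inner contiguity arcs to the outer arcs on $BC$ and $DA$ via their $(\lambda_i,c_i)$-quasi-geodesicity yields
\[
\|U\|+\|V\|\ge\|U_H\|+\|V_H\|\ge\|\hat q_{\Gamma_2}\|+\|\hat q_{\Gamma_4}\|\ge\frac{(1-121\lambda_i\mu_i)\rho_i-2c_i}{\lambda_i}-4\epsilon_i=\zeta(i).
\]
The bound $i\le\mathcal{I}(\|U\|+\|V\|)$ then follows because sparseness of the standard parameters guarantees, by LPP, the inequality $\Phi(i)\le\zeta(i)$ for every $i\ge 1$, so that $\Phi(i)\le\|U\|+\|V\|$ and the definition of $\mathcal{I}$ delivers the inequality.

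The delicate point of the argument is the reduction step: one cannot shorten $U,V$ using $\mathcal{R}_i$-relators without risking destruction of the $H_i$-non-conjugacy, so $U_H,V_H$ may still contain $(\epsilon_i,1-121\lambda_i\mu_i)$-subwords and the sharper Lemma~\ref{lemma_about_slender_conjugacy_diagrams} is not directly available; this is why the argument is routed through Lemma~\ref{lem 6.6}, which only requires $(\lambda,c)$-quasi-geodesic boundary. A secondary subtlety is the verification that slenderness of $\Delta$ (which controls $\|AB\|$ only via the $G_i$-metric) is still enough to make the contiguity degrees of $\Pi$ to the short sides $AB$ and $CD$ negligible compared to $\mu_i$; this is resolved because $\|\Pi\|\ge\rho_i$ is chosen much larger than the slenderness bound on $\|AB\|$. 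The degenerate case $U\sim_{conj}V$ in $G_0$ is absorbed by selecting $i$ as the first index at which a nontrivial $\mathcal{R}_j$-reduction genuinely intervenes.
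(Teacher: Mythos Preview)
Your argument has a genuine gap at the step where you bound the contiguity degrees $(\Pi,\Gamma_1,AB)$ and $(\Pi,\Gamma_3,CD)$. You appeal to an analogue of Lemma~\ref{lemma-side length of a slender diagram} for the slender diagram over $G_i$, so as to bound $\|AB\|$ by a constant depending on ``the slimness data of $G_i$'', and then use $\|\Pi\|\ge\rho_i$ to conclude the degrees are below $\mu_i$. But that slenderness constant is $\tau(|X|,\delta_i,\lambda_i,c_i)$, and in the parameter hierarchy (Convention~\ref{convention-lpp}) one has $\rho_i\succ\delta_i$: the hyperbolicity constant $\delta_i$ of $G_i$ is fixed only \emph{after} $\rho_i$, and indeed Lemma~\ref{lem 6.6} gives $\delta_i=4L_i\ge4\rho_i$. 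Thus $\|AB\|$ is not small relative to $\|\Pi\|$, and the desired bound on the side contiguity degrees does not follow from sparsity. (A secondary problem: $U_H,V_H$ are $(\lambda_i,c_i)$-quasi-geodesic in $H_i$, not a priori in $G_i$, so even the hypotheses of Lemma~\ref{lemma-side length of a slender diagram} applied to $G_i$ are in doubt.)

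The paper handles exactly the obstruction you identify --- that one cannot invoke Lemma~\ref{lemma_about_slender_conjugacy_diagrams} directly because $U_H,V_H$ may still carry $(\epsilon_i,1-121\lambda_i\mu_i)$-subwords --- not by circumventing that lemma but by a case split on precisely this property. If some cyclic shift of the $(\lambda_i,c_i)$-cyclic-reduction $U'$ (your $U_H$) contains such a subword, then already
\[
\|U\|+\|V\|\ge\|U'\|\ge\frac{(1-121\lambda_i\mu_i)\rho_i-c_i}{\lambda_i}-2\epsilon_i>\zeta(i),
\]
with no diagram needed. Otherwise both $U',V'$ are cyclically $(\lambda_i,c_i,\epsilon_i,1-121\lambda_i\mu_i)$-reduced, and now Lemma~\ref{lemma_about_slender_conjugacy_diagrams} (whose proof, in the Appendix, uses mirroring-point surgeries together with the $C'$ condition rather than any crude bound on $\|AB\|$) applies and delivers $(\Pi,\Gamma_2,BC)+(\Pi,\Gamma_4,DA)\ge1-121\lambda_i\mu_i$ directly. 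Your final step, deducing $i\le\mathcal{I}(\|U\|+\|V\|)$ from $\zeta(i)\le\|U\|+\|V\|$ via a sparsity inequality $\Phi(i)\le\zeta(i)$, is fine and matches the paper.
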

\begin{proof}
If $U\sim_{conj} V$ in $G_0$ then the statement is obvious. Now, without loss of generality assume that $U\not\sim_{conj} V$ in $G_0$. Then there exists a minimal $i\in \mathbb{N}$ such that $U \sim_{conj} V$ in $G_i$, but $U\not\sim_{conj} V$ in $H_i$.
	Suppose that $U', V' \in X^*$ are the $(\lambda_i, c_i)$-cyclic-reductions of $U$ and $V$, respectively. 
	
	First, let us show that $\zeta(i) \leq \|U\|+\|V\|$. For that purpose, let us separately consider two different cases. The first case is when at least one of $U', V'$, say $U'$, is not cyclically $(\lambda_i, c_i, \epsilon_i,  1-121\lambda_i\mu_i)$-reduced. The second case is when both $U'$ and $V'$ are cyclically $(\lambda_i, c_i, \epsilon_i,  1-121\lambda_i\mu_i)$-reduced.
	
	For the first case, by definition, some cyclic shift $U''$ of $U'$ contains a $(\epsilon_i, 1-121\lambda_i\mu_i)$-subword. Therefore, by definition and by triangle inequality,
	\begin{align}
	\label{inequality-1234}
		\|U\|+\|V\| \geq \|U''\| \geq \frac{(1-121\lambda_i \mu_i) \rho_i -c_i}{\lambda_i} -2\epsilon_i > \zeta(i).
	\end{align}
	~\\
	
	Now let us consider the second case, i.e. when both $U'$ and $V'$ are $(\lambda_i, c_i, \epsilon_i,  1-121\lambda_i\mu_i)$-reduced. In this case, there exists a reduced cyclically slender $(U', V')$-conjugacy diagram $\Delta$ over $G_i=H_i/\ll \mathcal{R}_i \gg$ which contains an $\mathcal{R}_i$-cell.
 Let $\partial \Delta = ABCD$ and $lab (BC) = U'', lab (AD) = V''$, where $U''$ and $V''$ are some cyclic shifts of $U'$ and $V'$, respectively. Then, by Lemma \ref{lemma_about_slender_conjugacy_diagrams}, there exists an essential $\mathcal{R}_i$-cell  $\Pi$ in $\Delta$ connected to $AB$, $BC$, $CD$ and $DA$ by $\Gamma_1$, $\Gamma_2$, $\Gamma_3$ and $\Gamma_4$, respectively, such that
	\begin{enumerate}
		\item $\Gamma_2$ and $\Gamma_4$ are non-empty;
		\item $(\Pi, \Gamma_2, BC)+(\Pi, \Gamma_4, DA) \geq 1-121 \lambda_i \mu_i$; and
	\end{enumerate}
	Therefore,
	\begin{equation}
		\label{inequality-12345}
	\begin{aligned}
		\|U\|&+\|V\| \geq \|U'\|+\|V'\| = \|U''\|+\|V''\|
		\geq \| 
		\hat{q}_{\Gamma_2} \| + \| \hat{q}_{\Gamma_4} \|\\
		&\geq \Bigg(\frac{(\Pi, \Gamma_2, \hat{q}_{\Gamma_2})\|\Pi\| - c_i }{\lambda_i}  - 2\epsilon_i \Bigg) + 	
		 \Bigg(\frac{(\Pi, \Gamma_4, \hat{q}_{\Gamma_4})\|\Pi\| - c_i }{\lambda_i}  - 2\epsilon_i \Bigg)\\
		 &\geq \frac{(1-121\lambda_i \mu_i )\|\Pi\| - 2c_i }{\lambda_i}  - 4\epsilon_i \geq \frac{(1-121\lambda_i \mu_i )\rho_i - 2c_i }{\lambda_i}  - 4\epsilon_i=\zeta(i).
	\end{aligned}
	\end{equation}
~\\
The conclusion from \eqref{inequality-1234} and \eqref{inequality-12345} is that if $i\in \mathbb{N}$, $U \sim_{conj} V$ in $G_i$, but $U \not\sim_{conj} V$ in $H_{i}$, then
\begin{align*}
	\|U\|+\|V\| \geq \zeta(i).
\end{align*}
~\\

Now let us show that $i \leq \mathcal{I} (\|U\|+\|V\| )$. 

From the definition of $\mathcal{I} (\|U\|+\|V\|)$ it follows that  $\Phi(\mathcal{I} (\|U\|+\|V\|)+1)> \|U\|+\|V\|$. Therefore, from the last two inequalities we get  
\begin{align*}
	\frac{\delta_{i-1}\Phi(\mathcal{I} (\|U\|+\|V\|)+1)+4\lambda_i\epsilon_i+2c_i}{1-121\lambda_i\mu_i} > \rho_i \geq^{\text{by LPP}} \frac{\delta_{i-1}\Phi(i)+4\lambda_i\epsilon_i+2c_i}{1-121\lambda_i\mu_i},
\end{align*}
which implies that $\mathcal{I} (\|U\|+\|V\|)\geq i$. Thus the lemma is proved.
	
\end{proof}

An obvious corollary from Lemma \ref{lemma-about-structure-of-CP-in-barG} is the following lemma.
\begin{lemma}
\label{lemma-1*1}
	If $U \not\sim_{conj} V$ in $G_{\mathcal{I}(n)}$, but $U \sim_{conj} V$ in $\bar{G}$, then $U$ is $H$-conjugate to $V$ in $\bar{G}$.
\end{lemma}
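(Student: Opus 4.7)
The plan is to argue by contrapositive: assuming $U$ is not $H$-conjugate to $V$ in $\bar{G}$ (but is still conjugate in $\bar{G}$), I will show that $U \sim_{conj} V$ already in $G_{\mathcal{I}(n)}$. Recall that a conjugacy in $\bar{G}$ must be either a $G$-conjugacy or an $H$-conjugacy (this was observed in the paragraph introducing the two notions). So, under the contrapositive assumption, $U$ must be $G$-conjugate to $V$ in the sense of Definition \ref{definition- G- and H- conjugacy}.

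I will then split into the two sub-cases that define $G$-conjugacy. In the trivial sub-case, $U \sim_{conj} V$ already in $G_0$, and since all the maps $\alpha_j: G_j \twoheadrightarrow G_{j+1}$ are surjective, conjugacy is preserved along the chain, giving $U \sim_{conj} V$ in $G_{\mathcal{I}(n)}$ --- contradicting the hypothesis. In the substantive sub-case, there exists some $i \in \mathbb{N}$ with $U \sim_{conj} V$ in $G_i$ but $U \not\sim_{conj} V$ in $H_i$; here is where I invoke Lemma \ref{lemma-about-structure-of-CP-in-barG}, which guarantees that (taking $n = \|U\|+\|V\|$ as in that lemma) such an $i$ can be chosen with $i \leq \mathcal{I}(\|U\|+\|V\|) = \mathcal{I}(n)$.

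Having $i \leq \mathcal{I}(n)$ with $U \sim_{conj} V$ in $G_i$, I push the conjugacy forward through the chain of epimorphisms $G_i \stackrel{\alpha_i}{\twoheadrightarrow} G_{i+1} \twoheadrightarrow \cdots \twoheadrightarrow G_{\mathcal{I}(n)}$ to conclude $U \sim_{conj} V$ in $G_{\mathcal{I}(n)}$, which again contradicts the hypothesis of the lemma. Combining the two sub-cases, the assumption that $U$ is not $H$-conjugate to $V$ in $\bar{G}$ is untenable, so $U$ must be $H$-conjugate to $V$.

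There is essentially no technical obstacle here: the whole force of the argument has already been packaged into Lemma \ref{lemma-about-structure-of-CP-in-barG} (which bounds the level at which a $G$-conjugacy can first appear in terms of $\mathcal{I}$) and into the fact that the maps $\alpha_j$ are epimorphisms. The only minor care needed is to make the identification $n = \|U\|+\|V\|$ explicit (since the statement uses $n$ in the same sense as the surrounding lemmas) and to handle the $G_0$ sub-case separately, as it is not covered by the ``$i \in \mathbb{N}$'' branch of the definition.
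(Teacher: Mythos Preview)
Your proof is correct and follows exactly the approach the paper intends: the paper simply states that this lemma is ``an obvious corollary from Lemma~\ref{lemma-about-structure-of-CP-in-barG}'', and your contrapositive argument, together with the dichotomy between $G$- and $H$-conjugacy and the pushforward along the epimorphisms $\alpha_j$, is precisely how that corollary unpacks.
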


\begin{theorem}
\label{theorem-effective-G-conjugacy-problem}
	If the standard parameters are sparse enough and the function $f(n)\stackrel{\text{def}}{=}\mathcal{C}_{_n\mathcal{R}_{\mathcal{I}(n)}}$ is bounded by a polynomial, then the $G$-conjugacy problem in $\bar{G}$ is solvable in polynomial time.
	\end{theorem}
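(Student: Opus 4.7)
The plan is to reduce the $G$-conjugacy decision in $\bar{G}$ to a polynomially bounded family of ordinary conjugacy decisions in the hyperbolic groups $G_i$ and $H_i$. The key reduction is Lemma \ref{lemma-about-structure-of-CP-in-barG}: if $U, V \in X^*$ of total length $n$ are $G$-conjugate in $\bar{G}$ and not already conjugate in $G_0$, then there exists an index $i$ with $\zeta(i) \leq n$ and $i \leq \mathcal{I}(n)$ such that $U \sim_{conj} V$ in $G_i$ but $U \not\sim_{conj} V$ in $H_i$; conversely, any such $i$ certifies $G$-conjugacy directly by Definition \ref{definition- G- and H- conjugacy}. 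So it suffices to enumerate this finite list of candidate indices and answer, for each, two ordinary conjugacy questions.

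Concretely, on input $(U, V)$ of total length $n$ the algorithm will proceed as follows. (1) Decide $G_0$-conjugacy via Epstein--Holt's linear-time algorithm; if affirmative, return ``$G$-conjugate''. (2) Run $\mathcal{TM}$ for exactly $n$ steps to obtain the $i$-th level data for every $i \leq \mathcal{I}(n)$; by definition of $\mathcal{I}$ this costs $O(n)$ time. (3) For every $i \leq \mathcal{I}(n)$ with $\zeta(i) \leq n$ compute $_n\mathcal{R}_i$; the cumulative cost is bounded by $f(n)$, which is polynomial by hypothesis. (4) For each such $i$, decide $H_i$-conjugacy using the full presentation $\langle X \cup \bar{Y}_i \mid \mathcal{F}_i \rangle$ of $H_i$, which is a Dehn presentation by Proposition \ref{proposition 1}, and decide $G_i$-conjugacy by first producing a Dehn presentation from the finite truncation $\langle X \cup \bar{Y}_i \mid \mathcal{F}_i \cup {}_n\mathcal{R}_i \rangle$ and then invoking Epstein--Holt. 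Return ``$G$-conjugate'' as soon as some $i$ yields ``yes'' in $G_i$ and ``no'' in $H_i$; otherwise return ``not $G$-conjugate''. Since $\mathcal{I}(n) \leq n$ and every conjugacy check runs in linear time once a Dehn presentation of polynomial size is in hand, the total running time is polynomial.

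The main technical obstacle is to justify that conjugacy in the truncated presentation $\langle X \cup \bar{Y}_i \mid \mathcal{F}_i \cup {}_n\mathcal{R}_i \rangle$ genuinely decides conjugacy in $G_i$ for words of length at most $n$. The reverse implication is automatic since the truncated group surjects onto $G_i$. For the forward direction, one first applies the $(\lambda_i, c_i, \epsilon_i, 1-121\lambda_i\mu_i)$-cyclic-reduction algorithm of Subsection \ref{subsection-the main algorithm} to replace $U, V$ by cyclically reduced representatives $U', V'$ of the same $G_i$-conjugacy classes; if $U' \sim_{conj} V'$ in $G_i$ then a cyclically slender $(U', V')$-conjugacy diagram $\Delta$ over $G_i$ exists. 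The structural information supplied by Lemma \ref{lemma_about_slender_conjugacy_diagrams} together with Lemma \ref{lemma-arcs-associated-cells-are-short} forces every essential $\mathcal{R}_i$-cell $\Pi$ in $\Delta$ to satisfy $\|\Pi\| = O(n)$; peeling $\Pi$ with its essential contiguity subdiagrams splits $\Delta$ into reduced sub-diagrams with quasi-geodesic polygonal boundary of length $O(n)$, so the same analysis applies inductively via Lemma \ref{lem 6.6}, eventually showing that every $\mathcal{R}_i$-cell used belongs to $_n\mathcal{R}_i$ once the constants are absorbed by the LPP choice of $\rho_i$. Consequently the truncated presentation correctly decides $G_i$-conjugacy on inputs of size $\leq n$, and the complexity analysis goes through.
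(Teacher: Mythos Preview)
Your overall reduction---enumerate $i\leq\mathcal{I}(n)$ and, for each, decide conjugacy in $H_i$ and in $G_i$---matches the paper. The genuine gap is in step~(4), where you propose to ``produce a Dehn presentation from the finite truncation $\langle X\cup\bar Y_i\mid\mathcal F_i\cup{}_n\mathcal R_i\rangle$ and then invoke Epstein--Holt.'' The problem is that the hyperbolicity constant of the truncated group is, by Lemma~\ref{lem 6.6}, of order $4L_n$ with $L_n=\max\{\|R\|:R\in{}_n\mathcal R_i\}$, and $L_n$ grows linearly in $n$. Any Dehn (or full) presentation of that group therefore has relator length $\Theta(n)$ and size $|X\cup\bar Y_i|^{\Theta(n)}$, and the preprocessing in Epstein--Holt (which builds a short-lex automaton whose state count is exponential in the hyperbolicity constant) is likewise exponential in $n$. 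Crucially, this exponential factor depends on $n$ itself, not merely on the level-$i$ parameters, so the LPP/sparseness trick cannot absorb it: no choice of $\rho_i$ makes an $|X|^{\Theta(n)}$ cost polynomial in $n$.

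The paper avoids this entirely. For conjugacy in $H_i$ it uses the short-conjugator bound of Lemma~\ref{lemma-side length of a slender diagram} (the bound $\tau(|X|,\delta_i',\lambda_i,c_i)$ depends only on level-$i$ data, hence can be absorbed by LPP) and reduces to polynomially many word-problem instances. For conjugacy in $G_i$ it does \emph{not} appeal to any general hyperbolic-group conjugacy algorithm; instead, having first $(\lambda_i,c_i,\epsilon_i,1-121\lambda_i\mu_i)$-cyclically reduced $U,V$, it invokes the structural Lemma~\ref{lemma_about_slender_conjugacy_diagrams} to conclude that any conjugator has the explicit form $T_1WT_2$ with $\|T_1\|,\|T_2\|\le 2\epsilon_i$ and $W$ a subword of some $R\in{}_n\mathcal R_i$ of length at most $\lambda_i\mu_i\|R\|$. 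The number of such candidates is polynomial in $n$ (after LPP), and each candidate is tested via a word problem in $G_i$, which is polynomial by Theorem~\ref{theorem_about_WP_in_barG_2}. That explicit conjugator shape is precisely the missing ingredient in your argument; your inductive ``peeling'' paragraph is aimed at showing the truncation is faithful, which is fine, but it does not address the complexity of solving conjugacy in the truncated group.
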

	\begin{proof}
		For any given words $U, V \in X^*$, by definition, $U$ being $G$-conjugate to $V$ in $\bar{G}$ means that either $U\sim_{conj} V$ in $G_0$ or  there exists $i\geq 1$ such that $U \sim_{conj} V$ in $G_i$ but $U \not\sim_{conj} V$ in $H_{i}$. If it is so, then, by Lemma \ref{lemma-1*1}, $i \leq \mathcal{I}(n)$, where $n=\|U\|+\|V\|$. 
		
		From what we said, it becomes apparent that in order to show that $U$ is $G$-conjugate to $V$ in $\bar{G}$ it is enough to check if $U\sim_{conj} V$ in $G_0$ and if it is not, then for each $1\leq i \leq \mathcal{I}(n)$ check whether 
		\begin{itemize}
			\item $U \not\sim_{conj} V$ in $H_{i}$, and
			\item $U \sim_{conj} V$ in $G_i$.
		\end{itemize}
		
		Now without loss of generality let us assume that $U \not\sim_{conj} V$ in $G_0$.
		
		Let $U', V' \in (X \cup Y_i)^*$ be cyclically $(\lambda_i, c_i)$-quasi-quasi geodesic word obtained by applying the $(\lambda_i, c_i)$-\texttt{cyclic-reduction} algorithm on $U$ and $V$, respectively. Then, since $U'$ and $V'$ are conjugate to $U$ and $V$ in $H_i$ respectively, we get that $U \sim_{conj} V$ in $H_{i}$ if and only if $U' \sim_{conj} V'$ in $H_{i}$.
		
		To check whether $U' \sim_{conj} V'$ in $H_{i}$,
	by Lemma \ref{lemma-side length of a slender diagram}, it is enough  to check for all 3-tuples $(T, U'', V'')$, where $T, U'', V'' \in (X\cup Y_{i})^*$, $U'', V''$ are some cyclic shifts of $U$, $V$ and $\|T\|\leq \tau(|X|, \delta_i', \lambda_i, c_i)$ ( where $\tau$ is defined as in Lemma \ref{lemma-side length of a slender diagram}) the equality
	\begin{align}
	\label{equality-for-H-conjugacy}
	T^{-1} U'' T =_{H_{i}} V''.
	\end{align}
	Clearly, since for large enough standard parameters, the word problem in $\bar{G}$ is decidable in polynomial time, then for large enough values of $\rho_i$ this checking can be done in polynomial time. 
	
	Now, assuming that  $U \not\sim_{conj} V$ in $H_{i}$ is already verified, in order to check whether $U \sim_{conj} V$ in $G_i$, we can apply \texttt{$(\lambda, c, \epsilon, \eta)$-cyclic-reduction} algorithm for $\eta= 1-121\lambda_i\mu_i$ to find cyclic $(\lambda_{i}, c_{i}, \epsilon_{i},  1-121\lambda_i\mu_i)$-reductions $U'$ and $V'$ of $U$ and $V$, respectively, and then check whether $U' \sim_{conj} V'$ in $H_i$ or in $G_i$. Without loss of generality assume that $U' \not\sim_{conj} V'$ in $H_i$, then, by Lemma \ref{lemma_about_slender_conjugacy_diagrams}, there exist $T_1, T_2, W \in (X \cup Y_{i})^*$ such that $\|T_1\|, \|T_2\| \leq 2\epsilon_{i}$, $W$ is a subword of a word $R \in ~_n\mathcal{R}_{i}$ of length $\|W\| \leq \lambda_i \mu_i$, and 
   \begin{align}
   \label{identity-1234}
   	(T_1 W T_2)^{-1} U'' (T_1 W T_2) =_{G_{i}} V'' 
   	   \end{align}
    for some cyclic shifts $U''$, $V''$ of $U'$ and $V'$, respectively. Therefore, in order to check whether $U' \sim_{conj} V'$ in $G_i$, it is enough to check equality \eqref{identity-1234} for all mentioned collection of words $(T_1, T_2, W, U'', V'')$. Clearly, this checking can be done in polynomial time, provided that the standard parameters are sparse enough and 
   $f(n)=\mathcal{C}_{_n\mathcal{R}_{\mathcal{I}(n)}}$ is bounded by a polynomial

\end{proof}

\subsection{The condition $C'\big(\mathcal{TM},  (g_i)_{i=1}^{\infty}, ({\rho}_i)_{i=1}^{\infty} \big)$}
\begin{definition}
\label{definition-new-small-cancellation-conditions}
\index{$C'\big(\mathcal{TM}, (g_i)_{i=1}^{\infty}$}  
\index{$C'\big(\mathcal{TM}, (g_i)_{i=1}^{\infty} \big)$}
If for fixed sequence $(g_i)_{i=1}^{\infty}$, fixed function $f_{\bar{\rho}}: \mathbb{N} \rightarrow \mathbb{N}$ and fixed Turing machine $\mathcal{TM}$ (all are defined is Subsection \ref{subsection_the-definition-of-the-subclass}), elements of the sequence $(\rho_i)_{i=1}^{\infty}$ are large enough so that  Theorem \ref{theorem_about_WP_in_barG_2} and Theorem \ref{theorem-effective-G-conjugacy-problem} hold, then we say that the presentation \eqref{main seq of gps***}  of $\bar{G}$ satisfies the condition
\begin{align*}
	C'\big(\mathcal{TM}, (g_i)_{i=1}^{\infty},  ({\rho}_i)_{i=1}^{\infty} \big).
\end{align*}
\end{definition}
\begin{property}
\label{property for rho}
	As it follows from the proof of theorems \ref{theorem_about_WP_in_barG_2} and \ref{theorem-effective-G-conjugacy-problem}, there exist a linear time computable function $f_{\rho}: \mathbb{N}^6 \rightarrow \mathbb{N}$ such that in order Theorem \ref{theorem_about_WP_in_barG_2} and Theorem \ref{theorem-effective-G-conjugacy-problem} to hold it is enough to require  $\rho_i \geq f_{\rho}(\delta'_{i}, \lambda_i, c_i, \epsilon_i, \mu_i, \Phi(i))$ for all $i\in \mathbb{N}$.
	\end{property}
\section{The general scheme for group constructions of Sections \ref{section-verbally_comlete_groups}, \ref{section-Tarskii_monsters}, \ref{section_problem_7-5} and \ref{section-MS}. }
\label{section-about-general-scheme}
The proofs of theorems \ref{theorem_verbally_complete}, \ref{theorem_tarskii_monsters} and \ref{theorem_about_connecton_of_word_and_conjugacy_problems}, given in sections \ref{section-verbally_comlete_groups}, \ref{section-Tarskii_monsters} and \ref{section_problem_7-5} respectively, are constructive and the corresponding groups constructed in these sections are inductive limits of presentations of type \eqref{main seq of gps***} satisfying the condition $$C'\big(\mathcal{TM}, (g_i)_{i=1}^{\infty},  ({\rho}_i)_{i=1}^{\infty} \big)$$ for $g_i(n)=n^{\frac{1}{i}}$. Moreover, the presentation
\begin{align}
	    \label{main seq of gps----}
		G_0 \stackrel{\beta_0}\hookrightarrow H_1 \stackrel{\gamma_1}\twoheadrightarrow G_1 \stackrel{\beta_1}\hookrightarrow H_2 \stackrel{\gamma_2}\twoheadrightarrow \ldots,
	\end{align} 
	for the corresponding constructions is such that if the group $G_i=\langle X \rangle / \ll \mathcal{R}_i \gg$ is already constructed, then the group $H_i=G_{i-1}*  F( Y_i)  / \ll \mathcal{S}_i \gg$ is defined uniformly, in the sense that the definition of $H_i$ does not depend on specific values of $i$. The standard parameters $(\delta_{i-1}, \delta_i', \lambda_i, c_i, \epsilon_i, \mu_i, \rho_i)_{i=1}^{\infty}$ are different in the corresponding constructions only in terms of their ``sparseness'', however, since we are not interested in their specific values, we will not go into the details of defining them, instead we will assume that they are sparse enough. 
	
	The groups $G_i$, $i=1,2, \ldots$ in all  those constructions will be defined uniformly as $G_i=H_i / \ll \mathcal{R}_i \gg$, where 
	\begin{align}
	\label{abcdef}
		 \mathcal{R}_i = \mathcal{R} \big(Z_i, U_i, V_i,  \delta'_i, \lambda_i, c_i, \epsilon_i,  \mu_i, \rho_i \big), 	
	\end{align}
according to the definition \eqref{definition_of_special_words} in Subsection \ref{subsection-a-class-of-small-cancellation-words}. Also $Z_i$ is a set of elements of $H_i$ with a ``natural'' order such that $\cup_{j=1}^i Y_i \subseteq Z_i$ and $(\lambda_i, c_i, \epsilon_i,  \mu_i, \rho_i)$ are sparse enough so that Lemma \ref{lem on words with cancellation condition}
 guarantees that $\mathcal{R}_i$ satisfies the small-cancellation condition $C'(\lambda_i, c_i, \epsilon_i, \mu_i, \rho_i)$ and the chain \eqref{main seq of gps----} satisfies the condition $C'\big(\mathcal{TM}, (g_i)_{i=1}^{\infty},  ({\rho}_i)_{i=1}^{\infty} \big)$ for $g_i(n)=n^{\frac{1}{i}}$ for some fixed Turing machine $\mathcal{TM}$ computing the $i$-th level data for \eqref{main seq of gps----}. Note that we will note specify the details about $\mathcal{TM}$ in the constructions of Sections \ref{section-verbally_comlete_groups}, \ref{section-Tarskii_monsters} and \ref{section_problem_7-5}   since what we need is actually the only fact that such a Turing machine $\mathcal{TM}$ indeed exists.
 
 As a conclusion of what is said in this section, in Sections \ref{section-verbally_comlete_groups}, \ref{section-Tarskii_monsters} and \ref{section_problem_7-5} we will only specify description of the following:
 \begin{itemize}
 	\item $G_0$;
 	\item How does $H_i$ relate to $G_{i-1}$;
 	\item Precise definitions of $Z_i$, $U_i$ and $V_i$ from \eqref{abcdef}.
 \end{itemize} 
 
 \begin{lemma}
 \label{lemma-word-problem-scheme}
 	The groups constructed according to the above described scheme have word problem decidable in almost linear time when for all $i\in \mathbb{N}$, $\mathcal{R}_i$ contains one element up to cyclic shift, and in almost quadratic time otherwise.
 \end{lemma}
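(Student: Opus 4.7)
The plan is to combine Theorem \ref{theorem_about_WP_in_barG_2} with Property \ref{property-time-for-special-words} and the specific form of the relators dictated by the scheme. First I would observe that by construction the presentation \eqref{main seq of gps----} satisfies the condition $C'\big(\mathcal{TM}, (g_i)_{i=1}^{\infty}, (\rho_i)_{i=1}^{\infty}\big)$ with $g_i(n) = n^{1/i}$. Theorem \ref{theorem_about_WP_in_barG_2} then applies and yields, for every $k \in \mathbb{N}$, a running time of
\[
\bigO\big(\mathcal{C}_{_n\mathcal{R}_{\mathcal{I}(n)}} + n^{1/k} \cdot n^{1+\upsilon}\big),
\]
where $\upsilon = 0$ if $\mathcal{R}_i$ contains a single word up to cyclic shifts for all but finitely many $i$, and $\upsilon = 1$ otherwise. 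Since the first case of the lemma (``for all $i$'') is stronger than the cofiniteness hypothesis in the theorem, it gives $\upsilon = 0$, while the complementary case of the lemma gives $\upsilon = 1$.

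The only remaining task is to bound $\mathcal{C}_{_n\mathcal{R}_{\mathcal{I}(n)}}$. Here I would exploit the fact that in the scheme every relator set has the explicit form $\mathcal{R}_i = \mathcal{R}\big(Z_i, U_i, V_i, \delta'_i, \lambda_i, c_i, \epsilon_i, \mu_i, \rho_i\big)$ from Subsection \ref{subsection-a-class-of-small-cancellation-words}. Property \ref{property-time-for-special-words} supplies a universal constant $A > 0$, independent of the standard parameters, so that the truncated set $_n\mathcal{R}_i$ can be produced in at most $An$ steps, for every single level $i$. Taking the maximum over $1 \leq j \leq \mathcal{I}(n)$ preserves this linear bound, so $\mathcal{C}_{_n\mathcal{R}_{\mathcal{I}(n)}} \leq An = \bigO(n)$, and this term is absorbed by the second summand.

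Substituting into the time bound, the word problem in $\bar{G}$ is decidable in $\bigO(n^{1+\upsilon+1/k})$ for every $k \in \mathbb{N}$, which is almost linear when $\upsilon = 0$ and almost quadratic when $\upsilon = 1$, as required. The only subtle point in the argument is the uniformity of the constant $A$ across all levels $i \leq \mathcal{I}(n)$; without this uniformity, the max in the definition of $\mathcal{C}_{_n\mathcal{R}_{\mathcal{I}(n)}}$ could grow with $\mathcal{I}(n)$ and destroy the estimate. That uniformity, however, is exactly what Property \ref{property-time-for-special-words} was set up to provide, so the argument goes through with no further work.
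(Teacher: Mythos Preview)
Your proof is correct and follows exactly the route the paper takes: it invokes Theorem~\ref{theorem_about_WP_in_barG_2} with $g_i(n)=n^{1/i}$ and then uses Property~\ref{property-time-for-special-words} to bound $\mathcal{C}_{_n\mathcal{R}_{\mathcal{I}(n)}}$ linearly. Your write-up is simply a fuller unpacking of the paper's one-line ``follows directly from Property~\ref{property-time-for-special-words} and Theorem~\ref{theorem_about_WP_in_barG_2}'', including the useful remark that the uniformity of $A$ in Property~\ref{property-time-for-special-words} is what makes the maximum over levels harmless.
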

 \begin{proof}
 	It follows directly from Property \ref{property-time-for-special-words} and Theorem \ref{theorem_about_WP_in_barG_2}.
 \end{proof}

		\section{Proof of Theorem \ref{theorem_verbally_complete}}
	\label{section-verbally_comlete_groups}
		In this section we are going to show that for any given non-elementary, torsion-free $\delta_0$-hyperbolic group $G_0$, there exists a lacunary hyperbolic quotient of $G_0$, denoted by $\check{G}$, which satisfies the conditions of Theorem \ref{theorem_verbally_complete}.
	
	 Our approach is constructive and will be based on the scheme described in Section \ref{section-about-general-scheme}. First of all, this means that $\check{G}$ will be constructed as an inductive limit of a chain of hyperbolic groups of type \eqref{main seq of gps***}, that is 
	\begin{align}
	    \label{main seq of gps-verbal}
		G_0 \stackrel{\beta_0}\hookrightarrow H_1 \stackrel{\gamma_1}\twoheadrightarrow G_1 \stackrel{\beta_1}\hookrightarrow H_2 \stackrel{\gamma_2}\twoheadrightarrow \ldots.
	\end{align}
	
	In our construction below we will inductively show that the groups $H_i$ and $G_i$, $i=1, \ldots$ are non-elementary torsion-free hyperbolic groups. In this section, the limit of \eqref{main seq of gps-verbal} we denote by $\check{G}$.
	
	Let the finite symmetric set $X = \{ x_1^{\pm 1}, x_2^{\pm 1}, \ldots, x_{n_0}^{\pm 1}\}$ be a generating set of $G_0$ such that $\Gamma(G_0, X)$ is $\delta$-hyperbolic for some $\delta \in \mathbb{N}$.  Let us denote $X^-= \{ x_1^{- 1}, x_2^{-1}, \ldots, x_{n_0}^{- 1}\}$ and $X^+=\{ x_1^{+ 1}, x_2^{+1}, \ldots, x_{n_0}^{+ 1}\}$. Also let us order $X$ in the following natural way: $x_i^{-1}<x_j^{-1}<x_i^{1} < x_j^{1}$ if $i<j$,  and the elements of $X^{-}$ precede the elements of $X^+$, i.e., for all $i, j \in \mathbb{N}$, $x_i^{-1} < x_j$. Hereafter whenever we consider an indexed alphabet $X'$, the order of the set $(X')^{\pm 1}$ will be defined just like it was done for $X =X^{-} \cup X^+$.
	
	Let us consider the free group $F_1=F(Y)$ of infinite rank, where $Y = \{y_1, y_2, \ldots \}$ and let us introduce an order on the set of reduced words from $F$ in the following natural way: for reduced words $u, v \in F$, we define $u>v$ if either $\|u\| = \|v\|$ and $u>v$ lexicographically (here we regard words as vectors of letters from $Y^{\pm 1}$) or $\|u\|>\|v\|$. In the analogous way, we order elements of the free group $F_2 = F(X)$. For a reduced word $u\in F(Y)$, we say that $u$ is a \textit{dense} word, if there exists $i \in \mathbb{N}$ such that $u$ contains at least one letter from each of the following sets $\{y_1^{\pm 1}\}, \ldots, \{y_i^{\pm 1}\}$ and does not contain any other letters.
	
	Finally, let us introduce a partial linear order on the set $F_1 \times F_2$ in the following way: let $u, u'$ be reduced words in $F_1=F(X)\setminus\{1\}$ and $v, v'$ be dense words in $F_2=F(Y)\setminus\{1\}$, then we define $(u, v)< (u', v')$ if either $\|u\|+ \|v\| < \|u'\|+\|v'\|$ or 
	\begin{align*}
		\|u\|+\|v\|=\|u'\|+\|v'\| \text{~and~} u<u'.
	\end{align*}
	 Denote the $i^{th}$ element of the set  $F_1 \times F_2$ with respect to this partial order by $(u_i, v_i)$. The reason for considering only the dense words from $F_2$ (and also the partial order) is that for dense words, balls of finite radius with respect to the word metric have finite volume and hence, by the introduced partial order, we will be able to effectively enumerate all the aforementioned pairs $(u, v)\in F_1 \times F_2$. (Also note that the map $i\mapsto (u_i, v_i)$ is not bijective). 
As it will be clear from what is discussed below, this is important for the construction of machines $TM_1$ and $TM_2$. 	
	Without loss of generality we assume that $x_1$ and $x_2$ are different elements of infinite order in $G_0$.

\subsubsection{Definition of $H_{i+1}$ ($i\geq 0$) for $\check{G}$}
Suppose that the non-elementary torsion-free $\delta_{i}$-hyperbolic  group $G_{i}$ is already constructed. Let $(u_{i+1}, v_{i+1}) \in F_1 \times F_2$ be a pair of dense word as defined above. 
~\\
	Case 1. ($u_{i+1}=_{G_i} 1$). In this case define $H_{i+1}=G_i$;\\
	Case 2. ($u_{i+1} \neq_{G_i} 1$). In this case let us define  $n_{i+1}$ to be the least positive integer such that $v_{i+1} \in F(y_1, \ldots, y_{n_{i+1}})$ (and consequently, $v_{i+1} \not\in F(y_1, \ldots, y_{n_{i+1}-1})$). Note that the existence of $n_{i+1}$ follows from the fact that $v_{i+1}$ is dense by definition.\\
	Let us define $G_i'=G_i*F(y_1, \ldots, y_{n_{i+1}})$. By Corollary \ref{corollary_about_finding_root_of_a_group_element}, there exists a Turing machine which for input $(G_i, \delta_i; u_{i+1}, v_{i+1})$ outputs the pairs $(u'_{i+1}, k)$ and $(v'_{i+1}, l)$, where $u'_{i+1}, v'_{i+1} \in (X \cup \{(y_1, \ldots, y_{n_{i+1}}\})^*$, $k, l \in \mathbb{N}$ are such that $u_{i+1} =_{G_i'} (u'_{i+1})^k$ and $v_{i+1} =_{G_i'} (v'_{i+1})^l$  and $u'_{i+1}, v'_{i+1}$ represent root elements of $u_{i+1}$ and $v_{i+1}$ in $G_i'$ (i.e. $E(u_{i+1}) = \langle u'_{i+1} \rangle$ and $E(v_{i+1}) = \langle v'_{i+1} \rangle$ in $G'_i$). We will use the standard notation $v_{i+1} = v_{i+1} ( y_1, \ldots, y_{n_{i+1}})$ to emphasize that $v_{i+1}$ is formed by the letters (or, in the context of diophantine equations, by variables) $y_1, \ldots, y_{n_{i+1}}$.
	
	Let $\langle z_{i+1} \rangle$ be an infinite cyclic group disjoint from $G_i'$. Define the group $H_{i+1}^0$ as an HNN-extension of $G_i$ as follows.
	\begin{align*}
		H_{i+1}^0= \langle G_i* \langle z_{i+1} \rangle, t_{i+1} \mid  t_{i+1}^{-1} u'_{i+1} t_{i+1} =  z_{i+1}^l\rangle.
	\end{align*}
   Now define $H$ as an HNN-extension of $H^0$ as follows.
   \begin{align}
   \label{definition of H}
		H_{i+1}= \langle H_{i+1}^0*F(y_1, \ldots, y_{n_{i+1}}), s_{i+1} \mid  s_{i+1}^{-1} v'_{i+1} s_{i+1} =  z_{i+1}^k\rangle.
	\end{align}
	Finally, define $Y_{i+1} = \{y_1, \ldots, y_{n_{i+1}}\} \cup \{z_{i+1}\} \cup \{ t_{i+1} \} \cup \{s_{i+1}\}$.\\
	\begin{proposition}
	The group $H_{i+1}$  is a torsion-free non-elementary hyperbolic group and the identity map $id: X \rightarrow X$ induces an embedding of $G_{i}$ into $H_{i+1}$.	
	\end{proposition}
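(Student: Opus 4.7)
Case 1 of the construction is immediate, so I focus on Case 2. The plan is to verify the hypotheses of Theorem \ref{th hnn extension} twice, once for the HNN extension producing $H_{i+1}^0$ and once more for the HNN extension producing $H_{i+1}$; hyperbolicity will then be granted at each step, and torsion-freeness and non-elementariness will transfer to each HNN extension by standard means (torsion elements of an HNN extension are conjugate into the base, and each base already contains the non-elementary group $G_i$).

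Before turning to the HNN extensions I would first localize the roots. Because $u_{i+1}\in F(X)$ represents a nontrivial element of the factor $G_i$ of $G'_i=G_i*F(y_1,\ldots,y_{n_{i+1}})$, the free-product normal form forces its root $u'_{i+1}$ in $G'_i$ to lie, up to conjugation in $G'_i$, inside $G_i$; after replacing $u'_{i+1}$ by such a conjugate one may assume $u'_{i+1}\in G_i$. Similarly, the dense word $v_{i+1}\in F(y_1,\ldots,y_{n_{i+1}})$ has its root $v'_{i+1}$ inside the free factor $F(y_1,\ldots,y_{n_{i+1}})$.

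For each of the two HNN extensions the verification is now the same: the base is a free product of two torsion-free non-elementary hyperbolic groups, hence itself torsion-free, hyperbolic, and non-elementary. The associated subgroups $\langle u'_{i+1}\rangle,\langle z_{i+1}^l\rangle$ (for the first extension) and $\langle v'_{i+1}\rangle,\langle z_{i+1}^k\rangle$ (for the second) sit in opposite free factors of the respective bases, so condition (2) of Theorem \ref{th hnn extension}, namely triviality of $g\langle\cdot\rangle g^{-1}\cap\langle\cdot\rangle$, is immediate from free-product normal forms. For condition (1), one uses the standard fact that in a free product of torsion-free hyperbolic groups the maximal elementary subgroup of an element of a free factor equals the maximal elementary subgroup computed inside the factor, so $\langle u'_{i+1}\rangle$ and $\langle v'_{i+1}\rangle$ remain maximal elementary in the respective free products, since they are roots in the factors $G_i$ and $F(y_1,\ldots,y_{n_{i+1}})$ respectively. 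Theorem \ref{th hnn extension} then applies twice to yield hyperbolicity, and torsion-freeness and non-elementariness follow as indicated above.

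The only nontrivial step is the upgrade of maximal elementariness from a free factor to the whole free product, and the main obstacle would be if $u'_{i+1}$ or $v'_{i+1}$ were commensurable in the free product with some element lying outside the factor in question; this possibility is ruled out by Lemma \ref{lem Lemma 8} applied within the free product, combined with the free-product fact that any element commensurable with an element of a factor is itself conjugate into that factor. Finally, the embedding of $G_i$ into $H_{i+1}$ induced by $id:X\to X$ comes from the chain of natural inclusions
\[
G_i\hookrightarrow G_i*\langle z_{i+1}\rangle\hookrightarrow H_{i+1}^0\hookrightarrow H_{i+1}^0*F(y_1,\ldots,y_{n_{i+1}})\hookrightarrow H_{i+1},
\]
each of which restricts to the identity on $X$: the free-product inclusions are standard, and the HNN-extension inclusions follow from Britton's Lemma (Lemma \ref{lem-britton}).
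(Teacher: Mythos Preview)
Your proof is correct and follows essentially the same approach as the paper: apply Theorem \ref{th hnn extension} twice, once to get hyperbolicity of $H_{i+1}^0$ and once for $H_{i+1}$, then handle torsion-freeness, non-elementariness, and the embedding by standard HNN-extension facts. You are in fact more careful than the paper on two points it leaves implicit: the localization of the roots $u'_{i+1},v'_{i+1}$ into the appropriate free factors, and the justification that maximal elementariness of $\langle u'_{i+1}\rangle$ and $\langle v'_{i+1}\rangle$ transfers from the factor to the free product.
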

   \begin{proof}
   First of all, if $H_{i+1}=G_i$ then the statement follow from the inductive assumption that $G_i$ is a torsion-free non-elementary hyperbolic group.
   
   Now assume that $H_{i+1}$ is defined by \eqref{definition of H}. Then, since $\langle u'_{i+1} \rangle$ is a maximal elementary subgroup of $G_i * \langle z_{i+1} \rangle$ and since for all $g \in G_i * \langle z_{i+1} \rangle$ we have $g^{-1} \langle u'_{i+1} \rangle g \cap \langle z_{i+1} \rangle = \{1\}$, by Theorem \ref{th hnn extension}, $H_{i+1}^0$ is a hyperbolic group.
   
   Now, since $\langle v'_{i+1} \rangle$ is a maximal elementary subgroup in $F(y_1, \ldots, y_{n_{i+1}})$, we get that $\langle v'_{i+1} \rangle$ is a maximal elementary subgroup in $H_{i+1}^0*F(y_1, \ldots, y_{n_{i+1}})$ and for all $h \in H_{i+1}^0*F(y_1, \ldots, y_{n_{i+1}})$, we have $h^{-1} \langle v'_{i+1} \rangle h \cap \langle z_{i+1} \rangle = \{1\}$. Therefore, by Theorem \ref{th hnn extension}, $H_{i+1}$ is a hyperbolic group.
   
   The fact that $H_{i+1}$ is torsion free follows from the fact that $G_i$ is torsion free and from Lemma \ref{lemma_about_proper_powers_in_hnn}. 
   
   	The part of the statement that the identity map $id: X \rightarrow X$ induces an embedding of $G_{i}$ into $H_{i+1}$
 follows from the basic properties of HNN-extensions. See \cite{lyndon schupp}.
 
 Finally, since $H_{i+1}$ contains an isomorphic copy of $G_{i}$ and $G_{i}$ is non-elementary, it follows that $H_{i+1}$ is non-elementary as well.
   \end{proof}
\begin{proposition}
   \label{proposition-verbally-completeness-in-checkG}
   	The equation $v_{i+1}( y_1^{s_{i+1}t_{i+1}^{-1}},  \ldots, y_{n_{i+1}}^{s_{i+1}t_{i+1}^{-1}} ) = u_{i+1}$ holds in $H_{i+1}$. In other words, $y_1 \mapsto y_1^{s_{i+1}t_{i+1}^{-1}}, \ldots,  y_{n_{i+1}} \mapsto y_{n{i+1}}^{s_{i+1}t_{i+1}^{-1}}$ is a solution to the diophantine equation
   	\begin{align*}
   		v_{i+1} (y_1, \ldots, y_{n_{i+1}}) = u_{i+1}
   	\end{align*}
   	in $H_{i+1}$.
   \end{proposition}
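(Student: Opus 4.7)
The statement is a direct computation from the defining relations of the two HNN-extensions used to build $H_{i+1}$, so there is no essential obstacle; the only ``idea'' needed is that conjugation by a fixed element is an endomorphism of the free group, so it commutes with evaluation of the word $v_{i+1}$.

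First I would rewrite the defining relations of $H_{i+1}^{0}$ and $H_{i+1}$ in conjugation form. From $t_{i+1}^{-1}u'_{i+1}t_{i+1} = z_{i+1}^{l}$ and the equality $u_{i+1} =_{G_i'} (u'_{i+1})^{k}$ (which continues to hold in $H_{i+1}$ since $G_i'$ embeds into $H_{i+1}$), raising both sides to the $k$-th power gives
\[
t_{i+1}^{-1}u_{i+1}t_{i+1} \;=\; z_{i+1}^{kl} \quad \text{in } H_{i+1}.
\]
Similarly, from $s_{i+1}^{-1}v'_{i+1}s_{i+1} = z_{i+1}^{k}$ together with $v_{i+1} =_{G_i'} (v'_{i+1})^{l}$, raising to the $l$-th power yields
\[
s_{i+1}^{-1}v_{i+1}s_{i+1} \;=\; z_{i+1}^{lk} \quad \text{in } H_{i+1}.
\]

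Next I would combine these two equalities. Setting $w := s_{i+1}t_{i+1}^{-1}$, the two displays above give $s_{i+1}^{-1}v_{i+1}s_{i+1} = t_{i+1}^{-1}u_{i+1}t_{i+1}$, hence
\[
u_{i+1} \;=\; t_{i+1}s_{i+1}^{-1}\,v_{i+1}\,s_{i+1}t_{i+1}^{-1} \;=\; w^{-1}v_{i+1}w \quad \text{in } H_{i+1}.
\]

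Finally, since the map $g \mapsto w^{-1}gw$ is a homomorphism of $H_{i+1}$, it commutes with evaluation of any group word; in particular
\[
w^{-1}\,v_{i+1}(y_1,\ldots,y_{n_{i+1}})\,w \;=\; v_{i+1}\bigl(w^{-1}y_1 w,\ldots, w^{-1}y_{n_{i+1}}w\bigr) \;=\; v_{i+1}\bigl(y_1^{\,w},\ldots, y_{n_{i+1}}^{\,w}\bigr).
\]
Comparing with the previous display yields
\[
v_{i+1}\bigl(y_1^{\,s_{i+1}t_{i+1}^{-1}},\ldots, y_{n_{i+1}}^{\,s_{i+1}t_{i+1}^{-1}}\bigr) \;=\; u_{i+1} \quad \text{in } H_{i+1},
\]
which is the desired identity. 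The proof amounts to combining two HNN-extension relations with the elementary fact that inner automorphisms preserve word expressions, so there is no real obstacle beyond bookkeeping of the exponents $k$ and $l$.
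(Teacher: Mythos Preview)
Your proof is correct and follows essentially the same approach as the paper: raise the two HNN defining relations to the $k$-th and $l$-th powers respectively to obtain $t_{i+1}^{-1}u_{i+1}t_{i+1}=z_{i+1}^{kl}=s_{i+1}^{-1}v_{i+1}s_{i+1}$, then use that conjugation by $s_{i+1}t_{i+1}^{-1}$ commutes with evaluation of the word $v_{i+1}$. The paper's argument is the same computation with slightly terser bookkeeping.
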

   \begin{proof}
   	Indeed, first of all, the relations $t_{i+1}^{-1} u'_{i+1} t_{i+1} = z_{i+1}^l$ and $s_{i+1}^{-1} v'_{i+1} z_{i+1}^k$ imply that
   	\begin{align*}
   		(t_{i+1}^{-1} u'_{i+1} t_{i+1})^k=t_{i+1}^{-1} u_{i+1} t_{i+1} = z_{i+1}^{lk}=(z_{i+1}^k)^l = (s_{i+1}^{-1} v s_{i+1})l = s_{i+1}^{-1} v_{i+1} s_{i+1}.
   	\end{align*}
   	Therefore, $t_{i+1}s_{i+1}^{-1} v_{i+1} s_{i+1}t_{i+1}^{-1} = v_{i+1}^{s_{i+1}t_{i+1}^{-1}}=u_{i+1}$. Now, since $$v_{i+1}^{s_{i+1}t_{i+1}^{-1}}=v_{i+1}( y_1^{s_{i+1}t_{i+1}^{-1}},  \ldots, y_{n_{i+1}}^{s_{i+1}t_{i+1}^{-1}} ),$$ we get that $y_1 \mapsto y_1^{s_{i+1}t_{i+1}^{-1}}, \ldots,  y_{n_{i+1}} \mapsto y_{n_{i+1}}^{s_{i+1}t_{i+1}^{-1}}$ is a solution of the diophantine equation
$v_{i+1} (y_1, \ldots, y_{n_{i+1}}) = u_{i+1}$.
   \end{proof}
\begin{proposition}
	\label{aux-prop-111}
		If $x_2 \notin E(x_1)$ in $G$, then $x_2 \notin E(x_1)$ in $H_{i+1}$. Also, for all $y \in Y_{i+1}$, $y \notin E(x_1)$ in $H_{i+1}$.
	\end{proposition}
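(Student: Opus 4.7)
The plan is to reduce the proposition, via a standard equivalence valid in torsion-free hyperbolic groups, to showing that various commutators are nontrivial in $H_{i+1}$, and then to exploit the two-step free-product/HNN structure by which $H_{i+1}$ was built.

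First I would record the equivalence: in a torsion-free hyperbolic group $K$, if $a\in K$ has infinite order, then $E(a)=C_K(a)$. Indeed, by Lemma \ref{lem Lemma 8} every $x\in E(a)$ satisfies $xa^kx^{-1}=a^{\pm k}$; in the inversion case one has $x^2\in C_K(a^k)=\langle r\rangle$ (where $E(a)=\langle r\rangle$), and uniqueness of roots in torsion-free hyperbolic groups forces $x\in\langle r\rangle$, which in turn forces $xax^{-1}=a$, ruling out inversion. Consequently ``$b\in E(a)$'' is equivalent to ``$[a,b]=1$'' in such $K$, so both assertions of the proposition reduce to showing that certain commutators do not vanish in $H_{i+1}$.

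For the first assertion, Case $1$ ($H_{i+1}=G_i$) is immediate. In Case $2$, the previous proposition gave an embedding $G_i\hookrightarrow H_{i+1}$. By the equivalence applied in $G_i$, the hypothesis $x_2\notin E(x_1)$ means $[x_1,x_2]\neq 1$ in $G_i$; injectivity yields $[x_1,x_2]\neq 1$ in $H_{i+1}$, and the equivalence applied to $H_{i+1}$ gives $x_2\notin E(x_1)$ there.

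For the second assertion, I would split according to where $y\in Y_{i+1}=\{y_1,\dots,y_{n_{i+1}}\}\cup\{z_{i+1},t_{i+1},s_{i+1}\}$ sits in the two-stage construction of $H_{i+1}$, using only that the base of an HNN-extension embeds, Britton's Lemma (Lemma \ref{lem-britton}), and the fact that two nontrivial elements of distinct free factors never commute. If $y\in\{y_1,\dots,y_{n_{i+1}}\}$, the commutator $[x_1,y]$ lies in the base $H_{i+1}^0*F(y_1,\dots,y_{n_{i+1}})$ of the $s_{i+1}$-extension, so triviality in $H_{i+1}$ descends to triviality in that base, where $x_1\in H_{i+1}^0$ and $y\in F(y_1,\dots,y_{n_{i+1}})$ are in distinct free factors, impossible. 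If $y=z_{i+1}$, the commutator descends first to $H_{i+1}^0$ and then through the $t_{i+1}$-extension to $G_i*\langle z_{i+1}\rangle$, where $x_1$ and $z_{i+1}$ again sit in distinct free factors.

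The remaining (and slightly subtler) cases $y=t_{i+1}$ and $y=s_{i+1}$ use Britton's Lemma directly. For $y=t_{i+1}$: assuming $[x_1,t_{i+1}]=1$ in $H_{i+1}$ and descending to $H_{i+1}^0$, the expression $t_{i+1}x_1t_{i+1}^{-1}$ is $t_{i+1}$-reduced (of syllable length $2$) unless $x_1\in\langle u'_{i+1}\rangle$ inside $G_i*\langle z_{i+1}\rangle$; Britton's Lemma forces the latter, so $x_1=(u'_{i+1})^m$ and then $x_1=t_{i+1}x_1t_{i+1}^{-1}=z_{i+1}^{lm}$ as an element of $G_i*\langle z_{i+1}\rangle$, which since $\langle u'_{i+1}\rangle\subseteq G_i$ and $\langle z_{i+1}\rangle$ are distinct free factors forces $m=0$, contradicting that $x_1$ has infinite order. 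The case $y=s_{i+1}$ is identical after replacing the triple $(u'_{i+1},z_{i+1}^l,G_i)$ by $(v'_{i+1},z_{i+1}^k,F(y_1,\dots,y_{n_{i+1}}))$ and carrying out the argument inside the base $H_{i+1}^0*F(y_1,\dots,y_{n_{i+1}})$ of the $s_{i+1}$-extension, using crucially that $v'_{i+1}$ was built only from the letters $y_1,\dots,y_{n_{i+1}}$, so that $\langle v'_{i+1}\rangle$ is genuinely contained in a free factor distinct from the one containing $x_1$. The only real obstacle is this last bookkeeping point; everything else is a mechanical passage through the embeddings.
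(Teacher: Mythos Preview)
Your overall strategy---reducing membership in $E(x_1)$ to commutation via the identity $E(a)=C_K(a)$ valid in torsion-free hyperbolic groups, and then analysing the relevant commutators through the two-stage free-product/HNN structure of $H_{i+1}$---is sound and considerably more explicit than the paper's one-line proof, which simply invokes Lemma~\ref{lemma_about_proper_powers_in_hnn} (a statement about preservation of non-proper-powers under HNN-extension) without spelling out the connection.

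There is, however, a slip in your Britton's-Lemma step for the stable letters. In
\[
H_{i+1}^0=\big\langle G_i*\langle z_{i+1}\rangle,\ t_{i+1}\ \big|\ t_{i+1}^{-1}u'_{i+1}t_{i+1}=z_{i+1}^l\big\rangle
\]
the associated subgroups are $A=\langle u'_{i+1}\rangle$ and $B=\langle z_{i+1}^l\rangle$, so a subword $t_{i+1}\,g\,t_{i+1}^{-1}$ is a pinch precisely when $g\in B$, not when $g\in A$. Britton therefore forces $x_1\in\langle z_{i+1}^l\rangle$ inside $G_i*\langle z_{i+1}\rangle$; since $x_1\in G_i$ lies in the other free factor, this yields $x_1=1$ immediately. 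Your follow-up computation $t_{i+1}(u'_{i+1})^m t_{i+1}^{-1}=z_{i+1}^{lm}$ is also off: from the relation one has $u'_{i+1}=t_{i+1}z_{i+1}^l t_{i+1}^{-1}$, so $t_{i+1}(u'_{i+1})^m t_{i+1}^{-1}=t_{i+1}^{2} z_{i+1}^{lm} t_{i+1}^{-2}$, not $z_{i+1}^{lm}$. The same correction applies to $y=s_{i+1}$: the pinch condition is $x_1\in\langle z_{i+1}^k\rangle$ in $H_{i+1}^0*F(y_1,\dots,y_{n_{i+1}})$, and descending to $G_i*\langle z_{i+1}\rangle$ again gives $x_1=1$; your remark about $v'_{i+1}$ living in $F(y_1,\dots,y_{n_{i+1}})$ is then unnecessary. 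With the associated subgroups identified correctly, your argument goes through and is in fact shorter.
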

	\begin{proof}
		This fact immediately follows from Lemma \ref{lemma_about_proper_powers_in_hnn}.
	\end{proof}
	
	\begin{proposition}
	\label{proposition-H-conjugation-in-checkG}
		Let $U, V \in X^*$ be such that $U \sim_{conj} V$ in $H_{i+1}$. Then $U \sim_{conj} V$ in $G_i$.
	\end{proposition}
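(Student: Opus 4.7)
The plan is to apply Collins' Lemma (Lemma~\ref{lemma-Collins}) successively to the two HNN-extensions that build $H_{i+1}$, and at each level to use the conjugacy theorem for free products to rule out the ``mixed'' alternatives that Collins' Lemma allows.

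View $H_{i+1}$ first as an HNN-extension of the base group $\widetilde{B} := H_{i+1}^0 * F(y_1, \ldots, y_{n_{i+1}})$ with stable letter $s_{i+1}$ and associated cyclic subgroups $\langle v'_{i+1}\rangle \leq F(y_1, \ldots, y_{n_{i+1}})$ and $\langle z_{i+1}^k\rangle \leq H_{i+1}^0$. Since $U, V \in X^*$ contain no $s_{i+1}$, they are cyclically reduced in this HNN with $\theta_{s_{i+1}}(U) = \theta_{s_{i+1}}(V) = 0$, so Collins' Lemma gives either (i)~$U \sim_{conj} V$ in $\widetilde B$, or (ii)~a chain $U = w_0, w_1', w_1, \ldots, w_K', w_K, w_{K+1}' = V$ with intermediate $w_i', w_i \in \langle v'_{i+1}\rangle \cup \langle z_{i+1}^k\rangle$ connected by $\phi^{\pm 1}$ and by conjugacies in $\widetilde B$. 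In case (i), the conjugacy theorem for free products applied to the factor $H_{i+1}^0$, in which $U, V$ lie and are cyclically reduced, yields $U \sim_{conj} V$ in $H_{i+1}^0$. In case (ii), because $v'_{i+1}$ lies in the factor $F(y_1,\ldots,y_{n_{i+1}})$ while $U \in H_{i+1}^0$, the conjugacy $U \sim_{conj} w_1'$ in $\widetilde B$ forces $w_1' \in \langle z_{i+1}^k\rangle$, say $w_1' = z_{i+1}^{kb}$; the same free-product obstruction then forces the chain to alternate strictly between $\langle z_{i+1}^k\rangle$ and $\langle v'_{i+1}\rangle$, and each intermediate conjugacy, taking place inside a single free factor, is a conjugacy of two powers of the root $v'_{i+1}$ in the free group $F(y_1,\ldots,y_{n_{i+1}})$, hence an equality of exponents. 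Thus every intermediate term has exponent $b$, and the final relation $w_K \sim_{conj} V$ (with $V \in H_{i+1}^0$) forces $w_K \in \langle z_{i+1}^k\rangle$ with exponent $b$, giving $V \sim_{conj} z_{i+1}^{kb}$ and hence $U \sim_{conj} V$ in $H_{i+1}^0$.

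Now apply the same argument to the inner HNN $H_{i+1}^0 = \langle G_i * \langle z_{i+1}\rangle, t_{i+1} \mid t_{i+1}^{-1}u'_{i+1} t_{i+1} = z_{i+1}^l\rangle$, with associated subgroups $\langle u'_{i+1}\rangle \leq G_i$ and $\langle z_{i+1}^l\rangle \leq \langle z_{i+1}\rangle$. Collins' Lemma gives either $U \sim_{conj} V$ in $G_i * \langle z_{i+1}\rangle$ --- which by the free-product conjugacy theorem immediately yields $U \sim_{conj} V$ in $G_i$ --- or a chain through $\langle u'_{i+1}\rangle \cup \langle z_{i+1}^l\rangle$. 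Since $U \in G_i$ and $z_{i+1}^l$ lies in the free factor $\langle z_{i+1}\rangle$, the first conjugacy of the chain forces $w_1' \in \langle u'_{i+1}\rangle$, so $U \sim_{conj} (u'_{i+1})^{a}$ in $G_i$. Free-product considerations again force strict alternation; the intermediate conjugacies this time occur inside the abelian factor $\langle z_{i+1}\rangle$ and so give equality of exponents, and the last step yields $V \sim_{conj} (u'_{i+1})^a$ in $G_i$. Transitivity completes the proof. The main technical content is this chain analysis --- verifying at both levels that the free-product structure of the base forces strict alternation of the intermediate terms between the two associated cyclic subgroups, and that conjugacy inside a single free or abelian factor forces a single common exponent throughout the chain. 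The degenerate cases $U = 1$ or $V = 1$ are trivial, since $G_i$ is torsion-free and $G_i \hookrightarrow H_{i+1}$ is an embedding.
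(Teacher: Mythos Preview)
Your approach via iterated Collins' Lemma plus the free-product conjugacy theorem is a valid alternative to the paper's direct band analysis on a slender conjugacy diagram, and the overall strategy is sound. However, there is a genuine gap in your chain analysis at both HNN levels. You assert that ``each intermediate conjugacy, taking place inside a single free factor, is a conjugacy of two powers of the root $v'_{i+1}$ in the free group'' at the outer level, and that at the inner level ``the intermediate conjugacies \ldots\ occur inside the abelian factor $\langle z_{i+1}\rangle$.'' This is only half right. Because the chain strictly alternates, the intermediate base-group conjugacies alternate between the two free factors: at the outer level they alternate between $F(y_1,\dots,y_{n_{i+1}})$ (powers of $v'_{i+1}$) and $H_{i+1}^0$ (powers of $z_{i+1}^k$); at the inner level they alternate between $\langle z_{i+1}\rangle$ and $G_i$ (powers of $u'_{i+1}$). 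The conjugacies inside $H_{i+1}^0$ and inside $G_i$ are not automatically equalities, and without controlling them you cannot conclude that the exponent is preserved through the chain.

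The fix is short: both $H_{i+1}^0$ and $G_i$ are torsion-free hyperbolic, and $z_{i+1}$ (by Lemma~\ref{lemma_about_proper_powers_in_hnn}) and $u'_{i+1}$ (by construction) are root elements there, so their maximal elementary subgroups are infinite cyclic, hence abelian, forcing $E^-=\emptyset$. Together with Lemma~\ref{lem Lemma 8} this gives that distinct nonzero powers of $z_{i+1}$ (resp.\ $u'_{i+1}$) are non-conjugate in $H_{i+1}^0$ (resp.\ $G_i$). With this in hand, every intermediate conjugacy in the chain---whether in the free, the abelian, or the hyperbolic factor---preserves the exponent, and your conclusion follows. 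Once patched, your argument is a clean algebraic alternative to the paper's diagrammatic proof; the paper's band argument is essentially carrying out the same Collins-type reduction by hand inside the van Kampen diagram.
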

	\begin{proof}
	Suppose that $U \not\sim_{conj} V$ in $G_i$. Then we want to show that $U \not\sim_{conj} V$ in $H$.
	
		By contradiction let us assume that  $U \sim_{conj} V$ in $H_{i+1}$. Then there exists a $(U, V)$-conjugacy diagram $\Delta$ over $H_{i+1}$ with boundary $ABCD$, $lab(AD)=V$, $lab(BC)=U$. Note that since $U \not\sim_{conj} V$ in $G_i$, $\Delta$ must contain at least one $t_{i+1}$- or $s_{i+1}$-band which has its ends on different sides of $ABCD$. Also, since $U$ and $V$ do not contain edges with labels from $\{ s_{i+1}^{\pm 1}, t_{i+1}^{\pm 1} \}$, it must be that all these bands are horizontal, i.e., have their ends on $AB$ and $DC$.
        
        Next, we will show that $\Delta$ cannot contain horizontal bands. By contradiction let us assume that it contains horizontal bands. 
        
        First, suppose that $\Delta$ contains more than one horizontal bands. In this case, let us choose edges $e_1, e_2 \in AB$ and $e'_1, e'_2 \in CD$ such that they have labels from $\{ s_{i+1}^{\pm 1}, t_{i+1}^{\pm 1} \}$ and $e_1$ and $e_2$ are connected by horizontal bands to $e_1'$ and $e_2'$, respectively. Additionally, without loss of generality let us assume that there is no horizontal band between these two bands. See Figure \ref{fig:  Verbally_complete-conjugacy}.
       \begin{figure}[H]
						\centering
						\includegraphics[clip, trim=0cm 8.5cm .7cm 7.7cm, width=.6\textwidth]{{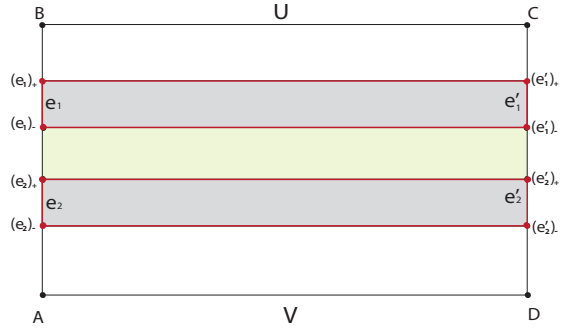}} 
						\caption{$\Delta$ with the two horizontal bands, depicted as grey areas.} 
						\label{fig:  Verbally_complete-conjugacy}
					\end{figure}
        Note that then $(e_1)_-$, $(e_1')_-$ and $(e_2)_+$, $(e'_2)_+$ are pairs of mirroring point, respectively. Therefore,
            the subdiagram of $\Delta$ bounded between $(e_2)_+$, $(e_1)_-$, $(e_1')_-$ and $(e'_2)_+$ is a $\Big(lab\big([(e_1)_-, (e'_1)_- ]\big), lab\big([(e_2)_+, (e'_2)_+ ]\big) \Big)$-conjugacy diagram over $G_i'$. In particular, $lab\big([(e_1)_-, (e'_1)_- ]\big)$ and $ lab\big([(e_2)_+, (e'_2)_+ ]\big)$ are conjugate in $G_i'$. Now, since $lab([(e_1)_-, (e'_1)_- ])$ and $lab([(e_2)_+, (e'_2)_+ ])$ are (free) powers of elements from $\{u'_{i+1}, v'_{i+1}, z_{i+1} \}$, it follows that, in fact, they must be freely equal. This means that $lab([(e_1)_+, (e'_1)_+ ])$ and $lab([(e_2)_-, (e'_2)_- ])$ are also freely equal. But, since $\big((e_1)_+, (e'_1)_+ \big)$ and $\big((e_2)_-, (e'_2)_- \big)$ are pairs of mirroring points on $\partial\Delta$, by Lemma \ref{lemma_about_opposite_points_in_slender_conjugacy_diagrams}, this contradicts  the assumption that $\Delta$ is chosen to be slender.         Therefore, $\Delta$ cannot contain two horizontal bands, hence, since by our assumptions, it contains at least one horizontal $\{ s_{i+1}^{\pm 1}, t_{i+1}^{\pm 1} \}$-band, it means that the number of such bands is exactly one.
        
        Now suppose that $\Delta$ contains only one horizontal band and that only horizontal band of $\Delta$ has its ends on edges $e_1$ and $e_1'$, i.e. in Figure \ref{fig:  Verbally_complete-conjugacy} just neglect the bottom band. Since one of $lab\big([(e_1)_-, (e'_1)_- ]\big)$ and $ lab\big([(e_1')_+, (e'_1)_+ ]\big)$ belong to $Y_{i+1}^*$, without loss of generality assume that $lab\big([(e_1)_+, (e'_1)_+ ]\big)$ $\in Y_{i+1}^*$. Then, since $lab([B, (e_1)_+])\equiv lab([C, (e'_1)_+])$, we get that $u'_{i+1} \sim_{conj} lab\big([(e_1)_+, (e'_1)_+ ]\big)$ in $G_i'$, which is impossible, since $G_i' = G_i * F(Y_{i+1})$ and $u'_{i+1} \in X^*$. A contradiction.

	\end{proof}
	\subsubsection{Definition of $G_{i+1}$ ($i\geq 0$) for $\check{G}$}
	Assuming that the torsion-free non-elementary hyperbolic group $H_{i+1}$ is already defined, $G_{i+1}$ we define as
	$$G_{i+1}=H_{i+1} / \ll \mathcal{R} \big(Y_{i+1}, x_1, x_2,  \delta'_{i+1}, \lambda_{i+1}, c_{i+1}, \epsilon_{i+1},  \mu_{i+1}, \rho_{i+1} \big)\gg.$$
	Note that, if $x_1 \notin E(x_2)$ in $G_{i}$, then,  by Proposition \ref{aux-prop-111}, $x_1 \notin E(x_2)$ in $H_{i+1}$, hence for sparse enough standard parameters $\delta'_{i+1}, \lambda_{i+1}, c_{i+1}, \epsilon_{i+1},  \mu_{i+1}, \rho_{i+1}$, the set of words $$\mathcal{R}_{i+1}=\mathcal{R} \big(Y_{i+1}, x_1, x_2,  \delta'_{i+1}, \lambda_{i+1}, c_{i+1}, \epsilon_{i+1},  \mu_{i+1}, \rho_{i+1} \big)$$ satisfies the small-cancellation condition $C'(\lambda_{i+1}, c_{i+1}, \epsilon_{i+1}, \mu_{i+1}, \rho_{i+1})$. Therefore,
	\begin{enumerate}
		\item By Lemma \ref{lem 7.2}, $G_{i+1}$ will be non-elementary torsion-free hyperbolic group;
		\item By Lemma \ref{lemma_about_proper_powers_in_quotients}, $x_1$ is not a proper power in $G_{i+1}$. Therefore, since $G_{i+1}$ is a torsion-free hyperbolic group, we get $x_1 \notin E(x_2)$ in $G_{i+1}$. Thus, by inductive hypothesis, $G_{i+1}$ is well-defined non-elementary torsion-free hyperbolic group.
	\end{enumerate}
	\subsection{Main properties of $\check{G}$}
	Note that since the groups $G_i$ are torsion-free non-elementary hyperbolic groups, the group $\check{G}$ is torsion-free infinite lacunary hyperbolic  group (recall that we assume that the standard parameters are sparse enough). 
	
	From Proposition \ref{proposition-verbally-completeness-in-checkG} if follows that $\check{G}$ is verbally complete.
	
	From Property \ref{property-time-for-special-words} and Theorem \ref{theorem_about_WP_in_barG_2} it follows that the word problem in $\check{G}$ is almost quadratic.
	
	From Proposition \ref{proposition-H-conjugation-in-checkG} it follows that the set of $H$-conjugate pairs in $\check{G}\times \check{G}$ is empty, hence combined this with Theorem \ref{theorem-effective-G-conjugacy-problem} and Property \ref{property-time-for-special-words}, we get that the conjugacy problem in $\check{G}$ is polynomial.
	
	Thus Theorem \ref{theorem_verbally_complete} is proved.

\section{Proof of Theorem \ref{theorem_tarskii_monsters}}
\label{section-Tarskii_monsters}
Let $G_0=\langle X \rangle $, $X=\{ x_1, x_2, \ldots, x_n \}$, be a torsion-free non-elementary $\delta$-hyperbolic group with respect to $X$. Without loss of generality we assume that $E(x_i) \cap E(x_j) = \{1\}$ if $i\neq j$.

 Let $X$ be linearly ordered such that $x_i^{-1}<x_j^{-1}<x_i< x_j$ if $i<j$.
We denote the set of reduced non-empty words of $X^*$ by $F'(X)$. Let us enumerate the set $F'(X)$ as $F'(X)=\{ w_1, w_2, \ldots \}$ where for $i<j$, $w_i < w_j$ according to the lexicographical order induced from the order on $X$. Then clearly $w_1=x_1$, $w_2=x_2$. Now, based on this order of $F'(X)$ let us lexicographically order the set $F'(X) \times F'(X) \setminus \{(w, w) \mid w \in F'(X) \}$ and enumerate it according to that order. Let
\begin{align*}
	F'(X) \times F'(X) \setminus \{(w, w) \mid w \in F'(X) \}= \{(u_1, v_1), (u_2, v_2) \ldots \},
\end{align*}
 where for $i<j$, we have $(u_i, v_i) < (u_j, v_j)$.
 
 As it was mentioned in Section \ref{section-about-general-scheme}, in this section we are going to construct the group $\hat{G}$ from Theorem \ref{theorem_tarskii_monsters} which will be a direct limit of a chain of non-elementary torsion-free hyperbolic groups of the form \eqref{main seq of gps***}, that is 
 \begin{align}
	    \label{main seq of gps-tarski}
		G_0 \stackrel{\beta_0}\hookrightarrow H_1 \stackrel{\gamma_1}\twoheadrightarrow G_1 \stackrel{\beta_1}\hookrightarrow H_2 \stackrel{\gamma_2}\twoheadrightarrow \ldots.
	\end{align}
	
 In this section we define $H_i=G_{i-1}$ for all $i\in \mathbb{N}$ and the map $\beta_{i-1}: G_{i-1} \rightarrow H_i$ is the identity map.

  Now let us describe how $G_{i}$ is obtained from $G_{i-1}$ for $i \in \mathbb{N}$. For that purpose by induction let us assume that $G_{i-1}$ is a non-elementary torsion-free hyperbolic group (below we will show that for $G_i$ the same property holds as well). Then, there exists smallest index ${j_i} \geq i$ such that $v_{j_i} \notin E(u_{j_i})$ in $G_{i-1}$, and the set $$Z_i\stackrel{\text{def}}{=} \{ x \in X \mid x \notin E(u_{j_i})\}$$ is non-empty. By Corollary \ref{corollary_about_finding_root_of_a_group_element}, $(u_{j_i}, v_{j_i})$ and $Z_i$ can be found algorithmically.
  
  Now define $$G_i = G_{i-1} / \ll \mathcal{R} \big(Z_i, u_{j_i}, v_{j_i},  \delta'_i, \lambda_i, c_i, \epsilon_i,  \mu_i, \rho_i \big) \gg.$$
Note that by Lemma \ref{lem on words with cancellation condition}, if the standard parameters $\delta'_i, \lambda_i, c_i, \epsilon_i,  \mu_i, \rho_i $ are sparse enough, then the set of words $$\mathcal{R}_i\stackrel{\text{def}}{=} \mathcal{R} \big(Z_i, u_{j_i}, v_{j_i},  \delta'_i, \lambda_i, c_i, \epsilon_i,  \mu_i, \rho_i \big)$$ satisfies the small-cancellation condition $C'(\lambda_i, c_i, \epsilon_i, \mu_i, \rho_i)$. Also note that, by the definition of $G_i$, $G_i = \langle u_{j_i}, v_{j_i} \rangle$.
 \begin{lemma}
\label{property-structure-G-i-tarskii}
	The following are true about $G_i$.
	\begin{enumerate}
		\item $G_i$ is a torsion-free non-elementary hyperbolic group;
		\item Either $v_i \in E(u_i)$ in $G_i$ or $\langle u_i, v_i \rangle = G_i$.
		\item For each $x \in X$, $E(x)= \langle x \rangle$ in $G_i$ (we assume that for $G_{i-1}$ this is already shown).
	\end{enumerate}
\end{lemma}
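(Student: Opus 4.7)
\medskip

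\noindent\textbf{Proof proposal.} The plan is to verify the three claims in order, using the inductive hypothesis on $G_{i-1}$ together with the small-cancellation machinery from Sections~\ref{section Small cancellation conditions} and \ref{section_about_special_class_of_LHG} applied to the set $\mathcal{R}_i = \mathcal{R}(Z_i, u_{j_i}, v_{j_i}, \delta'_i, \lambda_i, c_i, \epsilon_i, \mu_i, \rho_i)$.

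\emph{For part (1),} I would first check that $\mathcal{R}_i$ satisfies the small-cancellation condition $C'(\lambda_i, c_i, \epsilon_i, \mu_i, \rho_i)$ over $H_i = G_{i-1}$. The hypotheses of Lemma~\ref{lem on words with cancellation condition} (and of Lemma~\ref{lem 1111 on cancellation words}) are precisely the two non-elementarity conditions $v_{j_i}\notin E(u_{j_i})$ and $z\notin E(u_{j_i})$ for every $z\in Z_i$, both of which hold by the very choice of $j_i$ and $Z_i$. Taking the standard parameters sparse enough to satisfy the relations in Subsection~\ref{subsection-a-class-of-small-cancellation-words} gives the small-cancellation condition, and then Lemma~\ref{lem 7.2} together with Remark~\ref{remarm 7.2} shows that $G_i$ is non-elementary, torsion-free, and hyperbolic.

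\emph{For part (2),} the argument splits on whether $j_i>i$ or $j_i=i$. If $j_i>i$, then the minimality in the definition of $j_i$, combined with the fact that $Z_k\neq\emptyset$ for every $k$ (which follows from the non-elementarity of $G_{i-1}$, since a single maximal elementary subgroup cannot contain all of $X$), forces $v_i\in E(u_i)$ in $G_{i-1}$; this relation survives under the quotient map $G_{i-1}\twoheadrightarrow G_i$. If instead $j_i=i$, then for every $z\in Z_i$ the relator $z\,u_i^{m_{z,1}}v_i u_i^{m_{z,2}}\cdots v_iu_i^{m_{z,s_z}}\in\mathcal{R}_i$ rewrites $z$ as an element of $\langle u_i,v_i\rangle$ inside $G_i$, so $Z_i\subseteq\langle u_i,v_i\rangle$. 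For $x\in X\setminus Z_i$, i.e., $x\in E(u_i)$ in $G_{i-1}$, the inductive hypothesis (3) gives $E(x)=\langle x\rangle$, forcing $E(u_i)=\langle x\rangle$ and therefore $u_i=x^{\pm k}$ for some $k\geq 1$; since we may (and should) replace $u_{j_i}$ by a root element representative using Corollary~\ref{corollary_about_finding_root_of_a_group_element}, we reduce to $k=1$, making $x\in\{u_i^{\pm 1}\}\subseteq\langle u_i,v_i\rangle$. Hence $X\subseteq\langle u_i,v_i\rangle$, giving $\langle u_i,v_i\rangle=G_i$.

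\emph{For part (3),} I would invoke Lemma~\ref{lemma_about_proper_powers_in_quotients}(ii). For any $x\in X$ we have $\|x\|=1$, which for sparse enough parameters is far below the threshold $(\mu_i\rho_i-c_i)/\lambda_i-2\epsilon_i$; thus, if $x$ were a proper power in $G_i$, say $x=_{G_i}w^k$ for $k\geq 2$, then the same equation would hold in $H_i=G_{i-1}$, contradicting the inductive hypothesis that $E(x)=\langle x\rangle$ in $G_{i-1}$. Since $G_i$ is torsion-free by part (1), being a non-proper-power is equivalent to $E(x)=\langle x\rangle$, completing the induction.

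\emph{Expected main obstacle.} The delicate point is part (2) in the case $j_i=i$: making sure that the generators in $X\setminus Z_i$ really do lie in $\langle u_i,v_i\rangle$ modulo $\mathcal{R}_i$. This forces us to arrange the construction so that $u_{j_i}$ is taken to be a root representative of its conjugacy class (via Corollary~\ref{corollary_about_finding_root_of_a_group_element}); without this normalization, an element $x\in X\setminus Z_i$ with $u_i=x^k$, $k\geq 2$, need not lie in $\langle u_i,v_i\rangle$ inside $G_i$. The other subtlety, pervasive throughout, is bookkeeping the sparse-parameter hypotheses so that the length bound in Lemma~\ref{lemma_about_proper_powers_in_quotients}(ii) and the cancellation hypotheses of Lemma~\ref{lem on words with cancellation condition} hold uniformly in~$i$; this is exactly the role of the lowest-parameter principle from Convention~\ref{convention-lpp}.
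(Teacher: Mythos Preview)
Your proofs of (1) and (3) match the paper exactly: Lemma~\ref{lem 7.2} together with Remark~\ref{remarm 7.2} for (1), and Lemma~\ref{lemma_about_proper_powers_in_quotients}(ii) combined with torsion-freeness for (3). For (2) the paper is far terser than you: it records that $j_i>i$ forces $v_i\in E(u_i)$, and for $j_i=i$ simply quotes the sentence preceding the lemma (``by the definition of $G_i$, $G_i=\langle u_{j_i},v_{j_i}\rangle$'') without examining $X\setminus Z_i$ at all.

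Your closer look at the case $j_i=i$ is warranted, and the obstacle you isolate is genuine rather than cosmetic. If $X\setminus Z_i=\{x\}$ with $u_i=_{G_{i-1}}x^{\pm k}$ and $k\ge 2$, the relators in $\mathcal R_i$ only put $Z_i$ into $\langle u_i,v_i\rangle$; an abelianisation check in the toy case $G_0=F(x_1,x_2)$, $u_i=x_1^{2}$, $v_i=x_2$ shows that $x_1$ need not fall into $\langle x_1^{2},x_2\rangle$ after imposing the single relator $x_2\,(x_1^{2})^{m_1}x_2\cdots$. So the one-line assertion the paper relies on is not justified as written. Your proposed remedy of passing to a root representative via Corollary~\ref{corollary_about_finding_root_of_a_group_element} is the natural repair, but be aware that it modifies the construction and delivers $G_i=\langle u_i',v_i\rangle$ for a root $u_i'$ of $u_i$, not the literal dichotomy of (2); in particular it does not prove the lemma verbatim. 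On this point you are going beyond the paper rather than diverging from it.
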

\begin{proof}

	Part (1) of the statement follows from Lemma \ref{lem 7.2}. 
	
	 For part (2) simply notice that, by our definition of $j_i$ if $j_i>i$ then $u_i \in E(v_i)$ in $G_i$, otherwise if $j_i=i$ then $v_i \notin E(u_i)$ in $G_i$ and $G_i = \langle u_{i}, v_{i} \rangle$.
	 
	 For Part (3), first, note that it immediately follows from Lemma \ref{lemma_about_proper_powers_in_quotients} that $x$ is not a proper power in $G_i$. Therefore, since by Part (1) of the current lemma, $G_i$ is a torsion-free abelian group, we get that $E(x)=\langle x \rangle$.
	 \end{proof}
 \begin{proposition}
	The group $\hat{G}$, which is defined as inductive limit of  $(G_i)_{i=1}^{\infty}$, satisfies the statement of Theorem \ref{theorem_tarskii_monsters}. That is
	\begin{enumerate}
         		\item[(i).] Every proper subgroup of $\hat{G}$ is an infinite cyclic group, while $\hat{G}$ is not cyclic;\\
         		\item[(ii).] The word problem in $\hat{G}$  is decidable in almost quadratic time and the conjugacy problem in $\hat{G}$  is decidable in polynomial time.
         	\end{enumerate}
\end{proposition}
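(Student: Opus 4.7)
The plan is to verify that the chain \eqref{main seq of gps-tarski} fits the framework of Section~\ref{section_about_special_class_of_LHG} and then handle parts~(i) and~(ii) separately.

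First I would confirm that \eqref{main seq of gps-tarski} satisfies the condition $C'\big(\mathcal{TM}, (n^{1/i})_{i=1}^\infty, (\rho_i)_{i=1}^\infty\big)$ from Definition~\ref{definition-new-small-cancellation-conditions} for sparse-enough standard parameters and an appropriate Turing machine $\mathcal{TM}$ computing the $i$-th level data. Since $H_i = G_{i-1}$ for all $i \geq 1$ by construction, with $\beta_{i-1}$ the identity, the only non-trivial small-cancellation data come from $\mathcal{R}_i = \mathcal{R}(Z_i, u_{j_i}, v_{j_i}, \ldots)$, which satisfies $C'(\lambda_i, c_i, \epsilon_i, \mu_i, \rho_i)$ by Lemma~\ref{lem on words with cancellation condition}. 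Lemma~\ref{property-structure-G-i-tarskii} is the inductive step guaranteeing that each $G_i$ is a non-elementary torsion-free hyperbolic group, so the framework of Section~\ref{section_about_special_class_of_LHG} applies; $\hat{G}$ is then torsion-free as a direct limit of torsion-free groups, and lacunary hyperbolic by Theorem~\ref{theorem_about_lacunary_hyperbolicity_of_barG}.

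For part~(i) I would exploit the dichotomy built into the construction: for any pair $(u,v) \in F'(X) \times F'(X)$ with $u \neq v$, the pair appears as $(u_m, v_m)$ for some $m$, and at the stage $i$ with $j_i = m$ either (A) $v_m \in E_{G_{i-1}}(u_m)$, so $u_m, v_m$ already commensurate in $G_{i-1}$ and hence in $\hat{G}$, or (B) the relators $z \cdot u_m^{m_1} v_m \cdots \in \mathcal{R}_i$ for $z \in Z_i$ express every $z \in Z_i$ as a word in $u_m, v_m$, forcing $G_i = \langle u_m, v_m \rangle$ and hence $\hat{G} = \langle u_m, v_m \rangle$. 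The central technical step I would then carry out is: for every non-trivial $h \in \hat{G}$, the maximal elementary subgroup $E_{\hat{G}}(h)$ is infinite cyclic. If $g, g'$ are two roots of $h$ in $\hat{G}$ with $g' \neq g^{\pm 1}$, I apply the dichotomy to $(g, g')$; since $g$ and $g'$ commensurate (both being roots of $h$), case~(B) would make $\hat{G}$ generated by commensurable elements, hence cyclic, contradicting the non-cyclicity established below. Hence $g' = g^{\pm 1}$ and $E_{\hat{G}}(h) = \langle g \rangle$. Given a proper subgroup $H \leq \hat{G}$ and $h \in H \setminus \{1\}$, any $h' \in H$ satisfies $\langle h, h' \rangle \subsetneq \hat{G}$, so the dichotomy forces $h, h'$ to commensurate in $\hat{G}$, giving $h' \in E_{\hat{G}}(h) = \langle g \rangle$; this shows $H \subseteq \langle g \rangle$, and $H$ is infinite cyclic because $\hat{G}$ is torsion-free. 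Non-cyclicity of $\hat{G}$ reduces to showing that $x_1$ and $x_2$ are non-commensurable in $\hat{G}$: commensurability in some $G_i$ would give $x_2 \in E_{G_i}(x_1) = \langle x_1 \rangle$ by Lemma~\ref{property-structure-G-i-tarskii}(3), forcing $x_2 = x_1^{\pm 1}$ because $x_2$ is also a root in $G_i$; the word $x_2 x_1^{\mp 1}$ would then represent the identity in $G_i$ with length $2 < \xi(1)$, contradicting Lemma~\ref{property 0}.

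For part~(ii), since $\mathcal{R}_i$ contains $|Z_i|$ classes up to cyclic shift and $|Z_i|$ generally exceeds one, Lemma~\ref{lemma-word-problem-scheme} combined with Property~\ref{property-time-for-special-words} yields an almost quadratic word problem. For the conjugacy problem, the defining identity $H_i = G_{i-1}$ makes the $H$-conjugacy relation of Definition~\ref{definition- G- and H- conjugacy} vacuous, since any pair conjugate in $H_i$ is automatically conjugate in $G_{i-1}$; therefore every conjugation in $\hat{G}$ is a $G$-conjugation, and Theorem~\ref{theorem-effective-G-conjugacy-problem} together with Property~\ref{property-time-for-special-words} deliver a polynomial-time conjugacy algorithm. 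The main obstacle will be executing the simultaneous argument for the Tarski dichotomy and the cyclicity of $E_{\hat{G}}(h)$ without circularity, since each is used in the proof of the other; the rest reduces to bookkeeping within the framework of Sections~\ref{section Small cancellation conditions}--\ref{section_about_special_class_of_LHG}.
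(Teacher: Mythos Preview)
Your outline is essentially correct and tracks the paper's proof closely. Part~(ii) is exactly what the paper does: since $H_i=G_{i-1}$ there is no $H$-conjugacy, so conjugacy in $\hat G$ coincides with $G$-conjugacy, and Theorems~\ref{theorem_about_WP_in_barG_2} and~\ref{theorem-effective-G-conjugacy-problem} (via Property~\ref{property-time-for-special-words} and Lemma~\ref{lemma-word-problem-scheme}) finish the job.

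For part~(i) your argument is a mild rephrasing of the paper's. The paper argues: a proper $K$ is abelian by the dichotomy, hence $K\subseteq C_{\hat G}(x)$ for any $x\in K\setminus\{1\}$, and $C_{\hat G}(x)$ is cyclic because it is cyclic in each $G_i$. You instead first isolate the claim ``$E_{\hat G}(h)$ is infinite cyclic'' and then feed the dichotomy through it. These are the same argument wearing different clothes.

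Your self-diagnosed ``circularity'' is not the actual issue. The dichotomy (Lemma~\ref{property-structure-G-i-tarskii}(2)) is proved independently of anything about $E_{\hat G}(h)$, and your uniqueness-of-roots step is fine: if $g$ is a root of $h$ in $\hat G$ then $g$ is not a proper power in any $G_i$ (a proper-power relation would survive to $\hat G$), so two roots $g,g'$ lie in the cyclic group $E_{G_i}(h)$ and must agree up to inversion. What is genuinely missing---and what the paper also glosses over when it asserts that the limit of cyclic centralizers is cyclic---is \emph{existence} of a root, i.e.\ ruling out that a fixed $h$ becomes an $n$-th power for unbounded $n$ along the chain. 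This is exactly Lemma~\ref{lemma_about_proper_powers_in_quotients}(ii): once $\rho_i$ is large relative to $\|h\|$, any equation $h=_{G_i}w^k$ already holds in $G_{i-1}$, so the maximal $k$ for which $h$ is a $k$-th power stabilises and the root in $G_{i_0}$ persists. Insert that lemma at the point where you form $E_{\hat G}(h)=\langle g\rangle$ and the argument is complete, with no circularity.
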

\begin{proof}

	(i). First of all, notice that $\hat{G}$ is not cyclic, because otherwise, by part (3) of Lemma \ref{property-structure-G-i-tarskii} it follows that, for example, $x_1=x_2$ in some $G_i$, which is impossible if the standard parameters $(\lambda_i, c_i, \epsilon_i, \mu_i, \rho_i)_{i=1}{\infty}$ are sparse enough (recall that for sparse enough standard parameters, $\hat{G}$ is lacunary hyperbolic by 	Theorem \ref{theorem_about_lacunary_hyperbolicity_of_barG}).
	
	Now, by contradiction let us assume that $\hat{G}$ contains a proper non-cyclic subgroup $K$. Then, since $K$ is a proper subgroup, by part (2) of Lemma \ref{property-structure-G-i-tarskii}, $K$ is abelian (even more, each finitely generated subgroup of $K$ is cyclic). Let us fix any non-trivial element $x \in K$. Then in each of the groups $G_i$ the centralizer of $x$ coincides with $E(x)$, hence it is cyclic. This means that in the inductive limit $\hat{G}$ the centralizer of $x$ is again cyclic. Therefore, since $K$ is contained in the centralizer of $x$, $K$ is cyclic as well. A contradiction.
	
	(ii). Second part of Proposition follows from Property  \ref{property-time-for-special-words} and theorems \ref{theorem_about_WP_in_barG_2} and \ref{theorem-effective-G-conjugacy-problem}. As for conjugacy problem, let us notice that two elements of $\hat{G}$ are conjugate if and only if they are $G$-conjugate, hence Theorem \ref{theorem-effective-G-conjugacy-problem} implies that conjugacy problem in $\hat{G}$ is polynomial.

\end{proof}
Thus Theorem \ref{theorem_tarskii_monsters} is proved.
\section{Proof of Theorem \ref{theorem_about_connecton_of_word_and_conjugacy_problems}} 
	\label{section_problem_7-5}
	Let $\mathcal{A}$ be any finite alphabet, and let $\mathcal{L}\subseteq \mathcal{A}^*$ be any recursively enumerable subset of  $\mathcal{A}^*$.
	
	 For the two generated free group $F(x_1, x_2)$, let us denote by $F^+(x_1, x_2)$ the set of words from $F(x_1, x_2)$ which do not contain the letters $x_1^{-1}$ and $x_2^{-1}$.
	
	  Let us also fix a bijective map $\Lambda_0:\mathcal{A}^*  \rightarrow F^+(x_1, x_2)$ such that $\Lambda_0$ and $\Lambda_0^{-1}$ are computable in linear time. Construction of such a map can be easily achieved through a standard binary encoding of the set $\mathcal{A}^*$. 
	
	Let us define $G_0 = F_1 * F_2 * F_3$, where $F_1 = F(x_1, x_2, x_3)$,	$F_2 = F(y_1, y_2, y_3)$, $F_3 = F(z_1, z_2)$ are free groups with freely generating sets $X_0=\{x_1, x_2, x_3\}$, $Y_0=\{y_1, y_2, y_3\}$ and $Z_0=\{z_1, z_2\}$, respectively. For the convenience in the further exposition, let us also introduce the following notations: $G_{0, 1}=F_1$, $G_{0, 2}=F_2$ and $G_{0, 3} = F_3$.
	
	 
	
	Let $\varsigma: F_1 \rightarrow F_2$ be the isomorphism between $F_1$ and $F_2$ induced by the map $x_1 \mapsto y_1$, $x_2 \mapsto y_2$, $x_3 \mapsto y_3$. 
	
	Define $\Lambda: \mathcal{L} \rightarrow X_0^* \times Y_0^*$ as follows: For all $\omega \in \mathcal{L}$, 
	\begin{align*}
		\Lambda(\omega )=( ~\Lambda_0(\omega)x_3, ~\varsigma( \Lambda_0(\omega))y_3  ).
	\end{align*}

		Clearly, $\Lambda$ in an injection. 
		Let
		$$ \Lambda(\mathcal{L} ) = \{(u_1, v_1), (u_2, v_2), \ldots \},$$
		where the enumeration is with respect to some fixed Turing machine $M_{\Lambda}$ which on input $i\in \mathbb{N}$ outputs $(u_i, v_i)$. Note that such an enumeration exists since $\mathcal{L}$ is recursively enumerable.
		
	~\\
	
	As it was mentioned in Section \ref{section-about-general-scheme}, in this section we are going to construct the group $G_{\mathcal{L}}$ from Theorem \ref{theorem_tarskii_monsters} which will be a direct limit of a chain of non-elementary torsion-free hyperbolic groups of the form \eqref{main seq of gps***}, that is 
 \begin{align}
	    \label{main seq of gps-wp-cp}
		G_0 \stackrel{\beta_0}\hookrightarrow H_1 \stackrel{\gamma_1}\twoheadrightarrow G_1 \stackrel{\beta_1}\hookrightarrow H_2 \stackrel{\gamma_2}\twoheadrightarrow \ldots.
	\end{align}
	More specifically, $G_0 \stackrel{def}= F(X)$, where 
	$$X \stackrel{def}=X_0 \cup Y_0 \cup Z_0 =\{x_1, x_2, x_3, y_1, y_2, y_3, z_1, z_2 \},$$
	where $X_0=\{x_1, x_2, x_3\}$, $Y_0= \{ y_1, y_2, y_3\}$ and $Z_0=\{z_1, z_2 \}$.
	
	\subsubsection{Definition of $H_{i+1}$ ($i\geq 0$) for $\check{G}$}
	Assuming that $G_i$ is already constructed we define $H_{i+1}$ as an HNN-extension of $G_i$. More precisely,
	\begin{equation}
	\label{definitino-H_i}
		H_{i+1}=\langle G_i, t_{i+1} \mid u_{i+1}=t_{i+1}^{-1}v_{i+1} t_{i+1} \rangle.
	\end{equation}
Then, clearly the identity map $id: X \rightarrow X$ induces an embedding $\beta_i: G_i \hookrightarrow H_{i+1}$.
	Define $$Y_{i+1} =\{t_{i+1}\}.$$
	
	We will show by induction that for all $i \geq 0$, $H_{i+1}$ is torsion-free, non-elementary $\delta'_{i+1}$-hyperbolic group (for some $\delta'_{i+1} \in \mathbb{N}$ such that the map $i+1 \mapsto \delta'_{i+1}$ is computable) with respect to the generating set $X \cup \{t_1, \ldots, t_i\}$.
\subsubsection{Definition of $G_{i+1}$ ($i\geq 0$) for $G_{\mathcal{L}}$}
Suppose that $H_{i+1}$ is already constructed and it is non-elementary, torsion-free $\delta'_{i+1}$-hyperbolic with respect to the generating set $X \cup \{t_1, \ldots, t_i\}$ for $\delta'_{i+1}\in \mathbb{N}$. Then, we define $G_{i+1}$ as follows 
$$G_{i+1} \stackrel{\text{def}}= G_{i} / \ll \mathcal{R} \big(\{t_{i+1}\}, z_1, z_2,  \delta'_{i+1}, \lambda_{i+1}, c_{i+1}, \epsilon_{i+1},  \mu_{i+1}, \rho_{i+1} \big) \gg,$$
where $\lambda_{i+1} \succ c_{i+1} \succ \epsilon_{i+1} \succ  \mu_{i+1} \succ \rho_{i+1}$ are some sparse enough standard parameters. Denote $\mathcal{R}_{i+1} =\mathcal{R} \big(\{t_{i+1}\}, z_1, z_2,  \delta'_{i+1}, \lambda_{i+1}, c_{i+1}, \epsilon_{i+1},  \mu_{i+1}, \rho_{i+1} \big)$, and let $R_{i+1} \in \mathcal{R}_{i+1}$ be any fixed representative of $\mathcal{R}_{i+1}$ (then,  $\mathcal{R}_{i+1}$ is the set of cyclic shifts of $R_{i+1}$).

Note that $G_{i+1}$ is generated by the image of $X$ (which we denote by $X$ too) under the natural  homomorphism from $G_i$ to $G_{i+1}$.
 We will show by induction that for all $i \geq 0$, $G_{i+1}$ is torsion-free, non-elementary $\delta_{i+1}$-hyperbolic group (for some $\delta_{i+1} \in \mathbb{N}$ such that the map $i+1 \mapsto \delta_{i+1}$ is computable) with respect to the generating set $X$.

	~\\
	
	For the further exposition let us define the concept of \textit{truncated} contiguity diagrams as follows: In a van Kampen diagram $\Delta$ over $G_i=H_i/\ll \mathcal{R}_i \gg$ which contains an essential cell $\Pi$ and an outer contiguity diagram  $\Gamma$  connecting an arc $\check{q}_{\Gamma}$ of $\Pi$ to an arc $\hat{q}_{\Gamma}$ of $\partial \Delta$, we say that $\Gamma$ is \textit{truncated} if $p_{\Gamma}$ and $p'_{\Gamma}$ are the shortest paths in $Proj(\Delta)$ joining, respectively, $(\check{q}_{\Gamma})_-$ and $(\check{q}_{\Gamma})_+$ to $\partial\Delta$. 

Note that truncated contiguity diagrams are truncated diagrams according to Definition \ref{def-truncated diagrams}.

	\subsubsection{Main properties of the chain \eqref{main seq of gps-wp-cp}}
	\begin{enumerate}
	\item[(a$_{i}$).] \label{lemma_auxiliary_for_problem_7-5}
	Let $W \in (X_0 \cup Y_0 \cup Z_0)^*$ and for some $i \geq 1$, $W =_{H_{i}} W'$, where $W'$ is a geodesic word in $\Gamma\big(H_{i}, X_0 \cup Y_0 \cup Z_0 \cup \{t_{i}\} \big)$. Then $W'$ does not contain the letter $t_{i}^{\pm 1}$, i.e. $W' \in (X_0 \cup Y_0 \cup Z_0)^*$. Also, if $W \in X_0^* \cup Y_0^*$ is a freely reduced word, then $W$ is geodesic in $\Gamma(H_{i}, X_0 \cup Y_0 \cup Z_0 \cup \{t_{i}\} )$;\\
	\item[(b$_{i}$).] \label{contiguity} There is no $\epsilon_{i}$-contiguity subdiagram $\Gamma$ of rank $i$ such that $lab(\hat{q}_{\Gamma}) \in X_0^* \cup Y_0^*$ and $\| \check{q}_{\Gamma}\| \geq \mu_{i} \|R_{i}\|$. Moreover, if $\check{q}_{\Gamma}$ does not contain an edge labeled by $t_{i}^{\pm 1}$, then it is enough to require $\| \check{q}_{\Gamma}\| \geq \mu_{i} \|R_{i}\|/2$;\\
	\item[(b$'_{i}$).] If a truncated $\epsilon_i$-contiguity subdiagram $\Gamma$ of rank $i$ is such that $\check{q}_{\Gamma}$ is geodesic in $\Gamma\big(H_i, X_0\cup Y_0 \cup Z_0 \cup \{t_1, \ldots, t_i \}\big)$ and $\| \check{q}_{\Gamma}\| \geq \mu_{i} \|R_{i}\|$, then $Area(\Gamma)=0$. \\
	
	\item[(c$_{i}$).] \label{geodesic} If $w \in  X_0^* \cup Y_0^*$ is a reduced word, then it is a geodesic word in $\Gamma\big(G_{i}, X_0 \cup Y_0 \cup Z_0 \cup \{t_{i}\}\big)$. Moreover,  if for some word $u \in (X_0 \cup Y_0 \cup Z_0 \cup \{t_{i}\})^*$, $u$ is geodesic in $\Gamma\big(G_{i}, X_0 \cup Y_0 \cup Z_0 \cup \{t_{i}\}\big)$ and $u=_{G_{i}} w$, then $u \equiv w$ (i.e. $u$ is freely equal to $w$) ;\\
	
	\item[(d$_{i}$).] \label{torsion} If $U \in X_0^* \cup Y_0^*$ is a reduced word which is not a proper power of another word from $G_0$, then it represents an element in $G_{i}$ which is not a proper power of another element from $G_{i}$; \\

	\item[(e$_{i}$).] \label{disjointness} $G_{i, 1} \cap G_{i, 2} = \{1\}$;\\
	
	\item[(f$_{i}$).] Assuming that $G_{i-1}$ is a $\delta_{i-1}$-hyperbolic group with respect to the generating set $X$, we have that $H_{i}$ is a $\delta_{i}'$-hyperbolic group with respect to the generating set $X \cup \{t_1, \ldots, t_{i}\}$ for some computable $\delta_i' \in \mathbb{N}$.
\end{enumerate}

Clearly this properties are true for $i=0$. Next, based on induction on $i$ we will prove that they are true for every $i$.
 	
\subsubsection{Proof of the properties (a$_{i+1}$)-(f$_{i+1}$).}

\label{subsection_proof_of_properties}
	\begin{lemma}
		\label{lem_main_structural_lemma_for_problem_7-5}
Assuming that the statements (a$_{i}$)-(f$_{i}$) are true, the following properties hold.
\begin{enumerate}
	\item[(a$_{i+1}$).] \label{lemma_auxiliary_for_problem_7-5}
	Let $W \in (X_0 \cup Y_0 \cup Z_0)^*$ and for some $i \geq 1$, $W =_{H_{i+1}} W'$, where $W'$ is a geodesic word in $\Gamma\big(H_{i+1}, X_0 \cup Y_0 \cup Z_0 \cup \{t_{i+1}\} \big)$. Then $W'$ does not contain the letter $t_{i+1}^{\pm 1}$, i.e. $W' \in (X_0 \cup Y_0 \cup Z_0)^*$. Also, if $W \in X_0^* \cup Y_0^*$ is a freely reduced word, then $W$ is geodesic in $\Gamma(H_{i+1}, X_0 \cup Y_0 \cup Z_0 \cup \{t_{i+1}\} )$;\\
	\item[(b$_{i+1}$).] \label{contiguity} There is no $\epsilon_{i+1}$-contiguity subdiagram $\Gamma$ of rank $i+1$ such that $lab(\hat{q}_{\Gamma}) \in X_0^* \cup Y_0^*$ and $\| \check{q}_{\Gamma}\| \geq \mu_{i+1} \|R_{i+1}\|$. Moreover, if $\check{q}_{\Gamma}$ does not contain an edge labeled by $t_{i+1}^{\pm 1}$, then it is enough to require $\| \check{q}_{\Gamma}\| \geq \mu_{i+1} \|R_{i+1}\|/2$;\\
	\item[(b$'_{i+1}$).] If a {truncated} $\epsilon_{i+1}$-contiguity diagram $\Gamma$ of rank $i+1$ is such that $\| \check{q}_{\Gamma}\| \geq \mu_{i+1} \|R_{i+1}\|$, then $Area(\Gamma)=0$. \\
	
	\item[(c$_{i+1}$).] \label{geodesic} If $w \in  X_0^* \cup Y_0^*$ is a reduced word, then it is a geodesic word in $\Gamma\big(G_{i+1}, X_0 \cup Y_0 \cup Z_0 \cup \{t_{i+1}\}\big)$. 
	Moreover,  if for some word $u \in (X_0 \cup Y_0 \cup Z_0 \cup \{t_{i+1}\})^*$, $u$ is geodesic in $\Gamma\big(G_{i+1}, X_0 \cup Y_0 \cup Z_0 \cup \{t_{i+1}\}\big)$ and $u=_{G_{i+1}} w$, then $u \equiv w$ (i.e. $u$ is freely equal to $w$) ;\\
	
	\item[(d$_{i+1}$).] \label{torsion} If $U \in X_0^*\cup Y_0^*$ is a reduced word which is not a proper power of another word $G_0$, then it represents an element in $G_{i+1}$ which is not a proper power of another element from $G_{i+1}$; \\

	\item[(e$_{i+1}$).] \label{disjointness} $G_{i+1, 1} \cap G_{i+1, 2} = \{1\}$;\\
	
	\item[(f$_{i+1}$).] Assuming that $G_{i}$ is a non-elementary torsion-free $\delta_{i}$-hyperbolic group with respect to the generating set $X_0 \cup Y_0 \cup Z_0 $, we have that $H_{i+1}$ is a non-elementary torsion-free $\delta_{i+1}'$-hyperbolic group with respect to the generating set $X \cup \{t_1, \ldots, t_{i+1}\}$, where $\delta'_{i+1}$ is some (computable) positive integer. Also, the group $G_{i+1}$ is non-elementary, torsion-free hyperbolic group.
	
	\end{enumerate}
	\end{lemma}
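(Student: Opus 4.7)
The plan is to prove the six properties essentially in the order (f$_{i+1}$), (a$_{i+1}$), (b$_{i+1}$)/(b'$_{i+1}$), (c$_{i+1}$), (d$_{i+1}$), (e$_{i+1}$), so that each step can freely use the ones already established at rank $i+1$ together with the full package (a$_i$)--(f$_i$). First I would handle hyperbolicity (f$_{i+1}$) by applying Theorem \ref{th hnn extension} to the HNN-extension \eqref{definitino-H_i}. The associated subgroups are $\langle v_{i+1} \rangle$ and $\langle u_{i+1} \rangle$; since $u_{i+1} = \Lambda_0(\omega) x_3$ and $v_{i+1} = \varsigma(\Lambda_0(\omega)) y_3$ have a terminal letter appearing exactly once, they are not proper powers in $G_0$, hence by (d$_i$) not proper powers in $G_i$, and in a torsion-free hyperbolic group this makes $\langle u_{i+1}\rangle$ and $\langle v_{i+1}\rangle$ maximal elementary. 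For the malnormality hypothesis, if some $g \in G_i$ satisfied $gu_{i+1}^kg^{-1} =_{G_i} v_{i+1}^l$ with $k,l \neq 0$, then using $u_{i+1} \in X_0^*$ and $v_{i+1} \in Y_0^*$ together with (e$_i$) and Lemma \ref{lem Lemma 8} one obtains a contradiction. Theorem \ref{th hnn extension} then yields hyperbolicity of $H_{i+1}$, torsion-freeness follows from Lemma \ref{lemma_about_proper_powers_in_hnn} applied to the inductive torsion-freeness of $G_i$, and hyperbolicity/torsion-freeness/non-elementariness of $G_{i+1}$ then follow from Lemma \ref{lem 7.2} plus Remark \ref{remarm 7.2}. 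Property (a$_{i+1}$) is then an immediate consequence of Britton's Lemma (Lemma \ref{lem-britton}), since a $t_{i+1}^{\pm 1}$-free input word cannot become equal to a $t$-reduced word containing $t_{i+1}^{\pm 1}$; the geodesicity of a freely reduced $W \in X_0^* \cup Y_0^*$ then reduces to (c$_i$).

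The main obstacle will be (b$_{i+1}$) and (b'$_{i+1}$), and I would spend the bulk of the proof here. The key point is that the defining relator $R_{i+1} \in \mathcal{R}(\{t_{i+1}\}, z_1, z_2, \delta'_{i+1}, \ldots)$ is built, according to the schema \eqref{definition_of_special_words} and \eqref{eq words with cancellation condition}, as an alternating product of large powers of $U_{i+1}=z_1$ separated by the letter $V_{i+1}=z_2$, prefixed by the word $t_{i+1}$; in particular every long subword of $R_{i+1}$ contains either a letter $t_{i+1}^{\pm 1}$ or a large power of $z_1$. Assume by contradiction that a contiguity subdiagram $\Gamma$ of rank $i+1$ satisfies $lab(\hat{q}_\Gamma) \in X_0^* \cup Y_0^*$ and $\|\check{q}_\Gamma\| \geq \mu_{i+1}\|R_{i+1}\|$. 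By the definition of an $\epsilon_{i+1}$-contiguity subdiagram, there exist words $T_1,T_2$ with $\|T_j\| \leq \epsilon_{i+1}$ and a subword $U$ of $R_{i+1}$ with $\|U\| \geq \mu_{i+1}\|R_{i+1}\|$ such that $lab(\hat{q}_\Gamma) =_{H_{i+1}} T_1 U T_2$. Because $\mu_{i+1}\|R_{i+1}\|$ is much larger than $2\epsilon_{i+1}+1$, the word $U$ must contain at least one $t_{i+1}^{\pm 1}$ in the $\mu_{i+1}\|R_{i+1}\|/2$-case or, in the unconstrained case, a block of the form $t_{i+1}^{\pm1} z_1^{m} z_2 z_1^{m'}$ with $m,m'$ so large that no $\epsilon_{i+1}$-envelope can absorb both the $t_{i+1}$-letters and the surrounding $z$-pattern. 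But then the equation $lab(\hat{q}_\Gamma) =_{H_{i+1}} T_1 U T_2$ gives a word in $(X_0 \cup Y_0)^*$ equal in $H_{i+1}$ to one whose geodesic representative in $\Gamma(H_{i+1}, X \cup \{t_1,\ldots,t_{i+1}\})$ must contain $t_{i+1}^{\pm 1}$ (by Lemma \ref{lem-britton} and the fact that $z_1,z_2 \notin \langle u_{i+1}\rangle \cup \langle v_{i+1}\rangle$), contradicting (a$_{i+1}$). The truncated version (b'$_{i+1}$) is then deduced by applying Lemma \ref{lemma-truncated-diagrams} to the pair $(\Pi,\Gamma)$ and using the constraint $\|\check{q}_\Gamma\|\geq\mu_{i+1}\|R_{i+1}\|$ together with (b$_{i+1}$), which forces every contiguity to the opposite side to be empty and the whole truncated diagram to have zero area.

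Once (b$_{i+1}$) is in hand, properties (c$_{i+1}$)--(e$_{i+1}$) are routine. For (c$_{i+1}$), given a freely reduced $w \in X_0^* \cup Y_0^*$ and a shorter geodesic representative $u$ in $G_{i+1}$, I would form a reduced disc diagram over $G_{i+1}$ with boundary label $wu^{-1}$; by (a$_{i+1}$) and (c$_i$) this diagram must contain an $\mathcal{R}_{i+1}$-cell, and Lemma \ref{lem 6.6} then produces an essential cell with a contiguity subdiagram to $\partial\Delta$ whose outer arc lies on the $w$-side (since $u$ is geodesic in $H_{i+1}$-metric too, by large enough standard parameters) and carries a label in $X_0^* \cup Y_0^*$, directly contradicting (b$_{i+1}$); the moreover clause is the same argument with $\|u\|=\|w\|$. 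For (d$_{i+1}$), if $U \in X_0^*\cup Y_0^*$ were a proper power $W^k$ in $G_{i+1}$, then by (c$_{i+1}$) together with Lemma \ref{lemma_about_proper_powers_in_quotients}(ii) (whose hypothesis on $\|U\|$ is satisfied because $\rho_{i+1}$ is chosen sparse enough after $U$ is fixed at rank $0$) the same equality holds in $H_{i+1}$; applying Corollary \ref{corollary_about_proper_powers_in_hnn} then brings the equality back to $G_i$, contradicting (d$_i$). Finally for (e$_{i+1}$), a common element of $G_{i+1,1}\cap G_{i+1,2}$ is represented by $W_1 \in X_0^*$ and $W_2 \in Y_0^*$ with $W_1 =_{G_{i+1}} W_2$; by (c$_{i+1}$) both words must have the same length, and replacing one side by the other and applying (c$_{i+1}$) once more forces the words to be freely equal, which is possible only if they are empty because $X_0 \cap Y_0 = \emptyset$.
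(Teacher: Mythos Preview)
Your overall plan is reasonable, and parts (f$_{i+1}$), (a$_{i+1}$), (e$_{i+1}$) are essentially fine. However, there are two genuine gaps.

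\textbf{The argument for (b$_{i+1}$) does not go through.} You try to reduce everything to Britton's Lemma and (a$_{i+1}$): since $R_{i+1}$ contains a letter $t_{i+1}$, any long subword $U$ of $R_{i+1}$ forces the geodesic representative of $T_1UT_2$ to contain $t_{i+1}^{\pm1}$, contradicting (a$_{i+1}$). But $R_{i+1}=t_{i+1}z_1^{m_1}z_2\ldots z_2z_1^{m_j}$ contains \emph{exactly one} letter $t_{i+1}$; a subword of length $\mu_{i+1}\|R_{i+1}\|$ (let alone $\mu_{i+1}\|R_{i+1}\|/2$) will generically lie entirely in $Z_0^*$. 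In that situation your Britton argument says nothing: the equation $lab(\hat q_\Gamma)=_{H_{i+1}}T_1^{-1}UT_2$ with $U\in Z_0^*$ is an equation in $G_i$ (once $t_{i+1}$-bands are disposed of), and the question becomes whether a long $z_1,z_2$-word can be $\epsilon_{i+1}$-close to a word in $X_0^*\cup Y_0^*$ inside $G_i$. This is a nontrivial statement about the lower ranks, and it is exactly what the paper spends most of the proof of (b$_{i+1}$) on: one shows $\Gamma$ is a diagram over some $G_j$ with $j\le i$, then invokes Lemma~\ref{lem on words with cancellation condition} (more precisely, property~(d) of the $C''(\boldsymbol{\lambda,c,\epsilon,\mu,\rho})$ condition) to rule out $\mathcal R_j$-cells having an $\epsilon_j$-contiguity of degree $\ge\mu_j$ to the side $AD\subset\partial\Pi$, and finally applies Lemma~\ref{lemma-truncated-diagrams} to conclude no $\mathcal R_j$-cell exists. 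Your sketch never touches this inter-rank small cancellation step; the appeal to ``$z_1,z_2\notin\langle u_{i+1}\rangle\cup\langle v_{i+1}\rangle$'' is irrelevant to it.

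\textbf{The argument for (d$_{i+1}$) is invalid.} You invoke Lemma~\ref{lemma_about_proper_powers_in_quotients}(ii), saying its length hypothesis $\|U\|<(\mu_{i+1}\rho_{i+1}-c_{i+1})/\lambda_{i+1}-2\epsilon_{i+1}$ ``is satisfied because $\rho_{i+1}$ is chosen sparse enough after $U$ is fixed at rank~$0$''. But $U$ is \emph{not} fixed: the statement of (d$_{i+1}$) is for every reduced $U\in X_0^*\cup Y_0^*$, of arbitrary length, while $\rho_{i+1}$ is fixed once and for all in the construction. So Lemma~\ref{lemma_about_proper_powers_in_quotients}(ii) simply does not apply. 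The paper instead argues directly with a cyclically slender $(U,(W'')^k)$-conjugacy diagram: by (b$_{i+1}$) the contiguity degree to the $U$-side is $<\mu_{i+1}$, forcing a $(1-122\lambda_{i+1}\mu_{i+1})$-contiguity to the $(W'')^k$-side, and then the periodicity of $lab(\hat q_{\Gamma_2})$ together with Lemma~\ref{lemma_about_contiguity_arcs_with_the_same_label} gives the contradiction. There is no shortcut through the ``short $U$'' hypothesis.

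A smaller point: in (c$_{i+1}$) you assert that the geodesic $u$ in $G_{i+1}$ is ``geodesic in $H_{i+1}$-metric too''; this is false in general, and the paper does not use it. What is used is that a geodesic in $G_{i+1}$ cannot carry an $(\epsilon_{i+1},1-25\mu_{i+1})$-arc (Lemma~\ref{lemma on Greendlinger's cell}), together with (b$_{i+1}$) bounding the contiguity to the $w$-side by $\mu_{i+1}$; the triangular decomposition $w=w_0x$ is needed to get Lemma~\ref{lem 6.6} to apply with $(\lambda_{i+1},c_{i+1})$-quasi-geodesic sides.
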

	\begin{proof}
	
	
	Based on the inductive assumption we will prove Lemma \ref{lemma_auxiliary_for_problem_7-5} using the following scheme: $the~inductive~hypothesis\implies (a_{i+1}) \implies (b_{i+1}) \implies (b'_{i+1}),~ (c_{i+1}) \implies (d_{i+1}) \implies (e_{i+1}) \implies (f_{i+1})$.\\
	~\\
	\textbf{(a$_{i+1}$).}  If $W =_{H_{i+1}} W'$ and $W'$ is a geodesic word in $\Gamma\big(H_{i+1}, X_0\cup Y_0 \cup Z_0\cup \{t_{i+1} \} \big)$, then there is a reduced van Kampen diagram $\Delta$ over $H_{i+1}$ such that $\partial \Delta = pq^{-1}$, where $lab(p)= W$ and $lab(q)=W'$.
	
	 If $W'$ contains a letter from $t_{i+1}^{\pm 1}$, then $q$ contains an edge with label from $t_{i+1}^{\pm 1}$, hence
	$\Delta$ contains a $t_{i+1}$-band. Therefore, since $W$ does not contain $t_{i+1}^{\pm 1}$ (or equivalently, $p$ does not contain edges with labels from $t_{i+1}^{\pm 1}$) we get that the $t_{i+1}$-bands of $\Delta$ must start and end on $q$. Let us consider edges $e$ and $e'$ on $q$ such that they are connected by a $t_{i+1}$-band and between them there is no other edge labeled by $t_{i+1}^{\pm 1}$. Let us denote the sides of this $t_{i+1}$-band which are not on $q$ by $q_1$ and $q_2$ as in Figure \ref{fig:  figure-a-i+1}. Note that since in the definition \eqref{definitino-H_i} of $H_{i+1}$ the words $u_{i+1}$ and $v_{i+1}$ are freely cyclically reduced and $\|u_{i+1}\|=\|v_{i+1}\|$, we get $\|q_1\|=\|q_2\|$.
	 Let us also denote by $q'$ the subpath of $q$ between $e_+$ and $(e')_-$. 
	 
	 By our assumptions,   there is no  edge on $q'$ labeled by $t_{i+1}^{\pm 1}$. Therefore, since $lab(q_2(q')^{-1})$ does not contain edges with labels from $t_{i+1}^{\pm 1}$, we get that the subdiagram of $\Delta$ with the boundary $q_2(q')^{-1}$ is a diagram over $G_{i}$ (see Figure \ref{fig:  figure-a-i+1}). Therefore, since by our assumptions $q'$, as a subpath of $q$,  is geodesic in $\Gamma(H_i, X_0\cup Y_0 \cup Z_0 \cup \{t_{i+1}\})$, it is also geodesic in $\Gamma(G_i, X_0\cup Y_0 \cup Z_0)$. Also, since by the statement of (c$_i$), $q_2$ is geodesic in $\Gamma(G_i, X_0\cup Y_0 \cup Z_0)$ too,  we get that $\|q_2\|=\|q'\|$. Also,  since $\|q_1\|=\|q_2\|$, we get $\|q_1\|=\|q'\|$.  Therefore, if we replace the subpath $eq'e'$ of $q$ with $q_1$, then $q$ will be shortened by $2$. The last observation contradicts the assumption that $q$ is geodesic in $\Gamma\big(H_{i+1}, X_0\cup Y_0 \cup Z_0\cup \{t_{i+1} \} \big)$. Therefore, it must be that $W'$ does not contain $t_{i+1}^{\pm 1}$, i.e. $W' \in (X_0\cup Y_0 \cup Z_0)^*$. 
	\begin{figure}[H]
						\centering
						\includegraphics[clip, trim=1cm 14cm 0cm 5cm, width=.75\textwidth]{{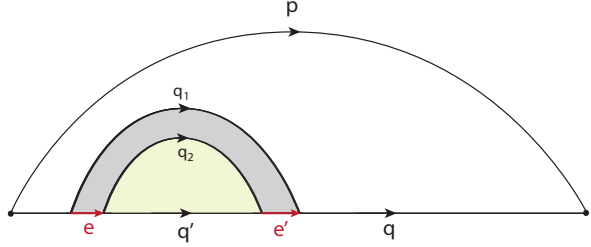}} 
						\caption{$\Delta$: $lab(p)=W$, $lab(q)=W'$, $lab(e)\in \{t_{i+1}^{\pm 1}\}$, $lab(e)\in \{t_{i+1}^{\mp 1}\}$.}
						\label{fig:  figure-a-i+1}
	\end{figure}	
	
	Now let us turn to the last statement of part (a$_{i+1}$). Namely, if $W \in X_0^* \cup Y_0^*$, then $W$ is geodesic in $\Gamma(H_{i+1}, X_0\cup Y_0 \cup Z_0 \cup \{t_{i+1} \} )$. 
	
	Suppose that $W' \in (X_0\cup Y_0 \cup Z_0 \cup \{t_{i+1} \})^*$ is a geodesic word in $\Gamma(H_{i+1}, X_0\cup Y_0 \cup Z_0 \cup \{t_{i+1} \} )$ such that $W'=_{H_{i+1}} W$. Then, by the first part of (a$_{i+1}$), $W'$ does not contain $t_{i+1}^{\pm 1}$, which implies that $W'=_{G_i} W$. By inductive hypothesis (more precisely, by  (c$_{i}$)), since  $W \in X_0^* \cup Y_0^*$, we get that $W$ is geodesic in $\Gamma(G_i, X_0\cup Y_0 \cup Z_0)$. Therefore, $W=_{G_i} W'$ implies $\|W\|=\|W'\|$ and since $W'$ is  geodesic in $\Gamma(H_{i+1}, X_0\cup Y_0 \cup Z_0 \cup \{t_{i+1} \} )$, we get that $W$ is geodesic in $\Gamma(H_{i+1}, X_0\cup Y_0 \cup Z_0 \cup \{t_{i+1} \} )$ as well.\\
	~\\
	\textbf{(b$_{i+1}$).} Suppose $\Gamma$ is a contiguity subdiagram satisfying the conditions described in the statement of (b$_{i+1}$), which, in particular, means that $lab(\hat{q}_{\Gamma}) \in X_0^* \cup Y_0^*$. 
	
	First of all, let us notice that since $lab(\hat{q}_{\Gamma}) \in X_0^* \cup Y_0^*$, by (a$_{i+1}$) we get that $\hat{q}_{\Gamma}$ is  geodesic in $\Gamma\big(H_{i+1}, X_0 \cup Y_0 \cup Z_0\cup \{t_{i+1} \} \big)$.\\
	Now, let $\partial \Gamma = ABCD$, where $AB = p_{\Gamma}$, $BC = \hat{q}_{\Gamma}$, $DC = p'_{\Gamma}$ and $AD = \check{q}_{\Gamma}$. Without loss of generality assume that $\|AB\|+\|DC\|$ is minimal among all  contiguity subdiagrams satisfying the conditions stated in (b$_{i+1}$).
	
	 Now we are going to show that $\Gamma$ does not contain any $t_{i+1}$-bands with both ends on $AB \cup BC \cup CD$. For that purpose, let us notice that since by definition $AB$ and $DC$ are geodesics, there is no $t_{i+1}$-band with both ends on $AB$ or on $DC$. Also, since $BC$ does not contain an edge with label $t_{i+1}^{\pm 1}$, there is no $t_{i+1}$-band which ends on $BC$. Also, since $AB$ and $DC$ are geodesics, there is no $t_{i+1}$-band with both of its ends on $AB$ or on $DC$ (the impossibility of such scenario is explained in the proof of part (a$_{i+1}$)). Thus the only possible way for a $t_{i+1}$-band to have both of  its ends on $AB \cup BC \cup CD$ is when one end is on $AB$ and the other one is on $DC$.\\	 
	  Now assume that there are edges $e$ and $e'$ on $AB$ and $DC$, respectively, such that their labels belong to $\{t_{i+1}^{\pm 1} \}$ and they are connected by a $t_{i+1}$-band. Suppose $e$ belongs to $[A, e_+]$ and $e'$ belongs to $[D, e'_+]$.
	   Then denote $B'=e_-$ and $C'=e'_-$. See Figure \ref{fig:  Contiguity-diagram_Problem-7_5_N1}. Then, since the labels of sides of $t_{i+1}$-bands belong to $X_0^*$ or $Y_0^*$, we get that the subdiagram $A B' C' D$ is another $\epsilon_{i+1}$-contiguity subdiagram which satisfies all the conditions put on $\Gamma$ in (b$_{i+1}$). But since $\|AB'\|+\|DC'\| < \|AB\|+\|DC\|$, this contradicts the minimality assumption on $\|AB\|+\|DC\|$. Therefore, there is no $t_{i+1}$-band with both of its ends on  $AB \cup BC \cup CD$. 
	   \begin{figure}[H]
						\centering
						\includegraphics[clip, trim=2cm 19.2cm 3cm 3.0cm, width=1\textwidth]{{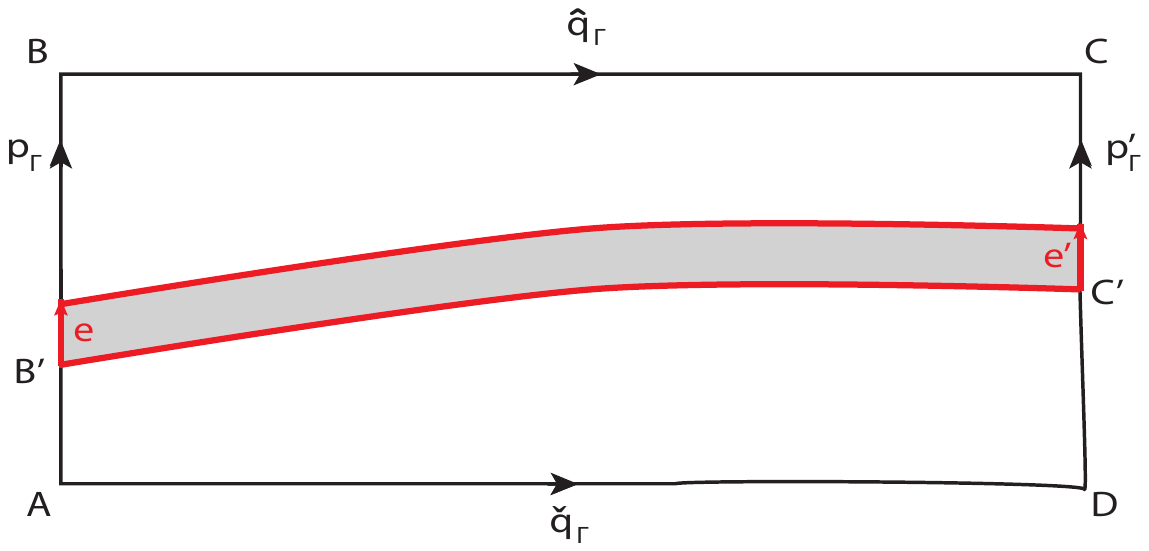}} 
						\caption{}
						\label{fig:  Contiguity-diagram_Problem-7_5_N1}
	\end{figure}	

	 Now let us consider the cases when $\Gamma$ contains a $t_{i+1}$-band with one of its ends on $AD$ and the other one on $AB \cup DC$ or it does not contain any $t_{i+1}$-band. 
	   
	 From the structure of the words $\mathcal{R}_{i+1}$ it follows that they contain exactly one letter from $t_{i+1}^{\pm 1}$. Therefore, since $lab(AD)$ is a subword of some word from $\mathcal{R}_{i+1}$, we get that $AD$ contains maximum one edge with a label from $\{t_{i+1}^{\pm 1} \}$. Hence in $\Gamma$ there is no $t_{i+1}$-band with both of its ends on $AD$.
	  Thus the only possible $t_{i+1}$-band in $\Gamma$ starts on $AD$ and ends on $AB \cup DC$ as it is depicted in Figure \ref{fig:  Contiguity-diagram_Problem-7_5}.
	  
	   Below we discuss in more details the only two possible cases: Case 1 -- when $\check{q}_{\Gamma}$ does not contain an edge with label from $\{t_{i+1}^{\pm 1} \}$ and $\Gamma$ does not contain a $t_{i+1}$-band, and Case 2 --  when $\check{q}_{\Gamma}$ contains exactly one edge with label from $\{t_{i+1}^{\pm 1} \}$.\\
	  ~\\
	\textit{Case 1.} ($\check{q}_{\Gamma}$ does not contain an edge with a label from $\{t_{i+1}^{\pm 1} \}$). In this case, clearly there is no $t_{i+1}$-band in $\Gamma$ which starts on $AD$ and ends on $AB \cup DC$.  Therefore, $\Gamma$ does not contain any $t_{i+1}$-band and $lab(\partial\Gamma) \in (X_0 \cup Y_0 \cup Z_0)^*$. Now, since the boundary of $\Gamma$ does not contain an edge with label $t_{i+1}^{\pm 1}$, clearly, for some $0<j\leq i$, $\Gamma$ is a diagram over $G_j$. Let us assume that $j$ is chosen to be minimal (since $G_0=F_1*F_2*F_3$, $j$ cannot be $0$). Then, clearly there exists a reduced diagram over the quotient $G_j=H_j/\ll \mathcal{R}_j \gg$ with the boundary $\partial\Gamma$. Therefore, let us regard $\Gamma$ as a reduced diagram over $G_j=H_j/\ll \mathcal{R}_j \gg$. From Lemma \ref{lem on words with cancellation condition} and from the structure of the words $\bigcup_k \mathcal{R}_k$, it follows that there is no $\mathcal{R}_j$-cell $\Pi_0$ in $\Gamma$ connected to $[A, D]$ by a $\epsilon_j$-contiguity subdiagram $\Gamma_0$ such that $(\Pi_0, \Gamma_0, [A, D])\geq \rho_j$. 
	
	 Let us choose $B', C' \in [B, C]$ such that $d(A, B')=dist(A, [B, C])$ and $d(D, C')=dist(D, [B, C])$ in $\Gamma(H_j, (X_0 \cup Y_0 \cup Z_0 \cup \{t_j \})^*$. Let $[A, B']$ and $[D, C']$ be geodesics in $\Gamma(H_j, (X_0 \cup Y_0 \cup Z_0 \cup \{t_j \})^*$ joining $A$ to $B'$ and $D$ to $C'$, respectively.  Note that, since $lab([A, B]), lab([B, B']) \in (X_0 \cup Y_0 \cup Z_0)^*$, by the property (a$_j$), $lab([A, B']) \in (X_0 \cup Y_0 \cup Z_0)^*$. The same way we get $lab([D, C']) \in (X_0 \cup Y_0 \cup Z_0)^*$. Therefore, from the minimality assumption on $\|[A, B]\|+\|[C, D]\| (=\|p_{\Gamma}\| + \|p'_{\Gamma}\|)$ it follows that $\|[A, B]\| = \|[A, B']\|$ and $\|[D, C]\|=\|[D, C']\|$, which means that we can simply assume that $B=B'$ and $C=C'$. Consequently, combining this observation with Lemma \ref{lemma-truncated-diagrams} (note that since $\| \check{q}_{\Gamma}\| \geq \mu_{i+1} \|R_{i+1}\|/2$, by LPP, we can assume that $\| \check{q}_{\Gamma}\|\geq \lambda_{i+1} (2\epsilon_{i+1}+ 2\epsilon_{i}+24\mu_i\|R_{i}\|)+c_{i+1}$, so that Lemma \ref{lemma-truncated-diagrams} can be applied) and with the observation that there is no $\mathcal{R}_j$-cell $\Pi_0$ in $\Gamma$ connected to $[A, D]$ by a $\epsilon_j$-contiguity subdiagram $\Gamma_0$ such that $(\Pi_0, \Gamma_0, [A, D])\geq \rho_j$, we conclude that $\Gamma$ does not contain an $\mathcal{R}_j$-cell. Therefore, $\Gamma$ is a diagram over $H_j=\langle X \cup \{t_j\} \rangle$. But since $\partial\Gamma$ does not contain an edge with a label from $\{t_j^{\pm 1}\}$, we conclude that, in fact, $\Gamma$ is a diagram over $G_{j-1}$, which contradicts the minimality assumption on $j$. Since $\Gamma$ cannot be a diagram over $G_0$, we conclude that such a $\Gamma$ does not exist. Thus Case 1 is proved.

~\\
\textit{Case 2.} ($\check{q}_{\Gamma}$ contains exactly one edge with label from $\{t_{i+1}^{\pm 1} \}$). In this case, there exists exactly one $t_{i+1}$-band joining $AD$ to $AB$ or to $DC$. Without loss of generality let us assume that there is an edge $e$ on $AD$ and an edge $e'$ on $DC$ labeled by $t_{i+1}^{\pm 1}$ such that they are connected by a $t_{i+1}$-band. Let us denote the side $[e_+, e'_+]$ of this $t_{i+1}$-band by $q_2$. Also let us denote the diagram between $q_2$, $\check{q}_{\Gamma}$ and $p'_{\Gamma}$ by $\Gamma'$. See Figure \ref{fig:  Contiguity-diagram_Problem-7_5}.
	
	Then, since $\partial\Gamma'$ does not contain an edge with label $t_{i+1}^{\pm 1}$, from Case 1 it follows that $\big\| [e_+,  D]\big\| < \mu_{i+1} \|R_{i+1}\|/2$. Otherwise, since $\Gamma'$ is  $\epsilon_{i+1}$-contiguity subdiagram as well and $\partial \Gamma'$ does not contain edges labeled by $t_{i+1}^{\pm 1}$, its existence is impossible as it is shown in Case 1.
		
	  Therefore, since $\|AD\| \geq \mu_{i+1} \|R_{i+1}\|$, for the point $D'' \in AD$ such that $\|AD''\|=\lceil \mu_{i+1} \|R_{i+1}\|/2\rceil$, we get that $D''$ is between $A$ and $e_-$, i.e., $AD''$ does not contain an edge with label $t_{i+1}^{\pm 1}$, and  by Corollary \ref{corollary on hausdorff distance between quasi-geodesics}, we get that there is a point $C'' \in BC$ such that $d(D'', C'') \leq 2R_{\lambda_{i+1}, c_{i+1}}+2\delta'_{i+1} \leq^{\text{by LPP}}  \epsilon_{i+1}$. This means that the $\epsilon_{i+1}$-contiguity subdiagram $ABC''D''$ satisfies all the conditions put on $\Gamma$, and since $AD''$ does not contain an edge with label $t_{i+1}^{\pm 1}$, we already showed that this cannot happen. See Figure \ref{fig:  Contiguity-diagram_Problem-7_5} for visual description.\\
	\begin{figure}[H]
						\centering
						\includegraphics[clip, trim=2cm 19.3cm 3cm 3.35cm, width=1\textwidth]{{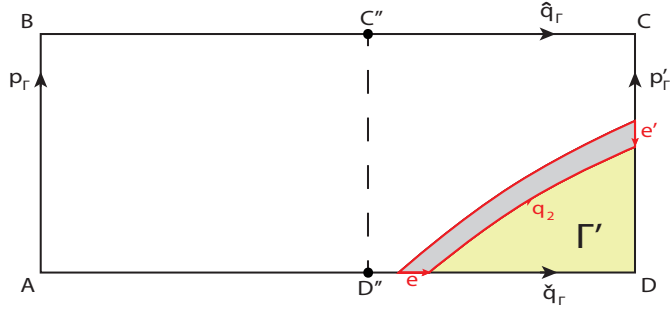}} 
						\caption{The case when $\check{q}_{\Gamma}$ contains an edge $e$ with a label from $t_{i+1}^{\pm 1}$ joined by a  $t_{i+1}$-band to $p'_{\Gamma}$.}
						\label{fig:  Contiguity-diagram_Problem-7_5}
	\end{figure}	
	~\\
	\textbf{(b$'_{i+1}$)}. Suppose that $\Gamma$ is a \text{truncated} $\epsilon_{i+1}$-contiguity subdiagram satisfying the conditions from the statement of (b$'_{i+1}$). Now let $\partial \Gamma = ABCD$, where $AB = p_{\Gamma}$, $BC = \hat{q}_{\Gamma}$, $DC = p'_{\Gamma}$ and $AD = \check{q}_{\Gamma}$ as it was in in (b$_{i+1}$) (see Figure \ref{fig:  Contiguity-diagram_Problem-7_5}).
	
		 Assume that $\Gamma$ contains $t_{i'}$-bands with both ends on $\partial\Gamma$ for some $1 \leq i' \leq i+1$. By  (b$_{i+1}$), there is no $t_j$-band in $\Gamma$ with both of its ends on $AB \cup DC$. Also, since $lab(\check{q}_{\Gamma})$ is a subword of a word $R_i \in \mathcal{R}_i$, we get that it can contain maximum one end of $t$-bands, where $t \in \{t_1, \ldots, t_{i+1} \}$  (more precisely, it must be that $t=t_{i+1}$).
		 
		 First, let us assume that there is no $t_{i+1}$-band with one of its ends on $AD=\check{q}_{\Gamma}$. Then all  $t$-bands of $\Gamma$ have their ends on $AB \cup BC \cup DC$, and no band has its sides on the same edge. Let $e_1$ and $e_2$ be edges on $AB$ and $DC$, respectively, such that they are ends of some $t$-bands and $[A, A']$ and $[D, D']$ do not contain  ends of $t$-bands, where $A'=(e_1)_-$ and $D'=(e_2)_-$. Let $e_1', e_2' \in BC$ be the other ends of these bands, respectively. Denote  $B'=(e_1')_-$ and $D'=(e_2)_-$. Also denote the subdiagram $AA'B'C'D'D$ by $\Gamma'$. See Figure \ref{fig:  b'_i-1}.
		 
		 \begin{figure}[H]
						\centering
						\includegraphics[clip, trim=2cm 10.3cm 3cm 10.35cm, width=1\textwidth]{{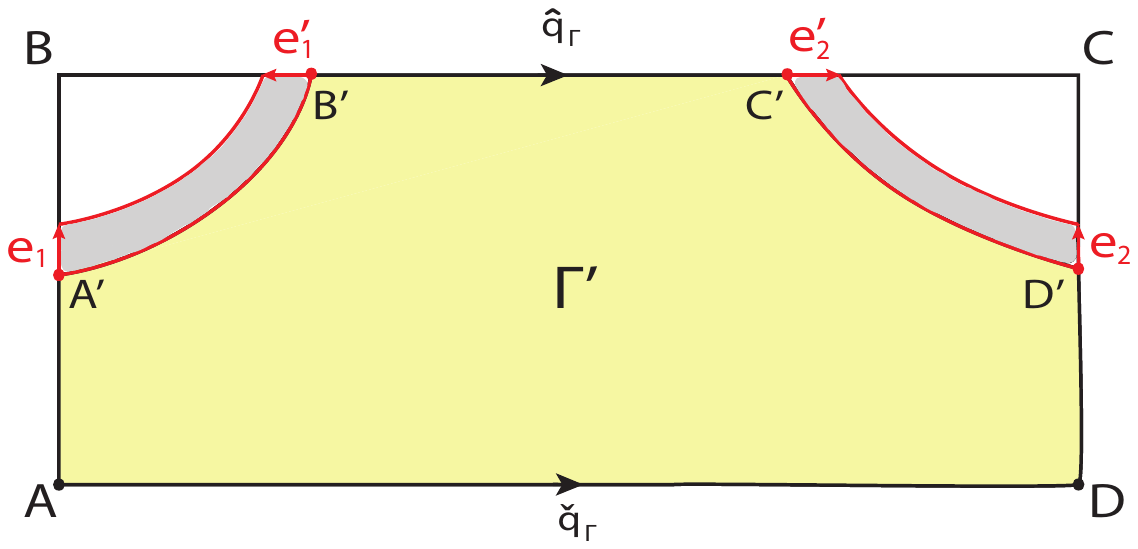}} 
						\caption{}
						\label{fig:  b'_i-1}
	\end{figure}	
		  Since $\Gamma'$ does not contain $t$-bands, it is either a diagram over $G_0$ or $\Gamma'$ contains an $\mathcal{R}_j$-cell for some $1\leq j \leq i$. Let us consider these two cases separately.
		  ~\\
		  \textit{Case 1.} If the first case holds, then, since $G_0$ is a free group, we get $Area(\Gamma')=0$, in which case, since $lab([A', B']), lab([D', C']) \in X_0^* \cup Y_0^*$ and $lab([A, D]) \in Z_0^*$, we get that $A, D \in [B', C']$, but this contradicts with the assumption that $\Gamma$ is truncated.
		  ~\\
		  \textit{Case 2.} Now assume that $\Gamma'$ contains an $\mathcal{R}_j$-cell for some $1\leq j \leq i$ and $j$ is chosen to be maximal. Then, since the sides $[A, A']$, $[A', B']$, $[B', C']$, $[C', D']$, $[D', D]$ and $[A, D]$ of $\Gamma'$ are $(\lambda_j, c_j)$-quasi-geodesic in $\Gamma(H_j, X_0\cup Y_0\cup Z_0 \cup \{t_1, \ldots, t_j \} )$, by Lemma \ref{lem 6.6}, we get that $\Gamma'$ contains an essential $\mathcal{R}_j$-cell $\Pi$, connected to $[A, A']$, $[A', B']$, $[B', C']$, $[C', D']$, $[D', D]$ and $[A, D]$ by essential $\epsilon_j$-contiguity subdiagrams $\Gamma_1$, $\Gamma_2$, $\Gamma_3$, $\Gamma_4$, $\Gamma_5$ and $\Gamma_6$, respectively. See Figure \ref{fig:  b'_i-2}.
		 \begin{figure}[H]
						\centering
						\includegraphics[clip, trim=2cm 10.3cm 3cm 10.35cm, width=1\textwidth]{{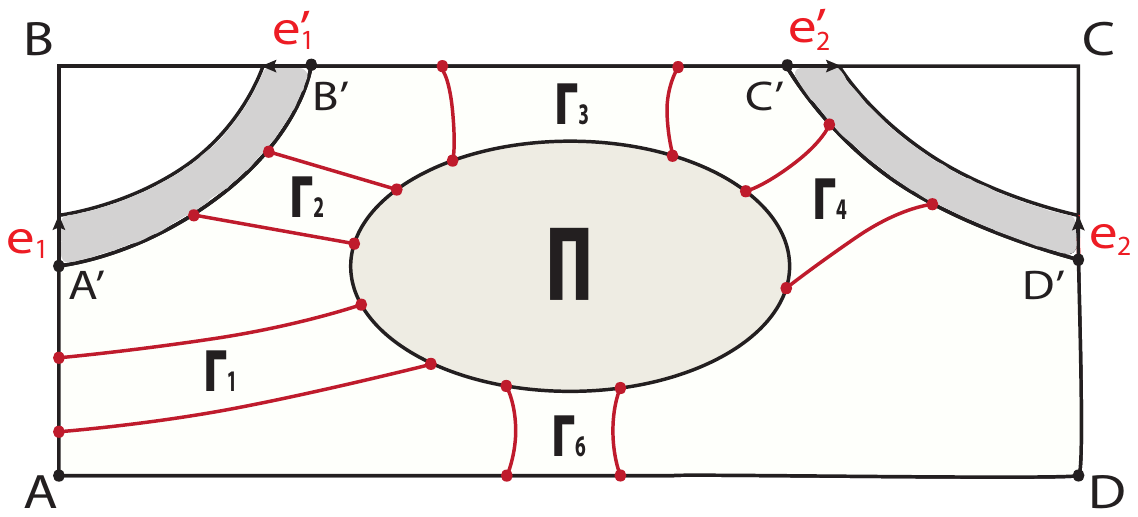}} 
						\caption{}
						\label{fig:  b'_i-2}
	\end{figure}	
		 Since $\|AD\|\geq \mu_{i+1}\rho_{i+1}$, for sparse enough standard parameters we get that at least one of $\Gamma_1$ and $\Gamma_5$ must be empty, because, otherwise, by triangle inequality it would be that
		  $$\|AD\|\geq \mu_{i+1}\rho_{i+1} \geq 2\epsilon_{i+1}+2\epsilon_j+\|\Pi\|,$$
		 which we can assume to be wrong  by LPP.
		 
		 Therefore,  without loss of generality we can assume that $\Gamma_5$ is empty. Now, from Lemma \ref{lem on words with cancellation condition} it follows that $(\Pi, \Gamma_6, [A, D]) \leq \rho_j$. Also,  because of the fact that $lab([A', B']), lab([D', C']) \in X_0^* \cup Y_0^*$, by (b$_{i+1}$), we get that 
		 \begin{align*}
		 	(\Pi, \Gamma_2, [A', B']), (\Pi, \Gamma_4, [C', D'])\leq \rho_j.
		 \end{align*}	
		 Therefore, we get
		 \begin{align*}
		 	(\Pi, \Gamma_1, [A, A'])+(\Pi, \Gamma_3, [B', C']) > 1-26 \rho_j,
		 \end{align*}
		 which is impossible because of Lemma \ref{lemma-truncated-diagrams}. Thus we showed that $Area(\Gamma)=0$ when $\check{q}_{\Gamma}$ does not contain an edge which is an end of a $t$-band for $t \in \{t_1, \ldots, t_{i+1} \}$. The case when 	 $\check{q}_{\Gamma}$ contains such an edge can be treated in a similar way.
	~\\
\textbf{(c$_{i+1}$)}. Now let us turn to the part (c$_{i+1}$) of the statement. By contradiction assume that there exists a reduced word $w  \in X_0^*$ which is not geodesic in  $\Gamma\big(G_{i+1}, X_0 \cup Y_0 \cup Z_0 \cup \{t_{i+1}\}\big)$. Without loss of generality we can assume that $w$ is the shortest one among such words. Then, since $w$ is not geodesic in $\Gamma\big(G_{i+1}, X_0 \cup Y_0 \cup Z_0 \cup \{t_{i+1}\}\big)$,  there exists a word $w' \in \big(X_0 \cup Y_0 \cup Z_0 \cup \{t_{i+1}\} \big)^*$ which is geodesic in   $\Gamma\big(G_{i+1}, X_0 \cup Y_0 \cup Z_0 \cup \{t_{i+1}\}\big)$ and  $\|w'\|< \|w\|$ and $w'=_{G_{i+1}} w$. 
 Since $w^{-1}w' = _{G_{i+1}} 1$, there exists a reduced diagram $\Delta$ over $G_{i+1}$ with the boundary label $w^{-1}w' $. Let us denote $\partial \Delta = pq$, where $lab(p) = w'$, $lab(q)=w$. By (c$_i$), $w$ is geodesic in $\Gamma\big(G_{i}, X_0 \cup Y_0 \cup Z_0 \cup \{t_{i}\}\big)$, hence the inequality $\|w'\| < \|w\|$ implies $w'\not\in (X_0 \cup Y_0 \cup Z_0)^*$, i.e. $w'$ contains a letter from $\{t_{i+1}^{\pm 1} \}$.
In particular, this means that $w \neq_{G_i} w'$.

 On the other hand, since $w'$ is geodesic in $\Gamma\big(G_{i+1}, X_0 \cup Y_0 \cup Z_0 \cup \{ t_{i+1}\}\big)$, there is no $t_{i+1}$-band in $\Delta$ which starts and ends on $p$ (otherwise, we will obtain a contradiction as in the proof of part (a$_{i+1}$)). Also, since $w$ does not contain any letter from $\{t_{i+1}^{\pm 1}\}$, by (a$_{i+1}$) it follows that $\Delta$ does not contain $t_{i+1}$-bands at all. Therefore, $w \neq_{H_{i+1}}w'$, because,  since $w \neq_{G_{i}} w'$, if $w=_{H_{i+1}}w'$ then $\Delta$ would contain a $t_{i+1}$-band. Therefore, $\Delta$ contains an $\mathcal{R}_{i+1}$-cell.
 
  Let $w=w_0x$, where $x \in X_0$. Denote the subword of $q$ with the label $w_0$ by $q_0$ and the one with the label $x$ by $q_1$. Since we chose $w$ to be of minimal length with the mentioned properties, it must be that $w_0$ is a geodesic word in $\Gamma\big(G_{i+1}, X_0 \cup Y_0 \cup Z_0 \cup \{t_{i+1}\}\big)$.  Therefore, $\partial \Delta$ is a geodesic triangle in $\Gamma\big(G_{i+1}, X_0 \cup Y_0 \cup Z_0 \cup \{ t_{i+1}\}\big)$ with geodesic sides $p$, $q_0$ and $q_1$. Therefore, by Lemma \ref{lem 6.6}, $\Delta$ contains an essential $\mathcal{R}_{i+1}$-cell  $\Pi$ connected to $p$, $q_0$ and $q_1$ by a system of essential $\epsilon_{i+1}$-contiguity subdiagrams $\Gamma_1$, $\Gamma_2$ and $\Gamma_3$, respectively. 
   See Figure \ref{fig:  problem_7-5_diagram_1}. 
		\begin{figure}[H]
			\centering
			\includegraphics[clip, trim=0.8cm 16.9cm 0cm 5.9cm, width=1\textwidth]{{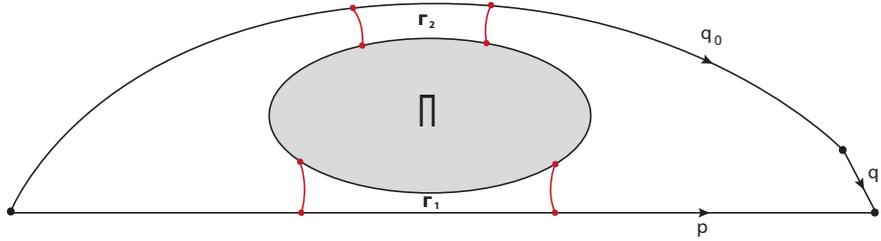}} 
			\caption{$\Delta:$ $lab(q_0q_1)=w$, $lab(p)=w'$.} 
			\label{fig:  problem_7-5_diagram_1}
		\end{figure}
		
	From (b$_{i+1}$) it follows that $(\Pi, \Gamma_2, q_0) < \mu_{i+1}$ and $(\Pi, \Gamma_3, q_1) < \mu_{i+1}$. 	Therefore,
	$(\Pi, \Gamma_1, p) > (1-23\mu_{i+1})-2\mu_{i+1} = 1-25\mu_{i+1}$. But since $p$ is geodesic in  $\Gamma\big(G_{i+1}, X_0 \cup Y_0 \cup Z_0 \cup \{t_{i+1}\}\big)$, by Lemma \ref{lemma on Greendlinger's cell}, for sparse enough standard parameters, this is impossible. A contradiction. 
	
		~\\
		
	\textbf{(d$_{i+1}$)}. Let $U \in X_0^*$ be  a reduced word which is not a proper power of any other word from $G_0$. Without loss of generality assume that $U$ is a freely cyclically reduced word. By contradiction let us assume that for some $k \geq 2$ and $W \in (X_0 \cup Y_0 \cup Z_0 \cup \{t_{i+1}\})^*$, $U=_{G_{i+1}} W^k$.
	First of all, it directly follows from Lemma \ref{lemma_about_proper_powers_in_hnn} and from the inductive hypothesis (i.e. by the statement (d$_i$)) that
	$U$ is not a proper power in $H_{i+1}$.	
	
	Now, let $W' \in \big(X_0 \cup Y_0 \cup Z_0 \cup \{t_{i+1}\}\big)^*$ be a cyclically minimal representative of $W$ in $G_{i+1}$. This means that there exists $T\in \big(X_0 \cup Y_0 \cup Z_0 \cup \{t_{i+1}\}\big)^*$ such that $W=_{G_{i+1}} T W' T^{-1}$ and $W'$ has minimal length among such words. In particular, this means that $U=_{G_{i+1}}  T(W')^k T^{-1}$, and $W'$ is cyclically geodesic in $\Gamma \big(G_{i+1}, X_0 \cup Y_0 \cup Z_0\cup \{t_{i+1}\}\big)$. Note that, since $G_{i+1}$ is a quotient of $H_{i+1}$, we get that $W'$ is cyclically geodesic in $\Gamma \big(H_{i+1}, X_0 \cup Y_0 \cup Z_0 \cup \{t_{i+1}\}\big)$ as well. Therefore, by Lemma \ref{lem 1.11} and by LPP, $(W')^k$ is cyclically $(\lambda_{i+1}, c_{i+1})$-quasi-geodesic in $\Gamma\big(H_{i+1}, X_0 \cup Y_0 \cup Z_0 \cup \{t_{i+1}\}\big)$.
	
	Since $W'$ is conjugate to $W$ in $G_{i+1}$ and $U=_{G_{i+1}} W^k$, there exists a $(U, (W')^k)$-conjugacy diagram over $G_{i}$. Hence there exists a cyclically slender $(U, (W')^k)$-conjugacy diagram over $G_{i}$. Let $\Delta$ be such a diagram. As before, let us denote $\partial \Delta = ABCD$, where $lab(BC), lab(AD)$ are cyclic shifts of $(W')^k$ and $U$, respectively, and $lab(AB)=lab(DC)$ are geodesic words in $\Gamma\big(G_{i+1}, X_0 \cup Y_0 \cup Z_0 \cup \{t_{i+1}\}\big)$. Note that by  (c$_{i+1}$), $U$ is also cyclically geodesic in $\Gamma\big(H_{i+1}, X_0 \cup Y_0 \cup Z_0 \cup \{t_{i+1}\}\big)$. Therefore, by Lemma \ref{lem 6.6}, $\Delta$ contains an essential $\mathcal{R}_{i+1}$-cell, $\Pi$. Let  $\Gamma_1$, $\Gamma_2$, $\Gamma_3$ and $\Gamma_4$ be essential $\epsilon_{i+1}$-contiguity subdiagrams connecting $\Pi$ to $AB$, $BC$, $CD$ and $DA$, respectively. Since we chose $\Delta$ to be cyclically slender, by Lemma \ref{lemma_about_slender_conjugacy_diagrams}, $\Gamma_2$ and $\Gamma_4$ are non-empty and 
	\begin{equation}
	\label{iiiii}
		(\Pi, \Gamma_2, BC)+ (\Pi, \Gamma_4, DA) \geq 1-121 \lambda_{i+1} \mu_{i+1}.
	\end{equation}
		Also, by statement (b$_{i+1}$) and (c$_{i+1}$) of the current lemma and by  LPP, since $lab(\hat{q}_{\Gamma_4}) \in X_0^*$, it follows that 
		\begin{equation*}
			(\Pi, \Gamma_4, DA) < \mu_{i+1}.
		\end{equation*}				
		Combining this with (\ref{iiiii}), we get
		\begin{equation}
		\label{ineq ****}
			(\Pi, \Gamma_2, BC)  > (1-121 \lambda_{i+1} \mu_{i+1}) - \mu_{i+1} > 1-122 \lambda_{i+1} \mu_{i+1}.
		\end{equation}
Therefore, since $W'$ is cyclically geodesic in $\Gamma \big(G_{i+1}, X_0 \cup Y_0 \cup Z_0 \cup \{t_{i+1}\}\big)$, by LPP and by Lemma \ref{lemma on Greendlinger's cell}, we get that $lab(\hat{q}_{\Gamma_2})$ is not a subword of a cyclic shift of $W'$. 
 This means that $lab(\hat{q}_{\Gamma_2})$ is of the form
		\begin{align*}
			lab(\hat{q}_{\Gamma_2}) = (W'')^n Q,
		\end{align*}
		where $W''$ is a cyclic shift of $W'$, $n \geq 1$, and $Q$ is a prefix of $W''$.
		
		Let us separately consider the cases when $n=1$ and when $n>1$.
		
		Before that, let us notice that by Corollary \ref{corollary on hausdorff distance between quasi-geodesics}, the Hausdorff distance between $\check{q}_{\Gamma_2}$ and $\hat{q}_{\Gamma_2}$ is bounded from above by $\epsilon_{i+1}+2R_{\lambda_{i+1}, c_{i+1}}+2\delta_{i+1} <^{\text{by LPP}} 2\epsilon_{i+1}$.\\
		~\\
		\textit{Case 1.} ($n=1$). For this case, let us partition $\hat{q}_{\Gamma_2} =  \hat{q}_{1} \hat{q}_{2} \hat{q}_{3}$, where $lab(\hat{q}_{1})=lab(\hat{q}_{3}) = Q$.  Let us also partition $\check{q}_{\Gamma_2}=  \check{q}_{1} \check{q}_{2} \check{q}_{3}$ such that $(\check{q}_{1})_+$ and $(\check{q}_{2})_+$ are the closest to $(\hat{q}_{1})_+$ and to $(\hat{q}_{2})_+$ points on $\check{q}_{\Gamma_2}$, respectively.  Since $lab(\hat{q}_{1})=lab(\hat{q}_{3})$, from the observation right above Case 1 and from Lemma \ref{lemma_about_contiguity_arcs_with_the_same_label}, it follows that $\|\check{q}_{1}\|, \|\check{q}_{ 3}\| \leq 2\mu_{i+1} \|\Pi\| < \mu_{i+1} \lambda_{i+1} \|\Pi\|$. Then combining this with (\ref{ineq ****}), we get that $\|\check{q}_{1} \check{q}_{2}\| > (1-23\lambda_{i+1}\mu_{i+1})\|\Pi\|$. But, since $\|W''\|= \big\|\check{q}_{1} \check{q}_{2} \big\|$ and $W''$ is a geodesic word in $\Gamma \big(G_{i+1}, X_0 \cup Y_0 \cup Z_0 \cup \{t_{i+1}\} \big)$, this is impossible for sparse enough standard parameters. 
		Thus we are done with  Case 1.\\
		~\\
		\textit{Case 2.} ($n>1$). For this case, again we partition $\hat{q}_{\Gamma_2}$ into three parts $\hat{q}_{\Gamma_2} =  \hat{q}_{1} \hat{q}_{2} \hat{q}_{3}$ such that  $lab(\hat{q}_{1})=lab(\hat{q}_{3})$ and  $lab(\hat{q}_{2})$ is a suffix of $W''$. Then, since $lab(\hat{q}_{\Gamma_2}) = (W'')^n Q $ and $n \geq 2$, we get that $\| \hat{q}_{1}\|=\|\hat{q}_{3}\| > \frac{1}{3} \|\hat{q}_{2}\|$, hence $\| \hat{q}_{\Gamma_2}\| < 3 \| \hat{q}_{1}\|$. Also, just like we showed in case $n=1$, by Lemma \ref{lemma_about_contiguity_arcs_with_the_same_label}, in this case also $\| \hat{q}_{1}\|, \| \hat{q}_{3}\| \leq 2\mu_{i+1} \|\Pi\|$. Therefore, $\| \hat{q}_{\Gamma_2}\|< 6\mu_{i+1} \|\Pi\|$. But, since {\text{by LPP}}  $1-122 \lambda_{i+1} \mu_{i+1}> 6\mu_{i+1} $, we get a contradiction with (\ref{ineq ****}). The case when $U \in Y_0^*$ can be dealt in the same way. Thus we are done with this case as well.\\	
		~\\
		\textbf{(e$_{i+1}$)}. By contradiction, let us assume that for some non-trivial reduced words $U \in X_0^*$, $V \in Y_0^*$ we have $U = _{G_{i+1}} V$. Then there exists a reduced van Kampen diagram $\Delta$ such that $\partial \Delta = q_1 q_2^{-1}$ and $lab(q_1)=U$, $lab(q_2)=V$. Since  $U$ and $V$ do not contain letters from $\{t_{i+1}^{\pm 1} \}$ and $U \neq _{G_{i}} V$, by  (e$_i$) we have that $\Delta$ contains an $\mathcal{R}_{i+1}$-cell. Therefore, since by (c$_{i+1}$) $U$ and $V$ are geodesic word in $\Gamma\big(H_{i+1}, X_0\cup Y_0 \cup Z_0 \cup \{t_{i+1}\}\big)$, by Lemma \ref{lem 6.6}, $\Delta$ contains an essential $\mathcal{R}_{i+1}$-cell, $ \Pi$. Let us assume that $\Pi$ is connected to $q_1$ and $q_2$ by $\epsilon_{i+1}$-contiguity subdiagrams $\Gamma_1$ and $\Gamma_2$, respectively. Then we have $(\Pi, \Gamma_1, q_1)+ (\Pi, \Gamma_2, q_2) \geq 1-23\mu_{i+1}$. But, on the other hand, by (b$_{i+1}$) we have that
		    $	(\Pi, \Gamma_1, q_1)+ (\Pi, \Gamma_2, q_2) < 2\mu_{i+1}$.
		 		    But since by LPP we can assume
		    \begin{equation*}
		    	2\mu_{i+1} < 1-23 \mu_i,
		    \end{equation*}
		    we get a contradiction.\\
		    ~\\
		   \textbf{ (f$_{i+1}$)}. The fact that $H_i$ is a hyperbolic group follows from Theorem \ref{th hnn extension} and parts (d$_{i+1}$) and (e$_{i+1}$) of the current lemma. 
	\end{proof}
	
	\begin{corollary}
	\label{corollary---*}
		Suppose $U \in X^*$ is a $(\lambda_i, c_i, \epsilon_i, 1-122\lambda_i\mu_i)$-cyclic-reduced word for $i=\mathcal{I}(\|U\|)$, and $U =_{G_{\mathcal{L}}} V$ for some $V \in X_0^* \cup Y_0^*$. Then $U \in X_0^* \cup Y_0^*$.
	\end{corollary}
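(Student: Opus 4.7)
The plan is to localize the identity $U =_{G_{\mathcal L}} V$ to an identity at a definite finite level $G_i$ and then strip the diagram down through the tower \eqref{main seq of gps-wp-cp} until we reach $G_0 = F(X_0) * F(Y_0) * F(Z_0)$, where the free-product normal form yields the conclusion. First I would observe that since $U$ is $(\lambda_i, c_i, \epsilon_i, 1-122\lambda_i\mu_i)$-cyclic-reduced with $i = \mathcal{I}(\|U\|)$, the word $UV'^{-1}$ (where $V'$ is a shortest representative in $X_0^* \cup Y_0^*$ of the $G_{\mathcal L}$-class of $U$, whose length is controlled by $\|U\|$ via property (c$_i$) of Lemma \ref{lem_main_structural_lemma_for_problem_7-5}) is too short to admit a nontrivial $\mathcal{R}_j$-cell for any $j > i$; concretely, Lemma \ref{property 0} together with the sparse-parameter setup forces any such identity to occur already at level $G_i$, so $U =_{G_i} V$ (after replacing $V$ by this $V'$, which is harmless for the conclusion).

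Next, fix a reduced van Kampen diagram $\Delta$ over $G_i$ with boundary $pq^{-1}$, where $\mathrm{lab}(p) = U$ and $\mathrm{lab}(q) = V$. By (c$_i$) the side $q$ is geodesic in $\Gamma(H_i, X \cup \{t_1, \ldots, t_i\})$, while $p$ is $(\lambda_i, c_i)$-quasi-geodesic there by hypothesis. If $\Delta$ contained an $\mathcal{R}_i$-cell, Lemma \ref{lem 6.6} applied to the bigon $\partial\Delta$ would yield an essential $\mathcal{R}_i$-cell $\Pi$ with outer contiguity subdiagrams $\Gamma_p, \Gamma_q$ summing to $(\Pi, \Gamma_p, \hat{q}_{\Gamma_p}) + (\Pi, \Gamma_q, \hat{q}_{\Gamma_q}) > 1 - 23\mu_i$; property (b$_i$) applied to $\Gamma_q$ (whose outer arc is labelled by a subword of $V \in X_0^* \cup Y_0^*$) bounds $(\Pi, \Gamma_q, \hat{q}_{\Gamma_q}) < \mu_i$, forcing $(\Pi, \Gamma_p, \hat{q}_{\Gamma_p}) > 1 - 24\mu_i$ and producing, by the sparse parameter choice, an $(\epsilon_i, 1-122\lambda_i\mu_i)$-subword of $U$, contradicting its cyclic-reducedness. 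So $\Delta$ is in fact a reduced diagram over $H_i$, and since neither $U$ nor $V$ contains $t_i^{\pm 1}$, the standard Britton-lemma argument rules out all $t_i$-bands, so $\Delta$ is a diagram over $G_{i-1}$ and $U =_{G_{i-1}} V$.

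The plan is then to iterate this descent through the tower: at each level $j < i$, one again shows that the resulting reduced diagram has neither $\mathcal{R}_j$-cells nor $t_j$-bands, giving $U =_{G_{j-1}} V$, and continues down to $G_0$; there, since $U$ is freely cyclically reduced and $V$ lies in a single free factor, the free-product normal form of $G_0 = F(X_0) * F(Y_0) * F(Z_0)$ forces $U$ into the same factor, giving $U \in X_0^* \cup Y_0^*$. The main obstacle will be the descent step at intermediate levels $j < i$: the cyclic-reducedness hypothesis was made with respect to $\mathcal{R}_i$ and does not directly rule out $\mathcal{R}_j$-cells via Lemma \ref{lem 6.6}. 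The way I would handle this is to observe that the $q$-side still carries a label in $X_0^* \cup Y_0^*$, so (b$_j$) continues to bound any contiguity of an $\mathcal{R}_j$-cell to the $q$-side by $\mu_j$; one is then forced to a large contiguity on the $p$-side, and here one invokes the truncated-contiguity statement (b$'_j$) together with minimality of $\Delta$ (and the fact that higher-rank cells and bands have already been stripped away in previous descent steps, so the current diagram's boundary geometry is controlled) to derive a contradiction. Once this technical point is resolved, the free-product base case closes the argument.
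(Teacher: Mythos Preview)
Your overall strategy—localize to level $i$, kill $\mathcal{R}_i$-cells via Lemma~\ref{lem 6.6} plus (b$_i$) plus the cyclic-reducedness of $U$, strip $t_i$-bands, then descend—is exactly the content of the paper's one-line proof, which just cites (a$_i$), (b$_i$), and Lemma~\ref{lem 6.6}. So at the top level your argument is correct and matches the paper.

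You are also right that the descent step at levels $j<i$ is the real issue: the hypothesis on $U$ is stated only at level $i$, so at level $j$ you can neither invoke Lemma~\ref{lem 6.6} (which needs $U$ to be $(\lambda_j,c_j)$-quasi-geodesic in $H_j$) nor rule out a large $(\epsilon_j,\eta_j)$-arc on the $U$-side directly. However, your proposed fix via (b$'_j$) does not close this gap: (b$'_j$) only says a large truncated contiguity subdiagram has area~$0$, i.e.\ the outer arc of $U$ is freely equal (up to short tails) to a long subword of $R_j\in Z_0^*\cup\{t_j\}$. That is a statement about the syntactic shape of $U$, but it does not contradict anything you know about $U$, whose reducedness is only relative to $\mathcal{R}_i$.

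The clean way through—implicit in the paper's use of the same reasoning in the proof of Lemma~\ref{lemma_important_auxiliary}—is to propagate quasi-geodesicity of $U$ down the tower. For any subword $U'\in X^*$ that is geodesic in $H_i$, property (a$_i$) gives $|U'|_{H_i}=|U'|_{G_{i-1}}$, hence $U'$ is geodesic in $G_{i-1}$, and since $G_{i-1}$ is a quotient of $H_{i-1}$ one gets $|U'|_{H_{i-1}}=\|U'\|$ as well; iterating, $U'$ is geodesic in every $H_j$ and $G_j$ with $j\le i$. Thus if $U$ is $(8\delta_i'+1)$-locally geodesic in $H_i$ (as any output of the $(\lambda_i,c_i,\epsilon_i,\eta_i)$-cyclic-reduction algorithm is), then $U$ is $(8\delta_j'+1)$-locally geodesic in $H_j$ and $(8\delta_j+1)$-locally geodesic in $G_j$ for all $j\le i$, hence $(\lambda_j,c_j)$-quasi-geodesic there by LPP. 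Now Lemma~\ref{lem 6.6} applies at every level $j$, (b$_j$) bounds the contiguity to the $V$-side by $\mu_j$, and the resulting $(\epsilon_j,1-24\mu_j)$-arc on the $U$-side is excluded by Convention~\ref{convention-8-1} (Lemma~\ref{lemma on Greendlinger's cell}) applied in $G_j$. The descent then terminates at $G_0$ exactly as you outline. Note that this argument uses the stronger local-geodesic property of $U$, not merely the abstract $(\lambda_i,c_i)$-quasi-geodesicity in $H_i$ stated in the corollary; in the paper the corollary is only ever applied to outputs of the cyclic-reduction algorithm, where this stronger property holds.
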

	\begin{proof}
		This directly follows from properties (a$_i$),(b$_i$) and Lemma \ref{lem 6.6}.
	\end{proof}

			\begin{lemma}
				\label{lemma-*}
				Two cyclically reduced non-empty words $U, V \in X^* \cup Y^*$ are conjugate one to each other in $G_{\mathcal{L}}$ if and only if 
				$(U, V)$ is a $\Lambda$-pair. Moreover, if $U$ is not a cyclic shift of $V$, then $U$ is not $G$-conjugate to $V$ in $G_{\mathcal{L}}$.
			\end{lemma}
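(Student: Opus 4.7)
The easy direction, that a $\Lambda$-pair is conjugate in $G_{\mathcal{L}}$, is immediate from the construction: the HNN relation $u_k = t_k^{-1} v_k t_k$ in $H_k$ gives $u_k^l \sim_{conj} v_k^l$ for every $l \in \mathbb{Z}\setminus\{0\}$, so the images in $G_k$ (hence in $G_{\mathcal{L}}$) remain conjugate, and cyclic shifts of one word are tautologically conjugate. I plan to prove the converse by induction on the minimal index $i$ such that $U \sim_{conj} V$ in $G_i$, together with the strengthening that in fact $U \sim_{conj} V$ already in $H_i$; the latter will yield the moreover clause for free.

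The base case $i=0$ uses only the free-product structure $G_0 = F_1*F_2*F_3$: cyclically reduced nonempty words in the same factor are conjugate there iff they are cyclic shifts, and words in different factors are never conjugate. For the inductive step, I form a reduced cyclically slender $(U,V)$-conjugacy diagram $\Delta$ over $G_i$. Using properties $(c_i)$ and $(b_i)$, I will check that $U,V$ are cyclically $(\lambda_i,c_i,\epsilon_i,1-121\lambda_i\mu_i)$-reduced, so Lemma~\ref{lemma_about_slender_conjugacy_diagrams} applies. If $\Delta$ contained an $\mathcal{R}_i$-cell, it would produce an essential cell $\Pi$ with outer $\epsilon_i$-contiguity subdiagrams $\Gamma_2,\Gamma_4$ to the sides labeled $V$ and $U$ (both in $X_0^*\cup Y_0^*$) whose contiguity degrees sum to at least $1-121\lambda_i\mu_i$; but property $(b_i)$ caps each such degree by $\mu_i$, contradicting LPP. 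Hence $\Delta$ is a diagram over $H_i$, so $U\sim_{conj} V$ in $H_i$. This already yields the moreover clause: no such pair is ever $H$-conjugate, so $G$-conjugacy between elements of $X_0^*\cup Y_0^*$ reduces to conjugacy in $G_0$, i.e., to cyclic-shift equivalence.

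It remains to analyze conjugacy inside the HNN extension $H_i=\langle G_{i-1},t_i \mid u_i=t_i^{-1}v_it_i\rangle$ via Collins' lemma (Lemma~\ref{lemma-Collins}). Since $U,V$ contain no $t_i^{\pm 1}$, either $U\sim_{conj} V$ already in $G_{i-1}$ (and induction closes the case) or there is a Collins chain $U=w_0, w'_1, w_1, \dots, w'_k, w_k, w'_{k+1}=V$ with consecutive terms conjugate in $G_{i-1}$ and each pair $(w'_j,w_j)$ a nonzero power of $u_i$ matched via $\phi^{\pm 1}$ with the corresponding power of $v_i$. I then apply the induction hypothesis to each conjugation step $w_j\sim_{conj} w'_{j+1}$ in $G_{i-1}$. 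The key rigidity is that $u_i=\Lambda_0(\omega_i)x_3$ and $v_i=\varsigma(\Lambda_0(\omega_i))y_3$ are freely cyclically reduced, are not proper powers, and contain a single occurrence of $x_3$ or $y_3$ respectively; combined with the injectivity of $\omega\mapsto(u_\omega,v_\omega)$ this forces any cyclic-shift equality between $u_i^l$ and $u_k^m$ (or between $v_i^l$ and $v_k^m$) to give $(i,l)=(k,m)$. Hence each chain segment preserves the pair $(i,l)$ and merely toggles between the $u$- and $v$-side. Tracing the chain from $U$ to $V$ therefore pins $U$ to a cyclic shift of $u_i^l$ or $v_i^l$ and $V$ to a cyclic shift of $u_i^l$ or $v_i^l$ with the same exponent $l$, producing either a cyclic-shift equivalence (if both sit on the same side of $\{u_i,v_i\}$) or a $\Lambda$-pair with index $k=i$.

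The main technical obstacle will be the rigidity bookkeeping in the third paragraph: carefully tracking how the Collins chain can alternate between powers of $u_i$ and $v_i$ and jump between different indices via $G_{i-1}$-conjugations, and verifying that the induction hypothesis pins down both the index $k$ and the exponent $l$ without slack. Once this is under control, the induction closes and both halves of the lemma, together with the moreover clause, follow.
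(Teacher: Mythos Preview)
Your proof is correct and follows the paper's strategy: rule out $\mathcal{R}_i$-cells via property $(b_i)$ together with Lemma~\ref{lemma_about_slender_conjugacy_diagrams}, then analyze conjugacy in $H_i$, then apply induction. The only variation is in the $H_i$-step: the paper works geometrically with $t_i$-bands in the slender diagram (slenderness plus Lemma~\ref{lemma_about_opposite_points_in_slender_conjugacy_diagrams} forces a single horizontal band, so induction is applied once), whereas you invoke Collins' lemma and chase the exponent $l$ through a possibly longer chain using the rigidity of the words $u_k=\Lambda_0(\omega_k)x_3$, $v_k=\varsigma(\Lambda_0(\omega_k))y_3$; both routes work, and yours makes explicit the rigidity bookkeeping that the paper leaves implicit. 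One slip of the pen: in ``no such pair is ever $H$-conjugate'' you mean the opposite---you have just shown they \emph{are} conjugate in $H_i$, hence the $G$-conjugacy alternative of Definition~\ref{definition- G- and H- conjugacy} can only be realized at level $0$, which gives the moreover clause.
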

			\begin{proof}
				Assume that $U$ and $V$ freely reduced words which are conjugate in $G_{\mathcal{L}}$. The case when $U$ is a cyclic shift of $V$ is trivial. Therefore, without loss of generality assume that $U$ is not a cyclic shift of $V$. Then, by the definition of $G_0$, it is clear that $U$ and $V$ are not conjugate in $G_0$. Therefore, there exists a minimal index $i \geq 1$ such that $U$ and $V$ are conjugate in $G_i$. This means that there exists a minimal cyclically slender $(U, V)$-conjugacy diagram $\Delta$ of rank $i$. If $i=0$ then the statement of the lemma follows from basic properties of free groups. Suppose that $U, V$ are chosen so that the corresponding index $i\in \mathbb{N}$ is minimal. Now, let us assume that $i \geq 1$ and apply induction on $i$.\\
				
				As usual, let us denote the boundary $\partial\Delta$ of $\Delta$ by $ABCD$. Let $U'=lab(BC)$ and $V'=lab(AD)$, where $U'$ and $V'$ are some cyclic shifts of $U$ and $V$, respectively. \\
				~\\
				\textbf{Claim 1.} $U$ is not $G$-conjugate to $V$ in $G_{\mathcal{L}}$.
				\begin{proof}[Proof of the claim]
					Indeed, assume that $U$ is $G$-conjugate to $V$ in $G_{\mathcal{L}}$. Then, since by the property (c$_{i}$), $U$ and $V$ are geodesic words in $\Gamma(H_i, X\cup \{t_1, \ldots, t_j\})$, according to Lemma \ref{lemma_about_slender_conjugacy_diagrams}, we get that at least one of $U'$ and $V'$ must contain a $(\epsilon_i, (1-121\lambda_i \mu_i)/2)$-subword, which contradicts to property (b$_{i}$).
				\end{proof}

				By Claim 1, we get that $\Delta$ is a slender $(U, V)$-conjugacy diagram over $H_i$. Therefore, since we chose the index $i$ to be minimal, $\Delta$ contains $t_i$-bands. Since $lab(AB)$ and $lab(DC)$ do not contain letters from $\{t_i^{\pm 1} \}$, we get that the $t_i$-bands of $\Delta$ must be horizontal, i.e. their ends belong to $[A, B]$ and $[D, C]$. 
				
								Now let us choose an edge $e_1$ on the side $AB$ such that $lab(e_1) \in \{t_i^{\pm 1}\}$ and $lab([(e_1)_+, B])$ does not contain $t_i^{\pm 1}$. From the basic properties of HNN-extensions, it it follows that there exists an edge $e_1'$ on $DC$ such that $lab(e_1) \in \{t_i^{\pm 1}\}$  and $e_1$ is connected to $e_1'$ by a $t_i$-band. Moreover, $lab([(e_1)_+, B]) = lab([(e_1')_+, C])$.
								
								 Let us denote the side of the $t_i$-band connecting $(e_1)_+$ to $(e'_1)_+$ by $p_1$ and the side connecting $(e_1)_-$ to $(e_1')_-$ by $q_1$. See Figure \ref{fig:  Conjugacy-diagram_Problem-7_5}. Then $lab(p_1)$ belongs to either $X_0^*$ or $Y_0^*$. Denote $U''=lab(p_1)$. 
					\begin{figure}[H]
						\centering
						\includegraphics[clip, trim=0cm 12.1cm 1cm 4.2cm, width=.6\textwidth]{{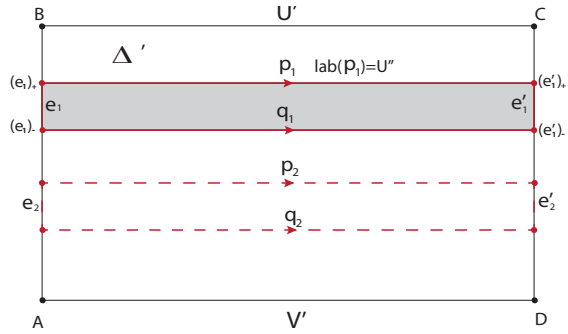}} 
						\caption{The cyclically slender $(U, V)$-conjugacy diagram $\Delta$. Below it is shown that the second $t_i$-band, joining $e_2$ to $e_2'$, actually, cannot exist.} 
						\label{fig:  Conjugacy-diagram_Problem-7_5}
					\end{figure}
					Let us denote by $\Delta'$ the $(U', U'')$-conjugacy subdiagram of $\Delta$ which is bounded between $(e_1)_+$, $B$, $C$ and $(e_1)'_+$. See Figure \ref{fig:  Conjugacy-diagram_Problem-7_5}.
				Since $\Delta'$ does not contain $R_i$-cells and $t_i$-bands, we conclude that it is a $(U', U'')$-conjugacy diagram of rank $j$ where $0 \leq j < i$, hence $U'$ is conjugate to $U''$ in $G_j$. On the other hand, since $U'' \in X^* $ or $Y^*$ and since $i$ was chosen to be minimal, by inductive argument on $i$ we conclude that either $U''$ is a cyclic shift of $U'$ (hence also of $U$) or $U'$ is a cyclic shift of $lab(q_1)$ (this means that $(U, lab(p_1))$ is a $\Lambda$-pair).
				
				Now we are going to show that besides the considered $t_i$-band, $\Delta$ does not contain any other $t_i$-band. Assume that this is not true. Then there exist edges $e_2$ and $e_2'$ on $AB$ and $DC$, respectively, such that they have a label from $\{t_i ^{\pm 1}\}$,  and between $e_2$ and $e_1$ (also between $e_2'$ and $e_1'$, respectively) there is no other edge with label from $\{t_i ^{\pm 1}\}$.  See Figure \ref{fig:  Conjugacy-diagram_Problem-7_5}. Then it must be that $e_2$ is connected to $e_2'$ by a $t_i$-band. Correspondingly, define $p_2$ and $q_2$ as we defined $p_1$ and $q_1$.  Then repeating the above stated arguments we get that $\big(lab(q_1), lab(q_1)\big)$ is a $\Lambda$-pair. The last observation implies that either $lab(p_1)\equiv lab(p_2)$ or $lab(p_1)\equiv lab(2_2)$, which is impossible by Lemma \ref{lemma_about_opposite_points_in_slender_conjugacy_diagrams}. Thus Lemma \ref{lemma-*} is proved.

				\end{proof}
				The next lemma is a stronger version of Lemma \ref{lemma-*}.
\begin{lemma}
	\label{lemma_important_auxiliary}
	Let $U \in (X_0 \cup Y_0 \cup Z_0)^*$ and $V \in X_0^* \cup Y_0^*$. Then $U \sim_{conj} V$ in $G_{\mathcal{L}}$ if and only if $(U', V)$ is a $\Lambda$-pair, where $U'$ is any $(\lambda_i, c_i, \epsilon_i, 1-122\lambda_i\mu_i)$-cyclic reduction of $U$ for $i= \mathcal{I}(\|U\|)$.
\end{lemma}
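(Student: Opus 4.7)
The plan is to establish the two directions separately, with the ``only if'' direction requiring the delicate analysis. For the ``if'' direction I would directly invoke the defining HNN-relation $t_k^{-1} v_k t_k = u_k$ of the presentation \eqref{main seq of gps-wp-cp}: if $U'$ is a cyclic shift of $V$ the claim is immediate, while if $U'$ and $V$ are cyclic shifts of $u_k^l$ and $v_k^l$ (in either order) for some $k, l$, then $u_k^l$ and $v_k^l$ are conjugate in $H_k$ via $t_k^{\pm l}$, and composing with the conjugacy $U \sim_{conj} U'$ in $H_i$ yields $U \sim_{conj} V$ in $G_{\mathcal{L}}$.

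For the ``only if'' direction I would induct on the minimal index $j$ such that $U' \sim_{conj} V$ in $G_j$ (such a $j$ exists because $U \sim_{conj} V$ in $G_{\mathcal{L}}$ gives $U' \sim_{conj} V$ in $G_{\mathcal{L}}$, since $U' \sim_{conj} U$ in $H_i$). The base case $j = 0$ is immediate from the free-product structure of $G_0 = F_1 * F_2 * F_3$: since $V \in X_0^* \cup Y_0^*$ lies in one free factor, any freely reduced conjugate must be a cyclic shift of $V$. For $j \geq 1$, consider a reduced cyclically slender $(U', V)$-conjugacy diagram $\Delta$ over $G_j$ with boundary $ABCD$, where $lab(BC)$ is a cyclic shift of $U'$ and $lab(AD)$ a cyclic shift of $V$, and split on whether $\Delta$ contains an $\mathcal{R}_j$-cell.

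If $\Delta$ contains an $\mathcal{R}_j$-cell, Lemma \ref{lemma_about_slender_conjugacy_diagrams} produces an essential cell $\Pi$ with essential contiguity subdiagrams $\Gamma_2, \Gamma_4$ to $BC, AD$ satisfying $(\Pi, \Gamma_2, BC) + (\Pi, \Gamma_4, DA) \geq 1 - 121\lambda_j\mu_j$. Since $lab(AD)$ lies in $X_0^* \cup Y_0^*$, property (b$_j$) of Lemma \ref{lem_main_structural_lemma_for_problem_7-5} forces $(\Pi, \Gamma_4, DA) < \mu_j$, hence $(\Pi, \Gamma_2, BC) > 1 - 122\lambda_j\mu_j$, so $lab(BC)$ contains an $(\epsilon_j, 1 - 122\lambda_j\mu_j)$-subword. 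Invoking Lemma \ref{lemma-about-structure-of-CP-in-barG} with the sparse-parameter convention, one verifies $j \leq i = \mathcal{I}(\|U\|)$, so by the monotonicity of the standard parameters the same subword is also $(\epsilon_i, 1 - 122\lambda_i\mu_i)$, contradicting the hypothesis that $U'$ is $(\lambda_i, c_i, \epsilon_i, 1 - 122\lambda_i\mu_i)$-cyclically reduced. Therefore $\Delta$ has no $\mathcal{R}_j$-cell; it is then a conjugacy diagram over $H_j$, and the minimality of $j$ forces it to contain a horizontal $t_j$-band. Repeating the band-peeling argument of Lemma \ref{lemma-*}, the two sides of such a band are labeled by cyclic shifts of $u_j^k$ and $v_j^k$, and each side together with the adjacent portion of $\partial\Delta$ bounds a conjugacy subdiagram in $G_{j-1}$ between a cyclic shift of $U'$ (resp.\ $V$) and $u_j^k$ (resp.\ $v_j^k$). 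Applying the inductive hypothesis to these smaller conjugacies and using that $u_j^k, v_j^k \in X_0^* \cup Y_0^*$, together with the transitivity of the $\Lambda$-pair equivalence and the canonical $\Lambda$-pair $(u_j^k, v_j^k)$, produces the desired relation $(U', V) \in \Lambda$.

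The main obstacle will be reconciling the index $j$ coming from the conjugacy diagram with the fixed cyclic-reduction level $i = \mathcal{I}(\|U\|)$: Lemma \ref{lemma-about-structure-of-CP-in-barG} only provides the bound $\|U'\|+\|V\| \geq \zeta(j)$, so to deduce $j \leq i$ one must exploit the sparse-parameter relation $\zeta(j) \gtrsim \Phi(j)$ and the fact that cyclic reduction does not increase length. A secondary delicate point in Case B is that, as one peels $t_j$-bands and descends to $G_{j-1}$, the cyclic-reduction level $i$ remains fixed; the inductive bookkeeping must track the decreasing level $j$ while using the same hypothesis on $U'$, which is legitimate because being $(\lambda_i, c_i, \epsilon_i, \cdot)$-cyclically reduced is preserved under restriction to lower levels via the monotonicity of the standard parameters.
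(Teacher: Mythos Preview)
Your proposal is correct and follows essentially the same approach as the paper: rule out the $G$-conjugacy (equivalently, the $\mathcal{R}_j$-cell) case via property~(b$_j$) together with Lemma~\ref{lemma_about_slender_conjugacy_diagrams} and the cyclic-reduction hypothesis on $U'$, then handle the remaining $H$-conjugacy case by peeling horizontal $t$-bands and invoking Lemma~\ref{lemma-*} with transitivity of the $\Lambda$-pair relation. The paper frames the split as the $G$-/$H$-conjugacy dichotomy for $(U',V)$ in $G_{\mathcal{L}}$ rather than your explicit induction on the minimal level $j$, and it is terser about the index reconciliation $j\leq i$ that you flag as the main obstacle, but the content and the cited lemmas are identical.
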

\begin{proof}
Suppose that $U\sim_{conj} V$ in $G_{\mathcal{L}}$. Let us fix a $(\lambda_i, c_i, \epsilon_i, 1-122\lambda_i\mu_i)$-cyclic reduction $U'$ of $U$. Then, clearly $U'\sim_{conj} V$ in $G_{\mathcal{L}}$. Let us separately consider two cases: Case 1 -- $U'$ is $G$-conjugate to $V$ in $G_{\mathcal{L}}$, and Case 2 -- $U'$ is $H$-conjugate to $V$ in $G_{\mathcal{L}}$.\\
~\\
\textit{Case 1.} ($U'$ is $G$-conjugate to $V$ in $G_{\mathcal{L}}$). In case $U\sim_{conj} V$ in $G_0$ the statement of the lemma is trivial. Now suppose that $U \not\sim_{conj} V$ in $G_0$. Then, by Lemma \ref{lemma-about-structure-of-CP-in-barG}, there exists an index $j\in \mathbb{N}$ such that $j\leq i$ and $U \sim_{conj} V$ in $G_j$ but $U \not\sim_{conj} V$ in $H_j$. Therefore, by Lemma \ref{lemma_about_slender_conjugacy_diagrams}, $U'$ and $V$ contain $(\epsilon_j, \kappa_1)$- and $(\epsilon_j, \kappa_2)$-arcs, respectively, such that $\kappa_1+\kappa_2 \geq 1-122\lambda_j\mu_j$. Also, since  $V \in X_0^* \cup Y_0^*$, by property (b$_j$), $\kappa_2 \leq \rho_j$. Therefore, $U'$ contains a $(\epsilon_j, 1-122\lambda_j\mu_j)$-subword, which contradicts to the fact that $U'$ is cyclically $(\lambda_j, c_j, \epsilon_j, 1-122\lambda_j\mu_j)$-reduced. 
So we are done with Case 1.\\
~\\
\textit{Case 2.} ($U'$ is $H$-conjugate to $V$ in $G_{\mathcal{L}}$). Then for some $k \in \mathbb{N}$, $U' \sim_{conj} V$ in $H_k$ and $U' \not\sim_{conj} V$ in $G_{k-1}$, and consequently, there exists a slender $(U', V)$-conjugacy diagram $\Delta$ over $H_k=\langle X\cup \{t_1, \ldots, t_k\} \rangle$ which contains at least one $t_k$-band. Note that, since $V$ does not contain a letter from $\{t_k^{\pm 1}\}$, $\Delta $ must contain only horizontal $t_k$-bands. Without loss of generality assume that $V \in X_0^* \cup Y_0^*$ is chosen so that $\Delta$ contains minimal number of horizontal $t$-bands, $t\in \{t_1, t_2, \ldots \}$. Therefore, by a standard inductive argument and by Lemma \ref{lemma-*}, we get that $U'$ and $V$ form a $\Lambda$-pair with a label of a side of any (horizontal) $t_k$-band from $\Delta$, hence, since $\Lambda$-pair relation is an equivalence relation, we get that  $(U', V)$ is a $\Lambda$-pair.
\end{proof}

\begin{lemma}
\label{Lemmmmma}
		Let $U, V \in (X_0 \cup Y_0 \cup Z_0)^*$. Suppose that $U', V' \in  (X\cup \{t_1, \ldots, t_j\})^*$ are any $(\lambda_i, c_i, \epsilon_i, 1-122\lambda_i\mu_i)$-cyclic-reductions of $U$ and $V$, respectively, where $i=\mathcal{I}(\|U\|+\|V\|)$. Then  $U$ is $H$-conjugate to $V$ in $G_{\mathcal{L}}$ if and only if $(U', V')$ is a $\Lambda$-pair and $U'$ is not a cyclic shift of $V'$.
	\end{lemma}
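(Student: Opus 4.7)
The plan is to use Collins' Lemma \ref{lemma-Collins} for the HNN extension structure, together with the previously established Lemma \ref{lemma_important_auxiliary}. For the $(\Leftarrow)$ direction, suppose $(U', V')$ is a $\Lambda$-pair with $U'$ not a cyclic shift of $V'$; then up to swapping, $U'$ is a cyclic shift of $u_k^l$ and $V'$ of $v_k^l$ for some $k \in \mathbb{N}$ and $l \in \mathbb{Z}$. Since the defining relation $u_k = t_k^{-1} v_k t_k$ holds in $H_k$, one has $u_k^l \sim_{conj} v_k^l$ in $H_k$, hence $U \sim_{conj} V$ in $H_k$. To verify the rank-$k$ minimality $U \not\sim_{conj} V$ in $G_{k-1}$, note that $u_k \in X_0^* x_3$ and $v_k \in Y_0^* y_3$ lie in distinct free factors of $G_0 = F_1 * F_2 * F_3$, so they are not conjugate in $G_0$; a rank-$(k-1)$ version of Lemma \ref{lemma-*} (obtained by running the same band-analysis argument but only through the truncated chain up to rank $k-1$) then rules out $u_k^l \sim_{conj} v_k^l$ in $G_{k-1}$, since the only new conjugacies arising in $G_{k-1}$ between words in $X_0^* \cup Y_0^*$ come from $\Lambda$-pair relations $(u_m, v_m)$ with $m \leq k-1$, none of which match $(u_k, v_k)$.

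For the $(\Rightarrow)$ direction, suppose $U$ is $H$-conjugate to $V$ in $G_\mathcal{L}$, so there exists $k \geq 1$ with $U \sim_{conj} V$ in $H_k$ but $U \not\sim_{conj} V$ in $G_{k-1}$. Since $U, V \in (X_0 \cup Y_0 \cup Z_0)^*$ contain no occurrence of $t_k^{\pm 1}$, their images in $H_k$ lie in the base $G_{k-1}$, and Collins' Lemma applied to $H_k = \langle G_{k-1}, t_k \mid u_k = t_k^{-1} v_k t_k \rangle$ produces a chain $U = w_0, w_0', w_1, w_1', \ldots, w_s'$ in $G_{k-1}$, with $w_m, w_m'$ representing elements of $\langle u_k \rangle \cup \langle v_k \rangle$, $w_m' = \phi^{\pm 1}(w_m)$, and $w_m \sim_{conj} w_{m+1}'$ in $G_{k-1}$, terminating with $V \sim_{conj} w_s'$ in $G_{k-1}$. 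Tracking the alternation of $\langle u_k \rangle$ and $\langle v_k \rangle$ along this chain, and using that these two cyclic subgroups intersect trivially in $H_k$ (since $u_k, v_k$ have infinite order and are not powers of a common element), one obtains, up to swapping roles, an integer $l$ with $U \sim_{conj} u_k^l$ and $V \sim_{conj} v_k^l$ in $G_{k-1}$, hence in $G_\mathcal{L}$. Applying Lemma \ref{lemma_important_auxiliary} to these conjugacies (with $u_k^l \in X_0^*$ and $v_k^l \in Y_0^*$ playing the role of $V$) gives that $(U', u_k^l)$ and $(V', v_k^l)$ are $\Lambda$-pairs, so by transitivity of the $\Lambda$-pair relation, $(U', V')$ is a $\Lambda$-pair.

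To rule out $U'$ being a cyclic shift of $V'$, observe that such a cyclic-shift relation would imply $U' \sim_{conj} V'$ in the free group $G_0$, hence in $G_{k-1}$. Combined with $U \sim_{conj} U'$ and $V \sim_{conj} V'$ established at rank $k-1$ (via the Collins chain together with a rank-$(k-1)$ version of Lemma \ref{lemma_important_auxiliary}), this would yield $U \sim_{conj} V$ in $G_{k-1}$, contradicting the $H$-conjugacy hypothesis at rank $k$. Thus $U'$ cannot be a cyclic shift of $V'$, completing the $(\Rightarrow)$ direction.

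The main obstacle will be the careful bookkeeping of which conjugacies hold at which rank, since Lemma \ref{lemma_important_auxiliary} as stated concerns the limit $G_\mathcal{L}$ whereas the $H$-conjugacy definition distinguishes $H_k$ from $G_{k-1}$. Establishing the truncated rank-$(k-1)$ analogues of Lemma \ref{lemma-*} and Lemma \ref{lemma_important_auxiliary} required in both directions relies on the sparse parameter assumption and the slender-conjugacy-diagram analysis of Section \ref{section_problem_7-5}, which ensures that the small-cancellation cells $\mathcal{R}_j$ with $j \leq k-1$ cannot interact with $t_k$-bands to create spurious conjugacies between elements associated to $(u_k, v_k)$.
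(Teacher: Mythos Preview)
Your $(\Rightarrow)$ direction is essentially the paper's argument fleshed out: the paper simply says the slender $(U,V)$-diagram over $H_j$ contains a horizontal $t_j$-band, so $U$ and $V$ are conjugate (in $G_{\mathcal{L}}$) to words from $X_0^*\cup Y_0^*$, and then Lemma~\ref{lemma_important_auxiliary} plus transitivity of the $\Lambda$-pair relation finishes. Your Collins-chain reformulation is fine, though note a minor level mismatch: Lemma~\ref{lemma_important_auxiliary} is stated with the reduction taken at level $\mathcal{I}(\|U\|)$, whereas here $U'$ is reduced at level $\mathcal{I}(\|U\|+\|V\|)$.

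The $(\Leftarrow)$ direction, however, has a real gap. You write ``$u_k^l\sim_{conj}v_k^l$ in $H_k$, hence $U\sim_{conj}V$ in $H_k$'' and then try to verify $U\not\sim_{conj}V$ in $G_{k-1}$. But you only know $U\sim_{conj}U'$ and $V\sim_{conj}V'$ in $G_i$ with $i=\mathcal{I}(\|U\|+\|V\|)$; there is no reason these conjugacies should hold already in $H_k$ or in $G_{k-1}$ for the particular $k$ coming from the $\Lambda$-pair structure. The index $k$ depends on the enumeration of $\mathcal{L}$ and is unrelated to $i$, so you cannot transport the conjugacy $U'\sim_{conj}V'$ in $H_k$ back to $U,V$ at that level. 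The ``rank-$(k-1)$ versions'' of Lemmas~\ref{lemma-*} and~\ref{lemma_important_auxiliary} you invoke do not resolve this, because the problem is not the target of the conjugacy but the level at which $U\sim_{conj}U'$ holds.

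The paper avoids this entirely by not trying to name the rank. It simply observes that a $\Lambda$-pair is conjugate in $G_{\mathcal{L}}$, hence $U\sim_{conj}V$ in $G_{\mathcal{L}}$; then, since $U',V'\in X_0^*\cup Y_0^*$ and $U'$ is not a cyclic shift of $V'$, Lemma~\ref{lemma-*} says $U'$ is not $G$-conjugate to $V'$, which (together with the mutual exclusivity of $G$- and $H$-conjugacy) forces $U$ to be $H$-conjugate to $V$. You should restructure $(\Leftarrow)$ along these lines rather than attempting to locate a specific rank witnessing the $H$-conjugacy.
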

\begin{proof}
	First of all, if $(U, V)$ is  a $\Lambda$-pair, then, clearly, $U \sim_{conj} V$ in $G_{\mathcal{L}}$. Moreover, by Lemma \ref{lemma-*}, if $U$ is not a cyclic shift of $V$, then $U$ is not $G$-conjugate to $V$, hence $U$ is $H$-conjugate to $V$ in $G_{\mathcal{L}}$.
	
	Now let us assume that $U$ is $H$-conjugate to $V$ in $G_{\mathcal{L}}$. Then there exists an index $j\in \mathbb{N}$ such that  $U$ is conjugate to $V$ in $H_j$, but $U$ is not conjugate to $V$ in $G_{j-1}$. 
	This means that there exists a slender $(U, V)$-conjugacy diagram $\Delta$ over $H_j$ which contains a horizontal $t_j$-band. Therefore, $U$ and $V$ are conjugate to conjugate words from $X_0^* \cup Y_0^*$. Hence, by Lemma \ref{lemma_important_auxiliary} and by the fact that $\Lambda$-pair relation is an equivalence relation, we get that $(U', V')$ is a $\Lambda$-pair. 
\end{proof}
~\\
The next lemma is an obvious corollary from the structure of the words of $\Lambda(\mathcal{A}^*)$ and definition of $\Lambda$-pairs.
\begin{lemma}
\label{11111111111111}
	The decision problem which for any pair of words  $U, V \in (X_0 \cup Y_0 \cup Z_0)^*$ asks whether   or not $(U, V)$ is a $\Lambda$-pair can be strongly reduced to the membership problem for $\mathcal{L} \subseteq \mathcal{A}^*$  in $\bigO(\|U\|+\|V\|)$ time.
\end{lemma}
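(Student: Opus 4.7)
The plan is to exhibit a linear-time computable function $\phi$ from $(X_0\cup Y_0\cup Z_0)^* \times (X_0\cup Y_0\cup Z_0)^*$ to $\mathcal{A}^*$ such that $(U,V)$ is a $\Lambda$-pair if and only if $\phi(U,V)\in \mathcal{L}$, with the image of the $\Lambda$-pairs under $\phi$ equal to $\mathcal{L}$. The key observation is structural: by the definition of $\Lambda$-pair, either $U$ is a cyclic shift of $V$ (a condition that can be checked without any reference to $\mathcal{L}$), or, after cyclic reduction and up to a swap of $U$ and $V$, the words decompose as $U = $~cyclic shift of $(\Lambda_0(\omega)x_3)^l$ and $V = $~cyclic shift of $(\varsigma(\Lambda_0(\omega))y_3)^l$ for a unique $(\omega, l)\in \mathcal{A}^*\times \mathbb{Z}$, with $\omega$ required to lie in $\mathcal{L}$ for the pair $(u_k,v_k)$ to even exist in the enumeration of $\Lambda(\mathcal{L})$. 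Thus the $\Lambda$-pair test reduces to extracting $\omega$ from $U,V$ and then querying $\omega\in\mathcal{L}$.

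First I would fix (whenever $\mathcal{L}\notin\{\emptyset,\mathcal{A}^*\}$) auxiliary elements $w_Y\in\mathcal{L}$ and $w_N\in \mathcal{A}^*\setminus \mathcal{L}$, which exist by assumption on $\mathcal{L}$; the two degenerate cases $\mathcal{L}=\emptyset$ and $\mathcal{L}=\mathcal{A}^*$ will be handled separately by a trivial reduction since in those cases the $\Lambda$-pair problem is itself decidable without any oracle. On input $(U,V)$, the algorithm for $\phi$ proceeds as follows: (i) test in linear time whether $U$ is a cyclic shift of $V$ by a standard string-matching procedure (e.g.\ searching for $V$ as a factor of $UU$ via Knuth--Morris--Pratt)---if so, output $w_Y$; (ii) check that, up to swapping $U$ and $V$, one has $U\in X_0^*$ and $V\in Y_0^*$---otherwise output $w_N$; (iii) freely reduce $U$ and $V$, then cyclically reduce them; (iv) compute their primitive roots $W_U,W_V$ together with exponents $l_U,l_V\in\mathbb{Z}$ using linear-time periodicity algorithms on strings, and return $w_N$ unless $l_U=l_V$; (v) verify that $W_U$ contains exactly one occurrence of $x_3^{\mathrm{sgn}(l_U)}$ and all other letters from $\{x_1^{\mathrm{sgn}(l_U)},x_2^{\mathrm{sgn}(l_U)}\}$---if so, the unique cyclic shift putting $x_3^{\mathrm{sgn}(l_U)}$ at the end yields a factorization $\Lambda_0(\omega_U)x_3$ (or its inverse), from which $\omega_U$ is recovered by applying $\Lambda_0^{-1}$, and analogously one extracts $\omega_V$; (vi) finally, output $\omega_U$ if $\omega_U=\omega_V$ and output $w_N$ otherwise.

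Correctness follows by inspection against the definition of a $\Lambda$-pair together with the fact that $\Lambda_0$ is a bijection and the powers $(\Lambda_0(\omega)x_3)^l$, $l\neq 0$, are always freely and cyclically reduced, so the primitive root is canonical up to cyclic shift. Surjectivity $\phi(\{\Lambda\text{-pairs}\})=\mathcal{L}$ holds because every $\omega\in\mathcal{L}$ appears as $\phi(u_k,v_k)$ for the index $k$ with $\Lambda(\omega)=(u_k,v_k)$, and $w_Y\in\mathcal{L}$ by choice; conversely, on non-$\Lambda$-pairs $\phi$ outputs either $w_N\notin\mathcal{L}$ or some $\omega_U\notin\mathcal{L}$ (in the latter case the pair fails to be a $\Lambda$-pair precisely because no such $u_k,v_k$ has been introduced). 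The length bound $\|\phi(U,V)\|_{\mathcal{A}}=O(\|U\|+\|V\|)$ follows from the linear-time computability of $\Lambda_0^{-1}$. The main obstacle in carrying out this plan cleanly is bookkeeping in step~(v): one must be careful with the sign of $l_U$, with the choice of cyclic shift that realizes the factorization $\Lambda_0(\omega)x_3$, and with the edge case $l_U=0$ (i.e.\ $U$ trivial in the free group), but each of these reduces to an elementary check on a single position of $W_U$.
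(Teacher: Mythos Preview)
Your approach is correct and is exactly what the paper intends---its own proof is a single sentence declaring the lemma ``an obvious corollary from the structure of the words of $\Lambda(\mathcal{A}^*)$ and the definition of $\Lambda$-pairs,'' and you have simply written out that structure explicitly.

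Two minor technical remarks. Step~(iii) should be replaced by a \emph{check} that $U$ and $V$ are already cyclically reduced as words, returning $w_N$ if not: the $\Lambda$-pair relation is defined on words, and every cyclic shift of $u_k^l$ is itself cyclically reduced, so reducing a non-reduced input can manufacture a false positive (e.g.\ $U=x_1x_3x_2x_2^{-1}$ reduces to $x_1x_3$, which may equal some $u_k$, yet the original $U$ is not a cyclic shift of any power of $u_k$). In the paper's applications the inputs to this lemma are themselves cyclic reductions, so the point is moot in context, but for the lemma as stated the check is needed. Second, your handling of the degenerate cases $\mathcal L\in\{\emptyset,\mathcal A^*\}$ cannot work under the paper's definition of strong reduction: if $\mathcal L=\emptyset$ the requirement $\phi(\{\Lambda\text{-pairs}\})=\mathcal L$ is unsatisfiable since every $(W,W)$ is a $\Lambda$-pair, and dually for $\mathcal L=\mathcal A^*$ the complement clause fails. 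This is a flaw in the lemma as stated rather than in your argument; the paper tacitly assumes $\mathcal L$ is nontrivial.
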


The combination of Lemma \ref{lemma-*} and Lemma \ref{Lemmmmma} implies the following proposition.
\begin{proposition}
\label{22222222222222}
	Suppose that $U, V \in (X_0 \cup Y_0 \cup Z_0)^*$.	Then, $U \sim_{conj} V$ if and only if exactly one of the following holds.
	\begin{enumerate}
		\item $U$ is $G$-conjugate to $V$ in $G_{\mathcal{L}}$;
		\item $(U', V')$ is a  $\Lambda$-pair and $U'$ is not a cyclic shift of $V'$, where $U', V' \in  (X_0 \cup Y_0 \cup Z_0 \cup \{ t_i \})^*$ are $(\lambda_i, c_i, \epsilon_i, 1-122\lambda_i\mu_i)$-cyclic reductions of $U$ and $V$, respectively, for $i=\mathcal{I}(\|U\|+\|V\|)$. 
	\end{enumerate}
\end{proposition}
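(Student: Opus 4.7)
The plan is to deduce the proposition directly from the machinery already built, with essentially no further diagram analysis. First I would recall that by the definition of $G$- and $H$-conjugacy (Definition \ref{definition- G- and H- conjugacy}), for any pair $U, V \in X^*$ one has $U \sim_{conj} V$ in $G_{\mathcal{L}}$ if and only if $U$ is either $G$-conjugate or $H$-conjugate to $V$. So the content of the proposition is really two statements: (i) the two alternatives are mutually exclusive, and (ii) the second alternative coincides with being $H$-conjugate in $G_{\mathcal{L}}$.

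For (i), I would argue by contradiction: suppose $U$ is simultaneously $G$-conjugate via some witness $i_1$ (i.e.\ either $i_1 = 0$ with $U \sim_{conj} V$ in $G_0$, or $i_1 \geq 1$ with $U \sim_{conj} V$ in $G_{i_1}$ but $U \not\sim_{conj} V$ in $H_{i_1}$) and $H$-conjugate via some witness $i_2 \geq 1$ (so $U \sim_{conj} V$ in $H_{i_2}$ but $U \not\sim_{conj} V$ in $G_{i_2-1}$). Using the chain of homomorphisms $G_0 \hookrightarrow H_1 \twoheadrightarrow G_1 \hookrightarrow H_2 \twoheadrightarrow \cdots$, a conjugacy relation in any $G_j$ (resp.\ $H_j$) propagates forward. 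If $i_2 \leq i_1$, then $U \sim_{conj} V$ in $H_{i_2}$ forces $U \sim_{conj} V$ in $H_{i_1}$, contradicting the $G$-conjugacy witness; if $i_2 > i_1$, then $U \sim_{conj} V$ in $G_{i_1}$ forces $U \sim_{conj} V$ in $G_{i_2-1}$, contradicting the $H$-conjugacy witness.

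For (ii), I would combine Corollary \ref{corollary---*} with Lemma \ref{Lemmmmma}. Specifically, if $U'$ and $V'$ are any $(\lambda_i, c_i, \epsilon_i, 1-122\lambda_i\mu_i)$-cyclic reductions of $U$ and $V$ with $i = \mathcal{I}(\|U\|+\|V\|)$, then $U \sim_{conj} U'$ and $V \sim_{conj} V'$ in $G_{\mathcal{L}}$, so $U$ is $H$-conjugate to $V$ in $G_{\mathcal{L}}$ precisely when $U'$ is $H$-conjugate to $V'$. Lemma \ref{Lemmmmma} then provides the required equivalence with alternative (2), namely that $(U', V')$ is a $\Lambda$-pair which is not a pair of cyclic shifts. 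Putting (i) and (ii) together with the definitional decomposition of conjugacy into $G$- and $H$-conjugacy yields the proposition.

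The proof is essentially bookkeeping on top of the previously established structural lemmas; I do not foresee any real obstacle beyond carefully verifying the mutual exclusivity in step (i), which is the only part not already packaged inside an earlier lemma.
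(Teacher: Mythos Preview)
Your approach is correct and matches the paper's, which simply cites Lemma~\ref{lemma-*} and Lemma~\ref{Lemmmmma}; your explicit argument for mutual exclusivity in step~(i) is a welcome addition since the paper leaves that implicit. One small cleanup in step~(ii): the intermediate claim ``$U$ is $H$-conjugate to $V$ precisely when $U'$ is $H$-conjugate to $V'$'' is neither justified nor needed---Lemma~\ref{Lemmmmma} is already stated for $U,V$ and their cyclic reductions $U',V'$, so you should apply it directly to $U,V$ rather than routing through $U',V'$ (and the reference to Corollary~\ref{corollary---*} is not actually used).
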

\begin{proof}
	This proposition directly follows from Lemma \ref{lemma-*}  and Lemma \ref{Lemmmmma}.
\end{proof}

\begin{proposition}
	\label{main-proposition-1}
	The membership problem for $\mathcal{L} \subseteq \mathcal{A}^*$  can be strongly reduced to the conjugacy problem in $G_{\mathcal{L}}$ in linear time; and the $H$-conjugacy problem in $G_{\mathcal{L}}$ can be strongly reduced to the membership problem for $\mathcal{L} \subseteq \mathcal{A}^*$ in almost linear time.
\end{proposition}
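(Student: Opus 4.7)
The plan divides into two independent reductions. For the first claim, I will define $\phi_1\colon\mathcal{A}^*\to X^*\times X^*$ by $\phi_1(\omega)=(u,v)$ with $u=\Lambda_0(\omega)x_3$ and $v=\varsigma(\Lambda_0(\omega))y_3$, encoded as a single word over a fixed alphabet. Since $\Lambda_0$ is linear-time computable and $\varsigma$ is a letter relabeling, $\phi_1$ is linear-time and length-preserving up to a constant. Correctness has two halves: if $\omega\in\mathcal{L}$, then $(u,v)=(u_k,v_k)$ for some index $k$, and the defining relation $u_k=t_k^{-1}v_kt_k$ of $H_k$ yields $u\sim_{conj}v$ in $G_{\mathcal{L}}$. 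Conversely, if $u\sim_{conj}v$ in $G_{\mathcal{L}}$, then since $u\in X_0^*$ and $v\in Y_0^*$ are cyclically reduced and non-trivial, Lemma~\ref{lemma-*} forces $(u,v)$ to be a $\Lambda$-pair. The swapped form of a $\Lambda$-pair is incompatible with the alphabet disjointness $X_0\cap Y_0=\emptyset$; in the remaining form, the marker letters $x_3,y_3$ — each occurring exactly once in $u,v$ — force $|l|=1$, positivity of $u$ rules out $l=-1$, and the uniqueness of the cyclic shift of $u_k$ terminating in $x_3$ forces $u=u_k$. Injectivity of $\Lambda_0$ then yields $\omega=\omega_k\in\mathcal{L}$.

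For the second claim, I will compose three stages. Given a pair $(U,V)$ of total length $n$, I first run the cyclic-reduction algorithm of Subsection~\ref{subsection-the main algorithm} with parameters $(\lambda_i,c_i,\epsilon_i,1-122\lambda_i\mu_i)$ and $i=\mathcal{I}(n)$ on both $U$ and $V$, producing reduced representatives $U',V'$; since each $\mathcal{R}_i$ is a single cyclic class by construction, Lemma~\ref{lemma_about_complexity_of_arc_reduction} combined with the $i$-th level data complexity (as in Theorem~\ref{theorem_about_WP_in_barG_2}) makes this step almost linear time. By Lemma~\ref{Lemmmmma}, the pair $(U,V)$ is $H$-conjugate in $G_{\mathcal{L}}$ precisely when $(U',V')$ is a $\Lambda$-pair but $U'$ is not a cyclic shift of $V'$. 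A classical linear-time algorithm decides the cyclic-shift question; when it answers ``yes'', I short-circuit by outputting a fixed $\omega_0\in\mathcal{A}^*\setminus\mathcal{L}$ (the degenerate case $\mathcal{L}=\mathcal{A}^*$ is handled separately, since then both problems are trivial). Otherwise, Lemma~\ref{11111111111111} provides a linear-time strong reduction of the $\Lambda$-pair test to the membership problem in $\mathcal{L}$. Concatenating the three stages produces the required almost-linear-time strong reduction.

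The main obstacle lies in the converse direction of the first reduction — ruling out unintended conjugacies in $G_{\mathcal{L}}$. The structural Lemma~\ref{lemma-*}, distilled from the induction (a$_i$)--(f$_i$) of Lemma~\ref{lem_main_structural_lemma_for_problem_7-5}, is exactly what turns the single-occurrence markers $x_3$ and $y_3$ into a faithful injective encoding: without them, pairs of the form $(u_k^l,v_k^l)$ with $|l|\geq 2$, or pairs arising from a different $\omega$, could not be separated from the designated pair $(u_k,v_k)$. All remaining ingredients for Part~2 are already packaged inside the cyclic-reduction subroutine and its almost-linear complexity bound, so that the final assembly is a routine composition.
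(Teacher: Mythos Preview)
Your proposal is correct and follows essentially the same approach as the paper: for the first reduction you use $\Lambda(\omega)$ together with Lemma~\ref{lemma-*}, and for the second you compute cyclic reductions $U',V'$ in almost linear time (via Remark~\ref{an-important-remark}) and then invoke Lemma~\ref{Lemmmmma} and Lemma~\ref{11111111111111}. Your treatment is in fact more careful than the paper's in two places --- you spell out the marker-letter argument showing that a $\Lambda$-pair of the form $(\Lambda_0(\omega)x_3,\varsigma(\Lambda_0(\omega))y_3)$ forces $\omega\in\mathcal{L}$, and you explicitly handle the ``$U'$ is a cyclic shift of $V'$'' clause of Lemma~\ref{Lemmmmma} and the degenerate case $\mathcal{L}=\mathcal{A}^*$, both of which the paper's proof passes over silently.
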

\begin{proof}
	Indeed, it follows from the definition of $\Lambda$-pairs that for any $\omega \in \mathcal{A}^*$, $\omega \in \mathcal{L}$ if and only if the pair of words $\Lambda(\omega)$ is a $\Lambda$-pair. Therefore, since $\Lambda(\omega) \in Y_0^* \times Z_0^*$,  by Lemma \ref{lemma-*}, $\omega \in \mathcal{L}$ if and only if $\Lambda(\omega)$ is a pair of words conjugate in $G_{\mathcal{L}}$.
	
	Now let us show the opposite side. For that let us consider a pair of words $(U, V) \in X^* \times X^*$. Then one can find $(\lambda_i, c_i,\epsilon_i, 1-122\lambda_i\mu_i)$-cyclic-reductions $U'$ and $V'$ of  $U$ and $V$, respectively, in almost linear time, where $i=\mathcal{I}(\|U\|+\|V\|)$ (see Remark \ref{an-important-remark}). Therefore, since by Lemma \ref{Lemmmmma} $U'$ is $H$-conjugate to $V'$ in $G_{\mathcal{L}}$ if and only if $(U', V')$ is a $\Lambda$-pair, by Lemma \ref{11111111111111}, the $H$-conjugacy problem in $G_{\mathcal{L}}$ can be strongly reduced to the membership problem for $\mathcal{L} \subseteq \mathcal{A}^*$ in almost linear time.
\end{proof}




\subsubsection{Geometry of slender $G$-conjugacy diagrams and time complexity of the $G$-conjugacy problem in $G_{\mathcal{L}}$}

	
\begin{lemma}
\label{lemma-structure-of-g-diagrams-G-T}
	Let for some $i \in \mathbb{N}$, $U, V \in (X_0 \cup Y_0 \cup Z_0 \cup \{t_1, \ldots, t_i\})^*$ be $(\lambda_i, c_i, \epsilon_i, 1-122\lambda_i\mu_i)$-cyclic-reduced words in $\Gamma\big(H_i, X_0 \cup Y_0 \cup Z_0 \cup \{t_1, \ldots, t_i\} \big)$ and suppose $U \sim_{conj} V$ in $G_i$ but $U \not\sim_{conj} V$ in $H_i$. Then there exists a $(U, V)$-conjugacy diagram $\Delta$ over 
	\begin{align*}
		G_i=\langle X_0 \cup Y_0 \cup Z_0 \cup \{t_1, \ldots, t_i\} \mid  \mathcal{R}_j, t_j^{-1}u_j t_j v_j^{-1}, 1\leq j \leq i \rangle
	\end{align*}
	such that $\partial\Delta=ABCD$, $lab(AD) \equiv U$, $lab(BC) \equiv V$, $lab(AB) \equiv lab(DC)$ and for every cell $\Pi$ in $\Delta$, $\partial\Pi \cap AD, ~\partial\Pi \cap BC \neq \emptyset$. Moreover, if $\Pi$ is an $\mathcal{R}_j$-cell for some $1\leq j \leq i$, then $\|\Pi \cap AD \|, \|\Pi \cap BC \| \geq \rho_j \|\Pi\|$. Also, if $\Pi$ is a cell with label of the form $t_j^{-1}u_j t_j v_j^{-1}$, then $u_j$ is contained either in $lab(\Pi \cap AD)$ or in $lab(Pi \cap BC)$, and the same is true about $v_j$
\end{lemma}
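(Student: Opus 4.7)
My plan is to obtain the desired $\Delta$ by starting with an arbitrary reduced $(U,V)$-conjugacy diagram over the given presentation of $G_i$ and then performing a sequence of minimizations and surgeries to force every cell into the prescribed transversal position.

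First I would select a reduced $(U,V)$-conjugacy diagram $\Delta$ whose boundary is $ABCD$ with $lab(AD)\equiv U$, $lab(BC)\equiv V$, $lab(AB)\equiv lab(DC)$, chosen to be lexicographically minimal in the complexity triple
\[
\bigl(\|AB\|,\; \#\{\mathcal{R}_i\text{-cells}\},\; \#\{\text{HNN-cells at rank }i\},\; \#\{\mathcal{R}_{i-1}\text{-cells}\}, \ldots\bigr).
\]
The fact that $\Delta$ exists follows from $U\sim_{conj} V$ in $G_i$ and van Kampen's lemma. The minimality of $\|AB\|$ forces $\Delta$ to be slender and, together with Lemma~\ref{lemma_about_opposite_points_in_slender_conjugacy_diagrams}, rules out pairs of mirroring points with matching labels on $AB\cup DC$. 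Since $U\not\sim_{conj} V$ in $H_i$ and $U,V$ are cyclically $(\lambda_i,c_i,\epsilon_i,1-122\lambda_i\mu_i)$-reduced, Lemma~\ref{lem 6.6} forces $\Delta$ to contain an $\mathcal{R}_j$-cell for some $1\le j\le i$.

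Next I would analyze the $\mathcal{R}_j$-cells rank by rank, starting from the highest. For an essential $\mathcal{R}_j$-cell $\Pi$, Lemma~\ref{lemma_about_slender_conjugacy_diagrams} produces essential $\epsilon_j$-contiguity subdiagrams $\Gamma_1,\Gamma_2,\Gamma_3,\Gamma_4$ from $\Pi$ to $AB,BC,CD,DA$, with non-empty $\Gamma_2,\Gamma_4$ and $(\Pi,\Gamma_2,BC)+(\Pi,\Gamma_4,DA)\ge 1-121\lambda_j\mu_j$. If either contiguity degree fell below $\rho_j$, the bound~\eqref{aineq ****} from the proof of Lemma~\ref{lemma_about_proper_powers_in_quotients} would produce a $(\epsilon_j,1-122\lambda_j\mu_j)$-subarc on $AD$ or $BC$, contradicting the cyclic $(\lambda_i,c_i,\epsilon_i,1-122\lambda_i\mu_i)$-reduction of $U$ and $V$ (note that for $j<i$ this uses the LPP inclusion of parameters). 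On the other hand, if $\Pi$ has any contiguity $\Gamma_1$ or $\Gamma_3$ to $AB$ or $DC$ of positive degree, applying Lemma~\ref{lemma_about_contiguity_arcs_with_the_same_label} together with $lab(AB)\equiv lab(DC)$ lets me excise the corresponding region and shorten $\|AB\|$, contradicting minimality.

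Then I would treat the HNN-cells by the $t_j$-band apparatus. Every HNN-cell of rank $j$ with label $t_j^{-1}u_jt_jv_j^{-1}$ lies in a unique $t_j$-band, whose two ends are $t_j$-edges either on $\partial\Delta$ or on the single $t_j$-edge of an $\mathcal{R}_j$-cell. Since $\mathcal{R}_j$-cells are forced by the previous step to sit transversally between $AD$ and $BC$ with their $t_j$-edges in the interior, any $t_j$-band that ends on such a cell can be rerouted through that cell. The remaining $t_j$-bands have both ends on $\partial\Delta$; those ending on $AD\cup BC$ are precisely the configurations claimed by the lemma (the $u_j$ and $v_j$ sides of each HNN-cell lie on $AD$ and $BC$), while a $t_j$-band with both ends on $AB\cup DC$ would, by the band's label structure and $lab(AB)\equiv lab(DC)$ together with Lemma~\ref{lemma_about_opposite_points_in_slender_conjugacy_diagrams}, allow a mirror-point surgery that strictly reduces $\|AB\|$, again contradicting minimality. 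Iterating the argument from the highest rank $j=i$ downward to $j=1$, and using the rank-ordered complexity to prevent any reintroduction of pathological cells at lower ranks, delivers the desired $\Delta$.

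The main obstacle will be the bookkeeping in the interaction between $t_j$-bands and $\mathcal{R}_j$-cells across different ranks, since after rerouting a $t_j$-band one must verify that no new non-transversal $\mathcal{R}_k$-cell with $k<j$ has been created, and that the boundary labels on $AD$ and $BC$ remain cyclic $(\lambda_i,c_i,\epsilon_i,1-122\lambda_i\mu_i)$-reduced words; this is exactly where the sparseness of the standard parameters via LPP, combined with the structural properties (a$_i$)--(e$_i$) of the chain, will be used repeatedly.
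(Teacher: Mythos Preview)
Your overall strategy—start from a minimal slender conjugacy diagram, apply Lemma~\ref{lemma_about_slender_conjugacy_diagrams} to locate an essential $\mathcal{R}_j$-cell $\Pi$ with large contiguity to $AD$ and $BC$, then iterate—is the same as the paper's. But you skip the one step that actually turns ``large contiguity'' into ``$\partial\Pi\cap AD\neq\emptyset$ and $\partial\Pi\cap BC\neq\emptyset$''. Knowing that $(\Pi,\Gamma_2,BC),(\Pi,\Gamma_4,DA)>\mu_j$ only tells you that $\Pi$ is joined to $AD$ and $BC$ by $\epsilon_j$-contiguity subdiagrams; it does \emph{not} say that $\partial\Pi$ touches $AD$ or $BC$. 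The side arcs $p_{\Gamma},p'_{\Gamma}$ have length $\le\epsilon_j$, and the subdiagram between them can in principle contain lower-rank cells and HNN-cells. Nothing in your minimality argument (shortening $\|AB\|$, or invoking Lemma~\ref{lemma_about_contiguity_arcs_with_the_same_label}) forces those subdiagrams to collapse.

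The paper handles this with property~(b$'_{i}$) from Lemma~\ref{lem_main_structural_lemma_for_problem_7-5}: once you replace $\Gamma_2,\Gamma_4$ by their truncated versions, the fact that $\|\check q_{\Gamma}\|\ge\mu_i\|R_i\|$ forces $Area(\Gamma_2)=Area(\Gamma_4)=0$, so $\Pi$ literally shares an arc with $AD$ and with $BC$. That property is not a general small-cancellation fact; it depends on the specific form of the relators here (each $R_j\in\mathcal{R}_j$ is a word in $z_1,z_2$ with a single $t_j$-letter), together with the inductively established properties (a$_j$)--(e$_j$). Your proposal never invokes (b$'_{i}$) or anything equivalent, so as written it cannot conclude $\partial\Pi\cap AD\neq\emptyset$. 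Once (b$'_{i}$) is inserted at the right place, the rest of your outline—removing $\Pi$ together with its zero-area contiguity subdiagrams, inducting down through the ranks, and handling HNN-cells via $t_j$-bands—is exactly the ``standard inductive argument'' the paper alludes to.
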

\begin{proof}
	Let $\Delta_0$ be a reduced cyclically slender $(U, V)$-conjugacy diagram over $G_i$. Let $\partial\Delta_0=A_0B_0C_0D_0$ be such that $lab([A_0, D_0])=U'$ and $lab([B_0, C_0])=V'$ for some cyclic shifts $U'$ and $V'$ of $U$ and $V$, respectively. Then, by Lemma \ref{lemma_about_slender_conjugacy_diagrams}, there exists an $\mathcal{R}_i$-cell $\Pi$ connected by non-empty $\epsilon_i$-conjugacy subdiagrams $\Gamma$ and $\Gamma'$ to $A_0D_0$ and $B_0C_0$ such that $(\Pi, \Gamma, A_0D_0)+ (\Pi, \Gamma', B_0C_0)\geq 1-121\lambda_i\mu_i$. Without loss of generality assume that $\Gamma$ and $\Gamma'$ are truncated. Now, since $U'$ and $V'$ are $(\lambda_i, c_i, \epsilon_i, 1-122\lambda_i\mu_i)$-reduced, we get $(\Pi, \Gamma, A_0D_0), (\Pi, \Gamma', B_0C_0) < 1-122\lambda_i\mu_i$. Therefore,
	\begin{align*}
		(\Pi, \Gamma, A_0D_0), (\Pi, \Gamma', B_0C_0) > \mu_i.
	\end{align*}
	Hence, by property (b$'_i$), we get that $Area(\Gamma)=Area(\Gamma')=0$.
	\begin{align*}
		\|W\|, \|W'\| \geq \mu \|R_i'\|.
	\end{align*}
	
	{Now the proof of Lemma \ref{lemma-structure-of-g-diagrams-G-T}
follows after applying some standard inductive arguments.
}\end{proof}

Visually, Lemma \ref{lemma-structure-of-g-diagrams-G-T}
 tells us that if for some $i \in \mathbb{N}$, $U, V \in (X_0 \cup Y_0 \cup Z_0 \cup \{t_1, \ldots, t_i\})^*$ are $(\lambda_i, c_i, \epsilon_i, 1-122\lambda_i\mu_i)$-cyclic-reduced words in $\Gamma\big(H_i, X_0 \cup Y_0 \cup Z_0 \cup \{t_1, \ldots, t_i\} \big)$ and  $U \sim_{conj} V$ in $G_i$ but $U \not\sim_{conj} V$ in $H_i$, then there exists a $(U, V)$-conjugacy diagram $\Delta$ which looks like in Figure \ref{fig:  G-conjugacy-diagram-for-G-T}, where by $\Pi_1$, \ldots $\Pi_k$ we denoted the cells of $\Delta$.

 \begin{figure}[H]
			\centering
			\includegraphics[clip, trim=0.8cm 17.9cm 0cm 5.9cm, width=1\textwidth]{{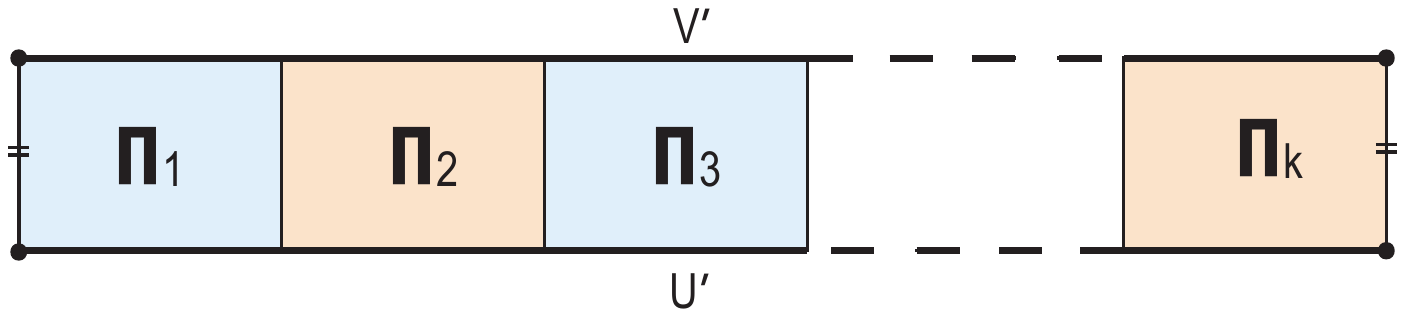}} 
			\caption{} 
			\label{fig:  G-conjugacy-diagram-for-G-T}
		\end{figure}
 
 
 \begin{lemma}
 \label{lemma-G-conj-in-WP-CP}
 	The $G$-conjugacy problem in $G_{\mathcal{L}}$ is decidable in almost linear time.
 \end{lemma}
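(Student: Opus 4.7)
The plan is to proceed in three stages and combine the structural information gathered above with the speed estimates already established for the cyclic reduction algorithm.

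\emph{Stage 1 (Cyclic reduction).} For an input pair $(U,V)$ with $n=\|U\|+\|V\|$, I first compute the $i$-th level data for $i=\mathcal{I}(n)$, and then apply the \texttt{$(\lambda_i,c_i,\epsilon_i,1-122\lambda_i\mu_i)$-cyclic-reduction} algorithm of Subsection~\ref{subsubsection-main-algorithm} to both $U$ and $V$. By Property~\ref{property-time-for-special-words} the auxiliary set $~_n\mathcal{R}_i$ can be built in linear time, and because each $\mathcal{R}_i$ contains exactly one element up to cyclic shifts (the system \eqref{definition_of_special_words} is a single cyclic class by construction), Lemma~\ref{lemma_about_complexity_of_arc_reduction} together with Remark~\ref{an-important-remark} yields that the reduction takes almost linear time. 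Denote the resulting cyclically $(\lambda_i,c_i,\epsilon_i,1-122\lambda_i\mu_i)$-reduced representatives by $U'$ and $V'$.

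\emph{Stage 2 (Locating the rank).} If $U$ is $G$-conjugate to $V$ in $G_{\mathcal{L}}$, then by Definition~\ref{definition- G- and H- conjugacy} either $U\sim_{conj}V$ in $G_0$ (a free group, where cyclic-shift comparison settles it in linear time), or there is $j\geq 1$ with $U'\sim_{conj}V'$ in $G_j$ but $U'\not\sim_{conj}V'$ in $H_j$. By Lemma~\ref{lemma-about-structure-of-CP-in-barG}, any such $j$ satisfies $j\leq\mathcal{I}(n)$, so the search over $j$ is a priori bounded; moreover the computable supradius built into the $C'\big(\mathcal{TM},(g_i),(\rho_i)\big)$-condition keeps the bookkeeping cost at $\bigO(n)$.

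\emph{Stage 3 (Reading off conjugacy from the ladder).} By Lemma~\ref{lemma-structure-of-g-diagrams-G-T}, in the nontrivial case there is a reduced cyclically slender $(U',V')$-conjugacy diagram $\Delta$ with the ladder shape of Figure~\ref{fig:  G-conjugacy-diagram-for-G-T}: every cell $\Pi$ meets both $AD$ and $BC$, each $\mathcal{R}_{j'}$-cell contributes an arc of length $\geq\rho_{j'}\|\Pi\|$ to each side, and each HNN-cell $t_{j'}^{-1}u_{j'}t_{j'}v_{j'}^{-1}$ places $u_{j'}$ on one side and $v_{j'}$ on the other. Because $|\mathcal{R}_{j'}|=1$ up to cyclic shifts and $u_{j'},v_{j'}$ are fixed by the enumeration $M_{\Lambda}$, the cell sequence of such a ladder is completely determined by a compatible factorisation of $U'$ and $V'$ into blocks of bounded-from-below length, each block being a prescribed subword of either $R_{j'}$ or an HNN-relator. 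Thus the existence of the ladder can be tested by a left-to-right scan of $U'$ and $V'$ with an Aho--Corasick automaton built from the (effectively computable) finite pattern library indexed by $j'\leq\mathcal{I}(n)$, which runs in almost linear time.

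The three stages combine to give an almost linear time algorithm for $G$-conjugacy. The main obstacle is Stage~3: one must verify that the ladder structure really forces the cell decomposition to be unique enough to be recognised by a linear-time pattern matcher, rather than by the quadratic exhaustive tuple search used in the general proof of Theorem~\ref{theorem-effective-G-conjugacy-problem}. This rigidity is exactly what the conditions $|\mathcal{R}_i|=1$ (up to cyclic shifts) and the $C'\big(\mathcal{TM},(g_i),(\rho_i)\big)$-small cancellation buy us here, so the heart of the argument is checking that every admissible choice of cell at a given position in the ladder is determined up to boundedly many options by a bounded-length prefix of $U'$ and $V'$ read from the current scanning position.
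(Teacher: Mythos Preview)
Your proposal is correct and follows essentially the same route as the paper: compute $(\lambda_i,c_i,\epsilon_i,1-122\lambda_i\mu_i)$-cyclic reductions $U',V'$ in almost linear time via Remark~\ref{an-important-remark}, then test for a ladder-shaped $(U',V')$-conjugacy diagram as described in Lemma~\ref{lemma-structure-of-g-diagrams-G-T}. The paper's proof simply asserts that this last test runs in $\bigO(\|U\|+\|V\|)$ time without further explanation, whereas you spell out a concrete mechanism (left-to-right scan with an Aho--Corasick automaton over the pattern library, exploiting that each $\mathcal{R}_j$ is a single cyclic class); your Stage~2 bookkeeping is not strictly needed since the ladder test handles all ranks at once, but it does no harm.
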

 \begin{proof}
 	Indeed, in order to check for a given pair of words $(U, V) \in X^* \times X^*$ whether or not $U$ is $G$-conjugate to $V$ in $G_{\mathcal{L}}$, one can first compute $(\lambda_i, c_i, \epsilon_i, 1-122\lambda_i\mu_i)$-cyclic-reductions $U'$ and $V'$ of $U$ and $V$, respectively, in almost linear time (see Remark \ref{an-important-remark}), then check does there exist a $(U', V')$-conjugacy diagram satisfying the properties described in Lemma \ref{lemma-structure-of-g-diagrams-G-T}. Notice that the last checking can be done in $\bigO(\|U\|+\|V\|)$ time. Therefore, the whole checking  can be done in almost linear time.
 \end{proof}
 \subsubsection{Main properties of $G_{\mathcal{L}}$ }
 Combining Proposition \ref{main-proposition-1} with Lemma \ref{lemma-G-conj-in-WP-CP} one immediately gets the statements $(II.i)$ and $(II.ii)$ of Theorem \ref{theorem_about_connecton_of_word_and_conjugacy_problems}.
 \begin{corollary}
 	The membership problem for $\mathcal{L} \subseteq \mathcal{A}^*$  can be strongly reduced to the conjugacy problem in $G_{\mathcal{L}}$ in linear time; and the conjugacy problem in $G_{\mathcal{L}}$ can be strongly reduced to the membership problem for $\mathcal{L} \subseteq \mathcal{A}^*$ in almost linear time.
 \end{corollary}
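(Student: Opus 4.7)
The plan is to deduce the corollary by assembling three ingredients already established in the section: the structural dichotomy for conjugacy in $G_{\mathcal{L}}$ given by Proposition \ref{22222222222222}; the effective $G$-conjugacy algorithm of Lemma \ref{lemma-G-conj-in-WP-CP}; and the translation between $\Lambda$-pairs and $\mathcal{L}$-membership supplied by Lemma \ref{11111111111111} together with the embedding $\Lambda$.

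For the first half, I would start from the observation that $\Lambda$ sends $\omega \in \mathcal{A}^*$ to a pair $(\Lambda_0(\omega)x_3,\;\varsigma(\Lambda_0(\omega))y_3) \in X_0^{\ast}\times Y_0^{\ast}$, and that this map is computable in linear time together with a linear bound on output length (since $\Lambda_0$ has these properties). By construction $\Lambda(\omega)$ is a $\Lambda$-pair precisely when $\omega \in \mathcal{L}$, and by Lemma \ref{lemma-*} the pair is conjugate in $G_{\mathcal{L}}$ iff it is a $\Lambda$-pair (the ``cyclic shift'' alternative is ruled out because one coordinate lies in $X_0^{\ast}$ and the other in $Y_0^{\ast}$). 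This yields a strong linear-time many-one reduction from membership in $\mathcal{L}$ to the conjugacy problem in $G_{\mathcal{L}}$, which is exactly the first half of Proposition \ref{main-proposition-1} and carries over unchanged.

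For the second half, given input $(U,V)\in X^{\ast}\times X^{\ast}$, I would proceed in two phases. First, using Lemma \ref{lemma-G-conj-in-WP-CP}, I decide in almost linear time whether $U$ is $G$-conjugate to $V$ in $G_{\mathcal{L}}$; if so, output YES. Otherwise, by Proposition \ref{22222222222222}, $U \sim_{conj} V$ in $G_{\mathcal{L}}$ iff $(U',V')$ is a $\Lambda$-pair with $U'$ not a cyclic shift of $V'$, where $U',V'$ are any $(\lambda_i,c_i,\epsilon_i,1-122\lambda_i\mu_i)$-cyclic reductions of $U,V$ for $i=\mathcal{I}(\|U\|+\|V\|)$. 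By Remark \ref{an-important-remark}, these reductions can be computed in almost linear time, and their lengths are bounded by $\|U\|+\|V\|$. At this point Lemma \ref{11111111111111} provides a strong linear-time many-one reduction of the $\Lambda$-pair test to the membership problem for $\mathcal{L}$, and the cyclic-shift check is itself a linear-time task that does not involve $\mathcal{L}$ at all. Chaining the pieces yields a strong almost linear-time reduction from the conjugacy problem in $G_{\mathcal{L}}$ to the membership problem for $\mathcal{L}$.

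The only non-routine point in this plan is reconciling the ``many-one'' flavor of the definition with the branching algorithm above: one must check that the $G$-conjugacy phase, which is independent of $\mathcal{L}$, can be folded into a single reduction function whose output of linearly bounded length lies in (a suitably padded version of) $\mathcal{L}$ exactly when $U \sim_{conj} V$. This is handled in a standard way by producing a fixed ``accept'' word when $G$-conjugacy succeeds and the output of the $\Lambda$-pair reduction otherwise; I do not expect any new technical obstacle here beyond care with the bookkeeping for the strong reduction constants.
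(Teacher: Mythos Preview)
Your proposal is correct and follows essentially the same route as the paper: the paper's proof is the one-line observation that combining Proposition~\ref{main-proposition-1} with Lemma~\ref{lemma-G-conj-in-WP-CP} gives the corollary, and your argument simply unpacks the content of Proposition~\ref{main-proposition-1} (via Lemma~\ref{lemma-*}, Lemma~\ref{11111111111111}, Proposition~\ref{22222222222222}, and Remark~\ref{an-important-remark}) rather than citing it as a black box. The many-one bookkeeping point you raise at the end is not addressed explicitly in the paper either and is handled, as you note, by the standard device of outputting a fixed element of $\mathcal{L}$ when the $G$-conjugacy test already decides the question.
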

 
 Another corollary is the following.
 \begin{corollary}
  The individual conjugacy problems in $G_{\mathcal{L}}$ are decidable in almost linear time.
 \end{corollary}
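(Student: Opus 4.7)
The plan is to reduce $ICP(g_0)$ for a fixed $g_0 \in G_{\mathcal{L}}$ to a constant number of checks, each decidable in almost linear time, by exploiting Proposition \ref{22222222222222} and by hard-coding into the algorithm the $\Lambda$-class structure of $g_0$, which is fixed data and hence need not be recomputed for each input. I would fix a word $V_0 \in X^*$ representing $g_0$ and a cyclic $(\lambda_{i_0}, c_{i_0}, \epsilon_{i_0}, 1-122\lambda_{i_0}\mu_{i_0})$-reduction $V_0'$ of $V_0$ at level $i_0 = \mathcal{I}(\|V_0\|)$; since $V_0$ is fixed, both $V_0$ and $V_0'$ are precomputed pieces of data of bounded size.

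For an arbitrary input $U \in X^*$ of length $n$, my procedure would first compute, in almost linear time by Remark \ref{an-important-remark}, a cyclic reduction $U'$ of $U$ at the appropriate level $i = \mathcal{I}(n + \|V_0\|)$. By Proposition \ref{22222222222222}, $U \sim_{conj} V_0$ in $G_{\mathcal{L}}$ if and only if either (a) $U$ is $G$-conjugate to $V_0$ in $G_{\mathcal{L}}$, which by Lemma \ref{lemma-G-conj-in-WP-CP} is decidable in almost linear time, or (b) $(U', V_0')$ is a $\Lambda$-pair with $U'$ not a cyclic shift of $V_0'$. The main task in the proof is then to show that check (b), for a fixed $V_0'$, reduces to a cyclic-shift comparison against at most one fixed target word, and is therefore linear-time; this is exactly the step where the undecidable membership in $\mathcal{L}$ would otherwise enter the picture, and where fixing $g_0$ saves us.

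The key observation for check (b) is the following: by the definition of $\Lambda$-pairs, exactly one of two alternatives holds for the fixed $V_0'$, and which alternative it is constitutes finite precomputed data stored in the algorithm. Either $V_0'$ is not a cyclic shift of any power $u_k^l$ or $v_k^l$ (in which case no $U'$ outside the cyclic-shift class of $V_0'$ can be $\Lambda$-equivalent to $V_0'$, so check (b) is always false), or else $V_0'$ is a cyclic shift of a uniquely determined $u_k^l$ (respectively $v_k^l$), and then $(U', V_0')$ is a $\Lambda$-pair with $U'$ not a cyclic shift of $V_0'$ if and only if $U'$ is a cyclic shift of the partner word $v_k^l$ (respectively $u_k^l$). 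Testing whether $U'$ is a cyclic shift of a single fixed word is standard and runs in $\mathcal{O}(\|U'\|) = \mathcal{O}(n)$ time, e.g.\ by a length check followed by searching $U'$ as a substring of the doubled target.

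The main subtlety I expect to need to verify is the uniqueness claim in the second alternative, namely that no word is simultaneously a cyclic shift of two distinct $u_k^l$, $u_{k'}^{l'}$, $v_k^l$, or $v_{k'}^{l'}$. This should follow from the construction of $\Lambda$: the $u_k$ live in $X_0^*$ and the $v_k$ in $Y_0^*$, so no $u_k^l$ is a cyclic shift of any $v_{k'}^{l'}$, and injectivity of $\Lambda_0$ together with the fact that each $u_k = \Lambda_0(\omega)x_3$ contains exactly one occurrence of $x_3$ (so $u_k$ is not a proper power and its conjugacy class determines $k$) should rule out the remaining ambiguities. Once this is in place, summing the three almost-linear bounds from the cyclic reduction of $U$, the $G$-conjugacy test, and the cyclic-shift comparison completes the argument.
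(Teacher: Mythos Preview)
Your approach is correct and essentially the same as the paper's: both exploit that for a fixed $g_0$ the set of $\Lambda$-partners is finite precomputed data (the paper phrases this as ``there are only finitely many words $W$ such that $(U,W)$ form a $\Lambda$-pair''), so the $H$-conjugacy branch collapses to a constant-time check and only the almost-linear $G$-conjugacy test of Lemma~\ref{lemma-G-conj-in-WP-CP} remains. One small technical point to tighten: Proposition~\ref{22222222222222} and Lemma~\ref{Lemmmmma} require the cyclic reduction of $V_0$ at level $i=\mathcal{I}(\|U\|+\|V_0\|)$, not at your fixed level $i_0$, so you should either recompute $V_0'^{(i)}$ for each input (still almost linear since $\|V_0\|$ is constant) or argue that the reduction stabilizes; the paper's proof is equally informal on this point.
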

 \begin{proof}
 	Let us fix an element $g \in G_{\mathcal{L}}$ and let $U \in X^*$ be a reduced word representing $g$. The key observation is that since there are only finitely many words $W$ such that $(U, W)$ form a $\Lambda$-pair, it can be checked in a fixed time whether or not $(U, V')$ form a  $\Lambda$-pair. Therefore, without loss of generality we can assume that $(U, V')$ do not form a  $\Lambda$-pair. Hence, from Lemmas \ref{lemma-*}  and \ref{Lemmmmma} it follows that $U \sim_{conj} V'$ in $G_{\mathcal{L}}$ if and only if $U$ is $G$-conjugate to $V'$ in $G_{\mathcal{L}}$, which can be checked in almost linear time according to Lemma \ref{lemma-G-conj-in-WP-CP}.
 \end{proof}

\section{ Proof of Theorem \ref{theorem-schupp-miasnikov-question}}
\label{section-MS}
In this section we are going to construct a group $\tilde{G}$ which satisfies the properties of Theorem \ref{theorem-schupp-miasnikov-question}. $\tilde{G}$ will be constructed as a direct limit of a chain
 \begin{align}
	    \label{main seq of M-S}
		G_0 \stackrel{\beta_0}\hookrightarrow H_1 \stackrel{\gamma_1}\twoheadrightarrow G_1 \stackrel{\beta_1}\hookrightarrow H_2 \stackrel{\gamma_2}\twoheadrightarrow \ldots.
	\end{align}
 of non-elementary torsion-free hyperbolic groups of the form \eqref{main seq of gps***} according to the scheme described in Section \ref{section-about-general-scheme}. 

	More specifically, $G_0 \stackrel{def}= F(X)$, where $X=\{x_1, x_2\}$.
	
	Let $\mathcal{N} = \{n_1, n_2, \ldots \} \subset \mathbb{N}$ be a recursively enumerable but not recursive subset of positive integers. 
	Let us enumerate elements of $G_0$ according to their lexicographical order as $G_0=\{1=u_0, u_1, u_2, \ldots \}$ and denote $\mathcal{U}=\{ u_1, u_2, \ldots \}$. Let $\mathcal{V}=\sqcup_{i=1}^{\infty} \mathcal{U} =\{v_1, v_2, \ldots \}$ be a disjoint union of copies of $\mathcal{U}$ with recursive enumeration such that before the next copy of a given element $u \in \mathcal{U}$ appears in $\mathcal{V}$, all  the elements preceding $u \in \mathcal{U}$ already appeared in $\mathcal{V}$ at least once. 
	
	Denote by $\mathcal{P}=\{p_1, p_2, \ldots \}$ the set of prime numbers indexed in their natural order. 
	
	
	\subsubsection{Definition of $H_{i+1}$ for $\tilde{G}$}
	Suppose that for $i\geq 0$, $G_i$ is already constructed and it  satisfies the following properties, which we call condition $A$: 
	\begin{enumerate}
		\item[A1.] $G_i$ is a non-elementary torsion-free $\delta_i$-hyperbolic group for $\delta_i \in \mathbb{N}$,
		\item[A2.] words  of the form $x_1^{n}x_2$, $n\in \mathbb{Z}$, are not proper powers in $G_i$. 
	\end{enumerate}
   Below we show how to construct $G_{i+1}$ from $G_i$ which, in particular, preserves properties A1 and A2.\\
   
	Suppose that the set $\{\tilde{v}_{j_1}, \tilde{v}_{j_2}, \ldots, \tilde{v}_{j_i} \}$ of words from $X^*$ is such that its elements are not proper powers in $G_i$, i.e. $ E(v_{j_k})=\langle \tilde{v}_{j_k} \rangle$ in $G_i$ for $1\leq k \leq i$. Then define $v_{j_{i+1}}$ as the element from $\{ v_{j_i+1}, v_{j_i+2}, \ldots \}$ of minimal index such that $v_{j_{i+1}}$ does not represent the trivial element in $G_i$. Now define $\tilde{v}_{j_{i+1}}$ as follows.
	\begin{itemize}
	    \item If $v_{j_{i+1}}$ commensurates with any element from $ \{ \tilde{v}_{j_1},  \ldots, \tilde{v}_{j_i} \}$ in $G_i$, then define $\tilde{v}_{j_{i+1}}= \tilde{v}_{j_k} $, where $1\leq k \leq i$ is the smallest index such that $v_{j_{i+1}}$  commensurates with $\tilde{v}_{j_k}$ in $G_i$;
		\item Otherwise, if $v_{j_{i+1}}$ commensurates in $G_{i}$ with an element of the form $x_1^nx_2$, then define $\tilde{v}_{j_{i+1}}=x_1^{n_0}x_2$, where $|n_0|$ is the smallest such that $v_{j_{i+1}}$ commensurates in $G_{i}$ with   $x_1^{n_0}x_2$; 
		\item Otherwise, if $v_{j_{i+1}}$ is not a proper power in $G_{i}$, then define $\tilde{v}_{j_{i+1}}=v_{j_{i+1}}$; 
		\item Otherwise, if $v_{j_{i+1}}$ is a proper power in $G_{i}$, then define $\tilde{v}_{j_{i+1}}$ to be cyclically geodesic word  in $\Gamma(\tilde{G}, X)$ such that $E({v}_{j_{i+1}})= \langle \tilde{v}_{j_{i+1}} \rangle$ in $G_{i}$.
	\end{itemize}
	
	Define $q_1=p_1$ and suppose that the set $\{q_1, \ldots, q_i \}$ of prime numbers is already defined. Then, define $q_{i+1} \in \mathbb{N}$ as follows.
	\begin{itemize}
		\item If $\tilde{v}_{j_{i+1}} \equiv \tilde{v}_{j_{k}}$ for some $1\leq k \leq i$, then define $q_{i+1} =q_k$;
		\item Otherwise, define $q_{i+1}=p_{i+1}$.
	\end{itemize}	
	~\\

Define $\tilde{n}_{i+1}= n_{s+1} \in \mathcal{N}$, where $s=\# \{ 1\leq k\leq i \mid \tilde{v}_{j_k}\equiv \tilde{v}_{j_{i+1}} \}$.
Now define $\tilde{w}_{i+1}=x_1^{q_{i+1}^{\tilde{n}_{i+1}}}x_2$. Now define $H_{i+1}$ as follows. 
If $\tilde{w}_{i+1}$ commensurates with $\tilde{v}_{j_{i+1}}$ in $G_i$, then $H_{i+1}=G_{i}$, otherwise $H_{i+1}$ is an HNN-extension of $G_i$. More precisely,
	\begin{align}
	\label{def-H}
		H_{i+1}=\langle G_i, t_{i+1} \mid t_{i+1}^{-1}\tilde{v}_{j_{i+1}} t_{i+1} =  \tilde{w}_{i+1} \rangle.
	\end{align}
~\\	
\begin{lemma}
	\label{lem-existence}
	 $H_{i+1}$  is non-elementary torsion-free $\delta_{i+1}'$-hyperbolic group for some $\delta_{i+1}' \in \mathbb{N}$.	
\end{lemma}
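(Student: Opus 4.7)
The plan is to split into the two cases used in the definition of $H_{i+1}$. If $\tilde{w}_{i+1}$ commensurates with $\tilde{v}_{j_{i+1}}$ in $G_i$, then by definition $H_{i+1}=G_i$, and the statement is immediate from the inductive hypothesis (condition A1). So the substantive case is when $\tilde{w}_{i+1}$ and $\tilde{v}_{j_{i+1}}$ are non-commensurable in $G_i$, and $H_{i+1}$ is the HNN-extension given in \eqref{def-H}. I would then verify the two hypotheses of Theorem \ref{th hnn extension} for this HNN-extension, conclude hyperbolicity, and separately check torsion-freeness and non-elementarity.

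First I would check that both associated cyclic subgroups $\langle \tilde{v}_{j_{i+1}} \rangle$ and $\langle \tilde{w}_{i+1} \rangle$ are maximal elementary in $G_i$. For $\tilde{v}_{j_{i+1}}$ this is built into the construction: in the first three bullets of its definition we either set $\tilde{v}_{j_{i+1}}$ equal to some previously fixed $\tilde{v}_{j_k}$ (which by induction satisfies $E(\tilde{v}_{j_k})=\langle \tilde{v}_{j_k}\rangle$), or to a word $x_1^{n_0}x_2$ (maximal by condition A2 and torsion-freeness of $G_i$), or to $v_{j_{i+1}}$ when it is not a proper power; in the fourth bullet we explicitly take a cyclically geodesic word generating $E(v_{j_{i+1}})$, which exists and is algorithmically computable by Corollary \ref{corollary_about_finding_root_of_a_group_element} and Lemma \ref{lemma_about_finding_root_of_a_group_element}. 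For $\tilde{w}_{i+1}=x_1^{q_{i+1}^{\tilde{n}_{i+1}}}x_2$, condition A2 says that words of the form $x_1^nx_2$ are not proper powers in $G_i$, so in a torsion-free hyperbolic group $E(\tilde{w}_{i+1})=\langle \tilde{w}_{i+1}\rangle$.

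Next I would verify condition (2) of Theorem \ref{th hnn extension}, namely that $g\langle \tilde{v}_{j_{i+1}} \rangle g^{-1}\cap \langle \tilde{w}_{i+1}\rangle$ is finite (hence trivial, since $G_i$ is torsion-free) for all $g\in G_i$. This follows from the case assumption that $\tilde{w}_{i+1}$ does not commensurate with $\tilde{v}_{j_{i+1}}$ in $G_i$: if some non-trivial element of $\langle \tilde{w}_{i+1}\rangle$ coincided with a conjugate of a non-trivial element of $\langle \tilde{v}_{j_{i+1}}\rangle$, then $\tilde{w}_{i+1}$ and $\tilde{v}_{j_{i+1}}$ would be commensurable. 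Applying Theorem \ref{th hnn extension} then gives hyperbolicity of $H_{i+1}$, and a computable hyperbolicity constant $\delta_{i+1}'$ can be extracted in the usual way. Non-elementarity is inherited because $H_{i+1}$ contains the non-elementary subgroup $G_i$, and torsion-freeness follows from the standard fact that an HNN-extension of a torsion-free group along infinite cyclic associated subgroups is torsion-free (cf.\ the argument used for Lemma \ref{lemma_about_proper_powers_in_hnn}): any torsion element of $H_{i+1}$ would have to be conjugate into $G_i$ after sufficient Britton reductions, contradicting torsion-freeness of $G_i$.

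The main obstacle, in my view, is not the hyperbolicity step itself but confirming that the construction of $\tilde{v}_{j_{i+1}}$ really produces a generator of a maximal elementary subgroup in each of its four sub-cases, in a way that is compatible across the induction — in particular, that the inductive properties A1 and A2 are preserved and that reusing a previously chosen $\tilde{v}_{j_k}$ does not inadvertently destroy maximality in the new group $G_i$ (as opposed to some earlier $G_{i'}$). This is where one must carefully invoke Lemma \ref{lemma_about_proper_powers_in_quotients} (or its analogue for this construction) to propagate ``not a proper power'' from $G_{i'}$ to $G_i$, and thereby ensure that the maximal elementary subgroup of $\tilde{v}_{j_k}$ has not grown. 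Once this compatibility is in place, the application of Theorem \ref{th hnn extension} is essentially formal.
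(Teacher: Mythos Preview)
Your approach is essentially the paper's: apply Theorem~\ref{th hnn extension} to the HNN-extension \eqref{def-H}, with torsion-freeness and non-elementarity coming from standard facts about HNN-extensions. The paper's proof is a two-line appeal to condition A2 and Theorem~\ref{th hnn extension}.

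However, you are doing more work than necessary, and the ``main obstacle'' you flag is not actually an obstacle. Reread condition~(1) of Theorem~\ref{th hnn extension}: it only requires that \emph{either} $A$ \emph{or} $B$ be a maximal elementary subgroup of $G_i$. Since $\tilde{w}_{i+1}=x_1^{q_{i+1}^{\tilde n_{i+1}}}x_2$ and condition~A2 says words of the form $x_1^nx_2$ are not proper powers in the torsion-free hyperbolic group $G_i$, we immediately get $E(\tilde w_{i+1})=\langle\tilde w_{i+1}\rangle$, and condition~(1) is satisfied regardless of whether $\langle\tilde v_{j_{i+1}}\rangle$ is maximal elementary. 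The four-case analysis you propose for $\tilde v_{j_{i+1}}$, and the worry about propagating ``not a proper power'' for previously chosen $\tilde v_{j_k}$ across the induction, can therefore be dropped entirely from this lemma. (That propagation \emph{is} needed elsewhere, and is handled in Lemma~\ref{lemm-ms} and Lemma~\ref{proper power in G_k}, but not here.) Condition~(2) then follows exactly as you say from the case assumption of non-commensurability and torsion-freeness of $G_i$.
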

\begin{proof}

 Hyperbolicity of $H_{i+1}$ follows directly from the above mentioned assumption A2 when combined with Theorem \ref{th hnn extension}.
 
The fact that $H_{i+1}$ is non-elementary and torsion-free follows from the basic properties of HNN-extensions (see, for example, \cite{lyndon schupp}).

\end{proof}
\begin{lemma}
	\label{lemm-ms}
	 The words $\{\tilde{v}_{j_1}, \ldots, \tilde{v}_{j_{i+1}} \}$ and the words of the form $x_1^mx_2$ are not proper powers in $H_{i+1}$ (assuming that this statement holds in $G_i$).
\end{lemma}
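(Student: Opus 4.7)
The plan is to reduce to Lemma \ref{lemma_about_proper_powers_in_hnn}, which says that HNN-extensions along cyclic subgroups generated by non-proper-powers preserve the non-proper-power property of elements of the base group. First I would dispose of the trivial case: if $\tilde{w}_{i+1}$ commensurates with $\tilde{v}_{j_{i+1}}$ in $G_i$, then by construction $H_{i+1}=G_i$, so the conclusion is immediate from the inductive hypothesis (namely, from property A2 for $G_i$ and from the assumption that all of $\tilde{v}_{j_1}, \ldots, \tilde{v}_{j_i}$ are not proper powers in $G_i$, which is part of the inductive assumption that this same lemma holds at stage $i$).

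In the nontrivial case, $H_{i+1}$ is the genuine HNN-extension
\[
H_{i+1}=\langle G_i, t_{i+1} \mid t_{i+1}^{-1}\tilde{v}_{j_{i+1}} t_{i+1} =  \tilde{w}_{i+1} \rangle,
\]
and I need to check that the two associated elements $\tilde{v}_{j_{i+1}}$ and $\tilde{w}_{i+1}=x_1^{q_{i+1}^{\tilde{n}_{i+1}}}x_2$ satisfy the hypotheses of Lemma \ref{lemma_about_proper_powers_in_hnn}, i.e.\ that each has infinite order in $G_i$ and is not a proper power in $G_i$. For $\tilde{w}_{i+1}$, this is exactly property A2 applied to the exponent $m=q_{i+1}^{\tilde{n}_{i+1}}$. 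For $\tilde{v}_{j_{i+1}}$, I would argue case by case along the four bullets defining $\tilde{v}_{j_{i+1}}$: in the first case $\tilde{v}_{j_{i+1}}=\tilde{v}_{j_k}$ for some $k\leq i$ and the inductive hypothesis applies directly; in the second case $\tilde{v}_{j_{i+1}}=x_1^{n_0}x_2$ and we again invoke A2; in the third and fourth cases $\tilde{v}_{j_{i+1}}$ was explicitly chosen to be a root element (generator of $E(v_{j_{i+1}})$) in $G_i$, and in a torsion-free hyperbolic group a root element of an infinite-order element is automatically not a proper power.

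With these two hypotheses verified, Lemma \ref{lemma_about_proper_powers_in_hnn} yields that every element of $G_i$ which is not a proper power in $G_i$ remains not a proper power in $H_{i+1}$. Applying this to each $\tilde{v}_{j_k}$, $1\leq k \leq i+1$, (which by the above are all non-proper-powers in $G_i$) and to each $x_1^mx_2$ (which is a non-proper-power in $G_i$ by A2) completes the proof.

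The only place where care is needed is verifying that the root words $\tilde v_{j_{i+1}}$ produced by the fourth bullet really are non-proper-powers in $G_i$; this is where torsion-freeness of $G_i$ (property A1 from the previous stage, established in Lemma \ref{lem-existence} and the construction) is essential, since in a torsion-free hyperbolic group the maximal elementary subgroup $E(g)$ containing $g$ is infinite cyclic and its generator is by definition a non-proper-power. Everything else is a direct appeal to the cited lemma, so I do not expect a serious obstacle here.
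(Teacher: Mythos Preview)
Your proposal is correct and follows exactly the paper's approach: the paper's proof is the single line ``Directly follows from Lemma \ref{lemma_about_proper_powers_in_hnn},'' and what you have written is precisely the verification that the hypotheses of that lemma are met (the associated elements $\tilde{v}_{j_{i+1}}$ and $\tilde{w}_{i+1}$ are non-proper-powers of infinite order in $G_i$) together with the observation that the trivial case $H_{i+1}=G_i$ is immediate. Your case analysis on the four bullets defining $\tilde{v}_{j_{i+1}}$ and your appeal to property A2 for $\tilde{w}_{i+1}$ are exactly the details the paper leaves implicit.
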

\begin{proof}
	Directly follows from Lemma \ref{lemma_about_proper_powers_in_hnn}.
\end{proof}

\begin{lemma}
\label{lem-ma}
	$t_{i+1} \notin E(x_1) \cup E(x_2)$, $x_1 \notin E(x_2)$ and $x_2 \notin E(x_1)$ in $H_{i+1}$.
\end{lemma}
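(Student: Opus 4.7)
The plan is to split into two cases depending on whether $H_{i+1}$ equals $G_i$ or is a genuine HNN-extension, and then to use Britton's lemma to handle the $t_{i+1}$ statements.

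First, if $\tilde{w}_{i+1}$ commensurates with $\tilde{v}_{j_{i+1}}$ in $G_i$, then $H_{i+1} = G_i$, so the assertion regarding $t_{i+1}$ is vacuous and the remaining claims $x_1 \notin E(x_2)$, $x_2 \notin E(x_1)$ in $H_{i+1}$ follow from the corresponding claims in $G_i$, which I would carry as part of the inductive assumption (they hold in $G_0 = F(x_1,x_2)$ trivially, and should be preserved at each step of the construction). Assuming then that $H_{i+1}$ is the genuine HNN-extension in \eqref{def-H}, I would use the standard fact that the natural map $G_i \hookrightarrow H_{i+1}$ is an embedding to conclude that any relation of the form $x_1 x_2^k x_1^{-1} = x_2^l$ (or with the roles of $x_1,x_2$ swapped) for $k,l \ne 0$ holding in $H_{i+1}$ already holds in $G_i$, and hence by the inductive hypothesis the relations $x_1 \notin E(x_2)$ and $x_2 \notin E(x_1)$ transfer from $G_i$ up to $H_{i+1}$.

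The substantive part is showing $t_{i+1} \notin E(x_1) \cup E(x_2)$. I would handle $t_{i+1} \notin E(x_1)$ first and argue the $x_2$ case by the same method. Assuming for contradiction that $t_{i+1} x_1^k t_{i+1}^{-1} = x_1^l$ in $H_{i+1}$ for some $k,l \in \mathbb{Z}\setminus\{0\}$, I would set $W := t_{i+1} x_1^k t_{i+1}^{-1} x_1^{-l}$ and note $W =_{H_{i+1}} 1$ while $\theta(W) = 2 > 0$. Britton's Lemma (Lemma \ref{lem-britton}) forces $W$ to be $t_{i+1}$-reducible, and the only possible reducing subword is $t_{i+1} x_1^k t_{i+1}^{-1}$, which forces $x_1^k \in \langle \tilde{w}_{i+1}\rangle$ inside $G_i$, i.e., $x_1^k = \tilde{w}_{i+1}^m$ for some integer $m$, with $m \neq 0$ because $x_1$ has infinite order.

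From here the argument is purely internal to the torsion-free hyperbolic group $G_i$: having a common nonzero power, $x_1$ and $\tilde{w}_{i+1}$ both lie in a common maximal cyclic subgroup $\langle h \rangle$, so $x_1 = h^a$ and $\tilde{w}_{i+1} = h^b$ with $a,b \neq 0$. Using the inductive invariant that $x_1$ is not a proper power in $G_i$, I would conclude $a = \pm 1$, whence $\tilde{w}_{i+1} \in \langle x_1\rangle$; but then $\tilde{w}_{i+1} = x_1^{q_{i+1}^{\tilde{n}_{i+1}}} x_2$ yields $x_2 \in \langle x_1\rangle \setminus \{1\} \subseteq E(x_1)$, contradicting the inductive assumption $x_2 \notin E(x_1)$ in $G_i$. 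The case $t_{i+1} \notin E(x_2)$ runs symmetrically: one forces $\tilde{w}_{i+1} \in \langle x_2 \rangle$, then solves $x_1^{q_{i+1}^{\tilde{n}_{i+1}}} x_2 = x_2^{\pm 1}$ to get either $x_1^{q_{i+1}^{\tilde{n}_{i+1}}} = 1$ (impossible) or $x_1^{q_{i+1}^{\tilde{n}_{i+1}}} \in \langle x_2\rangle$, the latter again contradicting $x_1 \notin E(x_2)$ in $G_i$.

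The main technical point to nail down is the inductive invariant package propagated along the chain \eqref{main seq of M-S}: one needs not just conditions A1 and A2 but also that $x_1$ and $x_2$ themselves are not proper powers in $G_i$, and that $x_1 \notin E(x_2)$, $x_2 \notin E(x_1)$ in $G_i$. These are elementary in $G_0$ and are preserved through the HNN step by Lemma \ref{lemm-ms} and the embedding of $G_i$ into $H_{i+1}$; the extra work in the accompanying quotient step $G_{i+1} = H_{i+1}/\langle\!\langle\mathcal{R}_{i+1}\rangle\!\rangle$ is precisely what Lemma \ref{lemma_about_proper_powers_in_quotients} delivers under sparse enough standard parameters, so no new obstacle arises.
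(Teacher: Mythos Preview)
Your proof is correct and is essentially a detailed unpacking of the paper's one-line argument, which just invokes ``basic properties of HNN-extensions and the assumption that $G_i$ is a non-elementary group.'' The one simplification you miss is that you do not need to track $x_1 \notin E(x_2)$, $x_2 \notin E(x_1)$ (or that $x_1, x_2$ are not proper powers) as separate inductive invariants through the chain: since $G_i = \langle x_1, x_2\rangle$ and $E(x_1)$ is cyclic in a torsion-free hyperbolic group, having $x_2 \in E(x_1)$ would force $G_i$ itself to be cyclic, contradicting condition~A1 directly; this is why the paper only cites non-elementarity.
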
	
\begin{proof}
	Indeed, it follows from the basic properties of HNN-extensions and from the assumption that $G_i$ is a non-elementary group.
\end{proof}
	
	\subsubsection{Definition of $G_{i+1}$ for $\tilde{G}$}
	Suppose that $H_{i+1}$ is already constructed and it is a non-elementary torsion free $\delta_{i+1}'$-hyperbolic group for  $\delta_{i+1}'\in \mathbb{N}$ such that the map $i+1 \mapsto \delta'_{i+1}$ is computable. Define $G_{i+1}$ as follows: If $H_{i+1}=G_i$, then $G_{i+1}=G_i$, otherwise
	\begin{align}
	\label{def-g-i+1}
		G_{i+1}=H_{i+1} / \ll \mathcal{R} \big(\{t_{i+1} \}, x_1, x_2,  \delta'_{i+1}, \lambda_{i+1}, c_{i+1}, \epsilon_{i+1},  \mu_{i+1}, \rho_{i+1} \big)\gg.
	\end{align}
	Denote $ \mathcal{R} \big(\{t_{i+1} \}, x_1, x_2,  \delta'_{i+1}, \lambda_{i+1}, c_{i+1}, \epsilon_{i+1},  \mu_{i+1}, \rho_{i+1} \big)$ by $\mathcal{R}_{i+1}$.
	~\\
	~\\
	\begin{lemma}
		\label{observation}
		For sparse enough standard parameters $\lambda_{i+1}, c_{i+1}, \epsilon_{i+1},  \mu_{i+1}, \rho_{i+1}$,  no word of the form $(x_1^mx_2)^{\pm 1}$, $m\in \mathbb{Z}$, has a $(\epsilon_{i+1}, \lambda_{i+1}\mu_{i+1})$-subword with respect to the quotient $G_{i+1}=H_{i+1} / \ll \mathcal{R}_{i+1} \gg$, and  $(\lambda_{i+1}, c_{i+1})$-quasi-geodesic words in $\Gamma(G_{i+1}, X\cup \{t_{i+1} \})$ do not contain $(\epsilon_{i+1}, 1-122\lambda_{i+1}\mu_{i+1})$-subwords.

	\end{lemma}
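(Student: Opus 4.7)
The second assertion follows immediately from Lemma \ref{lemma on Greendlinger's cell} applied to the quotient $G_{i+1}=H_{i+1}/\ll\mathcal{R}_{i+1}\gg$, whose defining system $\mathcal{R}_{i+1}$ satisfies the $C'(\lambda_{i+1},c_{i+1},\epsilon_{i+1},\mu_{i+1},\rho_{i+1})$-condition with respect to $H_{i+1}$ by Lemma \ref{lem on words with cancellation condition}: take $\epsilon_0=\epsilon_{i+1}$ and $K=122$, and observe that for $\rho_{i+1}$ chosen large enough by LPP, no $(\lambda_{i+1},c_{i+1})$-quasi-geodesic path in $\Gamma(G_{i+1},X\cup\{t_{i+1}\})$ can contain such a subword.

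For the first assertion, I would proceed by contradiction. Suppose some $(x_1^mx_2)^{\pm 1}$ contains an $(\epsilon_{i+1},\lambda_{i+1}\mu_{i+1})$-subword $W_0$; then there exist $T_1,T_2\in(X\cup\{t_{i+1}\})^*$ with $\|T_1\|,\|T_2\|\leq\epsilon_{i+1}$ and $R=UV\in\mathcal{R}_{i+1}$ with $\|U\|\geq\lambda_{i+1}\mu_{i+1}\|R\|$ satisfying $W_0=_{H_{i+1}}T_1^{-1}UT_2$. Recall from the construction that every $R\in\mathcal{R}_{i+1}$ has the form $R=t_{i+1}x_1^{m_{1,1}}x_2x_1^{m_{1,2}}x_2\cdots x_2x_1^{m_{1,j_1}}$ (or a cyclic shift), and hence contains exactly one occurrence of $t_{i+1}^{\pm 1}$. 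A standard minimization of $T_1,T_2$ via Britton's Lemma (Lemma \ref{lem-britton}) applied to the HNN structure \eqref{def-H} of $H_{i+1}$ over $G_i$ allows us to assume that $T_1,T_2\in X^*$ and that no $t_{i+1}$-pinch occurs against them. I would then split into two cases according to whether $U$ contains $t_{i+1}$.

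If $U$ contains $t_{i+1}^{\pm 1}$, then the $t_{i+1}$-band through it in any reduced van Kampen diagram realizing $T_1^{-1}UT_2W_0^{-1}=_{H_{i+1}}1$ has no available terminus on the boundary, since $W_0$, $T_1$, $T_2$ are all $t_{i+1}$-free; this contradicts Britton's Lemma directly. If on the other hand $U$ avoids $t_{i+1}$, then $U$ is a subword of $R$ of the form $x_1^{a_1}x_2x_1^{a_2}x_2\cdots x_2x_1^{a_k}$, and by LPP (using $\|U\|\geq\lambda_{i+1}\mu_{i+1}\rho_{i+1}$ together with the bound $\|R\|\geq\rho_{i+1}$ and the size controls on the $m_{1,j}$ built into Subsection \ref{subsection-a-class-of-small-cancellation-words}), $k\geq 2$ so that $U$ carries at least two $x_2$-syllables. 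The equation $T_1^{-1}UT_2=_{G_i}W_0$ then takes place inside the embedded copy of $G_i$ in $H_{i+1}$, while $W_0$, being a subword of $(x_1^mx_2)^{\pm 1}$, carries at most one $x_2^{\pm 1}$-syllable.

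To convert this structural mismatch into a contradiction, I would regard the equality in $\Gamma(G_i,X\cup\{t_1,\ldots,t_i\})$ as a geodesic quadrilateral whose two long sides are $U$ and $W_0$; by Lemma \ref{lem 1111 on cancellation words}, $U$ is $(\lambda_{i+1},c_{i+1})$-quasi-geodesic there, and by Corollary \ref{corollary on hausdorff distance between quasi-geodesics} its largest subblock $x_1^{a_j}$ is forced to sit $\epsilon_{i+1}$-close to a subarc of $W_0$, to which Lemma \ref{lem 2.1} then applies (using that $x_1$ is root in $G_i$ by A2 and by Lemma \ref{lemm-ms}). The resulting conclusion is that $\tilde{w}_{i+1}=x_1^{q_{i+1}^{\tilde{n}_{i+1}}}x_2$ must commensurate with $\tilde{v}_{j_{i+1}}$ in $G_i$; but under this very hypothesis the construction of Section \ref{section-MS} sets $H_{i+1}=G_i$ and, by the convention \eqref{definition_of_special_words-2}, $\mathcal{R}_{i+1}=\emptyset$, so no such $R$ can exist, a contradiction. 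The main obstacle here is Case~2: the invariant ``number of $x_2$'s'' is lost under the inductive quotients, so the argument must be carried out geometrically via the thin-quadrilateral and quasi-geodesic machinery rather than by a naive syllable count.
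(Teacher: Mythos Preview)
Your treatment of the second assertion is correct and identical to the paper's: it is an immediate application of Lemma~\ref{lemma on Greendlinger's cell}.

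For the first assertion, however, you have taken a very different and much longer route than the paper, and there are genuine gaps. The paper's argument is one line: since $\mathcal{R}_{i+1}=\mathcal{R}(\{t_{i+1}\},x_1,x_2,\ldots)$ satisfies $C'(\lambda_{i+1},c_{i+1},\epsilon_{i+1},\mu_{i+1},\rho_{i+1})$, conditions (2.1)--(2.2) (the $\epsilon'$-piece bound) already forbid the configuration. Concretely, any subword $U$ of $R\in\mathcal{R}_{i+1}$ of length $\ge\lambda_{i+1}\mu_{i+1}\|R\|$ necessarily contains several $x_2$-``special segments'' (the blocks $x_1^{m_k}$ have length $<2\sqrt{\|R\|}$ while $\lambda_{i+1}\mu_{i+1}\|R\|\gg 2\sqrt{\|R\|}$ for sparse $\rho_{i+1}$), whereas any subword $W_0$ of $(x_1^mx_2)^{\pm1}$ carries at most one. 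The phase-vertex argument underlying Lemma~\ref{lem on words with cancellation condition} then gives the contradiction directly: special segments on the two $\epsilon$-close paths must match up.

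Your approach has two concrete problems. First, the ``standard Britton minimization'' that you invoke to reduce to $T_1,T_2\in X^*$ is not justified: a $t_{i+1}$-pinch against $T_1$ (say) alters $U$, not just $T_1$, and $U$ is fixed as a prescribed subword of $R$; you cannot trade $t_{i+1}$-letters in $T_1$ for short $X^*$-words while preserving both the bound $\|T_1\|\le\epsilon_{i+1}$ and the identity of $U$. This undercuts your Case~1. Second, the contradiction you reach in Case~2 is the wrong one. From a thin quadrilateral between $U$ and $W_0$ together with Lemma~\ref{lem 2.1}, what follows is that the $x_2$-separators inside $U$ are forced into $E(x_1)$ in $H_{i+1}$, contradicting $x_2\notin E(x_1)$ (Lemma~\ref{lem-ma}); nothing in that computation produces a commensuration relation between $\tilde w_{i+1}$ and $\tilde v_{j_{i+1}}$, so the conclusion you state does not follow from your argument. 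If you repair the endpoint of Case~2 to ``$x_2\in E(x_1)$, contradiction'' and drop the unjustified $t$-freeness reduction (handling instead the at most $2\epsilon_{i+1}+1$ many $t_{i+1}$-edges on $\partial\Delta$ directly), the argument can be made to work, but at that point you are essentially reproving the special-segment matching from Lemma~\ref{lem on words with cancellation condition}, which is exactly what the paper's one-line reference to $C'$ is pointing at.
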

	\begin{proof}
		Indeed, the first statement follows from the fact that the words $\mathcal{R}_{i+1}$ satisfy the small cancellation condition $C'(\lambda_{i+1}, c_{i+1}, \epsilon_{i+1},  \mu_{i+1}, \rho_{i+1})$ (see conditions (2.1) and (2.2) in the definition of $C'(\lambda, c, \epsilon,  \mu, \rho)$ condition).  The second statement follows from Lemma \ref{lemma on Greendlinger's cell}. 
	\end{proof}
	
	\begin{lemma}
		For sparse enough standard parameters $\lambda_{i+1}, c_{i+1}, \epsilon_{i+1},  \mu_{i+1}, \rho_{i+1}$, the group $G_{i+1}$ is torsion-free non-elementary $\delta_{i+1}$-hyperbolic for some  $\delta_{i+1} \in \mathbb{N}$.
	\end{lemma}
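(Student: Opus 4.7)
The plan is to split into two cases according to the definition of $G_{i+1}$ in \eqref{def-g-i+1}. In the trivial case $H_{i+1}=G_i$, we have $G_{i+1}=G_i$ and the conclusion is immediate from the inductive assumption (condition $A1$) that $G_i$ is non-elementary torsion-free $\delta_i$-hyperbolic, so we may simply take $\delta_{i+1}=\delta_i$. Thus all the work goes into the second case, in which $H_{i+1}$ is the HNN-extension from \eqref{def-H} and $G_{i+1}=H_{i+1}/\ll \mathcal{R}_{i+1}\gg$ with $\mathcal{R}_{i+1}=\mathcal{R}\big(\{t_{i+1}\}, x_1, x_2, \delta'_{i+1}, \lambda_{i+1}, c_{i+1}, \epsilon_{i+1}, \mu_{i+1}, \rho_{i+1}\big)$.

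The first step is to verify that the hypotheses of the construction \eqref{definition_of_special_words} are met, so that Lemma \ref{lem 1111 on cancellation words} and Lemma \ref{lem on words with cancellation condition} apply. The construction requires that $V=x_2\notin E(U)=E(x_1)$ and that each generator $z\in\{t_{i+1}\}$ also satisfies $z\notin E(x_1)$ in $H_{i+1}$. Both inclusions fail exactly because Lemma \ref{lem-ma} gives $x_2\notin E(x_1)$ and $t_{i+1}\notin E(x_1)\cup E(x_2)$ in $H_{i+1}$. Hence, provided the parameters $\lambda_{i+1}\succ c_{i+1}\succ \epsilon_{i+1}\succ \mu_{i+1}\succ \rho_{i+1}$ are sparse enough, Lemma \ref{lem on words with cancellation condition} guarantees that $\mathcal{R}_{i+1}$ satisfies the small-cancellation condition $C'(\lambda_{i+1}, c_{i+1}, \epsilon_{i+1}, \mu_{i+1}, \rho_{i+1})$ with respect to $H_{i+1}=\langle X\cup\{t_1,\ldots,t_{i+1}\}\rangle$.

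With small cancellation in place, Lemma \ref{lem 7.2} tells us that $G_{i+1}$ is non-cyclic and a word is of finite order in $G_{i+1}$ iff it is trivial or conjugate to a torsion element of $H_{i+1}$; since by Lemma \ref{lem-existence} $H_{i+1}$ is torsion-free, $G_{i+1}$ is torsion-free as well. Combining this with Remark \ref{remarm 7.2} (applied to the non-elementary torsion-free hyperbolic group $H_{i+1}$) yields that $G_{i+1}$ is a non-elementary torsion-free hyperbolic group. Finally, the ``moreover'' part of Lemma \ref{lem 6.6} provides the explicit hyperbolicity constant: $G_{i+1}$ is $4L_{i+1}$-hyperbolic with $L_{i+1}=\max\{\|R\|\mid R\in\mathcal{R}_{i+1}\}$. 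Setting $\delta_{i+1}=4L_{i+1}\in\mathbb{N}$ completes the proof.

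The only subtle point is checking that the hypotheses of Lemma \ref{lem-ma} and Lemma \ref{lemm-ms} (hence inductive assumption A2) propagate — i.e. one wants to keep the invariant that words $x_1^m x_2$ remain non-powers in $H_{i+1}$ and $G_{i+1}$, and that $x_2\notin E(x_1)$ survives the quotient — but this has already been addressed by Lemma \ref{lemm-ms} in $H_{i+1}$ and by Lemma \ref{lemma_about_proper_powers_in_quotients} in the quotient $G_{i+1}$, so no additional argument is needed here. There is no real obstacle: the lemma is essentially a bookkeeping consequence of the small-cancellation machinery assembled in Sections~\ref{section Small cancellation conditions}--\ref{section_about_special_class_of_LHG}.
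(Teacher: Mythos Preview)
Your proof is correct and follows the same approach as the paper, which simply writes ``Follows directly from Lemmas \ref{lem-ma} and \ref{lem 7.2}.'' You have filled in the details the paper leaves implicit (the trivial case $H_{i+1}=G_i$, the verification via Lemma \ref{lem-ma} that the $E(\cdot)$ conditions needed for the small-cancellation construction hold, the invocation of Remark \ref{remarm 7.2}, and the explicit hyperbolicity constant from Lemma \ref{lem 6.6}), but the skeleton is identical.
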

	\begin{proof}
		Follows directly from Lemmas \ref{lem-ma} and \ref{lem 7.2}.
	\end{proof}
	
	\begin{lemma}
	   \label{lem-q-g-x1x2}
		For sparse enough standard parameters $\lambda_{i+1}, c_{i+1}$, words of the form $x_1^mx_2$ are $(\lambda_{i+1}, c_{i+1})$-quasi-geodesic in $\Gamma(H_{i+1}, X \cup \{t_{i+1}\})$.
	\end{lemma}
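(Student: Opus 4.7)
The plan is to apply Lemma \ref{lem 1.11} to the element $x_1 \in H_{i+1}$ and then extend the resulting quasi-geodesic estimate for powers $x_1^s$ to words of the form $x_1^s x_2$ by a simple triangle-inequality argument. Recall that by the inductive assumption A2 the word $x_1$ is not a proper power in $G_i$, and by Lemma \ref{lemm-ms} it remains so (and, in particular, of infinite order) in $H_{i+1}$. Since $x_1$ is a single generator, the word $x_1$ is trivially geodesic and cyclically minimal in $\Gamma(H_{i+1}, X\cup\{t_{i+1}\})$, so Lemma \ref{lem 1.11} applies and produces constants $\lambda_{x_1}, c_{x_1}\in\mathbb{N}$, depending only on $|X\cup\{t_{i+1}\}|=5$ and $\delta'_{i+1}$, such that for every $k\in\mathbb{Z}$ the word $x_1^k$ is $(\lambda_{x_1},c_{x_1})$-quasi-geodesic in $\Gamma(H_{i+1},X\cup\{t_{i+1}\})$.

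Next I would observe that every subword $W$ of $x_1^m x_2$ is either of the form $x_1^s$ or of the form $x_1^s x_2$ for some integer $s$ with $|s|\leq|m|$. In the first case the required inequality $\|W\|\leq\lambda_{x_1}|W|+c_{x_1}$ is already provided by Lemma \ref{lem 1.11}. In the second case, the triangle inequality in $\Gamma(H_{i+1},X\cup\{t_{i+1}\})$ gives $|x_1^s x_2|\geq|x_1^s|-1$, while $|x_1^s|\geq(\|x_1^s\|-c_{x_1})/\lambda_{x_1}=(s-c_{x_1})/\lambda_{x_1}$. Combining these,
\begin{equation*}
\|x_1^s x_2\|=s+1\leq\lambda_{x_1}\bigl(|x_1^s x_2|+1\bigr)+c_{x_1}+1=\lambda_{x_1}\,|x_1^s x_2|+(\lambda_{x_1}+c_{x_1}+1).
\end{equation*}
Hence, setting $\lambda':=\lambda_{x_1}$ and $c':=\lambda_{x_1}+c_{x_1}+1$, every subword of $x_1^m x_2$ is $(\lambda',c')$-quasi-geodesic, which means that $x_1^m x_2$ itself is $(\lambda',c')$-quasi-geodesic.

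Finally, I would invoke the lowest parameter principle (Convention \ref{convention-lpp}) to conclude. The quantities $\lambda_{x_1}$ and $c_{x_1}$ are computable functions of $\delta'_{i+1}$ alone, and $\delta'_{i+1}$ has higher priority than $\lambda_{i+1}$ and $c_{i+1}$; thus, for sparse enough standard parameters, we can arrange $\lambda_{i+1}\geq\lambda'$ and $c_{i+1}\geq c'$. This yields the desired $(\lambda_{i+1},c_{i+1})$-quasi-geodesicity of $x_1^m x_2$ in $\Gamma(H_{i+1},X\cup\{t_{i+1}\})$. The one small point worth checking carefully, which I would regard as the only mild obstacle, is precisely this parameter book-keeping: one must be sure that the bound $(\lambda',c')$ obtained above depends only on parameters of strictly higher priority than $(\lambda_{i+1},c_{i+1})$, so that the choice guaranteed by LPP is consistent with the other constraints imposed on $\lambda_{i+1}, c_{i+1}$ throughout the paper.
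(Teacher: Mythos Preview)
Your argument is correct and follows essentially the same route as the paper. The paper's one-line proof simply invokes Lemma~\ref{lemma_aux_q-g_cyclically_minimal}; you instead pass through Lemma~\ref{lem 1.11} (whose ``moreover'' clause is itself derived from Lemma~\ref{lemma_aux_q-g_cyclically_minimal}) to get uniform quasi-geodesic constants for the powers $x_1^s$, and then make the extension to $x_1^s x_2$ explicit via the triangle inequality---a step the paper leaves implicit. In both cases the resulting constants depend only on $\delta'_{i+1}$ and the alphabet size, so your final appeal to LPP is exactly the intended conclusion; the parameter book-keeping concern you flag is indeed a non-issue here.
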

	\begin{proof}
		The statement of the lemma follows immediately from Lemma \ref{lemma_aux_q-g_cyclically_minimal}.
	\end{proof}
	
	\begin{lemma}
	\label{proper power in G_k}
		For sparse enough standard parameters $\lambda_{i+1}$, $c_{i+1}, \epsilon_{i+1}$,  $\mu_{i+1}$, $\rho_{i+1}$, the words $\{\tilde{v}_{j_1}, \ldots, \tilde{v}_{j_{i+1}} \}$ and the words of the form $x_1^mx_2$ are not proper powers in $G_{i+1}$, assuming that these statements hold in $H_{i+1}$.
	
	\end{lemma}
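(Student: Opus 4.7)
The plan is to handle the two assertions separately: the finitely many words $\tilde v_{j_1},\ldots,\tilde v_{j_{i+1}}$ have bounded length, so I can invoke Lemma \ref{lemma_about_proper_powers_in_quotients}(ii) after choosing $\rho_{i+1}$ large enough by LPP; the family $\{x_1^m x_2\}_{m\in\mathbb{Z}}$ has unbounded length, and requires a geometric argument through a slender conjugacy diagram. For the finite list, by the lowest parameter principle one can take $\rho_{i+1}$ so large that
$$\max_{1\leq k\leq i+1}\|\tilde v_{j_k}\| \;<\; \frac{\mu_{i+1}\rho_{i+1}-c_{i+1}}{\lambda_{i+1}}-2\epsilon_{i+1},$$
whereupon any equation $\tilde v_{j_k}=_{G_{i+1}}W^s$ with $s\geq 2$ would, by Lemma \ref{lemma_about_proper_powers_in_quotients}(ii), already hold in $H_{i+1}$, contradicting Lemma \ref{lemm-ms}.

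For the family $x_1^m x_2$, suppose for contradiction that $x_1^m x_2 =_{G_{i+1}} W^k$ for some word $W$ and some $k\geq 2$. Replace $W$ by a cyclically minimal representative $W'$ in $G_{i+1}$, so that $x_1^m x_2 \sim_{conj} (W')^k$ in $G_{i+1}$. By Lemma \ref{lem 1.11}, $(W')^k$ is $(\lambda_{i+1},c_{i+1})$-quasi-geodesic in the Cayley graph of $G_{i+1}$, and by Lemma \ref{lem-q-g-x1x2} so is $x_1^m x_2$. Consider a reduced cyclically slender $(x_1^m x_2,(W')^k)$-conjugacy diagram $\Delta$ over $G_{i+1}$ with $\partial\Delta=ABCD$. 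Since $x_1^m x_2$ is not a proper power in $H_{i+1}$ (Lemma \ref{lemm-ms}), $\Delta$ must contain an $\mathcal{R}_{i+1}$-cell; Lemma \ref{lemma_about_slender_conjugacy_diagrams} then produces an essential cell $\Pi$ together with non-empty essential $\epsilon_{i+1}$-contiguity subdiagrams $\Gamma_2,\Gamma_4$ from $\Pi$ to $BC$ and $DA$ respectively, satisfying $(\Pi,\Gamma_2,BC)+(\Pi,\Gamma_4,DA)\geq 1-121\lambda_{i+1}\mu_{i+1}$. The decisive step uses Observation \ref{observation}, whose conclusion extends to every cyclic shift of $x_1^m x_2$ since $\mathcal{R}_{i+1}$ is closed under cyclic shifts and inverses: no such cyclic shift admits an $(\epsilon_{i+1},\lambda_{i+1}\mu_{i+1})$-subword. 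Hence $(\Pi,\Gamma_4,DA)<\lambda_{i+1}\mu_{i+1}$ and consequently $(\Pi,\Gamma_2,BC)>1-122\lambda_{i+1}\mu_{i+1}$. Thus $(W')^k$ contains an $(\epsilon_{i+1},1-122\lambda_{i+1}\mu_{i+1})$-subword, contradicting Lemma \ref{lemma_about_proper_powers_in_quotients}(i) applied to the cyclically minimal word $W'$.

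The main obstacle I anticipate is the technical point that Observation \ref{observation} must be applied not only to $x_1^m x_2$ itself but to cyclic shifts of it, since $lab(DA)$ in the slender diagram is only guaranteed to be a cyclic shift; this follows from the symmetry of $\mathcal{R}_{i+1}$ but deserves to be made explicit. A secondary check is that the simultaneous LPP requirements — the length bound above for $\tilde v_{j_k}$, quasi-geodesicity of $x_1^m x_2$ and $(W')^k$, cyclic $(\lambda_{i+1},c_{i+1},\epsilon_{i+1},1-121\lambda_{i+1}\mu_{i+1})$-reducedness of both boundary words of $\Delta$ so that Lemma \ref{lemma_about_slender_conjugacy_diagrams} applies, and the inequality $\lambda_{i+1}\mu_{i+1}<1-122\lambda_{i+1}\mu_{i+1}$ — are simultaneously satisfiable; this is routine given the priority order $\lambda\succ c\succ\epsilon\succ\mu\succ\rho$.
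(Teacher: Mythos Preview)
Your proposal is correct and follows essentially the same route as the paper: Lemma~\ref{lemma_about_proper_powers_in_quotients}(ii) with $\rho_{i+1}$ large for the finite list $\{\tilde v_{j_k}\}$, and for $x_1^m x_2$ a cyclically slender conjugacy diagram combined with Lemma~\ref{observation} and Lemma~\ref{lemma_about_proper_powers_in_quotients}(i) to force a forbidden $(\epsilon_{i+1},1-122\lambda_{i+1}\mu_{i+1})$-arc on the $(W')^k$ side. One minor precision: the quasi-geodesicity of $(W')^k$ that feeds into Lemma~\ref{lemma_about_slender_conjugacy_diagrams} should be stated in $\Gamma(H_{i+1},\cdot)$ rather than $\Gamma(G_{i+1},\cdot)$; this is automatic since $W'$ cyclically minimal in the quotient $G_{i+1}$ is a fortiori cyclically minimal in $H_{i+1}$, so Lemma~\ref{lem 1.11} (or Lemma~\ref{lemma_aux_q-g_cyclically_minimal}, as the paper uses) applies there.
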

	\begin{proof}
		The part about the words $\{\tilde{v}_{j_1}, \ldots, \tilde{v}_{j_{i+1}} \}$ immediately follows from Lemma \ref{lemma_about_proper_powers_in_quotients}, because, since the standard parameters  are assumed to be sparse enough, in particular, we can assume that $\rho_{i+1}$ is sufficiently larger than $\max\{ \| \tilde{v}_{j_1} \|, \ldots,  \| \tilde{v}_{j_i} \| \}$ and then apply Lemma \ref{lemma_about_proper_powers_in_quotients}.
		
		Now, by contradiction, assume that for some fixed $m \in \mathbb{N}$, the word $x_1^mx_2$ is a proper power in $G_{i+1}$. Then, there exists $k>1$ and $u \in (X \cup \{t_{i+1} \} )^*$ such that $u$ is cyclically minimal in $\Gamma(G_{i+1}, X \cup \{t_{i+1} \} )$ and 
		$$ x_1^m x_2  \sim_{conj} u^k$$ in $G_{i+1}$. By Lemma \ref{lemma_aux_q-g_cyclically_minimal}, for sparse enough standard parameters $\lambda_{i+1}$, $c_{i+1}, \epsilon_{i+1}$,  $\mu_{i+1}$, $\rho_{i+1}$, the word $u^k$ is $(\lambda_{i+1}, c_{i+1})$-quasi-geodesic in $\Gamma(H_{i+1}, X \cup \{t_{i+1} \} )$. Therefore, since by Lemma \ref{observation}, $x_1^mx_2$ does not contain a $(\epsilon_{i+1}, \mu_{i+1}\lambda_{i+1})$-subwords with respect to the quotient $G_{i+1}=H_{i+1}/\ll \mathcal{R}_{i+1} \gg$,  by Lemma \ref{lemma_about_slender_conjugacy_diagrams}, we get that $u^k$ must contain a $(\epsilon_{i+1}, 1-122\lambda_{i+1}\mu_{i+1})$-subword with respect to $G_{i+1}=H_{i+1}/\ll \mathcal{R}_{i+1} \gg$, which is impossible by Lemma \ref{lemma_about_proper_powers_in_quotients}. A contradiction.

	\end{proof}

	\subsection{Properties}
	  
	  Define 
	  \begin{align*}
	  	\tilde{\mathcal{V}}=\{\tilde{v}_{j_1}, \tilde{v}_{j_2}, \ldots \}
	  \end{align*}
	  and for all $i \in \mathbb{N}$, define
	  \begin{align*}
	  	\tilde{\mathcal{V}}_i=\{\tilde{v}_{j_k} \in \tilde{\mathcal{V}} \mid \tilde{v}_{j_k}\equiv \tilde{v}_{j_i}\}
	  \end{align*}
	 and 
	  \begin{align*}
	 	\tilde{\mathcal{N}}_i=\{n \in \mathbb{N}  \mid \tilde{v}_{j_i}\sim_{conj} x_1^{q_i^n}x_2 \text{~in~} \tilde{G} \}.
	  \end{align*}
~\\
~\\
\begin{lemma}
	\label{lemma-SM-2}
	For all $i, m \in \mathbb{N}$, words of the form $\tilde{v}_{j_i}$ and $x_1^{m} x_2$ are not proper powers in $\tilde{G}$.
\end{lemma}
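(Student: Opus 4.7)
The plan is to argue by contradiction using the fact that $\tilde{G}$ is the direct limit of the groups $G_N$ and that the property of being (not) a proper power is essentially witnessed at some finite stage. The substantive work, namely that the prescribed words are not proper powers at each stage, has already been done in Lemma \ref{proper power in G_k} and Lemma \ref{lemm-ms}; what remains is to make the transition from the stages to the limit rigorous.

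First I would observe that, by iterating Lemma \ref{lemm-ms} and Lemma \ref{proper power in G_k}, for every $N \geq i$ the element represented by $\tilde{v}_{j_i}$ in $G_N$ is not a proper power in $G_N$, and similarly, for every $N \geq 0$ and every $m \in \mathbb{Z}$, the element $x_1^m x_2$ is not a proper power in $G_N$. Indeed, the base of the induction for $\tilde{v}_{j_i}$ is that $\tilde{v}_{j_i}$ is a root element in $G_i$ by its very definition (and $x_1^m x_2$ is a root element in $G_0 = F(X)$); the inductive step from $G_{N-1}$ to $H_N$ is Lemma \ref{lemm-ms}, and the step from $H_N$ to $G_N$ is Lemma \ref{proper power in G_k}, both applicable once the standard parameters are sufficiently sparse.

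Now suppose, toward a contradiction, that $\tilde{v}_{j_i} =_{\tilde{G}} u^k$ for some element $u \in \tilde{G}$ and some integer $k \geq 2$. Writing $u$ as a word $w \in X^*$ (which is possible since $\tilde{G}$ is generated by the image of $X$), this becomes an equation $\tilde{v}_{j_i} =_{\tilde{G}} w^k$ between words in $X^*$. Because $\tilde{G}$ is the direct limit of the chain $G_0 \twoheadrightarrow G_1 \twoheadrightarrow \cdots$ under the natural epimorphisms $\alpha_i$, any such equation between words in $X^*$ that holds in $\tilde{G}$ must already hold in $G_N$ for all sufficiently large $N$. Choose any $N \geq i$ for which the equality $\tilde{v}_{j_i} =_{G_N} w^k$ is valid. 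Then $\tilde{v}_{j_i}$ is a proper power in $G_N$, contradicting the conclusion of the previous paragraph. The same argument, starting from an alleged equality $x_1^m x_2 =_{\tilde{G}} w^k$, contradicts the corresponding statement for $x_1^m x_2$ in $G_N$.

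The only subtlety, and hence the place deserving care, is the choice of standard parameters: one must verify that the sparseness conditions invoked in Lemma \ref{lemm-ms} and Lemma \ref{proper power in G_k} are compatible with the cumulative requirements already imposed on the sequence $(\lambda_i, c_i, \epsilon_i, \mu_i, \rho_i)_{i=1}^{\infty}$ throughout the construction of $\tilde{G}$. Since those requirements are all of lower-parameter-principle type (finitely many inequalities depending on previously fixed data), there is no obstruction, and the contradiction above completes the proof.
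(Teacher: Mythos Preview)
Your proof is correct and takes essentially the same approach as the paper, which simply says ``Follows immediately from Lemmas \ref{lemm-ms} and \ref{proper power in G_k}.'' You have unpacked this one-liner by making explicit the induction on the stage $N$ and the passage to the direct limit, which is exactly what the citation is meant to convey.
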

\begin{proof}
Follows immediately from Lemmas \ref{lemm-ms} and \ref{proper power in G_k}.
\end{proof}

	\begin{lemma}
		\label{lemma-commensurability-x1x2}
		Words of the form $(x_1^{m_1}x_2)^{\tau_1}$ and $(x_1^{m_2}x_2)^{\tau_2}$, where $m_1, m_2 \in \mathbb{Z}$, $\tau_1, \tau_2 \in \{\pm 1\}$, are $G$-conjugate in $\tilde{G}$ if and only if $m_1=m_2$ and $\tau_1=\tau_2$. 
	\end{lemma}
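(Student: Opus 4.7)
The \emph{if} direction is trivial, since when $m_1=m_2$ and $\tau_1=\tau_2$ the two words coincide and are therefore conjugate in $G_0$. For the converse I plan to rule out both possibilities in Definition \ref{definition- G- and H- conjugacy}. If the two words are conjugate already in $G_0=F(x_1,x_2)$, a direct inspection of the cyclic shifts of $(x_1^{m}x_2)^{\tau}$ in the free group forces $(m_1,\tau_1)=(m_2,\tau_2)$: the $x_2$-exponents determine $\tau_j$ and the total $x_1$-exponent determines $m_j$, while both are preserved under cyclic permutation. If instead there is a smallest $i\geq 1$ with $(x_1^{m_1}x_2)^{\tau_1}\sim_{conj}(x_1^{m_2}x_2)^{\tau_2}$ in $G_i$ but not in $H_i$, I will derive a contradiction via Lemma \ref{lemma_about_slender_conjugacy_diagrams}.

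For this I first verify that both words are cyclically $(\lambda_i,c_i,\epsilon_i,1-121\lambda_i\mu_i)$-reduced with respect to the presentation $G_i=H_i/\ll\mathcal{R}_i\gg$: their cyclic $(\lambda_i,c_i)$-quasi-geodesicity in $\Gamma(H_i,X\cup\{t_i\})$ is Lemma \ref{lem-q-g-x1x2}, while the absence of $(\epsilon_i,1-121\lambda_i\mu_i)$-subwords follows from Lemma \ref{observation}, which forbids even $(\epsilon_i,\lambda_i\mu_i)$-subwords, together with the inequality $\lambda_i\mu_i\leq 1-121\lambda_i\mu_i$ that holds by LPP for sparse enough standard parameters.

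Next, in a reduced cyclically slender $(U,V)$-conjugacy diagram $\Delta$ over $G_i$ with $\partial\Delta=ABCD$, the hypothesis $U\not\sim_{conj}V$ in $H_i$ forces $\Delta$ to contain at least one $\mathcal{R}_i$-cell (otherwise $\Delta$ would already be a diagram over $H_i$). Lemma \ref{lemma_about_slender_conjugacy_diagrams} then delivers an essential $\mathcal{R}_i$-cell $\Pi$ together with non-empty essential $\epsilon_i$-contiguity subdiagrams $\Gamma_2,\Gamma_4$ from $\Pi$ to the sides $BC$ and $DA$, satisfying $(\Pi,\Gamma_2,BC)+(\Pi,\Gamma_4,DA)\geq 1-121\lambda_i\mu_i$. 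By construction each outer arc $\hat{q}_{\Gamma_j}$ is an $(\epsilon_i,(\Pi,\Gamma_j,\cdot))$-subword of a cyclic shift of $(x_1^{m_j}x_2)^{\tau_j}$, so by Lemma \ref{observation} both contiguity degrees are strictly less than $\lambda_i\mu_i$; hence their sum is below $2\lambda_i\mu_i<1-121\lambda_i\mu_i$, the last inequality again by LPP (it amounts to $123\lambda_i\mu_i<1$). This contradicts the lower bound above and completes the argument.

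The principal obstacle will be the parameter bookkeeping: sparseness of the standard parameters must be strong enough to yield simultaneously the two inequalities $\lambda_i\mu_i\leq 1-121\lambda_i\mu_i$ and $2\lambda_i\mu_i<1-121\lambda_i\mu_i$, uniformly in $i$, and the tighter bound supplied by Lemma \ref{observation} — although phrased for the $(i+1)$-st stage — must be applied to every stage to control the outer-arc contiguity degrees of $\epsilon_i$-contiguity subdiagrams whose outer arcs label cyclic shifts of $(x_1^{m}x_2)^{\pm 1}$.
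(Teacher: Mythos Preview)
Your proposal is correct and follows essentially the same route as the paper: reduce to the free-group case for $G_0$, then for the minimal stage $s$ with conjugacy in $G_s$ but not in $H_s$, invoke Lemma~\ref{lem-q-g-x1x2} for cyclic quasi-geodesicity, Lemma~\ref{observation} to bound the contiguity degrees toward the $U$- and $V$-sides, and Lemma~\ref{lemma_about_slender_conjugacy_diagrams} to force a contradiction. The only cosmetic difference is that the paper derives the contradiction by noting that at least one of the two contiguity degrees must be $\geq (1-121\lambda_s\mu_s)/2$, whereas you bound both below $\lambda_i\mu_i$ and sum; these are equivalent under LPP.
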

	\begin{proof}
	If $(x_1^{m_1}x_2)^{\tau_1}$ and $(x_1^{m_2}x_2)^{\tau_2}$ are conjugate in $G_0$, then clearly $m_1=m_2$ and $\tau_1=\tau_2$. 
	Now suppose that $(x_1^{m_1}x_2)^{\tau_1}$ and $(x_1^{m_2}x_2)^{\tau_2}$ are $G$-conjugate in $\tilde{G}$, but $(x_1^{m_1}x_2)^{\tau_1} \not\sim_{conj} (x_1^{m_2}x_2)^{\tau_2}$ in $G_0$. Then there exists $s \in \mathbb{N}$ such that $(x_1^{m_1}x_2)^{\tau_1} \sim_{conj} (x_1^{m_2}x_2)^{\tau_2}$ in $G_s$ but $(x_1^{m_1}x_2)^{\tau_1} \not\sim_{conj} (x_1^{m_2}x_2)^{\tau_2}$ in $H_s$.
	Since by Lemma \ref{lem-q-g-x1x2}, the words $(x_1^{m_1}x_2)^{\tau_1}$ and $(x_1^{m_2}x_2)^{\tau_2}$ are cyclically $(\lambda_s, c_s)$-quasi-geodesic in $\Gamma(H_s, X \cup \{t_s\})$, and since by Lemma \ref{observation}, $(x_1^{m_1}x_2)^{\tau_1}$ and $(x_1^{m_2}x_2)^{\tau_2}$ do not contain $(\epsilon_s, \lambda_s\mu_s)$-subwords with respect to the quotient $G_s=H_s / \ll \mathcal{R}_s \gg$, by Lemma \ref{lemma_about_slender_conjugacy_diagrams}, we get a contradiction, because Lemma  \ref{lemma_about_slender_conjugacy_diagrams} tells us that in case $(x_1^{m_1}x_2)^{\tau_1} \sim_{conj} (x_1^{m_2}x_2)^{\tau_2}$ in $G_s$ but $(x_1^{m_1}x_2)^{\tau_1} \not\sim_{conj} (x_1^{m_2}x_2)^{\tau_2}$ in $H_s$, at least one of the words $(x_1^{m_1}x_2)^{\tau_1}$ and $(x_1^{m_2}x_2)^{\tau_2}$ contains a $(1-121\lambda_s\mu_s)/2$-subword with respect to the quotient $G_s=H_s / \ll \mathcal{R}_s \gg$, which contradicts to the assertion of Lemma \ref{observation}.

	\end{proof}
	
	\begin{lemma}
	\label{lemma-SM-0}
	For all $i, k \in \mathbb{N}$, $\tau \in \{\pm 1\}$, $\tilde{v}_{j_i}$  is $G$-conjugate with $(\tilde{v}_{j_k})^{\tau}$ in $\tilde{G}$ if and only if $\tilde{v}_{j_i} \equiv 	\tilde{v}_{j_k}$ and $\tau=1$.
	\end{lemma}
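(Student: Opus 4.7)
The reverse implication is immediate: if $\tilde{v}_{j_i} \equiv \tilde{v}_{j_k}$ and $\tau = 1$, then the two sides coincide as elements of $X^*$, so they are trivially conjugate in $G_0$ and hence $G$-conjugate in $\tilde{G}$ by Definition \ref{definition- G- and H- conjugacy}. The rest of the plan addresses the forward implication.

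The main structural preliminary I intend to establish is that for each index $r$ there is a well-defined sign $\sigma_r \in \{\pm 1\}$, depending only on the word $\tilde{v}_{j_r}$, such that $\tilde{v}_{j_r} \sim_{conj} \tilde{w}_r^{\sigma_r}$ in $\tilde{G}$, where $\tilde{w}_r = x_1^{q_r^{\tilde{n}_r}} x_2$. If $H_r$ is a proper HNN-extension, the defining relation $t_r^{-1} \tilde{v}_{j_r} t_r = \tilde{w}_r$ gives $\sigma_r = 1$ directly. Otherwise $H_r = G_{r-1}$, which by construction means that $\tilde{w}_r$ commensurates with $\tilde{v}_{j_r}$ in the torsion-free hyperbolic group $G_{r-1}$; since both elements are not proper powers (by Lemma \ref{lemm-ms} and Lemma \ref{proper power in G_k}), they share a common cyclic maximal elementary subgroup, which forces $\tilde{w}_r \sim_{conj} \tilde{v}_{j_r}^{\pm 1}$, determining $\sigma_r$. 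That $\sigma_r$ is unambiguous (a function of the word, not the index) follows from the standard fact that no non-trivial element of a torsion-free hyperbolic group is conjugate to its own inverse, which I will verify from the description of $E(g)$ recalled in the preliminaries.

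Assuming now that $\tilde{v}_{j_i}$ is $G$-conjugate to $(\tilde{v}_{j_k})^{\tau}$ in $\tilde{G}$, I will transport this conjugacy along the two HNN-conjugators from the previous paragraph to obtain $\tilde{w}_i^{\sigma_i} \sim_{conj} \tilde{w}_k^{\sigma_k\tau}$ in $\tilde{G}$, and then argue that this transported conjugacy is itself a $G$-conjugacy. Informally the witnessing conjugator decomposes as $t_i^{\pm 1}\cdot(\text{$G$-conjugator of the $\tilde{v}$'s})\cdot t_k^{\mp 1}$, so stripping the $H$-parts preserves the $G$-status of the middle; formally, this is read off from the decomposition of slender conjugacy diagrams given by Lemma \ref{lemma_about_slender_conjugacy_diagrams}, using that $\tilde{w}_i, \tilde{w}_k$ are of the form $x_1^m x_2$, which by Lemma \ref{observation} do not admit $(\epsilon_s, 1-122\lambda_s\mu_s)$-subwords in any $G_s = H_s/\ll\mathcal{R}_s\gg$. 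Applying Lemma \ref{lemma-commensurability-x1x2} to the resulting $G$-conjugate pair then yields $q_i^{\tilde{n}_i} = q_k^{\tilde{n}_k}$ and $\sigma_i = \sigma_k\tau$.

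From here the conclusion is short: unique prime factorisation gives $q_i = q_k$ and $\tilde{n}_i = \tilde{n}_k$, and by the definition of the sequence $(q_r)$ the equality $q_i = q_k$ occurs exactly when $\tilde{v}_{j_i}\equiv \tilde{v}_{j_k}$. Once the words coincide, the signs $\sigma_i$ and $\sigma_k$ are associated with the same word and hence equal, so the relation $\sigma_i = \sigma_k\tau$ forces $\tau=1$, as required. The main obstacle I anticipate is precisely the middle step — justifying that the conjugacy of the $\tilde{w}$'s inherited from the assumed $G$-conjugacy of the $\tilde{v}$'s is itself a $G$-conjugacy in the formal sense of Definition \ref{definition- G- and H- conjugacy} and not a disguised $H$-conjugacy, since the transport inevitably passes through the HNN stages; this will rest on a careful slender-diagram analysis of the kind already used to prove Lemma \ref{lemma-commensurability-x1x2}.
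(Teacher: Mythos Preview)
Your approach is considerably more roundabout than the paper's, and the step you yourself flag as the ``main obstacle'' --- showing that the transported conjugacy of $\tilde{w}_i^{\sigma_i}$ and $\tilde{w}_k^{\sigma_k\tau}$ is a $G$-conjugacy --- is not adequately justified. The notions of $G$- and $H$-conjugacy refer to the \emph{first} stage at which a conjugacy appears, and are not compositional: composing an $H$-conjugacy (the HNN relation $t_r^{-1}\tilde v_{j_r}t_r=\tilde w_r$), a $G$-conjugacy, and another $H$-conjugacy need not yield a $G$-conjugacy. To make the transport work at the level $G_s$ where the $\tilde v$'s first become conjugate, you need both HNN relations already to hold in $H_s$, i.e.\ you need $s\ge\max(i,k)$. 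That inequality is precisely the paper's key step, and you do not establish it; the promised ``slender-diagram analysis'' does not supply it.

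The paper's argument is short and direct: assuming (WLOG) $i<k$ and $\tilde v_{j_i}\not\equiv\tilde v_{j_k}$, the very definition of $\tilde v_{j_k}$ (it is chosen in $G_{k-1}$ so as not to commensurate with any earlier $\tilde v_{j_l}$ unless literally equal) gives that $\tilde v_{j_i}\not\sim_{conj}(\tilde v_{j_k})^{\tau}$ in $G_{k-1}$, forcing $s\ge k$. Then $\rho_s$, being chosen after $\tilde v_{j_i}$ and $\tilde v_{j_k}$, can be taken much larger than $\|\tilde v_{j_i}\|+\|\tilde v_{j_k}\|$; Lemma~\ref{lemma_about_slender_conjugacy_diagrams} then immediately rules out the possibility that they become conjugate in $G_s$ but not in $H_s$, since neither word is long enough to carry an $(\epsilon_s,\cdot)$-subword of an $\mathcal R_s$-relator. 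Your detour through the $\tilde w$'s, the signs $\sigma_r$, and Lemma~\ref{lemma-commensurability-x1x2} is unnecessary once $s\ge k$ is known, and cannot be completed without it.
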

\begin{proof}
    If $\tilde{v}_{j_i} \sim_{conj} \tilde{v}_{j_k}^{\tau}$ in $G_0$, then clearly $\tilde{v}_{j_i} \equiv 	\tilde{v}_{j_k}$ and ${\tau}=1$. The inverse is true as well. Now assume that $\tilde{v}_{j_i}$  is $G$-conjugate with $\tilde{v}_{j_k}^{\tau}$ in $\tilde{G}$, but $\tilde{v}_{j_i} \not\sim_{conj} \tilde{v}_{j_k}^{\tau}$ in $G_0$. Then there exists $s \in \mathbb{N}$ such that $\tilde{v}_{j_i} \sim_{conj} \tilde{v}_{j_k}^{\tau}$ in $G_s$ but $\tilde{v}_{j_i} \not\sim_{conj} \tilde{v}_{j_k}^{\tau}$ in $H_s$.  
    
    Without loss of generality assume that $i < k$. Then, since by definition of $\tilde{v}_{j_k}$, $\tilde{v}_{j_k}^{\tau}$ is not conjugate in $G_{k-1}$ with any element from
    \begin{align*}
    	\{ \tilde{v}_{j_l} \mid 1\leq l <k, ~\tilde{v}_{j_k}^{\tau} \not \equiv \tilde{v}_{j_l} \}, 
    \end{align*}
    we get that $s \geq k$.
     However, by Lemma \ref{lemma_about_slender_conjugacy_diagrams}, if the standard parameters $\lambda_s, c_s, \epsilon_s, \mu_s, \rho_s$ are sparse enough, in particular, if $\rho_s$ is much larger than $\|\tilde{v}_{j_i}\|$ and $\|\tilde{v}_{j_k}\|$, then it cannot happen that $\tilde{v}_{j_i} \sim_{conj} \tilde{v}_{j_k}$ in $G_s$ but $\tilde{v}_{j_i} \not\sim_{conj} \tilde{v}_{j_k}$ in $H_s$.

\end{proof}

	\begin{lemma}
	\label{lemma-33}
		If for some $i\in \mathbb{N}$, $n \in \mathbb{Z}$, $\tau \in \{\pm 1\}$, $\tilde{v}_{j_i}$ is $G$-conjugate to $(x_1^n x_2)^{\tau}$ in $\tilde{G}$, then $\tilde{v}_{j_i} \equiv (x_1^n x_2)^{\tau}$.
	\end{lemma}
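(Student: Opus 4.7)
The plan is to split into two cases according to whether $\tilde{v}_{j_i}$ is or is not of the form $x_1^{m}x_2$. In Case 1, when $\tilde{v}_{j_i} \equiv x_1^{m}x_2$ for some $m\in\mathbb{Z}$, Lemma \ref{lemma-commensurability-x1x2} applied to the pair $(x_1^{m}x_2, (x_1^{n}x_2)^{\tau})$ forces $m=n$ and $\tau=1$, so $\tilde{v}_{j_i} \equiv (x_1^{n}x_2)^{\tau}$ and we are done.

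Case 2 is the substantive one. Here the defining rule for $\tilde{v}_{j_i}$ tells us that $v_{j_i}$ does not commensurate with any $x_1^{m}x_2$ in $G_{i-1}$, for otherwise $\tilde{v}_{j_i}$ would have been set equal to $x_1^{n_0}x_2$. Assume for contradiction that $\tilde{v}_{j_i}$ is $G$-conjugate to $(x_1^{n}x_2)^{\tau}$ in $\tilde{G}$. The definition of $G$-conjugacy offers two possibilities: either $\tilde{v}_{j_i}\sim_{conj}(x_1^{n}x_2)^{\tau}$ already in $G_0$, or there exists $k\geq 1$ with conjugacy in $G_k$ but not in $H_k$. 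The first is excluded quickly: in $G_0=F(x_1,x_2)$ a conjugation $\tilde{v}_{j_i}\sim_{conj}(x_1^{n}x_2)^{\tau}$ would imply that $v_{j_i}=\tilde{v}_{j_i}^{\,\ell}$ commensurates with $x_1^{n}x_2$ in $G_0$, hence in $G_{i-1}$ by the natural epimorphism, contradicting Case 2. The same argument for any $k\leq i-1$ gives commensurability in $G_{i-1}$, so necessarily $k\geq i$.

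For such $k$ the strategy is to derive a quantitative contradiction from Lemma \ref{lemma_about_slender_conjugacy_diagrams} applied to a reduced cyclically slender $(\tilde{v}_{j_i},(x_1^{n}x_2)^{\tau})$-conjugacy diagram over $G_k=H_k/\!\ll\!\mathcal{R}_k\!\gg$. First one checks that both words are cyclically $(\lambda_k,c_k,\epsilon_k,1-121\lambda_k\mu_k)$-reduced: for $(x_1^{n}x_2)^{\tau}$ this follows from Lemma \ref{lem-q-g-x1x2} together with Lemma \ref{observation} (which also supplies $\lambda_k\mu_k<1-121\lambda_k\mu_k$ by LPP); for $\tilde{v}_{j_i}$ it follows from the fact that $\tilde{v}_{j_i}$ is cyclically geodesic in $\Gamma(\tilde{G},X)$ and hence in $\Gamma(H_k,X\cup\{t_k\})$ (since $\tilde{G}$ is a quotient of $H_k$), combined with the sparseness assumption $\rho_k \gg \lambda_k(\|\tilde{v}_{j_i}\|+2\epsilon_k)+c_k$ which rules out any $(\epsilon_k,1-121\lambda_k\mu_k)$-subword. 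Lemma \ref{lemma_about_slender_conjugacy_diagrams} then yields an essential $\mathcal{R}_k$-cell $\Pi$ with essential $\epsilon_k$-contiguity subdiagrams $\Gamma_2,\Gamma_4$ to the sides labeled $(x_1^{n}x_2)^{\tau}$ and $\tilde{v}_{j_i}$ satisfying
\[
(\Pi,\Gamma_2,\cdot)+(\Pi,\Gamma_4,\cdot)\ \geq\ 1-121\lambda_k\mu_k.
\]

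The contradiction comes from bounding each summand. By Lemma \ref{observation}, $(x_1^{n}x_2)^{\tau}$ contains no $(\epsilon_k,\lambda_k\mu_k)$-subword in $G_k$, giving $(\Pi,\Gamma_2,\cdot)<\lambda_k\mu_k$. For $\Gamma_4$, the triangle inequality and the $(\lambda_k,c_k)$-quasi-geodesicity of $\partial\Pi$ bound $\|\check{q}_{\Gamma_4}\|\leq \lambda_k(\|\tilde{v}_{j_i}\|+2\epsilon_k)+c_k$, so $(\Pi,\Gamma_4,\cdot)\leq \bigl[\lambda_k(\|\tilde{v}_{j_i}\|+2\epsilon_k)+c_k\bigr]/\rho_k<\mu_k$ by LPP. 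Summing gives $(\lambda_k+1)\mu_k<1-121\lambda_k\mu_k$, which fails for sparse enough standard parameters, contradicting the lower bound above. The main obstacle is purely bookkeeping: choosing the LPP hierarchy for $(\lambda_k,c_k,\epsilon_k,\mu_k,\rho_k)$ uniformly in the fixed data $(i,n,\tau,\|\tilde{v}_{j_i}\|)$ so that both contiguity estimates close up for every admissible $k\geq i$.
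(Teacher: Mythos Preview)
Your overall strategy matches the paper's: split by the witnessing level $s$, handling $s<i$ via the definition of $\tilde v_{j_i}$ and $s\ge i$ via Lemma~\ref{lemma_about_slender_conjugacy_diagrams} together with Lemma~\ref{observation} and sparseness of $\rho_s$ relative to $\|\tilde v_{j_i}\|$. There is, however, a genuine gap in your Case~2 treatment of $k\le i-1$. You assert that ``the defining rule for $\tilde{v}_{j_i}$ tells us that $v_{j_i}$ does not commensurate with any $x_1^{m}x_2$ in $G_{i-1}$,'' but this overlooks the priority order in the definition: the first clause (commensurability with an earlier $\tilde v_{j_k}$) is tested \emph{before} the second (commensurability with words $x_1^{m}x_2$). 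So if $v_{j_i}$ commensurates in $G_{i-1}$ with some $\tilde v_{j_k}$, $k<i$, not of the form $x_1^{m}x_2$, then $\tilde v_{j_i}=\tilde v_{j_k}$ regardless of whether $v_{j_i}$ also commensurates with some $x_1^{m}x_2$; your deduction does not rule this out. The repair is to pass to the minimal index $l$ with $\tilde v_{j_l}\equiv\tilde v_{j_i}$: at step $l$ the first clause cannot fire, so in Case~2 bullets~3 or~4 applied and $v_{j_l}$ genuinely fails to commensurate with any $x_1^{m}x_2$ in $G_{l-1}$, whence your argument excludes $k\le l-1$; and your ``late'' argument already works for all $k\ge l$, since $\rho_k$ is sparse compared with $\|\tilde v_{j_l}\|=\|\tilde v_{j_i}\|$. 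The paper's corresponding step is equally terse (``by the definition of $\tilde v_{j_i}$\dots'') and implicitly rests on the same reduction.

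A minor slip: your closing inequality is stated backwards. What you want is that the two upper bounds give $(\Pi,\Gamma_2,\cdot)+(\Pi,\Gamma_4,\cdot)<(\lambda_k+1)\mu_k$, and by LPP one has $(\lambda_k+1)\mu_k<1-121\lambda_k\mu_k$, contradicting the lower bound from Lemma~\ref{lemma_about_slender_conjugacy_diagrams}. As written you say the inequality $(\lambda_k+1)\mu_k<1-121\lambda_k\mu_k$ ``fails for sparse enough standard parameters,'' but it \emph{holds} for sparse parameters; that is precisely what yields the contradiction. Also, the claim that $\tilde v_{j_i}$ is cyclically geodesic in $\Gamma(\tilde G,X)$ is only explicit in the fourth clause of the definition; for the other clauses you should justify cyclic $(\lambda_k,c_k)$-quasi-geodesicity in $H_k$ separately (e.g.\ via sparseness, since $k\ge l$ and $\tilde v_{j_l}$ is fixed).
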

	\begin{proof}
		If $\tilde{v}_{j_i} \sim_{conj} (x_1^n x_2)^{\tau}$ in $G_0$, then clearly $\tilde{v}_{j_i} \equiv (x_1^n x_2)^{\tau}$.
		Now assume that  $\tilde{v}_{j_i}$  is $G$-conjugate with $(x_1^n x_2)^{\tau}$ in $\tilde{G}$, but $\tilde{v}_{j_i} \not\sim_{conj} (x_1^n x_2)^{\tau}$ in $G_0$. Then there exists $s \in \mathbb{N}$ such that $\tilde{v}_{j_i} \sim_{conj} (x_1^n x_2)^{\tau}$ in $G_s$ but $\tilde{v}_{j_i} \not\sim_{conj} (x_1^n x_2)^{\tau}$ in $H_s$.  
		
		If $s<i$, then by the definition of $\tilde{v}_{j_i}$, the fact that $\tilde{v}_{j_i} \sim_{conj} (x_1^n x_2)^{\tau}$ in $G_s$ implies that $\tilde{v}_{j_i} \equiv x_1^{n'} x_2$ for some $n' \in \mathbb{Z}$. Therefore, by Lemma \ref{lemma-commensurability-x1x2}, $x_1^{n'} x_2 \equiv (x_1^{n} x_2)^{\tau}$, which implies that $\tilde{v}_{j_i} \sim_{conj} (x_1^{n} x_2)^{\tau}$ in $G_0$. A contradiction.
		
		If $s\geq i$, then since by Lemma \ref{observation}, the word $(x_1^n x_2)^{\tau}$ does not contain a $(\epsilon_s, \lambda_s \mu_s)$-subword with respect to the quotient $G_s=H_s / \ll \mathcal{R}_s \gg$, by Lemma \ref{lemma_about_slender_conjugacy_diagrams}, the word $\tilde{v}_{j_i}$ must contain $(\epsilon_s, 1-122\lambda_s \mu_s)$-subwords with respect to the quotient $G_s=H_s / \ll \mathcal{R}_s \gg$, which is impossible granted that the standard parameters $\lambda_s, c_s, \epsilon_s, \mu_s, \rho_s$ are sparse enough, in particular, if $\rho_s$ is much larger than $\|\tilde{v}_{j_i}\|$.
	\end{proof}

	\begin{lemma}
		\label{lem-central}
		
		Let 
		\begin{align*}
			w_1, w_2 \in \big\{ \tilde{v}_{j_i}, (x_1^n x_2)^{\tau} \mid i \in \mathbb{N}, n \in \mathbb{Z}, \tau \in \{\pm 1\} \big\}
		\end{align*}
		and $w_1 \not \equiv w_2$ such that $w_1 \sim_{conj} w_2$ in $\tilde{G}$. 
		Then, for the group
		\begin{align*}
			H'_s = \langle X, t_1, t_2, \ldots, t_s \mid t_1^{-1} \tilde{v}_{j_1} t_1 =  x_1^{q_1^{\tilde{n}_1}}x_2, \ldots, t_s^{-1} \tilde{v}_{j_s} t_s= x_1^{q_s^{\tilde{n}_s}}  x_2\rangle,
		\end{align*}
		there exists $T \in \{ t_1, t_2, \ldots, t_s\}^*$ such that 
		\begin{align*}
			T^{-1} w_1 T = w_2 \text{~in~} H_s',
		\end{align*}
		where $s$ is such that $w_1 \sim_{conj} w_2$ in $H_s$, but $w_1 \not\sim_{conj} w_2$ in $G_{s-1}$
	\end{lemma}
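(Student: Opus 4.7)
\smallskip

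My plan is to prove Lemma~\ref{lem-central} by induction on $s$, exploiting the structure of slender conjugacy diagrams over the HNN-extensions $H_s$. I would slightly strengthen the inductive statement to read: for any $s' \leq s$ and any $w_1', w_2'$ in the allowed set (closed formally under inversion), if $w_1' \sim_{conj} w_2'$ in $H_{s'}$ then the conjugation is realized by some $T \in \{t_1, \ldots, t_{s'}\}^*$ in $H_{s'}'$. The base case $s'=0$ reduces to conjugacy in the free group $G_0$; combining Lemmas~\ref{lemma-commensurability-x1x2}, \ref{lemma-SM-0}, \ref{lemma-33} with the observation that distinct-as-words elements of $\tilde{\mathcal{V}}$ represent non-commensurable elements in $G_0$, any such conjugacy forces $w_1' \equiv w_2'$, so $T=1$ works.

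For the inductive step, fix a reduced slender $(w_1, w_2)$-conjugacy diagram $\Delta$ over $H_s$ with $\partial \Delta = ABCD$, $lab(AD)=w_1$, $lab(BC)=w_2$, and $lab(AB)=lab(DC)$. Since $w_1 \not\sim_{conj} w_2$ in $G_{s-1}$, $\Delta$ must contain at least one $t_s$-band; and since $w_1, w_2 \in X^*$ contain no $t_s^{\pm 1}$, every $t_s$-band must be horizontal, connecting an edge of $AB$ to an edge of $DC$. Enumerate the horizontal $t_s$-bands $\mathfrak{B}_1, \ldots, \mathfrak{B}_k$ in order from the $AD$-side toward the $BC$-side. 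These partition $\Delta$ into horizontal subdiagrams $S_0, S_1, \ldots, S_k$, each having no $t_s$-bands on its boundary and thus being a conjugacy diagram over $H_{s-1}$. The horizontal labels of $S_j$ are either $w_1$, $w_2$, or powers of $\tilde{v}_{j_s}$ and $\tilde{w}_s$ (the two possible side-labels of a $t_s$-band). Because by Lemma~\ref{lemma-SM-2} none of $w_1, w_2, \tilde{v}_{j_s}, \tilde{w}_s$ is a proper power in $\tilde{G}$, and because a conjugate of a non-proper-power in a torsion-free hyperbolic group is still not a proper power, all the powers appearing must equal $\pm 1$.

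I then apply the inductive hypothesis to each strip $S_j$ to obtain $T_j \in \{t_1, \ldots, t_{s-1}\}^*$ realizing the relevant $H_{s-1}$-conjugacy inside $H'_{s-1} \subseteq H'_s$; triviality is used whenever the two pivot words coincide. Setting $T := T_0 \cdot t_s^{\epsilon_1} \cdot T_1 \cdot t_s^{\epsilon_2} \cdots t_s^{\epsilon_k} \cdot T_k$, where $\epsilon_j \in \{\pm 1\}$ records the orientation with which $T$ crosses $\mathfrak{B}_j$, the defining relation $t_s^{-1} \tilde{v}_{j_s} t_s = \tilde{w}_s$ of $H'_s$ together with the strip-conjugacies assembles into $T^{-1} w_1 T = w_2$ in $H'_s$, with $T \in \{t_1, \ldots, t_s\}^*$ as required. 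The most delicate point is ensuring that the intermediate pivot words $\tilde{v}_{j_s}^{\pm 1}$ and $\tilde{w}_s^{\pm 1}$ lie in a set to which the induction hypothesis applies; this is precisely the reason for strengthening the statement to close the allowed set under inversion, noting that $\tilde{w}_s^{-1} = (x_1^{q_s^{\tilde{n}_s}} x_2)^{-1}$ is already in the stated set with $\tau=-1$, while $\tilde{v}_{j_s}^{-1}$ is handled symmetrically (either $\tilde{v}_{j_s}$ itself has the form $x_1^m x_2$ and its inverse is in the set, or an inversion-closed version of the inductive claim is carried through).
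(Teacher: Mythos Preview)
Your approach is essentially the same as the paper's: both argue by induction on the level $s$ and use the HNN structure of $H_s$ over $G_{s-1}$ to peel off $t_s$-letters from the conjugator. You do this geometrically via $t_s$-bands in a conjugacy diagram; the paper does it algebraically via Collins' Lemma together with a secondary induction on $\theta(h)$. These are equivalent formulations, and your identification of the inversion-closure issue as the most delicate point is well taken (the paper glosses over it).

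There is, however, a genuine gap in your inductive step. You write that each strip $S_j$ is a conjugacy diagram ``over $H_{s-1}$'', but this is false: removing the $t_s$-bands from a diagram over $H_s = \langle G_{s-1}, t_s \mid \ldots \rangle$ leaves subdiagrams over $G_{s-1}$, not over $H_{s-1}$ (recall $G_{s-1}$ is a proper quotient of $H_{s-1}$). Since your inductive hypothesis concerns conjugacy in some $H_{s'}$, you cannot apply it to the strips directly. The fix --- which the paper makes explicit in its inductive step --- is to invoke Lemmas~\ref{lemma-commensurability-x1x2}, \ref{lemma-SM-0}, \ref{lemma-33} at this point: since the strip words (being $w_1$, $w_2$, or $(\tilde{v}_{j_s})^{\pm 1}$, $\tilde{w}_s^{\pm 1}$) are never $G$-conjugate in $\tilde{G}$ unless freely equal, their $G_{s-1}$-conjugacy forces $H_{s-1}$-conjugacy, and then your induction applies. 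You cited these lemmas only in the base case, where they are in fact unnecessary (conjugacy of cyclically reduced words in the free group $G_0$ already forces free equality); their real role is in the inductive step.
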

	\begin{proof}
		Suppose that $w_1 \not\equiv w_2$, then by Lemmas \ref{lemma-commensurability-x1x2}, \ref{lemma-SM-0} and \ref{lemma-33}, the fact that $w_1 \sim_{conj} w_2$ in $\tilde{G}$, implies that $w_1$ is $H$-conjugate to $w_2$ in $\tilde{G}$. Therefore, there exists $s\in \mathbb{N}$ such that $w_1 \sim_{conj} w_2$ in $H_s$, but $w_1 \not\sim_{conj} w_2$ in $G_{s-1}$. Therefore, there exists $h\in H_s$ such that 
		\begin{align*}
			h^{-1} w_1 h = w_2 \text{~in~} H_s
		\end{align*}
		such that $\theta(h)$ is minimal and $\theta(h)>0$, where $\theta$ is defined in Subsection \ref{section-HNN}.
		
		We will prove by induction on $(s, \theta(h)$, where we define $(s_1, \theta(h_1)) <(s_2, \theta(h_2))$ if either $s_1<s_2$ or $s_1=s_2$ and $\theta(h_1) < \theta(h_2)$.
		
		If $s=1$, then note that $H'_s$ coincides with $H_s$, and  the statement of the lemma follows from the combination of Collins' Lemma (see Lemma \ref{lemma_about_conjugacy_in_HNN_extensions}) with the fact that $G_0$ is a free group.
		
		Now assume that $s>1$ and for all smaller pairs $(s', \theta(h'))$ the statement is true. Note that, by Collins' Lemma, there exist $m\in \mathbb{Z}$, $h_1, h_2 \in H_s$ such that $h_1 t_s h_2=h$ and, in particular, $\theta(h_1), \theta(h_2) < \theta(h)$, and either
		\begin{align*}
			h_1^{-1} w_1 h_1 =_{H_s} (\tilde{v}_{j_s})^m \text{~and~} h_2^{-1} (x_1^{q_s^{\tilde{n}_s}}x_2)^m h_2 =_{H_s} w_2
		\end{align*}
		or
		\begin{align*}
			h_1^{-1} w_1 h_1 =_{H_s}(x_1^{q_s^{\tilde{n}_s}}x_2)^m \text{~and~} h_2^{-1} (\tilde{v}_{j_s})^m h_2 =_{H_s} w_2.
		\end{align*}
	Since by Lemma \ref{lemma-SM-2}, the words $w_1$ and $w_2$ are not proper powers, we get that $m \in \{\pm 1\}$. Also, not that since by Lemmas \ref{lemma-commensurability-x1x2}, \ref{lemma-SM-0} and \ref{lemma-33}, $w_1, (\tilde{v}_{j_s})^m$ and $w_1, (x_1^{q_s^{\tilde{n}_s}}x_2)^m $  are not $G$-conjugate in $\tilde{G}$, in case $\theta(h_1)=0$, we get that either $w_1$ is conjugate to $(\tilde{v}_{j_s})^m$ in $H_{s-1}$ or $w_1$ is conjugate to $(x_1^{q_s^{\tilde{n}_s}}x_2)^m$ in $H_{s-1}$. Analogous statement is true for the pairs $(w_2, (\tilde{v}_{j_s})^m)$ and $(w_2, (x_1^{q_s^{\tilde{n}_s}}x_2)^m )$ if $\theta(h_2)=0$. Therefore, the statement of the lemma follows from inductive hypothesis. 
	\end{proof}
	~\\
	\begin{lemma}
	\label{lem-*!}
		Let $i\in \mathbb{N}$. Then for all but finitely many $m\in \mathbb{N}$, if the word $x_1^{q_i^{m}}x_2$ is conjugate with $(\tilde{v}_{j_i})^{\tau}$, $\tau \in \{\pm 1\}$, in $\tilde{G}$, then $ m \in \mathcal{N}$ and $\tau=1$.
	\end{lemma}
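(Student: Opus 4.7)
The plan is to combine Lemma~\ref{lem-central} with an iterated application of Collins' Lemma. Suppose $x_1^{q_i^m}x_2 \sim_{\mathrm{conj}} (\tilde{v}_{j_i})^\tau$ in $\tilde{G}$. There is at most one value of $m$ (and one value of $\tau$) for which $x_1^{q_i^m}x_2 \equiv (\tilde{v}_{j_i})^\tau$ holds literally as words in $F(X)$; exclude these finitely many $m$. Then by Lemmas~\ref{lemma-commensurability-x1x2}, \ref{lemma-SM-0} and \ref{lemma-33}, the two elements cannot be $G$-conjugate (they are distinct as elements of $\tilde{G}$), so the conjugacy is necessarily an $H$-conjugacy. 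Lemma~\ref{lem-central} then yields $s\in\mathbb{N}$ and $T\in\{t_1,\dots,t_s\}^*$ with $T^{-1}(\tilde{v}_{j_i})^\tau T = x_1^{q_i^m}x_2$ in the explicit iterated HNN extension $H'_s$ of the free group $F(X)=G_0$.

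Next I would decompose this HNN-conjugacy by repeated application of Collins' Lemma (Lemma~\ref{lemma_about_conjugacy_in_HNN_extensions}), obtaining a finite chain of $F(X)$-conjugacies alternating with the HNN isomorphisms $\phi_k:\tilde{v}_{j_k}\mapsto x_1^{q_k^{\tilde{n}_k}}x_2$. By Lemma~\ref{lemma-SM-2} (together with the free-group verification) every element $\tilde{v}_{j_k}$ and every element $x_1^{q_k^{\tilde{n}_k}}x_2$ is non-proper-power in $F(X)$, so by Lemma~\ref{lem Lemma 8} the exponent carried along the chain stays in $\{\pm 1\}$, and each $\phi_k^{\pm 1}$ preserves the sign. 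Comparing with the sign $+1$ of the terminal element $x_1^{q_i^m}x_2$ forces $\tau=1$.

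Finally, analyzing the last step of the chain produces the conclusion $m\in\mathcal{N}$. The step ends at $x_1^{q_i^m}x_2$ through either (a) applying some $\phi_l$ whose image $x_1^{q_l^{\tilde{n}_l}}x_2$ is $F(X)$-conjugate to $x_1^{q_i^m}x_2$, whence $q_l^{\tilde{n}_l}=q_i^m$, or (b) applying some $\phi_l^{-1}$ whose image $\tilde{v}_{j_l}$ is $F(X)$-conjugate to $x_1^{q_i^m}x_2$, forcing $\tilde{v}_{j_l}\equiv x_1^{q_i^m}x_2$ in canonical form. In case (a), the key point is that by the construction of the $q_k$'s (namely $q_k=q_{k'}$ iff $\tilde{v}_{j_k}\equiv\tilde{v}_{j_{k'}}$, and otherwise $q_k$ is a fresh prime $p_k$), the primes $q_l,q_i$ are either equal or coprime; unique prime factorization then yields $q_l=q_i$, so $\tilde{v}_{j_l}\equiv\tilde{v}_{j_i}$ and $\tilde{n}_l=m$, which places $m$ in $\mathcal{N}$ by the very definition $\tilde{n}_l=n_{s+1}$. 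The main obstacle is case (b): since $(\tilde{v}_{j_l})_{l\geq1}$ is a fixed sequence and the map $m\mapsto x_1^{q_i^m}x_2$ is injective with strictly growing length, only finitely many $m$'s can satisfy $\tilde{v}_{j_l}\equiv x_1^{q_i^m}x_2$ through a genuine coincidence at the construction stage of some $\tilde{v}_{j_l}$; the remaining occurrences of case (b) are accounted for by iterating the chain one further step and reducing back to case (a). After absorbing all such finitely many exceptional $m$'s, one concludes that $m\in\mathcal{N}$ and $\tau=1$ for every remaining $m$.
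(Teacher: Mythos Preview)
Your reduction to $H'_s$ via Lemma~\ref{lem-central} and the argument for $\tau=1$ match the paper's, and your case~(a) is correct. The gap is case~(b). The bare claim that only finitely many $m$ satisfy $\tilde v_{j_l}\equiv x_1^{q_i^{m}}x_2$ for \emph{some} $l$ is not justified and in fact false: nothing in the construction prevents infinitely many of the $\tilde v_{j_l}$ from being of the form $x_1^{q_i^{m}}x_2$ (every nontrivial element of $\tilde G$ is conjugate to a power of some $\tilde v_{j_l}$ by Lemma~\ref{lemma-MS-*}). What actually constrains $l$ is that the earlier steps of the chain must carry $\tilde v_{j_i}$ to the element on which $\phi_l^{-1}$ acts, and you do not analyze this. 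Your proposed fix, ``iterate one step further and reduce to case~(a)'', does not work either: in a \emph{minimal} chain a case-(a) step followed immediately by a case-(b) step with matching associated subgroups cancels (this is exactly the paper's Claim~1), so a minimal chain of length $\geq 2$ consists entirely of case-(b) moves and backing up never lands you in case~(a).

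The paper closes this gap by working directly with the minimal conjugator $T\in\{t_1,\dots,t_s\}^*$. Britton's Lemma plus minimality give that $T$ contains no subword $t_{s_1}t_{s_2}^{-1}$ (Claim~1) and, for $\|T\|\geq 2$, must end in a negative letter (Claim~2); hence $T=t_{s_1}^{-1}\cdots t_{s_k}^{-1}$. Reducing $T^{-1}\tilde v_{j_i}T$ from the inside out then forces $\tilde v_{j_i}=x_1^{q_{s_1}^{\tilde n_{s_1}}}x_2$, then $\tilde v_{j_{s_1}}=x_1^{q_{s_2}^{\tilde n_{s_2}}}x_2$, and so on, so the successive $\tilde v_{j_{s_r}}$ (hence the action of each $t_{s_r}$) are determined by $\tilde v_{j_i}$ alone. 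If some $m_1\neq m$ also arose from a minimal $T'$, then $T$ and $T'$ would be comparable; at the first extra letter of the longer one the prime equality $q_i=q_{s_{k+1}}$ forces $\tilde v_{j_{s_{k+1}}}=\tilde v_{j_i}$, so the chain has looped back to its start, contradicting minimality of $T'$. This uniqueness-via-minimality argument is what you need in place of the hand-wave in case~(b).
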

	\begin{proof}
		First of all, by Lemma \ref{lemma-commensurability-x1x2}, if $x_1^{q_i^{m}}x_2$ is conjugate with $(\tilde{v}_{j_i})^{\tau}$ in $\tilde{G}$, then $x_1^{q_i^{m}}x_2$ is $H$-conjugate with $(\tilde{v}_{j_i})^{\tau}$ in $\tilde{G}$. Therefore, there exists $s\in \mathbb{N}$ such that $x_1^{q_i^{m}}x_2 \sim_{conj} (\tilde{v}_{j_i})^{\tau}$ in $H_s$, but $x_1^{q_i^{m}}x_2 \not\sim_{conj} (\tilde{v}_{j_i})^{\tau}$ in $G_{s-1}$.
Then, by Lemma \ref{lem-central}, for the group
		\begin{align*}
			H'_s = \langle X, t_1, t_2, \ldots, t_s \mid t_1^{-1} \tilde{v}_{j_1} t_1 =  x_1^{q_1^{\tilde{n}_1}}x_2, \ldots, t_s^{-1} \tilde{v}_{j_s} t_s= x_1^{q_s^{\tilde{n}_s}}x_2 \rangle,
		\end{align*}
		there exists $T \in \{ t_1, t_2, \ldots, t_s\}^*$ such that $T$ is of minimal length for which
		\begin{align*}
			T^{-1} (\tilde{v}_{j_i})^{\tau} T = x_1^{q_i^{m}}x_2 \text{~in~} H_s'.
		\end{align*}
Now, from the last identity, to see that $\tau=1$ is a simple exercise.\\
~\\
\textit{Claim 1.} $T$ does not contain subwords of the form $t_{s_1}t_{s_2}^{-1}$, where $1\leq s_1, s_2 \leq s$.
\begin{proof}[Proof of Claim 1]
By contradiction suppose that $T=T_1 t_{s_1}t_{s_2}^{-1} T_2$. Then, by Britton's Lemma (see Lemma \ref{lem-britton}), the identities $$T^{-1} \tilde{v}_{j_i} T (x_1^{q_i^{m}}x_2)^{-1}=_{H_s'}1$$ and $$ \tilde{v}_{j_i} T (x_1^{q_i^{m}}x_2)^{-1}T^{-1}=_{H_s'}1$$ imply that
$$(T_1t_{s_1})^{-1} \tilde{v}_{j_i} T_1t_{s_1} \in \langle x_1^{q_{s_1}^{\tilde{n}_{s_1}}}x_2 \rangle$$ and $$t_{s_2}^{-1} T_2 x_1^{q_{i}^{m}}x_2 (t_{s_2}^{-1} T_2)^{-1} \in \langle x_1^{q_{s_2}^{\tilde{n}_{s_2}}}x_2 \rangle $$ in $H'_s$, and on the other hand
$$(T_1t_{s_1})^{-1} \tilde{v}_{j_i} T_1t_{s_1}=_{H'_s} t_{s_2}^{-1} T_2 x_1^{q_{i}^{m}}x_2 (t_{s_2}^{-1} T_2)^{-1}.$$
Therefore, we get $s_1=s_2$ and hence $T=_{H'_s}T_1T_2$, which contradicts  the assumption that $T$ was chosen to be of minimal length.
\end{proof}
~\\
\textit{Claim 2.} If $\|T\|\geq 2$, then $T$ is of the form $T_1 t_{s_0}^{-1}$, for some $1\leq s_0 \leq s$.
\begin{proof}[Proof of Claim 2]
Indeed, if $T$ was of the form $T_1 t_{s_0}$ for some $1\leq s_0 \leq s$, then by Britton's Lemma, the identity $ \tilde{v}_{j_i} T (x_1^{q_i^{m}}x_2)^{-1}T^{-1}=_{H_s'}1$ would imply that $t_{s_0} x_1^{q_i^{m}}x_2 t_{s_0}^{-1} \in \langle \tilde{v}_{j_i} \rangle$ in $H'_s$, which implies that $t_{s_0} x_1^{q_i^{m}}x_2 t_{s_0}^{-1} =  \tilde{v}_{j_i} $. However, the last identity contradicts the assumption that $\|T\|\geq 1$ and $T$ was chosen of minimal length.
\end{proof}
Note that if $\|T\|=1$, then the identity $T^{-1} \tilde{v}_{j_i} T (x_1^{q_i^{m}}x_2)^{-1}=_{H_s'}1$ can hold only for finitely many values of $m \notin \mathcal{N}$. Hence without loss of generality, let us assume that $\|T\|>2$. Then, by Claims 1 and 2, $T$ is of the form $T=t_{s_1}^{-1} \ldots t_{s_k}^{-1}$, where $k\geq 2$ and $1\leq s_1, \ldots, s_k \leq s$. 

Now, note that by Britton's Lemma, for some $n\in \mathbb{N}$, $t_{s_1} v_{j_n} t_{s_1}^{-1}= \tilde{v}_{j_i}$, and hence $\tilde{v}_{j_i}=x_1^{q_{s_1}^{\tilde{n}_{s_1}}}x_2$, which implies that $s_1$ is defined uniquely. The same was $s_2$, \ldots, $s_k$ are defined uniquely. Therefore, if for some $m_1 \neq m$, we have $$(T')^{-1} \tilde{v}_{j_i} T' (x_1^{q_i^{m_1}}x_2)^{-1}=_{H_s'}1$$ and $\|T'\|\geq 1$ also $\|T'\|$ is minimal, then either $T'$ is a prefix of $T$ or $T$ is a prefix of $T'$ and $T'$ is of the same form as $T$. However, an application of Britton's Lemma show that this cannot happen. Indeed, if without loss of generality we assume that $\|T'\|> \|T\|$, then $T'= T t_{s_{k+1}}^{-1} \ldots t_{s_{k+l}}^{-1}$. Then, since $T^{-1} \tilde{v}_{j_i} T =_{H'_s}  x_1^{q_i^{m_1}}x_2$, we would have
$$ t_{s_{k+1}}T^{-1} \tilde{v}_{j_i} Tt_{s_{k+1}}^{-1} = t_{s_{k+1}}x_1^{q_i^{m_1}}x_2 t_{s_{k+1}}^{-1} \in \langle \tilde{v}_{j_{s_{k+1}}} \rangle $$
and also $q_{i}= q_{s_{k+1}}$. However, from the definition of the elements $q_1, q_2, \ldots, $ and from the last identities, we get $\tilde{v}_{j_{s_{k+1}}} =\tilde{v}_{j_{i}}$ and $t_{s_{k+1}}x_1^{q_i^{m-1}}x_2 t_{s_{k+1}}^{-1} =_{H'_s} \tilde{v}_{j_{i}}$. The last identity contradicts the assumption that $\|T'\|\geq 2$ and $T'$ was chosen to be of minimal length.

Thus the lemma is proved.


 		
	\end{proof}
\begin{lemma}
\label{lem-**!!}
	Let $i\in \mathbb{N}$ and let $i_0$ be the smallest index such that $\tilde{v}_{i_0}=_{\tilde{G}} \tilde{v}_{i}$. Then, the set $\tilde{\mathcal{V}}_{i_0}$ is infinite and also the set $\mathcal{N}\bigtriangleup \tilde{\mathcal{N}}_{i_0} = (\mathcal{N} \setminus \tilde{\mathcal{N}}_{i_0}) \cup ( \tilde{\mathcal{N}}_{i_0} \setminus \mathcal{N})$ is finite.
\end{lemma}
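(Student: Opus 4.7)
The plan is to prove both assertions by exploiting the enumeration structure of $\mathcal{V}$ together with Lemma \ref{lem-*!}. For infiniteness of $\tilde{\mathcal{V}}_{i_0}$, the key observation is that $\mathcal{V}$, being a disjoint union of infinitely many copies of the enumeration $\mathcal{U}$ of $G_0$, contains the word $\tilde{v}_{j_{i_0}}$ infinitely often, and at every such occurrence the underlying group element is non-trivial in every $G_i$ (since $\tilde{v}_{j_{i_0}}$ is a root of a non-trivial element of $\tilde{G}$), so it is eventually picked as some $v_{j_{i+1}}$. The step that needs care is to show that at every such stage (with $i \geq i_0$) the bulleted rule in the construction of $\tilde{v}_{j_{i+1}}$ returns precisely $\tilde{v}_{j_{i_0}}$. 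This reduces to ruling out that $v_{j_{i+1}}$ commensurates in $G_i$ with some earlier $\tilde{v}_{j_k}$ having $k < i_0$: commensuration in $G_i$ passes to commensuration in $\tilde{G}$, and using Lemma \ref{lemma-SM-2} (none of the $\tilde{v}_{j_\ell}$ are proper powers) together with Lemma \ref{lemma-SM-0} one promotes commensuration of two elements of $\tilde{\mathcal{V}}$ in $\tilde{G}$ to literal equality as words, which would contradict the minimality of $i_0$.

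For the second assertion, I would enumerate the indices of $\tilde{\mathcal{V}}_{i_0}$ in increasing order as $k_1 < k_2 < \cdots$ (with $k_1 = i_0$). Unfolding the counting rule defining $\tilde{n}_{k_r}$ gives $s = r-1$ previous copies of $\tilde{v}_{j_{i_0}}$ and hence $\tilde{n}_{k_r} = n_r$, while the rule for the primes yields $q_{k_r} = q_{i_0}$. The HNN relation $t_{k_r}^{-1}\tilde{v}_{j_{k_r}} t_{k_r} = x_1^{q_{i_0}^{n_r}} x_2$ built into $H_{k_r}$ then shows $\tilde{v}_{j_{i_0}} \sim_{conj} x_1^{q_{i_0}^{n_r}} x_2$ in $\tilde{G}$, so $n_r \in \tilde{\mathcal{N}}_{i_0}$ for every $r$. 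This yields $\mathcal{N} \subseteq \tilde{\mathcal{N}}_{i_0}$, i.e.\ $\mathcal{N} \setminus \tilde{\mathcal{N}}_{i_0} = \emptyset$. The opposite inclusion up to a finite discrepancy is then an immediate application of Lemma \ref{lem-*!} with $i = i_0$, which says that all but finitely many $m \in \tilde{\mathcal{N}}_{i_0}$ must lie in $\mathcal{N}$. Combining the two gives finiteness of $\mathcal{N} \bigtriangleup \tilde{\mathcal{N}}_{i_0}$.

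The main obstacle is the first paragraph: verifying that the greedy ``smallest index $k$'' clause in the construction of $\tilde{v}_{j_{i+1}}$ always singles out $k = i_0$ at the relevant stages. Everything else reduces to bookkeeping, but this step requires knowing that among the canonical roots $\tilde{v}_{j_\ell}$ commensurability in $\tilde{G}$ is as rigid as literal word-equality, which is exactly where the combination of torsion-freeness, the ``not a proper power'' property from Lemma \ref{lemma-SM-2}, and the $G$-conjugacy rigidity of Lemma \ref{lemma-SM-0} has to be invoked. Once this rigidity is established, the infinitude of $\tilde{\mathcal{V}}_{i_0}$ and the identification $\tilde{n}_{k_r} = n_r$ both follow with no further work.
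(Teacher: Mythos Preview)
Your approach is essentially the same as the paper's, which is extremely terse: it disposes of the infiniteness of $\tilde{\mathcal{V}}_{i_0}$ in one line (``follows from the definition''), then uses Lemma~\ref{lem-*!} for finiteness of $\tilde{\mathcal{N}}_{i_0}\setminus\mathcal{N}$ and the definition of $\tilde{n}_k$ (together with the infiniteness just established) for $\mathcal{N}\setminus\tilde{\mathcal{N}}_{i_0}=\emptyset$. Your enumeration of $\tilde{\mathcal{V}}_{i_0}$ as $k_1<k_2<\cdots$ with $\tilde{n}_{k_r}=n_r$ and $q_{k_r}=q_{i_0}$ is exactly the bookkeeping the paper suppresses.

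One technical caveat on the step you flag as the main obstacle: Lemma~\ref{lemma-SM-0} handles only $G$-conjugacy, whereas commensuration of $\tilde{v}_{j_{i_0}}$ with some earlier $\tilde{v}_{j_{k_0}}$ in $G_i$ promotes (via Lemma~\ref{lemma-SM-2} and torsion-freeness) to conjugacy in $\tilde{G}$, which could in principle be $H$-conjugacy. To close that case you would need the structural information of Lemma~\ref{lem-central} (or a direct HNN argument) in addition to Lemma~\ref{lemma-SM-0}. The paper's one-line proof does not address this point either, so your argument is already at least as complete as the paper's.
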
	
\begin{proof}
	The first statement follows from the definition of $\tilde{\mathcal{V}}_{i_0}$ and the elements \\$\{\tilde{v}_{j_1}, \tilde{v}_{j_2}, \ldots \}$.
	
	As for the second statement, first of all, note that Lemma \ref{lem-*!} implies that $ \tilde{\mathcal{N}}_{i_0} \setminus \mathcal{N}$ is finite.
	
	 Also, since the set $\mathcal{V}_{i_0}$ is infinite, by the definition of the set $\{\tilde{n}_1, \tilde{n}_2 \ldots \} \subseteq \mathcal{N}$ we get  $\{\tilde{n}_1, \tilde{n}_2 \ldots \} = \mathcal{N}$. Therefore, $\mathcal{N} \setminus \tilde{\mathcal{N}}_{i_0} = \emptyset$. Thus $\mathcal{N}\bigtriangleup \tilde{\mathcal{N}}_{i_0}$ is finite.
	 
	 \end{proof}
	

\begin{lemma}
		\label{conjugacy-x_1x_2}
		Let $i\in \mathbb{N}$, $m\in \mathbb{Z} \setminus\{0\}$. Let $i_0$ be the smallest index such that $\tilde{v}_{j_{i_0}}$ is conjugate to $\tilde{v}_{j_i}$ in $\tilde{G}$. Then for  all but finitely many positive integers $n$, $(\tilde{v}_{j_{i_0}})^m$ is conjugate to $\big(x_1^{q_{i_0}^{n}}x_2\big)^{\tau m}$ in $\tilde{G}$, where $\tau \in \{\pm 1\}$, if and only if $n \in \mathcal{N}$ and $\tau=1$.
	\end{lemma}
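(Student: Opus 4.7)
The plan is to reduce the power-conjugacy statement to a base-conjugacy statement and then invoke Lemma~\ref{lem-*!} together with Lemma~\ref{lem-**!!}. First I will handle the forward implication. Suppose $(\tilde{v}_{j_{i_0}})^m \sim_{conj} (x_1^{q_{i_0}^{n}}x_2)^{\tau m}$ in $\tilde{G}$. Since $\tilde{G}$ is the direct limit of the groups $G_s$, there is some index $s$ in which this conjugacy already holds. By Lemmas~\ref{lemm-ms} and \ref{proper power in G_k}, both $\tilde{v}_{j_{i_0}}$ and $x_1^{q_{i_0}^{n}}x_2$ are not proper powers in $G_s$, and $G_s$ is torsion-free hyperbolic, so their maximal elementary subgroups are exactly $\langle \tilde{v}_{j_{i_0}}\rangle$ and $\langle x_1^{q_{i_0}^{n}}x_2\rangle$ respectively. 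A conjugacy of two non-trivial powers $g^m$ and $h^{\tau m}$ of non-proper-power elements forces $\langle g\rangle$ and $\langle h\rangle$ to be conjugate and the exponents to agree up to the sign coming from the generator identification; hence $\tilde{v}_{j_{i_0}} \sim_{conj} (x_1^{q_{i_0}^{n}}x_2)^{\epsilon}$ in $G_s$ for some $\epsilon\in\{\pm 1\}$ with $m=\tau\epsilon m$, giving $\tau=\epsilon$.

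At this point Lemma~\ref{lem-*!} applies directly: for all but finitely many $n$, the existence of such a conjugacy in $\tilde{G}$ forces $\epsilon=1$ and $n\in\mathcal{N}$. Combining with $\tau=\epsilon$ yields $\tau=1$ and $n\in\mathcal{N}$, which is the desired conclusion in this direction.

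For the reverse direction, assume $\tau=1$ and $n\in\mathcal{N}$. By Lemma~\ref{lem-**!!}, the symmetric difference $\mathcal{N}\bigtriangleup\tilde{\mathcal{N}}_{i_0}$ is finite, so for all but finitely many such $n$ we have $n\in\tilde{\mathcal{N}}_{i_0}$. By the very definition of $\tilde{\mathcal{N}}_{i_0}$ this means $\tilde{v}_{j_{i_0}} \sim_{conj} x_1^{q_{i_0}^{n}}x_2$ in $\tilde{G}$, and raising both sides to the $m$-th power preserves conjugacy, giving $(\tilde{v}_{j_{i_0}})^m \sim_{conj} (x_1^{q_{i_0}^{n}}x_2)^m$ as required.

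The main obstacle is the first reduction step: passing from an equation $(\tilde{v}_{j_{i_0}})^m \sim_{conj} (x_1^{q_{i_0}^{n}}x_2)^{\tau m}$ to a conjugacy of the roots $\tilde{v}_{j_{i_0}}$ and $(x_1^{q_{i_0}^{n}}x_2)^{\pm 1}$. This rests on two ingredients that have been carefully prepared earlier: that both words are root (non-proper-power) elements at every level $G_s$ (Lemmas~\ref{lemm-ms} and \ref{proper power in G_k}), and that in a torsion-free hyperbolic group each non-trivial element sits inside a unique maximal cyclic subgroup $E(\cdot)$ (Section~\ref{section elementary subgroups}, together with Lemma~\ref{lem Lemma 8} controlling exponents of conjugations of infinite order elements). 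Once those are in hand, the remaining reasoning is purely formal and the previously established Lemmas~\ref{lem-*!} and \ref{lem-**!!} close the argument in both directions.
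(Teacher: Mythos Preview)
Your proposal is correct and follows essentially the same route as the paper's proof: pass to some $G_s$, use that both $\tilde{v}_{j_{i_0}}$ and $x_1^{q_{i_0}^{n}}x_2$ are root elements there (the paper packages your Lemmas~\ref{lemm-ms} and \ref{proper power in G_k} into Lemma~\ref{lemma-SM-2}) so that the power-conjugacy descends to a conjugacy of generators with matching sign, then invoke Lemma~\ref{lem-*!} for the forward direction and Lemma~\ref{lem-**!!} for the reverse. The only point worth making explicit in your reduction step is that one uses the \emph{same} conjugator $c$ witnessing $c^{-1}(\tilde{v}_{j_{i_0}})^m c = (x_1^{q_{i_0}^{n}}x_2)^{\tau m}$ to deduce $c^{-1}\tilde{v}_{j_{i_0}}c = (x_1^{q_{i_0}^{n}}x_2)^{\epsilon}$, which is what forces $\epsilon m = \tau m$ and hence $\epsilon=\tau$ on the nose rather than just up to sign.
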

	\begin{proof}
		Indeed, suppose that $(\tilde{v}_{j_{i_0}})^m$ is conjugate with $(\tilde{v}_{j_{i_0}})^{\tau m}$ in $\tilde{G}$. Then, there exists $s\in \mathbb{N}$, such that $(\tilde{v}_{j_{i_0}})^m$ is conjugate with $\big(x_1^{q_{i_0}^{n}}x_2\big)^{\tau m}$ in $G_s$. Therefore, $E\big((\tilde{v}_{j_{i_0}})^m\big)$ is conjugate with $E\big(\big(x_1^{q_{i_0}^{n}}x_2 \big)^m \big)$ in $G_s$. But since $G_s$ is a torsion-free hyperbolic group and  by Lemma \ref{lemma-SM-2}, $\tilde{v}_{j_{i_0}}$ and $x_1^{q_i^{n}}x_2$ are not proper powers, we get that $\langle \tilde{v}_{j_{i_0}} \rangle $ is conjugate with $\langle x_1^{q_{i_0}^{n}}x_2  \rangle$ in $G_s$. Consequently, $\tilde{v}_{j_{i_0}} $ is conjugate with  $ \big(x_1^{q_{i_0}^{n}}x_2 \big)^{\tau}$ in $\tilde{G}$. 
		 Therefore, by Lemma \ref{lem-*!}, for all but finitely many $n$, we get $n \in\mathcal{N}$.
		
		The inverse statement follows immediately from Lemma \ref{lem-**!!}.
	\end{proof}
\begin{lemma}
\label{lemma-MS-*}
	For any word $u \in X^*$ representing a non-trivial element of $\tilde{G}$, there exists an element   $\tilde{v}_{j_i}\in\tilde{\mathcal{V}}$ and $m\in \mathbb{Z}$ such that $u \sim_{conj} \tilde{v}_{j_i}^m$ in $\tilde{G}$.
	\end{lemma}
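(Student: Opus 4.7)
My plan is to prove this lemma in two steps. First I will show that every non-trivial element of $\tilde{G}$, viewed in $G_0$, coincides with some picked $v_{j_i}$ of the construction. Then I will read off the conjugacy $u \sim_{conj} \tilde{v}_{j_i}^m$ from the four-case definition of $\tilde{v}_{j_i}$.

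For the first step, I will replace $u$ by its freely reduced form in $G_0 = F(X)$, which does not affect its conjugacy class in $\tilde{G}$, so that $u = u_k$ for some $k$ in the enumeration $\mathcal{U}$. Since $u \neq 1$ in $\tilde{G}$, the element $u_k$ is non-trivial in every $G_i$. In $\mathcal{V}$ the entry $u_k$ appears at infinitely many positions $p_1 < p_2 < \cdots$. The crucial claim is that some $p_n$ equals some $j_i$. Suppose not; since $j_i \to \infty$, each $p_n$ would fall strictly between two consecutive $j_{i-1}, j_i$ (with $j_0 := 0$). But the definition of $j_i$ as the minimal index $> j_{i-1}$ for which $v_{j_i}$ is non-trivial in $G_{i-1}$ forces every $v_m$ with $j_{i-1} < m < j_i$ to be trivial in $G_{i-1}$. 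Applied to $m = p_n$, this would make $u_k$ trivial in $G_{i-1}$, contradicting the non-triviality of $u$ in $\tilde{G}$. Hence $u = v_{j_i}$ as elements of $G_0$ for some $i$.

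For the second step, I will extract from each of the four cases in the definition of $\tilde{v}_{j_i}$ the fact that $v_{j_i}$ commensurates in $G_{i-1}$ with $\tilde{v}_{j_i}$: cases (c) and (d) are immediate because $\tilde{v}_{j_i}$ is chosen so that $v_{j_i} \in \langle \tilde{v}_{j_i} \rangle$ in $G_{i-1}$; case (a) gives it through the chain $v_{j_i} \sim \tilde{v}_{j_k} = \tilde{v}_{j_i}$; and case (b) via $\tilde{v}_{j_i} = x_1^{n_0}x_2$. Once commensurability is in hand, Lemma \ref{lemm-ms} together with Lemma \ref{proper power in G_k} guarantee that $\tilde{v}_{j_i}$ is not a proper power in $\tilde{G}$, so in the torsion-free hyperbolic group $G_{i-1}$ the element $\tilde{v}_{j_i}$ generates its maximal cyclic subgroup. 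Commensurability in such a group then upgrades to an equation $v_{j_i} \sim_{conj} \tilde{v}_{j_i}^m$ for some $m \in \mathbb{Z}$, and this passes to $\tilde{G}$.

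The main obstacle is the first step. The worry one expects at first glance is that the enumeration $\mathcal{V}$ is highly repetitive and the greedy choice defining $j_i$ might skip $u_k$ forever, but the minimality clause makes this impossible as soon as one tracks where $u_k$ sits between consecutive $j_i$; the argument reduces to a one-line contradiction with no length estimates required. The second step is essentially bookkeeping: the four-case dichotomy was designed precisely so that $\tilde{v}_{j_i}$ is a primitive generator of the commensurability class of $v_{j_i}$ at the moment of its introduction.
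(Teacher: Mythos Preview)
Your proof is correct and follows essentially the same approach as the paper. The paper's argument is a one-paragraph version of yours: it asserts directly that by the definition of the $\tilde{v}_{j_i}$ every non-trivial $u$ commensurates with some $\tilde{v}_{j_i}$ in $G_i$, then uses that $\tilde{v}_{j_i}$ is not a proper power (Lemma~\ref{lemma-SM-2}) to conclude $E(\tilde{v}_{j_i}) = \langle \tilde{v}_{j_i} \rangle$ and hence $u \sim_{conj} \tilde{v}_{j_i}^m$. You unpack more carefully the ``by the definition'' step --- showing via the minimality clause that some occurrence of $u$ in $\mathcal{V}$ is actually selected as a $v_{j_i}$ --- which the paper leaves implicit.
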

\begin{proof}
Indeed, by the definition of the words $\{ \tilde{v}_{j_1}, \tilde{v}_{j_2}, \ldots \}$, for each $u \in X^*$, there exists $i \in \mathbb{N}$ and $v \in X^*$ such that $u$ commensurates with $\tilde{v}_{j_i}$ in $G_i$. In other words, $v^{-1}uv \in E(\tilde{v}_{j_i})$ in $G_i$. But since $G_{i}$ is a torsion-free hyperbolic group and  by Lemma \ref{lemma-SM-2}, $\tilde{v}_{j_i}$ is not a proper power, we get that $v^{-1}uv \in E(\tilde{v}_{j_i})$ is equivalent to $v^{-1}uv \in \langle \tilde{v}_{j_i} \rangle$ in $G_i$. Therefore, for some $m \in \mathbb{Z}$, $u \sim_{conj} \tilde{v}_{j_i}^m$ in $\tilde{G}$.


\end{proof}
\begin{lemma}
	\label{lemma-main-MS-1}
	The word problem in $\tilde{G}$ is decidable in almost linear time, however, for each $g \in \tilde{G}\setminus \{1\}$, the individual conjugacy problem $ICP(g)$ is undecidable.
\end{lemma}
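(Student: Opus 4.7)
The plan has two independent parts. For the word-problem part, I would invoke the general machinery of Section \ref{section-about-general-scheme}: the chain \eqref{main seq of M-S} is built exactly according to the scheme described there, with $Z_{i+1}=\{t_{i+1}\}$, so each $\mathcal{R}_{i+1}$ from \eqref{def-g-i+1} contains only one word up to cyclic shift. Hence by Lemma \ref{lemma-word-problem-scheme} the word problem in $\tilde G$ is solvable in almost linear time.

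For the undecidability of $ICP(g)$ for every $g\neq 1$, the plan is to produce, for each such $g$, a strong reduction from membership in the recursively enumerable but non-recursive set $\mathcal{N}$ to the decision problem $ICP(g)$. First I would use Lemma \ref{lemma-MS-*} to pick, from a geodesic representative $u$ of $g$, an index $i\in\mathbb{N}$ and an integer $m$ such that $u\sim_{conj}\tilde v_{j_i}^{\,m}$ in $\tilde G$; since $g\neq 1$ and each $\tilde v_{j_i}$ has infinite order in the torsion-free group $\tilde G$, we have $m\neq 0$. Because conjugacy is an equivalence relation, $ICP(g)$ and $ICP(\tilde v_{j_i}^{\,m})$ are the same decision problem (the reduction between the two is just ``replace $g$ by $\tilde v_{j_i}^{\,m}$'', which is fixed and therefore has no effect on decidability).

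Next, let $i_0$ be the smallest index with $\tilde v_{j_{i_0}}\sim_{conj}\tilde v_{j_i}$ in $\tilde G$ (so $ICP(\tilde v_{j_i}^{\,m})=ICP(\tilde v_{j_{i_0}}^{\,m})$), and consider the computable map
\[
    \Phi\colon \mathbb{N}\longrightarrow X^{*},\qquad
    \Phi(n)=\big(x_{1}^{q_{i_0}^{\,n}}x_{2}\big)^{m}.
\]
By Lemma \ref{conjugacy-x_1x_2}, for all but finitely many $n\in\mathbb{N}$ one has $\tilde v_{j_{i_0}}^{\,m}\sim_{conj}\Phi(n)$ in $\tilde G$ if and only if $n\in\mathcal{N}$. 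The finite list $E$ of exceptional values of $n$ is non-uniform but only affects finitely many inputs, so modifying $\Phi$ on $E$ by hard-coding the correct answer (using that $\mathcal{N}$ is r.e.\ and the word problem, hence a semi-algorithm for conjugacy to $\tilde v_{j_{i_0}}^{\,m}$, is decidable) yields an honest computable reduction from the membership problem of $\mathcal{N}$ to $ICP(\tilde v_{j_{i_0}}^{\,m})=ICP(g)$. Since $\mathcal{N}$ is not recursive, $ICP(g)$ is undecidable.

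The main obstacle, as is usual with this sort of ``individual $\Rightarrow$ uniform'' argument, is making sure the reduction is uniform in $g$ only up to a finite exception set and that this is really enough. Concretely, the delicate point is controlling the finitely many $n$ excluded by Lemma \ref{conjugacy-x_1x_2}: one must verify that they form a computable finite set once $g$, and hence $i_0$ and $m$, are fixed, so that the reduction $\Phi$ can be patched to produce a genuine decision of $n\in\mathcal{N}$ from a decision of $\Phi(n)\sim_{conj}\tilde v_{j_{i_0}}^{\,m}$. This is where the fact that the word problem is already solvable (first part of the lemma) and that $\mathcal{N}$ is r.e.\ enter crucially, since they allow us to enumerate and correctly label the finite exception set before the reduction is applied.
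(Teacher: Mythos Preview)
Your proposal is correct and follows essentially the same route as the paper: invoke Lemma~\ref{lemma-word-problem-scheme} for the word problem, then for a nontrivial $g$ use Lemma~\ref{lemma-MS-*} to find $\tilde v_{j_i}^{\,m}\sim_{conj}g$ and apply Lemma~\ref{conjugacy-x_1x_2} to reduce membership in $\mathcal{N}$ to $ICP(g)$.

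One remark: your concern about the finite exception set is unnecessary, and the paper rightly ignores it. You do not need the exceptional $n$'s to form a \emph{computable} finite set. The argument is simply: if $ICP(g)$ were decidable, then $\mathcal{N}':=\{n\in\mathbb{N}:\Phi(n)\sim_{conj}g\}$ would be recursive, and by Lemma~\ref{conjugacy-x_1x_2} the symmetric difference $\mathcal{N}\bigtriangleup\mathcal{N}'$ is finite. Any set differing from a recursive set on finitely many points is itself recursive (the finite correction table exists as a fixed finite object, regardless of whether we can effectively produce it from $g$), contradicting the choice of $\mathcal{N}$. So there is no need to invoke the word problem or the recursive enumerability of $\mathcal{N}$ to ``patch'' the reduction.
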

\begin{proof}
	The decidability of the word problem in almost linear time follows from Lemma \ref{lemma-word-problem-scheme}.
	
	Now suppose that $g\neq 1$ in $\tilde{G}$. Then, by Lemma \ref{lemma-MS-*}, there exist   $m\in \mathbb{Z}$ and minimal index $i$ such that $\tilde{v}_{j_i}^m$ is conjugate to $g$ in $\tilde{G}$. Therefore, by Lemma \ref{conjugacy-x_1x_2}, for all but finitely many $n \in \mathbb{N}$, the question of whether or not $(x_1^{q_i^{n}}x_2)^m$ is  conjugate to $g$ is equivalent to the question of whether or not $n \in \mathcal{N}$. Therefore, since $\mathcal{N}$ is not recursive, we get that the decision problem which for each input $n\in \mathbb{N}$  asks whether or not  $(\tilde{v}_{j_i}^nx)^m$ is  conjugate to $g$ in $\tilde{G}$ is undecidable. In particular, this implies that $ICP(g)$ is undecidable.
\end{proof}


\section{Appendix}
\label{appendix}
\subsection{Proof of Lemma \ref{lem 2.1}}
	
	Let $U, V, T_1, T_2, L, \lambda, c, m, n$ be defined as in the statement of  Lemma \ref{lem 2.1}.
	
	Let us assume that
		\begin{align}
	\label{equatiion}
	 L \leq 	\frac{\|U\|}{12\lambda}m.	
	\end{align}

	Then in the Cayley graph $\Gamma(G, X)$ there exists a rectangle $ABCD$ such that $lab(AB)=T_1$, $lab(BC)= U^m$, $lab(CD)=T_2$ and $lab(AD)=V^n$. Since the sides $BC$ and $AD$ are $(\lambda, c)$-quasi-geodesic and $\|T_1\|, \|T_2\| \leq L$, by Corollary \ref{corollary on hausdorff distance between quasi-geodesics}, we get that the Hausdorff distance between $BC$ and $AD$ is bounded from above by $L+2R_{\lambda, c}+2\delta$. Moreover, by Corollary \ref{another corollary about hausdorff distance},
 for any point $o \in BC$ such that its distance from $B$ and $C$ is more than $L+R_{\lambda, c}+2\delta$, we have  $dist(o, AD)\leq 2R_{\lambda, c}+2\delta$. 
	Let us fix the points $B', C' \in BC$ such that 
	\begin{align}
	\label{equation000}
	lab(BB')=lab(C'C)=U^{2\lfloor (\lambda(L+R_{\lambda, c}+2\delta))+c)/\|U\| \rfloor+1}.
	\end{align}
	Note that then $d(B, B'), d(C, C') > L+ R_{\lambda, c}+2\delta$ and 
	\begin{equation}
	\begin{aligned}
\label{equation007}
		&2\big(2\lfloor (\lambda(L+R_{\lambda, c}+2\delta))+c)/\|U\| \rfloor+1 \big)\\ 
		\leq &\frac{4\lambda L}{\|U\|}+\frac{4\lambda R_{\lambda, c}+8\delta+2c}{\|U\|}+2 < \frac{4\lambda L}{\|U\|}+|X|^{2R_{\lambda, c}+2\delta+\|V\|}\\
		\text{by \eqref{equatiion}~,} \leq &\frac{m}{3}+|X|^{2R_{\lambda, c}+2\delta+\|V\|} \leq \frac{2m}{3}.
	\end{aligned}
	\end{equation}

	Following Olshanskii, \cite{Olsh G-groups}, we call a point on $CD$ a phase vertex, , say $O$, if $lab(BO)$ is a power of $U$. Correspondingly, we call a point on $AD$, say $O'$,  a phase vertex, if $lab(AO')$ is a power of $V$. Since $B'C'$ is contained in the $(2R_{\lambda, c}+2\delta)$-neighborhood of $AD$,  for each phase vertex $O \in B'C'$ there exists a phase vertex $O'\in AD$ such that $d(O, O')\leq 2R_{\lambda, c}+2\delta+ \|V\|$. This follows from Lemma \ref{another corollary about hausdorff distance} and from the simple observation that the set of phase vertices on $AD$ is a $\|V\|$-net.
	
	 By \eqref{equation000} and \eqref{equation007} we get that the number of phase vertices on $B'C'$ is greater than $|X|^{2R_{\lambda, c}+2\delta+\|V\|}$ (recall that $X$ is a symmetric set). 
	Therefore, by the pigeonhole principle, there exist at least two phase vertices $O_1, O_2 \in BC$ and two phase vertices $O_1', O_2' \in AD$ such that $d(O_1, O_1'), d(O_2, O_2') \leq 2R_{\lambda, c}+2\delta+ \|V\|$ and $lab(O_1 O_1')\equiv lab(O_2 O_2')$, where by  $lab(O_1 O_1')$ and $ lab(O_2 O_2')$ we mean the labels of some geodesic paths joining $O_1$ to $O_1'$ and $O_2$ to $O_2'$, respectively. 
	
	Denote $Q = lab(O_1 O_1')= lab(O_2 O_2')$. Then we have that for some integers $m_0$ and $n_0$, $Q^{-1}U^{m_0}Q =_G V^{n_0}$. On the other hand, $T_1=_G U^{m_1} Q V^{n_1}$, where the integers $m_1$, $n_1$ are such that $lab(BO_1)=U^{m_1}$ and $lab(O_1'A)=V^{n_1}$. But this means that $T_1U^{m_0}T_1^{-1}=_G V^{n_0}$. Therefore, since every element of a hyperbolic group is contained in a unique maximal elementary subgroup (see \cite{Olsh G-groups}), $T_1UT_1^{-1}$ and $V$ are contained in the same subgroup $E(V)$. The same way $T_2UT_2^{-1}\in E(V)$.
	
	In case $U=_G V$, by the properties described in the beginning of Section \ref{section elementary subgroups}, the fact that $T_1UT_1^{-1} \in E(V) (=E(U))$ implies that $T_1 \in E(V) (=E(T_2))$. The same way  $T_2 \in E(V) (=E(U))$. Also, since $V^{n_0}$ is a label of a subpath of $DA$, as it follows from the above described, we get that the sign of $n_0$ coincides with the sign of $n$. Therefore, $T_1, T_2 \in E^{+}(U)$ for $n>0$ and $T_1, T_2 \in E^-(U)$ for $n\leq 0$.
\begin{proof}[]
\end{proof}
\subsection{Proof of Lemma \ref{lemma_about_slender_conjugacy_diagrams}}
\label{subsection-proof-of-lemma-slender-c-d}
		Since $\Delta$ is minimal and contains an $\mathcal{R}$-cell, by Lemma \ref{lem 6.6}, it must contain an essential $\mathcal{R}$-cell. Let us consider an essential $\mathcal{R}$-cell $\Pi$ in $\Delta$, connected to $AB$, $BC$, $CD$ and $DA$ by contiguity subdiagrams $\Gamma_1$, $\Gamma_2$, $\Gamma_3$ and $\Gamma_4$, respectively. Then,
				in general, our diagram $\Delta$ looks like in Figure \ref{fig:  conjugacy diagram 1}, with a possibility that some of the contiguity subdiagrams $\Gamma_1$, $\Gamma_2$, $\Gamma_3$ and $\Gamma_4$, in fact, are empty (i.e. do not exist).	
		\begin{figure}[H]
			\centering
			\includegraphics[clip, trim=2cm 12.20cm 0cm 4.25cm, width=.6\textwidth]{{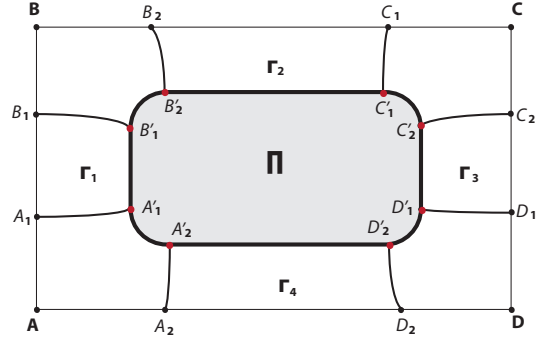}} 
			\caption{$lab(AB)=lab(DC)$ are geodesic words and $lab(BC)$, $lab(AD)$ are cyclic shifts of $U$ and $V$, respectively.} 
			\label{fig:  conjugacy diagram 1}
		\end{figure}	
		~\\
		\textit{\underline{Proof of part (1) of Lemma \ref{lemma_about_slender_conjugacy_diagrams}.}}\\
		
		First of all, by contradiction assume that at least one of $\Gamma_2$ and $\Gamma_4$ is empty. First we will consider the case when just one of them is empty and then, separately, the case when both of them are empty.\\
		~\\
			\textit{\underline{Case 1.1.}} (Exactly one of $\Gamma_2$ and $\Gamma_4$ is empty).\\
			For this case, without loss of generality assume that $\Gamma_4$ is empty. Then our conjugacy diagram $\Delta$ would look like in Figure \ref{fig: conjugacy diagram 2}.
			\begin{figure}[H]
				\centering
				\includegraphics[clip, trim=2cm 12.35cm 0cm 4.25cm, width=.6\textwidth]{{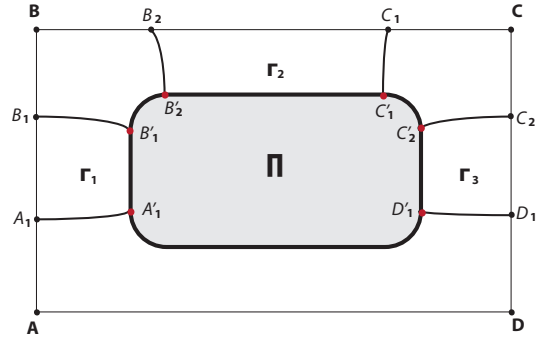}} 
				\caption{$\Gamma_4$ is empty.} 
				\label{fig:  conjugacy diagram 2}
			\end{figure}
			Since $lab(BC)=U$ is a cyclically $(\lambda, c, \epsilon, 1-121\lambda\mu)$-reduced word, we get that 
			\begin{align}
			\label{athens}
				(\Pi, \Gamma_2, BC)<1-121\lambda\mu.
			\end{align}
			 Therefore, since $\Pi$ is an essential cell, meaning that $\sum_{i=1}^4(\Pi, \Gamma_i, \partial\Delta) > 1-23 \mu$, it must be that 
			\begin{align}
			\label{inequality-dddddd}	
			(\Pi, \Gamma_1, AB)+(\Pi, \Gamma_3, CD) >(121\lambda-23)\mu>98\lambda \mu.
			\end{align}

			 In particular, at least one of $\Gamma_1$ and $\Gamma_3$ is non-empty. In fact, we claim that neither one of $\Gamma_1$ and $\Gamma_3$ is empty.\\
			~\\
			\textit{Claim.} Neither one of $\Gamma_1$ and $\Gamma_3$ is empty.
		\begin{proof}[Proof of the claim]
			 First of all, without loss of generality assume that $\Gamma_1$ is non-empty.
			
			Now since $\Delta$ is a slender $(U, V)$-conjugacy diagram, it must be that $d(A, B) \leq d(A, C_1)$. For the next chain of inequalities, in case $\Gamma_3$ is empty, we will simply assume $d(D_1', C_2')=0$. Thus we have 
			\begin{equation}
			\begin{aligned}
		 \label{ineq *}
			d(A, B) = &d(A, A_1)+d(A_1, B) \leq d(A, C_1)\\
			\leq & d(A, A_1)+d(A_1, C_1)\\
			\leq & d(A, A_1)+d(A_1, A_1')\\
			&+d(A_1', D_1')+d(D_1', C_2')+d(C_2', C_1')+d(C_1', C_1),\\
			\text{consequently, since~} d(A_1', D_1')&+d(C_2', C_1')\leq 23\mu\|\Pi\|, \text{~we have}\\
			d(A, B)\leq & d(A, A_1)+\epsilon+d(D_1', C_2')+ 23\mu \|\Pi\|+\epsilon.
			\end{aligned}
			\end{equation}
			Therefore, $d(A_1, B_1) \leq d(A_1, B) \leq d(D_1', C_2')+ 23\mu \|\Pi\|+2\epsilon$.
			Combining this with the inequality $\big\|[A_1', B_1']\big\|\leq \lambda (d(A_1, B_1)+2\epsilon)+c$, we get
			\begin{align*}
			\frac{\big\|[A_1', B_1']\big\|-c}{\lambda}-2\epsilon \leq d(A_1, B_1) \leq d(D_1', C_2')+ 23\mu \|\Pi\|+2\epsilon.
			\end{align*}
			
			Now, in case $\Gamma_3$ is empty, i.e. if $d(D_1', C_2')=0$, we also have
			\begin{equation}
			\begin{aligned}
			\label{ineq **}
				\big\|[A_1', B_1']\big\| &\geq \|\Pi\|-23\mu\|\Pi\| - \big\|[B_2', C_1']\big\|\\
				& > (1-23\lambda\mu)\|\Pi\| - \big\|[B_2', C_1']\big\|\\
				& > (1-23\lambda\mu)\|\Pi\| - (1-121\lambda\mu)\|\Pi\| \text{,~by \eqref{athens}}\\
				& = 98\lambda\mu \|\Pi\|\\
				&> \lambda(4\epsilon+23\mu\|\Pi\|)+c, \text{~by LPP}.
			\end{aligned}
			\end{equation}
		
		From (\ref{ineq **}) it follows that $d(A_1, B_1) \geq  23\mu \|\Pi\|+2\epsilon$. Therefore, $d(A, B) \geq d(A, A_1)+d(A_1, B_1) \geq d(A, A_1)+\epsilon+ 23\mu \|\Pi\|+\epsilon$, but this contradicts (\ref{ineq *}). Therefore, in order not to have contradictions, $\Gamma_3$ have to be non-empty.
	\end{proof}

			
			Note that 
			\begin{align*}
				d(A_1, D_1) &\leq d(A_1, A_1')+d(A_1',  D_1')+d(D_1', D_1)\\
				                 & \leq 2\epsilon + 23\mu\|\Pi\|.
			\end{align*}			
			Therefore, since $d(A, B)=d(D, C)$ and since by the property of cyclically slenderness, $d(A, B)\leq d(A, C)$, $d(D, C) \leq d(D, B)$, we get
			\begin{align}
			\label{inequality-aaa}
			\big| d(B, A_1)- d(C, D_1) \big| \leq d(A_1, D_1) \leq 	2\epsilon + 23\mu\|\Pi\|.
			\end{align}
			 Also, since $d(A, B) \leq d(A, B_2)$, we get
			 \begin{align}
			 \label{inequality-bbb}
			 d(B_1, B) \leq d(B_1, B_2) \leq d(B_1, B_1')+d(B_1', B_2')+d(B_2', B_2) \leq 2\epsilon+ 23\mu \|\Pi\|.	
			 \end{align}
			 Analogously, we get
			 \begin{align}
			 \label{inequality-ccc}
			 d(C_2, C) \leq d(C_2, C_1) \leq 2\epsilon + 23\mu \|\Pi\|.	
			 \end{align}
			
			After combining inequalities \eqref{inequality-aaa}, \eqref{inequality-bbb} and \eqref{inequality-ccc}, we get that 
			\begin{align*}
			\left| d(A_1, B_1) - d(D_1, C_2) \right| \leq 2(2\epsilon+23\mu\|\Pi\|) = 4\epsilon + 46\mu\|\Pi\|.
			\end{align*}
			Moreover, since $lab(AB) = lab(DC)$, 
			we get that $lab(A_1B_1)$ and $lab(D_1C_2)$ have a common subword of length at least $\max\big \{ \big\|[A_1, B_1]\big\|,  \big\|[D_1, C_1]\big\|\big\}-(4\epsilon + 46\mu\|\Pi\|)$. We will show that this is impossible. 	
						
			Assume that  it is possible. Then there exist $O_1, O_2 \in [A_1, B_1]$ such that $lab(O_1O_2)$ is also a subword of $lab(D_1C_2)$ and
			\begin{align}
			\label{inequality-eeeee}
				 \big\|[O_1, O_2]\big\|\geq \max \big\{ \big\|[A_1, B_1]\big\|,  \big\|[D_1, C_1]\big\| \big\}-(4\epsilon + 46\mu\|\Pi\|).
			\end{align}			
			 In light of \eqref{inequality-dddddd}, without loss of generality we can assume that $\big\|[A_1', B_1']\big\|\geq 49 \lambda \mu \|\Pi\|$, which, by \eqref{inequality-eeeee}, implies that 
			 \begin{align}
			 \label{inequality-ffff}
			 \big\|[O_1, O_2]\big\| \geq  49 \lambda \mu \|\Pi\|-(4\epsilon + 46\mu\|\Pi\|). 	
			 \end{align}

			 Now note that, by Corollary \ref{corollary on hausdorff distance between quasi-geodesics}, there exist $O_1', O_2' \in [A_1', B_1']$ such that $d(O_1, O_1')$, $d(O_2, O_2') \leq \epsilon+R_{\lambda, c}+2\delta \leq 2\epsilon$. Therefore, by the triangle inequality, we have
			\begin{align*}
				\big\|[O_1', O_2']\big\| &\geq \big\|[O_1, O_2]\big\| - 2(\epsilon+R_{\lambda, c}+2\delta)\\
				 &\geq 49 \lambda \mu \|\Pi\|-(4\epsilon + 46\mu\|\Pi\|)- 2(2\epsilon)\\ \text{~by \eqref{inequality-ffff}}
				 &>2\mu \|\Pi\|, \text{~by LPP}.
			\end{align*}
			The last inequality contradicts Lemma \ref{lemma_about_contiguity_arcs_with_the_same_label}.
		Therefore, we got a contradiction, which means that	
we are done with Case 1.1.\\

\underline{Illustration.} For the sake of clarity of the above arguments, let us consider the following diagram: let us consider a $(U, V)$-conjugacy-diagram $\bar{\Delta}$ which is a copy of $\Delta$ with $\bar{\Delta} = \bar{A}\bar{B}\bar{C}\bar{D}$ and all points and subdiagrams inside have the same notations but with $\bar{bar}$ and let us attach this diagram to $\delta$ along the sides $DC$ and $\bar{A}\bar{B}$. Let us denote the new diagram obtained this way by $\bar{\Delta}'$. See Figure \ref{fig:  Conjugacy-diagram_3}.\\
				\begin{figure}[H]
					\centering
					\includegraphics[clip, trim=0cm 14.1cm .7cm 8.3cm, width=1.1\textwidth]{{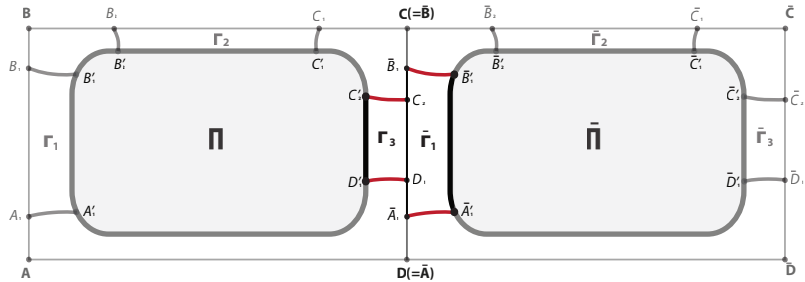}} 
					\caption{$\bar{\Delta}'$: in the figure depicted the case when $O_1=D_1$ and $O_2=C_2$.} 
					\label{fig:  Conjugacy-diagram_3}
				\end{figure}
~\\
			\textit{\underline{Case 1.2.}} (Both $\Gamma_2$ and $\Gamma_4$ are empty).\\
			In this case the $(U, V)$-conjugacy diagram $\Delta$ looks like in Figure \ref{fig:  conjugacy diagram case-2}.
			\begin{figure}[H]
				\centering
				\includegraphics[clip, trim=2cm 12.4cm 0cm 4.25cm, width=.6\textwidth]{{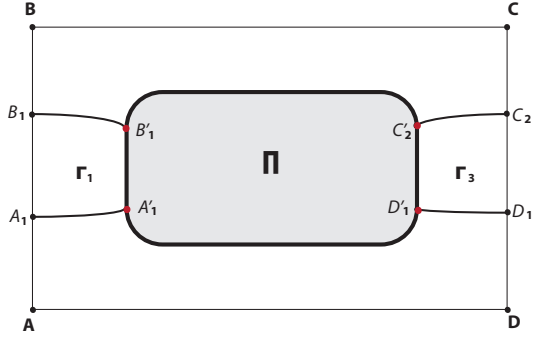}} 
				\caption{$\Gamma_2$ and $\Gamma_4$ are empty.} 
				\label{fig:  conjugacy diagram case-2}
			\end{figure}
			The emptiness of $\Gamma_2$ and $\Gamma_4$ implies the following estimation of the lengths of arcs $[B_1', C_2']$ and $[D_1', A_1']$:  $\big\|[B_1', C_2']\big\|, \big\|[D_1', A_1']\big\| \leq 23 \mu \|\Pi\|$. Therefore, from the cyclically slenderness of $\Delta$, it follows that
			\begin{align*}
				d(A, B) &= d(A, A_1) + d(A_1, B)  \leq d(A, C)\\
				 & \leq d(A, A_1)+ d(A_1, A_1') + d(A_1', D_1') + d(D_1', D_1) + d(D_1, C)\\
				 & \leq 23 \mu \|\Pi\| + 2\epsilon + d(D_1, C).
			\end{align*}
			Therefore, we get that $d(A_1, B) - d(D_1, C) = d(D, D_1)- d(A, A_1) \leq 23 \mu \|\Pi\|$. And from the symmetric arguments, we obtain $\left| d(A, A_1)-d(D, D_1) \right| \leq 23 \mu\|\Pi\|$. Analogously, $\left| d(B, B_1)-d(C, C_2) \right| \leq 23 \mu \|\Pi\|$. The rest is just a repetition of arguments of Case 1.
			
			Thus the conclusion from Case 1 and Case 2 is that, in fact, $\Gamma_2$ and $\Gamma_4$ are non-empty.\\
				At this point we already showed that $\Gamma_1$ and $\Gamma_4$ must be non-empty, i.e. we are done with the first part of the lemma. Thus the part (1) of the lemma is proved.\\
		
		Now we are in a position to show the parts (2) and (3).\\
		~\\
		\textit{\underline{Proof of parts (2) and (3) of Lemma \ref{lemma_about_slender_conjugacy_diagrams}.}}\\
		~\\
		First of all, note that since $\Pi$ is an essential cell, i.e.  $\sum_{i=1}^4(\Pi, \Gamma_i, \partial\Delta) > 1-23 \mu$, part (2) immediately follows from part (3). Therefore, it is enought= to prove the statement of part (3).
		
		To that end, let us first consider the case when at least one of $\Gamma_1$ and $\Gamma_3$ is empty. If both of $\Gamma_1$ and $\Gamma_3$ are empty, then there is nothing to prove for part $(3)$, and part $(2)$ is also true in that case, because $\Pi$ is an essential cell. Therefore, let us separately consider two cases: when exactly one of $\Gamma_1$ and $\Gamma_3$ is empty and when both of them are non-empty.\\
		~\\
		\textit{\underline{Case 2.1.}} (Exactly one of $\Gamma_1$ and $\Gamma_3$ is empty).\\
		\begin{figure}[H]
				\centering
				\includegraphics[clip, trim=2.cm 9cm 1cm 7.3cm, width=.5\textwidth]{{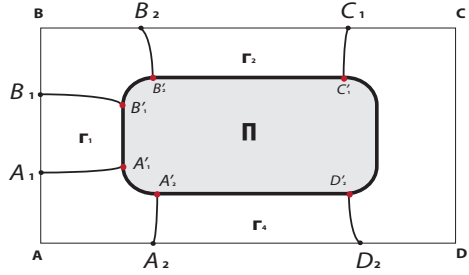}} 
				\caption{$\Gamma_3$ is empty, but $\Gamma_1$ is not.} 
				\label{fig:  conjugacy diagram case-2-1}
			\end{figure}
		
		 For this case, without loos of generality let us assume that $\Gamma_1$ is non-empty and $\Gamma_3$ is empty. See Figure \ref{fig:  conjugacy diagram case-2-1}. Then, since,  by cyclic slenderness property, we have
		\begin{align*}
		d(D, C) \leq d(D_2, C_1) \leq d(D_2, D_2')+d(D_2', C_1')+d(C_1', C_1)\leq 2\epsilon+23\mu \|\Pi\|
		\end{align*}
		and $d(A_1, B_1) \leq d(A, B) = d(D, C)$, we get that $d(A_1, B_1) \leq 2\epsilon+23\mu \|\Pi\|$. But also, since $lab[A'_1, B'_1]$ is a $(\lambda, c)$-quasi-geodesic word in $\Gamma(H, X)$, we have that
		\begin{align*}
			\big\|[A'_1, B'_1]\big\| \leq & \lambda d(A_1', B_1') +c \leq \lambda(d(A_1, B_1) + 2\epsilon) + c\\
			 \leq & \lambda(23\mu \|\Pi\| + 2\epsilon) + c < 29 \lambda \mu \|\Pi\| \text{~by LPP}.
		\end{align*}
		 Thus we are done in the case when at least one of $\Gamma_1$ and $\Gamma_3$ is empty, i.e with Case 2.1.\\
		 ~\\
		 \textit{\underline{Case 2.2.}} (Both $\Gamma_1$ and $\Gamma_3$ are non-empty).\\
		 Since we already showed that $\Gamma_2$ and $\Gamma_4$ are non-empty, this case is equivalent of saying that all $\Gamma_i$, $i=1,2,3,4$, are non-empty, that is the case depicted in Figure \ref{fig:  conjugacy diagram 1}.

	For this case, by contradiction, assume that $\max\{(\Pi, \Gamma_1, AB), (\Pi, \Gamma_3, CD)\} > 49 \lambda \mu$.
	
	 Now, since $\Delta$ is cyclically slender, we get that $d(B, A) \leq d(B, A_2)$. Therefore,
		\begin{align*}
		d(A_1, A) \leq d(A_1, A_2) \leq d(A_1, A_1')+d(A_1', A_2')+d(A_2', A_2) \leq 2\epsilon +23\mu \|\Pi\|. 
		\end{align*}
		 The same way we get that 	$d(B, B_1), d(C, C_2), d(D, D_1) \leq 2\epsilon +23\mu \|\Pi\|$. Therefore, since $d(A, B)=d(D, C)$, we get that $\left|d(A_1, B_1)-d(D_1, C_2) \right| \leq 2(2\epsilon +23\mu \|\Pi\|)$. Moreover, this observation, combined  with the fact that $lab(AB)=lab(DC)$, implies that $lab([A_1, B_1])$ and $lab([D_1, C_2])$ have a common subword of length at least $\max \{\|[A_1, B_1]\|, \|[D_1, C_2]\| \}-2(2\epsilon +23\mu \|\Pi\|)$.
		  But this is exactly a situation which we discussed while dealing with Case 1.2. Moreover, there we showed that this case is impossible if $\max\{(\Pi, \Gamma_1, AB), (\Pi, \Gamma_3, CD)\} > 49 \lambda \mu$, hence we get a contradiction. This finishes the discussion of Case 2.2.
		  
		  Thus part $(3)$ of the lemma is proved too. 
		  \begin{proof}[]
		  \end{proof}
~\\
~\\
~\\

	\printindex
	
\end{document}